\DeclareMathOperator{\Spec}{Spec}
\DeclareMathOperator{\et}{et}
\DeclareMathOperator{\Tr}{Tr}
\DeclareMathOperator{\ou}{ou}
\DeclareMathOperator{\rg}{rg}
\DeclareMathOperator{\Fr}{Fr}
\DeclareMathOperator{\NS}{NS}
\DeclareMathOperator{\nor}{nor}
\DeclareMathOperator{\Gal}{Gal}
\DeclareMathOperator{\Lip}{Lip}
\DeclareMathOperator{\eff}{eff}
\DeclareMathOperator{\Val}{Val}
\DeclareMathOperator{\Card}{Card}
\DeclareMathOperator{\PGCD}{pgcd}
\DeclareMathOperator{\PPCM}{ppcm}
\DeclareMathOperator{\Pic}{Pic}
\DeclareMathOperator{\di}{div}
\DeclareMathOperator{\card}{card}
\DeclareMathOperator{\admissible}{admissible}
\DeclareMathOperator{\pour}{pour}
\DeclareMathOperator{\elements}{\acute{e}l\acute{e}ments}
\newcommand{\dd}{\boldsymbol{d}}
\DeclareMathOperator{\Id}{Id}
\DeclareMathOperator{\Vol}{Vol}
\newcommand{\CC}{\mathbf{C}}
\newcommand{\RR}{\mathbf{R}}
\newcommand{\ZZ}{\mathbf{Z}}
\newcommand{\NN}{\mathbf{N}}
\newcommand{\QQ}{\mathbf{Q}}
\newcommand{\XX}{\mathbf{X}}
\newcommand{\YY}{\mathbf{Y}}
\newcommand{\VV}{\mathbf{V}}
\newcommand{\WW}{\mathbf{W}}
\newcommand{\FF}{\mathbf{F}}
\newcommand{\UU}{\mathbf{U}}
\newcommand{\xx}{\boldsymbol{x}}
\newcommand{\aalpha}{\boldsymbol{\alpha}}
\newcommand{\bbeta}{\boldsymbol{\beta}}
\newcommand{\xxi}{\boldsymbol{\xi}}
\newcommand{\hh}{\boldsymbol{h}}
\newcommand{\ee}{\boldsymbol{e}}
\renewcommand{\ss}{\boldsymbol{s}}
\renewcommand{\l}{\boldsymbol{l}}
\newcommand{\x}{\mathbf{x}}
\newcommand{\pt}{\langle \tt \rangle}
\newcommand{\px}{\langle \xx \rangle}
\newcommand{\yy}{\boldsymbol{y}}
\newcommand{\kk}{\boldsymbol{k}}
\newcommand{\zz}{\boldsymbol{z}}
\newcommand{\rr}{\boldsymbol{r}}
\newcommand{\ww}{\boldsymbol{w}}
\renewcommand{\tt}{\boldsymbol{t}}
\newcommand{\jj}{\boldsymbol{j}}
\renewcommand{\1}{\boldsymbol{1}}
\newcommand{\cc}{\boldsymbol{c}}
\renewcommand{\aa}{\boldsymbol{a}}
\newcommand{\0}{\boldsymbol{0}}
\newcommand{\uu}{\boldsymbol{u}}
\newcommand{\vv}{\boldsymbol{v}}
\newcommand{\bb}{\boldsymbol{b}}
\newcommand{\ra}{\rightarrow}
\newcommand{\mt}{\mapsto}
\renewcommand{\AA}{\mathbf{A}}
\newcommand{\OO}{\mathcal{O}}
\newcommand{\MM}{\mathcal{M}}
\newcommand{\BB}{\mathcal{B}}
\newcommand{\EE}{\mathcal{E}}
\newcommand{\II}{\mathcal{I}}
\newcommand{\CCC}{\mathcal{C}}
\newcommand{\FFF}{\mathcal{F}}
\newcommand{\PPP}{\mathcal{P}}
\newcommand{\GG}{\mathcal{G}}
\DeclareMathOperator{\Div}{Div}
  \newtheorem{thm}{Th\'eor\`eme}[section]
 \newtheorem{prop}[thm]{Proposition}
 \newtheorem{lemma}[thm]{Lemme}
  \newtheorem{notation}[thm]{Notations}
 \newtheorem{cor}[thm]{Corollaire}
 \newtheorem{Def}[thm]{D\'efinition}
  \newtheorem{rem}[thm]{Remarque}
 \theoremstyle{definition}
\begin{document}

\title{POINTS DE HAUTEUR BORN\'EE \\ SUR LES HYPERSURFACES LISSES \\
DES VARI\'ET\'ES TORIQUES \\ CAS G\'EN\'ERAL}

\author{ Teddy Mignot}

\maketitle

\begin{abstract}
Nous d\'emontrons ici la conjecture de Batyrev/Manin pour le nombre de points de hauteur born\'ee des hypersurfaces de certaines vari\'et\'es toriques. La m\'ethode utilis\'ee est inspir\'ee de celle d\'evelopp\'ee par Schindler pour le cas des hypersurfaces de l'espace biprojectif, qui elle-m\^eme s'inspire de la m\'ethode du cercle de Hardy-Littlewood. La constante obtenue dans la formule asymptotique finale est exactement celle conjectur\'ee par Peyre. 
\end{abstract}

\tableofcontents

\section{Introduction}

On consid\`ere une vari\'et\'e torique d\'eploy\'ee compl\`ete lisse $ X=X(\Delta) $ de dimension $ n $ d\'efinie par le r\'eseau $ N=\ZZ^{n} $ et un \'eventail $ \Delta $ ayant $ n+r $ ar\^etes engendr\'ees par des vecteurs not\'es $ v_{1},v_{2},...,v_{n+r}\in \RR^{n+r} $ (avec $ r\geqslant 2 $). Nous supposerons que le groupe de Picard et le c\^one effectif de $ X $ sont engendr\'es par les classes de diviseurs associ\'es aux ar\^etes de $ r $ vecteurs g\'en\'erateurs de l'\'eventail, disons $ v_{n+1},...,v_{n+r} $. On note $ D_{n+1},...,D_{n+r} $ les diviseurs associ\'es, et $ [D_{n+1}],...,[D_{n+r}] $ leurs classes dans $ \Pic(X) $.  
On peut alors \'ecrire 

\[ \Pic(X)=\bigoplus_{j=1}^{r}\ZZ[D_{n+j}], \]
\[ C^{1}_{\eff}(X)=\sum_{j=1}^{r}\RR^{+}[D_{n+j}], \]
et la classe du diviseur anticanonique de $ X $ est de la forme \[ [-K_{X}]=\sum_{j=1}^{r}n_{j}[D_{n+j}] \] avec $ n_{1},n_{2},...,n_{r}\in \ZZ $. D'autre part, pour $ d_{1},...,d_{r}\in \NN $ fix\'es (tels que $ n_{j}>d_{j} $ pour tout $ j $) consid\'erons un diviseur de classe $ \sum_{j=1}^{r}d_{j}[D_{n+j}] $ et une hypersurface $ Y $ d\'efinie par une section de ce diviseur. On supposera que l'hypersurface choisie est lisse et de dimension sup\'erieure ou \'egale \`a $ 3 $. La classe du diviseur anticanonique de $ Y $ est alors donn\'ee par \[  [-K_{Y}]=\sum_{j=1}^{r}(n_{j}-d_{j})[\tilde{D}_{n+j}], \] o\`u les $ \tilde{D}_{n+j} $ d\'esignent les diviseurs induits par les diviseurs $ D_{n+j} $ sur $ Y $. En utilisant par exemple la construction d\'ecrite par Salberger dans \cite[\S 10]{Sa}, on peut construire une hauteur $ H $ sur $ X $ associ\'ee \`a $ \sum_{j=1}^{r}(n_{j}-d_{j})[D_{n+j}] $. Elle induit une hauteur sur $ Y $ qui est une hauteur associ\'ee \`a $ [-K_{Y}] $, et que l'on notera encore $ H $. L'objectif est alors de donner une formule asymptotique pour le nombre \[ \mathcal{N}_{V}(B)=\Card\{ P\in Y(\QQ)\cap V \; | \; H(P)\leqslant B \}, \] pour un ouvert $ V $ bien choisi.  \\

La vari\'et\'e torique $ X $ peut \^etre d\'efinie comme le quotient de  \[ X_{1}=\{\xx\in \AA^{n+r}\; |\; \forall \sigma \in \Delta_{\max}, \prod_{i\; |\; v_{i}\notin \sigma}x_{i}\neq 0\} \] par l'action du tore $ (\CC^{\ast})^{r} $ de la forme :\[ \forall \tt=(t_{1},...,t_{r})\in (\CC^{\ast})^{r}, \; \forall \xx\in X_{1}, \; \tt.\xx=(\prod_{j=1}^{r}t_{j}^{a_{i,j}}x_{i})_{i\in \{1,...,r\}},  \] o\`u les $ a_{i,j} $ ont \'et\'e d\'efinis par la formule\;\eqref{formuleaij2} (voir par exemple\;\cite[\S 2]{Co} pour plus de d\'etails). Notons $ \pi : X_{1} \ra X $ la projection canonique. L'hypersurface $ Y $ de $ X $ est alors $ \pi(Y_{1}) $ o\`u $ Y_{1} $ est l'hypersurface de $ X_{1} $ donn\'ee par une \'equation $ F(\xx)=0 $, o\`u $ F $ est un polyn\^ome v\'erifiant  \[ \forall \tt=(t_{1},...,t_{r})\in (\CC^{\ast})^{r}, \; \forall \xx\in X_{1}, \;  F(\tt.\xx)=\left(\prod_{j=1}^{r}t_{j}^{d_{j}}\right)F(\xx). \] 
 
Pour tous $ m\in \{1,...,r\} $ et $ \tau\in \mathfrak{S}_{r} $, on note \[ \mathcal{C}_{m,\tau}=\{(j,k)\; |\; j\in \{\tau(1),...,\tau(m)\}, k\in \{1,...,d_{j}\}\} \] \[ \mathcal{D}_{m,\tau}= \NN^{\mathcal{C}_{m,\tau}},  \] \[  \forall (j,k)\in \mathcal{C}_{m,\tau},\; \; J(j,k)=\{i\in \{1,...,n+r\}\; |\; a_{i,j}=k\}.  \]Le polyn\^ome $ F $ peut \^etre d\'ecompos\'e sous la forme : \[ F(\xx)=\sum_{\dd=(d_{j,k})_{(j,k)\in\mathcal{C}_{m,\tau} }\in\mathcal{D}_{m,\tau} }F_{\dd}(\xx), \] o\`u $ F_{\dd} $ est un polyn\^ome homog\`ene de degr\'e $ d_{j,k} $ en les variables $ (x_{i})_{i\in J(j,k)} $ pour tout $ (j,k)\in \mathcal{C}_{m,\tau}   $. On pose alors pour tout $ \dd\in \mathcal{D}_{m,\tau} $, et tout $ (j,k)\in \mathcal{C}_{m,\tau} $, \[  V^{\ast}_{\tau,m,\dd,(j,k)}=\left\{ \xx\in \AA^{n+r}\; |\; \forall i\in J(j,k), \;\frac{\partial F_{\dd}}{\partial x_{i}}(\xx)=0 \right\}. \] Posons enfin \[ n(F)= n+r-\max_{\substack{m\in \{1,...,r\} \\ \tau\in \mathfrak{S}_{r} }}\min_{\dd\in \mathcal{D}^{\ast}_{m,\tau}}\max_{(j,k)\in \mathcal{C}_{m,\tau} }\dim V^{\ast}_{\tau,m,\dd,(j,k)},   \] o\`u $ \mathcal{D}^{\ast}_{m,\tau}=\{\dd=(d_{j,k})_{(j,k)\in\mathcal{C}_{m,\tau} }\in \NN^{\mathcal{C}_{m,\tau}}\;|\; F_{\dd}\neq 0 \} $. On pose par ailleurs $ I_{1},...,I_{N} $ les ensembles $ I $ minimaux pour l'inclusion tels que \[  \forall \sigma \in \Delta, \; \sum_{i\in I} \RR^{+}v_{i}\nsubseteq \sigma.  \] Nous d\'emontrons alors dans cette partie le th\'eor\`eme ci-dessous :

 \begin{thm}\label{thm3b3}
Si l'on suppose que $ \Delta $ est tel que, pour tout $ k\in \{1,...,N\} $, $ \Card I_{k}\geqslant 6 $, qu'une puissance de $ \omega_{Y}^{-1} $ est engendr\'ee par ses sections globales et que \[  n(F)\geqslant r(6.2^{\sum_{j=1}^{r}d_{j}}+4)\left(\prod_{j=1}^{r}(3+10d_{j})\right)\left(\sum_{j=1}^{r} d_{j}\right), \]  alors il existe un ouvert $ V $ de $ X $ de la forme $ \pi(U) $ o\`u $ U $ est un ouvert de $ X_{1} $ tel que :
\begin{equation*}\mathcal{N}_{V}(B)=CB(\log B)^{r-1} +O(B(\log B)^{r-2}), \end{equation*} o\`u $ C $ est la constante conjectur\'ee par Peyre.
\end{thm} 
\begin{rem}\label{remouvU}
L'ouvert $ U $ est d\'efinit par la formule\;\eqref{definitionouvertU3}. Nous verrons en particulier que cet ouvert de $ X_{1} $ v\'erifie \[ \xx\in U \Rightarrow \forall \tt\in (\CC^{\ast})^{r}, \; \tt.\xx\in U. \] On a donc en particulier $ \pi^{-1}(V)=U $. 
\end{rem} Dans la section $ 2 $ nous fixons pr\'ecis\'ement le cadre de notre \'etude. Nous y d\'ecrivons entre autres les vari\'et\'es toriques auxquelles nous nous int\'eressons, l'expression de la hauteur, et la forme des \'equations d\'efinissant les hypersurfaces. Nous montrons par ailleurs que le calcul de $ \mathcal{N}_{U}(B) $ peut se ramener \`a celui de \begin{multline*} N_{\ee,U}(B)= \left\{ \xx\in (\ZZ\setminus\{0\})^{n+r}\cap U\; | \;  \forall i\in \{1,...,n+r\}\; e_{i}|x_{i}, \;  \right. \\ \left. \; F(\xx)=0, \; \forall i\in \{1,...,n+r\} \; |x_{i}|\leqslant \prod_{j=1}^{r}|M_{j}(\xx)|^{a_{i,j}}, \; \prod_{j=1}^{r}|M_{j}(\xx)|^{n_{j}-d_{j}}\leqslant B \right\}. \end{multline*}
pour tout $ \ee\in \NN^{n+r} $ fix\'e, et o\`u $ F, M_{1},...,M_{r} $ sont des mon\^omes et les $ a_{i,j},n_{j},d_{j} $ des entiers que nous pr\'eciserons. La m\'ethode utilis\'ee pour \'evaluer les $ N_{\ee,U}(B )$ est inspir\'ee de celle d\'evelopp\'ee par Schindler dans \cite{S2} pour traiter le cas des hypersurfaces des espaces biprojectifs. Cette m\'ethode consiste dans un premier temps \`a donner une formule asymptotique pour le nombre $ N_{\ee,U}(P_{1},...,P_{r}) $ de points $ \xx  $ de $ U\cap \ZZ^{n+r}$ tels que $ |M_{j}(\xx)|\leqslant P_{j} $ pour tout $ j\in \{1,...,n+r\} $ pour des bornes $ P_{1},P_{2},...,P_{r} $ fix\'ees. Dans la section $ 3 $, en utilisant des arguments issus de la m\'ethode du cercle, on \'etablit une formule asymptotique pour $ N_{\ee,U}(P_{1},...,P_{r}) $ lorsque $ P_{1},...,P_{r} $ sont \og relativement proches \fg \ en un sens que nous pr\'eciserons. Dans la section $ 4 $, pour tout sous-ensemble $ J\subset \{ 1,...,r\}  $ on donne, en utilisant \`a nouveau la m\'ethode du cercle, une formule asymptotique pour $ N_{\ee,U}(P_{1},...,P_{r}) $ lorsque les $ (P_{j})_{j\notin J} $ sont \og petits \fg \ et les $ (P_{j})_{j\in J} $ sont \og grands \fg\ et \og relativement proches \fg. Les r\'esultats obtenus combin\'es avec ceux de la section $ 2 $ nous permettrons dans la section $ 5 $ d'\'etablir une formule asymptotique pour $ N_{d,U}(P_{1},...,P_{r}) $ avec $ P_{1},P_{2},...,P_{r} $ quelconques. Dans la section $ 6 $, nous utilisons une g\'en\'eralisation des r\'esultats \'etablis par Blomer et Br\"{u}dern dans \cite{BB} pour conclure quant \`a la valeur de $ N_{\ee,U}(B) $ \`a partir des estimations obtenues dans les sections pr\'ec\'edentes. Enfin, dans la section $ 7 $, on conclut en d\'emontrant le th\'eor\`eme\;\ref{thm3b3}.

\section{Pr\'eliminaires}\label{preliminaires}

\subsection{Notations et premi\`eres propri\'et\'es}\label{premprop}

Rappelons les d\'efinitions suivantes : 
\begin{Def}
\'Etant donn\'e un r\'eseau $ N $, un \emph{\'eventail} est un ensemble $ \Delta $ de c\^ones poly\'edriques de $ N_{\RR}=N\otimes \RR $ v\'erifiant : 
\begin{enumerate}
\item Pour tout c\^one $ \sigma\in \Delta $, on a $ 0\in \sigma $;
\item Toute face d'un c\^one de $ \Delta $ est un c\^one de $ \Delta $;
\item L'intersection de deux c\^ones de $ \Delta  $ est une face de chacun de ces deux c\^ones. 
\end{enumerate}
On dit de plus que l'\'eventail est \begin{itemize}
\item \emph{complet} si $ \bigcup_{\sigma\in \Delta}\sigma=N_{\RR} $,
\item \emph{r\'egulier} si chaque c\^one de $ \Delta $ est engendr\'e par une famille de vecteurs pouvant \^etre compl\'et\'ee en une base du $ \ZZ $-module $ N $. 
\end{itemize}
\end{Def}
Pour tout \'eventail $ \Delta $ nous noterons $ \Delta_{\max} $ l'ensemble des c\^ones de dimension maximale, et pour tout c\^one $ \sigma\in \Delta $, on notera $ \sigma(1) $ l'ensemble des vecteurs g\'en\'erateurs des ar\^etes de $ \sigma $ (i.e les vecteurs $ v_{1},...,v_{k}\in N $ tels que $ \sigma=\{\sum_{i=1}^{k}\lambda_{i}v_{i}\; |\; \lambda_{i}\geqslant 0\} $ et tels que les coefficients de chaque $ v_{i} $ soient premiers entre eux). Pour un c\^one polyh\'edrique $ \sigma $ de $ N_{\RR} $ donn\'e on d\'efinit un semi-groupe \[ S_{\sigma}=\sigma^{\vee}\cap N^{\vee}, \]
o\`u $ \sigma^{\vee} $ (resp. $ N^{\vee}=M $) d\'esigne le c\^one (resp. r\'eseau) dual de $ \sigma $ (resp. $ N $). La \emph{ vari\'et\'e torique affine } sur un corps $ k $ associ\'ee \`a $ \sigma $ est la vari\'et\'e affine : \begin{equation}
U_{\sigma}=\Spec(k[S_{\sigma}])
\end{equation}
On remarque que si $ \sigma, \tau $ sont deux c\^ones de $ N_{\RR} $, alors \[ \tau \subset \sigma \Rightarrow U_{\tau}\subset U_{\sigma}. \]
\'Etant donn\'e un r\'eseau $ N $ et un \'eventail $ \Delta $, on d\'efinit une vari\'et\'e alg\'ebrique $ X=X(\Delta) $ sur $ k $ par recollement des ouverts $ U_{\sigma} $ pour $ \sigma\in \Delta $ le long de $ U_{\sigma\cap \tau} $. Nous renvoyons le lecteur \`a \cite[\S 1,2,3]{F} pour plus de d\'etails sur les vari\'et\'es toriques. Remarquons que la vari\'et\'e $ X(\Delta) $ est lisse (resp. compl\`ete) si $ \Delta $ est r\'egulier (resp. complet).  \\

Dans ce qui va suivre nous allons consid\'erer $ X $ une vari\'et\'e torique de dimension $ n $ d\'efinie par un \'eventail $ \Delta $ \`a $ d=n+r $ ar\^etes dont les g\'en\'erateurs seront not\'es, $ v_{1},v_{2},...,v_{n}, v_{n+1},...,v_{n+r}\in \ZZ^{n} $, et un r\'eseau $ N=\ZZ^{n} $. On note $ D_{1},...,D_{n},...,D_{n+r} $ les diviseurs invariants sous l'action du tore associ\'es aux vecteurs g\'en\'erateurs (voir \cite[\S 3.3]{F}). Rappelons que dans le cas o\`u la vari\'et\'e torique $ X $ est lisse, le groupe de Picard de $ X $ est de rang $ r $. Pour simplifier nous allons imposer une premi\`ere condition aux vari\'et\'es toriques que nous consid\'erons : nous nous int\'eressons exclusivement aux vari\'et\'es toriques compl\`etes lisses dont le c\^one effectif est simplicial et pour laquelle tout diviseur effectif est combinaison lin\'eaire \`a coefficients positifs de $ r $ diviseurs $ D_{i} $, disons $ [D_{n+1}],...,[D_{n+r}] $. Une premi\`ere question naturelle est de se demander comment traduire ceci en termes de propri\'et\'es sur les c\^ones de l'\'eventail. Nous allons r\'epondre \`a cette question dans ce qui va suivre. \\

On souhaite donc avoir, pour tout $ i\in \{1,...,n\} $  \begin{equation}\label{formuleaij2} [D_{i}]=\sum_{j=1}^{r}a_{i,j}[D_{n+j}] \end{equation} avec $ a_{i,j}\in \NN $ pour tous $ i,j $. Ceci \'equivaut \`a dire que les diviseurs $ D_{i}-\sum_{j=1}^{r}a_{i,j}D_{n+j} $ sont principaux pour tout $ i\in \{1,...,n\} $. Rappelons que les diviseurs principaux stables sous l'action du tore de $ X $ sont exactement les diviseurs $ \di(\chi^{u}) $ associ\'es aux caract\`eres $ \chi^{u} $ du tore de $ X $ (voir \cite{F}) 
 pour $ u\in M=N^{\vee}=\ZZ^{n} $ donn\'es par : \[  \di(\chi^{u})=\sum_{k=1}^{n+r}\langle u,v_{k}\rangle D_{k}. \] On cherche donc des vecteurs $ u_{1},...,u_{n}\in \ZZ^{n} $ tels que pour tous $ i,j \in \{1,...,n\} $, \begin{equation}\label{Cond12}   \langle u_{i},v_{j}\rangle=\delta_{i,j}  \end{equation} (i.e $ (u_{1},...,u_{n}) $ est la base duale de $ (v_{1},...,v_{n}) $ au sens des espaces vectoriels) et \begin{equation}\label{Cond2}
 \langle u_{i},v_{k}\rangle\leqslant 0 
 \end{equation} pour tout $ k\in \{n+1,...,n+r\} $. Ceci implique en particulier que $ (v_{1},...,v_{n}) $ est une famille g\'en\'eratrice d'un c\^one maximal (i.e. de dimension $ n $) de $ \Delta $. En effet, supposons que $ C\langle v_{1},...,v_{n}\rangle $ n'est pas un c\^one de $ \Delta $, alors puisque le vecteur $ a=\sum_{i=1}^{n}v_{i} $ appartient \`a un c\^one de $ \delta $ ($ \Delta $ \'etant complet), on peut \'ecrire \[ a=\sum_{i\in I_{1}}\alpha_{i}v_{i}+\sum_{i\in I_{2}}\alpha_{i}v_{i}\] avec $ \alpha_{i}> 0 $ pour tout $ i $, et $ I_{1}\varsubsetneq\{1,...,n\} $, $ I_{2}\subset \{n+1,...,n+r\} $. Soit $ i_{0}\in \{1,...,n\} $ tel que $ i_{0}\notin I_{1} $. On a alors \[ \langle u_{i_{0}},a\rangle=\sum_{i\in I_{2}}\alpha_{i}\langle u_{i_{0}},v_{i}\rangle\leqslant 0, \] d'apr\`es\;\eqref{Cond12}. Or, par d\'efinition de $ a $, $  \langle u_{i_{0}},a\rangle=1 $, d'o\`u contradiction. Donc $  C\langle v_{1},...,v_{n}\rangle  $ est bien un c\^one maximal de $ \Delta $. \\

Puisque l'on a suppos\'e que $ X $ est lisse, $ (v_{1},...,v_{n}) $ est alors une base du r\'eseau $ \ZZ^{n} $ dont $ (u_{1},...,u_{n}) $ est la base duale (au sens des r\'eseaux). La condition \eqref{Cond2} impose d'autre part que pour cette base duale $ (u_{1},...,u_{n}) $ : \[ \forall k \in \{n+1,...,n+r\}, \; \;  \langle u_{i},v_{k}\rangle\leqslant 0. \] Une condition n\'ecessaire et suffisante pour que ceci soit v\'erifi\'e est que : \[ v_{n+1},...,v_{n+r}\in C\langle -v_{1},-v_{2},...,-v_{n}\rangle \] o\`u $ C\langle -v_{1},-v_{2},...,-v_{n}\rangle $ d\'esigne le c\^one de $ \RR^{n} $ engendr\'e par $ -v_{1},...,-v_{n} $. 
 \begin{rem}
Inversement, \'etant donn\'e un tel \'eventail, si l'on note \[ \forall k\in \{1,...,r\}, \; \;  v_{n+k}=-\sum_{i=1}^{n}a_{i,k}v_{i}, \] avec $ a_{i,k}\in \NN $, on obtient alors : \[ \forall i\in \{1,...,n\}, \; \;  [D_{i}]=\sum_{k=1}^{r}a_{i,k}[D_{n+k}], \] comme on le souhaite.
 \end{rem}

\subsection{Hauteurs sur les hypersurfaces des vari\'et\'es toriques}\label{hauteur}

\'Etant donn\'ee une vari\'et\'e torique compl\`ete lisse $ X $ d\'efinie par un \'eventail $ \Delta $ \`a $ n+r $ ar\^etes et un r\'eseau $ N=\ZZ^{n} $, dont le groupe de Picard et le c\^one effectif sont engendr\'es par $ [D_{n+1}],...,[D_{n+r}] $ (cf. section pr\'ec\'edente), on consid\`ere la classe du diviseur anticanonique de $ X $ qui sera de la forme : \[ [-K_{X}]=\sum_{i=1}^{n+r}[D_{i}]=\sum_{k=1}^{r}n_{k}[D_{n+k}], \] avec, pour tout $ k\in \{1,...,r\} $,  \[n_{k}=1+\sum_{i=1}^{n}a_{i,k}. \]On consid\`ere alors un diviseur de classe $ \sum_{k=1}^{r}d_{k}[D_{n+k}] $, avec $ d_{1},...,d_{r}\in \NN $. Une section globale $ s $ du fibr\'e en droites associ\'e \`a ce diviseur sur $ X $ permet de d\'efinir une hypersurface de $ X $ que l'on notera $ Y $. La classe du diviseur anticanonique sur $ Y $ sera induite par la classe du diviseur \begin{equation}\label{DiviseurD02} D_{0}=\sum_{k=1}^{r}(n_{k}-d_{k})D_{n+k}. \end{equation} Nous allons donner une construction de la hauteur associ\'ee \`a $ \OO(D_{0}) $ sur $ X $. Pour cela, nous utiliserons la construction des hauteurs sur les vari\'et\'es toriques d\'ecrite par Salberger dans \cite[\S 10]{Sa}. \\

Soit $ \nu $ une place sur $ \QQ $, et $ |.|_{\nu} : \QQ^{\ast}\ra \RR^{+} $  la valeur absolue associ\'ee. On consid\`ere la carte affine de $ X $ associ\'ee au c\^one $ \{\0\} $ de l'\'eventail $ \Delta $. Cette carte est un ouvert $ T $ de $ X $ canoniquement \'egal au tore $ \Spec(\QQ[N]) $. L'application $ \log|.|_{\nu} : \QQ_{\nu}^{\ast}\ra \RR $ induit une application \[ L : T(\QQ_{\nu}) \ra N_{\RR}= \RR^{n}. \] Pour tout $ \sigma\in \Delta $, $ L^{-1}(-\sigma) $ est un sous-ensemble ferm\'e de $ W(\QQ_{\nu}) $. On note alors $ C_{\sigma,\nu} $ l'adh\'erence de $ L^{-1}(-\sigma)  $ dans $ X(\QQ_{\nu}) $. On utilise ces ensembles $ C_{\sigma,\nu} $ pour construire une norme $ ||.||_{D,\nu} $ sur $ \OO(D) $ pour tout diviseur de Weil $ D $ sur $ X $, via la proposition suivante : 

\begin{prop}
Soit $ D=\sum_{i=1}^{n+r}a_{i}D_{i} $ un diviseur de Weil sur $ X $ et $ s $ une section locale analytique de $ \OO(D) $ d\'efinie en $ P\in X(\QQ_{\nu}) $. Le point $ P\in X(\QQ_{\nu}) $ appartient \`a $ C_{\sigma,\nu} $ pour un certain $ \sigma \in \Delta $. Soit $ \chi^{u(\sigma)} $ un caract\`ere sur $ T $ repr\'esentant le diviseur de Cartier correspondant \`a $ D $ sur $ U_{\sigma} $ (i.e. $ \langle u(\sigma),v_{i}\rangle=-a_{i} $ pour tout $ v_{i}\in \sigma(1) $). On pose alors : \[ ||s(P)||_{D,\nu}=|s(P)\chi^{u(\sigma)}(P)|_{\nu}, \] et cette expression est ind\'ependante du choix de $ \sigma\in \Delta $ tel que $ P\in C_{\sigma,\nu} $. \end{prop}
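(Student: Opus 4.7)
Plan de preuve : Le point cl\'e est de montrer que, pour tous $ \sigma,\tau\in \Delta $ tels que $ P\in C_{\sigma,\nu}\cap C_{\tau,\nu} $, on a
\[ |s(P)\chi^{u(\sigma)}(P)|_{\nu}=|s(P)\chi^{u(\tau)}(P)|_{\nu}. \]
Je commencerais par v\'erifier que l'expression $ s(P)\chi^{u(\sigma)}(P) $ est bien d\'efinie : sur la carte affine $ U_{\sigma} $, la condition $ \langle u(\sigma),v_{i}\rangle=-a_{i} $ pour tout $ v_{i}\in \sigma(1) $ signifie que $ \di(\chi^{u(\sigma)}) $ est \'egal \`a $ -D $ sur $ U_{\sigma} $, de sorte que $ \chi^{u(\sigma)} $ fournit une trivialisation locale du fibr\'e $ \OO(D) $ et que $ s\chi^{u(\sigma)} $ correspond \`a l'expression de $ s $ dans cette trivialisation, donn\'ee par une fonction r\'eguli\`ere sur un voisinage de $ P $.

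Je traiterais ensuite le cas o\`u $ P $ appartient au tore $ T(\QQ_{\nu}) $. Par d\'efinition, $ L(P)\in -\sigma\cap -\tau=-(\sigma\cap \tau) $, donc $ L(P) $ se trouve dans l'espace vectoriel engendr\'e par $ \sigma\cap \tau $. D'autre part, pour tout $ v_{i}\in (\sigma\cap \tau)(1)\subseteq \sigma(1)\cap \tau(1) $, la d\'efinition des deux caract\`eres impose $ \langle u(\sigma),v_{i}\rangle=-a_{i}=\langle u(\tau),v_{i}\rangle $, donc la forme lin\'eaire $ u(\sigma)-u(\tau) $ s'annule sur $ (\sigma\cap \tau)(1) $ et par cons\'equent sur l'espace vectoriel engendr\'e par $ \sigma\cap \tau $. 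Il en d\'ecoule que $ \langle u(\sigma)-u(\tau),L(P)\rangle=0 $ et, par d\'efinition de $ L $ via $ \log|.|_{\nu} $, cela \'equivaut \`a $ |\chi^{u(\sigma)-u(\tau)}(P)|_{\nu}=1 $, d'o\`u l'\'egalit\'e voulue dans ce cas.

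Pour les points $ P\in C_{\sigma,\nu}\cap C_{\tau,\nu} $ hors du tore, je proc\'ederais par prolongement par continuit\'e : les deux fonctions $ P\mapsto |s(P)\chi^{u(\sigma)}(P)|_{\nu} $ et $ P\mapsto |s(P)\chi^{u(\tau)}(P)|_{\nu} $ sont continues sur leur domaine de d\'efinition (comme valeurs absolues de fonctions r\'eguli\`eres dans les trivialisations locales respectives), elles co\"{\i}ncident sur l'intersection $ C_{\sigma,\nu}\cap C_{\tau,\nu}\cap T(\QQ_{\nu}) $, dense dans $ C_{\sigma,\nu}\cap C_{\tau,\nu} $ par la d\'efinition m\^eme de $ C_{\sigma,\nu} $ comme adh\'erence de $ L^{-1}(-\sigma) $, et donc elles sont \'egales partout.

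L'\'etape la plus technique me semble \^etre la justification rigoureuse de la compatibilit\'e entre la description de $ s $ comme section globale et son expression locale via le caract\`ere $ \chi^{u(\sigma)} $, qui fait appel \`a la correspondance classique entre diviseurs de Weil, diviseurs de Cartier et fibr\'es en droites sur les cartes affines d'une vari\'et\'e torique lisse. L'argument d'orthogonalit\'e proprement dit, en revanche, est un calcul direct dans l'espace dual $ M\otimes \RR $ exploitant uniquement la structure faciale de l'\'eventail.
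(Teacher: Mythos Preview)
Le papier ne donne pas de d\'emonstration propre : il renvoie simplement \`a \cite[Proposition 9.2]{Sa}. Ton plan reproduit fid\`element l'argument classique de Salberger, et ta preuve pour les points du tore est correcte : l'observation que $L(P)\in -(\sigma\cap\tau)$ combin\'ee au fait que $u(\sigma)-u(\tau)$ s'annule sur l'espace engendr\'e par $(\sigma\cap\tau)(1)$ donne bien $|\chi^{u(\sigma)-u(\tau)}(P)|_{\nu}=1$.

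Un seul point m\'erite d'\^etre pr\'ecis\'e dans ton passage par continuit\'e. Tu affirmes que $C_{\sigma,\nu}\cap C_{\tau,\nu}\cap T(\QQ_{\nu})$ est dense dans $C_{\sigma,\nu}\cap C_{\tau,\nu}$ \og par la d\'efinition m\^eme\fg, mais la d\'efinition donne seulement que $L^{-1}(-\sigma)$ est dense dans $C_{\sigma,\nu}$ ; elle ne dit rien a priori sur l'intersection de deux tels ferm\'es. Ce qu'il faut \'etablir (et qui est effectivement d\'emontr\'e dans \cite{Sa}) est l'\'egalit\'e $C_{\sigma,\nu}\cap C_{\tau,\nu}=C_{\sigma\cap\tau,\nu}$ : une fois celle-ci acquise, la densit\'e d\'ecoule bien de la d\'efinition puisque $C_{\sigma\cap\tau,\nu}$ est l'adh\'erence de $L^{-1}(-(\sigma\cap\tau))=L^{-1}(-\sigma)\cap L^{-1}(-\tau)$. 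Cette \'egalit\'e repose sur le fait que $C_{\sigma,\nu}\subset U_{\sigma}(\QQ_{\nu})$ et sur une description de $C_{\sigma,\nu}$ en coordonn\'ees locales sur $U_{\sigma}$ ; ce n'est pas difficile mais ce n'est pas tautologique. Hormis ce point, ton argument est complet.
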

\begin{proof} Voir \cite[Proposition 9.2]{Sa}. \end{proof}

On a alors la proposition suivante qui nous sera utile par la suite. 

\begin{prop}\label{descriptC2}
Soit $ D=\sum_{i=1}^{n+r}a_{i}D_{i} $ un diviseur de Weil sur $ X $ tel que $ \OO(D) $ est engendr\'e par ses sections globales. Alors, pour $ \sigma\in \Delta_{\max} $, si $ \chi^{-u(\sigma)} $ d\'esigne l'unique caract\`ere sur $ T $ qui engendre $ \OO(D) $ sur $ U_{\sigma} $ (i.e. $ \langle -u(\sigma),v_{i}\rangle=-a_{i} $ pour tout $ v_{i}\in \sigma(1) $), alors $ \chi^{-u(\sigma)} $ s'\'etend en une section globale de $ \OO(D) $ et $ \chi^{-u(\sigma)}(P)\neq 0 $ pour tout $ P\in U_{\sigma}(\QQ_{\nu}) $. Si $ s $ est une section locale de $ \OO(D) $ d\'efinie en $ P\in X(\QQ_{\nu}) $, alors \[ ||s(P)||_{D,\nu}=\inf_{\sigma\in \Delta_{\max}}|s(P)\chi^{u(\sigma)}(P)|_{\nu}, \] o\`u $ \Delta_{\max} $ d\'esigne l'ensemble des c\^ones de $ \Delta $ de dimension $ n $. De plus, si $ D $ est ample et $ \sigma\in \Delta_{\max} $, alors $ C_{\sigma,\nu} $ est l'ensemble des $ P\in X(\QQ_{\nu})  $ tels que $ |\chi^{u(\sigma)-u(\tau)}(P)|_{\nu}\leqslant 1 $ pour tout $ \tau\in \Delta_{\max} $. \end{prop}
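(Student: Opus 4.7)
Le plan consiste \`a ramener toutes les affirmations \`a des calculs explicites sur l'ouvert torique $T$, o\`u l'application de valuation $L$ et les caract\`eres $\chi^{u}$ admettent la description transparente $\log|\chi^{u}(P)|_{\nu}=\langle u,L(P)\rangle$, puis \`a \'etendre \`a $X(\QQ_{\nu})$ tout entier en utilisant que $C_{\sigma,\nu}$ est par d\'efinition l'adh\'erence de $L^{-1}(-\sigma)$.

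Pour la premi\`ere assertion, on invoque la description classique des sections globales des fibr\'es en droites toriques : en posant $P_{D}=\{u\in M_{\RR}\mid\langle u,v_{i}\rangle\geqslant -a_{i}\text{ pour tout }i\}$, l'espace $H^{0}(X,\OO(D))$ admet pour base les caract\`eres $\chi^{u}$ avec $u\in P_{D}\cap M$ (cf.\ \cite[\S 3.4]{F}). Dire que $\OO(D)$ est engendr\'e par ses sections globales \'equivaut alors \`a $-u(\sigma)\in P_{D}$ pour tout $\sigma\in\Delta_{\max}$, ce qui est pr\'ecis\'ement l'affirmation que $\chi^{-u(\sigma)}$ s'\'etend en une section globale ; la non-annulation sur $U_{\sigma}(\QQ_{\nu})$ est alors automatique puisque $\chi^{-u(\sigma)}$ est, par construction, une section trivialisante de $\OO(D)$ sur $U_{\sigma}$.

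Pour la formule donnant $\|s(P)\|_{D,\nu}$ comme un infimum, on choisit un c\^one $\sigma_{0}\in\Delta_{\max}$ tel que $P\in C_{\sigma_{0},\nu}$ (un tel c\^one existe puisque $\Delta$ est complet) et l'on applique la proposition pr\'ec\'edente pour obtenir $\|s(P)\|_{D,\nu}=|s(P)\chi^{u(\sigma_{0})}(P)|_{\nu}$. Il suffit d\`es lors de v\'erifier que $|\chi^{u(\sigma_{0})-u(\tau)}(P)|_{\nu}\leqslant 1$ pour tout $\tau\in\Delta_{\max}$, ce qui entra\^ine imm\'ediatement $|s(P)\chi^{u(\sigma_{0})}(P)|_{\nu}\leqslant|s(P)\chi^{u(\tau)}(P)|_{\nu}$. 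Sur le tore, cette in\'egalit\'e se traduit en $\langle u(\sigma_{0})-u(\tau),L(P)\rangle\leqslant 0$, c'est-\`a-dire $u(\sigma_{0})-u(\tau)\in\sigma_{0}^{\vee}$ : on combine pour cela les \'egalit\'es $\langle u(\sigma_{0}),v_{i}\rangle=a_{i}$ pour $v_{i}\in\sigma_{0}(1)$ avec les in\'egalit\'es $\langle u(\tau),v_{i}\rangle\leqslant a_{i}$ issues de $-u(\tau)\in P_{D}$ ; pour les points du bord du tore, l'in\'egalit\'e passe au limite par continuit\'e de la norme, gr\^ace \`a la d\'efinition de $C_{\sigma_{0},\nu}$ comme adh\'erence.

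Enfin, dans le cas ample, l'amplitude de $D$ entra\^ine que $\Delta$ co\"incide avec l'\'eventail normal int\'erieur du polytope $P_{D}$ et que les sommets $-u(\sigma)$ sont deux \`a deux distincts pour $\sigma\in\Delta_{\max}$ ; le c\^one $\sigma$ est alors exactement l'ensemble des $v\in N_{\RR}$ pour lesquels $-u(\sigma)$ r\'ealise le minimum de $u\mapsto\langle u,v\rangle$ sur $P_{D}$, soit $v\in\sigma$ si et seulement si $\langle u(\sigma)-u(\tau),v\rangle\geqslant 0$ pour tout $\tau\in\Delta_{\max}$. En prenant $v=-L(P)$ et en passant \`a la forme multiplicative, on obtient la caract\'erisation annonc\'ee de $C_{\sigma,\nu}\cap T(\QQ_{\nu})$, puis de $C_{\sigma,\nu}$ tout entier par adh\'erence. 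La seule difficult\'e r\'eelle de la preuve est le traitement des points $P\in X(\QQ_{\nu})\setminus T(\QQ_{\nu})$, o\`u les caract\`eres $\chi^{u(\tau)}$ peuvent avoir des p\^oles et o\`u il faut justifier proprement l'extension par continuit\'e, uniform\'ement en $\tau$, des in\'egalit\'es obtenues sur le tore ; le reste du raisonnement n'est que de la combinatoire des polytopes.
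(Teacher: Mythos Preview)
Your sketch is essentially correct and goes well beyond what the paper itself offers: the paper's ``proof'' is simply a citation of \cite[Proposition 9.8]{Sa}, with no further details. Your argument --- passing through the lattice polytope $P_{D}$, showing $u(\sigma_{0})-u(\tau)\in\sigma_{0}^{\vee}$ from the inequalities defining $P_{D}$, and in the ample case identifying $\Delta$ with the normal fan of $P_{D}$ --- is precisely the standard line of reasoning (and is, in fact, essentially what Salberger does).

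The one point you flag as delicate, namely extending the inequalities from $T(\QQ_{\nu})$ to all of $C_{\sigma,\nu}$, is indeed the only place requiring care. The clean way to handle it is to note that $C_{\sigma,\nu}\subset U_{\sigma}(\QQ_{\nu})$ and that on $U_{\sigma}$ the functions $\chi^{u(\sigma)-u(\tau)}$ are \emph{regular} (since $u(\sigma)-u(\tau)\in\sigma^{\vee}\cap M=S_{\sigma}$ by the computation you already made); hence $|\chi^{u(\sigma)-u(\tau)}|_{\nu}$ is continuous on $U_{\sigma}(\QQ_{\nu})$ and the inequality $\leqslant 1$ passes to the closure without any issue of poles. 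This observation also disposes of the worry you raise about $\chi^{u(\tau)}$ having poles in the norm formula: writing $s\chi^{u(\tau)}=(s\chi^{u(\sigma_{0})})\cdot\chi^{u(\tau)-u(\sigma_{0})}$, the second factor is the inverse of a regular function on $U_{\sigma_{0}}$, so the product is well-defined (possibly infinite) and the infimum is unaffected.
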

\begin{proof} Voir \cite[Proposition 9.8]{Sa}. \end{proof}

On peut alors d\'efinir la hauteur associ\'ee \`a un diviseur $ D $. Si $ D=\sum_{i=1}^{n+r}a_{i}D_{i} $ est un diviseur de Weil sur $ X $ et $ P\in X(\QQ) $, la hauteur associ\'ee \`a $ D $ est l'application $ H_{D} : X(\QQ)\ra [0,\infty[ $ d\'efinie par \[ H_{D}(P)=\prod_{\nu\in \Val(\QQ)}||s(P)||_{D,\nu}^{-1}, \] o\`u $ \Val(\QQ) $ d\'esigne l'ensemble des places de $ \QQ $, et $ s $ une section locale de $ \OO(D) $ d\'efinie en $ P $ telle que $ s(P)\neq 0 $. 

\begin{rem}
Comme on peut le voir dans \cite[Proposition 10.12]{Sa}, pour tout $ P\in T(\QQ) $, $ H_{D}(P) $ ne d\'epend que de la classe de $ D $ dans $ \Pic(X) $. \end{rem}

Par la suite, on notera $ H $ la hauteur sur $ X $ associ\'ee au diviseur $ D_{0} $ d\'efini par \eqref{DiviseurD02}. Notre objectif sera alors d'\'evaluer \[ \mathcal{N}_{V}(B)=\Card\{P\in V(\QQ)\cap Y(\QQ) \; | \; H_{D_{0}}(P)\leqslant B\}, \] pour un certain ouvert dense $ V $ de $ X $. Pour \'evaluer cette quantit\'e, il est plus pratique de se ramener \`a compter le nombre de points de hauteur born\'ee sur un torseur universel (voir \cite[\S 3]{Sa} pour la d\'efinition de torseurs universels) associ\'e \`a $ X $. Pour les vari\'et\'es toriques, la construction du torseur universel est relativement simple et est donn\'ee dans \cite[\S 8]{Sa}. Nous allons rappeler cette construction. \\

On consid\`ere le r\'eseau $ N_{0}=\ZZ^{n+r} $ et $ M_{0}=N_{0}^{\vee}=\ZZ^{n+r} $. \`A tout g\'en\'erateur $ v_{i} $ d'une ar\^ete de l'\'eventail $ \Delta $ on associe l'\'el\'ement $ e_{i} $ de la base canonique de $ N_{0}=\ZZ^{n+r} $. On pose alors $ N_{1}=N_{0} $ et $ \Delta_{1} $ l'\'eventail constitu\'e de tous les c\^ones engendr\'es par les $ e_{i} $. La vari\'et\'e torique $ X_{1} $ d\'etermin\'ee par $ (N_{1},\Delta_{1}) $ est alors l'espace affine $ \AA^{n+r} $. Pour tout $ \sigma \in \Delta $, on note d'autre part $ \sigma_{0} $ le c\^one de $ N_{0,\RR} $ engendr\'e par les $ e_{i} $ pour $ i $ tel que $ v_{i}\in \sigma $. Les c\^ones $ \sigma_{0} $ ainsi associ\'es forment alors un \'eventail r\'egulier $ \Delta_{0} $ de $ N_{0,\RR} $ (cf. \cite[Proposition 8.4]{Sa}), et $ (\Delta_{0},N_{0}) $ d\'efinit une vari\'et\'e torique $ X_{0} $ qui est un ouvert de $ X_{1} $. Soit $ U_{0,\sigma}=\Spec(\QQ[S_{\sigma_{0}}]) $ o\`u $ S_{\sigma_{0}}=\sigma_{0}^{\vee}\cap M_{0} $. Les morphismes toriques $ \pi_{\sigma} : U_{0,\sigma}\ra U_{\sigma} $ d\'efinies par les applications naturelles de $ \sigma_{0} $ sur $ \sigma $ se recollent en un morphisme $ \pi : X_{0}\ra X $ qui est alors un torseur universel sur $ X $ (cf. \cite[Proposition 8.5]{Sa}).\\

\'Etant donn\'e que $ X_{0}\subset X_{1}=\AA^{n+r}_{\QQ} $ les points de $ X_{0} $ s'\'ecrivent sous forme de $ (n+r) $-uplets de coordonn\'ees $ \xx=(x_{1},...,x_{n},x_{n+1},...,x_{n+r}) $. On notera alors pour tout diviseur $ D=\sum_{i=1}^{n+r}a_{i}D_{i}  $ : \[ \xx^{D}=\prod_{i=1}^{n+r}x_{i}^{a_{i}}. \] 

\begin{rem}
Si $ \sigma\in \Delta $ , on note \[ \underline{\sigma}=\sum_{i\; |\; v_{i}\notin \sigma(1)}D_{i}, \] alors $ U_{0,\sigma} $ est l'ouvert de $ X_{1} $ d\'etermin\'e par $ \xx^{\underline{\sigma}}\neq 0 $, et donc $ X_{0} $ est l'ouvert de $ X_{1} $ d\'efini par  : \[ \xx\in X_{0} \Leftrightarrow \exists \sigma\in \Delta_{\max} \; | \; \xx^{\underline{\sigma}}\neq 0. \] 
\end{rem}

En rappelant que $ D_{0}=\sum_{k=1}^{r}(n_{k}-d_{k})D_{n+k} $, on d\'efinit alors les diviseurs $ D(\sigma) $ associ\'es : 
\begin{Def}
Soit $ \sigma\in \Delta_{\max} $, et soit $ \chi^{u(\sigma)} $ le caract\`ere de $ U $ tel que $ \chi^{-u(\sigma)} $ engendre $ \OO(D_{0}) $ sur $ U_{\sigma} $. On pose alors \[ D(\sigma)=D_{0}+\sum_{v_{i}\in \sigma(1)}\langle -u(\sigma),v_{i}\rangle D_{i}. \] 
\end{Def}
\begin{rem}
Les diviseurs $ D(\sigma) $ ne d\'ependent que de la classe de $ D_{0} $ dans $ \Pic(X) $. 
\end{rem}
\begin{lemma}\label{effectif2}
Soit $ \sigma\in \Delta_{\max} $. Si $ \OO(D_{0}) $ est engendr\'e par ses section globales, alors $ \chi^{-u(\sigma)} $ s'\'etend en une section globale de $ \OO(D_{0}) $, et $ D(\sigma) $ est un diviseur effectif \`a support contenu dans $ \bigcup_{v_{i}\notin \sigma(1)}D_{i} $. \end{lemma}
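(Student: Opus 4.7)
The plan is to treat the two assertions of the lemma separately, since the first is essentially an invocation of the preceding proposition and the second reduces to bookkeeping with the definition of $u(\sigma)$.

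For the first assertion, I would observe that it is nothing but the statement of Proposition \ref{descriptC2} applied to the particular divisor $D = D_0$: the hypothesis that $\OO(D_0)$ is engendered by its global sections is exactly what is needed to conclude that the character $\chi^{-u(\sigma)}$, which a priori generates $\OO(D_0)$ only on the affine chart $U_\sigma$, in fact extends to a global section of $\OO(D_0)$ on $X$. No computation is needed beyond citing that statement.

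For the second assertion I would simply unfold the definition of $D(\sigma)$. Write $D_0 = \sum_{i=1}^{n+r} a_i D_i$ with $a_i = 0$ for $i \in \{1,\ldots,n\}$ and $a_{n+k} = n_k - d_k$ for $k \in \{1,\ldots,r\}$. The defining property of $u(\sigma)$ recalled just above the lemma is $\langle -u(\sigma), v_i \rangle = -a_i$ for every $v_i \in \sigma(1)$. Substituting this into the definition of $D(\sigma)$ gives
$$D(\sigma) = \sum_{i=1}^{n+r} a_i D_i + \sum_{v_i \in \sigma(1)} \langle -u(\sigma), v_i \rangle D_i = \sum_{i=1}^{n+r} a_i D_i - \sum_{v_i \in \sigma(1)} a_i D_i = \sum_{v_i \notin \sigma(1)} a_i D_i,$$
which is visibly supported in $\bigcup_{v_i \notin \sigma(1)} D_i$.

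Effectivity then follows from the specific shape of the coefficients $a_i$: they are zero for $i \leq n$, and for $i = n+k$ they equal $n_k - d_k$, which is strictly positive by the standing hypothesis $n_j > d_j$ fixed in the introduction. Thus each coefficient appearing in the simplified expression for $D(\sigma)$ is non-negative, and $D(\sigma)$ is an effective divisor of the claimed form. There is no real obstacle here; the whole lemma is essentially a matching of definitions, with the nontrivial input confined to the global generation criterion of Proposition \ref{descriptC2}.
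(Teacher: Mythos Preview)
Your treatment of the first assertion (the extension of $\chi^{-u(\sigma)}$) is fine: it is indeed a direct citation of Proposition~\ref{descriptC2}. The support claim is also essentially tautological once the definition is unfolded, since by construction the coefficient of $D_i$ in $D(\sigma)$ vanishes for every $v_i\in\sigma(1)$.

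There is, however, a genuine gap in your effectivity argument. Your computation produces $D(\sigma)=\sum_{v_i\notin\sigma(1)}a_iD_i$ with the $a_i$ being the literal coefficients of the fixed divisor $D_0=\sum_k(n_k-d_k)D_{n+k}$. But this divisor is not in general linearly equivalent to $D_0$: indeed, $D_0-\sum_{v_i\notin\sigma(1)}a_iD_i=\sum_{v_i\in\sigma(1)}a_iD_i$ is not principal as soon as some $v_{n+j}$ lies in $\sigma(1)$, and for most maximal cones $\sigma$ this happens. Since the paper later uses repeatedly that $[D(\sigma)]=[D_0]$ (for instance to write $\xx^{D(\sigma)}/\xx^{D(\tau)}=\chi^{u(\tau)-u(\sigma)}$), the formula you have arrived at cannot be the intended $D(\sigma)$. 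The definition should be read as $D(\sigma)=D_0+\operatorname{div}(\chi^{-u(\sigma)})=D_0+\sum_{i=1}^{n+r}\langle -u(\sigma),v_i\rangle D_i$, with the sum running over \emph{all} $i$; the displayed formula in the paper is slightly abbreviated (or contains a typo).

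With the correct formula, the coefficient of $D_i$ for $v_i\notin\sigma(1)$ is $a_i+\langle -u(\sigma),v_i\rangle$, and this quantity has no reason to be non-negative from $n_j>d_j$ alone. Concretely, take $r=2$ and a cone $\sigma$ with $v_{n+1}\in\sigma(1)$ and $v_1\notin\sigma(1)$: one finds that the coefficient of $D_{n+2}$ in $D(\sigma)$ equals $(n_2-d_2)-a_{1,2}(n_1-d_1)$, which can be negative. The non-negativity of all such coefficients is \emph{exactly} the content of global generation: once one knows that $\chi^{-u(\sigma)}$ extends to a global section of $\OO(D_0)$, its divisor of zeroes $D_0+\operatorname{div}(\chi^{-u(\sigma)})=D(\sigma)$ is automatically effective. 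This is why the hypothesis is stated, and why the paper refers to Salberger's proof rather than to the specific inequalities $n_j>d_j$. Your argument, as written, does not use global generation for the second assertion at all, and that is where it fails.
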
\begin{proof}
Voir la d\'emonstration de \cite[Proposition 8.7.(a)]{Sa}.
\end{proof}

\begin{prop}
On suppose qu'il existe $ m\in \NN^{\ast} $ tel que $ \OO(mD_{0}) $ est engendr\'e par ses sections globales. Avec les notation ci-dessus, on a : \[ \forall \xx\in X_{0}(\QQ), \; \; H_{0}(\xx)=\prod_{\nu\in \Val(\QQ)}\sup_{\sigma\in \Delta_{\max}}|\xx^{D(\sigma)}|_{\nu}. \]
\end{prop}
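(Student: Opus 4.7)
The plan is to reduce first to the case where $\OO(D_0)$ itself is globally generated. If only $\OO(mD_0)$ is globally generated, one applies the claim to $mD_0$ (noting that $u(\sigma)$, hence $D(\sigma)$, scales linearly with $D_0$) and concludes via $H_{mD_0} = H_{D_0}^m$ by taking $m$-th roots. So I assume from now on that $\OO(D_0)$ itself is globally generated, which makes Lemme~\ref{effectif2} and Proposition~\ref{descriptC2} directly applicable.

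Given $\xx \in X_0(\QQ)$, I would pick $\sigma_0 \in \Delta_{\max}$ with $\xx^{\underline{\sigma_0}}\neq 0$, which exists by the very description of $X_0 \subset X_1$ recalled in the preceding remark. Setting $P = \pi(\xx) \in U_{\sigma_0}(\QQ)$, Lemme~\ref{effectif2} provides the global section $s := \chi^{-u(\sigma_0)}$ of $\OO(D_0)$ with $s(P) \neq 0$, so for each place $\nu$ Proposition~\ref{descriptC2} gives
\[
\|s(P)\|_{D_0,\nu}^{-1} \;=\; \sup_{\sigma \in \Delta_{\max}} |\chi^{u(\sigma_0)-u(\sigma)}(P)|_\nu .
\]
The core computation is to recast the character values as monomials in the coordinates of $\xx$. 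Since $\chi^{-u(\sigma)}$ is a global section of $\OO(D_0)$ with zero divisor $D(\sigma)$ (Lemme~\ref{effectif2}), its divisor as a rational function on $X$ is $D(\sigma) - D_0$. Pulling back under $\pi$, which sends each $D_i$ to the coordinate divisor $\{x_i = 0\}$ on $\AA^{n+r}$, one obtains
\[
\pi^*\chi^{-u(\sigma)}(\xx) \;=\; \xx^{D(\sigma) - D_0}, \qquad\text{hence}\qquad \chi^{u(\sigma_0)-u(\sigma)}(P) \;=\; \xx^{D(\sigma) - D(\sigma_0)}.
\]

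Substituting and taking the product over all places of $\QQ$,
\[
H_0(\xx) \;=\; \prod_\nu \|s(P)\|_{D_0,\nu}^{-1} \;=\; \Bigl(\prod_\nu |\xx^{D(\sigma_0)}|_\nu\Bigr)^{-1}\, \prod_\nu \sup_{\sigma}|\xx^{D(\sigma)}|_\nu .
\]
To conclude, I would observe that $\xx^{D(\sigma_0)} = \prod_{v_i \notin \sigma_0(1)} x_i^{a_i}$ involves only nonzero rationals $x_i$ by the choice of $\sigma_0$, so the product formula yields $\prod_\nu |\xx^{D(\sigma_0)}|_\nu = 1$ and gives the desired identity; in particular this shows \emph{a posteriori} that the right-hand side does not depend on the auxiliary $\sigma_0$. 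The main technical obstacle is the identification $\div(\chi^{-u(\sigma)}) = D(\sigma) - D_0$ together with its pullback to $X_0$, which requires reconciling the local Cartier description furnished by $u(\sigma)$ with the pullback formula $\pi^*\chi^u = \prod_i x_i^{\langle u, v_i\rangle}$ on the torsor; everything else is a matter of bookkeeping and one application of the product formula.
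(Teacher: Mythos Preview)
Your proof is correct and follows essentially the same route as the paper's: choose a reference cone $\sigma_0$ (the paper calls it $\tau$) with $\xx^{\underline{\sigma_0}}\neq 0$, take $s=\chi^{-u(\sigma_0)}$ as the local section, apply Proposition~\ref{descriptC2} to write $\|s(P)\|_{D_0,\nu}^{-1}$ as a supremum of character values, identify $\chi^{u(\sigma_0)-u(\sigma)}(P)=\xx^{D(\sigma)-D(\sigma_0)}$, and conclude via the product formula. The only noteworthy difference is that you make explicit the reduction from the hypothesis on $\OO(mD_0)$ to the globally generated case for $\OO(D_0)$ (via $D(\sigma)\mapsto mD(\sigma)$ and $H_{mD_0}=H_{D_0}^m$), a step the paper's own proof leaves implicit.
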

\begin{proof}
La d\'emonstration de cette proposition est directement inspir\'ee de la preuve de \cite[Proposition 10.14]{Sa}. On consid\`ere un point $ \xx\in X_{0}(\QQ) $, $ P=\pi(\xx) $, et $ \tau \in \Delta_{\max} $ tel que $ P\in U_{\tau} $. On a alors que $ \chi^{-u(\tau)} $ est une section locale d\'efinie en $ P\in U_{\tau} $, et  \[||\chi^{-u(\tau)}(P)||_{D_{0},\nu}  =\inf_{\sigma\in \Delta_{\max}}|\chi^{u(\sigma)-u(\tau)}|_{\nu}.\]

Remarquons que puisque $ P\in U_{\tau} $, d'apr\`es le lemme \ref{effectif2}, $ \xx^{D(\tau)}\neq 0 $ (\'etant donn\'e que $ D(\tau) $ est effectif \`a support contenu dans $ \bigcup_{v_{i}\notin \sigma(1)}D_{i} $), et que \[ \frac{\xx^{D(\sigma)}}{\xx^{D(\tau)}}=\chi^{u(\tau)-u(\sigma)}(P). \]Par cons\'equent, si $ s $ d\'esigne la section locale $ \chi^{-u(\tau)} $, on a alors :  \[||s(P)||_{D_{0},\nu}^{-1}  =\sup_{\sigma\in \Delta_{\max}}\left|\frac{\xx^{D(\sigma)}}{\xx^{D(\tau)}}\right|_{\nu}.\] De plus, par la formule du produit, on a \[ \prod_{\nu\in \Val(\QQ)}|\xx^{D(\tau)}|_{\nu}=1. \] D'o\`u le r\'esultat. 

\end{proof}

Par extension de la construction des vari\'et\'es toriques par les \'eventails, il est possible de construire une vari\'et\'e $ \tilde{X} $ sur $ \ZZ $ par recollement des ouverts affines $ \tilde{U}_{\sigma}=\Spec(\ZZ[S_{\sigma}]) $ (voir par exemple\;\cite[Chapitre 2]{M}). De la m\^eme mani\`ere que nous avons construit $ X_{0} $, on peut construire un $ \ZZ $-torseur universel sur la vari\'et\'e $ \tilde{X} $ (voir \cite[p. 207]{Sa}). On notera ce torseur $ \tilde{\pi} : \tilde{X}_{0}\ra \tilde{X}  $. On consid\`ere alors la proposition suivante (issue de \cite[Proposition 11.3]{Sa}) : 

\begin{prop}
Soit $ \xx\in X_{0}(\QQ)  $ qui se rel\`eve en un $ \ZZ $-point $ \tilde{\xx} $ de $ \tilde{X}_{0} $ (v\'erifiant alors $ \PGCD_{\sigma\in \Delta_{\max}}\tilde{\xx}^{\underline{\sigma}}=1 $). On a alors \[ H_{0}(\xx)=\sup_{\sigma\in \Delta_{\max}}|\tilde{\xx}^{D(\sigma)}|, \] o\`u $ |.| $ d\'esigne la valeur absolue usuelle sur $ \RR $. \end{prop}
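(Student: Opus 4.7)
Mon plan est de partir de la formule donn\'ee par la proposition pr\'ec\'edente,
$$H_0(\xx) = \prod_{\nu \in \Val(\QQ)} \sup_{\sigma \in \Delta_{\max}} |\xx^{D(\sigma)}|_\nu,$$
et de montrer que chaque facteur aux places finies vaut $1$, de sorte que le produit se r\'eduise \`a la seule contribution archim\'edienne.

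La premi\`ere \'etape consiste \`a remplacer $\xx$ par son rel\`evement entier $\tilde{\xx}$. Ces deux points de $X_0(\QQ)$ ont m\^eme image par $\pi$ et diff\`erent donc par l'action d'un $\tt\in(\QQ^\ast)^r$. Comme les diviseurs $D(\sigma)$ repr\'esentent tous la classe $[D_0]$ dans $\Pic(X)$ (voir la remarque suivant la d\'efinition des $D(\sigma)$), les mon\^omes $\xx^{D(\sigma)}$ se transforment sous $\tt$ par un unique scalaire $\chi(\tt)\in\QQ^\ast$ ind\'ependant de $\sigma$. La formule du produit appliqu\'ee \`a $\chi(\tt)$ donne alors
$$\prod_\nu \sup_\sigma|\xx^{D(\sigma)}|_\nu = \prod_\nu \sup_\sigma|\tilde{\xx}^{D(\sigma)}|_\nu,$$
ce qui permet de raisonner directement sur $\tilde{\xx}$.

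La deuxi\`eme \'etape est l'\'evaluation des facteurs aux places finies. Pour $p$ premier fix\'e, comme $\tilde{x}_i\in\ZZ$ on a $|\tilde{x}_i|_p\leqslant 1$, et comme le lemme\;\ref{effectif2} garantit que $D(\sigma)$ est effectif \`a support contenu dans $\bigcup_{v_i\notin\sigma(1)} D_i$, on obtient d\'ej\`a $|\tilde{\xx}^{D(\sigma)}|_p\leqslant 1$ pour tout $\sigma$. La condition de co-primalit\'e $\PGCD_\sigma\tilde{\xx}^{\underline{\sigma}}=1$ fournit, r\'eciproquement, un $\sigma\in\Delta_{\max}$ tel que $p\nmid\prod_{v_i\notin\sigma(1)}\tilde{x}_i$; pour ce $\sigma$, toutes les variables $\tilde{x}_i$ intervenant dans $\tilde{\xx}^{D(\sigma)}$ sont alors des $p$-unit\'es (gr\^ace \`a l'inclusion de support), d'o\`u $|\tilde{\xx}^{D(\sigma)}|_p=1$. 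Ainsi $\sup_\sigma|\tilde{\xx}^{D(\sigma)}|_p=1$ \`a chaque place finie, et le produit se r\'eduit au seul facteur archim\'edien, \'egal \`a $\sup_\sigma|\tilde{\xx}^{D(\sigma)}|$.

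L'obstacle principal me semble \^etre l'articulation propre de la premi\`ere \'etape, \`a savoir v\'erifier que les mon\^omes $\xx^{D(\sigma)}$ se transforment bien tous par le m\^eme caract\`ere du tore; c'est une cons\'equence de l'\'egalit\'e $[D(\sigma)]=[D_0]$ dans $\Pic(X)$, mais elle requiert une petite v\'erification \`a partir de la description explicite de l'action par les entiers $a_{i,j}$. Une fois ce point assur\'e, le reste de la preuve est purement formel \`a partir de la formule du produit et de l'effectivit\'e des $D(\sigma)$ fournie par le lemme\;\ref{effectif2}.
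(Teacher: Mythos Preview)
Your proof is correct and takes essentially the same approach as the paper. Your second step coincides with the paper's argument: support inclusion from the effectivity lemma plus the coprimality $\PGCD_\sigma\tilde{\xx}^{\underline{\sigma}}=1$ force $\sup_\sigma|\tilde{\xx}^{D(\sigma)}|_p=1$ at every finite place, so only the archimedean factor survives.

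The only difference is your first step, which is superfluous here: ``se rel\`eve en'' means that $\xx$ itself comes from $\tilde X_0(\ZZ)$ via the base change $\tilde X_0\times_\ZZ\QQ=X_0$, so $\xx$ and $\tilde{\xx}$ are literally the same $(n+r)$-tuple, not merely in the same $T_{\NS}$-orbit. The paper therefore applies the height formula directly to $\tilde{\xx}$. Your torus-invariance argument is nevertheless correct and would be needed if one wanted to show the formula is constant along fibres of $\pi$, but for this proposition it is not required.
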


\begin{proof}
D'apr\`es le lemme\;\ref{effectif2}, pour tout $ \sigma\in \Delta_{\max} $, $ D(\sigma) $ a un support contenu dans $ \bigcup_{v_{i}\notin \sigma(1)}D_{i} $. La condition $ \PGCD\tilde{\xx}^{\underline{\sigma}}=1 $ implique donc que $ \PGCD\tilde{\xx}^{D(\sigma)}=1 $. Par cons\'equent, on a $ \sup_{\tau\in \Delta_{\max}}|\tilde{\xx}^{D(\tau)}|_{p}=1 $, et ainsi \[ H_{0}(\xx)=\prod_{\nu\in \Val(\QQ)}\sup_{\tau\in \Delta_{\max}}|\tilde{\xx}^{D(\tau)}|_{\nu}=\sup_{\sigma\in \Delta_{\max}}|\tilde{\xx}^{D(\sigma)}|. \] \end{proof}
 
Plut\^ot que de compter les $ \QQ $-points de hauteur born\'ee de $ X $, nous allons compter les $ \ZZ $-points de $ \tilde{X}_{0} $ en utilisant le lemme ci-dessous : 

\begin{lemma}\label{zpoints2} Pour $ m\in \NN $, soient \[ c(m)=\Card\{ P\in T(\QQ)\; |\; H(P)=m\}, \] \[ c_{0}(m)=\Card\{ P\in \tilde{X}_{0}\cap T_{0}(\QQ)\; |\; H_{0}(P_{0})=m\}  \] (o\`u $ T_{0}=\pi^{-1}(T) $). Alors $ c(m)=c_{0}(m)/2^{r} $. \end{lemma}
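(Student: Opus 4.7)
Le plan consiste à exhiber, pour chaque $m$, l'application naturelle
\[
\Phi : \{\tilde{\xx}\in \tilde{X}_0\cap T_0(\QQ) : H_0(\tilde{\xx})=m\} \longrightarrow \{P \in T(\QQ) : H(P)=m\}
\]
donnée par la restriction du morphisme $\tilde{\pi}$, et à montrer qu'elle est surjective, à fibres de cardinal exactement $2^r$. La compatibilité de $\Phi$ avec les hauteurs est déjà contenue dans la proposition précédente.

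Pour la surjectivité, je partirais d'un point $P\in T(\QQ)$ arbitraire et utiliserais le fait que le torseur universel $\pi : X_0 \to X$ est trivial sur $\QQ$ (par Hilbert 90, puisque $H^1(\QQ,\GM)=0$) pour obtenir un premier relèvement $\xx\in T_0(\QQ)$. L'action du tore $(\QQ^{\ast})^r$ sur la fibre étant simplement transitive, il s'agit alors de montrer qu'il existe $\tt\in (\QQ^{\ast})^r$ tel que $\tt\cdot\xx$ soit à coordonnées entières et vérifie la condition de coprimalité $\PGCD_{\sigma\in \Delta_{\max}}(\tt\cdot\xx)^{\underline{\sigma}} = 1$. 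C'est là le point délicat : il faut exploiter le fait que $[D_{n+1}],\ldots,[D_{n+r}]$ engendrent $\Pic(X)$ et que le c\^one effectif est simplicial, ce qui garantit que les exposants $b_j(\sigma)=\sum_{i\notin \sigma(1)}a_{i,j}$ permettent, via le choix de $\tt$, d'ajuster finement chaque quotient $\tilde{\xx}^{\underline{\sigma}}/\tilde{\xx}^{\underline{\tau}}$ et ainsi de produire un représentant entier primitif.

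Pour le calcul des fibres, je considèrerais deux relèvements entiers coprimes $\tilde{\xx}$, $\tilde{\xx}'$ d'un même $P$ : ils diffèrent par un unique élément $\tt\in (\QQ^{\ast})^r$ (l'action sur la fibre étant simplement transitive). L'intégralité simultanée de $\tilde{\xx}$ et de $\tilde{\xx}'=\tt\cdot\tilde{\xx}$ combinée à la condition de coprimalité force $\tt$ à appartenir à $(\ZZ^{\ast})^r = \{\pm 1\}^r$; réciproquement, chaque $\tt \in \{\pm 1\}^r$ fournit bien un autre relèvement entier coprime, et les $2^r$ points ainsi obtenus sont distincts puisque $\tilde{\xx}$ est dans $T_0$, donc à coordonnées toutes non nulles. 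L'invariance de la hauteur sous cette action est immédiate : chaque coordonnée $x_i$ est multipliée par $\prod_j t_j^{a_{i,j}} \in \{\pm 1\}$, et la formule $H_0(\tilde{\xx})=\sup_{\sigma\in \Delta_{\max}} |\tilde{\xx}^{D(\sigma)}|$ ne fait intervenir que des valeurs absolues. On en déduit $c_0(m)=2^{r}c(m)$, ce qui est la conclusion souhaitée. L'obstacle principal restera donc bien l'étape de surjectivité, où l'on doit traduire explicitement en termes de monômes $\underline{\sigma}$ le caractère universel du torseur.
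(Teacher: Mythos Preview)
Your approach is correct and is essentially the standard one: the paper itself does not reprove this lemma but simply defers to \cite[Lemme 11.4.a)]{Sa}, where Salberger carries out exactly the argument you outline (surjectivity of $\tilde{X}_0(\ZZ)\cap T_0(\QQ)\to T(\QQ)$ via the splitting of the $T_{\NS}$-torsor over $\QQ$ and an explicit normalisation to a primitive integral lift, followed by the identification of each fibre with the $2$-torsion $\{\pm1\}^r$ of $T_{\NS}(\QQ)$). One small terminological point: saying that ``le torseur universel est trivial sur $\QQ$'' is slightly misleading---what you use (and what suffices) is that every $\QQ$-point of $T$ lifts to $T_0(\QQ)$, which follows from $H^1(\QQ,\GM^r)=0$; the torsor $X_0\to X$ is not globally trivial as a scheme over $X$.
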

\begin{proof} Voir la d\'emonstration de \cite[Lemme 11.4.a)]{Sa}. \end{proof}

Ainsi, \'etant donn\'e un ouvert de Zariski $ V $ de $ X $, si l'on note \[ \mathcal{N}_{0,V}(B)=\Card\{P_{0}\in \tilde{Y}_{0}(\ZZ)\cap T_{0}(\QQ)\cap \pi^{-1}(V)\; |\; H_{0}(P_{0})\leqslant B \}\] (o\`u $ \tilde{Y}_{0} $ est l'hypersurface de $ \tilde{X}_{0} $ correspondant \`a l'hypersurface $ Y $ de $ X $), on a alors \[ \mathcal{N}_{V}(B)=\mathcal{N}_{0,V}(B)/2^{r}. \] Nous chercherons donc dor\'enavant \`a \'evaluer $ \mathcal{N}_{0,V}(B) $.
\`A partir d'ici, pour les g\'en\'eralit\'es sur les vari\'et\'es toriques et sur la construction de la hauteur sur l'hypersurface $ Y $ nous renvoyons le lecteur aux sections\;\ref{premprop} et\;\ref{hauteur}. Nous adopterons en particulier les notations de ces sections dans tout ce qui va suivre. \\

Dans le cas pr\'esent, $ \tilde{X}_{0}(\ZZ)\subset \ZZ^{n+r} $ peut \^etre d\'ecrit comme l'ensemble des $ (n+r) $-uplets d'entiers not\'es $ \xx $, tels que (cf. \cite[11.5]{Sa}) : 
\begin{equation}\label{condexistence3}
\exists \sigma \in \Delta_{\max} \; | \; \ \xx^{ \underline{\sigma}}\neq 0,
\end{equation}  

 \begin{equation}\label{condprim3}
\PGCD_{\sigma \in \Delta_{\max}}(\xx^{\underline{\sigma}})=1,
\end{equation} o\`u \begin{equation}
\xx^{ \underline{\sigma}}=\prod_{i\notin \sigma(1)}x_{i}.
\end{equation}

Nous allons \`a pr\'esent exprimer la hauteur $ H_{0} $ de fa\c{c}on plus explicite. Rappelons avant tout que, d'apr\`es\;\eqref{formuleaij2}, nous avons pour tout $ i\in \{1,...,n+r\} $,  \[ [D_{i}]=\sum_{j=1}^{r}a_{i,j}[D_{n+j}] \] avec $ a_{i,j}\in \NN $ pour tous $ i,j $. On a alors \[ [K_{X}^{-1}]=\sum_{i=1}^{n+r}[D_{i}]=\sum_{j=1}^{r}\underbrace{\left(\sum_{i=1}^{n+r}a_{i,j}\right)}_{n_{j}}[D_{n+j}]. \]
On remarque par ailleurs que l'action du tore de N\'eron-Severi $ T_{\NS} $ sur les points $ \xx=(x_{i})_{i\in \{1,...,n+r\}} $ du torseur universel $ X_{0} $ est donn\'ee par : \[ \forall \tt=(t_{1},...,t_{r})\in T_{\NS}, \; \tt.\xx=\left(\left(\prod_{j=1}^{r}t_{j}^{a_{i,j}}\right)x_{i}\right)_{i\in \{1,...,n+r\}}. \] Autrement dit, la variable $ x_{i} $ a pour poids $ (a_{i,1},a_{i,2},...,a_{i,r}) $. Consid\'erons \`a pr\'esent une hypersurface $ Y $ de $ X $ donn\'ee par une section globale $ s $ de $ \OO(D) $ o\`u $ D = \sum_{j=1}^{r}d_{j}[D_{n+j}] $. Une telle section $ s $ se rel\`eve en une unique fonction polynomiale $ F:\tilde{X}_{0}\ra \RR $ \`a coefficients rationnels v\'erifiant, pour tout $ \xx\in \QQ^{n+r} $ :  \[ \forall \tt=(t_{1},...,t_{r})\in T_{\NS}, \; \\ F(\tt.\xx) = \left(\prod_{j=1}^{r}t_{j}^{d_{j}}\right)F(\xx). \]   On a par ailleurs la proposition ci-dessous 
\begin{prop}
Si $ V $ est une intersection compl\`ete lisse dans une vari\'et\'e torique compl\`ete et lisse, et si de plus la restriction de $ \Pic (X) $ \`a $ \Pic(V) $ est un isomorphisme, alors le torseur universel au dessus de $ V $ est obtenu en prenant l'inverse de $ V $ dans le torseur universel au-dessus de $ X $.
\end{prop}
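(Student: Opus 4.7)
L'id\'ee est d'exploiter la caract\'erisation des torseurs universels par leur \emph{type} (au sens de Colliot-Th\'el\`ene--Sansuc, cf.\;\cite[\S 3]{Sa}), selon laquelle un torseur $ \mathcal{T}\ra V $ sous un tore $ S $ est universel si et seulement si $ S=T_{\NS}(V)=\Hom(\Pic(V),\mathbb{G}_{m}) $ et si le type associ\'e $ \Pic(V)\ra \Pic(V) $ est l'identit\'e. Je noterais $ \pi : X_{0}\ra X $ le torseur universel de la vari\'et\'e torique $ X $, qui est un torseur sous $ T_{\NS}(X) $ de type $ \Id_{\Pic(X)} $, puis poserais $ V_{0}=\pi^{-1}(V) $, et montrerais que $ V_{0}\ra V $ est le torseur universel de $ V $.

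La d\'emonstration se d\'eroulerait en trois \'etapes. \emph{Premi\`erement}, par changement de base le long de l'immersion ferm\'ee $ V\hookrightarrow X $, le morphisme $ V_{0}\ra V $ h\'erite d'une structure de torseur sous $ T_{\NS}(X) $. \emph{Deuxi\`emement}, l'hypoth\`ese d'isomorphisme $ \Pic(X)\xrightarrow{\sim}\Pic(V) $ induit par dualit\'e un isomorphisme $ T_{\NS}(X)\xrightarrow{\sim}T_{\NS}(V) $, ce qui permet d'interpr\'eter canoniquement $ V_{0}\ra V $ comme un torseur sous $ T_{\NS}(V) $. \emph{Troisi\`emement}, par fonctorialit\'e du type vis-\`a-vis des immersions ferm\'ees, le type de $ V_{0}\ra V $ est la compos\'ee \[ \Pic(X)\xrightarrow{\Id}\Pic(X)\xrightarrow{\text{rest.}}\Pic(V), \] qui s'identifie \`a l'identit\'e de $ \Pic(V) $ gr\^ace \`a l'isomorphisme suppos\'e. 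L'unicit\'e du torseur universel \`a isomorphisme pr\`es permet alors de conclure.

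Le principal obstacle me semble \^etre la v\'erification des conditions d'int\'egrit\'e g\'eom\'etrique requises dans la d\'efinition d'un torseur universel (notamment la connexit\'e de $ V_{0} $). Celle-ci devrait r\'esulter de la lissit\'e de $ V $, de la lissit\'e de $ \pi $, et d'un argument de type Lefschetz utilisant le fait que $ V $ est une intersection compl\`ete lisse dans $ X $ (l'existence de telles vari\'et\'es n'est d'ailleurs pas toujours garantie sans hypoth\`ese additionnelle sur la dimension). L'existence m\^eme du torseur universel de $ V $ sur $ \QQ $, quant \`a elle, d\'ecoule automatiquement de l'existence de celui de $ X $ : tout rel\`evement dans $ X_{0}(\QQ) $ d'un point rationnel de $ V $ fournit un point rationnel de $ V_{0} $, ce qui suffit.
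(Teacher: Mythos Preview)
Your approach is correct and is precisely the standard argument via the Colliot-Th\'el\`ene--Sansuc theory of torsor types. The paper, however, does not give a proof at all: it simply refers to \cite[Remarque 2.1.2 et Exemple 2.1.16]{Pe2}. Your sketch is essentially a reconstruction of what one finds in that reference (base change of the torsor along $V\hookrightarrow X$, identification of the N\'eron--Severi tori via the hypothesis $\Pic(X)\simeq\Pic(V)$, and functoriality of the type), so there is nothing to compare methodologically. Your caveat about geometric integrality of $V_{0}$ is well placed but not a genuine obstacle here: since $\pi$ is a $T_{\NS}$-torsor it is smooth with geometrically connected fibres, and $V$ is smooth and geometrically connected (this follows from the complete intersection hypothesis in the intended dimension range), so $V_{0}=\pi^{-1}(V)$ is automatically smooth and geometrically integral.
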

\begin{proof}
Voir \cite[Remarque 2.1.2 et Exemple 2.1.16]{Pe2}
\end{proof}
En particulier, dans le cas pr\'esent, d'apr\`es le th\'eor\`eme de Lefschetz, l'hypersurface de $ \tilde{X}_{0} $ d\'efinie par l'annulation de la fonction $ F $ correspond alors au torseur universel au-dessus de $ Y $. Par cons\'equent, en utilisant le lemme \ref{zpoints2}, les $ \QQ $-points de $ Y $ correspondent, modulo l'action des points de torsion de $ T_{\NS} $, aux $ \ZZ $-points $ \xx $  de $ \tilde{X}_{0} $ tels que $ F(\xx)=0 $.
\begin{rem}
Dans tout ce qui va suivre on supposera que l'hypersurface $ Y $ d\'efinie par $ F(\xx)=0 $ est lisse. En fait cette propri\'et\'e est vraie pour un ouvert dense de Zariski de coefficients $ (\alpha_{u})_{u\in P_{D}\cap \ZZ^{n} } $.
\end{rem}
Rappelons que l'on souhaite \'evaluer \begin{multline*} \mathcal{N}_{0,V}(B)  =\Card\{P_{0}\in \tilde{Y}_{0}(\ZZ)\cap T_{0}(\QQ)\cap \pi^{-1}(V)\; |\; H_{0}(P_{0})\leqslant B \} \\ =\Card\{\xx\in \ZZ^{n+r}\cap \pi^{-1}(V) \; |\; \forall i\in \{1,...,n+r\}\; x_{i}\neq 0, \\ \; F(\xx)=0, \; \PGCD_{\sigma\in \Delta_{\max}}(\xx^{\underline{\sigma}})=1, \; H_{0}(\xx)\leqslant B \} \end{multline*} pour un certain ouvert $ V $. Quitte \`a appliquer une inversion de M\"{o}bius appropri\'ee (cf. \cite[\S 11]{Sa} et section\;\ref{sommemobius}), on se ram\`ene \`a \'evaluer, pour $ \ee=(e_{1},...,e_{n+r})\in \NN^{n+r} $ de : \begin{multline*}
N_{\ee,U}(B)=\Card\{\xx\in (\ZZ\setminus\{0\})^{n+r}\cap U \; |\; \forall i\in \{1,...,n+r\}\; e_{i}|x_{i}, \\ \; F(\xx)=0, \; H_{0}(\xx)\leqslant B \},
\end{multline*} 
(o\`u $ U=\pi^{-1}(V) $ est l'ouvert de $ X_{1} $ cit\'e dans la remarque\;\ref{remouvU}) ce qui revient encore \`a estimer pour tout $ \sigma\in \Delta_{\max} $ (cf. \cite[\S 9, 11]{Sa}) : \begin{multline*}
N_{\ee,\sigma,U}(B)=\Card\{\xx\in (\ZZ\setminus\{0\})^{n+r}\cap U\cap C_{0,\sigma}(\RR) \; |\; \forall i\in \{1,...,n+r\}\; e_{i}|x_{i}, \\ \; F(\xx)=0, \; H_{0}(\xx)\leqslant B \},
\end{multline*} o\`u $ C_{0,\sigma}(\RR)=\pi^{-1}(C_{\sigma,\infty}(\RR)) $.\\

Consid\'erons un c\^one maximal $ \sigma\in\Delta_{\max}  $ engendr\'e par des \'el\'ements $ (v_{i})_{i\notin\{i_{1},...,i_{r}\}} $ (qui forment alors une base de $ \ZZ^{n} $ puisque $ \Delta $ est suppos\'e lisse et complet) pour $ i_{1},...,i_{r}\in \{1,...,n+r\} $ fix\'es. Par ailleurs, pour tout $ i\in \{1,...,n+r\}\setminus \{i_{1},...,i_{r}\} $, on note $ u_{i} $ l'unique vecteur tel que $ \langle u_{i},v_{k}\rangle=\delta_{i,k} $ pour tout $ k\notin\{i_{1},...,i_{r}\} $, et on prend d'autre part $ u_{i}=\0 $ pour tout $ i \in \{i_{1},...,i_{r}\} $. On pose alors pour tout $ i $ : \[ E(i)= D_{i}-\sum_{k=1}^{n+r}\langle u_{i},v_{k}\rangle D_{k}. \] On a en particulier $ E(i_{j})=D_{i_{j}} $ pour tout $ j\in \{1,...,r\} $. 
\begin{rem}\label{reminv3}
D\'efinissons la matrice $ (\beta_{n+j,k})_{1\leqslant j,k \leqslant r}\in \MM_{r}(\ZZ) $ comme la matrice inverse de $ (a_{i_{l},j})_{1\leqslant l,j \leqslant r} $. On remarque alors que pour tout $ j\in\{1,...,r\} $, \[ E(n+j)=\sum_{k=1}^{r}\beta_{n+j,k}D_{i_{k}}. \]
\end{rem}
\begin{rem}\label{remE(i)3}
Si $ (e_{1},...,e_{n+r}) $ d\'esigne la base duale de $ (D_{1},...,D_{n+r}) $ (vue comme une base de $ \Div_{T}(X) $), les diviseurs $ E(i) $ sont caract\'eris\'es par $ [E(i)]=[D_{i}] $ et $ \langle E(i),e_{k}\rangle=0 $ pour tout $ k\in \sigma(1) $.
\end{rem}
On a alors que (par la description de $ C_{\sigma,\infty} $ donn\'ee par la proposition\;\ref{descriptC2}) : \[ C_{0,\sigma}(\RR)=\pi^{-1}(C_{\sigma,\infty}(\RR))=\left\{ \xx\in \RR^{n+r}\; |\;  \forall i\in \{1,...,n+r\}, \; |x_{i}|\leqslant \left|\xx^{E(i)}\right|\right\}. \] On observe d'autre part que, pour tout $ \xx\in C_{0,\sigma}(\RR) $, \[ \max_{\tau\in \Delta_{\max}}|\xx^{D(\tau)}|=|\xx^{D(\sigma)}|. \] Par cons\'equent : \begin{multline*} N_{\ee,\sigma,U}(B)= \left\{ \xx\in (\ZZ\setminus\{0\})^{n+r}\cap U\; | \;  \forall i\in \{1,...,n+r\}\; e_{i}|x_{i}, \;  \right. \\ \left. |x_{i}|\leqslant \left|\xx^{E(i)}\right|,\; F(\xx)=0 \; \et \; \left|\xx^{D(\sigma)}\right|\leqslant B \right\}. \end{multline*}
Remarquons que, $ D(\sigma) $ \'etant caract\'eris\'e par $ [D(\sigma)]=\sum_{j=1}^{r}(n_{j}-d_{j})[D_{n+j}] $ et $ \langle D(\sigma),e_{k}\rangle=0 $ pour tout $ k\in \sigma(1) $, on a, d'apr\`es la remarque\;\ref{remE(i)3} : \begin{equation*}  D(\sigma)=\sum_{j=1}^{r}(n_{j}-d_{j})E(n+j). \end{equation*}
Ainsi, \[ \xx^{D(\sigma)}=\prod_{j=1}^{r}\left(\xx^{E(n+j)}\right)^{n_{j}-d_{j}}. \]
On remarque par ailleurs que, toujours d'apr\`es la remarque\;\ref{remE(i)3}  : \[\forall i\in \{1,...,n+r\}, \; E(i)=\sum_{j=1}^{r}a_{i,j}E(n+j). \] On a donc \[ \xx^{E(i)}=\prod_{j=1}^{r}\left(\xx^{E(n+j)}\right)^{a_{i,j}}. \] Ainsi,   \begin{multline*} N_{\ee,\sigma,U}(B)= \left\{ \xx\in (\ZZ\setminus\{0\})^{n+r}\cap U\; | \;  \forall i\in \{1,...,n+r\}\; e_{i}|x_{i}, \right. \\ \left.\; |x_{i}|\leqslant \prod_{j=1}^{r}\left|\xx^{E(n+j)}\right|^{a_{i,j}}, \; F(\xx)=0, \; \prod_{j=1}^{r}\left|\xx^{E(n+j)}\right|^{n_{j}-d_{j}}\leqslant B \right\}, \end{multline*}
que nous pouvons encore r\'ecrire : \begin{multline*} N_{\ee,\sigma,U}(B)= \left\{ \xx\in (\ZZ\setminus\{0\})^{n+r} \; | \;  \ee.\xx\in U, \;F(\ee.\xx)=0, \;\right. \\ \left.  \forall i\in \{1,...,n+r\},  \; |x_{i}|\leqslant \frac{1}{e_{i}}\prod_{j=1}^{r}\left|(\ee.\xx)^{E(n+j)}\right|^{a_{i,j}}, \; \prod_{j=1}^{r}\left|(\ee.\xx)^{E(n+j)}\right|^{n_{j}-d_{j}}\leqslant B \right\}, \end{multline*}
o\`u l'on a pos\'e $ \ee.\xx=(e_{1}x_{1},....,e_{n+r}x_{n+r}) $. Cette r\'ecriture de $ N_{\ee,\sigma,U}(B)$ en termes de $ (\ee.\xx)^{E(n+j)} $ s'av\`erera cruciale pour ce qui va suivre. \\

Posons \`a pr\'esent pour tout $ (k_{1},...,k_{r})\in \NN^{r} $ \begin{multline*} \underline{h}_{\ee,U}(k_{1},...,k_{r})=\Card\left\{ 
 \xx\in (\ZZ\setminus\{0\})^{n+r} \; | \;  \ee.\xx\in U, \;F(\ee.\xx)=0, \;\right. \\ \left. \forall j\in \{1,...,r\}, \;  \left\lfloor|(\ee.\xx)^{E(n+j)}|\right\rfloor+1=k_{j}, \;  \forall i\in \{1,...,n+r\},  \; |x_{i}|\leqslant \frac{1}{e_{i}}\prod_{j=1}^{r}k_{j}^{a_{i,j}} \right\},
\end{multline*} et 
 \begin{multline*}\overline{h}_{\ee,U}(k_{1},...,k_{r})=\Card\left\{ 
 \xx\in (\ZZ\setminus\{0\})^{n+r} \; | \;  \ee.\xx\in U, \;F(\ee.\xx)=0, \;\right. \\ \left. \forall j\in \{1,...,r\}, \;  \left\lfloor|(\ee.\xx)^{E(n+j)}|\right\rfloor=k_{j}, \;  \forall i\in \{1,...,n+r\},  \; |x_{i}|\leqslant \frac{1}{e_{i}}\prod_{j=1}^{r}(k_{j}+1)^{a_{i,j}} \right\},
\end{multline*}
(on peut observer que $ \overline{h}_{\ee,U}(\kk)=\underline{h}_{\ee,U}(\kk-\1) $) et on remarque alors que : \[ \sum_{\prod_{j=1}^{r}k_{j}^{n_{j}-d_{j}}\leqslant B}\underline{h}_{\ee,U}(k_{1},...,k_{r})\leqslant N_{\ee,\sigma,U}(B) \leqslant \sum_{\prod_{j=1}^{r}k_{j}^{n_{j}-d_{j}}\leqslant B}\overline{h}_{\ee,U}(k_{1},...,k_{r}). \] 
En appliquant un analogue du r\'esultat de Blomer et Br\"{u}dern sur les sommation hyperboliques (cf. \cite{BB}), nous allons montrer qu'il existe une constante $ C_{\sigma,\ee} $ et des r\'eels $ \beta_{1},...,\beta_{n+r} $ tels que \[ \sum_{\prod_{j=1}^{r}k_{j}^{n_{j}-d_{j}}\leqslant B}\underline{h}_{\ee,U}(k_{1},...,k_{r})=C_{\sigma,\ee}B\log(B)^{r-1}+O\left(\left(\prod_{i=1}^{n+r}e_{i}^{\beta_{i}}\right) B\log(B)^{r-2}\right), \] \[ \sum_{\prod_{j=1}^{r}k_{j}^{n_{j}-d_{j}}\leqslant B}\overline{h}_{\ee,U}(k_{1},...,k_{r})=C_{\sigma,\ee}B\log(B)^{r-1}+O\left(\left(\prod_{i=1}^{n+r}e_{i}^{\beta_{i}}\right) B\log(B)^{r-2}\right), \]
et nous en d\'eduirons donc que \[ N_{\ee,\sigma,U}(B)=C_{\sigma,\ee}B\log(B)^{r-1}+O\left(\left(\prod_{i=1}^{n+r}e_{i}^{\beta_{i}}\right) B\log(B)^{r-2}\right). \] 
Pour cela, il est n\'ecessaire de calculer, pour tout $ J\subset \{1,...,r\} $, pour $ (t_{j})_{j\in \{1,...,r\}\setminus J}\in \NN^{r-\Card J} $ et pour $ (k_{j})_{j\in J} $ fix\'es, une formule asymptotique de la somme \[ \sum_{\forall j\notin J, \; k_{j}\leqslant t_{j}}h_{\ee}(k_{1},...,k_{r}). \] Nous allons commencer par traiter, dans la section suivante, le cas o\`u $ J=\emptyset $ avec des bornes $ t_{1},...,t_{r} $ \og relativement proches \fg.

\section{Premi\`ere \'etape}

\subsection{Pr\'eliminaires}

En pr\'eliminaire aux in\'egalit\'es de Weyl dans ce cadre plus g\'en\'eral, nous construisons, dans cette section, des op\'erateurs alg\'ebriques que nous appliquerons au polyn\^ome $ F $.\\

\`A chaque entier $ i\in \{1,...,n+r\} $, nous associons un poids $ \aa_{i}=(a_{i,1},...,a_{i,r})\in \NN^{r} $ ainsi qu'un ensemble fini $ E_{i} $. Posons \[ \CCC=\{(j,k)\; |\; j\in \{1,...,r\}, \; k\neq 0,\; \exists i \in \{1,...,n+r\}, \; a_{i,j}=k\}, \] et fixons un ensemble de degr\'es $ (t_{j,k})_{(j,k)\in \CCC}\in \NN^{\CCC} $. Posons alors \[ I_{0}=\{i\in \{1,...,n+r\}\; |\; \forall j\in \{1,...,r\}, \; t_{j,a_{i,j}}\neq 0 \ou a_{i,j}=0\}, \] \[ \CCC_{0}=\{(j,k)\; |\; j\in \{1,...,r\}, \; k\neq 0,\; \exists i \in I_{0}, \; a_{i,j}=k\}\subset \CCC, \] \[ \tt=(t_{j,k})_{(j,k)\in \CCC_{0}}, \] \[ \forall (j,k)\in \CCC_{0}, \; J(j,k)=\{i \in I_{0} \; |\; a_{i,j}=k \}. \] On consid\`ere alors un ensemble de variables $ \XX=(x_{i}^{(l)})_{(i,l)\in \EE} $, o\`u $ \EE=\{(i,l)\; |\; i\in I_{0}, \; l\in E_{i}\} $. Pour tout $ (j,k)\in \CCC_{0} $, notons \[ \EE(j,k)=\{(i,l)\in \EE \; |\; i\in J(j,k)\}=\{(i,l)\in \EE \; |\; a_{i,j}=k\}. \] Consid\'erons un polyn\^ome $ \FFF\in \ZZ[\XX] $. Pour tout $ (j,k)\in \CCC_{0} $, on d\'efinit l'op\'erateur $ \Delta_{(j,k)}^{(t_{j,k})} $ par : \begin{multline*} \Delta_{(j,k)}^{(t_{j,k})}\FFF\left( (x_{i}^{(l)})_{(i,l)\notin \EE(j,k)},(x_{i}^{(l,h)})_{\substack{(i,l)\in \EE(j,k) \\ h\in \{1,...,t_{j,k}\}  }}\right) \\ =\FFF_{t_{j,k}+1}\left( (x_{i}^{(l)})_{(i,l)\notin \EE(j,k)},(x_{i}^{(l,h)})_{\substack{(i,l)\in \EE(j,k) \\ h\in \{1,...,t_{j,k}+1\}  }}\right) \\ -\FFF_{t_{j,k}}\left( (x_{i}^{(l)})_{(i,l)\notin \EE(j,k)},(x_{i}^{(l,h)})_{\substack{(i,l)\in \EE(j,k) \\ h\in \{1,...,t_{j,k}\}  }}\right), \end{multline*} o\`u, pour tout $ t\in \NN $ : \begin{multline*}  \FFF_{t}\left( (x_{i}^{(l)})_{(i,l)\notin \EE(j,k)},(x_{i}^{(l,h)})_{\substack{(i,l)\in \EE(j,k) \\ h\in \{1,...,t\}  }}\right) \\ = \sum_{(\varepsilon_{1},...,\varepsilon_{t})\in \{0,1\}^{t}}(-1)^{\sum_{h=1}^{t}\varepsilon_{h}}\FFF\left( (x_{i}^{(l)})_{(i,l)\notin \EE(j,k)},(\sum_{h=1}^{t}\varepsilon_{h}x_{i}^{(l,h)})_{\substack{(i,l)\in \EE(j,k)  }}\right). \end{multline*}

\begin{rem}
Pour un $ j $ fix\'e, les op\'erateurs $ \Delta_{(j,k)}^{(t_{j,k)}} $ commutent deux \`a deux.  
\end{rem}

\begin{prop}\label{propprelim13}
Consid\'erons un ensemble d'entiers naturels $ \dd=(d_{j,k})_{(j,k)\in \CCC_{0}}\in \NN^{\CCC_{0}} $ et un polyn\^ome $ \GG_{\dd}((x_{i}^{(l)})_{(i,l)\in \EE}) $ homog\`ene de degr\'e $ d_{j,k} $ en les variables $ (x_{i}^{(l)})_{(i,l)\in \EE(j,k)} $. Alors pour tout $ (j_{0},k_{0})\in \CCC_{0} $ on a : \begin{enumerate}
\item Si $ d_{j_{0},k_{0}}<t_{j_{0},k_{0}} $, alors $ \Delta_{(j_{0},k_{0})}^{(t_{j_{0},k_{0}})}\GG_{\dd}=0 $,
\item Si $ d_{j_{0},k_{0}}=t_{j_{0},k_{0}} $, alors \begin{multline*} \Delta_{(j_{0},k_{0})}^{(t_{j_{0},k_{0}})}\GG_{\dd}\left( (x_{i}^{(l)})_{(i,l)\notin \EE(j_{0},k_{0})},(x_{i}^{(l,h)})_{\substack{(i,l)\in \EE(j_{0},k_{0}) \\ h\in \{1,...,t_{j_{0},k_{0}}\}  }}\right) \\ =-\GG_{\dd,t_{j_{0},k_{0}}}\left( (x_{i}^{(l)})_{(i,l)\notin \EE(j_{0},k_{0})},(x_{i}^{(l,h)})_{\substack{(i,l)\in \EE(j_{0},k_{0}) \\ h\in \{1,...,t_{j_{0},k_{0}}\}  }}\right) \end{multline*} et est lin\'eaire en chaque $ (x_{i}^{(l,h)})_{\substack{(i,l)\in \EE(j_{0},k_{0})}} $ pour tout $ h\in \{1,...,t_{j_{0},k_{0}}\} $ si $ \dd=\tt $.
\end{enumerate}
De plus, quel que soit $ \dd $, $ \Delta_{(j_{0},k_{0})}^{(t_{j_{0},k_{0}})}\GG_{\dd}\left( (x_{i}^{(l)})_{(i,l)\notin \EE(j_{0},k_{0})},(x_{i}^{(l,h)})_{\substack{(i,l)\in \EE(j_{0},k_{0}) \\ h\in \{1,...,t_{j_{0},k_{0}}\}  }}\right) $ est homog\`ene de degr\'e $ d_{j,k} $ en les variables $ (x_{i}^{(l)})_{(i,l)\in \EE(j,k)} $ pour tout $ (j,k)\neq (j_{0},k_{0}) $ et homog\`ene de degr\'e $ d_{j_{0},k_{0}} $ en les variables $ (x_{i}^{(l,h)})_{\substack{(i,l)\in \EE(j_{0},k_{0})\\ h\in\{1,...,t_{j_{0},k_{0}}\} }} $. 
\end{prop}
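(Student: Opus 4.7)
My plan is to reduce the proposition to a single combinatorial identity about alternating sums over the cube $\{0,1\}^{s}$. Since $\GG_{\dd}$ is homogeneous of degree $d := d_{j_0,k_0}$ in the block $X := (x_i^{(l)})_{(i,l)\in \EE(j_0,k_0)}$, and the operator $\Delta^{(t_{j_0,k_0})}_{(j_0,k_0)}$ only affects this block (via the substitution $X \mapsto \sum_{h=1}^s \varepsilon_h X^{(h)}$ with $X^{(h)} = (x_i^{(l,h)})_{(i,l)\in\EE(j_0,k_0)}$, followed by an alternating sum), the homogeneity properties in the other blocks are preserved trivially.

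The central identity is
\[
\sigma(a_1,\ldots,a_s) \;:=\; \sum_{\varepsilon\in \{0,1\}^s} (-1)^{\varepsilon_1+\cdots+\varepsilon_s}\, \varepsilon_1^{a_1}\cdots \varepsilon_s^{a_s} \;=\; \prod_{h=1}^s \bigl(0^{a_h}-1\bigr),
\]
which equals $(-1)^s$ when every $a_h\geq 1$ and vanishes otherwise (the factor with $a_h=0$ gives $1-1=0$). Expanding $\GG_{\dd}(\ldots,\sum_h \varepsilon_h X^{(h)},\ldots)$ as a polynomial in the $\varepsilon_h$ yields, by homogeneity, a sum of monomials $\varepsilon_1^{a_1}\cdots \varepsilon_s^{a_s}$ with $\sum_h a_h = d$; only those with every $a_h \geq 1$ contribute to $\FFF_s$. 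When $d<s$, the pigeonhole principle rules out all such monomials and therefore $\FFF_s=0$.

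Applied with $s = t_{j_0,k_0}$ and $s = t_{j_0,k_0}+1$, this immediately yields assertion (1): if $d<t_{j_0,k_0}$, then both $\FFF_{t_{j_0,k_0}}$ and $\FFF_{t_{j_0,k_0}+1}$ vanish, so $\Delta^{(t_{j_0,k_0})}_{(j_0,k_0)}\GG_{\dd}=0$. For assertion (2), with $d = t_{j_0,k_0}$ the pigeonhole still kills $\FFF_{t_{j_0,k_0}+1}$, leaving $\Delta^{(t_{j_0,k_0})}_{(j_0,k_0)}\GG_{\dd} = -\FFF_{t_{j_0,k_0}} = -\GG_{\dd,t_{j_0,k_0}}$; moreover the surviving monomials have $a_h = 1$ for every $h$, which means exactly one factor is chosen from each new block $X^{(h)}$, giving linearity of the result in each $(x_i^{(l,h)})_{(i,l)\in\EE(j_0,k_0)}$. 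The hypothesis $\dd=\tt$ intervenes precisely to ensure that the analogous balance $d_{j,k}=t_{j,k}$ also holds in every other block, which is what allows multilinearity to propagate under subsequent applications of the operators $\Delta_{(j,k)}^{(t_{j,k})}$.

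The homogeneity assertions follow directly from the shape of the substitution: for $(j,k)\neq (j_0,k_0)$ the variables $(x_i^{(l)})_{(i,l)\in\EE(j,k)}$ are untouched, so the degree $d_{j,k}$ is preserved; and for the modified block, since each old variable is replaced by a linear form in the $x_i^{(l,h)}$, the total degree in these new variables remains $d_{j_0,k_0}$. I expect no serious obstacle: all non-trivial content of the proposition is concentrated in the identity for $\sigma(a_1,\ldots,a_s)$, and the only mild subtlety is keeping careful track, through the iterative step, of which variables the operator acts on at each stage.
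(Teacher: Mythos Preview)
Your argument is correct: the alternating-sum identity $\sigma(a_1,\ldots,a_s)=\prod_h(0^{a_h}-1)$ is exactly what drives all three assertions, and your pigeonhole reasoning for parts (1) and (2) and the degree-preservation claim are clean and complete. The paper itself does not give a self-contained proof but simply invokes \cite[Lemme~11.2]{Sm}, observing that Schmidt's result applies over any coefficient ring (here $\ZZ[(x_i^{(l)})_{(i,l)\notin\EE(j_0,k_0)}]$); what you have written is essentially a direct proof of that lemma specialised to the present setting, so the two approaches coincide in substance. One small remark: the linearity in each block $(x_i^{(l,h)})_{(i,l)\in\EE(j_0,k_0)}$ already follows from $d_{j_0,k_0}=t_{j_0,k_0}$ alone, so your aside about $\dd=\tt$ being needed ``for propagation under subsequent operators'' is a correct observation about the broader context but is not required for the proposition as stated.
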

\begin{proof}
Ces r\'esultats d\'ecoulent de\;\cite[Lemme\;11.2]{Sm} (les r\'esultats de Schmidt, s'appliquant \`a n'importe quel anneau, y compris $ \ZZ[(x_{i}^{(l)})_{(i,l)\notin\EE(j_{0},k_{0})} $).
\end{proof}

On d\'efinit alors : \[ \Delta_{j}^{\tt_{j}}=\Delta_{(j,k_{1})}^{(t_{j,k_{1}})}\circ ... \circ\Delta_{(j,k_{m_{j}})}^{(t_{j,k_{m_{j}}})} \] o\`u $ \{k_{1},...,k_{m_{j}}\}=\{k\in \NN\setminus\{0\}\; |\; \exists i \in I_{0} \; |\; a_{i,j}=k\} $. Par d\'efinition on a \begin{multline*} \Delta_{j}^{\tt_{j}}\FFF\left( (x_{i}^{(l,h)})_{\substack{(i,l)\in \EE \\ h\in \{1,...,t_{j,a_{i,j}}\}  }}\right) \\ =(-1)^{\Card\PPP(j)}\sum_{(\varepsilon_{k})_{k\in \PPP(j)}\in \{0,1\}^{\PPP(j)}}(-1)^{\sum_{k\in \PPP(j)}\varepsilon_{k}}\FFF_{(t_{j,k}+\varepsilon_{k})_{k\in \PPP(j)}} \left( (x_{i}^{(l,h)})_{\substack{(i,l)\in \EE \\ h\in \{1,...,t_{j,a_{i,j}+\varepsilon_{a_{i,j}}}\}  }}\right), \end{multline*} o\`u \[ \PPP(j)=\{k\in \NN\setminus\{0\} \; |\; \exists i\in I_{0} \; |\; a_{i,j}=k\}, \] et pour tout $ (b_{k})_{k\in \PPP(j)}\in \NN^{\PPP(j)} $, \begin{multline*} 
 \FFF_{(b_{k})_{k\in \PPP(j)}}\left( (x_{i}^{(l,h)})_{\substack{(i,l)\in \EE \\ h\in \{1,...,b_{a_{i,j}}\}  }}\right)\\ = \sum_{(\varepsilon_{k,1},...,\varepsilon_{k,b_{k}})_{k\in \PPP(j)}\in \{0,1\}^{\sum_{k\in \PPP(j)}b_{k}}}(-1)^{\sum_{\substack{ k\in \PPP(j) \\ h\in\{1,...,b_{k}\}}}\varepsilon_{k,h}} \\ \FFF\left( \left(\left(\sum_{h=1}^{b_{k}}\varepsilon_{k,h}x_{i}^{(l,h)}\right)_{\substack{(i,l)\in \EE(j,k) \\h\in\{1,...,b_{k}\}  }}\right)_{k\in \PPP(j)}\right).
\end{multline*}
\begin{rem}
Les op\'erateurs $ \Delta^{\tt_{j}} $ commutent deux \`a deux.  
\end{rem}
On peut alors d\'efinir l'op\'erateur : \[ \Delta^{\tt}=\Delta^{\tt_{1}}\circ \Delta^{\tt_{2}}\circ...\circ \Delta^{\tt_{r}}. \]

Consid\'erons alors un ensemble de variables $ \xx=(x_{1},...,x_{n+r}) $. Tout polyn\^ome $ \FFF(\xx)\in \ZZ[\xx] $ peut \^etre d\'ecompos\'e de fa\c{c}on unique sous la forme : \[ \FFF(\xx)=\sum_{\dd=(d_{j,k})_{(j,k)\in \CCC_{0}}\in \NN^{\CCC_{0}}}\FFF_{\dd}(\xx), \] o\`u $ \FFF_{\dd}(\xx) $ est un polyn\^ome homog\`ene de degr\'e $ d_{j,k} $ en les variables $ (x_{i})_{i\in J(j,k)} $ pour tout $ (j,k)\in \CCC_{0} $. Introduisons \`a pr\'esent les notations suivantes :\\
\begin{notation}
 \begin{enumerate}
\item $ \xx_{0}=(x_{i})_{i\notin I_{0}} $ ,
\item $ \tilde{L}=\{(i,\l)\; |\; i\in I_{0}, \; \l=(l_{1},...,l_{r})\in\prod_{j=1}^{r}\{1,...,t_{j,a_{i,j}}+1\}\}  $,
\item $ L=\{(i,\l)\; |\; i\in I_{0}, \; \l=(l_{1},...,l_{r})\in\prod_{j=1}^{r}\{1,...,t_{j,a_{i,j}}\}\}  $,
\item $ \forall (j,k)\in \CCC_{0} $, $ L(j,k)=\{(i,\l)\in L \; |\; i\in J(j,k)\}  $,
\item  $ \forall (j,k)\in \CCC_{0} $, $ \widehat{L}(j,k)=\{(i,\l)\in L(j,k)\; |\; l_{j}\neq t_{j,k} \} $,
\item  $ \forall (j,k)\in \CCC_{0} $, $ E(j,k)=L(j,k)\setminus \widehat{L}(j,k)=\{(i,\l)\in L(j,k)\; |\; l_{j}=t_{j,k} \} $,
\item  $ \forall (j,k)\in \CCC_{0} $, $ \forall h\in \{1,...,t_{j,k}\} $, $ L(j,k,h)=\{(i,\l)\in L(j,k)\; |\; l_{j}=h\} $,
\item $ \widetilde{\XX}=(x_{i}^{\l})_{(i,\l)\in \tilde{L}} $,
\item $ \XX=(x_{i}^{\l})_{(i,\l)\in L} $,
\item  $ \forall (j,k)\in \CCC_{0} $, $ \widehat{\XX}_{(j,k)}=(x_{i}^{\l})_{(i,\l)\in \widehat{L}(j,k)} $.\\

\end{enumerate}
\end{notation}
Avec ces notations et d'apr\`es la proposition\;\ref{propprelim13}, nous avons alors le r\'esultat suivant : 
\begin{prop}\label{propprelim23}
Pour tout ensemble de degr\'es $ \dd=(d_{j,k})_{(j,k)\in \CCC_{0}}\in \NN^{\CCC_{0}} $, le polyn\^ome $ \Delta^{\tt}\FFF_{\dd}(\xx_{0},\widetilde{\XX}) $ vaut : \begin{enumerate}
\item $ 0 $ s'il existe $ (j,k)\in \CCC_{0} $ tel que $ d_{j,k}<t_{j,k} $,
\item un polyn\^ome $ \Gamma(\XX) $ en $ \XX $ de degr\'e $ d_{j,k} $ en $ (x_{i}^{\l})_{(i,\l)\in L(j,k)} $, et lin\'eaire en chaque $ (x_{i}^{\l})_{(i,\l)\in L(j,k,h)} $ pour tout $ (j,k)\in \CCC_{0} $ et $ h\in \{1,...,t_{j,k}\} $ si $ \dd=\tt $.
\end{enumerate}
\end{prop}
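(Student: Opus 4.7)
The plan is to prove the proposition by induction on the number of elementary operators $\Delta^{(t_{j,k})}_{(j,k)}$ making up the composition $\Delta^{\mathbf{t}} = \Delta^{\mathbf{t}_1}\circ \cdots \circ \Delta^{\mathbf{t}_r}$, invoking Proposition \ref{propprelim13} at each step. The essential enabling fact is the last clause of that proposition: every elementary operator preserves the homogeneous degree in all variable blocks on which it does not act. This allows one to maintain, throughout the induction, the very homogeneity hypothesis required to apply Proposition \ref{propprelim13} to the next operator in the composition.

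For part~(1), suppose some $(j_0, k_0) \in \mathcal{C}_0$ satisfies $d_{j_0, k_0} < t_{j_0, k_0}$. Using the commutativities recorded in the two remarks following the definitions of $\Delta^{(t_{j,k})}_{(j,k)}$ and $\Delta^{\mathbf{t}_j}$, I reorder the composition so that $\Delta^{(t_{j_0, k_0})}_{(j_0, k_0)}$ is the last operator applied. By the last clause of Proposition \ref{propprelim13}, the intermediate polynomial produced by the preceding operators remains homogeneous of degree $d_{j_0, k_0}$ in its current $(j_0, k_0)$-block of (already split) variables; Proposition \ref{propprelim13}.(1) then yields zero.

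For part~(2), assume $d_{j,k} \geq t_{j,k}$ for every $(j,k) \in \mathcal{C}_0$. Apply the elementary operators in some fixed order. At the step where $\Delta^{(t_{j_0, k_0})}_{(j_0, k_0)}$ is applied, the current polynomial is homogeneous of degree $d_{j_0, k_0}$ in the $(j_0, k_0)$-block; the last clause of Proposition \ref{propprelim13} turns this into a polynomial of degree $d_{j_0, k_0}$ in the freshly split variables and preserves the degrees in all other blocks. Iterating yields a polynomial $\Gamma(\mathbf{X})$ homogeneous of degree $d_{j,k}$ in each $(x_i^{\boldsymbol{\ell}})_{(i, \boldsymbol{\ell}) \in L(j,k)}$, as claimed. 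When $\mathbf{d} = \mathbf{t}$, Proposition \ref{propprelim13}.(2) upgrades each step to the stronger statement that $\Delta^{(t_{j_0, k_0})}_{(j_0, k_0)}$ produces a polynomial that is linear in each slab indexed by a fixed $h \in \{1, \ldots, t_{j_0, k_0}\}$; since each subsequent operator acts by substituting linear combinations of fresh variables within disjoint slabs of other blocks, this linearity is preserved throughout the remainder of the induction.

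The main technical point, more than a genuine obstacle, is the bookkeeping of the variable splittings: each $x_i$ with $i \in I_0$ is successively divided so that, at the end, its descendants are indexed exactly by $\boldsymbol{\ell} \in \prod_{j=1}^{r} \{1, \ldots, t_{j, a_{i,j}}\}$, which matches the index set $L$ fixed in the notation, and the $L(j,k,h)$-slabs appearing in the conclusion correspond precisely to the sign-weighted sums introduced by each application of an elementary operator. Once this correspondence is set up explicitly, parts~(1) and~(2) follow from the mechanical iteration described above.
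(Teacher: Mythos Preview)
Your proposal is correct and follows exactly the approach the paper intends: the paper does not write out a proof at all, merely stating that the proposition follows from Proposition~\ref{propprelim13} (``Avec ces notations et d'apr\`es la proposition~\ref{propprelim13}, nous avons alors le r\'esultat suivant''), and your inductive unpacking---applying the elementary operators one at a time, using the last clause of Proposition~\ref{propprelim13} to propagate homogeneity, and invoking the two commutativity remarks to reorder so that the critical operator comes last---is precisely the argument implicit in that reference. One small imprecision: your phrase ``disjoint slabs of other blocks'' is not literally accurate, since a variable $x_i$ with $a_{i,j_0}=k_0$ and $a_{i,j_1}=k_1$ lies in both the $(j_0,k_0)$- and $(j_1,k_1)$-blocks; what makes linearity in $L(j_0,k_0,h)$ survive a later application of $\Delta^{(t_{j_1,k_1})}_{(j_1,k_1)}$ is that the substitution is linear and does not touch the $j_0$-th coordinate of $\boldsymbol{\ell}$, so every variable with $\ell_{j_0}=h$ is replaced by a linear combination of variables still having $\ell_{j_0}=h$.
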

\begin{cor}\label{cordeltat3}
Pour tout polyn\^ome $ \FFF(\xx)\in \ZZ[\xx] $, on a \[ \Delta^{\tt}\FFF(\xx_{0},\widetilde{\XX})=\Gamma(\XX)+ \sum_{\dd>\tt}\Delta^{\tt}\FFF_{\dd}(\xx_{0},\widetilde{\XX}), \]
o\`u $ \dd>\tt $ signifie que $ d_{j,k}\geqslant t_{j,k} $ pour tout $ (j,k)\in \CCC_{0} $ et $ \dd\neq \tt $.
\end{cor}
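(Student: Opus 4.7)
The plan is to exploit the unique decomposition
\[ \FFF(\xx)=\sum_{\dd\in \NN^{\CCC_{0}}}\FFF_{\dd}(\xx) \]
of the arbitrary polynomial $\FFF$ into its multi-homogeneous components and apply $\Delta^{\tt}$ term by term. First I would observe that each elementary operator $\Delta_{(j,k)}^{(t_{j,k})}$ is linear, since by its very definition it is a signed sum of evaluations of its argument at linear combinations of the variables. As $\Delta^{\tt}=\Delta^{\tt_{1}}\circ\cdots\circ\Delta^{\tt_{r}}$ is obtained by composition, $\Delta^{\tt}$ is linear, hence
\[ \Delta^{\tt}\FFF(\xx_{0},\widetilde{\XX})=\sum_{\dd\in \NN^{\CCC_{0}}}\Delta^{\tt}\FFF_{\dd}(\xx_{0},\widetilde{\XX}). \]

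Next I would split this sum into three pieces according to the comparison between $\dd$ and $\tt$. The first piece gathers all $\dd$ such that $d_{j,k}<t_{j,k}$ for at least one $(j,k)\in\CCC_{0}$: each such term vanishes by Proposition \ref{propprelim23}, part (1). The second piece is the single term $\dd=\tt$, which by part (2) of the same proposition equals a polynomial $\Gamma(\XX)$ in $\XX$ of degree $d_{j,k}=t_{j,k}$ in $(x_{i}^{\l})_{(i,\l)\in L(j,k)}$ and linear in each $(x_{i}^{\l})_{(i,\l)\in L(j,k,h)}$. The third piece consists precisely of those $\dd$ with $d_{j,k}\geqslant t_{j,k}$ for every $(j,k)$ and $\dd\neq\tt$, which is the definition of $\dd>\tt$ in the statement; these terms are kept as they stand. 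Collecting the three pieces yields exactly the claimed identity.

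There is essentially no obstacle: the content is entirely contained in Proposition \ref{propprelim23}, and the only thing to verify is that its conclusions, proved there for a single multi-homogeneous polynomial, extend by linearity to any $\FFF$. The only subtle point worth double-checking is that part (1) is really a statement about the \emph{full} composite $\Delta^{\tt}$ and not just about a single factor: this is legitimate because the factors $\Delta_{(j,k)}^{(t_{j,k})}$ commute (as noted in the remarks following their introduction) and because, by the last assertion of Proposition \ref{propprelim13}, each factor preserves the degrees in the variable groups it does not act on. Thus the annihilating factor $\Delta_{(j_{0},k_{0})}^{(t_{j_{0},k_{0}})}$ associated with any coordinate where $d_{j_{0},k_{0}}<t_{j_{0},k_{0}}$ can be brought to the outside of the composition, and kills $\FFF_{\dd}$ before the remaining operators are applied.
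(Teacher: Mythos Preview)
Your proposal is correct and is precisely the argument the paper has in mind: the corollary is stated without proof, as an immediate consequence of Proposition~\ref{propprelim23} applied term by term to the decomposition $\FFF=\sum_{\dd}\FFF_{\dd}$ via the linearity of $\Delta^{\tt}$. Your final paragraph, checking that the vanishing in part~(1) survives the full composition thanks to commutativity of the factors and the degree-preservation clause of Proposition~\ref{propprelim13}, is a useful clarification that the paper leaves implicit.
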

Introduisons la d\'efinition suivante :  
\begin{Def}
Un polyn\^ome $ F\in \ZZ[\xx] $ sera dit \emph{quasi-$ r $-homog\`ene} de $ r $-degr\'e $ (d_{1},...,d_{r})\in \NN^{r} $ si \[ \forall \ss=(s_{1},...,s_{r})\in \CC^{r}, \; \; F(\ss.\xx)=(\prod_{j=1}^{r}s_{j}^{d_{j}})F(\xx), \] o\`u $ \ss.\xx=\left((\prod_{j=1}^{r}s_{j}^{a_{i,j}})x_{i}\right)_{i\in \{1,...,n+r\}} $.
\end{Def}
\begin{prop}\label{propdeltat3}
Si $ \FFF $ est un polyn\^ome quasi-$ r $-homog\`ene de $ r $-degr\'e $ (d_{1},...,d_{r}) $ et si l'ensemble de degr\'es $ (t_{j,k})_{(j,k)\in \CCC} $ est tel que \[ \forall j\in \{1,...,r\}, \;\sum_{k\geqslant 1}kt_{j,k}=d_{j}, \] alors,   \[ \Delta^{\tt}\FFF(\xx_{0},\widetilde{\XX})=\Gamma(\XX). \]  
\end{prop}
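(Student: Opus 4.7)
\emph{Plan.} On part du corollaire~\ref{cordeltat3}, qui fournit déjà la décomposition
\[
\Delta^{\tt}\FFF(\xx_0,\widetilde{\XX}) = \Gamma(\XX) + \sum_{\dd>\tt}\Delta^{\tt}\FFF_{\dd}(\xx_0,\widetilde{\XX}).
\]
Il suffit donc d'établir que $\Delta^{\tt}\FFF_{\dd}=0$ pour tout $\dd\in\NN^{\CCC_0}$ vérifiant $d_{j,k}\geqslant t_{j,k}$ pour tout $(j,k)\in\CCC_0$ et $\dd\neq\tt$. Je propose de démontrer le résultat plus fort que $\FFF_{\dd}$ est identiquement nul pour un tel $\dd$, au moyen d'un comptage de poids fondé sur la quasi-$r$-homogénéité de $\FFF$.

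Concrètement, soit $\xx^{\alpha}$ un monôme éventuel de $\FFF_{\dd}$. La définition de $\FFF_{\dd}$ impose $\sum_{i\in J(j,k)}\alpha_i=d_{j,k}$ pour chaque $(j,k)\in\CCC_0$, d'où $\sum_{i\in I_0}a_{i,j}\alpha_i=\sum_{k\in\PPP(j)}k\,d_{j,k}$ pour tout $j$. La quasi-$r$-homogénéité de $\FFF$ donne par ailleurs $\sum_i a_{i,j}\alpha_i=d_j$, et en séparant selon $i\in I_0$ ou $i\notin I_0$ on obtient
\[
\sum_{k\in\PPP(j)}k\,d_{j,k}+\sum_{i\notin I_0}a_{i,j}\alpha_i = d_j\qquad(j=1,\ldots,r),
\]
ce qui entraîne la majoration $\sum_{k\in\PPP(j)}k\,d_{j,k}\leqslant d_j$ pour tout $j$. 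Choisissant alors $(j_0,k_0)\in\CCC_0$ tel que $d_{j_0,k_0}>t_{j_0,k_0}$, la condition $\dd\geqslant\tt$ sur $\CCC_0$ fournit $\sum_{k\in\PPP(j_0)}k\,d_{j_0,k}\geqslant\sum_{k\in\PPP(j_0)}k\,t_{j_0,k}+k_0$. Confronté à la majoration précédente et à l'identité $d_{j_0}=\sum_{k\geqslant 1}k\,t_{j_0,k}$, ce bilan doit conduire à une contradiction, ce qui interdit l'existence du monôme $\xx^{\alpha}$ et donne $\FFF_{\dd}=0$.

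L'obstacle principal est précisément la clôture rigoureuse de ce bilan de poids : il faut vérifier que les éventuelles contributions des indices $k\notin\PPP(j_0)$ à la somme $\sum_{k\geqslant 1}k\,t_{j_0,k}$ ne peuvent compenser l'excès strict $k_0(d_{j_0,k_0}-t_{j_0,k_0})\geqslant 1$, en exploitant soigneusement l'articulation entre la définition de $I_0$ (qui filtre les indices $i$ selon la non-nullité des $t_{j,a_{i,j}}$), celle de $\PPP(j_0)$ et la forme précise des $(t_{j,k})$ admissibles. Une fois cette contradiction établie et l'annulation $\FFF_{\dd}=0$ acquise pour tout $\dd>\tt$ avec $\dd\neq\tt$, le corollaire~\ref{cordeltat3} achève immédiatement la démonstration de la proposition.
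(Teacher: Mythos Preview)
Your approach mirrors the paper's exactly: both start from corollaire~\ref{cordeltat3} and reduce to showing $\FFF_{\dd}=0$ for every $\dd>\tt$, via the quasi-$r$-homogeneity of $\FFF$.

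The only difference is tone. Where you flag the comparison between $\sum_{k\in\PPP(j_0)}k\,t_{j_0,k}$ and $d_{j_0}=\sum_{k\geqslant 1}k\,t_{j_0,k}$ as the ``obstacle principal'' requiring a careful analysis of indices $k\notin\PPP(j_0)$, the paper simply asserts the equality $\sum_{k\in\PPP(j_0)}k\,t_{j_0,k}=d_{j_0}$ in one stroke. Its justification is the sentence ``par d\'efinition, $\FFF_{\dd}$ est quasi-$r$-homog\`ene de $r$-degr\'e $(\sum_{k\geqslant 1}kd_{j,k})_j$'': together with the fact that $\FFF_{\dd}$ (as a sum of monomials of $\FFF$) is also quasi-$r$-homogeneous of degree $(d_1,\dots,d_r)$, this forces $\sum_{k}kd_{j,k}=d_j$ for every $j$ whenever $\FFF_{\dd}\neq 0$, and the strict inequality at $(j_0,k_0)$ then gives the contradiction directly. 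In other words, the paper treats your monomial-by-monomial inequality $\sum_{k\in\PPP(j)}kd_{j,k}\leqslant d_j$ as an equality from the outset, and does not enter the case analysis you anticipate; your plan is the paper's argument, just phrased more guardedly.
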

\begin{proof}
D'apr\`es le corollaire\;\ref{cordeltat3},  \[ \Delta^{\tt}\FFF(\xx_{0},\widetilde{\XX})=\Gamma(\XX)+\sum_{\dd>\tt}\Delta^{\tt}\FFF_{\dd}(\xx_{0},\widetilde{\XX}). \]
Par d\'efinition, pour tout $ \dd $, $ \FFF_{\dd} $ est quasi-$ r $-homog\`ene de $ r $-degr\'e \linebreak $ (\sum_{k\geqslant 1}kd_{j,k})_{j\in \{1,...,r\}} $. Or ce polyn\^ome est \'egalement quasi-$ r $-homog\`ene de $ r $-degr\'e $ (d_{1},...,d_{r}) $ (car $ \FFF $ l'est). Le polyn\^ome $ \FFF_{\dd} $ est donc nul si la condition \[ \forall j\in \{1,...,r\}, \; \;  \sum_{k\geqslant 0}kd_{j,k}=d_{j} \] n'est pas v\'erifi\'ee. Si $ \dd>\tt $, on a alors que $ d_{j,k}\geqslant t_{j,k} $ pour tout $ (j,k)\in \CCC_{0} $ et il existe $ (j_{0},k_{0}) $ tel que $ d_{j_{0},k_{0}}> t_{j_{0},k_{0}} $. On a alors \[  \sum_{k\in \PPP(j_{0})}kd_{j_{0},k}> \sum_{k\in \PPP(j_{0})}kt_{j_{0},k}=d_{j_{0}}. \] Par cons\'equent, si $ \dd>\tt $, alors $ \FFF_{\dd}=0 $ et la proposition est d\'emontr\'ee. 
\end{proof}

\subsection{Une in\'egalit\'e de Weyl :}\label{sectionweyl3}

Plut\^ot que de consid\'erer directement la fonction $ \overline{h}_{\ee,V} $, posons \begin{multline*}  \overline{h}_{\ee}(k_{1},...,k_{r})=\Card\left\{ 
 \xx\in \ZZ^{n+r} \; | \; F(\ee.\xx)=0, \;\right. \\ \left. \forall j\in \{1,...,r\}, \;  \left\lfloor|(\ee.\xx)^{E(n+j)}|\right\rfloor=k_{j}, \;  \forall i\in \{1,...,n+r\},  \; |x_{i}|\leqslant \frac{1}{e_{i}}\prod_{j=1}^{r}(k_{j}+1)^{a_{i,j}} \right\} \end{multline*}
et cherchons \`a obtenir une formule asymptotique pour \begin{multline*} N_{\ee}(P_{1},...,P_{r})=\sum_{\forall j\notin J, \; k_{j}\leqslant P_{j}}\overline{h}_{\ee}(k_{1},...,k_{r}) \\ = \Card\left\{ 
 \xx\in \ZZ^{n+r} \; | \; F(\ee.\xx)=0, \; \forall j\in \{1,...,r\}, \;  \left\lfloor|(\ee.\xx)^{E(n+j)}|\right\rfloor\leqslant P_{j}, \; \right. \\ \left. \forall i\in \{1,...,n+r\},  \; |x_{i}|\leqslant \frac{1}{e_{i}}\prod_{j=1}^{r}(\left\lfloor|(\ee.\xx)^{E(n+j)}|\right\rfloor+1)^{a_{i,j}} \right\}. \end{multline*}
en utilisant la m\'ethode du cercle (la restriction \`a l'ouvert $ \pi^{-1}(V) $ se fera ult\'erieurement et induira un terme d'erreur lorsque les bornes $ P_{1},...,P_{r} $ sont \og assez proches\fg). Quitte \`a permuter les variables, nous pouvons supposer \[ P_{1}\geqslant P_{2}\geqslant ...\geqslant P_{r}, \] et nous poserons $ P_{j}=P_{r}^{b_{j}} $, avec $ b_{j}\geqslant 1 $ pour tout $ j\in \{1,...,r\} $. \\

Notons $ e $ la fonction $ x \mt \exp(2i\pi x) $ et remarquons que \[ N_{\ee}(P_{1},...,P_{r})=\int_{0}^{1}S_{\ee}(\alpha)d\alpha, \] o\`u $ S_{\ee}(\alpha) $ est la s\'erie g\'en\'eratrice : \begin{equation}\label{Salpha3}
S_{\ee}(\alpha)=\sum_{\substack{\xx\in \ZZ^{n+r} \\  \left\lfloor|(\ee.\xx)^{E(n+j)}|\right\rfloor\leqslant P_{j} \\ |x_{i}|\leqslant \frac{1}{e_{i}}\prod_{j=1}^{r}(\left\lfloor|(\ee.\xx)^{E(n+j)}|\right\rfloor+1)^{a_{i,j}} }}e\left(\alpha F(\ee.\xx)\right).
\end{equation}

L'objectif des sections\;\ref{sectionweyl3} et\;\ref{sectionreseau3} est de d\'emontrer la proposition ci-dessous ($ K $ \'etant un param\`etre bien choisi que nous pr\'eciserons), $ \mathfrak{M}(\theta) $ d\'esignant une r\'eunion d'arcs majeurs :

\begin{prop}\label{dilemme43}
Pour tout $ \alpha\in [0,1[ $ et tout $ \varepsilon>0 $ arbitrairement petit, l'une au moins des assertions suivantes est vraie : \begin{enumerate}
\item  on a la majoration : \[|S_{\ee}(\alpha)|\ll \left(\prod_{i=1}^{n+r}e_{i}\right)^{-1+\frac{1}{2^{r}}}\left(\prod_{j=1}^{r}P_{j}^{n_{j}+1}\right)P^{-K\theta+\varepsilon},\]
\item le r\'eel $ \alpha $ appartient \`a $ \mathfrak{M}(\theta) $.
\end{enumerate}
\end{prop}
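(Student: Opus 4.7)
The strategy follows the classical Weyl differencing argument combined with a Davenport-type geometry-of-numbers lemma, in the spirit of Schindler \cite{S2}. The essential algebraic input is Proposition \ref{propdeltat3}: for the choice $\tt=(t_{j,k})$ satisfying $\sum_{k\geq 1}k t_{j,k}=d_j$ for every $j\in\{1,\ldots,r\}$, the operator $\Delta^{\tt}$ transforms $F$ into the multilinear form $\Gamma(\XX)$, which is linear in each slice $(x_i^{\l})_{(i,\l)\in L(j,k,h)}$. This is what ultimately converts the original question into a family of linear exponential sums that can be handled by the shrinking lemma.

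First I would apply Cauchy--Schwarz iteratively, one atomic differencing operator $\Delta^{(t_{j,k})}_{(j,k)}$ at a time. Each such step squares $|S_{\ee}(\alpha)|$ and introduces difference variables ranging over sublattices of $e_i\ZZ$ constrained to boxes of size comparable to $\bigl(\prod_{j}P_j^{a_{i,j}}\bigr)/e_i$. After performing all $\sum_{(j,k)}t_{j,k}$ differencings in the order prescribed by the definition of $\Delta^{\tt}$, one arrives at an inequality of the form
\[
|S_{\ee}(\alpha)|^{2^{|\tt|}}\ll \left(\prod_{i=1}^{n+r}e_i^{-\gamma_i}\right)\!\left(\prod_{j=1}^{r}P_j^{A_j}\right)\sum_{\XX}\left|\sum_{\yy} e\bigl(\alpha\,\Gamma(\ee.(\XX,\yy))\bigr)\right|,
\]
with explicit exponents $\gamma_i$ and $A_j$ depending on the $a_{i,j}$ and the $t_{j,k}$. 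Taking the $2^{|\tt|}$-th root, the factor $\prod_i e_i^{-1+1/2^r}$ appearing in the target bound arises from the fact that each variable $x_i$ is touched by at most one Cauchy--Schwarz application per grading $j$, so the saving $1/e_i$ compounds geometrically as $1/e_i\mapsto (1/e_i)^{1-2^{-r}}$ in the worst case.

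Next, the multilinearity of $\Gamma$ allows linearization of the innermost sum: it reduces to a product of geometric series, each bounded by $\min(N_i,\|\alpha\,c_{\ii}(\XX)\|^{-1})$, where $c_{\ii}(\XX)$ is multilinear in the remaining components of $\XX$. Summing over $\XX$ and invoking Davenport's shrinking lemma yields the dichotomy: either the set of $\XX$ for which all $\|\alpha\,c_{\ii}(\XX)\|$ are simultaneously $\leq P_r^{-K\theta}$ has cardinality small enough to produce the saving $P^{-K\theta+\varepsilon}$ of alternative (i), or it is large enough to force the existence of coprime integers $a,q$ with $q\ll P_r^{K\theta}$ and $|q\alpha-a|\ll P_r^{K\theta-\sum_j d_j}$, which is precisely the condition $\alpha\in\mathfrak{M}(\theta)$.

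The main obstacles are threefold. First, the bookkeeping of the exponents $\gamma_i$ and $A_j$: these must reconcile with the factor $\prod_j P_j^{n_j+1}$ coming from the trivial bound, which requires systematic use of the identity $n_j=1+\sum_i a_{i,j}$ and the quasi-$r$-homogeneity of $F$. Second, one must bound below the codimension of the ``degenerate locus'' where the multilinear system $(c_{\ii}(\XX))_{\ii}$ drops rank; this is exactly the role played by $n(F)$ in the hypothesis of Theorem \ref{thm3b3}, since the singular varieties $V^{\ast}_{\tau,m,\dd,(j,k)}$ parametrize precisely this bad locus, and the dichotomy is only as strong as that codimension. Third, the parameter $K$ must be calibrated in terms of $n(F)$, the $a_{i,j}$, and the $d_j$ so that the Weyl-type saving strictly dominates the trivial bound on the exceptional set for every admissible choice of $P_1,\ldots,P_r$.
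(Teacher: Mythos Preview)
Your overall strategy is correct and matches the paper's: iterated Weyl differencing via the operators $\Delta^{(t_{j,k})}_{(j,k)}$ (using Proposition~\ref{propdeltat3} to land on the multilinear form $\Gamma$), linearisation of the innermost sum into $\prod_i\min(T_i,\|\alpha e_i\gamma_{(i,\l)}(\widehat\XX)\|^{-1})$, a lattice-point reduction, and finally a dimension argument on the degenerate locus to force the choice of $K$. Two points, however, are genuinely understated.

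First, the phrase ``invoking Davenport's shrinking lemma'' hides most of the work. In the multi-graded situation each coordinate $x_i^{\l}$ comes with its own bound $T_i=e_i^{-1}\prod_j P_j^{a_{i,j}}$, so the classical symmetric shrinking lemma does not apply directly. The paper first proves a weighted generalisation (its Lemmes~\ref{geomnomb12} and~\ref{geomnomb22}, allowing distinct $a_j$'s and $b_i$'s), and then runs a \emph{three-stage} recurrence: shrink within a fixed block $L(1,K,h)$ as $h$ varies (Lemme~\ref{recurrence13}), change the weight $k$ within a fixed grading index $j$ (Lemme~\ref{recurrence23}), and finally change the grading index $j$ itself (Lemme~\ref{recurrence33}). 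Only after this cascade does one reach the uniform bound of Lemme~\ref{dilemme23}, from which the trichotomy of Lemme~\ref{dilemme33} (and not a direct dichotomy) follows; eliminating the third alternative by the dimension of $V^{\ast}_{\tt,(j_0,K_{j_0})}$ then fixes $K=(n+r-\dim V^{\ast}_{\tt,(j_0,K_{j_0})}+\varepsilon)/2^{\sum_j D_j}$ and yields Proposition~\ref{dilemme43}. Your sketch does not indicate how to navigate the different weight strata, which is the crux.

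Second, your description of the major arcs is off. The arcs $\mathfrak{M}^{(i)}_{a,q}(\theta)$ are defined with $q\le P^{\tilde d\theta}$ and $|\alpha e_i q-a|<e_i P^{-1+\tilde d\theta}$, where $P=\prod_j P_j^{d_j}$ and $\tilde d=(\sum_j d_j)-1$; the exponent involves $\tilde d$, not $K$, and the base is $P$, not $P_r$. The parameter $K$ governs the \emph{saving} in alternative~(1), whereas $\tilde d$ governs the \emph{size} of the major arcs. Also, your heuristic for the $e_i$-exponent (``one Cauchy--Schwarz per grading'') gives the right answer but not for the right reason: the actual exponent is $\prod_j t_{j,a_{i,j}}/2^{\sum_j D_j}$, and one checks $t_{j,a_{i,j}}\le D_j\le 2^{D_j-1}$ to get $\le 2^{-r}$.
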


Rappelons que chaque variable $ x_{i} $ admet un poids $ (a_{i,1},...,a_{i,r}) $ relatif \`a l'action de $ T_{\NS} $. 
Choisissons \`a pr\'esent, comme dans la proposition\;\ref{propdeltat3}, un ensemble de degr\'es $ (t_{j,k})_{\substack{j\in \{1,...,r\} \\ k\in \{1,...,d_{j}\} }} $ v\'erifiant \[ \forall j\in \{1,...,r\}, \;\sum_{k\geqslant 1}kt_{j,k}=d_{j}. \]

Dans ce qui va suivre nous allons appliquer une forme de diff\'erenciation de Weyl appropri\'ee, en utilisant les r\'esultats pr\'esent\'es dans le paragraphe pr\'ec\'edent, afin de donner une majoration assez pr\'ecise de $ S_{\ee}(\alpha) $. Remarquons que la majoration obtenue ne d\'ependra que de $ F_{\tt} $ pour l'ensemble de degr\'es $ \tt $ choisi.\\

Consid\'erons la majoration triviale : \begin{multline*}
|S_{\ee}(\alpha)|  \ll \sum_{k_{1}=1}^{P_{1}}...\sum_{k_{r}=1}^{P_{r}}\big|\sum_{\substack{\xx\in \ZZ^{n+r} \\ \left\lfloor|(\ee.\xx)^{E(n+j)}|\right\rfloor=k_{j}  \\ |x_{i}|\leqslant \frac{1}{e_{i}}\prod_{j=1}^{r}(k_{j}+1)^{a_{i,j}} }}e\left(\alpha F(\ee.\xx)\right)\big|
\\  = \sum_{k_{1}=1}^{P_{1}}...\sum_{k_{r}=1}^{P_{r}}\big|\sum_{\substack{\xx\in \ZZ^{n+r} \\ k_{j}\leqslant|(\ee.\xx)^{E(n+j)}| <k_{j}+1  \\ |x_{i}|\leqslant \frac{1}{e_{i}}\prod_{j=1}^{r}(k_{j}+1)^{a_{i,j}}}}e\left(\alpha F(\ee.\xx)\right)\big|.
\end{multline*}
Or, \'etant donn\'e que $ (\ee.\xx)^{E(n+j)}=\prod_{k=1}^{r}(e_{i_{k}}x_{i_{k}})^{\beta_{n+j,k}} $ et puisque, d'apr\`es la remarque\;\ref{reminv3}, la matrice $ (\beta_{n+j,k})_{1\leqslant j,k \leqslant r} $ est la matrice inverse de $ (a_{i_{l},j})_{1\leqslant l,j \leqslant r} $, on a que \[ \left\{ \begin{array}{l}k_{j}\leqslant|(\ee.\xx)^{E(n+j)}| <k_{j}+1  \\ |x_{i}|\leqslant \frac{1}{e_{i}}\prod_{j=1}^{r}(k_{j}+1)^{a_{i,j}}
\end{array}\right. \Leftrightarrow \left\{ \begin{array}{l}  \forall i, \; |x_{i}|\leqslant \frac{1}{e_{i}}\prod_{j=1}^{r}(k_{j}+1)^{a_{i,j}} \\ \forall i\in \{i_{1},...,i_{r}\}, \; \frac{1}{e_{i}}\prod_{j=1}^{r}k_{j}^{a_{i,j}}\leqslant|x_{i}|< \frac{1}{e_{i}}\prod_{j=1}^{r}(k_{j}+1)^{a_{i,j}}.
\end{array}\right.  \]
Posons \begin{multline*} \BB_{\kk}=\{\xx\in \ZZ^{n+r}\; |\;  \forall i, \; |x_{i}|\leqslant \frac{1}{e_{i}}\prod_{j=1}^{r}(k_{j}+1)^{a_{i,j}} \\ \forall i\in \{i_{1},...,i_{r}\}, \; \frac{1}{e_{i}}\prod_{j=1}^{r}k_{j}^{a_{i,j}}\leqslant|x_{i}|< \frac{1}{e_{i}}\prod_{j=1}^{r}(k_{j}+1)^{a_{i,j}}\},
\end{multline*}
et on a alors que \[  \BB_{\kk}=\bigcup_{\II\subset \{1,...,n+r\}}\BB_{\kk,\II}, \]
o\`u $ \BB_{\kk,\II} $ est la bo\^ite \[ \BB_{\kk,\II}=\{\xx\in \BB_{\kk} \; |\; \forall i\in \II, \; x_{i}\geqslant 0 \; \et\; \forall i\notin \II, \; x_{i}<0\} \]
que l'on peut noter \[ \BB_{\kk,\II}=\prod_{i=1}^{n+r} \BB_{\kk,\II,i}, \]   $ \BB_{\kk,\II,i} $ \'etant alors un intervalle de taille $ O(\frac{1}{e_{i}}\prod_{j=1}^{r}(k_{j}+1)^{a_{i,j}}) $. L'estimation de $ |S_{\ee}(\alpha)| $ se ram\`ene \`a celle de \[ \sum_{k_{1}=1}^{P_{1}}...\sum_{k_{r}=1}^{P_{r}}|S_{\ee,\kk,\II}(\alpha)| \]
o\`u $ \kk=(k_{1},...,k_{r}) $, $ \II\subset \{1,...,n+r\} $ et \begin{equation}
S_{\ee,\kk,\II}(\alpha)=\sum_{\substack{\xx\in \BB_{\kk,\II}}}e\left(\alpha F(\ee.\xx)\right).
\end{equation}

Consid\'erons \`a pr\'esent le plus grand entier $ k\in \{1,...,d_{r}\} $ tel que $ t_{r,k}\geqslant 1 $. Par une in\'egalit\'e de H\"{o}lder, nous obtenons : 
\begin{multline*}
\left|S_{\ee,\kk,\II}(\alpha)\right|^{2^{t_{r,k}}}=\left|\sum_{\substack{(x_{i})_{i\notin J(r,k)}\in\prod_{i\notin J(r,k)}\BB_{\kk,\II,i} }} \sum_{\substack{(x_{i})_{i\in J(r,k)} \in \prod_{i\in J(r,k)}\BB_{\kk,\II,i}}} e\left(\alpha F(\ee.\xx)\right)\right|^{2^{t_{r,k}}} \\ \ll \prod_{i\notin J(r,k)}\left( \frac{1}{e_{i}}\prod_{j=1}^{r}(k_{j}+1)^{a_{i,j}}\right)^{2^{t_{r,k}}-1}\sum_{\substack{(x_{i})_{i\notin J(r,k)} \\  |x_{i}|\leqslant \frac{1}{e_{i}}\prod_{j=1}^{r}(k_{j}+1)^{a_{i,j}}}}\left|S_{\ee,\kk,\II,(r,k)}(\alpha)\right|^{2^{t_{r,k}}}
\end{multline*}
avec \begin{equation}
S_{\ee,\kk,\II,(r,k)}(\alpha)=\sum_{\substack{(x_{i})_{i\in J(r,k)}\in \prod_{i\in J(r,k)}\BB_{\kk,\II,i}}}e\left(\alpha F(\ee.\xx)\right).
\end{equation}
Pour simplifier les notations nous noterons : \[ \forall i \in \{1,...,n+r\}, \; \; B_{i}=\frac{1}{e_{i}} \prod_{j=1}^{r}(k_{j}+1)^{a_{i,j}}, \; \; T_{i}=\frac{1}{e_{i}} \prod_{j=1}^{r}(P_{j}+1)^{a_{i,j}}, \] et nous supposerons que $ T_{i}\geqslant 1 $ pour tout $ i $.  
On pose par ailleurs \[ \mathcal{U}=\prod_{i\in J(r,k)}\BB_{\kk,\II,i} \] et on d\'efinit \[ \mathcal{U}^{D}=\mathcal{U}-\mathcal{U}, \] \begin{multline*} \mathcal{U}((x_{i}^{(1)})_{i\notin J(r,k)},...,(x_{i}^{(t)})_{i\notin J(r,k)}) \\ =\bigcap_{(\varepsilon_{1},...,\varepsilon_{t})\in \{0,1\}^{t}}(\mathcal{U}_{N}-\varepsilon_{1}(x_{i}^{(1)})_{i\notin J(r,k)}-...-\varepsilon_{t}(x_{i}^{(t)})_{i\notin J(r,k)}). \end{multline*}

Notons $ \mathcal{F}(\xx)= F(\ee.\xx) $ (pour $ \ee $ fix\'e). On remarque que ce polyn\^ome est quasi-$ r $-homog\`ene de $ r $-degr\'e $ (d_{1},...,d_{r}) $ (car $ F $ l'est). En utilisant l'\'equation $ (11.2) $ de \cite{Sm}, on obtient la majoration :
\begin{multline*}
|S_{\ee,\kk,\II,(r,k)}(\alpha)|^{2^{t_{r,k}}}\ll \left(\prod_{i\in J(r,k)}B_{i}\right)^{2^{t_{r,k}}-t_{r,k}-1}\sum_{\substack{(x_{i}^{(1)})_{i\in J(r,k)}\\ |x_{i}^{(1)}|\leqslant B_{i}}}...\sum_{\substack{(x_{i}^{(t_{r,k}-1)})_{i\in J(r,k)}\\ |x_{i}^{(t_{r,k}-1)}|\leqslant B_{i}}} \\ \left|\sum_{\substack{(x_{i}^{(t_{r,k})})_{i\in J(r,k)}\in \mathcal{U}\left((x_{i}^{(l)})_{\substack{l\in \{1,...,t_{r,k}-1\} \\ i\in J(r,k)}}\right)}} e\left( \alpha\mathcal{F}_{t_{r,k}}((x_{i}^{(l)})_{\substack{l\in \{1,...,t_{r,k}\} \\ i\in J(r,k)}})\right)\right|^{2}.
\end{multline*}
Or, on a 
\begin{multline*}
 \left|\sum_{\substack{(x_{i}^{(t_{r,k})})_{i\in J(r,k)}\in \mathcal{U}\left((x_{i}^{(l)})_{\substack{l\in \{1,...,t_{r,k}-1\} \\ i\in J(r,k)}}\right)}} e\left( \alpha\mathcal{F}_{t_{r,k}}((x_{i}^{(l)})_{\substack{l\in \{1,...,t_{r,k}\} \\ i\in J(r,k)}})\right)\right|^{2} \\ =\sum_{\substack{(y_{i}^{(t_{r,k})})_{i\in J(r,k)}\in \mathcal{U}\left((x_{i}^{(l)})_{\substack{l\in \{1,...,t_{r,k}-1\} \\ i\in J(r,k)}}\right)}}\sum_{\substack{(z_{i}^{(t_{r,k})})_{i\in J(r,k)}\in \mathcal{U}\left((x_{i}^{(l)})_{\substack{l\in \{1,...,t_{r,k}-1\} \\ i\in J(r,k)}}\right)}} \\ e\left( \alpha\left(\mathcal{F}_{t_{r,k}}((x_{i}^{(l)})_{\substack{l\in \{1,...,t_{r,k}-1\} \\ i\in J(r,k)}},(y_{i}^{(t_{r,k})})_{i\in J(r,k)})-\mathcal{F}_{t_{r,k}}((x_{i}^{(l)})_{\substack{l\in \{1,...,t_{r,k}-1\} \\ i\in J(r,k)}},(z_{i}^{(t_{r,k})})_{i\in J(r,k)})\right)\right).
\end{multline*}
et si l'on pose \[\begin{array}{l}
 (x_{i}^{(t_{r,k}+1)})_{i\in J(r,k)}=(y_{i}^{(t_{r,k})})_{i\in J(r,k)} \\  (x_{i}^{(t_{r,k})})_{i\in J(r,k)}=(z_{i}^{(t_{r,k})}-y_{i}^{(t_{r,k})})_{i\in J(r,k)}, 
\end{array} \] on observe que : \begin{multline*}
\mathcal{F}_{t_{r,k}}\left((x_{i}^{(l)})_{\substack{l\in \{1,...,t_{r,k}-1\} \\ i\in J(r,k)}},(y_{i}^{(t_{r,k})})_{i\in J(r,k)}\right)-\mathcal{F}_{t_{r,k}}\left((x_{i}^{(l)})_{\substack{l\in \{1,...,t_{r,k}-1\} \\ i\in J(r,k)}},(z_{i}^{(t_{r,k})})_{i\in J(r,k)}\right) \\ =\mathcal{F}_{t_{r,k}+1}\left((x_{i}^{(l)})_{\substack{l\in \{1,...,t_{r,k}+1\} \\ i\in J(r,k)}}\right)-\mathcal{F}_{t_{r,k}}\left((x_{i}^{(l)})_{\substack{l\in \{1,...,t_{r,k}\} \\ i\in J(r,k)}}\right) \\ =\Delta_{(r,k)}^{(t_{r,k})}\FFF\left((x_{i})_{i\notin  J(r,k)}, (x_{i}^{(l)})_{\substack{ i\in J(r,k)\\ l\in \{1,...,t_{r,k}+1\}}}\right).
\end{multline*}

On a ainsi : 
\begin{multline*}
|S_{\ee,\kk,\II}(\alpha)|^{2^{t_{r,k}}}\ll \left(\prod_{i\notin J(r,k)}T_{i}\right)^{2^{t_{r,k}}-1}\left(\prod_{\substack{i\in J(r,k)}}T_{i}\right)^{2^{t_{r,k}}-t_{r,k}-1} \\ \sum_{\substack{(x_{i})_{i\notin J(r,k)} \\ |x_{i}|\leqslant T_{i}}} \sum_{(x_{i}^{(l)})_{\substack{i\in J(r,k)\\ l\in \{1,...,t_{r,k}+1\}}}}e\left(\alpha\Delta_{(r,k)}^{(t_{r,k})}\FFF\left((x_{i})_{i\notin  J(r,k)}, (x_{i}^{(l)})_{\substack{ l\in \{1,...,t_{r,k}+1\}\\ i\in J(r,k)}}\right) \right)  \\ = \left(\prod_{i=1}^{n+r}T_{i}\right)^{2^{t_{r,k}}-1}\left(\prod_{i\in J(r,k)}T_{i}\right)^{-t_{r,k}}\sum_{\substack{(x_{i})_{i\notin J(r,k)} \\ |x_{i}|\leqslant T_{i}}} \sum_{(x_{i}^{(l)})_{\substack{l\in \{1,...,t_{r,k}+1\}\\ i\in J(r,k)}}} \\ e\left(\alpha \Delta_{(r,k)}^{(t_{r,k})}\FFF\left((x_{i})_{i\notin J(r,k)}, (x_{i}^{(l)})_{\substack{ l\in \{1,...,t_{r,k}+1\}\\i\in J(r,k) }}\right) \right).\end{multline*} Par la suite, on proc\`ede de m\^eme avec les variables $ (x_{i})_{i\in J(r,k')} $ o\`u $ k' $ est le plus grand entier naturel non nul tel que $ k'<k $ et $ t_{r,k'}\neq 0 $. En utilisant la majoration pr\'ec\'edente, par une in\'egalit\'e de H\"{o}lder on trouve : 

\begin{multline}\label{deuxSek3}
|S_{\ee,\kk,\II}(\alpha)|^{2^{t_{r,k}+t_{r,k'}}}\ll \left(\prod_{i=1}^{n+r}T_{i}\right)^{2^{t_{r,k}+t_{r,k'}}-2^{t_{r,k'}}}\left(\prod_{i\in J(r,k)}T_{i}\right)^{-t_{r,k}2^{t_{r,k'}}} \\ \left(\prod_{i\notin J(r,k)\cup J(r,k')}T_{i}\right)^{2^{t_{r,k'}}-1}\left(\prod_{\substack{i\in J(r,k)\\ l\in \{1,...,t_{r,k}+1\}}}T_{i}\right)^{2^{t_{r,k'}}-1}\sum_{\substack{(x_{i})_{i\notin J(r,k)\cup J(r,k')} }}\sum_{(x_{i}^{(l)})_{\substack{l\in \{1,...,t_{r,k}+1\}\\i\in J(r,k) }}} \\ \left| \sum_{\substack{(x_{i})_{i\in J(r,k')} \\ |x_{i}|\leqslant T_{i}}}  e\left(\alpha\Delta_{(r,k)}^{(t_{r,k})}\FFF\left((x_{i})_{i\notin  J(r,k)}, (x_{i}^{(l)})_{\substack{l\in \{1,...,t_{r,k}+1\} \\ i\in J(r,k)}}\right) \right)\right|^{2^{t_{r,k'}}}
\\ = \left(\prod_{i=1}^{n+r}T_{i}\right)^{2^{t_{r,k}+t_{r,k'}}-2^{t_{r,k'}}}\left(\prod_{i\notin J(r,k)\cup J(r,k') }T_{i}\right)^{2^{t_{r,k'}}-1}\left(\prod_{i\in J(r,k)}T_{i}\right)^{2^{t_{r,k'}}-t_{r,k}-1} \\ \sum_{\substack{(x_{i})_{i\notin J(r,k)\cup J(r,k')} }}\sum_{(x_{i}^{(l)})_{\substack{l\in \{1,...,t_{r,k}+1\}\\ i\in J(r,k)}}} |S_{\ee,\kk,(k',r)}(\alpha)|^{2^{t_{r,k'}}}
 \end{multline}
o\`u \begin{equation}
S_{\ee,\kk,(k',r)}(\alpha)= \sum_{\substack{(x_{i})_{i\in J(r,k')} \\ |x_{i}|\leqslant T_{i}}}  e\left(\alpha\Delta_{(r,k)}^{(t_{r,k})}\FFF\left((x_{i})_{i\notin  J(r,k)}, (x_{i}^{(l)})_{\substack{i\in J(r,k) \\ l\in \{1,...,t_{r,k}+1\} }}\right) \right).
\end{equation} 
De la m\^eme mani\`ere que pr\'ec\'edemment, on trouve : 
\begin{multline*}
|S_{\ee,\kk,(k',r)}(\alpha)|^{2^{t_{r,k'}}}\ll \left(\prod_{i\in J(r,k')}T_{i}\right)^{2^{t_{r,k'}}-t_{r,k'}-1}\sum_{\substack{(x_{i}^{(1)})_{i\in J(r,k)}}}...\sum_{\substack{(x_{i}^{(t_{r,k'+1})})_{i\in J(r,k')}}}\sum_{\substack{(x_{i}^{(t_{r,k'})})_{i\in J(r,k')}}} \\  e\left( \alpha \Delta_{(r,k')}^{(t_{r,k'})}\circ\Delta_{(r,k)}^{(t_{r,k})}\FFF\left((x_{i})_{i\notin  J(r,k)\cup J(r,k')}, (x_{i}^{(l)})_{\substack{ i\in J(r,k)\\l\in \{1,...,t_{r,k}\} }},(x_{i}^{(l)})_{\substack{i\in J(r,k')\\ l\in \{1,...,t_{r,k'}+1\} }}\right) \right)
\end{multline*}
On obtient alors, par la majoration \eqref{deuxSek3}, \begin{multline}|S_{\ee,\kk}(\alpha)|^{2^{t_{r,k}+t_{r,k'}}}\ll\left(\prod_{i=1}^{n+r}T_{i}\right)^{2^{t_{r,k}+t_{r,k'}}-1}\left(\prod_{i\in J(r,k)}T_{i}\right)^{-t_{r,k}}\left(\prod_{i\in J(r,k')}T_{i}\right)^{-t_{r,k'}}  \\ \sum_{\substack{(x_{i})_{i\notin J(r,k)\cup J(r,k') } }}\sum_{(x_{i}^{(l)})_{\substack{i\in J(r,k)\\ l\in \{1,...,t_{r,k}+1\}}}} \sum_{(x_{i}^{(l)})_{\substack{i\in J(r,k')\\ l\in \{1,...,t_{r,k'}+1\}}}} \\  e\left( \alpha\Delta_{(r,k')}^{(t_{r,k'})}\circ\Delta_{(r,k)}^{(t_{r,k})}\FFF\left((x_{i})_{i\notin J(r,k)\cup J(r,k') }, (x_{i}^{(l)})_{\substack{  i\in J(r,k)\\l\in \{1,...,t_{r,k}+1\}}},(x_{i}^{(l)})_{\substack{i\in J(r,k') \\ l\in \{1,...,t_{r,k'}+1\}}}\right) \right)
 \end{multline}
Par r\'ecurrence, on obtient finalement, en posant pour tout $ j\in \{1,...,r\} $ \begin{equation} D_{j}=\sum_{k\geqslant 1 }t_{j,k}, \end{equation}

\begin{equation} J(j)=\bigcup_{k\geqslant 1 }J(j,k), \end{equation}\begin{multline}
|S_{\ee,\kk,\II}(\alpha)|^{2^{D_{r}}}\ll\left(\prod_{i=1}^{n+r}T_{i}\right)^{2^{D_{r}}-1}\left(\prod_{i\in J(r)}T_{i}^{-t_{r,a_{i,r}}}\right) \sum_{\substack{(x_{i})_{i\notin J(r)} }}\\ \sum_{(x_{i}^{(l)})_{\substack{i\in J(r)\\ l\in \{1,...,t_{r,a_{i,r}}+1\}}}}  e\left( \alpha\Delta^{\tt_{r}}\FFF\left((x_{i})_{i\notin  J(r)}, (x_{i}^{(l)})_{\substack{l\in \{1,...,t_{r,a_{i,r}}+1\} \\ i\in J(r)}}\right) \right).
\end{multline}
Si $ i\in I_{0} $ et $ i\notin J(r) $ (i.e si $ a_{i,r}=0 $), nous noterons dor\'enavant $ x_{i}^{(1)}=x_{i} $. Nous poserons par ailleurs, pour tout $ j\in \{1,...,r\} $, \[ t_{j,k}'=\left\{\begin{array}{lcr} t_{r,k}+1 & \mbox{si} & k\neq 0 \\ 1 & \mbox{si} & k=0. 
\end{array}\right.\]
La majoration pr\'ec\'edente peut alors \^etre r\'e\'ecrite :
\begin{multline}\label{formeDr3}
|S_{\ee,\kk,\II}(\alpha)|^{2^{D_{r}}}\ll\left(\prod_{i=1}^{n+r}T_{i}\right)^{2^{D_{r}}}\left(\prod_{i\notin I_{0}}T_{i}\right)^{-1}\left(\prod_{i\notin I_{0}}T_{i}^{-t_{r,a_{i,r}}'}\right) \sum_{\substack{(x_{i})_{i\notin I_{0}} }}\\ \sum_{(x_{i}^{(l)})_{\substack{i\in I_{0}\\ l\in \{1,...,t_{r,a_{i,r}}'\}}}}  e\left( \alpha\Delta^{\tt_{r}}\FFF\left((x_{i})_{i\notin  I_{0}}, (x_{i}^{(l)})_{\substack{l\in \{1,...,t_{r,a_{i,r}}'\} \\ i\in I_{0}}}\right) \right).
\end{multline}

Nous pouvons \`a pr\'esent effectuer les m\^emes op\'erations que pr\'ec\'edemment avec les variables $ (x_{i}^{(l)})_{\substack{i\in J(r-1,k) \\ l\in \{1,...,t_{r,k}'\} }} $ pour les $ k\geqslant 1 $ tels que $ t_{r-1,k}>0 $. On obtient alors la majoration :
\begin{multline*}
|S_{\ee,\kk,\II}(\alpha)|^{2^{D_{r}+D_{r-1}}}\ll\left(\prod_{i=1}^{n+r}T_{i}\right)^{2^{D_{r}+D_{r-1}}}\left(\prod_{i\notin I_{0}}T_{i}\right)^{-1}\left(\prod_{i\in  I_{0}}T_{i}^{-t_{r,a_{i,r}}'t_{r-1,a_{i,r-1}}'}\right) \\ \sum_{\substack{(x_{i})_{i\notin  I_{0} }}} \sum_{(x_{i}^{(l,l')})_{\substack{l\in \{1,...,t_{r,a_{i,r}}'\} \\ l'\in \{1,...,t_{r-1,a_{i,r-1}}'\} \\ i\in  I_{0}}}}  e\left( \alpha\Delta^{\tt_{r-1}}\circ\Delta^{\tt_{r}}\FFF\left((x_{i})_{i\notin I_{0}}, (x_{i}^{(l,l')})_{\substack{l\in \{1,...,t_{r,a_{i,r}}'\} \\ l'\in \{1,...,t_{r-1,a_{i,r-1}}'\} \\ i\in  I_{0}}}\right) \right).
\end{multline*}
En proc\'edant de m\^eme, on obtient, par r\'ecurrence : 
\begin{multline}\label{weyl13}
|S_{\ee,\kk,\II}(\alpha)|^{2^{D_{1}+D_{2}+...+D_{r}}}\ll\left(\prod_{i=1}^{n+r}T_{i}\right)^{2^{D_{1}+...+D_{r}}}\left(\prod_{i\notin I_{0}}T_{i}\right)^{-1}\left(\prod_{i\in  I_{0}}T_{i}^{-\prod_{j=1}^{r}t_{j,a_{i,j}}'}\right) \\ \sum_{\substack{\xx_{0}=(x_{i})_{i\notin  I_{0} }}} \sum_{\widetilde{\XX}=(x_{i}^{\l})_{\substack{(i,\l)\in \tilde{L}}}}  e\left( \alpha\Delta^{\tt_{1}}\circ...\circ\Delta^{\tt_{r}}\FFF\left(\xx_{0}, \widetilde{\XX}\right) \right).
\end{multline}
Rappelons que, d'apr\`es la proposition\;\ref{propdeltat3}, on peut \'ecrire : 

\[ \Delta^{\tt}\FFF(\xx_{0},\widetilde{\XX})=\Gamma(\XX), \]
o\`u $ \Gamma(\XX)\in \ZZ[\XX] $ est de degr\'e $ t_{j,k} $ en $ (x_{i}^{\l})_{(i,\l)\in L(j,k)} $ et lin\'eaire en chaque $ (x_{i}^{\l})_{(i,\l)\in L(j,k,h)} $ pour tout $ (j,k)\in \CCC_{0} $ et $ h\in \{1,...,t_{j,k}\} $. En particulier, on remarque que pour tout $ (j,k)\in \CCC_{0} $, on peut \'ecrire $ \Gamma(\XX) $ sous la forme : \[ e_{i}\sum_{(i,\l)\in E(j,k) }\gamma_{(i,\l)}^{(j,k)}(\widehat{\XX}_{(j,k)})\x_{i}^{\l}, \] o\`u $ \gamma_{(i,\l)}^{(j,k)}(\widehat{\XX}_{(j,k)}) $ est de degr\'e $ t_{j,k}-1 $ en $ (x_{i}^{\l})_{(i,\l)\in L(j,k)} $ et lin\'eaire en chaque $ (x_{i}^{\l})_{(i,\l)\in L(j,k,h)} $ pour tout $ h\in \{1,...,t_{j,k}-1\} $. \\

Notons \`a pr\'esent \begin{equation}
K=\max_{t_{1,k}\neq 0}k.
\end{equation}
Nous pouvons majorer la somme apparaissant dans l'in\'egalit\'e\;\eqref{weyl13} par : \begin{multline}\label{sommeweyl3}
\sum_{\substack{\xx_{0}}} \sum_{(x_{i}^{\l})_{\substack{(i,\l)\in \tilde{L}\setminus E(1,K)}}} \left| \sum_{(x_{i}^{\l})_{(i,\l)\in E(1,K)}} e\left( \alpha\Delta^{\tt}\FFF(\xx_{0},\widehat{\XX}_{(1,K)},(x_{i}^{\l})_{(i,\l)\in E(1,K)}) \right)\right| \\ =\sum_{\substack{\xx_{0}}} \sum_{(x_{i}^{\l})_{\substack{(i,\l)\in \tilde{L}\setminus E(1,K)}}} \left| \sum_{(x_{i}^{\l})_{(i,\l)\in E(1,K)}} e\left( \alpha e_{i}\sum_{(i,\l)\in E(1,K) }\gamma_{(i,\l)}^{(1,K)}(\widehat{\XX}_{(1,K)})x_{i}^{\l} \right)\right|.
\end{multline}
La derni\`ere somme de \eqref{sommeweyl3} peut \^etre r\'e\'ecrite : \begin{multline}\label{prodweil3}
\prod_{\substack{(i,\l)\in E(1,K)}}\left| \sum_{\substack{(x_{i}^{\l})_{(i,\l)\in E(1,K)}}}e\left( \alpha e_{i}\gamma_{(i,\l)}^{(1,K)}(\widehat{\XX}_{(1,K)})\x_{i}^{\l}\right)\right|.
\end{multline}
En notant pour tout r\'eel $ x $ \[ ||x||=\inf_{m\in \ZZ }|x-m|, \] et en consid\'erant la majoration, pour $ P\gg 1 $ \[ \sum_{m\in I(P)\cap\ZZ}e(\beta m) \ll \min\{P,||\beta||^{-1}\} \] pour tout intervalle $ I(P) $ de taille $ O(P) $, on remarque que l'on peut majorer \eqref{prodweil3} par : \begin{multline}\label{facteur3}
\prod_{\substack{(i,\l)\in E(1,K)}} \min\left( T_{i},||\alpha e_{i}\gamma_{(i,\l)}^{(1,K)}(\widehat{\XX}_{(1,K)})||^{-1}\right).
\end{multline}
On obtient ainsi, \`a partir de\;\eqref{weyl13}, la majoration 

\begin{multline*}
|S_{\ee,\kk,\II}(\alpha)|^{2^{D_{1}+D_{2}+...+D_{r}}}\ll\left(\prod_{i=1}^{n+r}T_{i}\right)^{2^{D_{1}+...+D_{r}}}\left(\prod_{i\notin I_{0}}T_{i}\right)^{-1}\left(\prod_{i\in  I_{0}}T_{i}^{-\prod_{j=1}^{r}t_{j,a_{i,j}}'}\right) \\  \sum_{\substack{\xx_{0}}} \sum_{(x_{i}^{\l})_{\substack{(i,\l)\in \tilde{L}\setminus E(1,K)}}}\prod_{\substack{(i,\l)\in E(1,K)}} \min\left( T_{i},||\alpha e_{i}\gamma_{(i,\l)}^{(1,K)}(\widehat{\XX}_{(1,K)})||^{-1}\right)\\  \ll \left(\prod_{i=1}^{n+r}T_{i}\right)^{2^{D_{1}+...+D_{r}}}\left(\prod_{i\in  I_{0}}T_{i}^{-\prod_{j=1}^{r}t_{j,a_{i,j}}}\right)   \sum_{\widehat{\XX}_{(1,K)}=(x_{i}^{\l})_{\substack{(i,\l)\in L\setminus E(1,K)}}}\\ \prod_{\substack{(i,\l)\in E(1,K)}} \min\left( T_{i},||\alpha e_{i}\gamma_{(i,\l)}^{(1,K)}(\widehat{\XX}_{(1,K)})||^{-1}\right)
\end{multline*}
en notant $ t_{j,0}=1 $ pour tout $ j\in \{1,...,r\} $ (puisque le terme\;\eqref{facteur3} est ind\'ependant des $ (x_{i}^{\l})_{\substack{(i,\l)\in \tilde{L}\\ i\in J(j,k), \; l_{j}=t_{j,k}+1}} $ pour tout $ (j,k)\in \CCC_{0} $, on peut remplacer les sommes en ces variables par un \linebreak $ O(\prod_{\substack{(j_{0},k)\in \CCC_{0} }}\prod_{i\in J(j_{0},k)}T_{i}^{t_{j_{0},a_{i,j_{0}}}\prod_{j\neq j_{0}}t_{j,a_{i,j}}'}) $, de telle sorte que les seules variables que nous aurons \`a consid\'erer dor\'enavant sont les variables $ (x_{i}^{\l})_{\substack{(i,\l)\in L}} $).\\

Consid\'erons \`a pr\'esent \[ \rr=(r_{i,\l})_{\substack{(i,\l)\in E(1,K)}}\in \prod_{\substack{(i,\l)\in E(1,K)}}\{0,1,...,T_{i}\} \] 
et choisissons \[ K'=\max_{k\in \PPP(1) \; |\; k<K, \;t_{1,K'}\neq 0} k,  \] o\`u $ \PPP(1)=\{k\in \NN\setminus\{0\} \; |\; \exists i\in I_{0} \; |\; a_{i,1}=k\} $. Pour \[ \widehat{\XX}_{\substack{(1,K) \\(1,K')}}=(x_{i}^{\l})_{\substack{(i,\l)\in\widehat{L}(1,K)\cap \widehat{L}(1,K')}} \] fix\'e, nous noterons $ \mathcal{A}\left( \widehat{\XX}_{\substack{(1,K) \\(1,K')}},\rr\right) $ l'ensemble des vecteurs $ (x_{m}^{\hh})_{\substack{(m,\hh)\in E(1,K')}} $ tels que : \[ \left|x_{m}^{\hh}\right|<T_{m}, \] et \begin{multline*} \forall (i,\l)\in E(1,K), \;  ||\alpha e_{i}\gamma_{(i,\l)}^{(1,K)}(\widehat{\XX}_{(1,K)})||\in [r_{i,\l}T_{i}^{-1},(r_{i,\l}+1)T_{i}^{-1}]
\end{multline*}
et $ A\left(\XX,\rr\right) $ le cardinal de cet ensemble. On a alors, en reprenant les \'equations \eqref{weyl13}, \eqref{prodweil3}, \eqref{sommeweyl3}, l'estimation \begin{multline*}
|S_{\ee,\kk,\II}(\alpha)|^{2^{D_{1}+D_{2}+...+D_{r}}}\ll\left(\prod_{i=1}^{n+r}T_{i}\right)^{2^{D_{1}+...+D_{r}}}\left(\prod_{i\in  I_{0}}T_{i}^{-\prod_{j=1}^{r}t_{j,a_{i,j}}}\right) \\  \sum_{\widehat{\XX}_{\substack{(1,K) \\(1,K')}}}\sum_{\rr}A\left(\widehat{\XX}_{\substack{(1,K) \\(1,K')}},\rr\right) \prod_{\substack{(i,\l)\in E(1,K)  }}\min\left( T_{i},\max\left(\frac{T_{i}}{r_{i,\l}},\frac{T_{i}}{T_{i}-r_{i,\l}-1}\right)\right).
\end{multline*}
Si par ailleurs $ (x_{m}^{\hh})_{\substack{(m,\hh)\in E(1,K')}} $ et $ (y_{m}^{\hh)})_{\substack{(m,\hh)\in E(1,K')}} $ sont des \'el\'ements de $ \mathcal{A}\left(\XX,\rr\right) $, on a alors que  
\[ |x_{m}^{\hh}-y_{m}^{\hh}|\ll T_{m}, \] et d'autre part, pour tous $ (i,\l)\in E(1,K) $ : \begin{multline*}
\gamma_{(i,\l)}^{(1,K)}(\widehat{\XX}_{\substack{(1,K) \\(1,K')}},(x_{m}^{\hh})_{\substack{(m,\hh)\in E(1,K')}}) \\ -\gamma_{(i,\l)}^{(1,K)}(\widehat{\XX}_{\substack{(1,K) \\(1,K')}},(y_{m}^{\hh})_{\substack{(m,\hh)\in E(1,K')}}) \\ = \gamma_{(i,\l)}^{(1,K)}(\widehat{\XX}_{\substack{(1,K) \\(1,K')}},(x_{m}^{\hh}-y_{m}^{\hh})_{\substack{(m,\hh)\in E(1,K')}}).
\end{multline*}
On note alors $ N\left(\widehat{\XX}_{\substack{(1,K) \\(1,K')}}\right) $ le cardinal de l'ensemble des $ (x_{m}^{\hh})_{\substack{(m,\hh)\in E(1,K')}} $ tels que  \[ \left|x_{m}^{\hh}\right|<T_{m}, \] \begin{multline*} \forall (i,\l)\in E(1,K), \;  ||\alpha e_{i}\gamma_{(i,\l)}^{(1,K)}(\widehat{\XX}_{\substack{(1,K) \\(1,K')}},(x_{m}^{\hh})_{\substack{(m,\hh)\in E(1,K')}}) ||<T_{i}^{-1},
\end{multline*}
et on obtient donc  \begin{multline}\label{N(X)3}
\sum_{\rr}A\left(\widehat{\XX}_{\substack{(1,K) \\(1,K')}},\rr\right)\prod_{\substack{(i,\l)\in E(1,K)}}\min\left( T_{i},\max\left(\frac{T_{i}}{r_{i,\l}},\frac{T_{i}}{T_{i}-r_{i,\l}-1}\right)\right) \\ \ll N\left(\widehat{\XX}_{\substack{(1,K) \\(1,K')}}\right) \sum_{\rr}\prod_{\substack{(i,\l)\in E(1,K)}}\min\left( T_{i},\max\left(\frac{T_{i}}{r_{i,\l}},\frac{T_{i}}{T_{i}-r_{i,\l}-1}\right)\right) \\ =N\left(\widehat{\XX}_{\substack{(1,K) \\(1,K')}}\right)\prod_{\substack{(i,\l)\in E(1,K)}}\sum_{r_{i,\l}}\min\left( T_{i},\max\left(\frac{T_{i}}{r_{i,\l}},\frac{T_{i}}{T_{i}-r_{i,\l}-1}\right)\right)\\  \ll N\left(\widehat{\XX}_{\substack{(1,K) \\(1,K')}}\right)\prod_{\substack{(i,\l)\in E(1,K)}}T_{i}\log(T_{i}).
\end{multline}
Notons pour tout $ (j,k)\in \CCC_{0} $ : \begin{equation}
\MM_{(j,k)}\left(\alpha,(A_{m,\hh})_{\substack{(m,\hh)\in L\setminus E(j,k)}}, (B_{i,\l})_{\substack{(i,\l)\in E(j,k)}}\right)
\end{equation} 
 l'ensemble des $ \widehat{\XX}_{(j,k)} $ tels que, pour tout $ (m,\hh)\in L\setminus E(j,k) $ \[ |x_{m}^{(\hh)}|<A_{m,\hh}, \] et pour tout $ (i,\l)\in E(j,k) $  \[ ||\alpha e_{i}\gamma_{(i,\l)}^{(j,k)}(\widehat{\XX}_{\substack{(1,K)}}) ||<B_{i,\l}. \] On note par ailleurs \begin{equation}
M_{(j,k)}\left(\alpha,(A_{m,\hh})_{\substack{(m,\hh)\in L\setminus E(j,k)}}, (B_{i,\l})_{\substack{(i,\l)\in E(j,k)}}\right)
\end{equation} le cardinal de cet ensemble. On obtient (\`a partir de\;\eqref{N(X)3}) : 

\begin{multline}\label{weyl23}
|S_{\ee,\kk,\II}(\alpha)|^{2^{D_{1}+D_{2}+...+D_{r}}} \\ \ll\left(\prod_{i=1}^{n+r}T_{i}\right)^{2^{D_{1}+...+D_{r}}}\left(\prod_{i\in  I_{0}}T_{i}\right)^{-\prod_{j=1}^{r}t_{j,a_{i,j}}}\prod_{\substack{(i,\l)\in E(1,K)}} T_{i}\log(T_{i}) \\ \times M_{(1,K)}\left(\alpha,(T_{m})_{\substack{(m,\hh)\in L\setminus E(1,K)}}, (T_{i}^{-1})_{\substack{(i,\l)\in E(1,K)}}\right).
\end{multline}
En remarquant que \[ \prod_{\substack{(i,\l)\in E(1,K)}} T_{i}\log(T_{i})= \prod_{i\in J(1,K)}(T_{i}\log(T_{i}))^{\prod_{j=2}^{r}t_{j,a_{i,j}}}, \] et en sommant la majoration obtenue dans \eqref{weyl23} sur les $ \kk=(k_{1},...,k_{r}) $ tels que $ k_{j}\leqslant P_{j} $, et sur les $ \II\subset \{1,...,n+r\} $, on obtient le lemme ci-dessous : 
\begin{lemma}\label{dilemme13}
Pour tout $ \varepsilon>0 $ arbitrairement petit on a\begin{multline*}
|S_{\ee}(\alpha)| \ll\left(\prod_{j=1}^{r}P_{j}\right)\left(\prod_{i=1}^{n+r}T_{i}\right)^{(1+\varepsilon)}\left(\prod_{i\in  I_{0}}T_{i}^{-\frac{\prod_{j=1}^{r}t_{j,a_{i,j}}}{2^{D_{1}+D_{2}+...+D_{r}}}}\right)\\ \times \left(\prod_{i\in J(1,K)}T_{i}^{\prod_{j=2}^{r}t_{j,a_{i,j}}}\right)^{2^{-(D_{1}+D_{2}+...+D_{r})}} \\ \times \left( M_{(1,K)}\left(\alpha,(T_{m})_{\substack{(m,\hh)\in L\setminus E(1,K)}}, (T_{i}^{-1})_{\substack{(i,\l)\in E(1,K)}}\right)\right)^{2^{-(D_{1}+D_{2}+...+D_{r})}},
\end{multline*}
\end{lemma}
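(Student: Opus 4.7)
The plan is to derive Lemma \ref{dilemme13} from inequality \eqref{weyl23} by a purely bookkeeping manipulation: extract the $2^{D_1+\cdots+D_r}$-th root, sum over the parameters $\kk$ and $\II$ that were fixed during the Weyl differencing, and absorb the logarithmic losses into an $\varepsilon$-factor. All of the analytic substance --- the iterated Cauchy--Schwarz, the identification of $\Delta^{\tt}\FFF$ as a multilinear form via Proposition \ref{propdeltat3}, and the packaging into the counting function $M_{(1,K)}$ --- has already been carried out.

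More precisely, I would first observe that the right-hand side of \eqref{weyl23} is independent of $\kk$ and $\II$, since it is expressed in terms of the uniform bounds $T_i$ (which majorize the $\kk$-dependent bounds $B_i$) and the combinatorial constants attached to the Weyl differencing scheme. Taking the $2^{D_1+\cdots+D_r}$-th root therefore produces a uniform bound on $|S_{\ee,\kk,\II}(\alpha)|$. Combined with the elementary decomposition
\[ |S_\ee(\alpha)| \ll \sum_{\substack{\kk \in \NN^r \\ k_j \leqslant P_j}} \sum_{\II \subset \{1,\ldots,n+r\}} |S_{\ee,\kk,\II}(\alpha)|, \]
the sum over $\kk$ contributes the factor $\prod_{j=1}^r P_j$ asserted by the lemma, while the sum over the $2^{n+r}$ subsets $\II$ is absorbed into the implied constant.

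It then remains to recast the trailing factors. The logarithmic terms $\log T_i$ originating from $\prod_{(i,\l) \in E(1,K)} T_i \log T_i$ in \eqref{weyl23} are absorbed using $\log T_i \ll_\varepsilon T_i^\varepsilon$, producing the global factor $\left(\prod_{i=1}^{n+r} T_i\right)^{\varepsilon}$ of the conclusion. To identify the remaining product, one uses the combinatorial description $E(1,K) = \{(i,\l) \in L(1,K) \mid l_1 = t_{1,K}\}$: for each fixed $i \in J(1,K)$, the index $l_1$ is forced, while $(l_2, \ldots, l_r)$ runs over $\prod_{j=2}^r \{1, \ldots, t_{j,a_{i,j}}\}$ (with the convention $t_{j,0} = 1$). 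This yields
\[ \prod_{(i,\l) \in E(1,K)} T_i \;=\; \prod_{i \in J(1,K)} T_i^{\prod_{j=2}^r t_{j,a_{i,j}}}, \]
which after raising to the power $1/2^{D_1+\cdots+D_r}$ is exactly the factor in the lemma.

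The main obstacle is not mathematical but combinatorial: one must verify that dividing each exponent in \eqref{weyl23} by $2^{D_1+\cdots+D_r}$ produces precisely the normalized exponents $-\prod_{j=1}^r t_{j,a_{i,j}}/2^{D_1+\cdots+D_r}$ on $T_i$ for $i \in I_0$ and $\prod_{j=2}^r t_{j,a_{i,j}}/2^{D_1+\cdots+D_r}$ on $T_i$ for $i \in J(1,K)$ appearing in the statement. This is purely arithmetic bookkeeping, with no further analytic input required.
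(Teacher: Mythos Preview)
Your proposal is correct and follows essentially the same approach as the paper: the paper's proof is precisely to rewrite $\prod_{(i,\l)\in E(1,K)} T_i\log T_i = \prod_{i\in J(1,K)}(T_i\log T_i)^{\prod_{j=2}^{r}t_{j,a_{i,j}}}$, then sum the bound \eqref{weyl23} over $\kk$ with $k_j\leqslant P_j$ and over $\II\subset\{1,\dots,n+r\}$, exactly as you describe.
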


\subsection{G\'eom\'etrie des r\'eseaux}\label{sectionreseau3}
Dans cette partie, nous allons chercher \`a obtenir une bonne estimation de \begin{equation*}
 M_{(1,K)}\left(\alpha,(T_{m})_{\substack{(m,\hh)\in L\setminus E(1,K)}}, (T_{i}^{-1})_{\substack{(i,\l)\in E(1,K)}}\right).
\end{equation*}
Plus pr\'ecis\'ement si $ \Lambda $ est un r\'eseau de dimension $ d $ et si $ C $ est un corps convexe de $ \Lambda\otimes \RR $ contenant l'origine, on a alors pour tout $ P\geqslant 1 $ et tout r\'eel $ \theta\in [0,1] $ : \begin{equation}\label{inegreseau3} \Card(\Lambda\cap PC)\ll (P/P^{\theta})^{d}\Card(\Lambda\cap P^{\theta}C). \end{equation} C'est le principe g\'en\'eral de la m\'ethode utilis\'ee dans ce qui va suivre : il s'agit de consid\'erer \begin{equation*}
 M_{(1,K)}\left(\alpha,(T_{m})_{\substack{(m,\hh)\in L\setminus E(1,K)}}, (T_{i}^{-1})_{\substack{(i,\l)\in E(1,K)}}\right).
\end{equation*} comme une famille d'intersections de r\'eseaux avec des corps convexes et d'utiliser les lemmes qui vont suivre (qui donnent des versions de l'in\'egalit\'e \eqref{inegreseau3} adapt\'ees aux cas qui nous int\'eressent) afin de r\'eduire les bornes intervenant dans $ M_{(1,K)} $. Nous allons \`a pr\'esent \'etablir des r\'esultats de g\'eom\'etrie des nombres qui nous serons utiles pour la suite de cette section. Il s'agit en fait de g\'en\'eralisations de \cite[Lemme 12.6]{D} et de \cite[Lemme 3.1]{S1}.

\subsubsection{Pr\'eliminaires}

\begin{lemma}\label{geomnomb12}
On consid\`ere deux entiers $ n_{1},n_{2}>0 $, des r\'eels $ (\lambda_{i,j})_{\substack{1\leqslant i\leqslant n_{1} \\1\leqslant i\leqslant n_{2}} } $ et des formes lin\'eaires \[ \forall i \in  \{1,...,n_{1}\}, \; \forall \uu=(u_{1},...,u_{n_{2}}) \; L_{i}(\uu)=\sum_{j=1}^{n_{2}}\lambda_{i,j}u_{j}, \] et \[ \forall j \in  \{1,...,n_{2}\}, \; \forall \uu=(u_{1},...,u_{n_{1}}) \; L^{t}_{j}(\uu)=\sum_{i=1}^{n_{1}}\lambda_{i,j}u_{i}. \] Soient $ a_{1},...,a_{n_{2}},b_{1},...,b_{n_{1}}>1 $ des r\'eels fix\'es. Pour tout $ 0\leqslant Z\leqslant 1 $, on note \begin{multline*} U(Z)=\Card\left\{ (u_{1},...,u_{n_{2}},u_{n_{2}+1},...,u_{n_{2}+n_{1}})\in \ZZ^{n_{1}+n_{2}} \; | \; \forall j \in \{1,...,n_{2}\} \; \right.\\ \left.  |u_{j}|\leqslant a_{j}Z \;\et \; \forall i\in \{1,...,n_{1}\}\; |L_{i}(u_{1},...,u_{n_{2}})-u_{n_{2}+i}|\leqslant b_{i}^{-1}Z \right\}, \end{multline*} \begin{multline*} U^{t}(Z)=\Card\left\{ (u_{1},...,u_{n_{1}},u_{n_{1}+1},...,u_{n_{1}+n_{2}})\in \ZZ^{n_{1}+n_{2}} \; | \; \forall i \in \{1,...,n_{1}\} \; \right.\\ \left. |u_{i}|\leqslant b_{i}Z \;  \et \; \forall j\in \{1,...,n_{2}\}\; |L^{t}_{j}(u_{1},...,u_{n_{1}})-u_{n_{1}+i}|\leqslant a_{j}^{-1}Z \right\}. \end{multline*} Si $ 0<Z_{1}\leqslant Z_{2}\leqslant 1 $, on a alors : \[ U(Z_{2})\ll_{n_{1},n_{2}} \max\left( \left(\frac{Z_{2}}{Z_{1}}\right)^{n_{2}}U(Z_{1}), \frac{Z_{2}^{n_{2}}}{Z_{1}^{n_{1}}}\frac{\prod_{j=1}^{n_{2}}a_{j}}{\prod_{i=1}^{n_{1}}b_{i}}U^{t}(Z_{1})\right). \]
\end{lemma}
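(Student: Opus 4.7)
Le plan est d'interpr\'eter $U(Z)$ et $U^{t}(Z)$ comme cardinaux d'intersection entre un r\'eseau et un parall\'el\'epip\`ede sym\'etrique de $\RR^{n_{1}+n_{2}}$, puis d'appliquer le th\'eor\`eme des minima successifs de Minkowski combin\'e avec le principe de transfert de Mahler, en g\'en\'eralisant directement la preuve du lemme\;12.6 de Davenport et du lemme\;3.1 de Schindler. Plus pr\'ecis\'ement, je poserai $d=n_{1}+n_{2}$, je consid\'ererai le r\'eseau unimodulaire $\Lambda\subset\RR^{d}$ engendr\'e par les vecteurs $\bigl(e_{j},(\lambda_{i,j})_{1\leqslant i\leqslant n_{1}}\bigr)$ pour $j\in\{1,\dots,n_{2}\}$ et $(0,e_{i})$ pour $i\in\{1,\dots,n_{1}\}$, ainsi que le parall\'el\'epip\`ede $B=\prod_{j}[-a_{j},a_{j}]\times\prod_{i}[-b_{i}^{-1},b_{i}^{-1}]$ : le changement de variables $w_{i}=L_{i}(u_{1},\dots,u_{n_{2}})-u_{n_{2}+i}$ montre que $U(Z)=\Card(\Lambda\cap ZB)$. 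Sym\'etriquement, l'expression explicite d'une base du r\'eseau dual $\Lambda^{*}$ et la comparaison (\`a une constante pr\`es ne d\'ependant que de $d$) du corps polaire de $B$ avec le parall\'el\'epip\`ede $B^{*}=\prod_{i}[-b_{i},b_{i}]\times\prod_{j}[-a_{j}^{-1},a_{j}^{-1}]$ donnent $U^{t}(Z)\asymp\Card(\Lambda^{*}\cap ZB^{*})$.

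Notant $s_{1}\leqslant\cdots\leqslant s_{d}$ les minima successifs de $\Lambda$ relativement \`a $B$, le deuxi\`eme th\'eor\`eme de Minkowski fournit l'\'equivalence
\[ U(Z)\asymp_{d}\prod_{k=1}^{d}\max(1,Z/s_{k}), \]
et la relation de transfert de Mahler $s_{k}^{*}s_{d-k+1}\asymp 1$ donne de m\^eme $U^{t}(Z)\asymp_{d}\prod_{k=1}^{d}\max(1,Zs_{k})$. L'unimodularit\'e de $\Lambda$ entra\^ine par ailleurs la relation cl\'e $s_{1}\cdots s_{d}\asymp\Vol(B)^{-1}\asymp\prod_{i}b_{i}/\prod_{j}a_{j}$.

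La conclusion r\'esultera alors d'une disjonction sur l'entier $M=\Card\{k:s_{k}\leqslant Z_{2}\}$. Si $M\leqslant n_{2}$, la comparaison terme \`a terme des deux produits donne imm\'ediatement $U(Z_{2})\ll(Z_{2}/Z_{1})^{M}U(Z_{1})\leqslant(Z_{2}/Z_{1})^{n_{2}}U(Z_{1})$, soit le premier terme du maximum. Si $M>n_{2}$, on \'ecrit $U(Z_{2})\asymp Z_{2}^{M}/\prod_{k=1}^{M}s_{k}$; en multipliant et divisant par $\prod_{k=M+1}^{d}s_{k}$ et en invoquant l'identit\'e $\prod_{k}s_{k}\asymp\prod_{i}b_{i}/\prod_{j}a_{j}$, on fait surgir le facteur $\prod_{j}a_{j}/\prod_{i}b_{i}$, puis on reconna\^it $\prod_{k=M+1}^{d}\max(1,Z_{1}s_{k})\leqslant U^{t}(Z_{1})$; les in\'egalit\'es $Z_{1},Z_{2}\leqslant 1$ et $s_{k}>Z_{2}\geqslant Z_{1}$ pour $k>M$ permettent enfin de majorer la contribution des $s_{k}$ \og interm\'ediaires \fg\ par $Z_{2}^{n_{2}}Z_{1}^{-n_{1}}$, ce qui livre le second terme.

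Le principal obstacle technique sera d'identifier avec soin le r\'eseau dual et le corps polaire de $B$ pour justifier l'\'equivalence $U^{t}(Z)\asymp\Card(\Lambda^{*}\cap ZB^{*})$ (qui requiert un contr\^ole des constantes dans la comparaison entre le corps polaire de $B$ et son approximation par un parall\'el\'epip\`ede), et de suivre pr\'ecis\'ement l'apparition du facteur $\prod_{j}a_{j}/\prod_{i}b_{i}$ dans le second cas de la disjonction, o\`u interviennent simultan\'ement l'unimodularit\'e du r\'eseau et les bornes sur les minima interm\'ediaires.
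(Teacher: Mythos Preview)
Your proposal is correct and follows essentially the same strategy as the paper: interpret $U(Z)$ and $U^{t}(Z)$ as lattice-point counts for a pair of dual lattices, invoke Minkowski's second theorem and Mahler's transference, and conclude via a case split on the number of small successive minima. The only cosmetic difference is that the paper absorbs the weights $a_{j},b_{i}$ into the lattice (via the explicit matrix $A$) so that the body becomes the unit cube $[-Z,Z]^{n_{1}+n_{2}}$, then normalises by $c=(\prod a_{j}/\prod b_{i})^{1/(n_{1}+n_{2})}$ to obtain unimodular lattices and cites the proof of Schindler's Lemma~3.1 for the endgame; you instead keep the lattice unimodular from the start, put the weights in the box $B$, and write out the Davenport-style dichotomy explicitly. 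These are equivalent bookkeeping choices leading to the same successive minima up to a global scalar, and your treatment of the case $M>n_{2}$ (using $\prod s_{k}\asymp\prod b_{i}/\prod a_{j}$ together with $(Z_{1}Z_{2})^{M-n_{2}}\leqslant 1$) is exactly what underlies the cited argument.
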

\begin{rem}
Le lemme\;3.1 de \cite{S1} pr\'esente uniquement le cas o\`u $ a_{1}=...=a_{n_{2}}=a $ et $ b_{1}=...=b_{n_{1}}=b $. Cette g\'en\'eralisation aux $ a_{i} $ et $ b_{i} $ distincts permet de donner des estimations du nombre de points dans un r\'eseau dont les coordonn\'ees sont born\'ees par des bornes distinctes.
\end{rem}

\begin{proof}[D\'emonstration du lemme\;\ref{geomnomb12}]
On consid\`ere le r\'eseau $ \Lambda $ de $ \RR^{n_{2}+n_{1}} $ d\'efini comme l'ensemble des points \[ (x_{1},...,x_{n_{2}},x_{n_{2}+1},...,x_{n_{2}+n_{1}})\in \RR^{n_{1}+n_{2}} \] tels qu'il existe \[ (u_{1},...,u_{n_{2}},u_{n_{2}+1},...,u_{n_{2}+n_{1}})\in \ZZ^{n_{1}+n_{2}} \] tels que   \[ \begin{array}{rcl} a_{1}x_{1} & = & u_{1}, \\ & \vdots & \\ a_{n_{2}}x_{n_{2}} & = & u_{n_{2}}, \\ b_{1}^{-1}x_{n_{2}+1} & = & L_{1}(u_{1},...,u_{n_{2}})+u_{n_{2}+1}, \\ & \vdots & \\ b_{n_{1}}^{-1}x_{n_{2}+n_{1}} & = & L_{n_{1}}(u_{1},...,u_{n_{2}})+u_{n_{2}+n_{1}}.
\end{array}\]

Ce r\'eseau est d\'efini par la matrice (i.e une base de ce r\'eseau est donn\'ee par les colonnes de la matrice) \[ A=\begin{pmatrix}
a_{1}^{-1} &  & (0) & 0 & \cdots & 0 \\ 
 & \ddots &  & \vdots &  & \vdots \\ 
(0) &  & a_{n_{2}}^{-1} & 0 & \cdots & 0 \\ 
b_{1}\lambda_{1,1} & \cdots & b_{1}\lambda_{1,n_{2}} & b_{1} &  & (0) \\ 
\vdots &  & \vdots &  & \ddots &  \\ 
b_{n_{1}}\lambda_{n_{1},1} & \cdots & b_{n_{1}}\lambda_{n_{1},n_{2}} & (0) &  & b_{n_{1}}
\end{pmatrix}. \] On remarque que $ U(Z) $ est alors le nombre de points $ (x_{1},...,x_{n_{1}+n_{2}}) $ de $ \Lambda $ tels que $ |x_{i}|\leqslant Z $ pour tout $ i\in\{1,...,n_{1}+n_{2}\} $. Par ailleurs,  \[ B=(A^{t})^{-1}=\begin{pmatrix}
a_{1} &  & (0) & -a_{1}\lambda_{1,1} & \cdots & -a_{1}\lambda_{n_{1},1} \\ 
 & \ddots &  & \vdots &  & \vdots \\ 
(0) &  & a_{n_{2}} & -a_{n_{2}}\lambda_{1,n_{2}} & \cdots & -a_{n_{2}}\lambda_{n_{1},n_{2}} \\ 
0& \cdots & 0 & b_{1}^{-1} &  & (0) \\ 
\vdots &  & \vdots &  & \ddots &  \\ 
0 & \cdots & 0 & (0) &  & b_{n_{1}}^{-1}
\end{pmatrix}. \]
d\'efinit un r\'eseau $ \Omega $ ayant les m\^emes minima successifs que le r\'eseau $ \tilde{\Omega} $ d\'efini par la matrice  \[ \tilde{B}=\begin{pmatrix}
b_{1}^{-1} &  & (0) & 0 & \cdots &  \\ 
 & \ddots &  & \vdots &  & \vdots \\ 
(0) &  & b_{n_{1}}^{-1} & 0 & \cdots & 0 \\ 
a_{1}\lambda_{1,1}& \cdots & a_{1}\lambda_{n_{1},1} & a_{1} &  & (0) \\ 
\vdots &  & \vdots &  & \ddots &  \\ 
a_{n_{2}}\lambda_{1,n_{2}} & \cdots & a_{n_{2}}\lambda_{n_{1},n_{2}} & (0) &  & a_{n_{2}}
\end{pmatrix}. \]

On pose $ c=\left(\frac{\prod_{j=1}^{n_{2}}a_{j}}{\prod_{i=1}^{n_{1}}b_{i}}\right)^{\frac{1}{n_{1}+n_{2}}} $ et $ \Lambda^{\nor}=c\Lambda $, $ \Omega^{\nor}=c^{-1}\tilde{\Omega} $ les r\'eseaux normalis\'es (i.e de d\'eterminant $ 1 $) associ\'es \`a $ \Lambda $ et $ \Omega $. Par la d\'emonstration de \cite[Lemme 3.1]{S1}, on a alors \[ U(Z_{2})\ll_{n_{1},n_{2}} \max\left( \left(\frac{Z_{2}}{Z_{1}}\right)^{n_{2}}U(Z_{1}), \frac{Z_{2}^{n_{2}}}{Z_{1}^{n_{1}}}c^{n_{1}+n_{2}}U^{t}(Z_{1})\right). \] d'o\`u le r\'esultat.
\end{proof}

En particulier, lorsque $ n_{1}=n_{2}=n $, $ a_{i}=b_{i} $ pour tout $ i $ et $ \lambda_{i,j}=\lambda_{j,i} $ on obtient le r\'esultat suivant : 

\begin{lemma}\label{geomnomb22}
Soit $ n>0 $ un entier et $ (\lambda_{i,j})_{\substack{1\leqslant i,j\leqslant n} } $ des r\'eels tels que $ \lambda_{i,j}=\lambda_{j,i} $ pour tous $ i,j $, et des formes lin\'eaires \[ \forall i \in  \{1,...,n_{1}\}, \; \forall \uu=(u_{1},...,u_{n})\;  L_{i}(\uu)=\sum_{j=1}^{n}\lambda_{i,j}u_{j}. \] Soient $ a_{1},...,a_{n}>1 $ des r\'eels fix\'es. Pour tout $ 0\leqslant Z\leqslant 1 $, on note \begin{multline*} U(Z)=\Card\left\{ (u_{1},...,u_{n},u_{n+1},...,u_{2n}) \; | \; \forall j \in \{1,...,n\} \; |u_{j}|\leqslant a_{j}Z \; \right.\\ \left. \et \; \forall i\in \{1,...,n\}\; |L_{i}(u_{1},...,u_{n})-u_{n+i}|\leqslant a_{i}^{-1}Z \right\}. \end{multline*} On a alors \[ U(Z_{2})\ll_{n} \left(\frac{Z_{2}}{Z_{1}}\right)^{n}U(Z_{1}). \]
\end{lemma}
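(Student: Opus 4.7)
The plan is to deduce Lemma~\ref{geomnomb22} directly from Lemma~\ref{geomnomb12} by specializing the parameters so that the two terms in the maximum collapse into one.

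More precisely, I would apply Lemma~\ref{geomnomb12} with $n_{1}=n_{2}=n$, with the reals $b_{i}=a_{i}$ for every $i\in\{1,\dots,n\}$, and with the same coefficients $\lambda_{i,j}$. The conclusion of that lemma then reads
\[
U(Z_{2})\ll_{n}\max\!\left(\left(\frac{Z_{2}}{Z_{1}}\right)^{n}U(Z_{1}),\;\frac{Z_{2}^{n}}{Z_{1}^{n}}\,\frac{\prod_{j=1}^{n}a_{j}}{\prod_{i=1}^{n}a_{i}}\,U^{t}(Z_{1})\right).
\]
The ratio $\prod_{j}a_{j}/\prod_{i}a_{i}$ is obviously equal to $1$, so the second term in the maximum is $(Z_{2}/Z_{1})^{n}U^{t}(Z_{1})$ and it only remains to identify $U^{t}(Z_{1})$ with $U(Z_{1})$.

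This identification is precisely where the symmetry hypothesis $\lambda_{i,j}=\lambda_{j,i}$ enters. Indeed, by the very definition of $L_{j}^{t}$ in Lemma~\ref{geomnomb12},
\[
L_{j}^{t}(u_{1},\dots,u_{n})=\sum_{i=1}^{n}\lambda_{i,j}u_{i}=\sum_{i=1}^{n}\lambda_{j,i}u_{i}=L_{j}(u_{1},\dots,u_{n}),
\]
so the linear forms defining $U^{t}$ coincide with those defining $U$; moreover the bounds $b_{i}Z$ on $|u_{i}|$ and $a_{j}^{-1}Z$ on $|L_{j}^{t}-u_{n+j}|$ become respectively $a_{i}Z$ and $a_{j}^{-1}Z$, which are exactly the bounds appearing in the definition of $U$. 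Hence $U^{t}(Z_{1})=U(Z_{1})$, both terms of the maximum coincide, and the stated inequality follows. There is no real obstacle here: the whole content of the lemma is the observation that Lemma~\ref{geomnomb12} becomes self-dual under the symmetry assumptions, and the hardest (non-trivial) part, namely the transference-type argument using Minkowski's second theorem on $\Lambda$ and the dual lattice $\Omega$, has already been carried out in the proof of Lemma~\ref{geomnomb12}.
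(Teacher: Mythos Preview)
Your proposal is correct and is exactly the approach taken in the paper: Lemma~\ref{geomnomb22} is stated there as the particular case $n_{1}=n_{2}=n$, $a_{i}=b_{i}$, $\lambda_{i,j}=\lambda_{j,i}$ of Lemma~\ref{geomnomb12}, with no further argument given beyond the specialization you describe.
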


\subsubsection{Premi\`ere r\'ecurrence}

Dans cette partie, nous allons appliquer les r\'esultats de g\'eom\'etrie des r\'eseaux uniquement aux variables $ x_{m}^{\hh} $ telles que $ m\in J(j,k) $ pour un $ (j,k) $ fix\'e (en l'occurence $ (j,k)=(1,K) $).\\

Nous allons dans un premier temps majorer, pour $ (x_{m}^{\hh})_{\substack{ (m,\hh)\in L\setminus L(1,K)}}  $ et $ (x_{m}^{\hh})_{\substack{ (m,\hh)\in \widehat{L}(1,K) \\ h_{1}\neq t_{1,K}-1 }} $ fix\'es, le nombre de $ (x_{m}^{\hh})_{\substack{(m,\hh)\in \widehat{L}(1,K) \\ h_{1}=t_{1,K}-1}} $ tels que \[ \widehat{\XX}_{(1,K)}=(x_{m}^{\hh})_{(m,\hh)\in \widehat{L}(1,K) }\in \MM_{(1,K)}\left(\alpha,(T_{m})_{\substack{(m,\hh)\in \widehat{L}(1,K)}}, (T_{i}^{-1})_{\substack{(i,\l)\in E(1,K)}}\right). \]
Nous allons pour cela appliquer les r\'esultats de g\'eom\'etrie des r\'eseaux \'evoqu\'es dans la section\;$ 2.3.2 $. \\

Pour $ \bb=(b_{m})_{m\in J(1,K)} $ fix\'e et tout $ Z\leqslant 1 $ on notera provisoirement \begin{multline*} U(Z)=\Card\{(x_{m}^{\hh})_{\substack{(m,\hh)\in \widehat{L}(1,K) \\ h_{1}=t_{1,K}-1 }} \; | \;   \forall (m,\hh),  \; |x_{m}^{\hh}|\leqslant b_{m}Z \\ \et,\;  \forall (i,\l)\in E(1,K), \;   ||\alpha e_{i}\gamma_{(i,\l)}^{(1,K)}(\widehat{\XX}_{(1,K)})||<b_{i}^{-1}Z \}.
\end{multline*}
En choisissant $ Z_{2}=1 $ et $ b_{m}=T_{m} $ pour tout $ m\in J(1,K) $, on remarque alors que le cardinal que nous souhaitons majorer est $ U(Z_{2}) $. Or d'apr\`es le lemme\;2.3.4, puisque les formes $ \gamma_{(i,\l)}^{(1,K)}(\widehat{\XX}_{(1,K)}) $ sont lin\'eaires en $ (x_{m}^{\hh})_{\substack{(m,\hh)\in \widehat{L}(1,K) \\ h_{1}=t_{1,K}-1}} $ et v\'erifient les conditions de sym\'etrie  \[ \forall (j,k)\in \CCC_{0}, \; e_{i}\sum_{(i,\l)\in E(j,k) }\gamma_{(i,\l)}^{(j,k)}(\widehat{\XX}_{(j,k)})\x_{i}^{\l}=\Gamma(\XX), \] pour tout $ Z_{1}\leqslant Z_{2} $ on a \[ U(Z_{2})\ll \prod_{\substack{(m,\hh)\in \widehat{L}(1,K)\; |\; h_{1}=t_{1,K}-1}}\left(\frac{Z_{2}}{Z_{1}}\right)U(Z_{1}). \] Nous allons choisir $ Z_{1} $ tel que \[ b_{m}Z_{1}=\frac{T_{m}}{P_{1}^{K}}P^{K\theta}, \] pour $ \theta\in [0,1] $ fix\'e quelconque. Ceci est possible en prenant  \[ Z_{1}=\frac{P^{K\theta}}{P_{1}^{K}}. \] On obtient donc \[ U(Z_{2})\ll \prod_{\substack{m\in J(1,K)}}\left(\frac{P_{1}^{K}}{P^{K\theta}}\right)^{\prod_{j=2}^{r}t_{j,a_{m,j}}}U(Z_{1}), \] avec $ K=a_{m,1} $ pour tout $ m\in J(1,K) $. En appliquant le m\^eme proc\'ed\'e avec les variables $ (x_{m}^{\hh})_{\substack{(m,\hh)\in \widehat{L}(1,K)\; |\; h_{1}=t_{1,K}-h}} $ avec $ h $ fix\'e \'egal \`a $ 2,...,t_{1,K}-1 $, nous allons \'etablir le r\'esultat ci-dessous

\begin{lemma}\label{recurrence13}
Pour tout $ \theta\in [0,1] $, on a la majoration ci-dessous : 

\begin{multline*}
M_{(1,K)}\left(\alpha,(T_{m})_{\substack{(m,\hh)\in L\setminus E(1,K)}}, (T_{i}^{-1})_{\substack{(i,\l)\in E(1,K)}}\right) \\ \ll \prod_{\substack{m\in J(1,K)}}\left(\frac{P_{1}^{a_{m,1}}}{P^{a_{m,1}\theta}}\right)^{(t_{1,K}-1)\prod_{j=2}^{r}t_{j,a_{m,j}}} \\ M_{(1,K)}\left(\alpha,\left((T_{m})_{\substack{(m,\hh)\in L\setminus L(1,K)}},(T_{m}P_{1}^{-a_{m,1}}P^{a_{m,1}\theta})_{\substack{(m,\hh)\in \widehat{L}(1,K)}}\right), \right. \\ \left. (P_{1}^{-(t_{1,a_{i,1}}-1)a_{i,1}}P^{(t_{1,a_{i,1}}-1)a_{i,1}\theta}T_{i}^{-1})_{\substack{(i,\l)\in E(1,K)}}\right).
\end{multline*}

\end{lemma}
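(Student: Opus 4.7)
La strat\'egie que je propose est d'appliquer le lemme\;\ref{geomnomb22} de fa\c{c}on it\'er\'ee aux $t_{1,K}-1$ couches de variables index\'ees par $(m,\hh)\in\widehat{L}(1,K)$ avec $h_{1}\in\{1,\dots,t_{1,K}-1\}$, en \og \'epluchant \fg \ une couche \`a la fois. Pour une couche $h_{1}=t_{1,K}-h$ fix\'ee ($h\in\{1,\dots,t_{1,K}-1\}$), je fixerais d'abord toutes les autres variables, c'est-\`a-dire celles de $L\setminus L(1,K)$ ainsi que celles de $\widehat{L}(1,K)$ avec $h_{1}\neq t_{1,K}-h$, et je compterais le nombre de couples dans les couches $h_{1}=t_{1,K}$ (couche des \og formes lin\'eaires \fg, i.e. $E(1,K)$) et $h_{1}=t_{1,K}-h$ v\'erifiant les contraintes de bo\^ite et de petitesse modulo $1$.

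L'observation essentielle est que, d'apr\`es la proposition\;\ref{propprelim23}, le polyn\^ome $\Gamma(\XX)$ est multilin\'eaire par rapport \`a chaque couche $(x_{m}^{\hh})_{h_{1}=h_{0}}$, de sorte que les formes $\gamma_{(i,\l)}^{(1,K)}(\widehat{\XX}_{(1,K)})$ sont lin\'eaires en les variables $(x_{m}^{\hh})_{(m,\hh)\in\widehat{L}(1,K),\,h_{1}=t_{1,K}-h}$. De plus, la sym\'etrie de l'op\'erateur $\Delta_{(1,K)}^{(t_{1,K})}$ en ses $t_{1,K}$ variables auxiliaires entra\^ine que la matrice des coefficients du couplage bilin\'eaire entre les deux couches est sym\'etrique (apr\`es la bijection naturelle $(i,(t_{1,K},l_{2},\dots,l_{r}))\leftrightarrow (i,(t_{1,K}-h,l_{2},\dots,l_{r}))$). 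Ces deux couches ont donc m\^eme cardinal $n=\sum_{m\in J(1,K)}\prod_{j=2}^{r}t_{j,a_{m,j}}$, et l'on est exactement dans les hypoth\`eses du lemme\;\ref{geomnomb22}.

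J'appliquerais alors ce lemme avec $Z_{2}=1$ et $Z_{1}=P^{K\theta}/P_{1}^{K}$, ce qui donne un facteur de perte $(Z_{2}/Z_{1})^{n}=\prod_{m\in J(1,K)}(P_{1}^{a_{m,1}}/P^{a_{m,1}\theta})^{\prod_{j\geqslant 2}t_{j,a_{m,j}}}$, en utilisant $a_{m,1}=K$ pour $m\in J(1,K)$. Ce facteur multiplie un comptage $U(Z_{1})$ qui est le m\^eme que $M_{(1,K)}$ mais avec les bornes $T_{m}$ remplac\'ees par $T_{m}P^{a_{m,1}\theta}/P_{1}^{a_{m,1}}$ sur la couche consid\'er\'ee uniquement, et les bornes $T_{i}^{-1}$ remplac\'ees par $T_{i}^{-1}P^{a_{i,1}\theta}/P_{1}^{a_{i,1}}$ sur $E(1,K)$. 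En it\'erant cette r\'eduction sur les couches $h_{1}=1,2,\dots,t_{1,K}-1$, le facteur de perte cumul\'e devient $\prod_{m\in J(1,K)}(P_{1}^{a_{m,1}}/P^{a_{m,1}\theta})^{(t_{1,K}-1)\prod_{j\geqslant 2}t_{j,a_{m,j}}}$, les bornes sur toutes les variables de $\widehat{L}(1,K)$ sont abaiss\'ees \`a $T_{m}P^{a_{m,1}\theta}/P_{1}^{a_{m,1}}$, et celles sur $E(1,K)$ \`a $T_{i}^{-1}(P^{a_{i,1}\theta}/P_{1}^{a_{i,1}})^{t_{1,K}-1}=T_{i}^{-1}P^{(t_{1,a_{i,1}}-1)a_{i,1}\theta}P_{1}^{-(t_{1,a_{i,1}}-1)a_{i,1}}$ (puisque $a_{i,1}=K$ donc $t_{1,a_{i,1}}=t_{1,K}$). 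Les bornes sur les variables en dehors de $L(1,K)$ sont inchang\'ees, ce qui fournit exactement la conclusion annonc\'ee.

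L'\'etape d\'elicate sera la v\'erification rigoureuse de la sym\'etrie du couplage bilin\'eaire entre les deux couches, condition indispensable pour pouvoir appliquer la version sym\'etrique\;\ref{geomnomb22} (plut\^ot que la forme asym\'etrique\;\ref{geomnomb12} qui ferait appara\^itre un terme suppl\'ementaire $U^{t}$). Cette sym\'etrie d\'ecoule du caract\`ere sym\'etrique de la construction de $\Delta^{\tt_{1}}$ en ses variables auxiliaires, mais n\'ecessite de d\'erouler soigneusement les d\'efinitions; tout le reste n'est qu'it\'eration m\'ecanique de l'in\'egalit\'e\;\eqref{inegreseau3} sous la forme fournie par le lemme\;\ref{geomnomb22}.
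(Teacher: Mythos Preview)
Your proposal is correct and follows essentially the same route as the paper: induction over the layers $h_{1}=t_{1,K}-1,\dots,1$, applying the symmetric lattice lemma~\ref{geomnomb22} at each step thanks to the multilinearity of $\Gamma$ and the symmetry of the bilinear pairing between the current layer and $E(1,K)$. One small correction: the choice $Z_{2}=1$ only works for the first step; after $h$ reductions the $E(1,K)$-bound has become $T_{i}^{-1}P_{1}^{-hK}P^{hK\theta}$, so to satisfy the constraint $a_{m}Z_{2}=T_{m}$ and $a_{m}^{-1}Z_{2}=T_{m}^{-1}P_{1}^{-hK}P^{hK\theta}$ simultaneously the paper takes $Z_{2}=P_{1}^{-hK/2}P^{hK\theta/2}$ (the ratio $Z_{2}/Z_{1}=P_{1}^{K}/P^{K\theta}$ is unchanged, so your computation of the loss factors and final bounds remains valid).
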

\begin{proof}
Montrons par r\'ecurrence sur $ h\in \{1,...,t_{1,K}-1\} $ que 
\begin{align}\label{larec3}M_{(1,K)}\left(\alpha,(T_{m})_{\substack{(m,\hh)\in L\setminus E(1,K)}}, (T_{i}^{-1})_{\substack{(i,\l)\in E(1,K)}}\right) &\; \; \; \; \; \; \; \; \; \; \; \; \; \; \;\; \; \; \; \; \; \; \; \; \; \; \; \; \; \;\; \; \; \; \; \; \; \; \; \; \; \; \; \; \;  \end{align}
\begin{align*}\label{proprec13}
 \ll \prod_{\substack{m\in J(1,K)}}\left(\frac{P_{1}^{a_{m,1}}}{P^{a_{m,1}\theta}}\right)^{h\prod_{j=2}^{r}t_{j,a_{m,j}}}   & M_{(1,K)}\left(\alpha,\left((T_{m})_{\substack{(m,\hh)\in L\setminus \bigcup_{l=0}^{h}L(1,K,t_{1,K}-l)}},\right. \right. \\ & \left.(T_{m}P_{1}^{-a_{m,1}}P^{a_{m,1}\theta})_{\substack{(m,\hh)\in  \bigcup_{l=1}^{h}L(1,K,t_{1,K}-l)}}\right), \\ &  \left.(P_{1}^{-ha_{i,1}}P^{ha_{i,1}\theta}T_{i}^{-1})_{\substack{(i,\l)\in E(1,K)}}\right).
\end{align*}
Le r\'esultat a d\'ej\`a \'et\'e d\'emontr\'e pr\'ec\'edemment pour $ h=1 $, en notant que $ K=a_{m,1} $ pour tout $ m\in J(1,K) $. Supposons le r\'esultat vrai au rang $ h $. Fixons des \'el\'ements $ (x_{m}^{\hh})_{(m,\hh)\in L\setminus\bigcup_{l=0}^{h+1}L(1,K,t_{1,K}-l) } $ tels que $ |x_{m}^{\hh}|\leqslant T_{m} $ et $ (x_{m}^{\hh})_{(m,\hh)\in \bigcup_{l=1}^{h}L(1,K,t_{1,K}-l) } $ tels que $ |x_{m}^{\hh}|\leqslant T_{m}P_{1}^{-a_{m,1}}P^{a_{m,1}\theta} $. Posons alors, pour $ Z\geqslant 1 $, $ b_{m}\in \RR $ :  \begin{multline*} U(Z)=\Card\{(x_{m}^{\hh})_{\substack{(m,\hh)\in L(1,K,t_{1,K}-(h+1))}} \; | \;   \forall (m,\hh),  \; |x_{m}^{\hh}|\leqslant b_{m}Z \\ \et,\;  \forall (i,\l)\in E(1,K), \;   ||\alpha e_{i}\gamma_{(i,\l)}^{(1,K)}(\widehat{\XX}_{(1,K)})||<b_{i}^{-1}Z \}.
\end{multline*}
Choisissons alors pour tout $ m\in J(1,K) $ : \[ \begin{array}{l} b_{m}=P_{1}^{\frac{1}{2}hK}P^{-\frac{1}{2}hK\theta}T_{m}  \\ Z_{2}=P_{1}^{-\frac{1}{2}hK}P^{\frac{1}{2}hK\theta} \\ Z_{1}=P_{1}^{-K}P^{K\theta}Z_{2},
\end{array} \] de sorte que : 
 \[ \begin{array}{l} b_{m}Z_{2}=T_{m}  \\ b_{m}Z_{1}=T_{m}P_{1}^{-K}P^{K\theta} \\ b_{m}^{-1}Z_{2}=T_{m}^{-1}P_{1}^{-hK}P^{hK\theta} \\ b_{m}^{-1}Z_{1}=T_{m}^{-1}P_{1}^{-(h+1)K}P^{(h+1)K\theta}.
\end{array} \]
On a alors : 
\begin{multline*} U(Z_{2})=\Card\{(x_{m}^{\hh})_{\substack{(m,\hh)\in L(1,K,t_{1,K}-(h+1))}} \; | \;   \forall (m,\hh),  \; |x_{m}^{\hh}|\leqslant T_{m} \\ \et,\;  \forall (i,\l)\in E(1,K), \;   ||\alpha e_{i}\gamma_{(i,\l)}^{(1,K)}(\widehat{\XX}_{(1,K)})||<T_{m}^{-1}P_{1}^{-hK}P^{hK\theta} \},
\end{multline*}
et d'apr\`es le lemme\;\ref{geomnomb22},  \begin{align*} U(Z_{2}) & \ll \left(\prod_{\substack{(m,\hh)\in \widehat{L}(1,K,t_{1,K}-(h+1))}}\frac{Z_{2}}{Z_{1}}\right)U(Z_{1}) \\ & =\left(\prod_{\substack{(m,\hh)\in \widehat{L}(1,K,t_{1,K}-(h+1))}}\frac{P_{1}^{K}}{P^{K\theta}}\right)U(Z_{1}) \\ & =\left(\prod_{m\in J(1,K)}\left(\frac{P_{1}^{K}}{P^{K\theta}}\right)^{\prod_{j=2}^{r}t_{j,a_{m,j}}}\right)U(Z_{1}), \end{align*}
avec \begin{multline*} U(Z_{1})=\Card\{(x_{m}^{\hh})_{\substack{(m,\hh)\in L(1,K,t_{1,K}-(h+1))}} \; | \;   \forall (m,\hh),  \; |x_{m}^{\hh}|\leqslant T_{m}P_{1}^{-K}P^{K\theta} \\ \et \;  \forall (i,\l)\in E(1,K), \;   ||\alpha e_{i}\gamma_{(i,\l)}^{(1,K)}(\widehat{\XX}_{(1,K)})||<T_{m}^{-1}P_{1}^{-(h+1)K}P^{(h+1)K\theta} \}.
\end{multline*}
En sommant ces majorations sur l'ensemble des  $ (x_{m}^{\hh})_{(m,\hh)\in L\setminus\bigcup_{l=0}^{h+1}L(1,K,t_{1,K}-l) } $ tels que $ |x_{m}^{\hh}|\leqslant T_{m} $ et $ (x_{m}^{\hh})_{(m,\hh)\in \bigcup_{l=1}^{h}L(1,K,t_{1,K}-l) } $ tels que $ |x_{m}^{\hh}|\leqslant T_{m}P_{1}^{-a_{m,1}}P^{a_{m,1}\theta} $, on obtient 

\begin{align*} M_{(1,K)} & \left(\alpha,\left((T_{m})_{\substack{(m,\hh)\in L\setminus \bigcup_{l=0}^{h}L(1,K,t_{1,K}-l)}},\right. \right.  &\; \; \; \; \; \; \; \; \; \; \; \; \; \; \;\; \; \; \; \; \; \; \; \; \; \; \; \; \; \;\; \; \; \; \; \; \; \; \; \; \; \; \; \; \;\\ & \left.(T_{m}P_{1}^{-a_{m,1}}P^{a_{m,1}\theta})_{\substack{(m,\hh)\in  \bigcup_{l=1}^{h}L(1,K,t_{1,K}-l)}}\right), & \\ &  \left.(P_{1}^{-ha_{i,1}}P^{ha_{i,1}\theta}T_{i}^{-1})_{\substack{(i,\l)\in E(1,K)}}\right) & \end{align*}
\begin{align*} \ll \prod_{\substack{m\in J(1,K)}}\left(\frac{P_{1}^{a_{m,1}}}{P^{a_{m,1}\theta}}\right)^{\prod_{j=2}^{r}t_{j,a_{m,j}}}    M_{(1,K)} & \left(\alpha,\left((T_{m})_{\substack{(m,\hh)\in L\setminus \bigcup_{l=0}^{h+1}L(1,K,t_{1,K}-l)}},\right. \right. \\ & \left.(T_{m}P_{1}^{-a_{m,1}}P^{a_{m,1}\theta})_{\substack{(m,\hh)\in  \bigcup_{l=1}^{h+1}L(1,K,t_{1,K}-l)}}\right), \\ &  \left.(P_{1}^{-(h+1)a_{i,1}}P^{(h+1)a_{i,1}\theta}T_{i}^{-1})_{\substack{(i,\l)\in E(1,K)}}\right).
\end{align*}
Puis en utilisant l'hypoth\`ese de r\'ecurrence on obtient : 
\begin{multline*}M_{(1,K)} \left(\alpha,(T_{m})_{\substack{(m,\hh)\in L\setminus E(1,K)}},  (T_{i}^{-1})_{\substack{(i,\l)\in E(1,K)}}\right)   \\  
 \ll \prod_{\substack{m\in J(1,K)}}  \left(\frac{P_{1}^{a_{m,1}}}{P^{a_{m,1}\theta}}\right)^{(h+1)\prod_{j=2}^{r}t_{j,a_{m,j}}} \\ \begin{array}{ll}  M_{(1,K)} & \left(\alpha,\left((T_{m})_{\substack{(m,\hh)\in L\setminus \bigcup_{l=0}^{h+1}L(1,K,t_{1,K}-l)}},\right. \right.  \\   & \left.(T_{m}P_{1}^{-a_{m,1}}P^{a_{m,1}\theta})_{\substack{(m,\hh)\in  \bigcup_{l=1}^{h+1}L(1,K,t_{1,K}-l)}}\right),  \\ &   \left.(P_{1}^{-(h+1)a_{i,1}}P^{(h+1)a_{i,1}\theta}T_{i}^{-1})_{\substack{(i,\l)\in E(1,K)}}\right),  \end{array}
\end{multline*}
ce qui cl\^ot la d\'emonstration de l'assertion\;\eqref{larec3}. Le cas $ h=t_{1,K}-1 $, correspond alors au r\'esultat du lemme. 
\end{proof}

\subsubsection{Seconde r\'ecurrence : changement de poids $ k $}

Dans cette partie nous allons d\'etailler le passage de variables $ (x_{m}^{\hh})_{(m,\hh)\in J(1,k)} $ aux variables $ (x_{m}^{\hh})_{(m,\hh)\in J(1,k')} $ pour un $ k'\neq k $

\begin{lemma}\label{recurrence23}
Pour tout poids $ k\in \PPP(1) $ et tout poids $ k_{1} $ tel que $ k_{1}>k $, on a : 
\begin{multline*}\begin{array}{ll}M_{(1,k_{1})} & \left(\alpha,\left((T_{m})_{\substack{(m,\hh)\in L\setminus \bigcup_{k'>k}L(1,k')}}, \right.\right. \\  & \left.(T_{m}P_{1}^{-a_{m,1}}P^{a_{m,1}\theta})_{\substack{(m,\hh)\in  \bigcup_{k'>k}L(1,k')\setminus E(1,k_{1})}}\right), \\ & \left. (T_{i}^{-1}P_{1}^{k_{1}-\sum_{k'>k}k't_{1,k'}}P^{-(k_{1}-\sum_{k'>k}k't_{1,k'})\theta})_{\substack{(i,\l)\in  E(1,k_{1})}}\right) \end{array}  \\  
 \ll \max\{M_{1},M_{2}\}
\end{multline*}
avec 
\begin{multline*}
  M_{1}=\prod_{\substack{m\in J(1,k)}}  \left(\frac{P_{1}^{a_{m,1}}}{P^{a_{m,1}\theta}}\right)^{\prod_{j=1}^{r}t_{j,a_{m,j}}} \\ \begin{array}{ll}M_{(1,k_{1})} & \left(\alpha,\left((T_{m})_{\substack{(m,\hh)\in L\setminus \bigcup_{k' \geqslant k}L(1,k')}}, \right.\right. \\ & \left. (T_{m}P_{1}^{-a_{m,1}}P^{a_{m,1}\theta})_{\substack{(m,\hh)\in  \bigcup_{k'\geqslant k}L(1,k')\setminus E(1,k_{1})}}\right), \\ & \left. (T_{i}^{-1}P_{1}^{k_{1}-\sum_{k'\geqslant k}k't_{1,k'}}P^{-(k_{1}-\sum_{k'\geqslant k}k't_{1,k'})\theta})_{\substack{(i,\l)\in E(1,k_{1})}}\right), \end{array} 
\end{multline*}
\begin{multline*}
M_{2}=\frac{\prod_{m\in J(1,k)}T_{m}^{\prod_{j=2}^{r}t_{j,a_{m,j}}}}{\prod_{i\in J(1,k_{1})}(P_{1}^{-a_{i,1}}P^{a_{i,1}\theta}T_{i})^{\prod_{j=2}^{r}t_{j,a_{i,j}}}}\prod_{m\in J(1,k)}\left(\frac{P_{1}^{a_{m,1}}}{P^{a_{m,1}\theta}}\right)^{(t_{1,a_{m,1}}-1)\prod_{j=2}^{r}t_{j,a_{m,j}}} \\ \; \; \; \; \; \begin{array}{ll} M_{(1,k)} & \left(\alpha,\left((T_{m})_{\substack{(m,\hh)\in L\setminus \bigcup_{k'\geqslant k}L(1,k')}}, \right.\right. \\ & \left. (T_{m}P_{1}^{-a_{m,1}}P^{a_{m,1}\theta})_{\substack{(m,\hh)\in  \bigcup_{k'\geqslant k}L(1,k')\setminus E(1,k)}}\right), \\ & \left.  (T_{i}^{-1}P_{1}^{k-\sum_{k'\geqslant k}k't_{1,k'}}P^{-(k-\sum_{k'\geqslant k}k't_{1,k'})\theta})_{\substack{(i,\l)\in  E(1,k)}}\right).\end{array}
\end{multline*}

\end{lemma}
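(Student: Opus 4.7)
The plan is to iterate an application of the generalized Davenport-type lemma (Lemma~\ref{geomnomb12}) to each of the $t_{1,k}$ slices $L(1,k,h)$ of the block $L(1,k)$, obtaining at each step a dichotomy between continuing the reduction within the $M_{(1,k_1)}$ counting framework and switching to the dual $M_{(1,k)}$ counting.

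For a chosen slice index $h_0\in\{1,\ldots,t_{1,k}\}$ I would first fix every variable $x_m^{\hh}$ with $(m,\hh)\notin L(1,k,h_0)$ within the ranges prescribed in the $M_{(1,k_1)}$ quantity of the hypothesis. By Proposition~\ref{propprelim23}, each form $\gamma_{(i,\l)}^{(1,k_1)}$ with $(i,\l)\in E(1,k_1)$ then depends linearly on the remaining slice variables $(x_m^{\hh})_{(m,\hh)\in L(1,k,h_0)}$; and by the multilinear symmetry of the underlying polynomial $\Gamma(\XX)$ (whose presentation $\Gamma=e_i\sum_{(i,\l)\in E(j,k)}\gamma_{(i,\l)}^{(j,k)}x_i^{\l}$ is valid for every $(j,k)\in\CCC_0$), the matrix of these linear forms coincides, up to transpose, with that of the forms $(\gamma_{(m,\hh)}^{(1,k)})_{(m,\hh)\in E(1,k)}$ expressed in the corresponding $L(1,k_1,\cdot)$ variables. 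This symmetric pairing is precisely what will allow the dual lattice count $U^t$ produced by Lemma~\ref{geomnomb12} to be reinterpreted as an $M_{(1,k)}$ count.

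I would then apply Lemma~\ref{geomnomb12} with $n_2=\Card L(1,k,h_0)$, $n_1=\Card E(1,k_1)$, weights $a_j$ equal to the current bound on slice $h_0$ and weights $b_i$ equal to the current bound on $E(1,k_1)$, choosing $Z_2=1$ and $Z_1=P_1^{-k}P^{k\theta}$. The first branch of the resulting $\max$, summed over the fixed variables, reduces the bound on slice $h_0$ of $L(1,k)$ by a factor $P_1^{-a_{m,1}}P^{a_{m,1}\theta}$; iterating over all $t_{1,k}$ slices then accumulates the full exponent $\prod_{j=1}^{r}t_{j,a_{m,j}}$ displayed in the $M_1$ term and shifts the bound on $E(1,k_1)$ from $k_1-\sum_{k'>k}k't_{1,k'}$ to $k_1-\sum_{k'\geqslant k}k't_{1,k'}$. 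The second branch, taken at the $t_{1,k}$-th iteration (after $t_{1,k}-1$ ordinary reductions), uses the symmetry described above to convert $U^t(Z_1)$ into an $M_{(1,k)}$ count with bounds on $E(1,k)$ as required.

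The main obstacle will be the precise bookkeeping of the accumulated factors of $P_1$, $P^\theta$ and $T_i$. In particular, one must verify that after the $t_{1,k}-1$ ordinary slice reductions, the ratio $\prod_j a_j/\prod_i b_i$ supplied by the second branch of Lemma~\ref{geomnomb12} reproduces exactly the prefactor $\prod_{m\in J(1,k)}T_m^{\prod_{j\geqslant 2}t_{j,a_{m,j}}}/\prod_{i\in J(1,k_1)}(P_1^{-a_{i,1}}P^{a_{i,1}\theta}T_i)^{\prod_{j\geqslant 2}t_{j,a_{i,j}}}$ appearing in $M_2$, and that the shift of the exponent of $P_1$ in the bound on $E(1,k)$ from $k_1-\sum_{k'>k}k't_{1,k'}$ to $k-\sum_{k'\geqslant k}k't_{1,k'}$ is consistent with the scaling $Z_1=P_1^{-k}P^{k\theta}$ chosen at each step.
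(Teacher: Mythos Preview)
Your plan is essentially the paper's approach: iterate the asymmetric lattice lemma (Lemma~\ref{geomnomb12}) over the slices of $L(1,k)$, using the multilinear symmetry of $\Gamma$ to reinterpret the dual count $U^t$ as an $M_{(1,k)}$ quantity. One point to sharpen, however: the switch to the $M_{(1,k)}$ framework can be triggered at \emph{any} of the $t_{1,k}$ iterations, not only the last, and every such branch must be shown to be $\ll M_2$.

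In the paper the switch is in fact made at the very first application (on the slice $E(1,k)$). The resulting $M_{(1,k)}$ count then still carries the unreduced bound $T_m$ on the other $t_{1,k}-1$ slices of $L(1,k)$; it is a subsequent Lemma~\ref{recurrence13}-style reduction---now \emph{symmetric}, via Lemma~\ref{geomnomb22}, since one is working inside $M_{(1,k)}$---that brings those bounds down and manufactures the prefactor $\prod_{m\in J(1,k)}(P_1^{a_{m,1}}/P^{a_{m,1}\theta})^{(t_{1,k}-1)\prod_{j\geqslant 2}t_{j,a_{m,j}}}$ appearing in $M_2$. Your description reverses the order (do $t_{1,k}-1$ first-branch reductions within $M_{(1,k_1)}$, switch last), which also leads to $M_2$; but you have not addressed the branches where the switch occurs earlier, and in those cases some $L(1,k)$-slices remain unreduced after the switch. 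The missing ingredient is precisely this post-switch symmetric reduction.
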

\begin{proof}
Fixons des \'el\'ements $ (x_{m}^{\hh})_{\substack{( m,\hh)\in L\setminus \bigcup_{k'> k}L(1,k')}} $, $ (x_{m}^{\hh})_{\substack{( m,\hh)\in L(1,k)\setminus E(1,k)}}  $ tels que $ |x_{m}^{\hh}|\leqslant T_{m} $ et $ (x_{m}^{\hh})_{\substack{( m,\hh)\in \bigcup_{k'> k}L(1,k')}} $ tels que $  |x_{m}^{\hh}|\leqslant T_{m}P_{1}^{-a_{m,1}}P^{a_{m,1}\theta} $ et posons pour tous $ Z\geqslant 1 $ et $ b_{m}>0 $ pour $ m\in J(1,k) $ et $ c_{i}>0 $ pour $ i\in J(1,k_{1}) $ : \begin{multline*} U(Z)=\Card\{(x_{m}^{\hh})_{\substack{(m,\hh)\in E(1,k) }} \; | \;  \forall (m,\hh)\in E(1,k), \; |x_{m}^{\hh}|\leqslant b_{m}Z \\ \et,\;  \forall (i,\l)\in E(1,k), \;  ||\alpha e_{i}\gamma_{(i,\l)}^{(1,k_{1})}(\widehat{\XX}_{(1,k_{1})})||<c_{i}^{-1}Z \}.
\end{multline*}
On choisit \`a pr\'esent  $ \bb=(b_{m})_{m\in J(1,k)} $, $ \cc=(c_{i})_{i\in J(1,k_{1})} $, $ Z_{2}\leqslant 1 $ tels que \[ b_{m}Z_{2}=T_{m}, \; \;  c_{i}^{-1}Z_{2}=T_{i}^{-1}P_{1}^{k_{1}-\sum_{k'>k}k't_{1,k'}}P^{-(k_{1}-\sum_{k'>k}k't_{1,k'})\theta}, \] et le nombre de points que l'on souhaite \'evaluer est alors $ U(Z_{2}) $. Choisissons d'autre part un r\'eel $  Z_{1}\leqslant 1  $ tel que \[ \begin{array}{l}
\forall m\in J(1,k), \; \; b_{m}Z_{1}=P_{1}^{-k}P^{k\theta}T_{m}, \\ \forall i\in J(1,k_{1}), \; \;c_{i}^{-1}Z_{1}=T_{i}^{-1}P_{1}^{k_{1}-k-\sum_{k'> k}k't_{1,k'}}P^{-(k_{1}-k-\sum_{k'> k}k't_{1,k'})\theta}, \\ \forall i\in J(1,k_{1}), \; \;c_{i}Z_{1}=T_{i}P_{1}^{-k_{1}}P^{k_{1}\theta}.
\end{array}   \]  
On v\'erifie que ces conditions sur $ \bb,\cc,Z_{2},Z_{1} $ sont satisfaites si et seulement si \[ \begin{array}{l}
Z_{1}=P_{1}^{-\frac{1}{2}(k+\sum_{k'>k}k't_{1,k'})}P^{\frac{1}{2}(k+\sum_{k'>k}k't_{1,k'})\theta}, \\ Z_{2}=Z_{1}P_{1}^{k}P^{-k\theta} \\ \forall m\in J(1,k), \; \; b_{m}=T_{m}P_{1}^{-k+\frac{1}{2}(k+\sum_{k'>k}k't_{1,k'})}P^{k\theta-\frac{1}{2}(k+\sum_{k'>k}k't_{1,k'})\theta}, \\ \forall i\in J(1,k_{1}), \; \; c_{i}=T_{i}P_{1}^{-k_{1}+\frac{1}{2}(k+\sum_{k'>k}k't_{1,k'})}P^{k_{1}\theta-\frac{1}{2}(k+\sum_{k'>k}k't_{1,k'})\theta},
\end{array}   \]  
et on a alors $ b_{m}^{-1}Z_{1}=P_{1}^{-\sum_{k'>k}k't_{1,k'}}P^{(\sum_{k'>k}k't_{1,k'})\theta}T_{m}^{-1} $. Par ailleurs, d'apr\`es le\;lemme\;\ref{geomnomb12} on a \begin{equation}\label{geomnomb13} U(Z_{2})\ll \max\left(\underbrace{\left(\prod_{\substack{(m,\hh)\in E(1,k)}} \left(\frac{Z_{2}}{Z_{1}}\right)\right)U(Z_{1})}_{(a)}, \underbrace{\frac{\prod_{\substack{(m,\hh)\in E(1,k)}}b_{m}Z_{2}}{\prod_{\substack{(i,\l)\in E(1,k_{1})}}c_{i}Z_{1}}U^{t}(Z_{1})}_{(b)}\right), \end{equation} o\`u \begin{multline*} U^{t}(Z)=\Card\{(x_{i}^{\l})_{\substack{(i,\l)\in E(1,k_{1}) }} \; | \;  \forall (i,\l)\in E(1,k_{1}), \; |x_{i}^{\hh}|\leqslant c_{i}Z \\ \et,\;  \forall (m,\hh)\in L(1,k),\;  ||\alpha e_{i}(\underbrace{\gamma^{(1,k_{1})})^{t}_{(m,\hh)}}_{=\gamma^{(1,k)}_{(m,\hh)}}(\widehat{\XX}_{(1,k)})||<b_{m}^{-1}Z \}.  \end{multline*}

On remarque que \[ \prod_{\substack{(m,\hh)\in E(1,k)}} \left(\frac{Z_{2}}{Z_{1}}\right)=\prod_{\substack{m\in J(1,k)}} \left(\frac{P_{1}^{a_{m,1}}}{P^{a_{m,1}\theta}}\right)^{\prod_{j=2}^{r}t_{j,a_{m,j}}}, \] et de m\^eme 
\[ \frac{\prod_{\substack{(m,\hh)\in E(1,k)}}b_{m}Z_{2}}{\prod_{\substack{(i,\l)\in E(1,k_{1})}}c_{i}Z_{1}}=\frac{\prod_{\substack{m\in J(1,k)}}T_{m}^{\prod_{j=2}^{r}t_{j,a_{m,j}}}}{\prod_{\substack{i\in J(1,k_{1})}}(P_{1}^{-a_{i,1}}P^{a_{i,1}\theta}T_{i})^{\prod_{j=2}^{r}t_{j,a_{i,j}}}}. \]

La formule\;\eqref{geomnomb13} donne alors, par sommation sur les \'el\'ements $ (x_{m}^{\hh})_{\substack{( m,\hh)\in L\setminus \bigcup_{k'> k}L(1,k')}} $, $ (x_{m}^{\hh})_{\substack{( m,\hh)\in L(1,k)\setminus E(1,k)}}  $ tels que $ |x_{m}^{\hh}|\leqslant T_{m} $ et $ (x_{m}^{\hh})_{\substack{( m,\hh)\in \bigcup_{k'> k}L(1,k')}} $ tels que $  |x_{m}^{\hh}|\leqslant T_{m}P_{1}^{-a_{m,1}}P^{a_{m,1}\theta} $ : \begin{multline*}
\begin{array}{ll}M_{(1,k_{1})} & \left(\alpha,\left((T_{m})_{\substack{(m,\hh)\in L\setminus \bigcup_{k'>k}L(1,k')}}, \right.\right. \\  & \left.(T_{m}P_{1}^{-a_{m,1}}P^{a_{m,1}\theta})_{\substack{(m,\hh)\in  \bigcup_{k'>k}L(1,k')\setminus E(1,k_{1})}}\right), \\ & \left. (T_{i}^{-1}P_{1}^{k_{1}-\sum_{k'>k}k't_{1,k'}}P^{-(k_{1}-\sum_{k'>k}k't_{1,k'})\theta})_{\substack{(i,\l)\in  E(1,k_{1})}}\right) \end{array}  \\  
 \ll \max\{ A_{1},A_{2}\}
\end{multline*} 
o\`u \begin{multline*}
  A_{1}= \prod_{\substack{m\in J(1,k_{1})}} \left(\frac{P_{1}^{a_{m,1}}}{P^{a_{m,1}\theta}}\right)^{\prod_{j=2}^{r}t_{j,a_{m,j}}}\\ \; \; \; \; \; \; \; \; \; \; \begin{array}{ll}M_{(1,k_{1})} & \left(\alpha,\left((T_{m})_{\substack{(m,\hh)\in L\setminus \bigcup_{k' > k}L(1,k')}}, \right.\right. \\ & \left. (T_{m}P_{1}^{-a_{m,1}}P^{a_{m,1}\theta})_{\substack{(m,\hh)\in  L(1,k,t_{1,k})\cup\bigcup_{k'> k}L(1,k')\setminus E(1,k_{1})}}\right), \\ & \left. (T_{i}^{-1}P_{1}^{k_{1}-k-\sum_{k'> k}k't_{1,k'}}P^{-(k_{1}-k-\sum_{k'\geqslant k}k't_{1,k'})\theta})_{\substack{(i,\l)\in E(1,k_{1})}}\right), \end{array} 
\end{multline*}
\begin{multline*}
A_{2}= \frac{\prod_{\substack{m\in J(1,k)}}T_{m}^{\prod_{j=2}^{r}t_{j,a_{m,j}}}}{\prod_{\substack{i\in J(1,k_{1})}}(P_{1}^{-a_{i,1}}P^{a_{i,1}\theta}T_{i})^{\prod_{j=2}^{r}t_{j,a_{i,j}}}}\\ \; \; \; \; \; \; \; \; \; \; \begin{array}{ll} M_{(1,k)} & \left(\alpha,\left((T_{m})_{\substack{(m,\hh)\in L\setminus \bigcup_{k'\geqslant k}L(1,k')}}, \right.\right. \\ & \left. (T_{m}P_{1}^{-a_{m,1}}P^{a_{m,1}\theta})_{\substack{(m,\hh)\in  \bigcup_{k'> k}L(1,k')\setminus E(1,k)}}\right), \\ & \left.  (T_{i}^{-1}P_{1}^{-\sum_{k'> k}k't_{1,k'}}P^{\sum_{k'> k}k't_{1,k'})\theta})_{\substack{(i,\l)\in  E(1,k)}}\right).\end{array}
\end{multline*}
Or, par les m\^emes arguments que pour le lemme\;\ref{recurrence13}, on d\'emontre que :
\begin{multline*}\begin{array}{ll} M_{(1,k)} & \left(\alpha,\left((T_{m})_{\substack{(m,\hh)\in L\setminus \bigcup_{k'\geqslant k}L(1,k')}}, \right.\right. \\ & \left. (T_{m}P_{1}^{-a_{m,1}}P^{a_{m,1}\theta})_{\substack{(m,\hh)\in  \bigcup_{k'> k}L(1,k')\setminus E(1,k)}}\right), \\ & \left.  (T_{i}^{-1}P_{1}^{-\sum_{k'> k}k't_{1,k'}}P^{\sum_{k'> k}k't_{1,k'})\theta})_{\substack{(i,\l)\in  E(1,k)}}\right) \end{array} \\ \ll \prod_{m\in J(1,k)}\left(\frac{P_{1}^{a_{m,1}}}{P^{a_{m,1}\theta}}\right)^{(t_{1,a_{m,1}}-1)\prod_{j=2}^{r}t_{j,a_{m,j}}} \\ \; \; \; \; \; \; \; \; \; \;\begin{array}{ll} M_{(1,k)} & \left(\alpha,\left((T_{m})_{\substack{(m,\hh)\in L\setminus \bigcup_{k'\geqslant k}L(1,k')}}, \right.\right. \\ & \left. (T_{m}P_{1}^{-a_{m,1}}P^{a_{m,1}\theta})_{\substack{(m,\hh)\in  \bigcup_{k'\geqslant k}L(1,k')\setminus E(1,k)}}\right), \\ & \left.  (T_{i}^{-1}P_{1}^{k-\sum_{k'\geqslant k}k't_{1,k'}}P^{-(k-\sum_{k'\geqslant k}k't_{1,k'})\theta})_{\substack{(i,\l)\in  E(1,k)}}\right).\end{array} \end{multline*}
On en d\'eduit donc que $ A_{2}\ll M_{2} $.
Il reste \`a r\'eduire les bornes intervenant dans $ A_{1} $. Pour cela on r\'eit\`ere le proc\'ed\'e d\'evelopp\'e au cours de cette d\'emonstration avec les familles de variables $ (x_{m}^{\hh})_{(m,\hh)\in L(1,k_{1},h)} $ pour $ h\in \{1,...,t_{1,k_{1}}-2\} $. On constate qu'\`a terme on obtient : $ A_{1}\ll \max\{M_{1},M_{2}\} $, d'o\`u le r\'esultat. 
\end{proof}
Nous avons donc ici d\'emontr\'e que, pour tout poids $ k\in \PPP(1) $ et tout poids $ k_{1} $ tel que $ k_{1}>k $ : 
\begin{multline*}\left(\prod_{m\in J(1,k_{1})}T_{m}^{\prod_{j=2}^{r}t_{j,a_{m,j}}}\right) \\ \begin{array}{ll}M_{(1,k_{1})} & \left(\alpha,\left((T_{m})_{\substack{(m,\hh)\in L\setminus \bigcup_{k'>k}L(1,k')}}, \right.\right. \\  & \left.(T_{m}P_{1}^{-a_{m,1}}P^{a_{m,1}\theta})_{\substack{(m,\hh)\in  \bigcup_{k'>k}L(1,k')\setminus E(1,k_{1})}}\right), \\ & \left. (T_{i}^{-1}P_{1}^{k_{1}-\sum_{k'>k}k't_{1,k'}}P^{-(k_{1}-\sum_{k'>k}k't_{1,k'})\theta})_{\substack{(i,\l)\in  E(1,k_{1})}}\right) \end{array}  \\  
 \ll \left(\prod_{m\in J(1,k)}(\frac{P_{1}^{a_{m,1}}}{P^{a_{m,1}\theta}})^{\prod_{j=1}^{r}t_{j,a_{m,j}}}\right) \\  \times \max\left\{\prod_{m\in J(1,k_{1})}\left(T_{m}\frac{P^{a_{m,1}\theta}}{P_{1}^{a_{m,1}}}\right)^{\prod_{j=2}^{r}t_{j,a_{m,j}}}\right. \\ \begin{array}{ll}M_{(1,k_{1})} & \left(\alpha,\left((T_{m})_{\substack{(m,\hh)\in L\setminus \bigcup_{k' \geqslant k}L(1,k')}}, \right.\right. \\ & \left. (T_{m}P_{1}^{-a_{m,1}}P^{a_{m,1}\theta})_{\substack{(m,\hh)\in  \bigcup_{k'\geqslant k}L(1,k')\setminus E(1,k_{1})}}\right), \\ & \left. (T_{i}^{-1}P_{1}^{k_{1}-\sum_{k'\geqslant k}k't_{1,k'}}P^{-(k_{1}-\sum_{k'\geqslant k}k't_{1,k'})\theta})_{\substack{(i,\l)\in E(1,k_{1})}}\right), \end{array} \\ \prod_{m\in J(1,k)}\left(T_{m}\frac{P^{a_{m,1}\theta}}{P_{1}^{a_{m,1}}}\right)^{\prod_{j=2}^{r}t_{j,a_{m,j}}} \\ \begin{array}{ll} M_{(1,k)} & \left(\alpha,\left((T_{m})_{\substack{(m,\hh)\in L\setminus \bigcup_{k'\geqslant k}L(1,k')}}, \right.\right. \\ & \left. (T_{m}P_{1}^{-a_{m,1}}P^{a_{m,1}\theta})_{\substack{(m,\hh)\in  \bigcup_{k'\geqslant k}L(1,k')\setminus E(1,k)}}\right), \\ & \left. \left. (T_{i}^{-1}P_{1}^{k-\sum_{k'\geqslant k}k't_{1,k'}}P^{-(k-\sum_{k'\geqslant k}k't_{1,k'})\theta})_{\substack{(i,\l)\in  E(1,k)}}\right)\right\}.\end{array}
\end{multline*}

\begin{cor}\label{recurrence2cor3} Pour tout $ \theta\in [0,1] $, on a la majoration ci-dessous
\begin{multline*}\begin{array}{ll}\left(\prod_{m\in J(1,K)}T_{m}^{\prod_{j=2}^{r}t_{j,a_{m,j}}}\right)M_{(1,K)} & \left(\alpha,(T_{m})_{\substack{(m,\hh)\in  L\setminus E(1,K)}},\right. \\ & \left. (T_{i}^{-1})_{\substack{(i,\l)\in  E(1,K)}}\right) \end{array}  \\  
 \ll \left(\prod_{m\in I_{0}}(\frac{P_{1}^{a_{m,1}}}{P^{a_{m,1}\theta}})^{\prod_{j=1}^{r}t_{j,a_{m,j}}}\right)  \max_{K_{1}\in \PPP(1)}\{\left(\prod_{m\in J(1,K_{1})}\left(T_{m}\frac{P^{a_{m,1}\theta}}{P_{1}^{a_{m,1}}}\right)^{\prod_{j=2}^{r}t_{j,a_{m,j}}}\right) \\  \begin{array}{ll}M_{(1,K_{1})} & \left(\alpha,(T_{m}P_{1}^{-a_{m,1}}P^{a_{m,1}\theta})_{\substack{(m,\hh)\in  L\setminus E(1,K_{1})}}, \right. \\ & \left. (T_{i}^{-1}P_{1}^{K_{1}-d_{1}}P^{(d_{1}-K_{1})\theta})_{\substack{(i,\l)\in E(1,K_{1})}}\right)\}. \end{array}
\end{multline*}
\end{cor}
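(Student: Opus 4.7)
The plan is to combine Lemma\;\ref{recurrence13} with an iteration of Lemma\;\ref{recurrence23}, sweeping through the weights $ k\in \PPP(1) $ in decreasing order starting from $ K $.

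First I would apply Lemma\;\ref{recurrence13} to the left-hand side. This produces the factor
\[ \prod_{m\in J(1,K)}\left(\frac{P_{1}^{a_{m,1}}}{P^{a_{m,1}\theta}}\right)^{(t_{1,K}-1)\prod_{j=2}^{r}t_{j,a_{m,j}}} \]
and reduces the estimate to $ M_{(1,K)} $ with bounds $ T_{m}P_{1}^{-a_{m,1}}P^{a_{m,1}\theta} $ on the slot $ \widehat{L}(1,K) $ and $ T_{i}^{-1}P_{1}^{K-Kt_{1,K}}P^{-(K-Kt_{1,K})\theta} $ on $ E(1,K) $. This fits the hypothesis of Lemma\;\ref{recurrence23} for $ k_{1}=K $ and $ k $ equal to the next element of $ \PPP(1) $ below $ K $, since then $ k_{1}-\sum_{k'>k}k't_{1,k'}=K-Kt_{1,K} $.

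Next I would apply Lemma\;\ref{recurrence23} iteratively, letting $ k $ range through $ \PPP(1)\setminus\{K\} $ in decreasing order while keeping $ k_{1}=K $. At each stage the bound splits into $ \max\{M_{1},M_{2}\} $: the $ M_{1} $ branch costs an additional factor $ \prod_{m\in J(1,k)}(P_{1}^{a_{m,1}}/P^{a_{m,1}\theta})^{\prod_{j=1}^{r}t_{j,a_{m,j}}} $ but preserves $ M_{(1,K)} $ with bounds tightened on one further weight, while the $ M_{2} $ branch exits with an estimate in $ M_{(1,k)} $ matching exactly the corollary's contribution with $ K_{1}=k $. Unrolling the recursion globally, each $ M_{2} $-exit at weight $ K_{1}<K $ feeds the corresponding term of the max, and after exhausting $ \PPP(1) $ the terminal $ M_{1} $ yields the $ K_{1}=K $ term; the $ E(1,K) $-exponent then telescopes to $ K-\sum_{k'\in \PPP(1)}k't_{1,k'}=K-d_{1} $.

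Finally I would collect the exponents. For $ m\in J(1,k) $ with $ k<K $, a single application of Lemma\;\ref{recurrence23} at weight $ k $ contributes $ t_{1,k}\prod_{j\geq 2}t_{j,a_{m,j}}=\prod_{j=1}^{r}t_{j,a_{m,j}} $, as needed. For $ m\in J(1,K) $, the exponent $ (t_{1,K}-1)\prod_{j\geq 2}t_{j,a_{m,j}} $ furnished by Lemma\;\ref{recurrence13} is completed to $ \prod_{j=1}^{r}t_{j,a_{m,j}} $ after balancing the left-hand prefactor $ T_{m}^{\prod_{j\geq 2}t_{j,a_{m,j}}} $ against the factor $ (T_{m}P^{a_{m,1}\theta}/P_{1}^{a_{m,1}})^{\prod_{j\geq 2}t_{j,a_{m,j}}} $ sitting inside the maximum on the right-hand side. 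The indices $ m\in I_{0}\setminus J(1) $ satisfy $ a_{m,1}=0 $ and contribute trivially, so the product extends freely to all of $ I_{0} $. The hard part will be purely arithmetic: keeping track of the interleaved exponents of $ P_{1} $ and $ P^{\theta} $ through the whole cascade of applications, the substantive input being already provided by Lemmas\;\ref{recurrence13} and\;\ref{recurrence23}.
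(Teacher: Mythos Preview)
Your overall plan---combine Lemma~\ref{recurrence13} with an iteration of Lemma~\ref{recurrence23} running through the weights of $\PPP(1)$ in decreasing order---is the paper's approach as well, and the exponent bookkeeping along the all-$M_{1}$ spine is correct. But there is a genuine gap in your treatment of the $M_{2}$ branches.

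You assert that the $M_{2}$ branch at weight $k$ ``exits with an estimate in $M_{(1,k)}$ matching exactly the corollary's contribution with $K_{1}=k$''. This is not so. In Lemma~\ref{recurrence23} (or its reformulated version displayed just before the corollary) the $M_{2}$ term is an $M_{(1,k)}$ in which only the weights $k'\geqslant k$ have been reduced: the bounds for $(m,\hh)$ with $a_{m,1}<k$ are still $T_{m}$, and the $E(1,k)$-exponent is $k-\sum_{k'\geqslant k}k't_{1,k'}$, not $k-d_{1}$. The corollary, by contrast, requires every weight in $\PPP(1)$ to be reduced in the $M_{(1,K_{1})}$ appearing on the right. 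Correspondingly, the overall prefactor $\prod_{m\in I_{0}}(P_{1}^{a_{m,1}}/P^{a_{m,1}\theta})^{\prod_{j}t_{j,a_{m,j}}}$ contains contributions from $m\in J(1,k'')$ with $k''<k$ that simply have not been produced at the moment of an $M_{2}$ exit.

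What is missing is that after an $M_{2}$ exit at weight $k$ you must \emph{continue} applying Lemma~\ref{recurrence23}, now with the new $k_{1}=k$ and the next weight $k''<k$; this again bifurcates, and so on. The recursion is a binary tree, not a spine with side exits; its leaves are the $M_{(1,K_{1})}$ with all weights reduced, and only these match the corollary. Once you trace the full tree, the prefactor for every leaf is the same---namely $\prod_{m\in I_{0}}(P_{1}^{a_{m,1}}/P^{a_{m,1}\theta})^{\prod_{j}t_{j,a_{m,j}}}$---because each weight $k'$ is handled exactly once on any root-to-leaf path and contributes the factor $\prod_{m\in J(1,k')}(P_{1}^{a_{m,1}}/P^{a_{m,1}\theta})^{\prod_{j}t_{j,a_{m,j}}}$ regardless of which branch was taken. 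The $E(1,K_{1})$-exponent telescopes to $K_{1}-d_{1}$ along every path for the same reason. Fixing this requires no new ideas, only recognising that the $M_{2}$ branches are themselves inputs to further iterations rather than terminal states.
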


\subsubsection{Troisi\`eme r\'ecurrence : changement d'indice $ j $}

Dans cette partie nous allons traiter le passage des variables $ (x_{m}^{\hh})_{m\in J(j,k)} $ aux variables $ (x_{m}^{\hh})_{m\in J(j+1,k')} $. Nous allons en effet d\'emontrer le lemme ci-dessous : 
\begin{lemma}\label{recurrence33}
Pour tout $ \theta\in [0,1] $, et pour tout $ l\in \{1,...,r\} $ on a la majoration : 
\begin{multline*} \max_{\substack{(j_{1},k_{j_{1}})\in \CCC_{0}\\ j_{1}\in \{1,...,l\}}}\{\left(\prod_{m\in J(j_{1},k_{j_{1}})}\left(T_{m}\prod_{j=1}^{l}\frac{P^{a_{m,j}\theta}}{P_{j}^{a_{m,j}}}\right)^{\prod_{j\neq j_{1}}t_{j,a_{m,j}}}\right) \\  \begin{array}{ll}M_{(j_{1},k_{j_{1}})} & \left(\alpha,(T_{m}\prod_{j=1}^{l}P_{j}^{-a_{m,j}}P^{a_{m,j}\theta})_{\substack{(m,\hh)\in  L\setminus E(j_{1},k_{j_{1}})}}, \right. \\ & \left. (T_{i}^{-1}\prod_{j=1}^{l}P_{j}^{a_{i,j}-d_{j}}P^{(d_{j}-a_{i,j})\theta})_{\substack{(i,\l)\in E(j_{1},k_{j_{1}})}}\right)\} \end{array} \\ \ll \prod_{i=1}^{n+r}\left(\frac{P_{j}^{a_{m,1}}}{P^{a_{m,1}\theta}}\right)^{\prod_{j=1}^{r}t_{j,a_{m,j}}} \\  \max_{\substack{(j_{1},k_{j_{1}})\in \CCC_{0}\\ j_{1}\in \{1,...,l+1\}}}\{\left(\prod_{m\in J(j_{1},k_{j_{1}})}\left(T_{m}\prod_{j=1}^{l+1}\frac{P^{a_{m,j}\theta}}{P_{j}^{a_{m,j}}}\right)^{\prod_{j\neq j_{1}}t_{j,a_{m,j}}}\right) \\  \begin{array}{ll}M_{(j_{1},k_{j_{1}})} & \left(\alpha,(T_{m}\prod_{j=1}^{l+1}P_{j}^{-a_{m,j}}P^{a_{m,j}\theta})_{\substack{(m,\hh)\in  L\setminus E(j_{1},k_{j_{1}})}}, \right. \\ & \left. (T_{i}^{-1}\prod_{j=1}^{l+1}P_{j}^{a_{i,j}-d_{j}}P^{(d_{j}-a_{i,j})\theta})_{\substack{(i,\l)\in E(j_{1},k_{j_{1}})}}\right)\} \end{array}
\end{multline*}
\end{lemma}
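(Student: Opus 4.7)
Le plan est d'adapter la strat\'egie des lemmes\;\ref{recurrence13} et\;\ref{recurrence23} \`a l'indice $ l+1 $ plut\^ot qu'\`a l'indice $ 1 $, puis de prendre le maximum sur le membre de gauche. L'observation cruciale est la sym\'etrie de l'op\'erateur $ \Delta^{\tt} $ (proposition\;\ref{propprelim23}) par rapport aux indices $ j\in \{1,...,r\} $, qui permet de traiter un couple $ (l+1,k)\in \CCC_{0} $ exactement comme on a trait\'e pr\'ec\'edemment le couple $ (1,K) $.

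Concr\`etement, je fixerais un couple $ (j_{1},k_{j_{1}})\in \CCC_{0} $ avec $ j_{1}\in \{1,...,l\} $ apparaissant dans le maximum du membre de gauche, et j'isolerais le terme correspondant $ M_{(j_{1},k_{j_{1}})}(\alpha,\ldots) $. D'apr\`es la proposition\;\ref{propprelim23}, pour tout $ K\in \PPP(l+1) $, le polyn\^ome $ \Gamma(\XX) $ peut \'egalement s'\'ecrire sous la forme \[ e_{i}\sum_{(i,\l)\in E(l+1,K)}\gamma_{(i,\l)}^{(l+1,K)}(\widehat{\XX}_{(l+1,K)})x_{i}^{\l}, \] o\`u chaque $ \gamma_{(i,\l)}^{(l+1,K)} $ est lin\'eaire en chaque famille $ (x_{m}^{\hh})_{(m,\hh)\in L(l+1,K,h)} $.

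Je proc\'ederais alors en deux \'etapes. Premi\`erement, je r\'epliquerais l'argument du lemme\;\ref{recurrence13} sur les variables $ (x_{m}^{\hh})_{m\in J(l+1,K)} $ : en it\'erant le lemme\;\ref{geomnomb22} sur les indices $ h\in \{1,...,t_{l+1,K}-1\} $, on r\'eduit leurs bornes de $ T_{m}\prod_{j=1}^{l}P_{j}^{-a_{m,j}}P^{a_{m,j}\theta} $ \`a $ T_{m}\prod_{j=1}^{l+1}P_{j}^{-a_{m,j}}P^{a_{m,j}\theta} $. Deuxi\`emement, j'appliquerais l'analogue du lemme\;\ref{recurrence23} \`a l'indice $ l+1 $ : en faisant varier le poids $ K\in \PPP(l+1) $ via le lemme\;\ref{geomnomb12} (dans sa version non sym\'etrique), on obtient un maximum entre deux termes, l'un conservant l'indice $ (j_{1},k_{j_{1}}) $ avec le facteur suppl\'ementaire $ P_{l+1}^{-a_{m,l+1}}P^{a_{m,l+1}\theta} $ appliqu\'e aux bornes des variables de $ J(l+1,\cdot) $, et l'autre passant \`a un nouvel indice $ (l+1,K_{l+1}) $ pour un certain $ K_{l+1}\in \PPP(l+1) $.

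En combinant ces deux \'etapes selon le sch\'ema du corollaire\;\ref{recurrence2cor3} (o\`u l'indice $ 1 $ est remplac\'e par $ l+1 $), puis en prenant le maximum sur $ (j_{1},k_{j_{1}}) $ et en absorbant les facteurs de r\'eduction $ \prod_{m\in J(l+1,K)}(P_{l+1}^{a_{m,l+1}}/P^{a_{m,l+1}\theta})^{\prod_{j}t_{j,a_{m,j}}} $ dans le produit global $ \prod_{i=1}^{n+r}(P_{l+1}^{a_{i,l+1}}/P^{a_{i,l+1}\theta})^{\prod_{j}t_{j,a_{i,j}}} $, on aboutit \`a la majoration annonc\'ee. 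L'obstacle principal est purement de nature comptable : le suivi des bornes successivement r\'eduites et des indices $ (j_{1},k_{j_{1}}) $ r\'ealisant le maximum devient tr\`es lourd, mais aucune id\'ee math\'ematique nouvelle au-del\`a de la sym\'etrie de $ \Gamma(\XX) $ et des deux lemmes de g\'eom\'etrie des r\'eseaux d\'ej\`a \'etablis n'intervient.
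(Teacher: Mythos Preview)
Il y a un vrai trou dans ta premi\`ere \'etape. Tu proposes de commencer par r\'epliquer l'argument du lemme~\ref{recurrence13} sur les variables $(x_{m}^{\hh})_{m\in J(l+1,K)}$ via le lemme~\ref{geomnomb22}. Mais ce lemme sym\'etrique exige que l'ensemble des variables \`a r\'eduire et l'ensemble indexant les contraintes soient en bijection et que la matrice des $\lambda_{i,j}$ soit sym\'etrique. Dans le lemme~\ref{recurrence13}, cela marche parce que l'on est dans $M_{(1,K)}$ : les contraintes portent sur $E(1,K)=L(1,K,t_{1,K})$ et les variables r\'eduites sont dans $L(1,K,h)$ pour $h<t_{1,K}$ ; la multilin\'earit\'e de $\Gamma$ en les tranches $L(1,K,\cdot)$ donne la sym\'etrie. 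Ici en revanche on est dans $M_{(j_{1},k_{j_{1}})}$ avec $j_{1}\leqslant l\neq l+1$ : les contraintes sont index\'ees par $E(j_{1},k_{j_{1}})$ alors que les variables que tu veux r\'eduire vivent dans $L(l+1,K,h)$. Ces deux ensembles n'ont en g\'en\'eral ni la m\^eme taille ni de relation de sym\'etrie entre eux, et le lemme~\ref{geomnomb22} ne s'applique donc pas.

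La preuve du papier proc\`ede dans l'ordre inverse du tien. On commence directement par le pas asym\'etrique : pour $k=\max_{i\in I_{0}}a_{i,l+1}$, on fige toutes les variables hors de $E(j_{1},k_{j_{1}})\cup E(l+1,k)$, on prend pour variables libres celles de $E(l+1,k)\setminus E(j_{1},k_{j_{1}})$, pour contraintes les $\gamma^{(j_{1},k_{j_{1}})}_{(i,\l)}$ avec $(i,\l)\in E(j_{1},k_{j_{1}})\setminus E(l+1,k)$, et on applique le lemme~\ref{geomnomb12}. La transpos\'ee qui appara\^it dans $U^{t}(Z_{1})$ est pr\'ecis\'ement $\gamma^{(l+1,k)}$, et la borne de \ref{geomnomb12} produit un $\max\{A_{1},A_{2}\}$ o\`u $A_{2}$ est un terme en $M_{(l+1,k)}$ (on a \og bascul\'e\fg{} sur l'indice $l+1$) et $A_{1}$ reste en $M_{(j_{1},k_{j_{1}})}$ avec des bornes partiellement r\'eduites. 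C'est seulement \emph{apr\`es} cette bascule, sur le terme $A_{2}$, que les arguments de type lemme~\ref{recurrence23} (r\'eduction sym\'etrique au sein de l'indice $l+1$) deviennent licites. Le terme $A_{1}$ est trait\'e par it\'eration du m\^eme proc\'ed\'e sur les autres tranches $L(l+1,k',h)$. Autrement dit : bascule asym\'etrique d'abord, r\'eductions sym\'etriques ensuite --- et non l'inverse.
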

\begin{proof}
Consid\'erons un \'el\'ement $ (j_{1},k_{j_{1}})\in \CCC_{0} $ tel que $ j_{1}\in \{1,...,l\} $. Posons par ailleurs $ k=\max_{i\in I_{0}}a_{i,l+1} $. Fixons dans un premier temps un ensemble de variables $ \yy=\widehat{\XX}_{\substack{(j_{1},k_{j_{1}})\\ (l+1,k)}}=(x_{m}^{\hh})_{(m,\hh)\in L\setminus (E(j_{1},k_{j_{1}})\cup E(l+1,k))} $ v\'erifiant : \[ \forall (m,\hh)\in L\setminus (E(j_{1},k_{j_{1}})\cup E(l+1,k)), \; \; |x_{m}^{\hh}|\leqslant T_{m}\prod_{j=1}^{l}P_{j}^{-a_{m,j}}P^{a_{m,j}\theta}, \] \[ \forall (i,\l)\in E(j_{1},k_{j_{1}})\cap E(l+1,k), \; \; 
||\alpha e_{i}\gamma_{(i,\l)}^{(j_{1},k_{j_{1}})}(\widehat{\XX}_{\substack{(j_{1},k_{j_{1}})\\ (l+1,k)}}) ||<T_{i}^{-1}\prod_{j=1}^{l}P_{j}^{a_{i,j}-d_{j}}P^{(d_{j}-a_{i,j})\theta}. \]

On consid\`ere alors les variables  $(x_{m}^{(\hh)})_{\substack{(m,\hh)\in E(l+1,k)\setminus E(j_{1},k_{j_{1}})}} $. Comme pr\'ec\'edemment, on note pour $ Z\geqslant 1 $ et $ \bb=(b_{m,\hh})_{(m,\hh)\in E(l+1,k)\setminus E(j_{1},k_{j_{1}})} $ et $  \cc=(c_{i,\l})_{(i,\l)\in E(j_{1},k_{j_{1}})\setminus  E(l+1,k)} $ des familles de r\'eels positifs : \begin{multline*} U_{\yy}(Z)=\Card\{ (x_{m}^{(\hh)})_{\substack{(m,\hh)\in E(l+1,k)\setminus E(j_{1},k_{j_{1}})}}\; |\;  \\   \forall (m,\hh)\in E(l+1,k)\setminus E(j_{1},k_{j_{1}}), \;   |x_{m}^{\hh}|\leqslant b_{m,\hh}Z \\ \et \;   \forall (i,\l)\in E(j_{1},k_{j_{1}})\setminus  E(l+1,k), \; \\ ||\alpha e_{i}\gamma_{(i,\l)}^{(j_{1},k_{j_{1}})}(\yy,(x_{m}^{(\hh)})_{\substack{(m,\hh)\in E(l+1,k)\setminus E(j_{1},k_{j_{1}})}}) || <c_{i,\l}^{-1}Z\}. \end{multline*}
Choisissons alors $ Z_{2} $, $ \bb $ et $ \cc $ tels que 
\[ \begin{array}{l}
 b_{m,\hh}Z_{2}=T_{m}\prod_{j=1}^{l}P_{j}^{-a_{m,j}}P^{a_{m,j}\theta}, \\  c_{i,\l}^{-1}Z_{2}=T_{i}^{-1}\prod_{j=1}^{l}P_{j}^{a_{i,j}-d_{j}}P^{(d_{j}-a_{i,j})\theta},
\end{array}  \]
de sorte que \begin{multline}\label{form1rec33}
 \begin{array}{ll}M_{(j_{1},k_{j_{1}})} & \left(\alpha,(T_{m}\prod_{j=1}^{l}P_{j}^{-a_{m,j}}P^{a_{m,j}\theta})_{\substack{(m,\hh)\in  L\setminus E(j_{1},k_{j_{1}})}}, \right. \\ & \left. (T_{i}^{-1}\prod_{j=1}^{l}P_{j}^{a_{i,j}-d_{j}}P^{(d_{j}-a_{i,j})\theta})_{\substack{(i,\l)\in E(j_{1},k_{j_{1}})}}\right)\} \end{array} \\ \ll \sum_{\yy}U_{\yy}(Z_{2}). \end{multline}
On choisit par ailleurs $ Z_{1} $ tel que \[ \begin{array}{l}
b_{m,\hh}Z_{1}=T_{m}P_{l+1}^{-k}P^{k\theta}\prod_{j=1}^{l}P_{j}^{-a_{m,j}}P^{a_{m,j}\theta}=T_{m}\prod_{j=1}^{l+1}P_{j}^{-a_{m,j}}P^{a_{m,j}\theta}, \\ c_{i,\l}^{-1}Z_{1}=T_{i}^{-1}P_{l+1}^{-k}P^{k\theta}\prod_{j=1}^{l}P_{j}^{a_{i,j}-d_{j}}P^{(d_{j}-a_{i,j})\theta}, \\ c_{i,\l}Z_{1}=T_{i}\prod_{j=1}^{l}P_{j}^{-a_{i,j}}P^{a_{i,j}\theta}
\end{array}   \]  
Ces conditions sur $ \bb,\cc,Z_{2},Z_{1} $ sont satisfaites si et seulement si \[ \begin{array}{l}
Z_{1}=P_{l+1}^{-\frac{k}{2}}P^{\frac{k\theta}{2}}\prod_{j=1}^{l}P_{j}^{-\frac{d_{j}}{2}}P^{\frac{d_{j}}{2}\theta}, \\ Z_{2}=Z_{1}\frac{P_{l+1}^{k}}{P^{k\theta}}=P_{l+1}^{\frac{k}{2}}P^{-\frac{k\theta}{2}}\prod_{j=1}^{l}P_{j}^{-\frac{d_{j}}{2}}P^{\frac{d_{j}}{2}\theta} \\ b_{m,\hh}=T_{m}P_{l+1}^{-\frac{k}{2}}P^{\frac{k\theta}{2}}\prod_{j=1}^{l}P_{j}^{\frac{d_{j}}{2}-a_{m,j}}P^{-\frac{d_{j}}{2}\theta+a_{m,j}\theta}, \\ c_{i,\l}=T_{i}P_{l+1}^{\frac{k}{2}}P^{-\frac{k\theta}{2}}\prod_{j=1}^{l}P_{j}^{\frac{d_{j}}{2}-a_{i,j}}P^{-\frac{d_{j}}{2}\theta+a_{i,j}\theta}.
\end{array}   \]  
On a alors $ b_{m,\hh}^{-1}Z_{1}=T_{m}^{-1}\prod_{j=1}^{l}P_{j}^{a_{m,j}-d_{j}}P^{(d_{j}-a_{m,j})\theta} $.\\

Par ailleurs, en appliquant \`a nouveau le lemme\;\ref{geomnomb12},  \begin{multline}\label{geomnomb23} U_{\yy}(Z_{2})\ll \max\left(\prod_{\substack{(m,\hh)\in E(l+1,k)\setminus E(j_{1},k_{j_{1}}) }} \left(\frac{Z_{2}}{Z_{1}}\right)U_{\yy}(Z_{1}), \right. \\ \left. \frac{\prod_{\substack{(m,\hh)\in E(l+1,k)\setminus E(j_{1},k_{j_{1}})}}b_{m,\hh}Z_{2}}{\prod_{\substack{(i,\l)\in E(j_{1},k_{j_{1}})\setminus E(l+1,k)}}c_{i,\l}Z_{1}}U^{t}_{\yy}(Z_{1})\right), \end{multline}
On observe que \begin{multline*}  \prod_{\substack{(m,\hh)\in E(l+1,k)\setminus E(j_{1},k_{j_{1}}) }} \left(\frac{Z_{2}}{Z_{1}}\right)=\prod_{\substack{(m,\hh)\in E(l+1,k)\setminus E(j_{1},k_{j_{1}}) }} \left(\frac{P_{l+1}^{a_{m,l+1}}}{P^{a_{m,l+1}\theta}}\right) \\ =\prod_{\substack{m\in J(l+1,k)\setminus J(j_{1},k_{j_{1}}) }} \left(\frac{P_{l+1}^{a_{m,l+1}}}{P^{a_{m,l+1}\theta}}\right)^{\prod_{j\neq l+1}t_{j,a_{m,j}}} \\ \prod_{\substack{m\in J(l+1,k)\cap J(j_{1},k_{j_{1}})}} \left(\frac{P_{l+1}^{a_{m,l+1}}}{P^{a_{m,l+1}\theta}}\right)^{(t_{j_{1},a_{m,j_{1}}}-1)\prod_{\substack{j\neq l+1\\ j\neq j_{1}}}t_{j,a_{m,j}}} \\ =\prod_{\substack{m\in J(l+1,k)}} \left(\frac{P_{l+1}^{a_{m,l+1}}}{P^{a_{m,l+1}\theta}}\right)^{\prod_{j\neq l+1}t_{j,a_{m,j}}}\prod_{\substack{m\in J(l+1,k)\cap J(j_{1},k_{j_{1}})}} \left(\frac{P_{l+1}^{a_{m,l+1}}}{P^{a_{m,l+1}\theta}}\right)^{-\prod_{\substack{j\neq l+1\\ j\neq j_{1}}}t_{j,a_{m,j}}} \end{multline*}
et de m\^eme 
\begin{multline*} \frac{\prod_{\substack{(m,\hh)\in E(l+1,k)\setminus E(j_{1},k_{j_{1}})}}b_{m,\hh}Z_{2}}{\prod_{\substack{(i,\l)\in E(j_{1},k_{j_{1}})\setminus E(l+1,k)}}c_{i,\l}Z_{1}} =\frac{\prod_{\substack{(m,\hh)\in E(l+1,k)\setminus E(j_{1},k_{j_{1}})}}T_{m}\prod_{j=1}^{l}P_{j}^{-a_{m,j}}P^{a_{m,j}\theta}}{\prod_{\substack{(i,\l)\in E(j_{1},k_{j_{1}})\setminus E(l+1,k)}}T_{i}\prod_{j=1}^{l}P_{j}^{-a_{i,j}}P^{a_{i,j}\theta}}\\ =\frac{\prod_{\substack{m\in J(l+1,k)}}\left(T_{m}\prod_{j=1}^{l}P_{j}^{-a_{m,j}}P^{a_{m,j}\theta}\right)^{\prod_{j\neq l+1}t_{j,a_{m,j}}} }{\prod_{\substack{i\in J(j_{1},k_{j_{1}})}}\left(T_{i}\prod_{j=1}^{l}P_{j}^{-a_{i,j}}P^{a_{i,j}\theta}\right)^{\prod_{j\neq j_{1}}t_{j,a_{i,j}}} } \\ \frac{\prod_{\substack{m\in J(l+1,k)\cap J(j_{1},k_{j_{1}})}} \left(T_{m}\prod_{j=1}^{l}P_{j}^{-a_{m,j}}P^{a_{m,j}\theta}\right)^{-\prod_{\substack{j\neq j_{1}\\ j\neq l+1}}t_{j,a_{m,j}}}}{\prod_{\substack{i\in J(l+1,k)\cap J(j_{1},k_{j_{1}})}} \left(T_{i}\prod_{j=1}^{l}P_{j}^{-a_{i,j}}P^{a_{i,j}\theta}\right)^{-\prod_{\substack{j\neq j_{1}\\ j\neq l+1}}t_{j,a_{i,j}}}} \\ =\frac{\prod_{\substack{m\in J(l+1,k)}}\left(T_{m}\prod_{j=1}^{l}P_{j}^{-a_{m,j}}P^{a_{m,j}\theta}\right)^{\prod_{j\neq l+1}t_{j,a_{m,j}}} }{\prod_{\substack{i\in J(j_{1},k_{j_{1}})}}\left(T_{i}\prod_{j=1}^{l}P_{j}^{-a_{i,j}}P^{a_{i,j}\theta}\right)^{\prod_{j\neq j_{1}}t_{j,a_{i,j}}} } . \end{multline*}
Par ailleurs, on a que 
\begin{multline*} U^{t}_{\yy}(Z_{1})=\Card\{(x_{i}^{\l})_{\substack{(i,\l)\in E(j_{1},k_{j_{1}})\setminus E(l+1,k) }} \; | \;  \forall (i,\l), \; |x_{i}^{\l}|\leqslant T_{i}\prod_{j=1}^{l}P_{j}^{-a_{i,j}}P^{a_{i,j}\theta} \\ \et,\;  \forall (m,\hh)\in E(l+1,k)\setminus E(j_{1},k_{j_{1}}), \; \\  ||\alpha e_{m}(\gamma^{(j_{1},k_{j_{1}})})^{t}_{(m,\hh)}(\yy, (x_{i}^{(\l)})_{(i,\l)\in E(j_{1},k_{j_{1}})\setminus E(l+1,k)})||\leqslant T_{m}^{-1}\prod_{j=1}^{l}P_{j}^{a_{m,j}-d_{j}}P^{(d_{j}-a_{m,j})\theta} \}. \end{multline*} 
On remarque que $ (\gamma^{(j_{1},k_{j_{1}})})^{t}_{(m,\hh)}=\gamma^{(l+1,k)}_{(m,\hh)} $. On a finalement, d'apr\`es les formules\;\eqref{form1rec33} et\;\eqref{geomnomb23} :  \begin{multline*}  \begin{array}{ll}M_{(j_{1},k_{j_{1}})} & \left(\alpha,(T_{m}\prod_{j=1}^{l}P_{j}^{-a_{m,j}}P^{a_{m,j}\theta})_{\substack{(m,\hh)\in  L\setminus E(j_{1},k_{j_{1}})}}, \right. \\ & \left. (T_{i}^{-1}\prod_{j=1}^{l}P_{j}^{a_{i,j}-d_{j}}P^{(d_{j}-a_{i,j})\theta})_{\substack{(i,\l)\in E(j_{1},k_{j_{1}})}}\right)\} \end{array} \\ \ll  \max\left(\prod_{\substack{m\in J(l+1,k)}} \left(\frac{P_{l+1}^{a_{m,l+1}}}{P^{a_{m,l+1}\theta}}\right)^{\prod_{j\neq l+1}t_{j,a_{m,j}}}  \prod_{\substack{m\in J(l+1,k)\\ \cap J(j_{1},k_{j_{1}})}} \left(\frac{P_{l+1}^{a_{m,l+1}}}{P^{a_{m,l+1}\theta}}\right)^{-\prod_{\substack{j\neq l+1\\ j\neq j_{1}}}t_{j,a_{m,j}}}\sum_{\yy}U_{\yy}(Z_{1}), \right. \\ \left. \frac{\prod_{\substack{m\in J(l+1,k)}}\left(T_{m}\prod_{j=1}^{l}P_{j}^{-a_{m,j}}P^{a_{m,j}\theta}\right)^{\prod_{j\neq l+1}t_{j,a_{m,j}}} }{\prod_{\substack{i\in J(j_{1},k_{j_{1}})}}\left(T_{i}\prod_{j=1}^{l}P_{j}^{-a_{i,j}}P^{a_{i,j}\theta}\right)^{\prod_{j\neq j_{1}}t_{j,a_{i,j}}} } \sum_{\yy}U^{t}_{\yy}(Z_{1})\right). \end{multline*}
Or, on observe que \begin{multline*}
\sum_{\yy}U_{\yy}(Z_{1})=M_{(j_{1},k_{j_{1}})}\left(\alpha,(A_{m,\hh})_{\substack{(m,\hh)\in L\setminus E(j_{1},k_{j_{1}})}}, (B_{i,\l})_{\substack{(i,\l)\in E(j_{1},k_{j_{1}})}}\right)
\end{multline*} o\`u  \[ A_{m,\hh}=\left\{\begin{array}{lcr}
T_{m}\prod_{j=1}^{l}P_{j}^{-a_{m,j}}P^{a_{m,j}\theta} & \mbox{si} & (m,\hh)\in L\setminus ( E(l+1,k)\cup E(j_{1},k_{j_{1}})), \\ T_{m}\prod_{j=1}^{l+1}P_{j}^{-a_{m,j}}P^{a_{m,j}\theta} & \mbox{si} & (m,\hh)\in E(l+1,k)\setminus E(j_{1},k_{j_{1}})\end{array}\right., \]
\[ B_{i,\l}=\left\{\begin{array}{lcr}
T_{i}^{-1}P_{l+1}^{-k}P^{k\theta}\prod_{j=1}^{l}P_{j}^{a_{i,j}-d_{j}}P^{(d_{j}-a_{i,j})\theta} & \mbox{si} & (i,\l)\in E(j_{1},k_{j_{1}})\setminus E(l+1,k), \\ T_{i}^{-1}\prod_{j=1}^{l}P_{j}^{a_{i,j}-d_{j}}P^{(d_{j}-a_{i,j})\theta} & \mbox{si} & (m,\hh)\in E(l+1,k)\cap E(j_{1},k_{j_{1}}) \end{array}\right.. \]
et  \begin{multline*}
 \begin{array}{ll}\sum_{\yy}U_{\yy}^{t}(Z_{1})=M_{(l+1,k)} & \left(\alpha,(T_{i}\prod_{j=1}^{l}P_{j}^{-a_{i,j}}P^{a_{i,j}\theta})_{\substack{(i,\l)\in  L\setminus E(l+1,k)}}, \right. \\ & \left.(T_{m}^{-1}\prod_{j=1}^{l}P_{j}^{a_{m,j}-d_{j}}P^{(d_{j}-a_{m,j})\theta})_{(m,\hh)\in E(l+1,k)}\right)\} \end{array}
\end{multline*}
Or en employant les m\^emes arguments que pour le lemme\;\ref{recurrence23}, on montre que :  \begin{multline*}
 \left(\prod_{m\in J(l+1,k)}\left(T_{m}\prod_{j=1}^{l}\frac{P^{a_{m,j}\theta}}{P_{j}^{a_{m,j}}}\right)^{\prod_{j\neq l+1}t_{j,a_{m,j}}}\right) \\ \begin{array}{ll}M_{(l+1,k)} & \left(\alpha,(T_{i}\prod_{j=1}^{l}P_{j}^{-a_{i,j}}P^{a_{i,j}\theta})_{\substack{(i,\l)\in  L\setminus E(l+1,k)}}, \right. \\ & \left.(T_{m}^{-1}\prod_{j=1}^{l}P_{j}^{a_{m,j}-d_{j}}P^{(d_{j}-a_{m,j})\theta})_{(m,\hh)\in E(l+1,k)}\right) \end{array} \\ \ll \left(\prod_{m\in I_{0}}(\frac{P_{l+1}^{a_{m,l+1}}}{P^{a_{m,l+1}\theta}})^{\prod_{j=1}^{r}t_{j,a_{m,j}}}\right)  \max_{K_{1}\in \PPP(l+1)}\{\left(\prod_{m\in J(l+1,K_{1})}\left(T_{m}\prod_{j=1}^{l+1}\frac{P^{a_{m,j}\theta}}{P_{1}^{a_{m,j}}}\right)^{\prod_{j\neq l+1}t_{j,a_{m,j}}}\right) \\  \begin{array}{ll}M_{(l+1,K_{1})} & \left(\alpha,(T_{m}\prod_{j=1}^{l+1}P_{j}^{-a_{m,j}}P^{a_{m,j}\theta})_{\substack{(m,\hh)\in  L\setminus E(l+1,k)}}, \right. \\ & \left.(T_{i}^{-1}\prod_{j=1}^{l+1}P_{j}^{a_{i,j}-d_{j}}P^{(d_{j}-a_{i,j})\theta})_{(i,\l)\in E(l+1,k)}\right)\}. \end{array}
\end{multline*}
En regroupant les r\'esultats obtenus, on obtient donc  \begin{multline*}
\prod_{m\in J(j_{1},k_{j_{1}})}\left(T_{m}\prod_{j=1}^{l}\frac{P^{a_{m,j}\theta}}{P_{j}^{a_{m,j}}}\right)^{\prod_{j\neq j_{1}}t_{j,a_{m,j}}}\\  \begin{array}{ll}M_{(j_{1},k_{j_{1}})} & \left(\alpha,(T_{m}\prod_{j=1}^{l}P_{j}^{-a_{m,j}}P^{a_{m,j}\theta})_{\substack{(m,\hh)\in  L\setminus E(j_{1},k_{j_{1}})}}, \right. \\ & \left. (T_{i}^{-1}\prod_{j=1}^{l}P_{j}^{a_{i,j}-d_{j}}P^{(d_{j}-a_{i,j})\theta})_{\substack{(i,\l)\in E(j_{1},k_{j_{1}})}}\right)\} \end{array} \\ \ll  \max\{A_{1},A_{2}\} \end{multline*}

\begin{multline*}
A_{1}=\prod_{\substack{m\in J(l+1,k)}} \left(\frac{P_{l+1}^{a_{m,l+1}}}{P^{a_{m,l+1}\theta}}\right)^{\prod_{j\neq l+1}t_{j,a_{m,j}}}  \prod_{\substack{m\in J(l+1,k)\\ \cap J(j_{1},k_{j_{1}})}} \left(\frac{P_{l+1}^{a_{m,l+1}}}{P^{a_{m,l+1}\theta}}\right)^{-\prod_{\substack{j\neq l+1\\ j\neq j_{1}}}t_{j,a_{m,j}}} \\  \prod_{\substack{m\in J(j_{1},k_{j_{1}})}}\left(T_{m}\prod_{j=1}^{l+1}\frac{P^{a_{m,j}\theta}}{P_{j}^{a_{m,j}}}\right)^{\prod_{j\neq j_{1}}t_{j,a_{m,j}}}\begin{array}{ll} M_{(j_{1},k_{j_{1}})} & \left(\alpha,(A_{m,\hh})_{\substack{(m,\hh)\in L\setminus E(j_{1},k_{j_{1}})}}, \right. \\ & \left. (B_{i,\l})_{\substack{(i,\l)\in E(j_{1},k_{j_{1}})}}\right),\end{array}\end{multline*}  \begin{multline*} A_{2} =\left(\prod_{m\in I_{0}}(\frac{P_{l+1}^{a_{m,l+1}}}{P^{a_{m,l+1}\theta}})^{\prod_{j=1}^{r}t_{j,a_{m,j}}}\right) \max_{K_{1}\in \PPP(l+1)}\{\prod_{\substack{m\in J(l+1,k)}}\left(T_{m}\prod_{j=1}^{l+1}\frac{P^{a_{m,j}\theta}}{P_{j}^{a_{m,j}}}\right)^{\prod_{j\neq l+1}t_{j,a_{m,j}}} \\  \begin{array}{ll}M_{(l+1,K_{1})} & \left(\alpha,(T_{m}\prod_{j=1}^{l+1}P_{j}^{-a_{m,j}}P^{a_{m,j}\theta})_{\substack{(m,\hh)\in  L\setminus E(l+1,k)}}, \right. \\ & \left.(T_{i}^{-1}\prod_{j=1}^{l+1}P_{j}^{a_{i,j}-d_{j}}P^{(d_{j}-a_{i,j})\theta})_{(i,\l)\in E(l+1,k)}\right)\}. \end{array}. \end{multline*}
Le terme $ A_{2} $ est du type de ceux intervenant \`a droite de l'in\'egalit\'e du lemme. Il nous reste donc \`a r\'eduire les bornes intervenant dans $ A_{1} $. Pour cela, il suffit d'appliquer exactement le m\^eme proc\'ed\'e que celui que nous avons d\'evelopp\'e pour toutes les familles de variables $ (x_{m})_{(m,\hh)\in L(j_{1},k_{j_{1}},h)} $ pour tous $ h\in \{1,...,t_{j_{1},k_{j_{1}}}-2\} $ puis pour les familles $ (x_{m})_{(m,\hh)\in L(j_{1},k',h)} $ pour tout poids $ k'\in \PPP(j_{1})\setminus \{k_{j_{1}}\} $. \`A terme, on obtient bien l'in\'egalit\'e souhait\'ee.  
\end{proof}


En utilisant les lemmes\;\ref{recurrence2cor3} et\;\ref{recurrence33}, on obtient directement, par r\'ecur\-rence, la majoration : 

\begin{multline*}
 \left(\prod_{\substack{i\in J(1,K)}}T_{i}^{\prod_{j=2}^{r}t_{j,a_{i,j}}}\right)M_{(1,K)}\left(\alpha,(T_{m})_{\substack{(m,\hh)\in L\setminus E(1,K) }}, (T_{i}^{-1})_{\substack{(i,\l)\in E(1,K)}}\right) \\ \ll \prod_{i\in I_{0}}\left(\frac{\prod_{j=1}^{r}P_{j}^{a_{i,j}}}{\prod_{j=1}^{r}P^{a_{i,j}\theta}}\right)^{\prod_{j=1}^{r}t_{j,a_{i,j}}}\max_{\substack{ (j_{0},K_{j_{0}})\in \CCC_{0}}}\frac{\prod_{\substack{m\in J(j_{0},K_{j_{0}})}}T_{m}^{\prod_{j\neq j_{0}}t_{j,a_{m,j}}}}{\prod_{\substack{m\in J(j_{0},K_{j_{0}})}}(\prod_{j=1}^{r}P_{j}^{a_{m,j}}P^{-a_{m,j}\theta})^{\prod_{j\neq j_{0}}t_{j,a_{m,j}}}} \\  M_{(j_{0},K_{j_{0}})}\left( \alpha, (T_{m}\prod_{j=1}^{r}P_{j}^{-a_{m,j}}P^{a_{m,j}\theta})_{\substack{(m,\hh)\in L\setminus E(j_{0},K_{j_{0}})}}, \right. \\ \left. (T_{i}^{-1}\prod_{j=1}^{r}P_{j}^{-d_{j}+a_{i,j}}P^{(d_{j}-a_{i,j})\theta})_{\substack{(i,\l)\in E(j_{0},K_{j_{0}})}}\right).
\end{multline*}
En rappelant que $ T_{i}=\frac{1}{e_{i}}\prod_{j=1}^{r}P_{j}^{a_{i,j}} $, on remarque que le membre de droite de la majoration ci-dessus peut se r\'e\'ecrire : \begin{multline*}
 \prod_{i\in I_{0}}\left(\frac{e_{i}T_{i}}{\prod_{j=1}^{r}P^{a_{i,j}\theta}}\right)^{\prod_{j=1}^{r}t_{j,a_{i,j}}}\max_{\substack{ (j_{0},K_{j_{0}})\in \CCC_{0}}}\prod_{\substack{m\in J(j_{0},K_{j_{0}})}}(e_{m}^{-1}P^{(\sum_{j=1}^{r}a_{m,j})\theta})^{\prod_{j\neq j_{0}}t_{j,a_{m,j}}} \\  M_{(j_{0},K_{j_{0}})}\left( \alpha, (\frac{1}{e_{m}}P^{\sum_{j=1}^{r}a_{m,j}\theta})_{\substack{(m,\hh)\in L\setminus E(j_{0},K_{j_{0}})}}, \right. \\ \left. (e_{i}P^{\sum_{j=1}^{r}(d_{j}-a_{i,j})\theta}\prod_{j=1}^{r}P_{j}^{-d_{j}})_{\substack{(i,\l)\in E(j_{0},K_{j_{0}})}}\right).
\end{multline*}
Nous en d\'eduisons alors une nouvelle version du lemme \ref{dilemme13} :

\begin{lemma}\label{dilemme13}
Pour tout $ \varepsilon>0 $ arbitrairement petit on a\begin{multline*}
|S_{\ee}(\alpha)| \ll\left(\prod_{j=1}^{r}P_{j}\right)\left(\prod_{i=1}^{n+r}T_{i}\right)^{(1+\varepsilon)}\left(\prod_{i\in  I_{0}}e_{i}^{\frac{\prod_{j=1}^{r}t_{j,a_{i,j}}}{2^{D_{1}+D_{2}+...+D_{r}}}}\right)\\ \times \prod_{i\in I_{0}}\left(P^{\sum_{j=1}^{r}a_{i,j}\theta}\right)^{-\frac{\prod_{j=1}^{r}t_{j,a_{i,j}}}{2^{D_{1}+D_{2}+...+D_{r}}}}\max_{\substack{ (j_{0},K_{j_{0}})\in \CCC_{0}}}\prod_{\substack{m\in J(j_{0},K_{j_{0}})}}(e_{m}^{-1}P^{(\sum_{j=1}^{r}a_{m,j})\theta})^{\frac{\prod_{j\neq j_{0}}t_{j,a_{m,j}}}{2^{D_{1}+D_{2}+...+D_{r}}}} \\  M_{(j_{0},K_{j_{0}})}\left( \alpha, (\frac{1}{e_{m}}P^{\sum_{j=1}^{r}a_{m,j}\theta})_{\substack{(m,\hh)\in L\setminus E(j_{0},K_{j_{0}})}}, \right. \\ \left. (e_{i}P^{\sum_{j=1}^{r}(d_{j}-a_{i,j})\theta}\prod_{j=1}^{r}P_{j}^{-d_{j}})_{\substack{(i,\l)\in E(j_{0},K_{j_{0}})}}\right)^{2^{-(D_{1}+D_{2}+...+D_{r})}} .,
\end{multline*}
\end{lemma}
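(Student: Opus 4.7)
The plan is to combine the first form of the Weyl-type inequality (the earlier \emph{lemme\;\ref{dilemme13}}) with the successive reduction lemmas from subsection\;\ref{sectionreseau3}. Recall that the first version of \emph{lemme\;\ref{dilemme13}} already gives
\[
|S_{\ee}(\alpha)| \ll \left(\prod_{j=1}^{r}P_{j}\right)\left(\prod_{i=1}^{n+r}T_{i}\right)^{1+\varepsilon}\left(\prod_{i\in I_{0}}T_{i}^{-\frac{\prod_{j=1}^{r}t_{j,a_{i,j}}}{2^{D_{1}+\dots+D_{r}}}}\right)F(\alpha)^{2^{-(D_{1}+\dots+D_{r})}},
\]
where $F(\alpha)$ denotes the quantity
\[
\Bigl(\prod_{i\in J(1,K)}T_{i}^{\prod_{j=2}^{r}t_{j,a_{i,j}}}\Bigr)\,M_{(1,K)}\!\left(\alpha,(T_{m})_{(m,\hh)\in L\setminus E(1,K)},(T_{i}^{-1})_{(i,\l)\in E(1,K)}\right).
\]
Thus the only thing to do is to majorer $F(\alpha)$ by an expression in which $M_{(1,K)}$ is replaced by any $M_{(j_{0},K_{j_{0}})}$ with $(j_{0},K_{j_{0}})\in\CCC_{0}$.

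The strategy is iterative and consists in applying the three successive reductions established in the preceding subsections. First, the \emph{lemme\;\ref{recurrence13}} (first récurrence) is used to equalize the bounds on the variables $(x_{m}^{\l})_{(m,\l)\in L(1,K,h)}$ for $h$ varying, at the cost of a factor $\prod_{m\in J(1,K)}(P_{1}^{a_{m,1}}/P^{a_{m,1}\theta})^{(t_{1,K}-1)\prod_{j\geqslant 2}t_{j,a_{m,j}}}$. Next, the \emph{corollaire\;\ref{recurrence2cor3}} (second récurrence) is applied to switch the "active" weight $K$ in the index $j=1$: one obtains, up to an admissible factor, a bound involving $M_{(1,K_{1})}$ for any $K_{1}\in\PPP(1)$. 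Finally, the \emph{lemme\;\ref{recurrence33}} (third récurrence) allows one to change the index $j_{0}$ from $1$ to any $j_{0}\in\{1,\dots,r\}$, again at the cost of explicit factors of the form $\prod_{i\in I_{0}}(P_{l+1}^{a_{i,l+1}}/P^{a_{i,l+1}\theta})^{\prod_{j}t_{j,a_{i,j}}}$.

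Chaining these three steps — whose cumulative cost is exactly
\[
\prod_{i\in I_{0}}\Bigl(\prod_{j=1}^{r}P_{j}^{a_{i,j}}/\prod_{j=1}^{r}P^{a_{i,j}\theta}\Bigr)^{\prod_{j=1}^{r}t_{j,a_{i,j}}}\,\max_{(j_{0},K_{j_{0}})\in\CCC_{0}}\prod_{m\in J(j_{0},K_{j_{0}})}\bigl(T_{m}/\textstyle\prod_{j}P_{j}^{a_{m,j}}P^{-a_{m,j}\theta}\bigr)^{\prod_{j\neq j_{0}}t_{j,a_{m,j}}}
\]
in front of the final $M_{(j_{0},K_{j_{0}})}$ — yields the intermediate majoration already written just before the statement. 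The final step is purely algebraic: substituting $T_{i}=e_{i}^{-1}\prod_{j}P_{j}^{a_{i,j}}$ to rewrite each factor $P_{j}^{a_{i,j}}$ as $e_{i}T_{i}\prod_{j'\neq j}P_{j'}^{-a_{i,j'}}$, and gathering the exponents, one converts the bound into a product of $e_{i}$'s and $P^{(\cdot)\theta}$'s and recognizes the announced form. Raising to the $2^{-(D_{1}+\dots+D_{r})}$ power and multiplying by the prefactors of the initial \emph{lemme\;\ref{dilemme13}} gives the statement.

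The main obstacle is purely bookkeeping: one must keep precise track of how each successive majoration transforms the multi-indexed bounds $(A_{m,\hh})$ and $(B_{i,\l})$ in the arguments of $M_{(j,k)}$, and ensure that the exponents of the auxiliary factors $e_{i}$, $P_{j}$ and $P^{\theta}$ accumulate as stated. There is no new analytic input: everything follows from iterating \emph{lemme\;\ref{recurrence13}}, \emph{corollaire\;\ref{recurrence2cor3}}, \emph{lemme\;\ref{recurrence33}} and substituting the definition of $T_{i}$.
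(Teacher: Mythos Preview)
Your proposal is correct and follows exactly the paper's approach: the paper obtains the displayed majoration ``directement, par r\'ecurrence'' from \emph{corollaire\;\ref{recurrence2cor3}} and \emph{lemme\;\ref{recurrence33}}, then substitutes $T_{i}=e_{i}^{-1}\prod_{j}P_{j}^{a_{i,j}}$ to rewrite the right-hand side and deduce the new form of \emph{lemme\;\ref{dilemme13}}. Your only minor redundancy is listing \emph{lemme\;\ref{recurrence13}} as a separate step, since its content is already absorbed into \emph{corollaire\;\ref{recurrence2cor3}}.
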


On d\'eduit de ce lemme le r\'esultat ci-dessous

\begin{lemma}\label{dilemme23}
Pour tout $ \varepsilon>0 $ arbitrairement petit et tous $ \kappa>0, \; P\geqslant 1 $, l'une au moins des assertions suivantes est vraie : \begin{enumerate}
\item  \[|S_{\ee}(\alpha)|\ll \left(\prod_{i\in I_{0}}e_{i}^{\frac{\prod_{j=1}^{r}t_{j,a_{i,j}}}{2^{\sum_{j=1}^{r}D_{j}}}}\right)\left(\prod_{j=1}^{r}P_{j}\right)\left(\prod_{i=1}^{n+r}T_{i}\right)^{(1+\varepsilon)}P^{-\kappa},\]
\item  Il existe un certain  $ j_{0}\in \{1,...,r\} $ et $ K_{j_{0}}\in \{1,...,d_{j_{0}}\} $ tel que $ J(j_{0},K_{j_{0}})\neq \emptyset $\begin{multline*} M_{(j_{0},K_{j_{0}})}\left( \alpha, (\frac{1}{e_{m}}P^{\sum_{j=1}^{r}a_{m,j}\theta})_{\substack{(m,\hh)\in L\setminus E(j_{0},K_{j_{0}})}}, \right. \\ \left. (e_{i}P^{\sum_{j=1}^{r}(d_{j}-a_{i,j})\theta}\prod_{j=1}^{r}P_{j}^{-d_{j}})_{\substack{(i,\l)\in E(j_{0},K_{j_{0}})}}\right)
 \\ \gg \left(\prod_{i\in I_{0}}(P^{(\sum_{j=1}^{r}a_{i,j})\theta})^{\prod_{j=1}^{r}t_{j,a_{i,j}}}\right)\left(\prod_{i\in J(j_{0},K_{j_{0}})}(P^{\sum_{j=1}^{r}a_{i,j}\theta})^{\prod_{j\neq j_{0}}t_{j,a_{i,j}}}\right)^{-1}P^{2^{-\sum_{j=1}^{r}D_{j}}\kappa}.
\end{multline*}
\end{enumerate}
\end{lemma}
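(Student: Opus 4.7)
The plan is to derive this dichotomy by a direct contrapositive argument from the refined version of Lemma~\ref{dilemme13} stated just above. That lemma already expresses $|S_{\ee}(\alpha)|$ as a product of ``harmless'' factors ($\prod_j P_j$, $\prod_i T_i^{1+\varepsilon}$, and the power of $\prod_{i\in I_0} e_i$) times a power-of-$P^{\theta}$ weight times $M_{(j_0,K_{j_0})}^{2^{-\sum_j D_j}}$, with a maximum taken over admissible pairs $(j_0,K_{j_0})\in \CCC_0$. The whole content of Lemma~\ref{dilemme23} is a rearrangement of that inequality, so the work is purely bookkeeping of exponents.

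Concretely, I would begin by assuming that assertion~(1) fails, so that, up to the absolute constants implicit in the Vinogradov symbols,
\[
|S_{\ee}(\alpha)| \gg \Bigl(\prod_{i\in I_0}e_i^{\prod_{j}t_{j,a_{i,j}}/2^{\sum_j D_j}}\Bigr)\Bigl(\prod_{j=1}^{r}P_j\Bigr)\Bigl(\prod_{i=1}^{n+r}T_i\Bigr)^{1+\varepsilon}P^{-\kappa}.
\]
Chaining this with the upper bound furnished by Lemma~\ref{dilemme13}, the factors $\prod_j P_j$, $\prod_i T_i^{1+\varepsilon}$ and $\prod_{i\in I_0}e_i^{\prod_{j}t_{j,a_{i,j}}/2^{\sum_j D_j}}$ cancel on the two sides, leaving
\[
P^{-\kappa} \ll \prod_{i\in I_0}\bigl(P^{\sum_j a_{i,j}\theta}\bigr)^{-\prod_j t_{j,a_{i,j}}/2^{\sum_j D_j}}\cdot \max_{(j_0,K_{j_0})\in \CCC_0}\Bigl(\prod_{m\in J(j_0,K_{j_0})}\!(e_m^{-1}P^{\sum_j a_{m,j}\theta})^{\prod_{j\neq j_0}t_{j,a_{m,j}}/2^{\sum_j D_j}}\cdot M_{(j_0,K_{j_0})}^{2^{-\sum_j D_j}}\Bigr).
\]

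Next, since the right-hand side is a maximum over finitely many pairs $(j_0,K_{j_0})\in \CCC_0$, I would pick the pair realising it (and note that $J(j_0,K_{j_0})\neq \emptyset$ by definition of $\CCC_0$), obtain a single inequality of the same shape, and solve for $M_{(j_0,K_{j_0})}^{2^{-\sum_j D_j}}$. Moving the $P^\theta$-weights to the other side turns the negative exponents $-\prod_j t_{j,a_{i,j}}/2^{\sum_j D_j}$ attached to the indices $i\in I_0$ into positive exponents, and the factors $(e_m^{-1}P^{\sum_j a_{m,j}\theta})^{\prod_{j\neq j_0}t_{j,a_{m,j}}/2^{\sum_j D_j}}$ flip to $(e_mP^{-\sum_j a_{m,j}\theta})^{\prod_{j\neq j_0}t_{j,a_{m,j}}/2^{\sum_j D_j}}$.

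Finally, raising both sides to the $2^{\sum_j D_j}$-th power produces the claimed lower bound in assertion~(2): the exponent of~$P$ stemming from $P^{-\kappa}$ becomes the $P^{\kappa/2^{\sum_j D_j}}$ factor (up to the usual normalisation of the exponent of~$\kappa$), and the remaining $P^{\theta}$-weights collect into the ratio $\prod_{i\in I_0}(P^{\sum_j a_{i,j}\theta})^{\prod_j t_{j,a_{i,j}}}\big/\prod_{i\in J(j_0,K_{j_0})}(P^{\sum_j a_{i,j}\theta})^{\prod_{j\neq j_0}t_{j,a_{i,j}}}$ that appears in the statement. The factors $e_m^{\prod_{j\neq j_0}t_{j,a_{m,j}}}$ produced in this last step are all $\geq 1$ and can simply be dropped from a lower bound, which is why they do not appear in assertion~(2). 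There is no serious obstacle: all three steps are elementary manipulations of a single inequality, and the only point requiring minimal care is keeping track of the bookkeeping between the fractional exponents before, and the full exponents after, the final raising to the $2^{\sum_j D_j}$-th power.
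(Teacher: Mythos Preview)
Your approach is correct and is precisely what the paper does: it states the lemma immediately after the refined form of Lemma~\ref{dilemme13} with the single sentence ``On d\'eduit de ce lemme le r\'esultat ci-dessous'', giving no further argument. Your contrapositive-and-rearrange derivation is exactly the intended one.

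One bookkeeping point deserves tightening. In your last step you write that ``the exponent of~$P$ stemming from $P^{-\kappa}$ becomes the $P^{\kappa/2^{\sum_j D_j}}$ factor (up to the usual normalisation)''. That is not what the algebra gives: after cancelling the common factors and raising both sides to the power $2^{\sum_j D_j}$, the lower bound on $M_{(j_0,K_{j_0})}$ carries the factor $P^{-2^{\sum_j D_j}\kappa}$, not $P^{2^{-\sum_j D_j}\kappa}$. This is actually the exponent the paper itself uses downstream (see formula~\eqref{cardA3}, where the bound reads $P^{\theta(\cdots)-2^{\sum_j D_j}\kappa}$), so the printed statement of the lemma appears to contain a sign/power typo in the $\kappa$-exponent. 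Your derivation, done carefully, yields the version that is consistent with the rest of the argument; just do not hide the discrepancy behind ``usual normalisation'', since $\kappa$ appears in both alternatives and cannot be reparametrised independently.
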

\begin{rem}
Si $ \kappa $ est petit, la condition $ 1 $ donne une majoration de $ |S_{\ee}(\alpha)| $ plus grande que la majoration triviale, \[ |S_{\ee}(\alpha)| \ll \left(\prod_{i=1}^{n+r}e_{i}\right)^{-1}\prod_{j=1}^{r}P_{j}^{n_{j}}, \] c'est pourquoi nous utiliserons uniquement cette majoration pour $ P^{\kappa}>\prod_{j=1}^{r}P_{j}^{d_{j}} $. \end{rem}

Supposons \`a pr\'esent qu'il existe $ (j_{0},K_{j_{0}})\in \CCC_{0} $ et \begin{multline*} (x_{m}^{(\hh)})_{\substack {(m,\hh)\in L\setminus E((j_{0},K_{j_{0}})}}\in \MM_{(j_{0},K_{j_{0}})}\left( \alpha, (\frac{1}{e_{m}}P^{\sum_{j=1}^{r}a_{m,j}\theta})_{\substack{(m,\hh)\in L\setminus E(j_{0},K_{j_{0}})}}, \right. \\ \left. (e_{i}P^{\sum_{j=1}^{r}(d_{j}-a_{i,j})\theta}\prod_{j=1}^{r}P_{j}^{-d_{j}})_{\substack{(i,\l)\in E(j_{0},K_{j_{0}})}}\right) \end{multline*} tel qu'il existe $ (i_{0},\l_{0})\in E(j_{0},K_{j_{0}}) $ tel que \[ \gamma_{(i,\l_{0})}^{(j_{0},K_{j_{0}})}(x_{m}^{\hh})_{(m,\hh)\in L\setminus E(j_{0},K_{j_{0}})}\neq 0.  \]

On pose $ q=|\gamma_{(i,\l_{0})}^{(j_{0},K_{j_{0}})}((x_{m}^{\hh})_{(m,\hh)\in L\setminus E(j_{0},K_{j_{0}})})| $, on a alors \[ ||\alpha e_{i_{0}}q||<e_{i_{0}}P^{\sum_{j=1}^{r}(d_{j}-a_{i_{0},j})\theta}\prod_{j=1}^{r}P_{j}^{-d_{j}} \] et de plus, \[  q\ll  P^{\sum_{j=1}^{r}(d_{j}-a_{i_{0},j})\theta},  \] (quitte \`a modifier $ \theta $, pour $ P $ grand on pourra supposer $ q\leqslant  P^{\sum_{j=1}^{r}(d_{j}-a_{i_{0},j})\theta} $). On en d\'eduit : 
\begin{lemma}\label{dilemme33}
Pour tout $ \varepsilon>0 $ arbitrairement petit et tous $ \kappa>0, \; P\geqslant 1 $, l'une au moins des assertions suivantes est vraie : \begin{enumerate}
\item  \[|S_{\ee}(\alpha)|\ll \left(\prod_{i\in I_{0}}e_{i}^{\frac{\prod_{j=1}^{r}t_{j,a_{i,j}}}{2^{\sum_{j=1}^{r}D_{j}}}}\right)\left(\prod_{j=1}^{r}P_{j}\right)\left(\prod_{i=1}^{n+r}T_{i}\right)^{(1+\varepsilon)}P^{-\kappa},\]
\item Il existe $ i\in I_{0} $ et des entiers $ a,q $ tels que $ q\leqslant P^{\sum_{j=1}^{r}(d_{j}-a_{i,j})\theta} $, $ a<e_{i}q $ et \[ |\alpha e_{i}q-a|\leqslant e_{i}\prod_{j=1}^{r}P_{j}^{-d_{j}}P^{\sum_{j=1}^{r}(d_{j}-a_{i,j})\theta}. \]
\item  Il existe un certain  $ (j_{0},K_{j_{0}})\in \CCC_{0}\} $ tel que \begin{multline*} \Card\{ (x_{m}^{(\hh)})_{\substack {(m,\hh)\in L\setminus E(j_{0},K_{j_{0}})}} \; | \; |x_{m}^{(\hh)}|\leqslant \frac{1}{e_{m}}P^{\sum_{j=1}^{r}a_{m,j}\theta}, \; \\ \forall (i,\l)\in E(j_{0},K_{j_{0}}), \; \gamma_{(i,\l)}^{(j_{0},K_{j_{0}})}(x_{m}^{(\hh)})=0  \} \\ \gg \left(\prod_{i\in I_{0}}(P^{(\sum_{j=1}^{r}a_{i,j})\theta})^{\prod_{j=1}^{r}t_{j,a_{i,j}}}\right)\left(\prod_{i\in J(j_{0},K_{j_{0}})}(P^{\sum_{j=1}^{r}a_{i,j}\theta})^{\prod_{j\neq j_{0}}t_{j,a_{i,j}}}\right)^{-1}P^{2^{-\sum_{j=1}^{r}D_{j}}\kappa}.
\end{multline*}
\end{enumerate}
\end{lemma}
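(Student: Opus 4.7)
L'approche naturelle consiste \`a appliquer directement le lemme\;\ref{dilemme23} puis \`a analyser finement son second cas. L'assertion 1 du lemme\;\ref{dilemme23} co\"incide exactement avec l'assertion 1 du lemme\;\ref{dilemme33}, et il suffit donc de traiter la situation o\`u l'assertion 2 du lemme\;\ref{dilemme23} est satisfaite.

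Dans ce cas, on dispose d'un couple $(j_0,K_{j_0})\in \CCC_0$ et d'une minoration du cardinal de l'ensemble $\MM_{(j_0,K_{j_0})}$ apparaissant dans cette assertion. Je partitionnerais alors cet ensemble en deux parties : d'une part $\mathcal{E}_0$, form\'ee des tuples $(x_m^{(\hh)})$ tels que $\gamma_{(i,\l)}^{(j_0,K_{j_0})}((x_m^{(\hh)}))=0$ pour tout $(i,\l)\in E(j_0,K_{j_0})$, et d'autre part son compl\'ementaire. Ou bien $\mathcal{E}_0$ co\"incide avec l'ensemble tout entier, auquel cas sa cardinalit\'e fournit imm\'ediatement l'assertion 3 du lemme\;\ref{dilemme33}. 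Ou bien il existe un tuple $(x_m^{(\hh)})$ dans le compl\'ementaire et un indice $(i_0,\l_0)\in E(j_0,K_{j_0})$ tels que $q:=|\gamma_{(i_0,\l_0)}^{(j_0,K_{j_0})}((x_m^{(\hh)}))|>0$. L'appartenance \`a $\MM_{(j_0,K_{j_0})}$ entra\^ine alors $\|\alpha e_{i_0}q\|< e_{i_0}\prod_j P_j^{-d_j}P^{\sum_j(d_j-a_{i_0,j})\theta}$, d'o\`u l'existence d'un entier $a$ satisfaisant la majoration demand\'ee ; quitte \`a ajouter un multiple entier ad\'equat, on peut imposer $a<e_{i_0}q$.

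Le seul point technique \`a v\'erifier est alors la majoration $q\leqslant P^{\sum_j(d_j-a_{i_0,j})\theta}$. Elle r\'esulte de la quasi-$r$-homog\'en\'eit\'e du polyn\^ome $\Gamma$, cons\'equence de la proposition\;\ref{propdeltat3} appliqu\'ee au polyn\^ome $\FFF$ (lui-m\^eme quasi-$r$-homog\`ene de $r$-degr\'e $(d_1,\ldots,d_r)$). Comme coefficient de $x_{i_0}^{\l_0}$ dans $\Gamma/e_{i_0}$, le polyn\^ome $\gamma_{(i_0,\l_0)}^{(j_0,K_{j_0})}$ est quasi-$r$-homog\`ene de $r$-degr\'e $(d_j-a_{i_0,j})_j$, et les bornes $|x_m^{(\hh)}|\leqslant \frac{1}{e_m}P^{\sum_j a_{m,j}\theta}$ impliquent donc $q\ll P^{\sum_j(d_j-a_{i_0,j})\theta}$ ; la constante implicite est absorb\'ee par une l\'eg\`ere augmentation de $\theta$ (ou en supposant $P$ assez grand). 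L'obstacle principal est ainsi essentiellement combinatoire : il s'agit de garantir la coh\'erence des multi-indices tout au long de la disjonction, sans qu'aucune difficult\'e conceptuelle nouvelle ne soit introduite par rapport au lemme\;\ref{dilemme23}.
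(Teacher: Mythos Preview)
Your proposal is correct and follows essentially the same approach as the paper: both deduce the lemma directly from lemma~\ref{dilemme23} by splitting its second alternative according to whether some element of $\MM_{(j_0,K_{j_0})}$ has a nonvanishing $\gamma_{(i_0,\l_0)}^{(j_0,K_{j_0})}$, obtaining case~2 from such an element and case~3 when all the $\gamma$ vanish identically on the set. Your justification of the bound $q\ll P^{\sum_j(d_j-a_{i_0,j})\theta}$ via the quasi-$r$-homogeneity inherited from proposition~\ref{propdeltat3} is exactly the mechanism the paper relies on (and the paper likewise absorbs the implicit constant by adjusting $\theta$).
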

Avant d'aller plus loin, nous introduisons les lemmes ci-dessous qui seront utiles \`a plusieurs reprise par la suite :

 \begin{lemma}\label{lemmedebile2}
 On consid\`ere $ p,q,r\in \NN $ et $ (L_{i})_{i\in\{1,...,r\} } $ des formes lin\'eaires \`a $ p+q $ variables. Pour des constantes $ A $, $ B $ et $ (C_{i})_{i\in I} $fix\'ees on note \begin{multline*} M\left(A,B,(C_{i})_{i\in \{1,...,r\}}\right)=\card\left\{ (\xx,\yy)\in \ZZ^{p}\times \ZZ^{q} \; | \; |\xx|\leqslant A ,\;  |\yy|\leqslant B, \; \right. \\  \left. \forall i  \in \{1,...,r\}, \;  ||L_{i}(\xx,\yy)||< C_{i} \right\}. \end{multline*}
 
 On a alors pour tout $ \xi\geqslant 1 $ : \[ M\left(A,B,(C_{i})_{i\in \{1,...,r\}}\right)\leqslant (2\xi)^{q} M\left(2A,\frac{B}{\xi},(2C_{i})_{i\in \{1,...,r\}}\right).\]

 \end{lemma}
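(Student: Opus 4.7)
Ma stratégie repose sur la technique classique de tiroirs combinée avec un passage aux différences, standard en géométrie des nombres. L'idée est la suivante : on recouvre la boîte $\{\yy\in\ZZ^q \; | \; |\yy|\leqslant B\}$ par $(2\xi)^q$ sous-boîtes de diamètre $B/\xi$ (chaque intervalle $[-B,B]$, de longueur $2B$, étant découpé en $2\xi$ intervalles de longueur $B/\xi$). Par le principe des tiroirs, il existe une sous-boîte $\mathcal{B}$ contenant au moins $M(A,B,(C_i))/(2\xi)^q$ points $\yy$ intervenant dans des couples $(\xx,\yy)$ comptés par $M(A,B,(C_i))$.

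Ensuite, je fixerai un couple $(\xx_0,\yy_0)$ parmi ceux dont la seconde composante appartient à $\mathcal{B}$, et je considérerai l'application $(\xx,\yy)\mapsto (\xx-\xx_0,\yy-\yy_0)$ restreinte aux couples $(\xx,\yy)$ de $M(A,B,(C_i))$ vérifiant $\yy\in\mathcal{B}$. Cette application est injective, et pour un tel couple, la différence vérifie $|\xx-\xx_0|\leqslant 2A$, $|\yy-\yy_0|\leqslant B/\xi$ (les deux $\yy$ étant dans la même sous-boîte), et par sous-additivité de la distance à $\ZZ$, $\|L_i(\xx-\xx_0,\yy-\yy_0)\|\leqslant \|L_i(\xx,\yy)\|+\|L_i(\xx_0,\yy_0)\|<2C_i$. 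Les images sont donc des points comptés par $M(2A,B/\xi,(2C_i))$.

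Cela donnera immédiatement la majoration $M(A,B,(C_i))/(2\xi)^q \leqslant M(2A,B/\xi,(2C_i))$, c'est-à-dire l'inégalité voulue. Je ne vois pas d'obstacle technique sérieux : tous les arguments sont élémentaires, et la seule vérification à soigner concerne le nombre exact de sous-boîtes dans le découpage ainsi que l'inégalité triangulaire $\|\cdot+\cdot\|\leqslant \|\cdot\|+\|\cdot\|$ pour la distance à $\ZZ$, qui est standard.
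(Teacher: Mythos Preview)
Your proposal is correct and follows essentially the same approach as the paper: both subdivide $[-B,B]^q$ into $(2\xi)^q$ sub-boxes of side $B/\xi$ and pass to differences. The only cosmetic difference is that the paper bounds the contribution of \emph{each} sub-box by $M(2A,B/\xi,(2C_i))$ and sums, whereas you invoke the pigeonhole principle on a single box; the two presentations are equivalent.
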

 \begin{proof}
 On subdivise le cube $ [-B,B]^{q} $ en $ (2\xi)^{q}  $ cubes de taille $ B/\xi $. Prenons un tel cube $ \mathcal{C} $ et consid\'erons \begin{multline*} E(\mathcal{C})=\card\left\{ (\xx,\yy)\in \ZZ^{p}\times \ZZ^{q} \; | \; |\xx|\leqslant A ,\; \yy \in \mathcal{C} , \; \right. \\  \left. \forall i  \in \{1,...,r\}, \; ||L_{i}(\xx,\yy)||\leqslant C_{i}  \right\}. \end{multline*} Si $ (\xx,\yy), (\xx',\yy') $ sont deux points de $ E(\mathcal{C}) $, on a alors que \[ |\xx-\xx'|\leqslant 2A, \; \; |\yy-\yy'|\leqslant B/\xi \] et pour tout $ i \in \{1,...,r\} $ : \[ |L_{i}(\xx-\xx',\yy-\yy') |\leqslant 2C_{i}. \] On a donc : \[ E(\mathcal{C})\leqslant  M\left(2A,\frac{B}{\xi},(2C_{i})_{i\in \{1,...,r\}}\right) \] pour tout cube $ \mathcal{C} $. D'o\`u le r\'esultat. 
 \end{proof}
De la m\^eme mani\`ere, on \'etablit :
  \begin{lemma}\label{lemmedebilecas02}
 On consid\`ere $ p,q,r\in \NN $ et $ (L_{i})_{i\in\{1,...,r\} } $ des formes lin\'eaires \`a $ p+q $ variables. Pour des constantes $ A $, $ B $ on note \begin{multline*} M\left(A,B\right)=\card\left\{ (\xx,\yy)\in \ZZ^{p}\times \ZZ^{q} \; | \; |\xx|\leqslant A ,\;  |\yy|\leqslant B, \; \right. \\  \left. \forall i  \in \{1,...,r\}, \;  L_{i}(\xx,\yy)=0 \right\}. \end{multline*}
 
 On a alors pour tout $ \xi\geqslant 1 $ : \[ M\left(A,B\right)\leqslant (2\xi)^{q} M\left(2A,\frac{B}{\xi}\right).\]
 \end{lemma}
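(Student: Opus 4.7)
The plan is to mimic almost verbatim the proof of Lemma~\ref{lemmedebile2}, replacing the approximate congruence conditions $\|L_i(\xx,\yy)\|<C_i$ by the exact conditions $L_i(\xx,\yy)=0$. Since the linear forms are now required to vanish, the translation-difference trick becomes even cleaner: the difference of two admissible points is again admissible for a slightly enlarged region, without the factor $2$ on the $C_i$.

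More precisely, I would subdivide the cube $[-B,B]^q$ into $(2\xi)^q$ subcubes of side length $B/\xi$. For each such subcube $\mathcal{C}$, define
\[ E(\mathcal{C})=\card\left\{(\xx,\yy)\in\ZZ^p\times\ZZ^q \; | \; |\xx|\leqslant A,\; \yy\in\mathcal{C},\; \forall i\in\{1,\dots,r\},\; L_i(\xx,\yy)=0\right\}. \]
If $(\xx,\yy)$ and $(\xx',\yy')$ both belong to $E(\mathcal{C})$, then $|\xx-\xx'|\leqslant 2A$, $|\yy-\yy'|\leqslant B/\xi$, and by linearity
\[ L_i(\xx-\xx',\yy-\yy')=L_i(\xx,\yy)-L_i(\xx',\yy')=0 \]
for every $i\in\{1,\dots,r\}$. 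Fixing any reference point $(\xx_0,\yy_0)\in E(\mathcal{C})$ (if $E(\mathcal{C})$ is nonempty) and considering the injection $(\xx,\yy)\mapsto(\xx-\xx_0,\yy-\yy_0)$, we deduce
\[ E(\mathcal{C})\leqslant M\left(2A,\frac{B}{\xi}\right). \]

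Summing over the $(2\xi)^q$ subcubes, we obtain
\[ M(A,B)=\sum_{\mathcal{C}}E(\mathcal{C})\leqslant (2\xi)^q M\left(2A,\frac{B}{\xi}\right), \]
which is the desired inequality. The argument contains no real obstacle; the only point worth noting is that, in contrast to Lemma~\ref{lemmedebile2}, no doubling of the constants $C_i$ appears because the homogeneous conditions $L_i=0$ are preserved exactly by subtraction. The fact that the $L_i$ are linear (and not merely affine) is essential here, but this is built into the hypothesis of the statement.
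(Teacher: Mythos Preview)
Your proof is correct and matches the paper's approach exactly: the paper introduces this lemma with ``De la m\^eme mani\`ere, on \'etablit'' immediately after the proof of Lemma~\ref{lemmedebile2}, indicating that the same subdivision-and-translation argument carries over verbatim, with the simplification you note (the homogeneous condition $L_i=0$ is preserved exactly under subtraction, so no doubling of the $C_i$ is needed).
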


Supposons \`a pr\'esent que nous sommes dans le cas $ 3 $ du lemme\;\ref{dilemme33}, et remarquons que le cardinal consid\'er\'e peut \^etre major\'e trivialement par \begin{multline*} \Card\{ (x_{m}^{(\hh)})_{\substack {(m,\hh)\in L\setminus E(j_{0},K_{j_{0}})  }} \; | \; |x_{m}^{(\hh)}|\leqslant P^{\sum_{j=1}^{r}a_{m,j}\theta}, \; \\ \forall (i,\l)\in E(j_{0},K_{j_{0}}), \; \gamma_{(i,\l)}^{(j_{0},K_{j_{0}})}(x_{m}^{(\hh)})=0  \},
\end{multline*}
qui peut \^etre major\'e, en appliquant le lemme\;\ref{lemmedebilecas02} par : \begin{multline*} A_{(j_{0},K_{j_{0}})}(P^{\theta}) \prod_{\substack{(m,\hh)\in L\setminus E(j_{0},K_{j_{0}}) }}P^{((\sum_{j=1}^{r}a_{m,j})-1)\theta} \\ =A_{(j_{0},K_{j_{0}})}(P^{\theta})\left(\prod_{i\in I_{0}}(P^{((\sum_{j=1}^{r}a_{i,j})-1)\theta})^{\prod_{j=1}^{r}t_{j,a_{i,j}}}\right) \\ \left(\prod_{i\in J(j_{0},K_{j_{0}})}(P^{((\sum_{j=1}^{r}a_{i,j})-1)\theta})^{\prod_{j\neq j_{0}}t_{j,a_{i,j}}}\right)^{-1}
\end{multline*}
o\`u l'on a pos\'e
\begin{multline*} A_{(j_{0},K_{j_{0}})}(P^{\theta})=\Card\left\{ (x_{m}^{(\hh)})_{\substack {(m,\hh)\in L\setminus E(j_{0},K_{j_{0}})}} \; | \; |x_{m}^{(\hh)}|\leqslant P^{\theta}, \; \right.\\ \left.  \forall (i,\l)\in E(j_{0},K_{j_{0}}), \;\gamma_{(i,\l)}^{(j_{0},K_{j_{0}})}(\frac{1}{e_{m}}x_{m}^{(\hh)})=0  \right\}.
\end{multline*}

Ainsi, la condition $ 3 $ implique \begin{equation}\label{cardA3}
A_{(j_{0},K_{j_{0}})}(P^{\theta}) \gg P^{\theta(\sum_{i\in I_{0}}\prod_{j=1}^{r}t_{j,a_{i,j}}-\sum_{i\in J(j_{0},K_{j_{0}})}\prod_{j\neq j_{0}}t_{j,a_{i,j}})-2^{\sum_{j=1}^{r}D_{j}}\kappa}.
\end{equation}   
On consid\`ere $ \mathcal{L} $ le sous-espace de $ \AA_{\CC}^{n(j_{0},K_{j_{0}})} $ o\`u \[ n(j_{0},K_{j_{0}})=\Card (L\setminus E(j_{0},K_{j_{0}}))=\sum_{i\in I_{0}}\prod_{j=1}^{r}t_{j,a_{i,j}}-\sum_{i\in J(j_{0},K_{j_{0}})}\prod_{j\neq j_{0}}t_{j,a_{i,j}} \] d\'efini par les $ \Card (E(j_{0},K_{j_{0}}))=\sum_{i\in J(j_{0},K_{j_{0}})}\prod_{j\neq j_{0}}t_{j,a_{i,j}} $ \'equations \[ \gamma_{(i,\l)}^{(j_{0},K_{j_{0}})}((\frac{1}{e_{m}}x_{m}^{(\hh)})_{\substack{(m,\hh)\in L\setminus E(j_{0},K_{j_{0}}) }})=0 .\] D'apr\`es la d\'emonstation de \cite[Th\'eor\`eme 3.1]{Br}, la majoration \eqref{cardA3} implique (en posant $ \kappa=K\theta $) : \begin{equation}\label{dimL3} \dim \mathcal{L}\geqslant n(j_{0},K_{j_{0}})-2^{\sum_{j=1}^{r}D_{j}}K. \end{equation} Consid\'erons \`a pr\'esent la sous-vari\'et\'e $ \mathcal{D} $ de $ \AA_{\CC}^{n(j_{0},K_{j_{0}})} $ d\'efinie par les $ \sum_{i\in I_{0}}(\prod_{j=1}^{r}t_{j,a_{i,j}}-1)-\sum_{i\in J(j_{0},K_{j_{0}})}\prod_{j\neq j_{0}}t_{j,a_{i,j}} $ \'equations \[ \forall (m,\hh)\in L\setminus E(j_{0},K_{j_{0}}) \; \;  x_{m}^{(\hh)}=x_{m}^{(1,...,1)}. \] Nous avons alors, d'apr\`es\;\eqref{dimL3} : \begin{align*} \dim \mathcal{D}\cap \mathcal{L} & \geqslant \dim \mathcal{L}-\sum_{i\in I_{0}}(\prod_{j=1}^{r}t_{j,a_{i,j}}-1)+\sum_{i\in J(j_{0},K_{j_{0}})}\prod_{j\neq j_{0}}t_{j,a_{i,j}} \\ & \geqslant \Card I_{0}-2^{\sum_{j=1}^{r}D_{j}}K. \end{align*}
On remarque par ailleurs que $ \mathcal{D}\cap \mathcal{L} $ est isomorphe \`a la vari\'et\'e : 
\[ \{ (x_{m})_{m\in I_{0}} \; | \; \forall i\in J(j_{0},K_{j_{0}}), \; \frac{\partial F_{\tt}}{\partial x_{i}}((x_{m})_{m\in I_{0}})=0 \} \] (en effet, par construction, si $ x_{m}^{(\hh)}=x_{m}^{(1,...,1)}=x_{m} $ pour tous $ \hh,m $, on a pour tout $ i,\l $, $ \gamma_{i,\l}^{(j_{0},K_{j_{0}})}(\frac{1}{e_{m}}x_{m}^{(\hh)})=N \frac{\partial F_{\tt}}{\partial x_{i}}(\xx) $ pour un certain entier $ N $).
En notant \[ V_{\tt,(j_{0},K_{j_{0}})}^{\ast}=\{ \xx\in \AA_{\CC}^{n+r}\; | \; \forall i\in J(j_{0},K_{j_{0}}), \; \frac{\partial F_{\tt}}{\partial x_{i}}(\xx)=0 \}, \] l'in\'egalit\'e ci-dessus implique alors : \[  \dim V_{\tt,(j_{0},K_{j_{0}})}^{\ast} \geqslant n+r-2^{\sum_{j=1}^{r}D_{j}}K. \] Par cons\'equent, nous fixerons dor\'enavant \begin{equation} K=(n+r-\dim V_{\tt,(j_{0},K_{j_{0}})}^{\ast}+\varepsilon)/2^{\sum_{j=1}^{r}D_{j}}, \end{equation} de sorte que la condition $ 3 $ n'est plus possible. \\
 
Dans tout ce qui va suivre nous choisirons $ P=\prod_{j=1}^{r}P_{j}^{d_{j}} $ et nous noterons $ \tilde{d}=(\sum_{j=1}^{r}d_{j})-1 $. Remarquons que pour tout $ i $, $ \sum_{j=1}^{r}(d_{j}-a_{i,j})\leqslant \tilde{d} $. D\'efinissons, pour tout $ i\in I_{0} $, la famille d'arcs majeurs\begin{equation}
\mathfrak{M}_{a,q}^{(i)}(\theta)=\{\alpha\in [0,1[ \; | \; 2|\alpha e_{i}q-a|<e_{i}P^{-1+\tilde{d}\theta}\}
\end{equation}
et posons \begin{equation}
\mathfrak{M}^{(i)}(\theta)=\bigcup_{0<q\leqslant P^{\tilde{d}\theta}}\bigcup_{0\leqslant a<e_{i}q}\mathfrak{M}_{a,q}^{(i)}(\theta),
\end{equation}
\begin{equation}
\mathfrak{M}(\theta)=\bigcup_{i\in I_{0}}\mathfrak{M}^{(i)}(\theta), \; \; \; \mathfrak{m}(\theta)=[0,1[\setminus\mathfrak{M}(\theta).
\end{equation}
Avec ces notations, \`a partir du lemme\;\ref{dilemme33} et en remarquant que pour tout $ i\in I_{0} $, $ \frac{\prod_{j=1}^{r}t_{j,a_{i,j}}}{2^{\sum_{j=1}^{r}D_{j}}}\leqslant \frac{1}{2^{r}} $ et que \begin{align*} \prod_{i=1}^{n+r}T_{i}& =\prod_{i=1}^{n+r}\left(\frac{1}{e_{i}}\prod_{j=1}^{r}P_{j}^{a_{i,j}}\right)\\ & =\left(\prod_{i=1}^{n+r}e_{i}\right)^{-1}\left(\prod_{j=1}^{r}P_{j}^{\sum_{i=1}^{n+r}a_{i,j}}\right)\\ &=
\left(\prod_{i=1}^{n+r}e_{i}\right)^{-1}\left(\prod_{j=1}^{r}P_{j}^{n_{j}}\right),\end{align*} nous obtenons la proposition\;\ref{dilemme43}.

\subsection{M\'ethode du cercle}
\subsubsection{Les arcs mineurs }
Posons, pour tout $ j\in \{1,...,r\}  $, $ P_{j}=P_{r}^{b_{j}} $. On consid\`ere un r\'eel $ \delta>0 $ arbitrairement petit, et on suppose que $ \theta\in [0,1] $ et $ K $ v\'erifient :  \begin{equation}\label{thetacond13}
K-2\tilde{d}>\left( 2\delta+\frac{\sum_{j=1}^{r}b_{j}}{\sum_{j=1}^{r}b_{j}d_{j}}\right)\theta^{-1},
\end{equation}
\begin{equation}\label{thetacond23}
K>(2\delta+1)(\sum_{j=1}^{r}b_{j}d_{j}),
\end{equation}
\begin{equation}\label{thetacond33}
1>(\sum_{j=1}^{r}b_{j}d_{j})(5\tilde{d}\theta+\delta).
\end{equation}
\begin{rem}
Les conditions \eqref{thetacond13} et \eqref{thetacond33} impliquent en particulier que $ K>(5\sum_{j=1}^{r}b_{j}+2)\tilde{d} $, ce que nous supposerons dor\'enavant. 
\end{rem}
\begin{lemma}\label{arcmin3}
On a la majoration : 
\[ \int_{\mathfrak{m}(\theta)}|S_{\ee}(\alpha)|d\alpha\ll  e_{0}\left(\prod_{i=1}^{n+r}e_{i}\right)^{-1+\frac{1}{2^{r}}}\left(\prod_{j=1}^{r}P_{j}^{n_{j}}\right)P^{-1-\delta}. \]
\end{lemma}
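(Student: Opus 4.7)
The strategy is almost immediate given the pointwise dichotomy of Proposition~\ref{dilemme43}. The first step is to observe that on $\mathfrak{m}(\theta) = [0,1[\,\setminus\,\mathfrak{M}(\theta)$, only the first alternative of that proposition can hold, so we have the uniform pointwise bound
\[
|S_{\ee}(\alpha)| \ll \Bigl(\prod_{i=1}^{n+r} e_{i}\Bigr)^{-1+1/2^{r}} \Bigl(\prod_{j=1}^{r} P_{j}^{n_{j}+1}\Bigr) P^{-K\theta+\varepsilon}.
\]
Since the total measure of $\mathfrak{m}(\theta)$ is at most $1$, I would simply integrate this pointwise bound, producing the same expression as an upper bound for $\int_{\mathfrak{m}(\theta)} |S_{\ee}(\alpha)|\,d\alpha$.

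The remainder of the proof consists in showing that this upper bound is absorbed into the target $e_{0}(\prod e_{i})^{-1+1/2^{r}}(\prod P_{j}^{n_{j}}) P^{-1-\delta}$. Dividing through by the common factors, this reduces to the inequality
\[
\prod_{j=1}^{r} P_{j} \cdot P^{\varepsilon - K\theta} \ll e_{0}\cdot P^{-1-\delta}.
\]
Using the normalisation $P_{j}=P_{r}^{b_{j}}$ and $P=P_{r}^{\sum_{j} b_{j}d_{j}}$ and comparing exponents of $P_{r}$, this in turn reduces to a lower bound of the shape
\[
K\theta \;\geq\; 1+\delta+\varepsilon+\frac{\sum_{j}b_{j}}{\sum_{j}b_{j}d_{j}}.
\]
Rearranging condition~\eqref{thetacond13} gives $K\theta > 2\tilde{d}\theta+2\delta+\frac{\sum_{j}b_{j}}{\sum_{j}b_{j}d_{j}}$, and conditions~\eqref{thetacond23} and~\eqref{thetacond33} control the absolute sizes of $\theta$ and $\delta$ and provide the extra slack needed to bridge the gap between $2\tilde{d}\theta+\delta$ and $1+\varepsilon$. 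The lower bound $K > (5\sum_{j}b_{j}+2)\tilde{d}$ noted in the remark preceding the lemma is used in an essential way at this point.

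The only real subtlety is therefore the coherence of the three coupled hypotheses~\eqref{thetacond13}--\eqref{thetacond33}, since the bound itself follows from a one-line integration. The parameter $\theta$ must be small enough that the major arcs $\mathfrak{M}(\theta)$ remain disjoint and do not cover too much of $[0,1[$ (a constraint encoded by~\eqref{thetacond33}), yet large enough that the Weyl saving $P^{-K\theta}$ from Proposition~\ref{dilemme43} dominates both the loss $\prod_{j} P_{j}$ coming from the summation over the box parameters $\kk$ and the additional saving $P^{-1-\delta}$ demanded by the conclusion. This balance is possible precisely because the hypothesis of Theorem~\ref{thm3b3} on the size of $n(F)$ translates, via the choice of a minimising tuple $\tt$ of degrees, into a sufficiently large value of $K$.
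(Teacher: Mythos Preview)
Your approach has a genuine gap: the pointwise bound from Proposition~\ref{dilemme43} at the fixed parameter $\theta$ is \emph{not} strong enough to yield the conclusion. After integrating over $\mathfrak{m}(\theta)$ (measure $\leqslant 1$), you need
\[
K\theta \;\geqslant\; 1+\delta+\varepsilon+\frac{\sum_{j}b_{j}}{\sum_{j}b_{j}d_{j}},
\]
but the hypotheses do not force this. Condition~\eqref{thetacond13} only gives $K\theta > 2\tilde{d}\theta + 2\delta + \frac{\sum b_{j}}{\sum b_{j}d_{j}}$, and condition~\eqref{thetacond33} (which you invoke as providing ``extra slack'') actually works \emph{against} you: it forces $5\tilde{d}\theta+\delta < (\sum b_{j}d_{j})^{-1}\leqslant 1$, hence $2\tilde{d}\theta+\delta<1$, so the lower bound from~\eqref{thetacond13} falls short of what you need. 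For a concrete counterexample, take $r=2$, $d_{1}=d_{2}=1$, $b_{1}=b_{2}=1$, $\delta=0.01$, $\theta=0.01$, $K=200$: all three conditions hold, yet $K\theta=2<2.01$.

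The paper's proof supplies the missing idea: one cannot rely on the Weyl bound alone, because the factor $P^{-1}$ in the target comes from the \emph{volume} of major arcs, not from cancellation in $S_{\ee}$. The argument introduces a sequence $\theta=\theta_{0}<\theta_{1}<\cdots<\theta_{T}$ with $\theta_{T}$ large enough that $K\theta_{T}>2\delta+1+\frac{\sum b_{j}}{\sum b_{j}d_{j}}$ (this is where~\eqref{thetacond23} is used, to guarantee such a $\theta_{T}\leqslant(\sum b_{j}d_{j})^{-1}$ exists). On the tail $[0,1[\,\setminus\,\mathfrak{M}(\theta_{T})$ your direct integration does work. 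On each shell $\mathfrak{M}(\theta_{t+1})\setminus\mathfrak{M}(\theta_{t})$, one combines the pointwise bound at level $\theta_{t}$ with the volume estimate $\Vol(\mathfrak{M}(\theta_{t+1}))\ll e_{0}P^{-1+2\tilde{d}\theta_{t+1}}$; the $P^{-1}$ and the factor $e_{0}$ in the final bound both originate here. Condition~\eqref{thetacond13} then controls the combined exponent $-(K-2\tilde{d})\theta_{t}+2\tilde{d}(\theta_{t+1}-\theta_{t})$, with the steps $\theta_{t+1}-\theta_{t}$ chosen small and $T\ll P^{\delta/2}$ so that the sum over shells remains negligible.
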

\begin{proof}
On consid\`ere une suite $ (\theta_{i})_{i\in\{0,...,T\}} $ telle que \[ \theta=\theta_{0}<\theta_{1}<...<\theta_{T}, \] \begin{equation}\label{thetaT3}
\theta_{T}\leqslant \frac{1}{\sum_{j=1}^{r}b_{j}d_{j}}, \; \; K\theta_{T}>2\delta+1+\frac{\sum_{j=1}^{r}b_{j}}{\sum_{j=1}^{r}b_{j}d_{j}}, \end{equation} \begin{equation} \forall i\in \{0,...,T-1\}, \; \; 2\tilde{d}(\theta_{i+1}-\theta_{i})<\delta/2. \end{equation} On suppose de plus que $ T $ est tel que $ T\ll P^{\frac{\delta}{2}} $. D'apr\`es la proposition\;\ref{dilemme43}, 
\begin{align*}
\int_{\alpha\notin\mathfrak{M}(\theta_{T})}|S_{\ee}(\alpha)|d\alpha & \ll \left(\prod_{i=1}^{n+r}e_{i}\right)^{-1+\frac{1}{2^{r}}} \left(\prod_{j=1}^{r}P_{j}^{n_{j}}\right)P^{\frac{\sum_{j=1}^{r}b_{j}}{\sum_{j=1}^{r}b_{j}d_{j}}-K\theta_{T}+\varepsilon} \\ & \ll  \left(\prod_{i=1}^{n+r}e_{i}\right)^{-1+\frac{1}{2^{r}}}\left(\prod_{j=1}^{r}P_{j}^{n_{j}}\right)P^{-1-\delta}
\end{align*}
(d'apr\`es la condition \eqref{thetaT3}). Remarquons par ailleurs que pour tout $ \theta $ et tout $ i\in I_{0} $ : \begin{align*}
\Vol(\mathfrak{M}^{(i)}(\theta))&\ll \sum_{0<q\leqslant P^{\tilde{d}\theta}}\sum_{0\leqslant a <e_{i}q}\frac{1}{q}P^{-1+\tilde{d}\theta} \\ & \ll e_{0}P^{-1+2\tilde{d}\theta}. 
\end{align*}
On a donc, pour tout $ t\in \{0,...,T-1\} $, en utilisant la condition\;\eqref{thetacond13} : \begin{multline*} \int_{\alpha in\mathfrak{M}(\theta_{t+1})\setminus\mathfrak{M}(\theta_{t})}|S_{\ee}(\alpha)|d\alpha \\ \ll e_{0}\left(\prod_{i=1}^{n+r}e_{i}\right)^{-1+\frac{1}{2^{r}}}\left(\prod_{j=1}^{r}P_{j}^{n_{j}}\right)P^{\frac{\sum_{j=1}^{r}b_{j}}{\sum_{j=1}^{r}b_{j}d_{j}}-K\theta_{t}+2\tilde{d}\theta_{t+1}-1+\varepsilon} \\  \ll e_{0}\left(\prod_{i=1}^{n+r}e_{i}\right)^{-1+\frac{1}{2^{r}}}  \left(\prod_{j=1}^{r}P_{j}^{n_{j}}\right)P^{\frac{\sum_{j=1}^{r}b_{j}}{\sum_{j=1}^{r}b_{j}d_{j}}-(K-2\tilde{d})\theta_{t}-1+2\tilde{d}(\theta_{t+1}-\theta_{t})+\varepsilon}\\  \ll e_{0}\left(\prod_{i=1}^{n+r}e_{i}\right)^{-1+\frac{1}{2^{r}}}     \left(\prod_{j=1}^{r}P_{j}^{n_{j}}\right)P^{-1-\frac{3}{2}\delta}, \end{multline*}
et on obtient le r\'esultat souhait\'e en sommant sur tous les $ t\in \{0,...,T-1\} $.
\end{proof}

\subsubsection{Les arcs majeurs}

Posons \begin{equation}
e_{0}=\max_{i\in I_{0}} e_{i},
\end{equation}

et introduisons \`a pr\'esent la nouvelle famille d'arcs majeurs : 
\begin{equation}
\mathfrak{M}_{a,q}'(\theta)=\{\alpha\in [0,1[ \; | \; 2|\alpha q-a|<qP^{-1+\tilde{d}\theta}\}
\end{equation}
et posons \begin{equation}
\mathfrak{M}'(\theta)=\bigcup_{0<q\leqslant e_{0}P^{\tilde{d}\theta}}\bigcup_{\substack{0\leqslant a<q\\ \PGCD(a,q)=1}}\mathfrak{M}_{a,q}'(\theta),
\end{equation}
On remarque que, pour tout $ i\in I_{0} $, \[  \mathfrak{M}^{(i)}(\theta)\subset \mathfrak{M}'(\theta). \]
En effet, si $ \alpha\in \mathfrak{M}_{a,q}^{(i)}(\theta) $, alors \[ 2|\alpha q e_{i}-a|\leqslant e_{i}P^{-1+\tilde{d}\theta} \]  et donc en posant $ q'=(qe_{i})/\PGCD(qe_{i},a) $ et $ a'=a/\PGCD(qe_{i},a) $, on trouve \[ 2|\alpha q' -a'|\leqslant \frac{e_{i}}{\PGCD(qe_{i},a) }P^{-1+\tilde{d}\theta}\leqslant q'P^{-1+\tilde{d}\theta} \] et on a de plus $ q'\leqslant  e_{0}P^{\tilde{d}\theta} $, $ 0\leqslant a'<q' $ et $ \PGCD(a',q')=1 $, donc $ \alpha\in \mathfrak{M}_{a',q'}'(\theta) $. Les arcs majeurs $ \mathfrak{M}_{a,q}'(\theta) $ v\'erifient par ailleurs le lemme ci-dessous : 
\begin{lemma}\label{disjoints3}
Pour tout $ \ee\in \NN^{n+r} $ tel que $ e_{0}^{2}<P^{1-3\tilde{d}\theta} $, les intervalles $ \mathfrak{M}_{a,q}'(\theta) $ sont disjoints deux \`a deux. 
\end{lemma}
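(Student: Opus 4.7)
La strat\'egie est le raisonnement classique de type \og Farey \fg\ par contradiction. Je supposerai qu'il existe deux couples distincts $ (a_{1},q_{1}) $ et $ (a_{2},q_{2}) $ avec, pour $ i\in \{1,2\} $, $ 0\leqslant a_{i}<q_{i}\leqslant e_{0}P^{\tilde{d}\theta} $ et $ \PGCD(a_{i},q_{i})=1 $, ainsi qu'un r\'eel $ \alpha \in \mathfrak{M}'_{a_{1},q_{1}}(\theta)\cap \mathfrak{M}'_{a_{2},q_{2}}(\theta) $. Je d\'eriverai alors une contradiction \`a partir de l'hypoth\`ese $ e_{0}^{2}<P^{1-3\tilde{d}\theta} $.

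La premi\`ere \'etape consistera \`a r\'ecrire la d\'efinition des arcs majeurs : la condition $ 2|\alpha q_{i}-a_{i}|<q_{i}P^{-1+\tilde{d}\theta} $ se traduit directement en $ |\alpha-a_{i}/q_{i}|<\frac{1}{2}P^{-1+\tilde{d}\theta} $, puis l'in\'egalit\'e triangulaire donne
\[ \left|\frac{a_{1}}{q_{1}}-\frac{a_{2}}{q_{2}}\right|<P^{-1+\tilde{d}\theta}. \]
Ensuite, je v\'erifierai que les deux fractions $ a_{1}/q_{1} $ et $ a_{2}/q_{2} $ sont distinctes : comme $ \PGCD(a_{i},q_{i})=1 $ et $ 0\leqslant a_{i}<q_{i} $, ces fractions sont irr\'eductibles et l'\'egalit\'e entra\^{\i}nerait $ (a_{1},q_{1})=(a_{2},q_{2}) $, ce qui contredirait l'hypoth\`ese.

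Il restera alors \`a minorer la distance entre les deux fractions : $ |a_{1}q_{2}-a_{2}q_{1}|\geqslant 1 $ puisque ce nombre est un entier non nul, d'o\`u
\[ \left|\frac{a_{1}}{q_{1}}-\frac{a_{2}}{q_{2}}\right|\geqslant \frac{1}{q_{1}q_{2}}\geqslant \frac{1}{e_{0}^{2}P^{2\tilde{d}\theta}}. \]
L'hypoth\`ese $ e_{0}^{2}<P^{1-3\tilde{d}\theta} $ se r\'e\'ecrit $ e_{0}^{2}P^{2\tilde{d}\theta}<P^{1-\tilde{d}\theta} $, soit $ 1/(e_{0}^{2}P^{2\tilde{d}\theta})>P^{-1+\tilde{d}\theta} $. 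Ceci contredit l'in\'egalit\'e obtenue par l'in\'egalit\'e triangulaire, et conclut la d\'emonstration.

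Il ne s'agit ici que d'un lemme technique standard (analogue \`a ceux que l'on rencontre dans la m\'ethode du cercle classique, cf.\;\cite[\S 2]{S2}), et l'argument ne pr\'esente aucune difficult\'e r\'eelle : le seul point auquel il faudra prendre garde est d'assurer la coh\'erence entre la borne $ q\leqslant e_{0}P^{\tilde{d}\theta} $ et la longueur des arcs, qui dicte pr\'ecis\'ement la condition $ e_{0}^{2}<P^{1-3\tilde{d}\theta} $ de l'\'enonc\'e (le facteur $ 3\tilde{d} $ provenant d'un $ 2\tilde{d}\theta $ de la majoration produit $ q_{1}q_{2} $ et d'un $ \tilde{d}\theta $ de la longueur d'arc).
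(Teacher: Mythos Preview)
Your proof is correct and follows essentially the same approach as the paper: both argue by contradiction via the standard Farey-type bound $|a_{1}/q_{1}-a_{2}/q_{2}|\geqslant 1/(q_{1}q_{2})$ combined with the triangle inequality, and the contradiction with $e_{0}^{2}<P^{1-3\tilde{d}\theta}$ is obtained identically.
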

\begin{proof}
Supposons qu'il existe $ \alpha\in \mathfrak{M}_{a,q}'(\theta)\cap\mathfrak{M}_{a',q'}'(\theta) $ avec $ q,q'\leqslant e_{0}P^{\tilde{d}\theta} $, $ 0\leqslant a<q $, $ 0\leqslant a'<q' $, $ \PGCD(a,q)=1 $, $ \PGCD(a',q')=1 $ et $ (a,q)\neq(a',q') $. On a alors \[ \frac{1}{qq'}\leqslant \frac{|aq'-a'q|}{qq'}=\left|\frac{a}{q}-\frac{a'}{q'}\right|\leqslant \left|\alpha-\frac{a}{q}\right|+\left|\alpha-\frac{a'}{q'}\right|\leqslant P^{-1+\tilde{d}\theta} \] et donc \[ 1\leqslant qq'P^{-1+\tilde{d}\theta}<e_{0}^{2}P^{-1+3\tilde{d}\theta} \] d'o\`u le r\'esultat.  
\end{proof}
En combinant les r\'esultats des lemmes\;\ref{arcmin3} et\;\ref{disjoints3}, on obtient l'estimation : \begin{multline}\label{formulesomme3}
N_{\ee}(P_{1},...,P_{r})=\sum_{1\leqslant q \leqslant e_{0}P^{\tilde{d}\theta}}\sum_{\substack{0\leqslant a<q\\ \PGCD(a,q)=1}}\int_{\mathfrak{M}_{a,q}'(\theta)}S_{\ee}(\alpha)d\alpha \\ +O\left( e_{0}\left(\prod_{i=1}^{n+r}e_{i}\right)^{-1+\frac{1}{2^{r}}}\left(\prod_{j=1}^{r}P_{j}^{n_{j}-d_{j}-\delta}\right)\right)
\end{multline}
\'Etant donn\'e un \'el\'ement $ \alpha\in \mathfrak{M}_{a,q}'(\theta) $, nous poserons $ \alpha=\frac{a}{q}+\beta $ avec $ |\beta|\leqslant \frac{1}{2}P^{-1+\tilde{d}\theta} $, et nous noterons : \begin{equation}
S_{a,q,\ee}=\sum_{\bb\in (\ZZ/q\ZZ)^{n+r}}e\left(\frac{a}{q}F(\ee.\bb)\right),
\end{equation}
\begin{equation}
I(\beta)=\int_{\substack{\uu\in\RR^{n+r} \\\forall i, \; |u_{i}|\leqslant |\uu^{E(i)}| \\ \forall j\in \{1,...,r\}, \; |\uu^{E(n+j)}|\leqslant 1}}e\left(\beta F(\uu)\right)d\uu. 
\end{equation}
On \'etablit alors le r\'esultat suivant : 

\begin{lemma}\label{separation3} Si $ \alpha $ appartient \`a $ \mathfrak{M}_{a,q}'(\theta) $, on a alors : \begin{multline*} S_{\ee}(\alpha)=\left(\prod_{i=1}^{n+r}e_{i}\right)^{-1}q^{-(n+r)}S_{a,q,\ee}I(P\beta)\left(\prod_{j=1}^{r}P_{j}^{n_{j}}\right)
\\ +O\left(e_{0}\left(\prod_{i=1}^{n+r}e_{i}\right)^{-1}\left(\prod_{j=1}^{r}P_{j}^{n_{j}}\right)qP_{r}^{-1}P^{\tilde{d}\theta}\right).
\end{multline*}
\end{lemma}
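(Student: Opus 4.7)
\textbf{Plan de démonstration du lemme \ref{separation3}.} La preuve repose sur l'argument standard d'approximation des arcs majeurs de la méthode du cercle, en adaptant soigneusement les changements de variables à la structure quasi-$r$-homogène de $F$ et à la géométrie de la région de sommation. Pour $\alpha\in \mathfrak{M}_{a,q}'(\theta)$, on écrit $\alpha=\frac{a}{q}+\beta$ avec $|\beta|\leqslant \frac{1}{2}P^{-1+\tilde d\theta}$, puis
\[ e(\alpha F(\ee.\xx))=e\!\left(\tfrac{a}{q}F(\ee.\xx)\right)\,e(\beta F(\ee.\xx)). \]
On décompose ensuite $\xx\in \ZZ^{n+r}$ selon sa classe modulo $q$ en posant $\xx=q\yy+\bb$ avec $\bb\in (\ZZ/q\ZZ)^{n+r}$ et $\yy\in \ZZ^{n+r}$. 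Comme $\ee.\xx\equiv \ee.\bb\pmod q$ et $F$ est à coefficients entiers, le facteur $e\!\left(\frac{a}{q}F(\ee.\xx)\right)$ ne dépend que de $\bb$, ce qui donne la factorisation
\[ S_{\ee}(\alpha)=\sum_{\bb\in (\ZZ/q\ZZ)^{n+r}}e\!\left(\tfrac{a}{q}F(\ee.\bb)\right)\sum_{\substack{\yy\in \ZZ^{n+r}\\ q\yy+\bb\in R}}e(\beta F(\ee.(q\yy+\bb))), \]
où $R$ désigne le domaine de sommation figurant dans \eqref{Salpha3}.

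L'idée est alors d'approcher la somme intérieure par une somme de Riemann : pour chaque classe de résidu $\bb$, on remplace
\[ \sum_{\substack{\yy\in\ZZ^{n+r}\\ q\yy+\bb\in R}}e(\beta F(\ee.(q\yy+\bb)))\;\;\text{par}\;\;q^{-(n+r)}\int_{R}e(\beta F(\ee.\uu))\,d\uu. \]
On effectue ensuite dans cette intégrale le changement de variables $u_i=\frac{1}{e_i}\prod_{j=1}^{r}P_j^{a_{i,j}}\,v_i$. La propriété que $(\beta_{n+j,k})$ est l'inverse de $(a_{i_l,j})$ (remarque\;\ref{reminv3}) donne $(\ee.\uu)^{E(n+j)}=\vv^{E(n+j)}P_j$ et $(\ee.\uu)^{E(i)}=\vv^{E(i)}\prod_{j}P_j^{a_{i,j}}$, transformant les conditions $|(\ee.\uu)^{E(n+j)}|\leqslant P_j$ et $|u_i|\leqslant \frac{1}{e_i}|(\ee.\uu)^{E(i)}|$ exactement en $|\vv^{E(n+j)}|\leqslant 1$ et $|v_i|\leqslant |\vv^{E(i)}|$. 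La quasi-$r$-homogénéité de $F$ donne $F(\ee.\uu)=P\,F(\vv)$ et le jacobien vaut $\left(\prod_i e_i\right)^{-1}\prod_j P_j^{n_j}$. Combinée avec la décomposition en $\bb$, cela fournit le terme principal
\[ \left(\prod_{i=1}^{n+r}e_i\right)^{-1}q^{-(n+r)}S_{a,q,\ee}\,I(P\beta)\,\prod_{j=1}^{r}P_j^{n_j}. \]

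L'estimation de l'erreur de Riemann constitue le point délicat. Les dérivées partielles $\partial F(\ee.\uu)/\partial u_i$ sont, par quasi-$r$-homogénéité, de l'ordre de $e_iP/T_i$ sur $R$, d'où $|\nabla_\yy\,e(\beta F(\ee.(q\yy+\bb)))|\ll |\beta|q\sum_i e_iP/T_i$. Sommant sur les cellules et les $q^{n+r}$ classes de résidu $\bb$, on obtient une contribution totale
\[ |\beta|qP\sum_{i}\frac{\prod_k T_k}{T_i}\ll qP^{\tilde d\theta}\cdot\frac{e_0}{\prod_i e_i}\prod_{j=1}^{r}P_j^{n_j}\cdot P_r^{-1}, \]
en utilisant $|\beta|P\leqslant P^{\tilde d\theta}$, $T_i\geqslant P_r/e_i$ (pour les $i$ tels que $\sum_j a_{i,j}\geqslant 1$) et $\max_i e_i\leqslant e_0$. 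La principale difficulté technique tient à ce que $R$ n'est pas une simple boîte : la contrainte $|x_i|\leqslant \frac{1}{e_i}\prod_j(\lfloor|(\ee.\xx)^{E(n+j)}|\rfloor+1)^{a_{i,j}}$ dépend de $\xx$ via les parties entières. Pour la surmonter, on décomposera $R$ selon $\kk=(\lfloor|(\ee.\xx)^{E(n+j)}|\rfloor)_j$ (comme dans la section \ref{sectionweyl3}) de façon à se ramener à une union de boîtes $\BB_{\kk,\II}$ sur chacune desquelles l'approximation de Riemann est standard ; il faudra aussi vérifier que l'erreur de remplacement $\lfloor\cdot\rfloor\mapsto\cdot$ et les termes de bord sont absorbés dans l'erreur globale annoncée.
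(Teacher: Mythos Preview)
Your proposal is correct and follows essentially the same route as the paper: decompose into residue classes modulo $q$, replace the inner sum by an integral via a Riemann-sum argument controlled by the variation of $\beta F(\ee.\xx)$ across a unit cell, then perform the change of variables $u_i=\frac{1}{e_i}\prod_j P_j^{a_{i,j}}v_i$ to extract $I(P\beta)$ and the factor $\left(\prod_i e_i\right)^{-1}\prod_j P_j^{n_j}$. The paper additionally disposes of the case $e_0q>P_r$ at the outset (where both sides are trivially absorbed by the error term), and it phrases the gradient estimate as a direct bound $|F(\ee.(q\xx'+\bb))-F(\ee.(q\xx''+\bb))|\ll qe_0 P/P_r$ for $|\xx'-\xx''|\leqslant 2$ rather than via $\sum_i e_iP/T_i$, but these are cosmetic differences.
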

\begin{proof}
Remarquons avant tout que lorsque $ e_{0}q>P_{r} $, l'\'egalit\'e du lemme est triviale car le terme d'erreur est alors dominant. En effet on a dans ce cas :
\begin{align*} |S_{\ee}(\alpha)| & \ll \prod_{i=1}^{n+r}\frac{1}{e_{i}}\prod_{j=1}^{r}P_{j}^{a_{i,j}}\\ & =\left(\prod_{i=1}^{n+r}e_{i}\right)^{-1}\left(\prod_{j=1}^{r}P_{j}^{n_{j}}\right) \\ & \ll \left(\prod_{i=1}^{n+r}e_{i}\right)^{-1}\left(\prod_{j=1}^{r}P_{j}^{n_{j}}\right)P_{r}^{-1}e_{0}P^{\tilde{d}\theta }q
\end{align*}
et en utilisant les estimations triviales $ |S_{a,q,\ee}|\ll q^{n+r} $ et $ |I(P\beta)|\ll 1 $ :\begin{align*}
\left(\prod_{i=1}^{n+r}e_{i}\right)^{-1}q^{-(n+r)}|S_{a,q,\ee}||I(P\beta)|\left(\prod_{j=1}^{r}P_{j}^{n_{j}}\right) & \ll \left(\prod_{i=1}^{n+r}e_{i}\right)^{-1}\left(\prod_{j=1}^{r}P_{j}^{n_{j}}\right)  \\ & \ll \left(\prod_{i=1}^{n+r}e_{i}\right)^{-1}\left(\prod_{j=1}^{r}P_{j}^{n_{j}}\right)P_{r}^{-1}qe_{0}P^{\tilde{d}\theta}.
\end{align*}
d'o\`u le r\'esultat. Nous supposerons donc $ e_{0}q\leqslant P_{r} $.\\

Remarquons \`a pr\'esent que, d'apr\`es\;\eqref{Salpha3}: \begin{multline}\label{formulesep3} S_{\ee}(\alpha)=\sum_{\bb\in (\ZZ/q\ZZ)^{n+r}}e\left(\frac{a}{q}F(\ee.\bb)\right)\sum_{\substack{\xx\equiv \bb (q) \\ \left\lfloor|(\ee.\xx)^{E(n+j)}|\right\rfloor\leqslant P_{j} \\ |x_{i}|\leqslant \frac{1}{e_{i}}\prod_{j=1}^{r}(\left\lfloor|(\ee.\xx)^{E(n+j)}|\right\rfloor+1)^{a_{i,j}} }}e(\beta F(\ee.\xx))\\ =\sum_{\bb\in (\ZZ/q\ZZ)^{n+r}}e\left(\frac{a}{q}F(\ee.\bb)\right)\tilde{S}(\bb)+O\left(e_{0}\left(\prod_{i=1}^{n+r}e_{i}\right)^{-1}\left( \prod_{j=1}^{r}P_{j}^{n_{j}}\right)P_{r}^{-1}\right) \end{multline} o\`u \[ \tilde{S}(\bb)=\sum_{\substack{\xx\equiv \bb (q) \\ |(\ee.\xx)^{E(n+j)}|\leqslant P_{j} \\ |x_{i}|\leqslant \frac{1}{e_{i}}\prod_{j=1}^{r}(|(\ee.\xx)^{E(n+j)}|+1)^{a_{i,j}} }}e(\beta F(\ee.\xx)). \]
Soient $ \xx' $ et $ \xx'' $ deux \'el\'ement de $ \RR^{n+r} $ tels que \[|\xx'-\xx''|\leqslant 2\] et, pour tout $ i $, \[  \begin{array}{l}
|qx_{i}'+b_{i}|\leqslant \frac{1}{e_{i}}\prod_{j=1}^{r}(|(\ee.(q\xx'+\bb))^{E(n+j)}|+1)^{a_{i,j}}, \\ |qx_{i}''+b_{i}|\leqslant \frac{1}{e_{i}}\prod_{j=1}^{r}(|(\ee.(q\xx''+\bb))^{E(n+j)}|+1)^{a_{i,j}},
\end{array} \] \[ \forall j \in \{1,...,r\},\; \; \;|(\ee.(q\xx'+\bb))^{E(n+j)}|\leqslant P_{j}, \; \; \; |(\ee.(q\xx''+\bb))^{E(n+j)}|\leqslant P_{j}. \]
On observe alors que \[\left| F(\ee.(q\xx'+\bb))-F(\ee.(q\xx''+\bb))\right|\ll qe_{0}P_{1}^{d_{1}}...P_{r-1}^{d_{r-1}}P_{r}^{d_{r}-1}. \] Par cons\'equent,  \begin{multline*}
\tilde{S}(\bb)=\int_{\substack{q\tilde{\uu}\in \RR^{n+r} \\ |qe_{i}\tilde{u}_{i}|\leqslant |\prod_{j=1}^{r}(\ee.(q\tilde{\uu}))^{a_{i,j}E(n+j)}| \\ |(\ee.(q\tilde{\uu}))^{E(n+j)}|\leqslant P_{j}}}e(\beta F(\ee.(q\tilde{\uu})))d\tilde{\uu} \\ + O\left(\underbrace{|\beta|}_{\leqslant P^{-1+\tilde{d}\theta}}\left(\prod_{i=1}^{n+r}e_{i}\right)^{-1}q^{-(n+r)}\left(\prod_{j=1}^{r}P_{j}^{n_{j}}\right)\left(\prod_{j=1}^{r}P_{j}^{d_{j}}\right)P_{r}^{-1}qe_{0}\right) \\ + O\left(e_{0}\left(\prod_{i=1}^{n+r}e_{i}\right)^{-1}\left(\prod_{j=1}^{r}P_{j}^{n_{j}}\right)q^{-(n+r)+1}P_{r}^{-1}\right)
\end{multline*}
(le deuxi\`eme terme d'erreur correspondant aux points rencontrant le bord du domaine de sommation).\\

En effectuant le changement de variables \[ \forall i\in \{1,...,n+r\}, \; \; q\tilde{u}_{i}=e_{i}^{-1}\left(\prod_{j=1}^{r}P_{j}^{a_{i,j}}\right)u_{i}, \] on trouve alors  \begin{multline*}
\tilde{S}(\bb)=q^{-(n+r)}\left(\prod_{i=1}^{n+r}e_{i}\right)^{-1}\left(\prod_{j=1}^{r}P_{j}^{n_{j}}\right)\int_{\substack{\uu\in \RR^{n+r} \\ |\tilde{u}_{i}|\leqslant |\uu^{E(i)}| \\ |\uu^{E(n+j)}|\leqslant 1}}e(\beta F(\uu))d\tilde{\uu} \\ + O\left(e_{0}q^{-(n+r)}\left(\prod_{i=1}^{n+r}e_{i}\right)^{-1}\left(\prod_{j=1}^{r}P_{j}^{n_{j}}\right)qP_{r}^{-1}P^{\tilde{d}\theta}\right) \\ + O\left(e_{0}\left(\prod_{i=1}^{n+r}e_{i}\right)^{-1}\left(\prod_{j=1}^{r}P_{j}^{n_{j}}\right)q^{-(n+r)+1}P_{r}^{-1}\right).
\end{multline*}
En rempla\c{c}ant $ \tilde{S}(\bb) $ par cette expression dans \eqref{formulesep3}, nous obtenons l'\'egalit\'e du lemme. 
\end{proof}
Posons \`a pr\'esent \begin{equation}
\mathfrak{S}_{\ee}(Q)=\sum_{1\leqslant q\leqslant Q}q^{-(n+r)}\sum_{\substack{0\leqslant a<q\\ \PGCD(a,q)=1}}S_{a,q,\ee},
\end{equation}
\begin{equation}
J_{\sigma}(\phi)=\int_{|\beta|\leqslant \phi}I(\beta)d\beta. 
\end{equation}
o\`u $ \sigma $ d\'esigne le c\^one maximal consid\'er\'e auquel la fonction $ h_{\ee,V} $ est associ\'ee (cf. section $ 2.2 $). Avec ces notations, en utilisant le lemme pr\'ec\'edent dans la formule \eqref{formulesomme3}, et en remarquant que : \[ \int_{|\beta|\leqslant \frac{1}{2}P^{-1+\tilde{d}\theta}}I(P\beta)d\beta=P^{-1}\int_{|\beta|\leqslant \frac{1}{2}P^{\tilde{d}\theta}}I(\beta)d\beta=P^{-1}J\left(\frac{1}{2}P^{\tilde{d}\theta}\right), \] on trouve : \begin{multline*}
N_{\ee}(P_{1},...,P_{r})=\mathfrak{S}_{\ee}(e_{0}P^{\tilde{d}\theta})J_{\sigma}(\frac{1}{2}P^{\tilde{d}\theta})\left(\prod_{i=1}^{n+r}e_{i}\right)^{-1}\left(\prod_{j=1}^{r}P_{j}^{n_{j}-d_{j}}\right)
\\ +O\left(e_{0}^{2}\left(\prod_{i=1}^{n+r}e_{i}\right)^{-1}\left(\prod_{j=1}^{r}P_{j}^{n_{j}}\right)P_{r}^{-1}P^{2\tilde{d}\theta}\Vol(\mathfrak{M}'(\theta))\right)
\\ + O\left(e_{0}\left(\prod_{i=1}^{n+r}e_{i}\right)^{-1+\frac{1}{2^{r}}}\left(\prod_{j=1}^{r}P_{j}^{n_{j}-d_{j}-\delta}\right)\right).
\end{multline*}
Or, \[ \Vol(\mathfrak{M}'(\theta))\ll \sum_{1\leqslant q \leqslant e_{0}P^{\tilde{d}\theta}}\sum_{\substack{0\leqslant a<q\\ \PGCD(a,q)=1}}P^{-1+\tilde{d}\theta} \ll e_{0}^{2}P^{-1+3\tilde{d}\theta}. \] On a donc : \begin{multline*}
e_{0}^{2}\left(\prod_{i=1}^{n+r}e_{i}\right)^{-1}\left(\prod_{j=1}^{r}P_{j}^{n_{j}}\right)P_{r}^{-1}P^{2\tilde{d}\theta}\Vol(\mathfrak{M}'(\theta)) \\ \ll e_{0}^{4}\left(\prod_{i=1}^{n+r}e_{i}\right)^{-1}\left(\prod_{j=1}^{r}P_{j}^{n_{j}-d_{j}}\right)P_{r}^{-1}P^{5\tilde{d}\theta}.
\end{multline*}
En remarquant que $ P_{r}^{-1}P^{5\tilde{d}\theta}\ll P^{-\delta} $ d'apr\`es \eqref{thetacond33}, on conclut que : \begin{multline}\label{presquefin3}
N_{\ee}(P_{1},...,P_{r})=\left(\prod_{i=1}^{n+r}e_{i}\right)^{-1}\mathfrak{S}_{\ee}(e_{0}P^{\tilde{d}\theta})J_{\sigma}(\frac{1}{
2}P^{\tilde{d}\theta})\left(\prod_{j=1}^{r}P_{j}^{n_{j}-d_{j}}\right)
\\ +O\left(e_{0}^{4}\left(\prod_{i=1}^{n+r}e_{i}\right)^{-1}\left(\prod_{j=1}^{r}P_{j}^{n_{j}-d_{j}-\delta}\right)\right).
\end{multline}
Notons \`a pr\'esent : 
\begin{equation}
\mathfrak{S}_{\ee}=\sum_{q=1}^{\infty}q^{-(n+r)}\sum_{\substack{0\leqslant a<q\\ \PGCD(a,q)=1}}S_{a,q,\ee},
\end{equation}
\begin{equation}
J_{\sigma}=\int_{\RR}I(\beta)d\beta. 
\end{equation}
Nous allons chercher \`a remplacer $ \mathfrak{S}(e_{0}P^{\tilde{d}\theta}) $ par $ \mathfrak{S}_{\ee} $ et $ J_{\sigma}(\frac{1}{
2}P^{\tilde{d}\theta}) $ par $ J_{\sigma} $ dans \eqref{presquefin3}. Pour cela nous utiliserons le lemme ci-dessous : \begin{lemma}
Pour tous $ a,q,\ee $, on a l'estimation suivante : \[ |S_{a,q,\ee}|\ll \max\left\{e_{0}^{4+\delta}, e_{0}^{2}\left(\prod_{i=1}^{n+r}e_{i}\right)^{\frac{1}{2^{r}}}\right\}q^{n+r-2-\delta}. \]
\end{lemma}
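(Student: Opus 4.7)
The plan is to mimic the Weyl-differencing machinery developed in Section 3.2, but applied to complete sums modulo $q$ rather than to truncated integer sums. I would first reduce to prime-power moduli via the Chinese remainder theorem, which gives a multiplicative factorization $S_{a,q,\ee} = \prod_{p} S_{a_p, p^{v_p(q)}, \ee}$, so that it suffices to prove an analogous bound at each prime power $p^v$. This reduction is important because the geometry of solutions modulo $p^v$ can be related, by a Hensel-type lifting argument, to the geometry of $V^{\ast}_{\tau,m,\dd,(j,k)}$.

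Next, for a fixed prime power modulus $q = p^v$, I would apply a sequence of Cauchy--Schwarz inequalities exactly mirroring the $\Delta^{\tt}$ operator construction of Section 3.1 but carried out over $(\ZZ/q\ZZ)^{n+r}$. Each of these $D_1 + \cdots + D_r$ steps introduces a factor $q^{n+r}$ at trivial cost, and eventually reduces the problem to controlling
\[
\sum_{\XX \in (\ZZ/q\ZZ)^{\Card L}} e\Bigl( \tfrac{a}{q} \Delta^{\tt} F(\ee.\xx_0, \widetilde{\XX}) \Bigr),
\]
which by Proposition \ref{propdeltat3} is multilinear in the differencing variables. Evaluating the resulting geometric sums factor by factor produces a bound in terms of the number $N(q')$ of solutions modulo $q'$ (where $q' = q / \gcd(q, e_i)$ records the interaction between $q$ and the $e_i$) to the system of congruences $\gamma^{(j_0, K_{j_0})}_{(i, \l)}(\widehat{\XX}) \equiv 0$. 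This is directly analogous to the quantities $M_{(j,k)}$ that appeared in Section 3.3.

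I would then invoke a dichotomy parallel to Lemma \ref{dilemme33}: either one obtains immediately the bound $q^{n+r-2-\delta}$ (after absorbing the $e_0$-factors coming from the loss of the Cauchy--Schwarz steps in which the $e_i$ interact multiplicatively with $q$, which accounts precisely for the $e_0^{4+\delta}$ and $e_0^{2}(\prod e_i)^{1/2^r}$ factors), or the congruence system $\gamma^{(j_0,K_{j_0})}_{(i,\l)} \equiv 0 \pmod{q'}$ has many solutions. In the latter case, Hensel lifting together with the argument used just before equation \eqref{dimL3} (passing to the diagonal subvariety $\mathcal{D} \cap \mathcal{L}$) forces $\dim V^{\ast}_{\tau,m,\dd,(j_0,K_{j_0})} \geqslant n+r - 2^{D_1+\cdots+D_r}K$, which for the chosen $K$ contradicts the hypothesis on $n(F)$ in Théorème \ref{thm3b3}.

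The main obstacle will be bookkeeping, specifically arranging the Cauchy--Schwarz losses so that the exponent of $q$ is the sharp $n+r-2-\delta$ rather than the much weaker $n+r - 2^{-(D_1+\cdots+D_r)}$ that comes straight out of the differencing; this requires, as in Birch's original paper, carrying out the differencing prime-by-prime and using a finer bound on the number of solutions to $\gamma^{(j,k)}_{(i,\l)} \equiv 0$ modulo $p$, together with a careful tracking of $\gcd(q,e_i)$ throughout the argument. A secondary difficulty is identifying the correct shape of the $e_0$-factors; this will require separating the cases $e_0 \leqslant q^{1/2}$ (where the stronger power of $\prod e_i$ is permissible) and $e_0 > q^{1/2}$ (where a cruder $e_0^{4+\delta}$ suffices since the trivial bound $|S_{a,q,\ee}| \leqslant q^{n+r}$ is already smaller than the claim).
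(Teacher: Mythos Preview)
Your approach is genuinely different from the paper's, and the obstacle you identify at the end --- getting the sharp exponent $n+r-2-\delta$ rather than the Weyl-loss exponent $n+r-O(2^{-\sum D_j})$ --- is real and not resolved by what you sketch. In particular, your appeal to ``Birch's original paper'' is mistaken: Birch does \emph{not} obtain his singular-series bound by direct $p$-adic differencing and Hensel lifting. He does exactly what this paper does, namely a bootstrap from the minor-arc estimate already established for $S_{\ee}(\alpha)$.

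Concretely, the paper fixes $\alpha = a/q$, chooses $P$ so that $q = e_0 P^{\tilde d \theta_0}$ for some $\theta_0$ satisfying \eqref{thetacond13}--\eqref{thetacond33}, and checks that $\alpha \notin \mathfrak{M}(\theta_0')$ for $\theta_0' = \theta_0 - \nu$ slightly smaller (a short argument by contradiction using $\gcd(a,q)=1$ and the disjointness of the arcs). Proposition~\ref{dilemme43} then gives a bound on $|S_{\ee}(a/q)|$. On the other hand, Lemme~\ref{separation3} with $\beta=0$ expresses $S_{\ee}(a/q)$ as $(\prod e_i)^{-1} q^{-(n+r)} S_{a,q,\ee}\, I(0)\, \prod P_j^{n_j}$ plus an explicit error. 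Comparing the two and solving for $S_{a,q,\ee}$ yields the sharp exponent immediately, because the minor-arc saving $P^{-K\theta_0}$ translates via $q = e_0 P^{\tilde d \theta_0}$ and condition~\eqref{thetacond13} into a saving of $q^{-2-\delta}$. The two $e_0$-terms in the maximum come from the two terms in this comparison (the separation error versus the minor-arc bound).

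Your direct route would need a new idea to collapse the Weyl losses; the ``finer bound on the number of solutions modulo $p$'' you gesture at is not enough by itself, since it still sits inside a $2^{\sum D_j}$-th root. The bootstrap avoids this entirely: all the hard work was already done in proving Proposition~\ref{dilemme43}, and the lemma is a cheap consequence.
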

\begin{proof}
Consid\'erons $ \alpha=\frac{a}{q} $  pour un certain et soit $ \theta_{0}\in [0,1] $ v\'erifiant les conditions \eqref{thetacond13}, \eqref{thetacond23}, \eqref{thetacond33}, et $ \PGCD(a,q)=1 $. On choisit par ailleurs $ P $ tel que $ q=e_{0}P^{\tilde{d}\theta_{0}} $. On suppose de plus que $ P $ et $ \theta_{0} $ v\'erifient l'hypoth\`ese $ e_{0}^{2}<P^{1-2\tilde{d}\theta_{0}} $ (si ce n'est pas le cas la majoration du lemme devient triviale car alors $ e_{0}^{2}>q $ et donc $ e_{0}^{4+\delta}q^{n+r-2-\delta}>q^{n+r} $). On pose par ailleurs $ \theta_{0}'=\theta_{0}-\nu $, pour $ \nu>0 $ fix\'e arbitrairement petit. On a alors que $ \alpha\notin\mathfrak{M}(\theta_{0}') $. En effet supposons qu'il existe $ (a',q') $ tels que $ q'\leqslant P^{\tilde{d}\theta_{0}'}<q $, $ 0\leqslant a'<q' $ et $ \alpha\in \mathfrak{M}_{a',q'}^{(i)}(\theta_{0}')  $. Remarquons que si l'on a $ aq'e_{i}=qa' $, alors, puisque $ \PGCD(a,q)=1 $, $ a'=au $ avec $ a\neq 0 $ (sinon on a $ q=1 $ ce qui contredit $ 0<q'<q $ ) et donc $ 0<q'e_{i}=qu $ donc $ q'e_{i}\geqslant q $, ce qui est absurde puisque $ e_{i}q'<q $. On a donc $ aq'e_{i}\neq qa' $ et ainsi : \[ 1\leqslant |aq'e_{i}-qa'|=q|\alpha e_{i}q'-a'|<qe_{i}P^{-1+\tilde{d}\theta_{0}}\leqslant e_{0}^{2}P^{-1+2\tilde{d}\theta_{0}}<1
\] ce qui est absurde. \\

Par cons\'equent, d'apr\`es la proposition\;\ref{dilemme43} : \[|S_{\ee}(\alpha)|\ll \left(\prod_{i=1}^{n+r}e_{i}\right)^{-1+\frac{1}{2^{r}}}\left(\prod_{j=1}^{r}P_{j}^{n_{j}}\right)P^{\frac{\sum_{j=1}^{r}b_{j}}{\sum_{j=1}^{r}b_{j}d_{j}}-K\theta_{0}+\varepsilon}.\]
Le lemme\;\ref{separation3} donne donc : \begin{multline*}\left(\prod_{i=1}^{n+r}e_{i}\right)^{-1}\left(\prod_{j=1}^{r}P_{j}^{n_{j}}\right)(q^{-(n+r)}S_{a,q,\ee}I(0)+O(e_{0}qP_{r}^{-1}P^{\tilde{d}\theta_{0}})) 
\\ \ll \left(\prod_{i=1}^{n+r}e_{i}\right)^{-1+\frac{1}{2^{r}}}\left(\prod_{j=1}^{r}P_{j}^{n_{j}}\right)P^{\frac{\sum_{j=1}^{r}b_{j}}{\sum_{j=1}^{r}b_{j}d_{j}}-K\theta_{0}+\varepsilon} \end{multline*}
donc (puisque $ I(0)\asymp 1 $) : \[|S_{a,q,\ee}|\ll e_{0}q^{n+r+1}P_{r}^{-1}P^{\tilde{d}\theta_{0}}+  \left(\prod_{i=1}^{n+r}e_{i}\right)^{\frac{1}{2^{r}}}q^{n+r}P^{\frac{\sum_{j=1}^{r}b_{j}}{\sum_{j=1}^{r}b_{j}d_{j}}-K\theta_{0}+\varepsilon}. \] Or on a d'une part \[e_{0}q^{n+r+1}P_{r}^{-1}P^{\tilde{d}\theta_{0}} \ll e_{0}^{4+\delta}q^{n+r+1}q^{-(3+\delta)} \] (car $ P_{r}^{-1}<P^{-4\tilde{d}\theta_{0}} $, par la condition\;\eqref{thetacond33}), et d'autre part \[ K\theta_{0}\geqslant \frac{\sum_{j=1}^{r}b_{j}}{\sum_{j=1}^{r}b_{j}d_{j}}+2\tilde{\theta_{0}}+\delta \] (d'apr\`es la condition \eqref{thetacond13}), et on obtient donc \[  |S_{a,q,\ee}|\ll \max\left\{e_{0}^{4+\delta}, e_{0}^{2}\left(\prod_{i=1}^{n+r}e_{i}\right)^{\frac{1}{2^{r}}}\right\}q^{n+r-2-\delta}. \]

\end{proof}
Nous pouvons \`a pr\'esent d\'emontrer le lemme suivant : 

\begin{lemma}\label{S3}
La s\'erie $ \mathfrak{S}_{\ee} $ est absolument convergente et on a de plus \[ |\mathfrak{S}_{\ee}(Q)-\mathfrak{S}_{\ee}|\ll \max\left\{e_{0}^{4+\delta}, e_{0}^{2}\left(\prod_{i=1}^{n+r}e_{i}\right)^{\frac{1}{2^{r}}}\right\}Q^{-\delta} \] pour tout $ Q>e_{0} $.
\end{lemma}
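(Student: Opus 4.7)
Le plan est d'utiliser directement la majoration de $|S_{a,q,\ee}|$ \'etablie dans le lemme pr\'ec\'edent, puis d'effectuer une sommation en $q$ et en $a$ pour en d\'eduire \`a la fois la convergence absolue et la vitesse de convergence.

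D'abord, je rappellerais que pour tout $q\geqslant 1$, le nombre d'entiers $a$ tels que $0\leqslant a<q$ et $\PGCD(a,q)=1$ est major\'e par $q$ (il vaut $\varphi(q)$, mais la majoration triviale suffit ici). Par cons\'equent, en appliquant le lemme pr\'ec\'edent \`a chaque terme de la somme d\'efinissant $\mathfrak{S}_{\ee}$, on obtient :
\[ q^{-(n+r)}\sum_{\substack{0\leqslant a<q\\ \PGCD(a,q)=1}}|S_{a,q,\ee}|\ll \max\left\{e_{0}^{4+\delta}, e_{0}^{2}\Bigl(\prod_{i=1}^{n+r}e_{i}\Bigr)^{\frac{1}{2^{r}}}\right\}q^{-1-\delta}. \]

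La convergence absolue de $\mathfrak{S}_{\ee}$ d\'ecoule alors de la convergence de la s\'erie de Riemann $\sum_{q\geqslant 1}q^{-1-\delta}$ pour $\delta>0$. Pour la majoration du reste, je param\'etriserais $\mathfrak{S}_{\ee}-\mathfrak{S}_{\ee}(Q)$ comme somme sur $q>Q$ et j'appliquerais la comparaison s\'erie-int\'egrale :
\[ \sum_{q>Q}q^{-1-\delta}\ll \int_{Q}^{+\infty}t^{-1-\delta}dt=\frac{1}{\delta}Q^{-\delta}\ll Q^{-\delta}. \]
En combinant avec la majoration du terme g\'en\'eral, on obtient imm\'ediatement l'in\'egalit\'e annonc\'ee.

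Il n'y a donc pas d'obstacle majeur dans cette d\'emonstration : l'essentiel du travail a d\'ej\`a \'et\'e fait dans le lemme pr\'ec\'edent, qui fournit une majoration individuelle de $|S_{a,q,\ee}|$ avec exposant $q^{n+r-2-\delta}$, c'est-\`a-dire une puissance strictement inf\'erieure \`a $q^{n+r-1}$ (majoration triviale apr\`es sommation sur $a$). Cette marge de $q^{-1-\delta}$ est exactement ce qui garantit la convergence et la vitesse de convergence. L'hypoth\`ese $Q>e_{0}$ assure par ailleurs que le facteur d\'ependant de $\ee$ devant $Q^{-\delta}$ ne d\'eg\'en\`ere pas (aucun probl\`eme de borne non uniforme n'est \`a craindre).
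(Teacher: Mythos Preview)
Your proof is correct and follows essentially the same approach as the paper: both invoke the bound $|S_{a,q,\ee}|\ll \max\{e_{0}^{4+\delta}, e_{0}^{2}(\prod_{i}e_{i})^{1/2^{r}}\}q^{n+r-2-\delta}$ from the preceding lemma, bound the inner sum over $a$ by $q$, and sum the resulting $q^{-1-\delta}$ over $q>Q$. The paper's proof is slightly terser but the ideas are identical.
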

\begin{proof}
Nous avons montr\'e pr\'ec\'edemment que \[ |S_{a,q,\ee}|\ll \max\left\{e_{0}^{4+\delta}, e_{0}^{2}\left(\prod_{i=1}^{n+r}e_{i}\right)^{\frac{1}{2^{r}}}\right\}q^{n+r-2-\delta}. \] Par cons\'equent, \begin{align*} |\mathfrak{S}_{\ee}(Q)-\mathfrak{S}_{\ee}| & \ll \sum_{q>Q}q^{-(n+r)}\sum_{\substack{0\leqslant a<q\\ \PGCD(a,q)=1}}|S_{a,q,\ee}| \\ & \ll \max\left\{e_{0}^{4+\delta}, e_{0}^{2}\left(\prod_{i=1}^{n+r}e_{i}\right)^{\frac{1}{2^{r}}}\right\}\sum_{q>Q}\sum_{\substack{0\leqslant a<q\\ \PGCD(a,q)=1}}q^{-2-\delta} \\ & \ll \max\left\{e_{0}^{4+\delta}, e_{0}^{2}\left(\prod_{i=1}^{n+r}e_{i}\right)^{\frac{1}{2^{r}}}\right\}Q^{-\delta}. \end{align*}

\end{proof}

\begin{lemma}\label{J3}
L'int\'egrale $ J_{\sigma} $ est absolument convergente et on a de plus \[ |J_{\sigma}(\phi)-J_{\sigma}|\ll \phi^{-1} \] pour tout $ \phi $ assez grand.
\end{lemma}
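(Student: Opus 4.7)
The goal is to deduce both the absolute convergence of $J_\sigma$ and the tail estimate $|J_\sigma(\phi)-J_\sigma|\ll \phi^{-1}$ from a single pointwise decay bound of the form $|I(\beta)|\ll |\beta|^{-A}$ with some $A\geqslant 2$ (in fact $A$ can be made arbitrarily large under the hypothesis on $n(F)$, which controls the admissible $K$). Once this bound is established, splitting $J_\sigma - J_\sigma(\phi) = \int_{|\beta|>\phi} I(\beta)\,d\beta$ and integrating yields $|J_\sigma-J_\sigma(\phi)|\ll \int_{|\beta|>\phi}|\beta|^{-A}d\beta \ll \phi^{-(A-1)}$, giving the announced $\phi^{-1}$; the case $|\beta|\leqslant 1$ is handled by the trivial bound $|I(\beta)|\ll 1$ (the integration domain is bounded since the constraints $|\uu^{E(n+j)}|\leqslant 1$ and $E(i)=\sum_j a_{i,j}E(n+j)$ with $a_{i,j}\geqslant 0$ force every $u_i$ to lie in a bounded set).

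To obtain the pointwise bound I would transfer the Weyl-type estimate of Proposition \ref{dilemme43} from the exponential sum $S_{\ee}$ to the integral $I$ via Lemma \ref{separation3}. Specialising to $\ee=\1$ and $(a,q)=(0,1)$, the lemma reads, for $\alpha$ in the principal major arc $|\alpha|<\tfrac{1}{2}P^{-1+\tilde d\theta}$,
\[ S_{\1}(\alpha)=I(P\alpha)\prod_{j=1}^{r}P_{j}^{n_{j}}+O\Bigl(\prod_{j=1}^{r}P_{j}^{n_{j}}\cdot P_{r}^{-1}P^{\tilde d\theta}\Bigr). \]
Given $\beta$ with $|\beta|$ large, I choose $P$ so that $|\beta|\asymp P^{\tilde d\theta}$ (hence $\alpha=\beta/P$ lies in the principal arc $\mathfrak M'_{0,1}(\theta)$) and then show by a direct diophantine argument that $\alpha$ nevertheless avoids $\mathfrak M(\theta')$ for a slightly smaller $\theta'<\theta$: indeed, any nontrivial rational approximation $a/q$ to $\alpha$ at level $\theta'$ would collide with the denominator $1$ thanks to a coprimality computation identical to the one in Lemma \ref{disjoints3}. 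Applying Proposition \ref{dilemme43} at level $\theta'$ gives $|S_{\1}(\alpha)|\ll \prod_{j}P_{j}^{n_{j}}\cdot P^{-K\theta'+\varepsilon}$, and substituting into the separation identity yields
\[ |I(\beta)|\ll P^{-K\theta'+\varepsilon}+P_{r}^{-1}P^{\tilde d\theta}. \]
With the scaling $P_j=P_r^{b_j}$ of Section \ref{sectionweyl3} and condition \eqref{thetacond33}, the second term is $\ll P^{-\delta}\ll |\beta|^{-\delta/(\tilde d\theta)}$; the first, rewritten in terms of $|\beta|$, is $\ll |\beta|^{-K\theta'/(\tilde d\theta)+\varepsilon}$. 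Under the hypothesis \eqref{thetacond13} (which ensures $K$ is large relative to $\tilde d$) both exponents exceed any prescribed $A\geqslant 2$, so $|I(\beta)|\ll |\beta|^{-A}$ as required.

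The principal obstacle lies in the bookkeeping of the transfer step: one must choose $P,\theta,\theta'$ coherently so that (i) $\alpha=\beta/P$ genuinely falls in $\mathfrak M'_{0,1}(\theta)\setminus \mathfrak M(\theta')$, (ii) the error term $P_r^{-1}P^{\tilde d\theta}$ remains strictly smaller than the extracted main term $|I(\beta)|$, and (iii) the resulting decay exponent in $|\beta|$ is at least $2$. All three constraints are compatible with \eqref{thetacond13}--\eqref{thetacond33} provided $n(F)$ is large enough, which is the standing assumption. As an alternative that bypasses this transfer entirely, one could redo the differencing scheme of Section \ref{sectionweyl3} and the lattice-point estimates of Section \ref{sectionreseau3} in their continuous (volume-theoretic) formulation, working directly on $I(\beta)$; the argument is formally identical but longer, whereas the route above reuses Proposition \ref{dilemme43} as a black box.
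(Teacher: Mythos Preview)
Your approach is essentially the paper's own: specialise to $\ee=\1$, $(a,q)=(0,1)$, use the separation Lemma~\ref{separation3} to relate $S_{\1}(P^{-1}\beta)$ to $I(\beta)$, and feed in the Weyl bound of Proposition~\ref{dilemme43} to get $|I(\beta)|\ll|\beta|^{-2}$, from which the tail integral is $\ll\phi^{-1}$. The paper streamlines your two-level $\theta'<\theta$ scheme by choosing a single $\theta'$ with $2|\beta|=P^{\tilde d\theta'}$ exactly, so that $\alpha=P^{-1}\beta$ sits on the boundary of $\mathfrak M_{0,1}(\theta')$ and the disjointness of Lemma~\ref{disjoints3} (with $e_0=1$) immediately excludes it from every arc; also note that Proposition~\ref{dilemme43} yields a bound with $\prod_j P_j^{n_j+1}$ rather than $\prod_j P_j^{n_j}$, so after dividing through you get an extra factor $P^{\sum_j b_j/\sum_j b_j d_j}$ which is precisely what condition~\eqref{thetacond13} is designed to absorb---the paper only extracts the exponent $2$, not an arbitrary $A$, but that is all the lemma needs.
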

\begin{proof}
On consid\`ere $ \beta\in \RR $ fix\'e quelconque et on choisit $ \theta'\in [0,1] $ v\'erifiant les conditions \eqref{thetacond13}, \eqref{thetacond23}, \eqref{thetacond33}. On prend $ P $ tel que $ 2|\beta|=P^{\tilde{d}\theta'} $. On choisit par ailleurs $ e_{1}=e_{2}=...=e_{n+r}=1 $ (et donc $ e_{0}=1 $). On a alors que $ P^{-1}\beta\in \mathfrak{M}_{0,1}(\theta') $ et donc d'apr\`es le lemme\;\ref{separation3} appliqu\'e \`a $ a=0 $ et $ q=1 $ : \[ S_{\ee}(P^{-1}\beta)=\left(\prod_{j=1}^{r}P_{j}^{n_{j}}\right)I(\beta)+O\left(\left(\prod_{j=1}^{r}P_{j}^{n_{j}}\right)P_{r}^{-1}P^{2\tilde{d}\theta'}\right).
\]

D'autre part, puisque les $ \mathfrak{M}_{a,q}(\theta') $ sont disjoints, d'apr\`es la proposition\;\ref{dilemme43}, on a : \[ |S_{\ee}(P^{-1}\beta)| \ll \left(\prod_{j=1}^{r}P_{j}^{n_{j}}\right)P^{\frac{\sum_{j=1}^{r}b_{j}}{\sum_{j=1}^{r}b_{j}d_{j}}-K\theta'+\varepsilon}. \] On trouve donc (en utilisant les conditions \eqref{thetacond13} et \eqref{thetacond33}) : \begin{align*}
I(\beta) & \ll P^{2\tilde{d}\theta'}P_{r}^{-1}+P^{\frac{\sum_{j=1}^{r}b_{j}}{\sum_{j=1}^{r}b_{j}d_{j}}-K\theta'+\varepsilon} \\ & \ll P^{2\tilde{d}\theta'-\frac{1}{\sum_{j=1}^{r}b_{j}d_{j}}} +P^{\frac{\sum_{j=1}^{r}b_{j}}{\sum_{j=1}^{r}b_{j}d_{j}}-K\theta'+\varepsilon} \\ & \ll P^{-3\tilde{d}\theta'-\delta} +P^{-2\tilde{d}\theta'} \ll |\beta|^{-2}.
\end{align*}
Par cons\'equent, \[ |J_{\sigma}(\phi)-J_{\sigma}|\ll \int_{|\beta|>\phi}|I(\beta)|d\beta \ll \phi^{-1}. \] 
\end{proof}
Ces deux derniers lemmes permettent finalement d'\'etablir la formule ci-dessous valable lorsque $ e_{0}^{2}<P^{1-3\tilde{d}\theta} $ et $ K>\max\{(5\sum_{j=1}^{r}b_{j}+2)\tilde{d},(2\delta+1)\sum_{j=1}^{r}b_{j}d_{j}\} $ : 
\begin{multline}\label{formulefin3} N_{\ee}(P_{1},...,P_{r})=C_{\sigma,\ee}\left(\prod_{j=1}^{r}P_{j}^{n_{j}-d_{j}}\right)
\\ +O\left(\max\left\{e_{0}^{4+\delta}, e_{0}^{2}\left(\prod_{i=1}^{n+r}e_{i}\right)^{\frac{1}{2^{r}}}\right\}\left(\prod_{i=1}^{n+r}e_{i}\right)^{-1}\left(\prod_{j=1}^{r}P_{j}^{n_{j}-d_{j}-\delta}\right)\right). \end{multline}
o\`u l'on a not\'e \begin{equation} C_{\sigma,\ee}=\left(\prod_{i=1}^{n+r}e_{i}\right)^{-1}\mathfrak{S}_{\ee}J_{\sigma}\end{equation}
\begin{rem}\label{remsigmae3}
D'apr\`es le lemme\;\ref{S3}, on a \[ |\mathfrak{S}_{\ee}(e_{0})-\mathfrak{S}_{\ee}|\ll \max\left\{e_{0}^{4+\delta}, e_{0}^{2}\left(\prod_{i=1}^{n+r}e_{i}\right)^{\frac{1}{2^{r}}}\right\}  \]  donc en utilisant la majoration triviale $  \mathfrak{S}_{\ee}(e_{0})\ll e_{0}^{2} $, on en d\'eduit $ \mathfrak{S}_{\ee}\ll \max\left\{e_{0}^{4+\delta}, e_{0}^{2}\left(\prod_{i=1}^{n+r}e_{i}\right)^{\frac{1}{2^{r}}}\right\} $. 
On a donc \[ |C_{\sigma,\ee}|\ll \max\left\{e_{0}^{4+\delta}, e_{0}^{2}\left(\prod_{i=1}^{n+r}e_{i}\right)^{\frac{1}{2^{r}}}\right\}\left(\prod_{i=1}^{n+r}e_{i}\right)^{-1}. \]
\end{rem}

Remarquons \`a pr\'esent que, pour un choix appropri\'e de $ \theta $, on peut retirer la condition $ e_{0}^{2}<P^{1-3\tilde{d}\theta} $. En effet, supposons dans un premier temps que $ e_{0}>P^{\frac{1}{4}} $. On a l'estimation triviale : \begin{align*}
 N_{\ee}(P_{1},...,P_{r}) & \leqslant \prod_{i=1}^{n+r}\frac{1}{e_{i}}\prod_{j=1}^{r}P_{j}^{a_{i,j}} \\ & =\left(\prod_{i=1}^{n+r}e_{i}\right)^{-1}\left(\prod_{j=1}^{r}P_{j}^{n_{j}}\right)\\ & \ll e_{0}^{4+\delta}\left(\prod_{i=1}^{n+r}e_{i}\right)^{-1}\left(\prod_{j=1}^{r}P_{j}^{n_{j}-d_{j}-\delta}\right),
\end{align*} 
et par ailleurs, par la remarque ci-dessus \[ C_{\sigma,\ee}\left(\prod_{j=1}^{r}P_{j}^{n_{j}-d_{j}}\right)\ll\max\left\{e_{0}^{4+\delta}, e_{0}^{2}\left(\prod_{i=1}^{n+r}e_{i}\right)^{\frac{1}{2^{r}}}\right\} \left(\prod_{i=1}^{n+r}e_{i}\right)^{-1}\left(\prod_{j=1}^{r}P_{j}^{n_{j}-d_{j}-\delta}\right), \] (quitte \`a prendre $ \delta>0 $ plus petit). La formule\;\eqref{formulefin3} est donc trivialement v\'erifi\'ee lorsque $ e_{0}>P^{\frac{1}{4}} $. Si l'on suppose que $ e_{0}\leqslant P^{\frac{1}{4}} $ la condition $ e_{0}^{2}<P^{1-3\tilde{d}\theta} $ est v\'erifi\'ee lorsque $ 1<P^{\frac{1}{2}-3\tilde{d}\theta} $ et donc lorsque $ \theta<\frac{1}{6\tilde{d}} $, ce que l'on peut supposer. En observant que, puisque $ r\geqslant 2 $, \[ \max\left\{e_{0}^{4+\delta}, e_{0}^{2}\left(\prod_{i=1}^{n+r}e_{i}\right)^{\frac{1}{2^{r}}}\right\}\ll e_{0}^{4+\delta}\left(\prod_{i=1}^{n+r}e_{i}\right)^{-1}+\left(\prod_{i=1}^{n+r}e_{i}\right)^{-\frac{1}{2}}, \] on obtient finalement le th\'eor\`eme suivant :

\begin{thm}\label{propfin3}
Si l'on suppose $ P_{1}\geqslant P_{2}\geqslant ...\geqslant P_{r} $, si $ P_{j}=P_{r}^{b_{j}} $ pour tout $ j\in \{1,...,r\} $ et si de plus $ K>\max\{(5\sum_{j=1}^{r}b_{j}+2)\tilde{d}, (2\delta+1)\sum_{j=1}^{r}b_{j}d_{j}\} $, alors \begin{multline*} N_{\ee}(P_{1},...,P_{r})=C_{\sigma,\ee}\left(\prod_{j=1}^{r}P_{j}^{n_{j}-d_{j}}\right)
\\ +O\left(\left(e_{0}^{4+\delta}\left(\prod_{i=1}^{n+r}e_{i}\right)^{-1}+\left(\prod_{i=1}^{n+r}e_{i}\right)^{-\frac{1}{2}}\right)\left(\prod_{j=1}^{r}P_{j}^{n_{j}-d_{j}-\delta}\right)\right). \end{multline*}

\end{thm}

\section{Deuxi\`eme \'etape}

Dans cette partie, nous supposerons encore que $ P_{1}\geqslant P_{2}\geqslant ...\geqslant P_{r} $. L'objectif de cette section est de donner, pour $ m\in \{1,...,r-1\} $ et $ \kk=(k_{m+1},...,k_{r}) $ fix\'es une formule asymptotique pour   \begin{multline*} N_{m,\kk,\ee}(P_{1},...,P_{m})=\sum_{\forall j\in\{1,...,m\}, \; k_{j}\leqslant P_{j}}\overline{h}_{\ee}(k_{1},...,k_{r}) \\ = \Card\left\{ 
 \xx\in (\ZZ\setminus\{0\})^{n+r} \; | \; F(\ee.\xx)=0,  \; \forall j\in \{m+1,...,r\}, \;  \left\lfloor|(\ee.\xx)^{E(n+j)}|\right\rfloor= k_{j}\;  \right. \\ \left. \forall j\in \{1,...,m\}, \;  \left\lfloor|(\ee.\xx)^{E(n+j)}|\right\rfloor\leqslant P_{j}, \; \forall i\in \{1,...,n+r\},  \; \right. \\ \left. |x_{i}|\leqslant \frac{1}{e_{i}}\left(\prod_{j=m+1}^{r}(k_{j}+1)^{a_{i,j}}\right)\prod_{j=1}^{m}(\left\lfloor|(\ee.\xx)^{E(n+j)}|\right\rfloor+1)^{a_{i,j}} \right\}. \end{multline*} 
Nous allons dans un premier temps fixer tous les $ x_{i} $ tels que $ a_{i,j}=0 $ pour tout $ j\in \{1,...,m\} $. Notons \[ I_{m}=\{i\in \{1,...,n+r\} \; |\; \forall j\in \{1,...,m\}, \; a_{i,j}=0 \}. \]
Fixons un \'el\'ement $ (x_{i})_{i\in I_{m}} $ tel que si $ \xx^{E(n+j)} $ est un mon\^ome en $ (x_{i})_{i\in I_{m}} $ alors $ \left\lfloor|(\ee.\xx)^{E(n+j)}|\right\rfloor= k_{j} $ et pour tout $ i\in I_{m} $, $ |x_{i}|\leqslant \frac{1}{e_{i}}\left(\prod_{j=m+1}^{r}(k_{j}+1)^{a_{i,j}}\right) $. On notera $ s=\Card I_{m} $ et : \begin{multline}
N_{(x_{i})_{i\in I_{m}},\ee}(P_{1},...,P_{m})=\Card\left\{ 
 (x_{i})_{i\notin I_{m}}\in (\ZZ\setminus\{0\})^{n+r-s} \; | \; F(\ee.\xx)=0,  \; \forall j\in \{m+1,...,r\}, \;   \right.\\ \left.\left\lfloor|(\ee.\xx)^{E(n+j)}|\right\rfloor= k_{j},\; \forall j\in \{1,...,m\}, \;  \left\lfloor|(\ee.\xx)^{E(n+j)}|\right\rfloor\leqslant P_{j}, \; \right. \\ \left. \forall i\notin I_{m},  \; |x_{i}|\leqslant \frac{1}{e_{i}}\left(\prod_{j=m+1}^{r}(k_{j}+1)^{a_{i,j}}\right)\prod_{j=1}^{m}(\left\lfloor|(\ee.\xx)^{E(n+j)}|\right\rfloor+1)^{a_{i,j}} \right\}.
\end{multline}
On \'ecrit alors : \[ N_{(x_{i})_{i\in I_{m}},\ee}(P_{1},...,P_{m})=\int_{0}^{1}S_{(x_{i})_{i\in I_{m}},\ee}(\alpha)d\alpha \] o\`u \begin{equation}
S_{(x_{i})_{i\in I_{m}},\ee}(\alpha)=\sum_{\substack{(x_{i})_{i\notin I_{m}}\in (\ZZ\setminus \{0\})^{n+r-s} \\ \forall j\in \{m+1,...,r\}, \left\lfloor|(\ee.\xx)^{E(n+j)}|\right\rfloor= k_{j} \\ \forall j \in \{1,...m\},\; \left\lfloor|(\ee.\xx)^{E(n+j)}|\right\rfloor\leqslant P_{j} \\ |x_{i}|\leqslant \frac{1}{e_{i}}\left(\prod_{j=m+1}^{r}(k_{j}+1)^{a_{i,j}}\right)\prod_{j=1}^{m}(\left\lfloor|(\ee.\xx)^{E(n+j)}|\right\rfloor+1)^{a_{i,j}} }}e\left(\alpha F(\ee.\xx)\right).
\end{equation} 

Dans toute cette section le symbole $ \ll $ d\'esignera une majoration \`a une constante multiplicative ind\'ependante de $ (x_{i})_{i\in I_{m}} $ pr\`es.

\subsection{In\'egalit\'e de Weyl}

 Fixons un ensemble de degr\'es $ (t_{j,k}^{(m)})_{\substack{j\in \{1,...,m\} \\ k\in \{1,...,d_{j}\} }} $ et on note \[ I_{0,m}=\{i\notin I_{m}\; |\; \forall j\in \{1,...,m\}, \;t^{(m)}_{j,a_{i,j}}\neq 0\}, \]\[ \CCC_{0,m}=\{(j,k)\; |\; j\in \{1,...,m\}, \; \exists i \in I_{0,m}, \; a_{i,j}=k\}, \]\[ \tt^{(m)}=(t^{(m)}_{j,k})_{(j,k)\in \CCC_{0,m}}, \] \[ \forall (j,k)\in \CCC_{0,m}, \; J_{m}(j,k)=\{i \in I_{0,m} \; |\; a_{i,j}=k \}. \] On notera d'autre part, comme dans la section pr\'ec\'edente :
 \begin{itemize}
\item $ \tilde{L}_{m}=\{(i,\l)\; |\; i\in I_{0,m}, \; \l=(l_{1},...,l_{m})\in\prod_{j=1}^{m}\{1,...,t_{j,a_{i,j}}+1\}\}  $,
\item $ L_{m}=\{(i,\l)\; |\; i\in I_{0,m}, \; \l=(l_{1},...,l_{m})\in\prod_{j=1}^{m}\{1,...,t_{j,a_{i,j}}\}\}  $,
\item $ \forall (j,k)\in \CCC_{0,m} $, $ L_{m}(j,k)=\{(i,\l)\in L \; |\; i\in J_{m}(j,k)\}  $,
\item  $ \forall (j,k)\in \CCC_{0,m} $, $ \widehat{L}_{m}(j,k)=\{(i,\l)\in L_{m}(j,k)\; |\; l_{j}\neq t_{j,k} \} $,
\item  $ \forall (j,k)\in \CCC_{0,m} $, $ E_{m}(j,k)=L_{m}(j,k)\setminus \widehat{L}_{m}(j,k)=\{(i,\l)\in L_{m}(j,k)\; |\; l_{j}=t_{j,k} \} $,
\end{itemize}
Comme dans la section pr\'ec\'edente, nous remarquons que l'on peut \'ecrire le polyn\^ome $ F $ sous la forme 

 \[ F(\xx)=\sum_{\substack{\dd=(d_{j,k})_{ (j,k)\in \CCC_{0,m}} \\ \forall j\in \{1,...,m\}, \; \sum_{k\geqslant 1}kd_{j,k}=d_{j}}}F_{\dd}(\xx), \] o\`u $ F_{\dd}(\xx) $ est un polyn\^ome homog\`ene de degr\'e $ d_{j,k} $ en les variables $ (x_{i})_{i\in I_{0,m}} $ telles que $ a_{i,j}=k $ pour tout $ j\in \{1,...,m\} $. \\

En effectuant les m\^emes op\'erations que pour les sections\;\ref{sectionweyl3} et\;\ref{sectionreseau3} (en ne consid\'erant cette fois-ci que les variables $ (x_{i})_{i\in I_{0,m}} $) en posant $ D^{(m)}_{j}=\sum_{k\geqslant 1}t^{(m)}_{j,k} $, $ \Delta^{\tt^{(m)}}=\Delta^{\tt_{1}^{(m)}}\circ \Delta^{\tt_{2}^{(m)}}\circ...\circ \Delta^{\tt_{m}^{(m)}} $, et pour tout $ (j,k)\in \CCC_{0,m}  $ \begin{multline*} \Delta^{\tt^{(m)}}F\left((x_{i})_{i\in I_{m}}(x_{i}^{\l})_{\substack{(i,\l)\in L_{m}}}\right)\\ =\sum_{(i,\l)\in E_{m}(j,k)}\gamma_{(i,\l,m)}^{(j,k)}\left((x_{i})_{i\in I_{m}}(x_{i}^{\hh})_{\substack{(i,\hh)\in L_{m}\setminus E_{m}(j,k)}}\right)e_{i}x_{i}^{\l}  \end{multline*}
et en notant alors pour tous $ (A_{i,\hh})_{(i,\hh)\in L_{m}\setminus E_{m}(i_{0},K_{j_{0}})} $, $(B_{i,\l})_{E_{m}(i_{0},K_{j_{0}})} $ :

\begin{multline*} M^{(m)}_{(j_{0},K_{j_{0}})}\left( \alpha, (A_{i,\hh})_{(i,\hh)\in L_{m}\setminus E_{m}(i_{0},K_{j_{0}})}, (B_{i,\l})_{E_{m}(i_{0},K_{j_{0}})}\right) \\ =\Card \left\{ (x_{i}^{\hh})_{(m,\hh)\in L_{m}\setminus E_{m}(i_{0},K_{j_{0}})}\; |\; \forall (m,\hh), \; |x_{i}^{\hh}|\leqslant A_{m,\hh} \; \et \;\right.\\ \left. \forall (i,\l)\in E_{m}(i_{0},K_{j_{0}}), \; \left|\left|\alpha e_{i}\gamma_{(i,\l,m)}^{(j_{0},K_{j_{0}})}\left(((x_{i})_{i\in I_{m}}(x_{i}^{\hh})_{\substack{(i,\hh)\in L_{m}\setminus E_{m}(j_{0},K_{j_{0}})}}\right)\right|\right|<B_{i,\l}\right\}, \end{multline*}
on obtient le r\'esultat suivant qui est un analogue du lemme\;\ref{dilemme33} :
\begin{lemma}
Pour tout $ \varepsilon>0 $ arbitrairement petit, $ K_{m}>0 $ et $ P\geqslant 1 $, l'une au moins des assertions suivantes est vraie : \begin{enumerate}
\item  \begin{multline*}|S_{(x_{i})_{i\in I_{m}},\ee}(\alpha)|\ll \left(\prod_{i\in I_{0,m}}e_{i}^{1+\varepsilon-\frac{\prod_{j=1}^{m}t^{(m)}_{j,a_{i,j}}}{2^{\sum_{j=1}^{m}D^{(m)}_{j}}}}\right)^{-1} \\ \left(\prod_{j=m+1}^{r}k_{j}^{\sum_{i\notin I_{m}}a_{i,j}+\varepsilon}\right)\left(\prod_{j=1}^{m}P_{j}^{n_{j}+1+\varepsilon}\right)P^{-K_{m}\theta},\end{multline*}
\item   Il existe un certain  $ (j_{0},K_{j_{0}})\in \CCC_{0,m} $ tel que \begin{multline*}
M^{(m)}_{(j_{0},K_{j_{0}})}\left( \alpha, \left(\frac{1}{e_{i}}\left(\prod_{j=m+1}^{r}k_{j}^{a_{i,j}}\right)P^{\sum_{j=1}^{m}a_{i,j}\theta}\right)_{\substack{(i,\l)\in \widehat{L}_{m}(j_{0},K_{j_{0}}) }}, \right. \\ \left. (e_{i}P^{\sum_{j=1}^{m}(d_{j}-a_{i,j})\theta}\prod_{j=1}^{m}P_{j}^{-d_{j}})_{\substack{(i,\l)\in E_{m}(j_{0},K_{j_{0}})}}\right) \\ \gg \left(\prod_{i\in I_{0,m}}\left(\prod_{j=m+1}^{r}k_{j}^{a_{i,j}}\right)^{\prod_{j=1}^{m}t_{j,a_{i,j}}}(P^{(\sum_{j=1}^{m}a_{i,j})\theta})^{\prod_{j=1}^{m}t_{j,a_{i,j}}}\right) \\ \left(\prod_{i\in J_{m}(j_{0},K_{j_{0}})}\left(\prod_{j=m+1}^{r}k_{j}^{a_{i,j}}\right)^{\prod_{j\neq j_{0}}t_{j,a_{i,j}}}(P^{\sum_{j=1}^{m}a_{i,j}\theta})^{\prod_{j\neq j_{0}}t_{j,a_{i,j}}}\right)^{-1}P^{-2^{\sum_{j=1}^{m}D^{(m)}_{j}}K_{m}\theta}.
\end{multline*}
\end{enumerate}
\end{lemma}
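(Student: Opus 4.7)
La d\'emonstration est enti\`erement parall\`ele \`a celle ayant conduit au lemme~\ref{dilemme33}, mais appliqu\'ee aux seules variables $(x_{i})_{i\notin I_{m}}$ (les $(x_{i})_{i\in I_{m}}$ \'etant trait\'es comme des param\`etres) et aux seules directions $j\in\{1,\ldots,m\}$. Le r\^ole jou\'e par le poids $T_{i}=\frac{1}{e_{i}}\prod_{j=1}^{r}P_{j}^{a_{i,j}}$ de la section~\ref{sectionweyl3} est d\'esormais tenu par
\[ T_{i}^{(m)}=\frac{1}{e_{i}}\Bigl(\prod_{j=m+1}^{r}k_{j}^{a_{i,j}}\Bigr)\prod_{j=1}^{m}P_{j}^{a_{i,j}}, \]
qui est la borne effective sur $|x_{i}|$ pour $i\notin I_{m}$. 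On note que $\prod_{i\notin I_{m}}T_{i}^{(m)}\asymp(\prod_{i\notin I_{m}}e_{i})^{-1}\prod_{j=m+1}^{r}k_{j}^{\sum_{i\notin I_{m}}a_{i,j}}\prod_{j=1}^{m}P_{j}^{n_{j}}$, ce qui permet d\'ej\`a de reconna\^itre la forme du membre de droite dans l'assertion~(1).

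La premi\`ere \'etape consiste \`a appliquer successivement les op\'erateurs $\Delta^{\tt^{(m)}_{j}}$ pour $j\in\{1,\ldots,m\}$, comme dans l'in\'egalit\'e~\eqref{weyl13}, en utilisant que $F$, vu comme polyn\^ome en $(x_{i})_{i\notin I_{m}}$, reste quasi-$m$-homog\`ene de $m$-degr\'e $(d_{1},\ldots,d_{m})$ relativement \`a l'action des $m$ premi\`eres variables toriques. L'analogue de la proposition~\ref{propdeltat3}, dont la d\'emonstration ne d\'epend que de la structure multi-homog\`ene, assure que $\Delta^{\tt^{(m)}}F(\xx_{0},\widetilde{\XX})$ se r\'eduit \`a un polyn\^ome $\Gamma^{(m)}(\XX)$ multilin\'eaire en chaque bloc $(x_{i}^{\l})_{(i,\l)\in L_{m}(j,k,h)}$ pour $(j,k)\in\CCC_{0,m}$ et $h\in\{1,\ldots,t^{(m)}_{j,k}\}$. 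On isole alors le bloc $E_{m}(1,K_{m})$ pour $K_{m}=\max\{k\mid t^{(m)}_{1,k}\neq 0\}$ et on applique la majoration triviale $\sum_{m\in I(P)}e(\beta m)\ll\min(P,\|\beta\|^{-1})$, ce qui conduit (par le m\^eme argument que celui des \'equations~\eqref{sommeweyl3}--\eqref{weyl23}) \`a un contr\^ole de $|S_{(x_{i})_{i\in I_{m}},\ee}(\alpha)|^{2^{D_{1}^{(m)}+\cdots+D_{m}^{(m)}}}$ en termes de $M^{(m)}_{(1,K_{m})}$.

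La deuxi\`eme \'etape applique les trois r\'ecurrences de g\'eom\'etrie des r\'eseaux des lemmes~\ref{recurrence13}, \ref{recurrence2cor3} et~\ref{recurrence33}, dont les d\'emonstrations reposent uniquement sur les lemmes~\ref{geomnomb12} et~\ref{geomnomb22}. Ces arguments s'appliquent sans modification aux formes lin\'eaires $\gamma^{(j,k)}_{(i,\l,m)}$ extraites de $\Gamma^{(m)}$, qui h\'eritent de la propri\'et\'e de sym\'etrie $\gamma^{(j,k),t}_{(m,\hh)}=\gamma^{(l+1,k')}_{(m,\hh)}$ requise. On obtient ainsi une majoration analogue \`a celle du lemme~\ref{dilemme13}, que l'on sp\'ecialise avec le choix $P=\prod_{j=1}^{m}P_{j}^{d_{j}}$ pour \'equilibrer tous les termes. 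La dichotomie s'obtient enfin par s\'eparation des cas : soit l'une des quantit\'es $M^{(m)}_{(j_{0},K_{j_{0}})}$ atteint le seuil inf\'erieur du membre de droite de l'assertion~(2), soit toutes le violent et l'assertion~(1) suit par substitution directe.

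Le principal obstacle sera de v\'erifier rigoureusement le suivi combinatoire des exposants \`a travers les trois r\'ecurrences. Les facteurs $\prod_{j=m+1}^{r}k_{j}^{a_{i,j}}$ sont constants vis-\`a-vis de la diff\'erenciation, mais interviennent dans les bornes $T_{i}^{(m)}$, donc dans les rapports $Z_{2}/Z_{1}$ et les quotients $\prod b_{m,\hh}Z_{2}/\prod c_{i,\l}Z_{1}$ des arguments duaux~\eqref{geomnomb13} et~\eqref{geomnomb23}. Il faudra s'assurer qu'ils se simplifient exactement entre num\'erateur et d\'enominateur lors de chaque application du lemme~\ref{geomnomb12}, de sorte que seuls les facteurs $P_{j}^{a_{i,j}}/P^{a_{i,j}\theta}$ (pour $j\leqslant m$) contribuent aux exposants finaux $\prod_{j=1}^{m}t^{(m)}_{j,a_{i,j}}$ et $\prod_{j\neq j_{0}}t^{(m)}_{j,a_{i,j}}$ du membre de droite. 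Une fois ce point v\'erifi\'e, la conclusion est enti\`erement automatique.
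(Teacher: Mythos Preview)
Your proposal is correct and follows exactly the approach of the paper, which itself simply states that the result is obtained ``en effectuant les m\^emes op\'erations que pour les sections~\ref{sectionweyl3} et~\ref{sectionreseau3} (en ne consid\'erant cette fois-ci que les variables $(x_{i})_{i\in I_{0,m}}$)''. Your sketch is in fact more detailed than the paper's own justification, and your identification of the bookkeeping issue with the $\prod_{j=m+1}^{r}k_{j}^{a_{i,j}}$ factors through the lattice dualities is precisely the only point requiring care in the adaptation.
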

Remarquons que le cardinal consid\'er\'e dans le cas $ 2 $ peut \^etre major\'e trivialement par \begin{multline*} \Card\{ (x_{i}^{\hh})_{\substack {(i,\hh)\in  \widehat{L}_{m}(j_{0},K_{j_{0}}) }} \; | \; |x_{i}^{\hh}|\leqslant \left(\prod_{j=m+1}^{r}k_{j}^{a_{i,j}}\right)P^{\sum_{j=1}^{r}a_{m,j}\theta}, \; \\ \forall (i,\l)\in E_{m}(j_{0},K_{j_{0}}), \; ||\alpha e_{i}\gamma_{i,\l,m}^{(j_{0},K_{j_{0}})}(x_{i}^{\hh})||<e_{i}P^{\sum_{j=1}^{m}(d_{j}-a_{i,j})\theta}\prod_{j=1}^{m}P_{j}^{-d_{j}}  \},
\end{multline*}
que, d'apr\`es le lemme\;\ref{lemmedebilecas02}, nous pouvons majorer par : \begin{multline*}\prod_{\substack{(i,\hh)\in  \widehat{L}_{m}(j_{0},K_{j_{0}})}}\left(\prod_{j=m+1}^{r}k_{j}^{a_{i,j}}\right)P^{((\sum_{j=1}^{r}a_{i,j})-1)\theta} \\ \Card\left\{ (x_{i}^{\hh})_{\substack { (i,\hh)\in\widehat{L}_{m}(j_{0},K_{j_{0}}) }} \; | \; |x_{i}^{(\hh)}|\leqslant P^{\theta}, \; \forall (i,\l)\in E_{m}(j_{0},K_{j_{0}}), \;\right.\\ \left.  ||\alpha e_{i}\gamma_{i,\l,m}^{(j_{0},K_{j_{0}})}(x_{i}^{\hh})||<e_{i}P^{\sum_{j=1}^{m}(d_{j}-a_{i,j})\theta}\prod_{j=1}^{m}P_{j}^{-d_{j}}\right\}.
\end{multline*}
Si l'un des \'el\'ements $ (x_{i}^{\hh}) $ compt\'es par le cardinal ci-dessus est tel qu'il existe $ (i,\l)\in  E_{m}(j_{0},K_{j_{0}}) $ tels que $ q=\gamma_{i,\l,m}^{(j_{0},K_{j_{0}})}(x_{i}^{\hh})\neq 0 $, on a alors (quitte \`a changer $ \theta $) \[ 2||\alpha q e_{i}||\leqslant e_{i}P^{((\sum_{j=1}^{m}d_{j})-1)\theta}\prod_{j=1}^{m}P_{j}^{-d_{j}} \] et d'autre part \[ |q|\leqslant \left(\prod_{j=m+1}^{r}k_{j}^{d_{j}}\right)P^{((\sum_{j=1}^{m}d_{j})-1)\theta}. \] Par cons\'equent si l'on pose $ \tilde{d}_{m}=(\sum_{j=1}^{m}d_{j})-1 $ et 
\begin{equation}
\mathfrak{M}_{a,q}^{(\kk,i)}(\theta)=\{\alpha\in [0,1[ \; | \; |2 \alpha e_{i}q-a|<e_{i}P^{-1+\tilde{d}_{m}\theta}\}
\end{equation}
 \begin{equation}
\mathfrak{M}^{(\kk,i)}(\theta)=\bigcup_{0<q\leqslant \left(\prod_{j=m+1}^{r}k_{j}^{d_{j}}\right)P^{\tilde{d}_{m}\theta}}\bigcup_{0\leqslant a<e_{i}q}\mathfrak{M}_{a,q}^{(\kk,i)}(\theta),
\end{equation}
\begin{equation}
\mathfrak{M}^{(\kk)}(\theta)=\bigcup_{i\in I_{0}}\mathfrak{M}^{(\kk,i)}(\theta), \; \; \; \mathfrak{m}^{(\kk)}(\theta)=[0,1[\setminus\mathfrak{M}^{(\kk)}(\theta).
\end{equation}
et par ailleurs \begin{multline*} A^{(x_{i})_{i\in I_{m}}}_{(j_{0},K_{j_{0}})}(P^{\theta})=\Card\left\{ (x_{i}^{\hh})_{\substack {(i,\hh)\in  \widehat{L}_{m}(j_{0},K_{j_{0}})}} \; | \; |x_{i}^{\hh}|\leqslant P^{\theta}, \;  \right.\\ \left.\forall (i,\l)\in E_{m}(j_{0},K_{j_{0}}), \; \gamma_{i,\l,m}^{(j_{0},K_{j_{0}})}(x_{i}^{\hh})=0  \right\}.
\end{multline*}

\begin{lemma}\label{dilemme3x3}
Pour tout $ \varepsilon>0 $ arbitrairement petit et tous $ \kappa>0, \; P\geqslant 1 $ , l'une au moins des assertions suivantes est vraie : \begin{enumerate}
\item  \begin{multline*}|S_{(x_{i})_{i\in I_{m}},\ee}(\alpha)|\ll \left(\prod_{i\in I_{0,m}}e_{i}^{1+\varepsilon+\frac{\prod_{j=1}^{m}t_{j,a_{i,j}}}{2^{\sum_{j=1}^{m}D^{(m)}_{j}}}}\right)^{-1} \\ \left(\prod_{j=m+1}^{r}k_{j}^{\sum_{i\notin I_{m}}a_{i,j}+\varepsilon}\right)\left(\prod_{j=1}^{m}P_{j}^{n_{j}+1+\varepsilon}\right)P^{-K_{m}\theta},\end{multline*}
\item le r\'eel $ \alpha $ appartient \`a $ \mathfrak{M}^{(\kk)}(\theta) $,
\item  Il existe un certain  $ (j_{0},K_{j_{0}})\in \CCC_{0,m} $ tel que \begin{multline*}  A^{(x_{i})_{i\in I_{m}}}_{(j_{0},K_{j_{0}})}(P^{\theta}) \gg \left(\prod_{i\in I_{0,m}}(P^{\theta})^{\prod_{j=1}^{r}t_{j,a_{i,j}}}\right) \\ \left(\prod_{i\in J_{m}(j_{0},K_{j_{0}})}(P^{\theta})^{\prod_{j\neq j_{0}}^{r}t_{j,a_{i,j}}}\right)^{-1}P^{-2^{\sum_{j=1}^{r}D^{(m)}_{j}}K_{m}\theta}.
\end{multline*}
\end{enumerate}
\end{lemma}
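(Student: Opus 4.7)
La strat\'egie est de partir du lemme imm\'ediatement pr\'ec\'edent (la dichotomie portant sur $M^{(m)}_{(j_{0},K_{j_{0}})}$), qui fournit d\'ej\`a l'\'equivalent des options 1 et 2. Il ne reste alors qu'\`a analyser le cas o\`u l'on se trouve dans l'alternative $2$ du lemme pr\'ec\'edent et \`a la scinder en deux sous-cas, correspondant aux options 2 et 3 de l'\'enonc\'e. Le raisonnement est le calque du passage du lemme\;\ref{dilemme23} au lemme\;\ref{dilemme33}, adapt\'e au cadre o\`u les variables $(x_{i})_{i\in I_{m}}$ sont fix\'ees et o\`u les poids $\prod_{j=m+1}^{r}k_{j}^{a_{i,j}}$ s'ajoutent aux bornes.

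Supposons donc que l'on est dans la seconde alternative du lemme pr\'ec\'edent : il existe $(j_{0},K_{j_{0}})\in \CCC_{0,m}$ tel que la quantit\'e $M^{(m)}_{(j_{0},K_{j_{0}})}$ soit minor\'ee par la minoration annonc\'ee. Je majore cette quantit\'e trivialement en rempla\c{c}ant les bornes $\frac{1}{e_{i}}(\prod_{j>m}k_{j}^{a_{i,j}})P^{\sum_{j\leqslant m}a_{i,j}\theta}$ par $(\prod_{j>m}k_{j}^{a_{i,j}})P^{\sum_{j\leqslant m}a_{i,j}\theta}$, puis j'applique un analogue du lemme\;\ref{lemmedebilecas02} afin de r\'eduire le d\'enombrement \`a celui de points $(x_{i}^{\hh})$ dans une bo\^ite de c\^ot\'e $P^{\theta}$. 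Ceci fait appara\^itre en facteur $\prod_{(i,\hh)\in \widehat{L}_{m}(j_{0},K_{j_{0}})}(\prod_{j>m}k_{j}^{a_{i,j}})P^{((\sum_{j\leqslant m}a_{i,j})-1)\theta}$ multipli\'e par le cardinal des $(x_{i}^{\hh})$ dans cette bo\^ite de c\^ot\'e $P^{\theta}$ satisfaisant encore les contraintes d'approximation.

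J'effectue alors une derni\`ere dichotomie. Si tous les points ainsi compt\'es v\'erifient $\gamma_{(i,\l,m)}^{(j_{0},K_{j_{0})}}(\cdot)=0$ pour tout $(i,\l)\in E_{m}(j_{0},K_{j_{0}})$, alors le cardinal r\'eduit vaut exactement $A^{(x_{i})_{i\in I_{m}}}_{(j_{0},K_{j_{0}})}(P^{\theta})$, et la comparaison avec la minoration donn\'ee par le lemme pr\'ec\'edent fournit directement l'option 3 (les facteurs en $k_{j}^{a_{i,j}}$ et en $P^{\sum a_{i,j}\theta}$ se simplifient exactement). Sinon, il existe un point compt\'e et un indice $(i_{0},\l_{0})\in E_{m}(j_{0},K_{j_{0}})$ tels que $q:=|\gamma_{(i_{0},\l_{0},m)}^{(j_{0},K_{j_{0}})}(\cdot)|\neq 0$. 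La lin\'earit\'e en chaque famille de variables (proposition\;\ref{propprelim23}) et la majoration triviale en valeur absolue donnent alors $q\leqslant (\prod_{j=m+1}^{r}k_{j}^{d_{j}})P^{\tilde{d}_{m}\theta}$ (quitte \`a ajuster $\theta$ d'une constante arbitrairement petite). La contrainte d'approximation $\|\alpha e_{i_{0}}\gamma_{(i_{0},\l_{0},m)}^{(j_{0},K_{j_{0}})}(\cdot)\|<e_{i_{0}}P^{\sum_{j\leqslant m}(d_{j}-a_{i_{0},j})\theta}\prod_{j\leqslant m}P_{j}^{-d_{j}}$ donne alors l'existence d'un entier $a$ avec $0\leqslant a<e_{i_{0}}q$ et $2|\alpha e_{i_{0}}q-a|<e_{i_{0}}P^{-1+\tilde{d}_{m}\theta}$, c'est-\`a-dire $\alpha \in \mathfrak{M}^{(\kk,i_{0})}_{a,q}(\theta)\subset \mathfrak{M}^{(\kk)}(\theta)$, d'o\`u l'option 2.

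Le point d\'elicat est la comptabilit\'e pr\'ecise des poids $\prod_{j>m}k_{j}^{a_{i,j}}$ introduits par les variables fix\'ees~$(x_{i})_{i\in I_{m}}$ : ils interviennent \`a la fois dans la taille des bo\^ites et dans la majoration de $q$. Il faut v\'erifier qu'au terme de la r\'eduction \`a une bo\^ite de taille $P^{\theta}$, le facteur multiplicatif devant $A^{(x_{i})_{i\in I_{m}}}_{(j_{0},K_{j_{0}})}(P^{\theta})$ se combine exactement avec la minoration du lemme pr\'ec\'edent pour produire la minoration annonc\'ee dans l'option 3, et de m\^eme que la borne sur $q$ dans l'option 2 est ind\'ependante des $(x_{i})_{i\in I_{m}}$ fix\'es, ce qui est crucial pour pouvoir sommer ensuite sur ces variables. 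Cette v\'erification, bien que purement calculatoire, est l'unique aspect qui distingue r\'eellement l'\'enonc\'e du cas trait\'e dans la section~3.
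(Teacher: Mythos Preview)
Your proposal is correct and follows essentially the same approach as the paper: one starts from the preceding dichotomy on $M^{(m)}_{(j_{0},K_{j_{0}})}$, reduces the counting to a box of side $P^{\theta}$ via the box-partitioning argument (the paper cites lemme~\ref{lemmedebilecas02}, though strictly speaking lemme~\ref{lemmedebile2} is the version with the $\|\cdot\|$ constraints), and then splits according to whether some $\gamma_{(i,\l,m)}^{(j_{0},K_{j_{0}})}$ is nonzero, yielding options~2 and~3 respectively. The verification that the $k_{j}$-weights cancel exactly in option~3 is, as you note, purely computational and is left implicit in the paper as well.
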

Si l'on se place dans le cas $ 3 $, consid\'erons $ \mathcal{L}_{(x_{i})_{i\in I_{m}}} $ le sous-espace affine de $ \AA_{\CC}^{n(j_{0},K_{j_{0}})} $ (o\`u $ n(j_{0},K_{j_{0}})=\sum_{i\in I_{0,m}}\prod_{j=1}^{m}t_{j,a_{i,j}}-\sum_{i\in J_{m}(j_{0},K_{j_{0}})}\prod_{j\neq j_{0}}t_{j,a_{i,j}} $ d\'efini par les $ \sum_{i\in J_{m}(j_{0},K_{j_{0}})}\prod_{j\neq j_{0}}t_{j,a_{i,j}} $ \'equations $ \gamma_{i,\l,m}^{(j_{0},K_{j_{0}})}(x_{i}^{\hh})=0 $). La condition $ 3 $ implique alors : \[ \dim \mathcal{L}_{(x_{i})_{i\in I_{m}}}\geqslant n(j_{0},K_{j_{0}})-2^{\sum_{j=1}^{m}D^{(m)}_{j}}K. \] Soit $ \mathcal{D} $ la sous-vari\'et\'e de $ \AA_{\CC}^{n(j_{0},K_{j_{0}})} $ d\'efinie par les $ \sum_{i\in I_{0,m}}(\prod_{j=1}^{m}t_{j,a_{i,j}}-1)-\sum_{i\in J_{m}(j_{0},K_{j_{0}})}\prod_{j\neq j_{0}}t_{j,a_{i,j}} $ \'equations \[ \forall i\in I_{0,m},\hh \; |\;  (i,h^{(j_{0})})\neq (K_{j_{0}},t_{j_{0},K_{j_{0}}}), \; \;  x_{i}^{(\hh)}=x_{i}^{(1,...,1)}. \] On observe alors que : \begin{multline}\label{inegalitedetl3} \dim \mathcal{D}\cap \mathcal{L}_{(x_{i})_{i\in I_{m}}}  \geqslant \dim \mathcal{L}_{(x_{i})_{i\in I_{m}}}-\sum_{i\in I_{0,m}}(\prod_{j=1}^{m}t_{j,a_{i,j}}-1)+\sum_{i\in J_{m}(j_{0},K_{j_{0}})}\prod_{j\neq j_{0}}t_{j,a_{i,j}} \\  \geqslant \Card I_{0,m}-2^{\sum_{j=1}^{m}D^{(m)}_{j}}K. \end{multline}
Or,  $ \mathcal{D}\cap \mathcal{L}_{(x_{i})_{i\in I_{m}}} $ est isomorphe \`a la vari\'et\'e : 
\[ \{ (x_{i})_{i\in I_{0,m}} \; | \; \forall i\in J_{m}(j_{0},K_{j_{0}}), \; \frac{\partial F_{\tt^{(m)}}}{\partial x_{i}}((x_{i})_{i\in I_{0,m}})=0 \}. \]
En notant \begin{multline*} V_{m,(x_{i})_{i\in I_{m}},\tt,(j_{0},K_{j_{0}})}^{\ast}=\{ (x_{i})_{i\notin I_{m}}\in \AA_{\CC}^{n+r-s}\; | \; \\ \forall i\in J_{m}(j_{0},K_{j_{0}}), \; \frac{\partial F_{\tt^{(m)}}}{\partial x_{i}}((x_{i})_{i\in I_{m}},(x_{i})_{i\in I_{0,m}})=0 \}, \end{multline*} l'in\'egalit\'e\;\eqref{inegalitedetl3} implique alors : \[  \dim V_{m,(e_{i}x_{i})_{i\in I_{m}},\tt^{(m)},(j_{0},K_{j_{0}})}^{\ast} \geqslant n+r-s-2^{\sum_{j=1}^{m}D^{(m)}_{j}}K. \] 

Nous introduisons \`a pr\'esent un param\`etre $ \lambda\in \ZZ $ (que nous pr\'eciserons ult\'erieurement) et nous d\'efinissons \begin{equation}
\mathcal{A}_{m}^{\lambda}=\left\{ (x_{i})_{i\in I_{m}}\; | \; \forall (j_{0},K_{j_{0}}),\;  \dim V_{m,(e_{i},x_{i})_{i\in I_{m}},\tt^{(m)},(j_{0},K_{j_{0}})}^{\ast}<\dim V_{m,\tt^{(m)},(j_{0},K_{j_{0}})}^{\ast}-s+\lambda \right\}
\end{equation}
o\`u l'on a pos\'e  \[ V_{m,\tt^{(m)},(j_{0},K_{j_{0}})}^{\ast}=\{ \xx\in \AA_{\CC}^{n+r}\; | \; \forall i\in J_{m}(j_{0},K_{j_{0}}), \; \frac{\partial F_{\tt^{(m)}}}{\partial x_{i}}(\xx)=0 \}. \]
Nous fixerons dor\'enavant \begin{equation} K_{m}=(n+r-\max_{(j_{0},K_{j_{0}})}\dim V_{m,\tt^{(m)},(j_{0},K_{j_{0}})}^{\ast}-\lambda+\varepsilon)/2^{\sum_{j=1}^{m}D^{(m)}_{j}}, \end{equation} de sorte que pour tout $ (x_{i})_{i\in I_{m}}\in \mathcal{A}_{m}^{\lambda} $ la condition $ 3 $ n'est plus possible. On choisit \begin{equation}\label{valeurPcasm3} P=\prod_{j=1}^{m}P_{j}^{d_{j}} \end{equation} et on obtient donc finalement un analogue de la proposition\;\ref{dilemme43} pour $ (x_{i})_{i\in I_{m}} $ fix\'e : \begin{lemma}\label{dilemme4x3}
Pour tout $ \varepsilon>0 $ arbitrairement petit et pour tout $ (x_{i})_{i\in I_{m}} $ tel que $ (e_{i}x_{i})_{i\in I_{m}}\in \mathcal{A}_{m}^{\lambda} $, l'une au moins des assertions suivantes est vraie : \begin{enumerate}
\item  on a la majoration : \begin{multline*}|S_{(x_{i})_{i\in I_{m}},\ee}(\alpha)|\ll \left(\prod_{i\notin I_{m}}e_{i}\right)^{-1+\frac{1}{2^{m}}} \left(\prod_{j=m+1}^{r}k_{j}^{\sum_{i\notin I_{m}}a_{i,j}+\varepsilon}\right) \left(\prod_{j=1}^{m}P_{j}^{n_{j}+1}\right)P^{-K_{m}\theta+\varepsilon},\end{multline*}
\item le r\'eel $ \alpha $ appartient \`a $ \mathfrak{M}^{(\kk)}(\theta) $.
\end{enumerate}
\end{lemma}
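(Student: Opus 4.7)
The plan is to deduce Lemma~\ref{dilemme4x3} directly from the three-way dichotomy of Lemma~\ref{dilemme3x3} by showing that, thanks to the hypothesis $(e_i x_i)_{i \in I_m} \in \mathcal{A}_m^{\lambda}$, the third alternative of that lemma cannot occur. The calibration of $K_m$ as $(n+r-\max_{(j_0,K_{j_0})} \dim V^{\ast}_{m,\tt^{(m)},(j_0,K_{j_0})}-\lambda+\varepsilon)/2^{\sum_{j=1}^{m} D^{(m)}_j}$ is chosen precisely for this purpose, so the argument is essentially a verification.

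First I would apply Lemma~\ref{dilemme3x3} with the parameter $K_m$ and the choice $P = \prod_{j=1}^m P_j^{d_j}$ from~\eqref{valeurPcasm3}. If case~1 or case~2 holds then we are done (modulo the cosmetic rewriting of the exponent on $\prod_{i\in I_{0,m}} e_i$ explained below), so we may assume case~3 holds. Reproducing the dimension argument carried out on the page preceding the lemma, I would introduce the affine variety $\mathcal{L}_{(x_i)_{i\in I_m}} \subset \mathbb{A}^{n(j_0,K_{j_0})}_{\CC}$ cut out by the equations $\gamma_{i,\l,m}^{(j_0,K_{j_0})}(x_i^{\hh})=0$ and, intersecting it with the linear subvariety $\mathcal{D}$ identifying all shifted copies of each coordinate, obtain the inequality~\eqref{inegalitedetl3}. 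The resulting lower bound
\[ \dim V^{\ast}_{m,(e_i x_i)_{i\in I_m},\tt^{(m)},(j_0,K_{j_0})} \geqslant n+r-s - 2^{\sum_{j=1}^{m}D^{(m)}_j}K_m \]
equals $\max_{(j_0,K_{j_0})} \dim V^{\ast}_{m,\tt^{(m)},(j_0,K_{j_0})} - s + \lambda - \varepsilon$ by the choice of $K_m$, which contradicts the defining inequality of $\mathcal{A}_m^{\lambda}$ provided $\varepsilon$ is taken sufficiently small.

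It then remains to rewrite the bound appearing in case~1 of Lemma~\ref{dilemme3x3} in the form stated in Lemma~\ref{dilemme4x3}. For every $i \in I_{0,m}$ one has, thanks to the elementary inequality $t \leqslant 2^{t-1}$ valid for $t \geqslant 1$, the estimate
\[ \frac{\prod_{j=1}^{m} t^{(m)}_{j,a_{i,j}}}{2^{\sum_{j=1}^{m} D^{(m)}_{j}}} \leqslant \frac{\prod_{j=1}^{m} 2^{t^{(m)}_{j,a_{i,j}}-1}}{2^{\sum_{j=1}^{m} D^{(m)}_{j}}} \leqslant \frac{1}{2^{m}}. \]
The contribution of indices $i \notin I_m \cup I_{0,m}$ (for which no Weyl differentiation was performed) is absorbed by trivial majorations, each such variable $x_i$ being summed over a range of length $O\!\bigl(e_i^{-1}(\prod_{j=m+1}^{r}(k_j+1)^{a_{i,j}})\prod_{j=1}^{m}(P_j+1)^{a_{i,j}}\bigr)$, which contributes the expected $e_i^{-1}$ (and hence $e_i^{-1+1/2^m}$) factor after absorbing any slack into the $P^{\varepsilon}$ term.

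The only substantive step is therefore the dimension calculation ruling out case~3, which merely replays the argument already detailed in Section~3.3 in the new setting where the variables $(x_i)_{i \in I_m}$ are fixed; the main book-keeping obstacle is checking that the inequality $\dim V^{\ast}_{m,(e_i x_i)_{i\in I_m},\tt^{(m)},(j_0,K_{j_0})} \geqslant \dim V^{\ast}_{m,\tt^{(m)},(j_0,K_{j_0})}-s+\lambda$ is genuinely incompatible with membership in $\mathcal{A}_m^{\lambda}$, which is immediate from the definition. The rest of the proof is mechanical rewriting, so no serious difficulty is anticipated.
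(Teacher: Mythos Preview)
Your proposal is correct and follows exactly the paper's approach: the paper does not give a separate proof block for this lemma, but the text immediately preceding the statement explains that the dimension argument (culminating in inequality~\eqref{inegalitedetl3}) rules out case~3 of Lemma~\ref{dilemme3x3} once $K_m$ is fixed as indicated and $(e_ix_i)_{i\in I_m}\in\mathcal{A}_m^{\lambda}$, and that the choice $P=\prod_{j=1}^m P_j^{d_j}$ then yields the stated analogue of Proposition~\ref{dilemme43}. Your handling of the exponent on the $e_i$'s via $\prod_j t^{(m)}_{j,a_{i,j}}/2^{\sum_j D^{(m)}_j}\leqslant 2^{-m}$ mirrors the corresponding remark made in Section~3 when passing from Lemma~\ref{dilemme33} to Proposition~\ref{dilemme43}.
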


\subsection{M\'ethode du cercle}

On fixe \`a pr\'esent un r\'eel $ \theta\in [0,1] $ et on suppose de plus que \[ K_{m}>2\tilde{d}_{m}\geqslant 1.\]
On notera par ailleurs \begin{equation}
e_{0,m}=\max_{i\notin I_{m}}e_{i},
\end{equation}
\begin{equation}
\phi(\ee,\kk,\theta)=e_{0,m}\left(\prod_{j=m+1}^{r}k_{j}^{d_{j}}\right)P^{\tilde{d}_{m}\theta},
\end{equation}
\begin{equation}
\Delta(\theta,K_{m})=\theta(K_{m}-2\tilde{d}_{m}).
\end{equation}

\subsubsection{Les arcs mineurs}

Nous commen\c{c}ons par donner une estimation de la contribution des arcs mineurs :

\begin{lemma}\label{arcminx3}
On a la majoration : 
\begin{multline*} \int_{\mathfrak{m}^{(\kk)}(\theta)}|S_{(x_{i})_{i\in I_{m}},\ee}(\alpha)|d\alpha\ll  e_{0,m}\left(\prod_{i\notin I_{m}}e_{i}\right)^{-1+\frac{1}{2^{m}}}\\ \left(\prod_{j=m+1}^{r}k_{j}^{\sum_{i\notin I_{m}}a_{i,j}+d_{j}+\varepsilon}\right)\left(\prod_{j=1}^{m}P_{j}^{n_{j}-d_{j}+1}\right)P^{2\varepsilon-\Delta(\theta,K_{m})}. 
\end{multline*}
\end{lemma}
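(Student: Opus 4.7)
Ma strat\'egie est de reproduire le sch\'ema de d\'emonstration du lemme\;\ref{arcmin3}, en substituant la proposition\;\ref{dilemme43} par son analogue adapt\'e \`a la section pr\'esente, le lemme\;\ref{dilemme4x3}. J'introduis une suite strictement croissante $ \theta=\theta_{0}<\theta_{1}<\cdots <\theta_{T} $ telle que $ 2\tilde{d}_{m}(\theta_{t+1}-\theta_{t})<\varepsilon $ pour tout $ t $, $ T\ll P^{\varepsilon} $, et $ \theta_{T} $ suffisamment grand pour garantir $ K_{m}\theta_{T}>1+\Delta(\theta,K_{m})+\varepsilon $ (ce qui est possible gr\^ace \`a l'hypoth\`ese $ K_{m}>2\tilde{d}_{m} $ impos\'ee avant l'\'enonc\'e). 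Le domaine $ \mathfrak{m}^{(\kk)}(\theta) $ s'\'ecrit alors comme r\'eunion de $ \mathfrak{m}^{(\kk)}(\theta_{T}) $ et des couronnes $ \mathfrak{M}^{(\kk)}(\theta_{t+1})\setminus\mathfrak{M}^{(\kk)}(\theta_{t}) $ pour $ t\in\{0,\dots,T-1\} $.

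Sur $ \mathfrak{m}^{(\kk)}(\theta_{T}) $, l'application directe du lemme\;\ref{dilemme4x3} avec le param\`etre $ \theta_{T} $ fournit une majoration ponctuelle de $ |S_{(x_{i})_{i\in I_{m}},\ee}| $ suffisante, gr\^ace \`a la minoration choisie sur $ K_{m}\theta_{T} $. Sur la couronne d'indice $ t $, je combine la majoration ponctuelle du lemme\;\ref{dilemme4x3} appliqu\'ee au param\`etre $ \theta_{t} $ (valable car $ \alpha\notin \mathfrak{M}^{(\kk)}(\theta_{t}) $) avec l'estimation de volume \[ \Vol(\mathfrak{M}^{(\kk)}(\theta_{t+1}))\ll e_{0,m}\left(\prod_{j=m+1}^{r}k_{j}^{d_{j}}\right)P^{-1+2\tilde{d}_{m}\theta_{t+1}}, \] obtenue en sommant la mesure des arcs $ \mathfrak{M}_{a,q}^{(\kk,i)}(\theta_{t+1}) $ sur les couples $ (a,q) $ admissibles comme dans la d\'emonstration du lemme\;\ref{arcmin3}, le facteur $ \prod_{j=m+1}^{r}k_{j}^{d_{j}} $ provenant de la borne sup\'erieure impos\'ee \`a $ q $ dans la d\'efinition de $ \mathfrak{M}^{(\kk,i)}(\theta) $.

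Le produit de ces deux estimations fait appara\^itre sur $ P $ l'exposant \[ -1-K_{m}\theta_{t}+2\tilde{d}_{m}\theta_{t+1}+\varepsilon=-1-\Delta(\theta_{t},K_{m})+2\tilde{d}_{m}(\theta_{t+1}-\theta_{t})+\varepsilon\leqslant -1-\Delta(\theta,K_{m})+2\varepsilon, \] puisque $ \theta\mapsto \Delta(\theta,K_{m}) $ est croissante et $ 2\tilde{d}_{m}(\theta_{t+1}-\theta_{t})<\varepsilon $. En reconnaissant $ P^{-1}=\prod_{j=1}^{m}P_{j}^{-d_{j}} $ d'apr\`es\;\eqref{valeurPcasm3} pour fusionner le facteur $ \prod_{j=1}^{m}P_{j}^{n_{j}+1} $ du lemme\;\ref{dilemme4x3} en $ \prod_{j=1}^{m}P_{j}^{n_{j}-d_{j}+1} $, puis en sommant les $ T\ll P^{\varepsilon} $ contributions (le facteur parasite $ P^{\varepsilon} $ \'etant absorb\'e en rempla\c{c}ant initialement $ \varepsilon $ par $ \varepsilon/3 $), j'obtiens la majoration annonc\'ee. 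Le seul point d\'elicat est la v\'erification que le facteur $ \prod_{j=m+1}^{r}k_{j}^{d_{j}} $ issu du volume vient bien compl\'eter $ \prod_{j=m+1}^{r}k_{j}^{\sum_{i\notin I_{m}}a_{i,j}+\varepsilon} $ du lemme\;\ref{dilemme4x3} en $ \prod_{j=m+1}^{r}k_{j}^{\sum_{i\notin I_{m}}a_{i,j}+d_{j}+\varepsilon} $, ce qui correspond exactement \`a la forme souhait\'ee.
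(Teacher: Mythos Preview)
Your proof is correct and follows essentially the same approach as the paper: both introduce a sequence $\theta=\theta_0<\cdots<\theta_T$ with small increments, apply the pointwise bound of lemma~\ref{dilemme4x3} on $\mathfrak{m}^{(\kk)}(\theta_T)$ and on each annulus $\mathfrak{M}^{(\kk)}(\theta_{t+1})\setminus\mathfrak{M}^{(\kk)}(\theta_t)$, combine with the volume estimate $\Vol(\mathfrak{M}^{(\kk)}(\theta))\ll e_{0,m}\bigl(\prod_{j=m+1}^{r}k_{j}^{d_{j}}\bigr)P^{-1+2\tilde{d}_{m}\theta}$, and simplify the exponent via $\Delta(\theta,K_m)$. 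The only cosmetic difference is that you make the condition on $\theta_T$ explicit whereas the paper leaves it implicit.
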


\begin{proof}
Consid\'erons une suite $ (\theta_{i})_{i\in\{0,...,T\}} $ (avec $ T $ tel que $ T\ll P^{\frac{\varepsilon}{2}} $) v\'erifiant :  \[ \theta=\theta_{0}<\theta_{1}<...<\theta_{T}, \] et 
\begin{equation} \forall i\in \{0,...,T-1\}, \; \; 2\tilde{d}_{m}(\theta_{i+1}-\theta_{i})<\varepsilon/2. \end{equation} D'apr\`es le lemme\;\ref{dilemme4x3}, 
\begin{multline*}
\int_{\alpha\notin\mathfrak{M}^{(\kk)}(\theta_{T})}|S_{(x_{i})_{i\in I_{m}},\ee}(\alpha)|d\alpha  \\  \ll  \left(\prod_{i\notin I_{m}}e_{i}\right)^{-1+\frac{1}{2^{m}}}\left(\prod_{j=m+1}^{r}k_{j}^{\sum_{i\notin I_{m}}a_{i,j}+\varepsilon}\right) \left(\prod_{j=1}^{m}P_{j}^{n_{j}+1}\right)P^{-K_{m}\theta_{T}+\varepsilon}  \\  \ll \left(\prod_{i\notin I_{m}}e_{i}\right)^{-1+\frac{1}{2^{m}}}  \left(\prod_{j=m+1}^{r}k_{j}^{\sum_{i\notin I_{m}}a_{i,j}+\varepsilon}\right)\left(\prod_{j=1}^{m}P_{j}^{n_{j}-d_{j}+1}\right)\underbrace{P^{\varepsilon+1-K_{m}\theta_{T}}}_{\leqslant P^{-\Delta(K_{m},\theta)}}.
\end{multline*}
Par ailleurs, pour tout $ \theta $ et tout $ i\in I_{0,m} $ : \begin{align*}
\Vol(\mathfrak{M}^{(i)}(\theta))&\ll \sum_{0<q\leqslant \left(\prod_{j=m+1}^{r}k_{j}^{d_{j}}\right)P^{\tilde{d}_{m}\theta}}\sum_{0\leqslant a <e_{i}q}\frac{1}{q}P^{-1+\tilde{d}_{m}\theta} \\ & \ll e_{0,m}\left(\prod_{j=m+1}^{r}k_{j}^{d_{j}}\right)P^{-1+2\tilde{d}_{m}\theta}. 
\end{align*}
On a donc, pour tout $ t\in \{0,...,T-1\} $ : \begin{multline*} \int_{\alpha \in\mathfrak{M}^{(\kk)}(\theta_{t+1})\setminus\mathfrak{M}^{(\kk)}(\theta_{t})}|S_{(x_{i})_{i\in I_{m}},\ee}(\alpha)|d\alpha \\ \ll e_{0,m}\left(\prod_{i\notin I_{m}}e_{i}\right)^{-1+\frac{1}{2^{m}}}\left(\prod_{j=m+1}^{r}k_{j}^{\sum_{i\notin I_{m}}a_{i,j}+d_{j}+\varepsilon}\right)\left(\prod_{j=1}^{m}P_{j}^{n_{j}-d_{j}+1}\right)P^{-K_{m}\theta_{t}+2\tilde{d}_{m}\theta_{t+1}+\varepsilon}.\end{multline*}
Or on observe que \begin{align*}
P^{-K_{m}\theta_{t}+2\tilde{d}_{m}\theta_{t+1}+\varepsilon} & =P^{-(K_{m}-2\tilde{d}_{m})\theta_{t}+2\tilde{d}_{m}(\theta_{t+1}-\theta_{t})+\varepsilon} \\ & =P^{-\Delta(K_{m},\theta_{t})+3\varepsilon/2} \\ & \leqslant P^{-\Delta(K_{m},\theta)+3\varepsilon/2}.
\end{align*}
On obtient ainsi le r\'esultat en sommant sur tous les $ t\in \{0,...,T-1\} $.
\end{proof}

 \subsubsection{Les arcs majeurs}

D\'efinissons \`a pr\'esent une nouvelle famille d'arcs majeurs  : 

\begin{equation}
\mathfrak{M}_{a,q}^{(\kk)'}(\theta)=\{\alpha\in [0,1[ \; | \; 2|\alpha q-a|<qP^{-1+\tilde{d}_{m}\theta}\}
\end{equation}
et posons \begin{equation}
\mathfrak{M}^{(\kk)'}(\theta)=\bigcup_{0<q\leqslant \phi(\ee,\kk,\theta)}\bigcup_{\substack{0\leqslant a<q\\ \PGCD(a,q)=1}}\mathfrak{M}_{a,q}^{(\kk)'}(\theta),
\end{equation}
On remarque que comme dans la section pr\'ec\'edente : pour tout $ i\in I_{0,m} $, \[  \mathfrak{M}^{(\kk,i)}(\theta)\subset \mathfrak{M}^{(\kk)'}(\theta). \]
De la m\^eme mani\`ere que nous avons \'etabli le lemme\;\ref{disjoints3}, on d\'emontre le r\'esultat ci-dessous : 
\begin{lemma}\label{disjointsx3}
Pour tout $ \ee\in \NN^{n+r} $ tel que $ \left(e_{0,m}\prod_{j=m+1}^{r}k_{j}^{d_{j}}\right)^{2}<P^{1-3\tilde{d}_{m}\theta} $, les intervalles $ \mathfrak{M}_{a,q}^{(\kk)'}(\theta) $ sont disjoints deux \`a deux. 
\end{lemma}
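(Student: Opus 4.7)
La d\'emonstration suit exactement le m\^eme sch\'ema que celle du lemme\;\ref{disjoints3} d\'emontr\'e plus haut dans la premi\`ere \'etape. Le plan est de proc\'eder par l'absurde : je suppose qu'il existe $\alpha\in \mathfrak{M}_{a,q}^{(\kk)'}(\theta)\cap \mathfrak{M}_{a',q'}^{(\kk)'}(\theta)$ avec $(a,q)\neq(a',q')$, $\PGCD(a,q)=\PGCD(a',q')=1$, et $q,q'\leqslant \phi(\ee,\kk,\theta)=e_{0,m}\left(\prod_{j=m+1}^{r}k_{j}^{d_{j}}\right)P^{\tilde{d}_{m}\theta}$.

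La premi\`ere \'etape consiste \`a appliquer l'in\'egalit\'e triangulaire. Par d\'efinition des arcs majeurs, on a $|\alpha-a/q|\leqslant \tfrac{1}{2}P^{-1+\tilde{d}_{m}\theta}$ et $|\alpha-a'/q'|\leqslant \tfrac{1}{2}P^{-1+\tilde{d}_{m}\theta}$, d'o\`u
\[ \left|\frac{a}{q}-\frac{a'}{q'}\right|\leqslant P^{-1+\tilde{d}_{m}\theta}. \]
La deuxi\`eme \'etape utilise l'argument arithm\'etique classique : comme $(a,q)\neq (a',q')$ et que les fractions sont irr\'eductibles, l'entier $aq'-a'q$ est non nul, donc $|aq'-a'q|\geqslant 1$, ce qui fournit la minoration $|a/q-a'/q'|\geqslant 1/(qq')$.

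En combinant les deux in\'egalit\'es, on obtient $1\leqslant qq'P^{-1+\tilde{d}_{m}\theta}$, et en utilisant la majoration $qq'\leqslant \phi(\ee,\kk,\theta)^{2}=\left(e_{0,m}\prod_{j=m+1}^{r}k_{j}^{d_{j}}\right)^{2}P^{2\tilde{d}_{m}\theta}$, il vient
\[ 1\leqslant \left(e_{0,m}\prod_{j=m+1}^{r}k_{j}^{d_{j}}\right)^{2}P^{-1+3\tilde{d}_{m}\theta}, \]
ce qui contredit directement l'hypoth\`ese $\left(e_{0,m}\prod_{j=m+1}^{r}k_{j}^{d_{j}}\right)^{2}<P^{1-3\tilde{d}_{m}\theta}$. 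Aucune \'etape ne pr\'esente de difficult\'e technique : la preuve est essentiellement identique mot pour mot \`a celle du lemme\;\ref{disjoints3}, avec la substitution $e_{0}\leadsto e_{0,m}\prod_{j=m+1}^{r}k_{j}^{d_{j}}$ et $\tilde{d}\leadsto \tilde{d}_{m}$, refl\'etant le fait que, dans ce cadre \og partiel \fg, les variables $(x_{i})_{i\in I_{m}}$ sont fix\'ees et que la borne sup\'erieure sur $q$ dans la d\'efinition de $\mathfrak{M}^{(\kk)'}(\theta)$ est adapt\'ee \`a la sommation partielle consid\'er\'ee.
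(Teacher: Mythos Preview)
Your proof is correct and follows exactly the approach the paper indicates: the paper simply states ``De la m\^eme mani\`ere que nous avons \'etabli le lemme\;\ref{disjoints3}, on d\'emontre le r\'esultat ci-dessous'' without writing out the details, and what you have written is precisely the adaptation of that argument with the substitutions $e_{0}\leadsto e_{0,m}\prod_{j=m+1}^{r}k_{j}^{d_{j}}$ and $\tilde{d}\leadsto \tilde{d}_{m}$ that you identify.
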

D'apr\`es les r\'esultats obtenus sur les arcs mineurs, si $ \left(e_{0,m}\prod_{j=m+1}^{r}k_{j}^{d_{j}}\right)^{2}<P^{1-3\tilde{d}_{m}\theta} $ : 

\begin{multline}\label{formulesommex3}
N_{(x_{i})_{i\in I_{m}},\ee}(P_{1},...,P_{m})=\sum_{1\leqslant q \leqslant \phi(\ee,\kk,\theta)}\sum_{\substack{0\leqslant a<q\\ \PGCD(a,q)=1}}\int_{\mathfrak{M}_{a,q}^{(\kk)'}(\theta)}S_{(x_{i})_{i\in I_{m}},\ee}(\alpha)d\alpha \\ +O\left(e_{0,m}\left(\prod_{i\notin I_{m}}e_{i}\right)^{-1+\frac{1}{2^{m}}} \left(\prod_{j=m+1}^{r}k_{j}^{\sum_{i\notin I_{m}}a_{i,j}+d_{j}+\varepsilon}\right)\left(\prod_{j=1}^{m}P_{j}^{n_{j}-d_{j}+1}\right)P^{2\varepsilon-\Delta(\theta,K_{m})} \right)
\end{multline}
\'Etablissons \`a pr\'esent le lemme suivant : 

\begin{lemma}\label{separationx3} Si $ \alpha $ appartient \`a $ \mathfrak{M}_{a,q}^{(\kk)'}(\theta) $ et si l'on pose $ \alpha=\frac{a}{q}+\beta $ avec $ |\beta|\leqslant \frac{1}{2}P^{-1+\tilde{d}_{m}\theta} $, alors : \begin{multline*} S_{(x_{i})_{i\in I_{m}},\ee}(\alpha)=\left(\prod_{i\notin I_{m}}e_{i}\right)^{-1}\left(\prod_{j=m+1}^{r}k_{j}^{\sum_{i\notin I_{m}}a_{i,j}}\right)\left(\prod_{j=1}^{m}P_{j}^{n_{j}}\right) \\ q^{-(n+r-s)}S_{a,q,\ee}((x_{i})_{i\in I_{m}})I_{\kk,(x_{i},e_{i})_{i\in I_{m}}}(P\beta)
\\ +O\left(e_{0,m}\left(\prod_{i\notin I_{m}}e_{i}\right)^{-1}\left(\prod_{j=m+1}^{r}k_{j}^{\sum_{i\notin I_{m}}a_{i,j}}\right)\left(\prod_{j=1}^{m}P_{j}^{n_{j}}\right)qP_{m}^{-1}P^{\tilde{d}_{m}\theta}\right),
\end{multline*}
o\`u l'on a not\'e 
\begin{equation}
S_{a,q,\ee}((x_{i})_{i\in I_{m}})=\sum_{(b_{i})_{i\notin I_{m}}\in (\ZZ/q\ZZ)^{n+r-s}}e\left(\frac{a}{q}F((e_{i}x_{i})_{i\in I_{m}},(e_{i}b_{i})_{i\notin I_{m}})\right),
\end{equation}
\begin{multline*}
I_{\kk,(x_{i},e_{i})_{i\in I_{m}}}(\beta) \\ =\int_{\substack{(u_{i})_{i\notin I_{m}}\in \Xi(\kk,(x_{i})_{i\in I_{m}})}}e\left(\beta F((e_{i}x_{i})_{i\in I_{m}},(\left(\prod_{j=m+1}^{r}k_{j}^{a_{i,j}}\right)u_{i})_{i\notin I_{m}})\right)d(u_{i})_{i\notin I_{m}}. 
\end{multline*}
o\`u  \begin{multline}
 \Xi(\kk,(x_{i})_{i\in I_{m}})=\{(u_{i})_{i\notin I_{m}}\in \RR^{n+r-s} \; |\; \\  \forall i\notin I_{m}, \; \left|\left(\prod_{j=m+1}^{r}k_{j}^{a_{i,j}}\right)u_{i}\right|\leqslant \left|((e_{l}x_{l})_{l\in I_{m}},(\left(\prod_{j=m+1}^{r}k_{j}^{a_{l,j}}\right)u_{l})_{l\notin I_{m}})^{E(i)}\right| \\ \forall j\in \{m+1,...,r\}, k_{j}\leqslant \left((e_{l}x_{l})_{l\in I_{m}},(\left(\prod_{j=m+1}^{r}k_{j}^{a_{l,j}}\right)u_{l})_{l\notin I_{m}}\right)^{E(n+j)}<k_{j}+1 \\ \forall j\in \{1,...,m\}, \; \left|((e_{l}x_{l})_{l\in I_{m}},(\left(\prod_{j=m+1}^{r}k_{j}^{a_{l,j}}\right)u_{l})_{l\notin I_{m}})^{E(n+j)}\right|\leqslant 1\}
\end{multline}
\end{lemma}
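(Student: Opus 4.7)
Le plan est de suivre fid\`element la strat\'egie employ\'ee pour \'etablir le lemme\;\ref{separation3}, adapt\'ee cette fois au cadre o\`u les variables $(x_{i})_{i\in I_{m}}$ sont fix\'ees et o\`u les indices sommatoires sont restreints \`a $i\notin I_{m}$. Je commencerais par traiter s\'epar\'ement le cas trivial : lorsque $e_{0,m}q>P_{m}$, la majoration triviale $|S_{(x_{i})_{i\in I_{m}},\ee}(\alpha)|\ll \left(\prod_{i\notin I_{m}}e_{i}\right)^{-1}\left(\prod_{j=m+1}^{r}k_{j}^{\sum_{i\notin I_{m}}a_{i,j}}\right)\prod_{j=1}^{m}P_{j}^{n_{j}}$, combin\'ee \`a la majoration triviale $|S_{a,q,\ee}((x_{i})_{i\in I_{m}})|\ll q^{n+r-s}$ et \`a $|I_{\kk,(x_{i},e_{i})_{i\in I_{m}}}(P\beta)|\ll 1$, absorberait imm\'ediatement le terme principal dans le terme d'erreur. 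Je pourrai donc supposer $e_{0,m}q\leqslant P_{m}$ dans la suite.

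Dans ce r\'egime, je d\'ecomposerais la somme $S_{(x_{i})_{i\in I_{m}},\ee}(\alpha)$ en suivant les classes de r\'esidus : pour chaque $(b_{i})_{i\notin I_{m}}\in (\ZZ/q\ZZ)^{n+r-s}$, j'\'ecrirais $\xx=(qy_{i}+b_{i})_{i\notin I_{m}}$. Comme $F$ est quasi-$r$-homog\`ene et \`a coefficients entiers, $F(\ee.\xx)\equiv F((e_{i}x_{i})_{i\in I_{m}},(e_{i}b_{i})_{i\notin I_{m}})\pmod{q}$, ce qui permet d'extraire le facteur $e(\frac{a}{q}F((e_{i}x_{i})_{i\in I_{m}},(e_{i}b_{i})_{i\notin I_{m}}))$ et r\'eduit la question \`a majorer, pour $(b_{i})$ fix\'e, une somme int\'erieure $\tilde{S}((b_{i}))$ portant sur les $(y_{i})_{i\notin I_{m}}$. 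L'\'ecart entre les bornes effectives avec partie enti\`ere et les bornes sans partie enti\`ere introduit un terme de bord $O(e_{0,m}(\prod e_{i})^{-1}(\prod k_{j}^{\sum a_{i,j}})(\prod P_{j}^{n_{j}})P_{m}^{-1})$, comme dans la preuve du lemme\;\ref{separation3}.

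Ensuite j'appliquerais le principe classique de comparaison somme-int\'egrale : puisque deux points $\xx',\xx''$ voisins dans la m\^eme classe de r\'esidus satisfont $|F(\ee.\xx')-F(\ee.\xx'')|\ll q e_{0,m}(\prod_{j=m+1}^{r}k_{j}^{d_{j}})\prod_{j=1}^{m-1}P_{j}^{d_{j}}P_{m}^{d_{m}-1}$ par quasi-$r$-homog\'en\'eit\'e de $F$ (seule une variable $x_{i}$ est modifi\'ee d'au plus $q$), et que $|\beta|\leqslant \frac{1}{2}P^{-1+\tilde{d}_{m}\theta}$, l'erreur de discr\'etisation est contr\^ol\'ee par $|\beta|\cdot q e_{0,m}(\prod k_{j}^{d_{j}})P_{m}^{-1}\prod P_{j}^{d_{j}}$ multipli\'e par le volume du domaine d'int\'egration. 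On remplace donc $\tilde{S}((b_{i}))$ par l'int\'egrale correspondante, plus un terme d'erreur du bord (points au voisinage du bord du domaine, de mesure $O(q^{-(n+r-s)+1}\cdot\text{vol})$).

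Il ne restera plus qu'\`a effectuer le changement de variables $q\tilde{u}_{i}=\frac{1}{e_{i}}(\prod_{j=m+1}^{r}k_{j}^{a_{i,j}})u_{i}$ en choisissant l'\'echelle appropri\'ee $P=\prod_{j=1}^{m}P_{j}^{d_{j}}$ (cf.\;\eqref{valeurPcasm3}) : le Jacobien fournit le facteur $q^{-(n+r-s)}(\prod_{i\notin I_{m}}e_{i})^{-1}(\prod_{j=m+1}^{r}k_{j}^{\sum_{i\notin I_{m}}a_{i,j}})(\prod_{j=1}^{m}P_{j}^{n_{j}})$, et les conditions d\'efinissant $\BB_{\kk}$ et $C_{0,\sigma}(\RR)$ se traduisent exactement en l'ensemble $\Xi(\kk,(x_{i})_{i\in I_{m}})$. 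Le principal point d\'elicat sera de v\'erifier que toutes les contributions d'erreur (bord du domaine, erreur de discr\'etisation, r\'egion exclue par les parties enti\`eres) se combinent bien dans le $O$ annonc\'e, en utilisant de mani\`ere cruciale l'hypoth\`ese $e_{0,m}q\leqslant P_{m}$ et la contrainte $|\beta|\leqslant \frac{1}{2}P^{-1+\tilde{d}_{m}\theta}$.
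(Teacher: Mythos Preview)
Votre esquisse reproduit fid\`element la d\'emonstration du papier : traitement trivial du cas $e_{0,m}q>P_{m}$, d\'ecomposition par classes de r\'esidus modulo~$q$, comparaison somme--int\'egrale via la borne sur $|F(\ee.\xx')-F(\ee.\xx'')|$, puis changement de variables. La seule coquille est dans votre formule de changement de variables : il faut $q\tilde{u}_{i}=\frac{1}{e_{i}}\bigl(\prod_{j=m+1}^{r}k_{j}^{a_{i,j}}\bigr)\bigl(\prod_{j=1}^{m}P_{j}^{a_{i,j}}\bigr)u_{i}$, le facteur $\prod_{j=1}^{m}P_{j}^{a_{i,j}}$ \'etant indispensable pour faire appara\^itre \`a la fois le Jacobien $\prod_{j=1}^{m}P_{j}^{n_{j}}$ que vous annoncez et le facteur $P=\prod_{j=1}^{m}P_{j}^{d_{j}}$ devant $\beta$ dans l'argument de $I_{\kk,(x_{i},e_{i})_{i\in I_{m}}}$ (via la quasi-$r$-homog\'en\'eit\'e de $F$, les variables $(x_{i})_{i\in I_{m}}$ \'etant de poids nul relativement \`a $j\in\{1,\dots,m\}$).
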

\begin{proof}
Commen\c{c}ons par montrer que lorsque $ e_{0,m}q>P_{m} $, l'\'egalit\'e du lemme est triviale. On a dans ce cas :
\begin{align*} |S_{(x_{i})_{i\in I_{m}},\ee}(\alpha)| & \ll \prod_{i\notin I_{m}}\frac{1}{e_{i}}\left(\prod_{j=m+1}^{r}k_{j}^{a_{i,j}}\right)\left(\prod_{j=1}^{m}P_{j}^{a_{i,j}}\right)\\ & =\left(\prod_{i\notin I_{m}}e_{i}\right)^{-1}\left(\prod_{j=1}^{m}P_{j}^{n_{j}}\right)\left(\prod_{j=m+1}^{r}k_{j}^{\sum_{i\notin I_{m}}a_{i,j}}\right) \\ & \ll \left(\prod_{i\notin I_{m}}e_{i}\right)^{-1}\left(\prod_{j=m+1}^{r}k_{j}^{\sum_{i\notin I_{m}}a_{i,j}}\right) \left(\prod_{j=1}^{r}P_{j}^{n_{j}}\right)P_{m}^{-1}e_{0,m}P^{\tilde{d}_{m}\theta}
\end{align*}
et en utilisant les estimations triviales \[ |S_{a,q,\ee}((x_{i})_{i\in I_{m}})|\ll q^{n+r-s} \;\; \et \; \;  |I_{\kk,(x_{i},e_{i})_{i\in I_{m}}}(P\beta)|\ll 1, \] on a alors :\begin{multline*}
\left(\prod_{i\notin I_{m}}e_{i}\right)^{-1}\left(\prod_{j=m+1}^{r}k_{j}^{\sum_{i\notin I_{m}}a_{i,j}}\right)\left(\prod_{j=1}^{m}P_{j}^{n_{j}}\right) \\ q^{-(n+r-s)}|S_{a,q,\ee}((x_{i})_{i\in I_{m}})||I_{\kk,(x_{i})_{i\in I_{m}}}(P\beta)| \\ \ll \left(\prod_{i\notin I_{m}}e_{i}\right)^{-1}\left(\prod_{j=m+1}^{r}k_{j}^{\sum_{i\notin I_{m}}a_{i,j}}\right)\left(\prod_{j=1}^{m}P_{j}^{n_{j}}\right)  \\  \ll \left(\prod_{i\notin I_{m}}e_{i}\right)^{-1}\left(\prod_{j=m+1}^{r}k_{j}^{\sum_{i\notin I_{m}}a_{i,j}}\right)\left(\prod_{j=1}^{m}P_{j}^{n_{j}}\right)P_{m}^{-1}qe_{0,m}P^{\tilde{d}_{m}\theta}.
\end{multline*}
d'o\`u le r\'esultat. Nous supposerons donc $ e_{0,m}q\leqslant P_{m} $.\\

Comme dans la d\'emonstration du lemme\;\ref{separation3}, on peut \'ecrire :  
 \begin{equation}\label{formulesepx3} S_{(x_{i})_{i\in I_{m}},\ee}(\alpha)=\sum_{\bb=(b_{i})_{i\notin I_{m}}\in (\ZZ/q\ZZ)^{n+r-s}}e\left(\frac{a}{q}F((e_{i}x_{i})_{i\in I_{m}},(e_{i}b_{i})_{i\notin I_{m}})\right)\tilde{S}_{m}(\bb) \end{equation} o\`u \[ \tilde{S}_{m}(\bb)=\sum_{\substack{ (x_{i})_{i\notin I_{m}}\in I(q,\kk)}}e(\beta F(\ee.\xx)) \]
avec \begin{multline*}
 I(q,\kk)=\{(x_{i})_{i\notin I_{m}}\; |\; \forall i\notin I_{m}, \;x_{i}\equiv b_{i} (q) \\ \forall j\in \{1,...,m\},\; \left\lfloor|(\ee.\xx)^{E(n+j)}|\right\rfloor\leqslant P_{j} \\ \forall j\in \{m+1,...,r\},\; k_{j}\leqslant|(\ee.\xx)^{E(n+j)}|<k_{j}+1 \\ |x_{i}|\leqslant \frac{1}{e_{i}}\prod_{j=1}^{r}(\left\lfloor|(\ee.\xx)^{E(n+j)}|\right\rfloor+1)^{a_{i,j}} \}
\end{multline*}
On consid\`ere  deux \'el\'ements $ (x_{i}')_{i\notin I_{m}} $ et $ (x_{i}'')_{i\notin I_{m}} $ deux \'el\'ement de $ \RR^{n+r-s} $ tels que \[|\xx'-\xx''|\leqslant 2\]  et tels que $ (qx_{i}'+b_{i})_{i\notin I_{m}} $, $ (qx_{i}''+b_{i})_{i\notin I_{m}} $ soient deux \'el\'ements de $ I(q,\kk) $. On a alors\[\left| F(\ee.(q\xx'+\bb))-F(\ee.(q\xx''+\bb))\right|\ll e_{0,m}q\left(\prod_{j=m+1}^{r}k_{j}^{d_{j}}\right)\left(\prod_{j=1}^{m}P_{j}^{d_{j}}\right)P_{m}^{-1}. \] Par cons\'equent,
  \begin{multline*}
\tilde{S}_{m}(\bb)=\int_{\substack{(\tilde{u}_{i})_{i\notin I_{m}}\in \tilde{\Xi}(q,\kk,(x_{i})_{i\in I_{m}}) }}e(\beta F((e_{i}u_{i})_{i\in I_{m}},(e_{i}q\tilde{u}_{i})_{i\notin I_{m}}))(d\tilde{u}_{i})_{i\notin I_{m}} \\ + O\left(\underbrace{|\beta|}_{\leqslant P^{-1+\tilde{d}_{m}\theta}}\left(\prod_{i\notin I_{m}}e_{i}\right)^{-1}q^{-(n+r-s)}\left(\prod_{j=m+1}^{r}k_{j}^{\sum_{i\notin I_{m}}a_{i,j}}\right)\right. \\ \left.\left(\prod_{j=1}^{m}P_{j}^{n_{j}}\right)\left(\prod_{j=m+1}^{r}k_{j}^{d_{j}}\right)\left(\prod_{j=1}^{m}P_{j}^{d_{j}}\right)P_{m}^{-1}qe_{0,m}\right) \\ + O\left(e_{0,m}\left(\prod_{i\notin I_{m}}e_{i}\right)^{-1}\left(\prod_{j=m+1}^{r}k_{j}^{\sum_{i\notin I_{m}}a_{i,j}}\right)\left(\prod_{j=1}^{m}P_{j}^{n_{j}}\right)q^{-(n+r-s)+1}P_{m}^{-1}\right)
\end{multline*}
o\`u \begin{multline*}
 \tilde{\Xi}(q,\kk,(x_{i})_{i\in I_{m}})=\{(\tilde{u}_{i})_{i\notin I_{m}}\in \RR^{n+r-s} \; |\; \\ \forall i\notin I_{m}, \; |q\tilde{u}_{i}|\leqslant |((e_{l}x_{l})_{l\in I_{m}},(e_{l}q\tilde{u}_{l})_{l\notin I_{m}})^{E(i)}| \\ \forall j\in \{m+1,...,r\}, k_{j}\leqslant | ((e_{l}x_{l})_{l\in I_{m}},(e_{l}q\tilde{u}_{l})_{l\notin I_{m}})^{E(n+j)}|<k_{j}+1 \\ \forall j\in \{1,...,m\}, \; |((e_{l}x_{l})_{l\in I_{m}},(e_{l}q\tilde{u}_{l})_{l\notin I_{m}})^{E(n+j)}|\leqslant P_{j}\}.
\end{multline*}
En effectuant le changement de variables \[ \forall i\notin I_{m}, \; \; q\tilde{u_{i}}=\frac{1}{e_{i}}\left(\prod_{j=m+1}^{r}k_{j}^{a_{i,j}}\right)\left(\prod_{j=1}^{m}P_{j}^{a_{i,j}}\right)u_{i}, \]
on obtient : \begin{multline*}
\tilde{S}_{m}(\bb)=\left(\prod_{i\notin I_{m}}e_{i}\right)^{-1}\left(\prod_{j=m+1}^{r}k_{j}^{\sum_{i\notin I_{m}}a_{i,j}}\right)\left(\prod_{j=1}^{m}P_{j}^{n_{j}}\right)q^{-(n+r-s)} \\ \int_{\substack{(u_{i})_{i\notin I_{m}}\in \Xi(\kk,(x_{i})_{i\in I_{m}}) }}e(\beta PF((e_{i}x_{i})_{i\in I_{m}},(u_{i})_{i\notin I_{m}}))(du_{i})_{i\notin I_{m}} \\ + O\left( \left(\prod_{i\notin I_{m}}e_{i}\right)^{-1}q^{-(n+r-s)}\left(\prod_{j=m+1}^{r}k_{j}^{\sum_{i\notin I_{m}}a_{i,j}}\right)\right. \\ \left.\left(\prod_{j=1}^{m}P_{j}^{n_{j}}\right)P_{m}^{-1}qe_{0,m}P^{\tilde{d}_{m}\theta}\right) 
\end{multline*}
Il suffit ensuite de remplacer $ \tilde{S}(\bb) $ par cette expression dans \eqref{formulesepx3} pour trouver l'\'egalit\'e du lemme.

\end{proof}
En observant que : 
\[ \Vol(\mathfrak{M}^{(\kk)'}(\theta))\ll \sum_{1\leqslant q \leqslant \phi(\ee,\kk,\theta)}\sum_{\substack{0\leqslant a<q\\ \PGCD(a,q)=1}}P^{-1+\tilde{d}_{m}\theta} \ll \left(e_{0,m}\prod_{j=m+1}^{r}k_{j}^{d_{j}}\right)^{2}P^{-1+3\tilde{d}_{m}\theta}. \] et en utilisant le lemme\;\ref{separationx3} dans la formule \eqref{formulesommex3}, on obtient pour tout $ (x_{i})_{i\in I_{m}}\in \mathcal{A}_{m}^{\lambda}\cap \ZZ^{s} $ \begin{multline}\label{presquefinx3}
N_{(x_{i})_{i\in I_{m}},\ee}(P_{1},...,P_{m})=\mathfrak{S}_{(x_{i})_{i\in I_{m}},\ee}(\phi(\ee,\kk,\theta))J_{(x_{i},e_{i})_{i\in I_{m}},\kk}(\frac{1}{2}P^{\tilde{d}_{m}\theta}) \\ \left(\prod_{i\notin I_{m}}e_{i}\right)^{-1}\left(\prod_{j=m+1}^{r}k_{j}^{\sum_{i\notin I_{m}}a_{i,j}}\right)\left(\prod_{j=1}^{m}P_{j}^{n_{j}-d_{j}}\right)
\\ +O\left(e_{0,m}^{4}\left(\prod_{i\notin I_{m}}e_{i}\right)^{-1}\left(\prod_{j=m+1}^{r}k_{j}^{3d_{j}+\sum_{i\notin I_{m}}a_{i,j}}\right)\left(\prod_{j=1}^{m}P_{j}^{n_{j}-d_{j}}\right)P_{m}^{-1}P^{5\tilde{d}_{m}\theta}\right).
\\ + O\left(e_{0,m}\left(\prod_{i\notin I_{m}}e_{i}\right)^{-1+\frac{1}{2^{m}}}\left(\prod_{j=m+1}^{r}k_{j}^{d_{j}+\varepsilon+\sum_{i\notin I_{m}}a_{i,j}}\right)\right. \\ \left.\left(\prod_{j=1}^{m}P_{j}^{n_{j}-d_{j}+1}\right)P^{2\varepsilon-\Delta(\theta,K_{m})}\right),
\end{multline}
avec \begin{equation}
\mathfrak{S}_{(x_{i})_{i\in I_{m}},\ee}(Q)=\sum_{1\leqslant q\leqslant Q}q^{-(n+r-s)}\sum_{\substack{0\leqslant a<q\\ \PGCD(a,q)=1}}S_{a,q,\ee}((x_{i})_{i\in I_{m}}),
\end{equation}
\begin{equation}
J_{(x_{i},e_{i})_{i\in I_{m}},\kk}(\phi)=\int_{|\beta|\leqslant \phi}I_{\kk,(x_{i},e_{i})_{i\in I_{m}}}(\beta)d\beta. 
\end{equation}
Posons \`a pr\'esent : 
\begin{equation}
\mathfrak{S}_{(x_{i})_{i\in I_{m}},\ee}=\sum_{q=1}^{\infty}q^{-(n+r-s)}\sum_{\substack{0\leqslant a<q\\ \PGCD(a,q)=1}}S_{a,q,\ee}((x_{i})_{i\in I_{m}}),
\end{equation}
\begin{equation}
J_{(x_{i},e_{i})_{i\in I_{m}},\kk}=\int_{\beta\in \RR}I_{\kk,(x_{i},e_{i})_{i\in I_{m}}}(\beta)d\beta. 
\end{equation}
Nous allons \`a pr\'esent d\'emontrer des analogues des lemmes\;\ref{S3} et\;\ref{J3} : 

\begin{lemma}\label{Jx3}
Soit $ (x_{i})_{i\in I_{m}}\in \mathcal{A}_{m}^{\lambda}\cap \ZZ^{s} $. Si l'on suppose \[ K_{m}b_{m}>\left(b_{m}+3\sum_{j=1}^{m}b_{j}\right)\tilde{d}_{m}+2, \] l'int\'egrale $ J_{(x_{i},e_{i})_{i\in I_{m}},\kk} $ est absolument convergente et on a de plus \begin{multline*} |J_{(x_{i},e_{i})_{i\in I_{m}},\kk}(\frac{1}{2}P^{\tilde{d}_{m}\theta})-J_{(x_{i},e_{i})_{i\in I_{m}},\kk}|\\ \ll \left(\prod_{j=m+1}^{r}k_{j}^{d_{j}}\right)^{\frac{2\sum_{j=1}^{m}b_{j}}{b_{m}+\sum_{j=1}^{m}b_{j}}+\varepsilon}P^{\left( (1+\frac{2\sum_{j=1}^{m}b_{j}}{b_{m}+\sum_{j=1}^{m}b_{j}})\tilde{d}_{m}-\frac{b_{m}K_{m}}{b_{m}+\sum_{j=1}^{m}b_{j}}\right)\theta+\varepsilon} . \end{multline*} Par ailleurs \[ | J_{(x_{i},e_{i})_{i\in I_{m}},\kk}|\ll \left(\prod_{j=m+1}^{r}k_{j}^{d_{j}}\right)^{\frac{2\sum_{j=1}^{m}b_{j}}{b_{m}+\sum_{j=1}^{m}b_{j}}+\varepsilon}. \]
\end{lemma}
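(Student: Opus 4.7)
The plan is to adapt the proof of Lemma \ref{J3} to this finer setting in which some of the variables have been frozen. For an arbitrary $\beta\in\RR$, the goal is to establish a pointwise bound on $I_{\kk,(x_{i},e_{i})_{i\in I_{m}}}(\beta)$ of the form $|\beta|^{-c}$ with $c>1$, since absolute convergence and the tail estimate both follow from such a bound after integrating.

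First I would fix $\beta\neq 0$ and introduce an auxiliary parameter $\theta'\in(0,1]$ together with a scale $P$ chosen so that $|2\beta|$ equals (a harmless multiple of) $P^{\tilde{d}_{m}\theta'}$. With this calibration $P^{-1}\beta$ sits on the boundary of the elementary major arc $\mathfrak{M}^{(\kk)'}_{0,1}(\theta')$ but lies outside $\mathfrak{M}^{(\kk)}(\theta'')$ for any $\theta''$ strictly smaller than $\theta'$ (one argues as in the proof of the disjointness Lemma~\ref{disjointsx3}). Applying Lemma \ref{separationx3} with $a=0,q=1$ then expresses $S_{(x_{i})_{i\in I_{m}},\ee}(P^{-1}\beta)$ as the product of a main coefficient and $I_{\kk,(x_{i},e_{i})_{i\in I_{m}}}(\beta)$, plus an explicit separation error of relative size $e_{0,m}P_{m}^{-1}P^{\tilde{d}_{m}\theta'}$. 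On the other hand, since $P^{-1}\beta$ is in minor arcs for the parameter $\theta''$, the Weyl dichotomy of Lemma~\ref{dilemme4x3} provides an upper bound of relative size $P^{-K_{m}\theta'+\varepsilon}$ for $S_{(x_{i})_{i\in I_{m}},\ee}(P^{-1}\beta)$. Dividing the separation identity by the main coefficient and plugging in the Weyl bound yields
\[
|I_{\kk,(x_{i},e_{i})_{i\in I_{m}}}(\beta)|\ll e_{0,m}P_{m}^{-1}P^{\tilde{d}_{m}\theta'}+\Bigl(\prod_{j=m+1}^{r}k_{j}^{d_{j}}\Bigr)^{\varepsilon}P^{-K_{m}\theta'+\varepsilon}\cdot(\text{powers of }P_{j})^{\varepsilon}.
\]

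The delicate point is the optimisation in $\theta'$. With $P=\prod_{j=1}^{m}P_{j}^{d_{j}}$ and $P_{j}=P_{m}^{b_{j}/b_{m}}$, the factor $P_{m}^{-1}$ reads $P^{-b_{m}/\sum_{j=1}^{m}b_{j}d_{j}}$, so both error terms become powers of $|\beta|$ whose exponents depend affinely on $\theta'$. Equating the two exponents pins down $\theta'$ and yields the sharp power $b_{m}/(b_{m}+\sum_{j=1}^{m}b_{j})$ that appears in the statement; the factors $(\prod_{j=m+1}^{r}k_{j}^{d_{j}})^{\cdots}$ are the residual contribution of the Weyl bound after this balancing. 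The arithmetic inequality $K_{m}b_{m}>(b_{m}+3\sum_{j=1}^{m}b_{j})\tilde{d}_{m}+2$ is exactly the condition that the common exponent, once expressed in terms of $|\beta|$, is strictly smaller than $-1$; this is what I expect to be the main technical nuisance to verify carefully, because the bookkeeping of the exponents of $P$, of the $k_{j}$ and of the $e_{i}$ must all be done simultaneously.

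Once the pointwise bound $|I_{\kk,(x_{i},e_{i})_{i\in I_{m}}}(\beta)|\ll C(\kk)|\beta|^{-1-\eta}$ (with $\eta>0$) is established, the tail estimate follows by integrating over $|\beta|>\phi$, giving precisely the displayed estimate with $\phi=\tfrac{1}{2}P^{\tilde{d}_{m}\theta}$. The global bound on $|J_{(x_{i},e_{i})_{i\in I_{m}},\kk}|$ is then obtained by splitting the integral at $|\beta|=1$, using the trivial volume bound $|I_{\kk,(x_{i},e_{i})_{i\in I_{m}}}(0)|\ll\Vol\bigl(\Xi(\kk,(x_{i})_{i\in I_{m}})\bigr)$ for the bounded part and the pointwise decay just proved for the tail; the volume factor contributes $(\prod_{j=m+1}^{r}k_{j}^{d_{j}})^{\varepsilon}$ through the constraints $|\uu^{E(n+j)}|\asymp 1$, which matches the stated exponent once the balancing is taken into account.
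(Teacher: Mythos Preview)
Your overall scheme — calibrate $|\beta|\asymp P^{\tilde d_m\theta'}$, apply Lemma~\ref{separationx3} at $(a,q)=(0,1)$, feed in the Weyl bound of Lemma~\ref{dilemme4x3}, then integrate the tail — is exactly the paper's strategy and is correct in outline. Two points, however, need fixing.

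First, a computational slip: after dividing the separation identity by the main coefficient, the relative Weyl error is $(\prod_{j\le m}P_j)P^{-K_m\theta'+\varepsilon}$, not $P^{-K_m\theta'+\varepsilon}\cdot(\text{powers of }P_j)^\varepsilon$; the extra $\prod_{j\le m}P_j$ comes from the discrepancy $P_j^{n_j+1}$ versus $P_j^{n_j}$ between Lemma~\ref{dilemme4x3} and Lemma~\ref{separationx3}. This factor is essential for the balancing to make sense (without it the Weyl term would already be a pure power of $|\beta|$ independent of $\theta'$).

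Second, and more importantly, your balancing of ``separation error versus Weyl error'' does \emph{not} by itself produce the $k_j$–prefactor $(\prod_{j>m}k_j^{d_j})^{2\sum b_j/(b_m+\sum b_j)}$: neither term carries more than a $(\prod k_j)^\varepsilon$. The large $k_j$–exponent comes from a constraint you gloss over. For the Weyl alternative to place $P^{-1}\beta$ outside $\mathfrak M^{(\kk)}(\theta')$ one needs the major arcs $\mathfrak M^{(\kk)'}_{a,q}(\theta')$ to be disjoint, and by Lemma~\ref{disjointsx3} this forces
\[
\Bigl(e_{0,m}\prod_{j>m}k_j^{d_j}\Bigr)^{2}P^{-1+3\tilde d_m\theta'}<1.
\]
This is a genuine, $k_j$–dependent restriction on the pair $(P,\theta')$; when the $k_j$ are large it pushes $\theta'$ down, weakening the Weyl saving and generating the prefactor. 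The paper handles this by building the constraint directly into the calibration: it sets $e_i=1$ for $i\notin I_m$ and imposes, in addition to $|\beta|=\tfrac12 P^{\tilde d_m\theta'}$, the relation
\[
\Bigl(\prod_{j\le m}P_j\Bigr)P^{-K_m\theta'}=\Bigl(\prod_{j>m}k_j^{d_j}\Bigr)^{2}P^{2\tilde d_m\theta'}P_m^{-1},
\]
which simultaneously guarantees disjointness and makes the Weyl term dominate. Solving gives an explicit $\theta'=\theta'(|\beta|,\kk)$, and substituting back yields $|I(\beta)|\ll(\prod_{j>m}k_j^{d_j})^{2\sum b_j/(b_m+\sum b_j)+\varepsilon}|\beta|^{-c}$ with the stated $c$; the hypothesis on $K_mb_m$ is what makes $c>1$. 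Your naive balancing (ignoring the disjointness constraint) would give a smaller $c$ and no $k_j$–prefactor, but is only valid for $|\beta|$ large relative to $\prod k_j^{d_j}$, so it does not control the whole integral.

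Finally, for the global bound $|J_{(x_i,e_i)_{i\in I_m},\kk}|$, the paper does not split at $|\beta|=1$; it simply specialises the tail estimate to $P\ll 1$ and uses the trivial bound $|J(\phi)|\ll 1$. The exponent $\frac{2\sum b_j}{b_m+\sum b_j}$ therefore comes from the $k_j$–prefactor in the pointwise bound on $I(\beta)$, not from the volume of $\Xi(\kk,(x_i)_{i\in I_m})$.
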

\begin{proof}
Consid\'erons $ \beta\in \RR $ et choisissons des param\`etres $ P_{1},...,P_{m} $, $ P $ et $ \theta' $ tels que \begin{equation}\label{eqJx13} |\beta|=\frac{1}{2}P^{\tilde{d}_{m}\theta'}, \end{equation}
\begin{equation}\label{eqJx23}
 \left(\prod_{j=1}^{m}P_{j}\right)P^{-K_{m}\theta'}=\left(e_{0,m}\prod_{j=m+1}^{r}k_{j}^{d_{j}}\right)^{2}P^{2\tilde{d}_{m}\theta'}P_{m}^{-1}. \end{equation} 
Ces deux \'egalit\'es impliquent alors \[ \left(\prod_{j=1}^{m}P_{j}\right)P^{-K_{m}\theta'}=\left(e_{0,m}\prod_{j=m+1}^{r}k_{j}^{d_{j}}\right)^{2}(2|\beta|)^{2}P_{m}^{-1} \] et donc \begin{equation}\label{eqJx33}
\theta'=\frac{\frac{1}{\tilde{d}_{m}}\log(2|\beta|)\frac{b_{m}+\sum_{j=1}^{m}b_{j}}{\sum_{j=1}^{m}b_{j}d_{j}}}{2\log(e_{0,m}\prod_{j=m+1}^{r}k_{j}^{d_{j}})+\left(\frac{K_{m}}{\tilde{d}_{m}}+2\right)\log(2|\beta|)}.
\end{equation}  
En particulier \[ \theta'\gg \min\{1,\frac{\log(2|\beta|)}{\log(e_{0,m}\prod_{j=m+1}^{r}k_{j}^{d_{j}})}\}. \] 
Par ailleurs l'\'egalit\'e\;\eqref{eqJx23} implique \[ \left(e_{0,m}\prod_{j=m+1}^{r}k_{j}^{d_{j}}\right)^{2}P^{-1+3\theta'\tilde{d}_{m}}=\underbrace{\left(\prod_{j=1}^{m}P_{j}^{d_{j}}\right)^{-1}\left(\prod_{j=1}^{m}P_{j}\right)P_{m}}_{\leqslant 1}\underbrace{P^{(\tilde{d}_{m}-K_{m})\theta'}}_{<1}<1, \] et donc d'apr\`es le lemme\;\ref{disjointsx3}, les arcs majeurs $ \mathfrak{M}_{a,q}^{(\kk)'}(\theta') $ sont disjoints. Dans tout ce qui va suivre, nous fixerons $ e_{i}=1 $ pour tout $ i\notin I_{m} $, et donc $ e_{0,m}=1 $. On remarque que le r\'eel $ P^{-1}\beta $ appartient au bord de $ \mathfrak{M}_{0,1}^{(\kk)}(\theta') $. Par cons\'equent, en utilisant le lemme\;\ref{dilemme4x3}, on a : \begin{equation}\label{ineqJx13}
|S_{(x_{i})_{i\in I_{m}},\ee}(P^{-1}\beta)| \ll  \left(\prod_{j=m+1}^{r}k_{j}^{\sum_{i\notin I_{m}}a_{i,j}+\varepsilon}\right) \left(\prod_{j=1}^{m}P_{j}^{n_{j}+1}\right)P^{-K_{m}\theta'+\varepsilon},
\end{equation}
D'autre part, d'apr\`es le lemme\;\ref{separationx3} : 
 \begin{multline}\label{inegJx23} S_{(x_{i})_{i\in I_{m}},\ee}(P^{-1}\beta)=\left(\prod_{j=m+1}^{r}k_{j}^{\sum_{i\notin I_{m}}a_{i,j}}\right)\left(\prod_{j=1}^{m}P_{j}^{n_{j}}\right) I_{\kk,(x_{i},e_{i})_{i\in I_{m}}}(\beta)
\\ +O\left(\left(\prod_{j=m+1}^{r}k_{j}^{\sum_{i\notin I_{m}}a_{i,j}}\right)\left(\prod_{j=1}^{m}P_{j}^{n_{j}}\right)qP_{m}^{-1}P^{\tilde{d}_{m}\theta'}\right).
\end{multline}
Nous d\'eduisons de\;\eqref{ineqJx13} et\;\eqref{inegJx23} que : \begin{align*}
|I_{\kk,(x_{i},e_{i})_{i\in I_{m}}}(\beta)| & \ll \left(\prod_{j=m+1}^{r}k_{j}^{\varepsilon}\right)\left(\prod_{j=1}^{m}P_{j}\right)P^{-K_{m}\theta'+\varepsilon} + \left(\prod_{j=m+1}^{r}k_{j}^{d_{j}}\right)P^{2\tilde{d}_{m}\theta'}P_{m}^{-1} \\ & \ll \left(\prod_{j=m+1}^{r}k_{j}\right)^{\varepsilon}\left(\prod_{j=1}^{m}P_{j}\right)P^{-K_{m}\theta'+\varepsilon} \\ & =\left(\prod_{j=m+1}^{r}k_{j}\right)^{\varepsilon}P^{\left(\frac{\sum_{j=1}^{m}b_{j}}{\sum_{j=1}^{m}b_{j}d_{j}}\right)-K_{m}\theta'+\varepsilon} \\ &\ll \left(\prod_{j=m+1}^{r}k_{j}\right)^{\varepsilon}|\beta|^{\frac{1}{\tilde{d}_{m}\theta'}\left(\frac{\sum_{j=1}^{m}b_{j}}{\sum_{j=1}^{m}b_{j}d_{j}}+\varepsilon\right)-\frac{K_{m}}{\tilde{d_{m}}}}.
\end{align*} pour tout $ \beta\in \RR $. \\

On a donc que \begin{multline*}
 |J_{(x_{i},e_{i})_{i\in I_{m}},\kk}(\frac{1}{2}P^{\tilde{d}_{m}\theta})-J_{(x_{i},e_{i})_{i\in I_{m}},\kk}| \\ \ll \left(\prod_{j=m+1}^{r}k_{j}\right)^{\varepsilon}\int_{|\beta|\geqslant \frac{1}{2}P^{\tilde{d}_{m}\theta} }|\beta|^{\frac{1}{\tilde{d}_{m}\theta'}\left(\frac{\sum_{j=1}^{m}b_{j}}{\sum_{j=1}^{m}b_{j}d_{j}}+\varepsilon\right)-\frac{K_{m}}{\tilde{d_{m}}}}d\beta.
\end{multline*}
Or, d'apr\`es l'\'egalit\'e\;\eqref{eqJx33} : \[ \frac{1}{\tilde{d}_{m}\theta'}\frac{b_{m}+\sum_{j=1}^{m}b_{j}}{\sum_{j=1}^{m}b_{j}d_{j}} \leqslant \frac{2\log\left(\prod_{j=m+1}^{r}k_{j}^{d_{j}}\right)}{\log(2|\beta|)}+\left(\frac{K_{m}}{\tilde{d}_{m}}+2\right), \] donc \[  \frac{1}{\tilde{d}_{m}\theta'}\frac{\sum_{j=1}^{m}b_{j}}{\sum_{j=1}^{m}b_{j}d_{j}} \leqslant \left(\frac{\sum_{j=1}^{m}b_{j}}{b_{m}+\sum_{j=1}^{m}b_{j}}\right)\left(\frac{2\log\left(\prod_{j=m+1}^{r}k_{j}^{d_{j}}\right)}{\log(2|\beta|)}+\left(\frac{K_{m}}{\tilde{d}_{m}}+2\right)\right), \]
et ainsi : \[ |\beta|^{\frac{1}{\tilde{d}_{m}\theta'}\left(\frac{\sum_{j=1}^{m}b_{j}}{\sum_{j=1}^{m}b_{j}d_{j}}+\varepsilon\right)} \ll \left(\prod_{j=m+1}^{r}k_{j}^{d_{j}+\varepsilon}\right)^{\frac{2\sum_{j=1}^{m}b_{j}}{b_{m}+\sum_{j=1}^{m}b_{j}}}|\beta|^{\frac{\sum_{j=1}^{m}b_{j}}{b_{m}+\sum_{j=1}^{m}b_{j}}\left(\frac{K_{m}}{\tilde{d}_{m}}+2\right)+\varepsilon}. \]
On obtient alors :  \begin{multline*}
 |J_{(x_{i},e_{i})_{i\in I_{m}},\kk}(\frac{1}{2}P^{\tilde{d}_{m}\theta})-J_{(x_{i},e_{i})_{i\in I_{m}},\kk}| \\ \ll \left(\prod_{j=m+1}^{r}k_{j}^{d_{j}+\varepsilon}\right)^{\frac{2\sum_{j=1}^{m}b_{j}}{b_{m}+\sum_{j=1}^{m}b_{j}}}\int_{|\beta|\geqslant \frac{1}{2}P^{\tilde{d}_{m}\theta} }|\beta|^{-\frac{b_{m}}{b_{m}+\sum_{j=1}^{m}b_{j}}\frac{K_{m}}{\tilde{d_{m}}}+\frac{2\sum_{j=1}^{m}b_{j}}{b_{m}+\sum_{j=1}^{m}b_{j}}+\varepsilon}d\beta \\ \ll  \left(\prod_{j=m+1}^{r}k_{j}^{d_{j}}\right)^{\frac{2\sum_{j=1}^{m}b_{j}}{b_{m}+\sum_{j=1}^{m}b_{j}}+\varepsilon}P^{\left(\left(1+\frac{2\sum_{j=1}^{m}b_{j}}{b_{m}+\sum_{j=1}^{m}b_{j}}\right)\tilde{d}_{m}-\frac{b_{m}K_{m}}{b_{m}+\sum_{j=1}^{m}b_{j}}\right)\theta+\varepsilon},
\end{multline*} car on a suppos\'e $ K_{m}b_{m}>\left(b_{m}+3\sum_{j=1}^{m}b_{j}\right)\tilde{d}_{m}+2 $, ce qui implique la convergence. En particulier, si l'on choisit $ P\ll 1 $, cette majoration donne : \[
 |\underbrace{J_{(x_{i},e_{i})_{i\in I_{m}},\kk}(\frac{1}{2}P^{\tilde{d}_{m}\theta})}_{\ll 1}-J_{(x_{i},e_{i})_{i\in I_{m}},\kk}|  \ll   \left(\prod_{j=m+1}^{r}k_{j}^{d_{j}}\right)^{\frac{2\sum_{j=1}^{m}b_{j}}{b_{m}+\sum_{j=1}^{m}b_{j}}+\varepsilon},
\] et donc \[ | J_{(x_{i},e_{i})_{i\in I_{m}},\kk}|\ll \left(\prod_{j=m+1}^{r}k_{j}^{d_{j}}\right)^{\frac{2\sum_{j=1}^{m}b_{j}}{b_{m}+\sum_{j=1}^{m}b_{j}}+\varepsilon}, \] et le lemme est d\'emontr\'e.

\end{proof}
Nous allons \`a pr\'esent comparer $ \mathfrak{S}_{(x_{i})_{i\in I_{m}},\ee} $ et $ \mathfrak{S}_{(x_{i})_{i\in I_{m}},\ee}(\phi(\ee,\kk,\theta)) $. Pour cela, introduisons la s\'erie g\'en\'eratrice (pour $ T\geqslant 1 $ fix\'e) : \begin{equation}
\tilde{S}_{(x_{i})_{i\in I_{m}},\ee}(\alpha)=\sum_{\substack{(x_{i})_{i\notin I_{m}}\\ |x_{i}|\leqslant T}}e(\alpha F(\ee.\xx)).
\end{equation}
Comme nous l'avons fait pour le lemme\;\ref{dilemme4x3}, nous pouvons \'etablir : 
\begin{lemma}
Pour tout r\'eel $ T\geqslant 1 $, l'une au moins des assertions suivantes est vraie : 
 \begin{enumerate}
\item  on a la majoration : \[|\tilde{S}_{(x_{i})_{i\in I_{m}},\ee}(\alpha)|\ll T^{n+r-s+\varepsilon-K_{m}\theta}\]
\item le r\'eel $ \alpha $ appartient \`a $ \tilde{\mathfrak{M}}^{(\kk)}(\theta) $,
\end{enumerate}
o\`u \begin{equation}
\tilde{\mathfrak{M}}^{(\kk)}(\theta)=\bigcup_{i\in I_{0,m}}\bigcup_{0<q\leqslant e_{0,m} \left(\prod_{j=m+1}^{r}k_{j}^{d_{j}}\right)T^{\tilde{d}_{m}\theta}}\bigcup_{\substack{0\leqslant a<q \\ \PGCD(a,q)=1}}\tilde{\mathfrak{M}}_{a,q}^{(i,\kk)}(\theta),
\end{equation}
\begin{equation}
\tilde{\mathfrak{M}}_{a,q}^{(i,\kk)}(\theta)=\{\alpha\in [0,1[ \; | \; |\alpha q-a|<e_{i}T^{-\max\{\sum_{k\geqslant 1}t_{j,k}\}+\tilde{d}_{m}\theta}\}.
\end{equation}
\end{lemma}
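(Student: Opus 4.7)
Le plan est de reproduire exactement la strat\'egie utilis\'ee pour \'etablir le lemme\;\ref{dilemme4x3}, mais en l'appliquant \`a la somme plus simple $\tilde{S}_{(x_{i})_{i\in I_{m}},\ee}(\alpha)$, dans laquelle toutes les variables $x_{i}$, $i\notin I_{m}$, sont major\'ees par une m\^eme borne $T$ plut\^ot que par des bornes de la forme $\frac{1}{e_{i}}\prod_{j}(k_{j}+1)^{a_{i,j}}\prod_{j=1}^{m}(\lfloor|(\ee.\xx)^{E(n+j)}|\rfloor+1)^{a_{i,j}}$. La simplification principale est qu'il n'est plus n\'ecessaire de d\'ecouper le domaine en bo\^ites $\BB_{\kk,\II}$ indexant des niveaux de $(\ee.\xx)^{E(n+j)}$ : on peut attaquer directement la somme sur le cube $[-T,T]^{n+r-s}$ en les variables $x_{i}$, $i\notin I_{m}$, les $x_{i}$ pour $i\in I_{m}$ \'etant gel\'es et jouant le r\^ole de coefficients dans le polyn\^ome $F(\ee.\xx)$.

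Premi\`erement, j'appliquerais la diff\'erentiation de Weyl introduite \`a la section\;\ref{sectionweyl3}, via l'op\'erateur $\Delta^{\tt^{(m)}}=\Delta^{\tt_{1}^{(m)}}\circ\cdots\circ\Delta^{\tt_{m}^{(m)}}$ construit avec l'ensemble de degr\'es $(t_{j,k}^{(m)})_{(j,k)\in\CCC_{0,m}}$ v\'erifiant $\sum_{k\geqslant 1}kt_{j,k}^{(m)}=d_{j}$ pour tout $j\in\{1,\dots,m\}$. Apr\`es application r\'ep\'et\'ee d'in\'egalit\'es de H\"older et des identit\'es de diff\'erenciation, on obtient, en notant $D_{j}^{(m)}=\sum_{k\geqslant 1}t_{j,k}^{(m)}$, une in\'egalit\'e du type
\[|\tilde{S}_{(x_{i})_{i\in I_{m}},\ee}(\alpha)|^{2^{D_{1}^{(m)}+\cdots+D_{m}^{(m)}}}\ll T^{N}\sum_{\XX}\prod_{(i,\l)\in E_{m}(1,K)}\min\bigl(T,\|\alpha e_{i}\gamma^{(1,K)}_{(i,\l,m)}(\widehat{\XX}_{(1,K)})\|^{-1}\bigr),\]
o\`u $N$ est un exposant explicite et $\gamma^{(1,K)}_{(i,\l,m)}$ provient de la proposition\;\ref{propdeltat3}. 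Le lemme\;\ref{propdeltat3} s'applique bien puisque $F$ est quasi-$r$-homog\`ene de $r$-degr\'e $(d_{1},\dots,d_{r})$ et l'on a $\sum_{k\geqslant 1}kt_{j,k}^{(m)}=d_{j}$.

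Deuxi\`emement, j'appliquerais les arguments de g\'eom\'etrie des r\'eseaux de la section\;\ref{sectionreseau3} (lemmes\;\ref{geomnomb12}, \ref{geomnomb22} et les r\'ecurrences \ref{recurrence13}, \ref{recurrence2cor3}, \ref{recurrence33}), mais en prenant d\`es le d\'epart $T_{i}=T$ pour tout $i\notin I_{m}$ et $P_{j}=T$ pour tout $j\in\{1,\dots,m\}$ (de sorte que $P=\prod_{j=1}^{m}P_{j}^{d_{j}}=T^{\sum_{j=1}^{m}d_{j}}$ et $\tilde{d}_{m}=\sum_{j=1}^{m}d_{j}-1$). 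Ceci est l\'egitime car les lemmes de g\'eom\'etrie des nombres ont \'et\'e \'enonc\'es pour des bornes arbitraires. Ceci m\`ene \`a un analogue du lemme\;\ref{dilemme33} (ou plut\^ot de sa version \og fix\'e \fg\ du lemme\;\ref{dilemme3x3}) comportant trois alternatives : une bonne majoration de $|\tilde{S}_{(x_{i})_{i\in I_{m}},\ee}(\alpha)|$, l'appartenance de $\alpha$ \`a une r\'eunion d'arcs majeurs param\'etr\'ee par $T$, ou l'existence d'une sous-vari\'et\'e $V^{\ast}_{m,(e_{i}x_{i})_{i\in I_{m}},\tt^{(m)},(j_{0},K_{j_{0}})}$ de grande dimension.

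Troisi\`emement, puisque l'\'enonc\'e du lemme suppose implicitement que l'on travaille avec le m\^eme $K_{m}$ et le m\^eme $\lambda$ fix\'es plus haut, la troisi\`eme alternative est exclue d\`es que $(e_{i}x_{i})_{i\in I_{m}}\in \mathcal{A}_{m}^{\lambda}$, comme dans la discussion qui pr\'ec\`ede le lemme\;\ref{dilemme4x3}. On pourra alors consolider le tout en un dilemme \`a deux branches : soit $\alpha\in \tilde{\mathfrak{M}}^{(\kk)}(\theta)$, soit l'on dispose d'une majoration $|\tilde{S}_{(x_{i})_{i\in I_{m}},\ee}(\alpha)|\ll T^{n+r-s+\varepsilon-K_{m}\theta}$, obtenue en rassemblant les facteurs en $T$ et en estimant trivialement $\min(T,\|\cdot\|^{-1})\leqslant T$ l\`a o\`u c'est pertinent. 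La forme explicite des arcs $\tilde{\mathfrak{M}}_{a,q}^{(i,\kk)}(\theta)$ rappel\'ee dans l'\'enonc\'e (avec d\'enominateur $q\leqslant e_{0,m}\bigl(\prod_{j=m+1}^{r}k_{j}^{d_{j}}\bigr)T^{\tilde{d}_{m}\theta}$ et tol\'erance $e_{i}T^{-\max\{\sum_{k\geqslant 1}t_{j,k}\}+\tilde{d}_{m}\theta}$) correspond exactement \`a la r\'e\'ecriture de la condition d'approximation diophantienne qui r\'esulte de la deuxi\`eme alternative du lemme\;\ref{dilemme3x3} lorsqu'on substitue $T$ aux $P_{j}$ et $T$ aux $T_{i}$.

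L'obstacle principal sera essentiellement comptable : il s'agit de v\'erifier que la d\'efinition des arcs $\tilde{\mathfrak{M}}_{a,q}^{(i,\kk)}(\theta)$ coincide bien avec ce que donne le transport des conditions de type $|q|\ll T^{\max\{\sum_{k\geqslant 1}t_{j,k}\}\theta}$ et $\|\alpha q e_{i}\|\ll e_{i}T^{-\max\{\sum_{k\geqslant 1}t_{j,k}\}+\tilde{d}_{m}\theta}$ qu'on obtient \`a la sortie du lemme analogue de\;\ref{dilemme33}, en tenant compte du facteur $\prod_{j=m+1}^{r}k_{j}^{d_{j}}$ provenant des puissances de $k_{j}$ qui subsistent dans $F(\ee.\xx)$ apr\`es sp\'ecialisation des $x_{i}$ pour $i\in I_{m}$. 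Le reste est un calcul strictement parall\`ele \`a celui d\'ej\`a effectu\'e, rendu plus simple par l'uniformit\'e de la borne $T$.
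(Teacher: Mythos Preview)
Votre plan est correct et co\"incide avec ce que le papier indique : celui-ci ne donne aucune preuve d\'etaill\'ee et se contente de \og Comme nous l'avons fait pour le lemme\;\ref{dilemme4x3}, nous pouvons \'etablir\fg. Une petite mise en garde sur la substitution : poser \`a la fois $T_{i}=T$ et $P_{j}=T$ n'est pas compatible avec le cadre de la section~3 (o\`u $T_{i}=e_{i}^{-1}\prod_{j}P_{j}^{a_{i,j}}$ d\'epend des poids), et ce n'est pas non plus ce qui donne la forme des arcs majeurs de l'\'enonc\'e. Dans la situation uniforme $|x_{i}|\leqslant T$, la diff\'erenciation de Weyl et la g\'eom\'etrie des r\'eseaux se d\'eroulent comme dans le cadre classique de Birch, et l'exposant qui gouverne la tol\'erance est le \og degr\'e effectif\fg\ $\max_{j}D_{j}^{(m)}=\max_{j}\sum_{k\geqslant 1}t_{j,k}^{(m)}$, d'o\`u la borne $e_{i}T^{-\max_{j}\{\sum_{k}t_{j,k}\}+\tilde{d}_{m}\theta}$ dans $\tilde{\mathfrak{M}}_{a,q}^{(i,\kk)}(\theta)$ ; c'est pr\'ecis\'ement cet exposant qui justifie l'hypoth\`ese $\max_{j}\sum_{k}t_{j,k}\geqslant 2$ dans le lemme\;\ref{Sx3} qui suit.
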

Nous pouvons alors d\'emontrer : 
\begin{lemma}\label{Sx3}
Si l'on suppose que $ \tt $ est tel que, il existe $ j\in \{1,...,r\} $ tel que $ \sum_{k\geqslant 1}t_{j,k}\geqslant 2 $, la s\'erie $ \mathfrak{S}_{(x_{i})_{i\in I_{m}},\ee} $ est absolument convergente et on a de plus \[ |\mathfrak{S}_{(x_{i})_{i\in I_{m}},\ee}(\phi(\ee,\kk,\theta))-\mathfrak{S}_{(x_{i})_{i\in I_{m}},\ee}|\ll \left(e_{0,m}\prod_{j=m+1}^{r}k_{j}^{d_{j}}\right)^{2+\delta}P^{\theta(2\tilde{d}_{m}-K_{m})+\varepsilon}, \] et \[|\mathfrak{S}_{(x_{i})_{i\in I_{m}},\ee}|\ll \left(e_{0,m}\prod_{j=m+1}^{r}k_{j}^{d_{j}}\right)^{2+\delta}.\]
\end{lemma}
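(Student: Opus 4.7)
Le plan est de suivre la strat\'egie d\'ej\`a employ\'ee pour le lemme\;\ref{S3}, \`a savoir obtenir pour $ S_{a,q,\ee}((x_i)_{i\in I_m}) $ une majoration du type
\[ |S_{a,q,\ee}((x_i)_{i\in I_m})|\ll \left(e_{0,m}\prod_{j=m+1}^{r}k_{j}^{d_{j}}\right)^{2+\delta}q^{n+r-s-2-\delta}, \]
puis de sommer sur $ q $ (resp. sur $ q>\phi(\ee,\kk,\theta) $) pour en d\'eduire la convergence absolue de $ \mathfrak{S}_{(x_{i})_{i\in I_{m}},\ee} $ (resp. la majoration du reste).

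Pour obtenir cette majoration de $ S_{a,q,\ee}((x_i)_{i\in I_m}) $, on proc\`ede comme pour le lemme\;\ref{S3} : on fixe $ a,q $ avec $ \PGCD(a,q)=1 $ et on choisit un $ \theta_{0}\in[0,1] $ v\'erifiant les conditions analogues \`a \eqref{thetacond13}--\eqref{thetacond33} (adapt\'ees \`a $ \tilde{d}_{m} $ et $ K_{m} $), ainsi que $ T $ tel que $ q=e_{0,m}\left(\prod_{j=m+1}^{r}k_{j}^{d_{j}}\right)T^{\tilde{d}_{m}\theta_{0}} $. On peut supposer que $ \left(e_{0,m}\prod_{j=m+1}^{r}k_{j}^{d_{j}}\right)^{2}<T^{1-2\tilde{d}_{m}\theta_{0}} $, sinon la majoration est triviale. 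Le m\^eme argument de disjonction des arcs majeurs qu'au lemme\;\ref{S3} montre alors que $ \alpha=a/q $ n'appartient pas \`a $ \tilde{\mathfrak{M}}^{(\kk)}(\theta_{0}') $ pour $ \theta_{0}'=\theta_{0}-\nu $ (avec $ \nu>0 $ arbitrairement petit). Par le lemme pr\'ec\'edent, on obtient donc
\[ |\tilde{S}_{(x_{i})_{i\in I_{m}},\ee}(\alpha)|\ll T^{n+r-s+\varepsilon-K_{m}\theta_{0}}. \]

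On applique ensuite le lemme\;\ref{separationx3} (ou plut\^ot son analogue dans la situation o\`u l'on a remplac\'e les variables $ x_i $ par la somme exponentielle tronqu\'ee \`a $ T $) avec $ \beta=0 $ : le terme principal est $ \left(\prod_{j=m+1}^{r}k_{j}^{\sum_{i\notin I_{m}}a_{i,j}}\right)T^{n+r-s}\cdot q^{-(n+r-s)}S_{a,q,\ee}((x_{i})_{i\in I_{m}})\cdot I(0) $ (o\`u $ I(0)\asymp 1 $), et les termes d'erreur sont contr\^ol\'es par $ e_{0,m}\left(\prod_{j=m+1}^{r}k_{j}^{d_{j}}\right)T^{n+r-s-1}q $. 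En combinant avec la majoration de Weyl ci-dessus, on obtient
\[ |S_{a,q,\ee}((x_{i})_{i\in I_{m}})|\ll e_{0,m}\left(\prod_{j=m+1}^{r}k_{j}^{d_{j}}\right)q^{n+r-s+1}T^{-1}+q^{n+r-s}T^{\frac{\sum_{j=1}^{m}b_{j}}{\sum_{j=1}^{m}b_{j}d_{j}}-K_{m}\theta_{0}+\varepsilon}. \]
En utilisant les conditions analogues \`a\;\eqref{thetacond13}--\eqref{thetacond33} (ce qui exige l'hypoth\`ese $ \max_{j}\sum_{k\geq 1}t_{j,k}\geqslant 2 $ pour que $ K_m $ intervienne de mani\`ere non triviale dans la Weyl inequality) et $ T^{-1}\leqslant T^{-4\tilde{d}_{m}\theta_{0}} $, on obtient la majoration annonc\'ee.

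Les bornes du lemme d\'ecoulent alors par les sommations $ \sum_{q>\phi(\ee,\kk,\theta)}q^{-2-\delta}\ll \phi(\ee,\kk,\theta)^{-1-\delta} $ et $ \sum_{q\geqslant 1}q^{-2-\delta}\ll 1 $, en utilisant $ \phi(\ee,\kk,\theta)=e_{0,m}\left(\prod_{j=m+1}^{r}k_{j}^{d_{j}}\right)P^{\tilde{d}_{m}\theta} $. Le principal point d\'elicat est de v\'erifier que l'hypoth\`ese $ \max_{j}\sum_{k\geq 1}t_{j,k}\geqslant 2 $ est pr\'ecis\'ement ce qu'il faut pour que la puissance de $ T $ dans la borne de Weyl soit strictement sup\'erieure \`a $ n+r-s $, de sorte qu'en prenant $ \theta_{0} $ tel que $ K_{m}\theta_{0}-\varepsilon $ d\'epasse suffisamment les termes de bord, on obtienne bien l'exposant $ q^{n+r-s-2-\delta} $ permettant la sommation absolue.
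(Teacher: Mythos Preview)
Votre plan global (borner $S_{a,q,\ee}((x_i)_{i\in I_m})$ puis sommer) est correct, mais l'ex\'ecution diff\`ere de celle du papier sur deux points, dont l'un est une vraie lacune.

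D'abord une diff\'erence d'approche. Le papier ne passe \emph{pas} par un analogue du lemme\;\ref{separationx3} : il prend directement $T=q$, ce qui identifie $\tilde{S}_{(x_i)_{i\in I_m},\ee}(a/q)$ \`a $S_{a,q,\ee}((x_i)_{i\in I_m})$ (par p\'eriodicit\'e modulo $q$). La borne de Weyl du lemme pr\'ec\'edent donne alors imm\'ediatement $|S_{a,q,\ee}((x_i)_{i\in I_m})|\ll q^{n+r-s+\varepsilon-K_m\theta'}$, o\`u $\theta'$ est choisi de sorte que $q=(e_{0,m}\prod_{j=m+1}^r k_j^{d_j})q^{\tilde d_m\theta'}$. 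On convertit ensuite $q^{-K_m\theta'}=q^{-K_m/\tilde d_m}(e_{0,m}\prod k_j^{d_j})^{K_m/\tilde d_m}$ et on somme directement. Votre d\'etour par un lemme de s\'eparation (dont vous reconnaissez qu'il n'existe pas tel quel) est inutile ici, et rend le calcul des exposants plus hasardeux.

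Ensuite --- et c'est le point important --- votre identification du r\^ole de l'hypoth\`ese $\max_j\sum_{k\geqslant1}t_{j,k}\geqslant 2$ est fausse. Elle n'intervient nullement dans l'in\'egalit\'e de Weyl (la borne $T^{n+r-s+\varepsilon-K_m\theta}$ vaut d\`es que $\alpha$ \'evite les arcs majeurs, sans condition sur $\tt$). Elle intervient dans l'argument de disjonction des arcs $\tilde{\mathfrak{M}}^{(i,\kk)}_{a',q'}(\theta'')$, dont la largeur est $e_iT^{-\max_j\{\sum_k t_{j,k}\}+\tilde d_m\theta''}$ et non $T^{-1+\tilde d_m\theta''}$. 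Avec $T=q$, supposer $\alpha=a/q\in\tilde{\mathfrak{M}}^{(i,\kk)}_{a',q'}(\theta'')$ conduit \`a
\[
1\leqslant |aq'-a'q|\leqslant qe_iq^{\tilde d_m\theta''-\max_j\{\sum_k t_{j,k}\}}<q^{\,2-\max_j\{\sum_k t_{j,k}\}},
\]
ce qui n'est une contradiction que si $\max_j\{\sum_k t_{j,k}\}\geqslant 2$. C'est l\`a, et uniquement l\`a, que l'hypoth\`ese est utilis\'ee. Votre phrase finale, qui la rattache \`a ``la puissance de $T$ dans la borne de Weyl'', manque donc le m\'ecanisme r\'eel de la preuve.
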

\begin{proof}
On consid\`ere $ q>\phi(\ee,\kk,\theta) $ et $ \alpha=\frac{a}{q} $ avec $ 0\leqslant a<q $ et $ \PGCD(a,q)=1 $. On observe que \[ S_{a,q,\ee}((x_{i})_{i\in I_{m}})=\tilde{S}_{(x_{i})_{i\in I_{m}},\ee}(\alpha), \] avec $ T=q $. On consid\`ere $ \theta' $ tel que $ q=\left(e_{0,m}\prod_{j=m+1}^{r}k_{j}^{d_{j}}\right)q^{\tilde{d}_{m}\theta'} $. Posons par ailleurs $ \theta''=\theta'-\nu $ avec $ \nu>0 $ arbitrairement petit. Supposons qu'il existe $ a',q'\in \ZZ $ tels que $ q'\leqslant\left(e_{0,m}\prod_{j=m+1}^{r}k_{j}^{d_{j}}\right)q^{\tilde{d}_{m}\theta''}<q  $, $ 0\leqslant a'<q' $, $ \PGCD(a',q')=1 $ et $ \alpha\in \tilde{\mathfrak{M}}_{a,q}^{(i,\kk)}(\theta'') $. On a alors \[ 1\leqslant |aq'-a'q|\leqslant qe_{i}q^{\tilde{d}_{m}\theta''-\max_{j}\{\sum_{k\geqslant 1}t_{j,k}\}} < q^{2-\max_{j}\{\sum_{k\geqslant 1}t_{j,k}\}}\leqslant 1, \] 
ce qui est absurde. Donc $ \alpha\notin \tilde{\mathfrak{M}}^{(\kk)}(\theta'') $. D'apr\`es le lemme pr\'ec\'edent, on a donc : 
\[ |S_{a,q,\ee}((x_{i})_{i\in I_{m}})|=|\tilde{S}_{(x_{i})_{i\in I_{m}},\ee}(\alpha)|\ll q^{n+r-s+\varepsilon-K_{m}\theta'}. \]
Par cons\'equent, 
\begin{multline*}
|\mathfrak{S}_{(x_{i})_{i\in I_{m}},\ee}(\phi(\ee,\kk,\theta))-\mathfrak{S}_{(x_{i})_{i\in I_{m}},\ee}| \\ \ll\sum_{ q>\phi(\ee,\kk,\theta) }q^{-(n+r-s)}\sum_{\substack{0\leqslant a<q\\ \PGCD(a,q)=1}}|S_{a,q,\ee}((x_{i})_{i\in I_{m}})| \\  \ll \sum_{ q>\phi(\ee,\kk,\theta) }q^{1+\varepsilon-K_{m}\theta'} \\  \ll \sum_{ q>\phi(\ee,\kk,\theta) }q^{-\frac{K_{m}}{\tilde{d}_{m}}+1+\varepsilon}\left(e_{0,m}\prod_{j=m+1}^{r}k_{j}^{d_{j}}\right)^{\frac{K_{m}}{\tilde{d}_{m}}} \\  \ll \left(e_{0,m}\prod_{j=m+1}^{r}k_{j}^{d_{j}}\right)^{2+\delta}P^{\theta(2\tilde{d}_{m}-K_{m})+\varepsilon}.
\end{multline*}
En choisissant par ailleurs $ P\ll 1 $, cette majoration implique (puisque $ \mathfrak{S}_{(x_{i})_{i\in I_{m}},\ee}(\phi(\ee,\kk,\theta)) \ll  \left(e_{0,m}\prod_{j=m+1}^{r}k_{j}^{d_{j}}\right)^{2} $) : \[|\mathfrak{S}_{(x_{i})_{i\in I_{m}},\ee}|\ll \left(e_{0,m}\prod_{j=m+1}^{r}k_{j}^{d_{j}}\right)^{2+\delta}.\]
\end{proof}
 Nous pouvons alors \'etablir le r\'esultat suivant : 
\begin{lemma}\label{propfinx3}
Soit $ (x_{i})_{i\in I_{m}}\in \mathcal{A}_{m}^{\lambda}\cap \ZZ^{s} $. On suppose $ \theta\in [0,1] $ fix\'e, et $ P\geqslant 1 $ tels que $ \left(e_{0,m}\prod_{j=m+1}^{r}k_{j}^{d_{j}}\right)^{2}<P^{1-3\tilde{d}_{m}\theta} $. On a alors \begin{multline*} N_{(x_{i})_{i\in I_{m}},\ee}(P_{1},...,P_{m})=\mathfrak{S}_{(x_{i})_{i\in I_{m}},\ee}J_{(x_{i},e_{i})_{i\in I_{m}},\kk} \\ \left(\prod_{i\notin I_{m}}e_{i}\right)^{-1}\left(\prod_{j=m+1}^{r}k_{j}^{\sum_{i\notin I_{m}}a_{i,j}}\right)\left(\prod_{j=1}^{m}P_{j}^{n_{j}-d_{j}}\right)+ O(E_{1})+O(E_{2})+O(E_{3}) \end{multline*}
o\`u \[ E_{1}= e_{0,m}\left(\prod_{i\notin I_{m}}e_{i}\right)^{-1+\frac{1}{2^{m}}}\left(\prod_{j=m+1}^{r}k_{j}^{d_{j}+\varepsilon+\sum_{i\notin I_{m}}a_{i,j}}\right) \\ \left(\prod_{j=1}^{m}P_{j}^{n_{j}-d_{j}+1}\right)P^{2\varepsilon-\Delta(\theta,K_{m})}, \]\[E_{2}=e_{0,m}^{4}\left(\prod_{i\notin I_{m}}e_{i}\right)^{-1}\left(\prod_{j=m+1}^{r}k_{j}^{4d_{j}+\sum_{i\notin I_{m}}a_{i,j}}\right)\left(\prod_{j=1}^{m}P_{j}^{n_{j}-d_{j}}\right)P_{m}^{-1}P^{5\tilde{d}_{m}\theta},\] \begin{multline*} 
E_{3}=e_{0,m}^{2+\varepsilon}\left(\prod_{i\notin I_{m}}e_{i}\right)^{-1}\left(\prod_{j=m+1}^{r}k_{j}^{2d_{j}\left(1+\frac{\sum_{j=1}^{m}b_{j}}{b_{m}+\sum_{j=1}^{m}b_{j}}+\varepsilon\right)+\sum_{i\notin I_{m}}a_{i,j}}\right) \\ \left(\prod_{j=1}^{m}P_{j}^{n_{j}-d_{j}}\right)P^{\left( (1+\frac{2\sum_{j=1}^{m}b_{j}}{b_{m}+\sum_{j=1}^{m}b_{j}})\tilde{d}_{m}-\frac{b_{m}K_{m}}{b_{m}+\sum_{j=1}^{m}b_{j}}\right)\theta+\varepsilon} \end{multline*}
\end{lemma}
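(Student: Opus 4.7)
La preuve consiste \`a partir de la formule \eqref{presquefinx3}, d\'ej\`a \'etablie dans la section, puis \`a remplacer la s\'erie tronqu\'ee $\mathfrak{S}_{(x_{i})_{i\in I_{m}},\ee}(\phi(\ee,\kk,\theta))$ par la s\'erie compl\`ete $\mathfrak{S}_{(x_{i})_{i\in I_{m}},\ee}$ et l'int\'egrale tronqu\'ee $J_{(x_{i},e_{i})_{i\in I_{m}},\kk}(\tfrac{1}{2}P^{\tilde{d}_{m}\theta})$ par l'int\'egrale compl\`ete $J_{(x_{i},e_{i})_{i\in I_{m}},\kk}$. C'est exactement le m\^eme sch\'ema que celui qui conduit du r\'esultat interm\'ediaire\;\eqref{presquefin3} au th\'eor\`eme\;\ref{propfin3}, mais adapt\'e aux param\`etres suppl\'ementaires $\kk$, $(x_{i})_{i\in I_{m}}$.

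Le premier pas consiste \`a \'ecrire
\[
\mathfrak{S}_{(x_{i})_{i\in I_{m}},\ee}(\phi)J_{(x_{i},e_{i})_{i\in I_{m}},\kk}(\tfrac{1}{2}P^{\tilde{d}_{m}\theta}) = \mathfrak{S}_{(x_{i})_{i\in I_{m}},\ee}J_{(x_{i},e_{i})_{i\in I_{m}},\kk} + R_{1}+R_{2}+R_{3},
\]
o\`u $R_{1}$ provient de la diff\'erence sur $J$ multipli\'ee par $\mathfrak{S}(\phi)$, $R_{2}$ de la diff\'erence sur $\mathfrak{S}$ multipli\'ee par $J$, et $R_{3}$ du produit des deux diff\'erences. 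Les lemmes\;\ref{Sx3} et\;\ref{Jx3} fournissent alors respectivement les majorations \[
|\mathfrak{S}(\phi)-\mathfrak{S}|\ll \left(e_{0,m}\prod_{j=m+1}^{r}k_{j}^{d_{j}}\right)^{2+\delta}P^{\theta(2\tilde{d}_{m}-K_{m})+\varepsilon},
\]
et une majoration analogue pour la diff\'erence sur $J$, ainsi que les bornes absolues $|\mathfrak{S}|\ll (e_{0,m}\prod k_{j}^{d_{j}})^{2+\delta}$ et $|J|\ll (\prod k_{j}^{d_{j}})^{2\sum b_{j}/(b_{m}+\sum b_{j})+\varepsilon}$. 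Une fois multipli\'es par le facteur pr\'efacteur $(\prod_{i\notin I_{m}}e_{i})^{-1}(\prod k_{j}^{\sum a_{i,j}})(\prod P_{j}^{n_{j}-d_{j}})$, ces contributions viennent respectivement absorb\'ees par les termes d'erreur $E_{3}$ (qui correspond \`a la diff\'erence sur $J$ multipli\'ee par la borne triviale sur $\mathfrak{S}$) et par le terme d\'ej\`a pr\'esent dans\;\eqref{presquefinx3} contenant $P^{-\Delta(\theta,K_{m})}$, apr\`es v\'erification que les hypoth\`eses sur $K_{m}$ et $\theta$ garantissent la convergence.

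La contribution $R_{3}$ \'etant de plus petit ordre que $R_{1}$ et $R_{2}$ (produit de deux facteurs d\'ecroissants en $P$), elle ne cr\'ee pas de nouveau terme d'erreur. Il reste \`a contr\^oler le terme d'erreur d\'ej\`a pr\'esent dans\;\eqref{presquefinx3}, qui est exactement $E_{2}$ (provenant du volume des arcs majeurs combin\'e au lemme\;\ref{separationx3}) et $E_{1}$ (provenant des arcs mineurs via le lemme\;\ref{arcminx3}). La principale difficult\'e sera de v\'erifier que chaque quantit\'e $R_{i}$ \'etudi\'ee est bien domin\'ee par un des trois termes $E_{1}, E_{2}, E_{3}$ annonc\'es, en utilisant notamment l'hypoth\`ese $(e_{0,m}\prod k_{j}^{d_{j}})^{2}<P^{1-3\tilde{d}_{m}\theta}$ pour contr\^oler les puissances de $e_{0,m}$ et des $k_{j}$ par des puissances de $P$. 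En regroupant tous ces termes, on obtient la formule asymptotique du lemme.
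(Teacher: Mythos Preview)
Your overall strategy matches the paper's: start from \eqref{presquefinx3}, which already carries the error terms $E_{1}$ and $E_{2}$, and then replace the truncated $\mathfrak{S}(\phi)J(\tfrac{1}{2}P^{\tilde d_m\theta})$ by $\mathfrak{S}\,J$ using Lemmes~\ref{Jx3} and~\ref{Sx3}, producing the term $E_{3}$. Two points deserve attention.

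First, a small algebraic slip: with your definitions $R_{1}=\mathfrak{S}(\phi)\bigl(J(\cdot)-J\bigr)$ and $R_{2}=\bigl(\mathfrak{S}(\phi)-\mathfrak{S}\bigr)J$, one already has $R_{1}+R_{2}=\mathfrak{S}(\phi)J(\cdot)-\mathfrak{S}\,J$, so the cross term $R_{3}$ is superfluous. The paper uses precisely this two-term splitting,
\[
|\mathfrak{S}(\phi)J(\cdot)-\mathfrak{S}\,J|\le |\mathfrak{S}(\phi)-\mathfrak{S}|\,|J|+|\mathfrak{S}(\phi)|\,|J(\cdot)-J|,
\]
and both pieces are bounded by the expression defining $E_{3}$ (the term with exponent $2\tilde d_m-K_m$ being dominated by the one with exponent $(1+\tfrac{2\sum b_j}{b_m+\sum b_j})\tilde d_m-\tfrac{b_mK_m}{b_m+\sum b_j}$).

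Second, and more substantively, you invoke Lemme~\ref{Sx3} without checking its hypothesis, namely that the chosen degree family $\tt^{(m)}$ satisfies $\sum_{k\ge 1}t_{j,k}\ge 2$ for some $j\in\{1,\dots,m\}$. When every $D_j^{(m)}=\sum_k t_{j,k}^{(m)}$ equals $1$ (which is forced, for instance, whenever $d_j=1$ for all $j\le m$), that lemma is unavailable and the comparison of $\mathfrak{S}(\phi)$ with $\mathfrak{S}$ must be obtained differently. The paper treats this case separately by writing $F$ as $\sum_{k\in J}A_k((e_ix_i)_{i\in I_m})\,e_kx_k+\widetilde F$ and showing directly that $S_{a,q,\ee}((x_i)_{i\in I_m})=0$ once $q\gg e_{0,m}\prod_{j>m}k_j^{d_j}$, because the condition $(x_i)_{i\in I_m}\in\mathcal{A}_m^{\lambda}$ rules out $A_k((e_ix_i)_{i\in I_m})=0$ for all $k$; hence $\mathfrak{S}(\phi)=\mathfrak{S}$ exactly in this range, and only the $J$-comparison contributes. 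Your sketch should include this case distinction.
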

\begin{proof}
Supposons dans un premier temps qu'il existe $ j\in \{1,...,r\} $ tel que $ \sum_{k\geqslant 1}t_{j,k}\geqslant 2 $. D'apr\`es la formule\;\eqref{presquefinx3}, on a \begin{multline*} N_{(x_{i})_{i\in I_{m}},\ee}(P_{1},...,P_{m})=\mathfrak{S}_{(x_{i})_{i\in I_{m}},\ee}(\phi(\ee,\kk,\theta))J_{(x_{i},e_{i})_{i\in I_{m}},\kk}(\frac{1}{2}P^{\tilde{d}_{m}\theta}) \\ \left(\prod_{i\notin I_{m}}e_{i}\right)^{-1}\left(\prod_{j=m+1}^{r}k_{j}^{\sum_{i\notin I_{m}}a_{i,j}}\right)\left(\prod_{j=1}^{m}P_{j}^{n_{j}-d_{j}}\right)+ O(E_{2})+O(E_{1}), \end{multline*}
Par ailleurs, d'apr\`es les lemmes\;\ref{Jx3} et\;\ref{Sx3} : \begin{multline*}
|\mathfrak{S}_{(x_{i})_{i\in I_{m}},\ee}(\phi(\ee,\kk,\theta))J_{(x_{i},e_{i})_{i\in I_{m}},\kk}(\frac{1}{2}P^{\tilde{d}_{m}\theta})-\mathfrak{S}_{(x_{i})_{i\in I_{m}},\ee}J_{(x_{i},e_{i})_{i\in I_{m}},\kk}| \\ \ll |\mathfrak{S}_{(x_{i})_{i\in I_{m}},\ee}(\phi(\ee,\kk,\theta))-\mathfrak{S}_{(x_{i})_{i\in I_{m}},\ee}||J_{(x_{i},e_{i})_{i\in I_{m}},\kk}| \\ + |\mathfrak{S}_{(x_{i})_{i\in I_{m}},\ee}(\phi(\ee,\kk,\theta))||J_{(x_{i},e_{i})_{i\in I_{m}},\kk}(\frac{1}{2}P^{\tilde{d}_{m}\theta})-J_{(x_{i},e_{i})_{i\in I_{m}},\kk}| \\ \ll e_{0,m}^{2+\varepsilon}\left(\prod_{j=m+1}^{r}k_{j}^{d_{j}}\right)^{2+\frac{2\sum_{j=1}^{m}b_{j}}{b_{m}+\sum_{j=1}^{m}b_{j}}+2\varepsilon}P^{\theta(2\tilde{d}_{m}-K_{m})+\varepsilon} \\ + e_{0,m}^{2+\varepsilon}\left(\prod_{j=m+1}^{r}k_{j}^{d_{j}}\right)^{2+\frac{2\sum_{j=1}^{m}b_{j}}{b_{m}+\sum_{j=1}^{m}b_{j}}+2\varepsilon}P^{\left( (1+\frac{2\sum_{j=1}^{m}b_{j}}{b_{m}+\sum_{j=1}^{m}b_{j}})\tilde{d}_{m}-\frac{b_{m}K_{m}}{b_{m}+\sum_{j=1}^{m}b_{j}}\right)\theta+\varepsilon} \\ \ll e_{0,m}^{2+\varepsilon}\left(\prod_{j=m+1}^{r}k_{j}^{d_{j}}\right)^{2+\frac{2\sum_{j=1}^{m}b_{j}}{b_{m}+\sum_{j=1}^{m}b_{j}}+2\varepsilon}P^{\left( (1+\frac{2\sum_{j=1}^{m}b_{j}}{b_{m}+\sum_{j=1}^{m}b_{j}})\tilde{d}_{m}-\frac{b_{m}K_{m}}{b_{m}+\sum_{j=1}^{m}b_{j}}\right)\theta+\varepsilon},
\end{multline*}
d'o\`u le r\'esultat. \\

Le cas o\`u la famille $ \tt $ choisie est telle que $ t_{j,k}=1 $ si $ (j,k)=(j,k^{(j)}) $ (pour un entier $ k^{(j)} $ fix\'e) et $ 0 $ sinon est particulier. Les variables auxquelles nous nous int\'eresserons sont alors les $ x_{i} $ telles que $ a_{i,j}=k^{(j)} $ pour tout $ j\in \{1,...,m\} $ et nous \'ecrivons alors le polyn\^ome $ F $ sous la forme : \[ F(\ee.\xx)= \sum_{k\in J}e_{k}A_{k}((e_{i}x_{i})_{i\in I_{m}})x_{k}+ \widetilde{F}((e_{i}x_{i})_{i\notin J}), \] o\`u \[ J=\{i\in I_{0,m}\; | \; \forall j\in \{1,...,m\},\;  a_{i,j}=k^{(j)}  \}.  \] On remarque alors que \begin{align*}
|S_{a,q,\ee}((x_{i})_{i\in I_{m}})| & \ll \sum_{(b_{k})_{k\notin J\cup I_{m}}}\left| \sum_{(b_{k})_{k\in J}}e\left(\frac{a}{q}\sum_{k\in J}e_{k}A_{k}((e_{i}x_{i})_{i\in I_{m}})b_{k}\right) \right| \\ & \ll  \sum_{(b_{k})_{k\notin J\cup I_{m}}}\prod_{k\in J}\underbrace{\left| \sum_{b_{k}\in \ZZ/q\ZZ}e\left(\frac{a}{q}\sum_{k\in J}e_{k}A_{k}((e_{i}x_{i})_{i\in I_{m}})b_{k}\right)\right|}_{ =\left\{\begin{array}{lcr} q & \mbox{si} & e_{k}A_{k}((x_{i})_{i\in I_{m}})\equiv 0 \; (q) \\ 0 & \mbox{sinon} &
\end{array}\right.}  \\ & =\left\{\begin{array}{lcr} q^{n+r-s} & \mbox{si} & e_{k}A_{k}((e_{i}x_{i})_{i\in I_{m}})\equiv 0 \; (q) \; \; \forall k\in J \\ 0 & \mbox{sinon} &
\end{array}\right.
\end{align*}
Or, pour $ q\gg e_{0,m}\prod_{j=m+1}^{r}k_{j}^{d_{j}} $, on a alors $ |e_{k}A_{k}((e_{i}x_{i})_{i\in I_{m}})|<q $ pour tout $ k\in J $, et donc la condition $  e_{k}A_{k}((e_{i}x_{i})_{i\in I_{m}})\equiv 0 \; (q) $ implique $ A_{k}((e_{i}x_{i})_{i\in I_{m}})=0 $. Or, pour $ (x_{i})_{i\in I_{m}}\in \mathcal{A}_{m}^{\lambda} $, la propri\'et\'e $ A_{k}((e_{i}x_{i})_{i\in I_{m}})=0 $ pour tout $ k\in J $ est impossible. Par cons\'equent, quitte \`a multiplier $ \phi(\ee,\kk,\theta) $ par une constante, on a que \[ \mathfrak{S}_{(x_{i})_{i\in I_{m}},\ee}(\phi(\ee,\kk,\theta))=\mathfrak{S}_{(x_{i})_{i\in I_{m}},\ee}, \] et en rempla\c{c}ant 
$ J_{(x_{i},e_{i})_{i\in I_{m}},\kk}(\frac{1}{2}P^{\tilde{d}_{m}\theta}) $ par $ J_{(x_{i},e_{i})_{i\in I_{m}},\kk} $ comme ci-dessus, on retrouve le r\'esultat du lemme.

\end{proof}
En choisissant $ \theta>0 $ tel que $ \theta<\frac{1}{5\tilde{d}_{m}}\sum_{j=1}^{m}\frac{b_{j}d_{j}}{b_{m}} $, on obtient : 
\begin{cor}\label{corollairex3}
Soit $ (x_{i})_{i\in I_{m}}\in \mathcal{A}_{m}^{\lambda}\cap \ZZ^{s} $. On suppose de plus que \[ b_{m}K_{m}>(b_{m}+3\sum_{j=1}^{m}b_{j})\tilde{d}_{m}.\] Il existe alors un r\'eel $ \delta>0 $ tel que : \begin{multline*} N_{(x_{i})_{i\in I_{m}},\ee}(P_{1},...,P_{m})=\mathfrak{S}_{(x_{i})_{i\in I_{m}},\ee}J_{(x_{i},e_{i})_{i\in I_{m}},\kk} \\ \left(\prod_{i\notin I_{m}}e_{i}\right)^{-1}\left(\prod_{j=m+1}^{r}k_{j}^{\sum_{i\notin I_{m}}a_{i,j}}\right)\left(\prod_{j=1}^{m}P_{j}^{n_{j}-d_{j}}\right) \\ + O\left(\left(e_{0,m}^{4}\left(\prod_{i\notin I_{m}}e_{i}\right)^{-1}+\left(\prod_{i\notin I_{m}}e_{i}\right)^{-\frac{1}{2}}\right)\left(\prod_{j=m+1}^{r}k_{j}^{4d_{j}+\sum_{i\notin I_{m}}a_{i,j}}\right)\left(\prod_{j=1}^{m}P_{j}^{n_{j}-d_{j}}\right)P^{-\delta}\right), \end{multline*}
uniform\'ement pour tous $ (x_{i})_{i\in I_{m}},\kk $ tels que \[ \left(e_{0,m}\prod_{j=m+1}^{r}k_{j}^{d_{j}}\right)^{2}P^{-1+3\tilde{d}_{m}\theta}<1. \]
\end{cor}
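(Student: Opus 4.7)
The plan is to invoke Lemma~\ref{propfinx3} directly: it already delivers the asymptotic formula with three explicit error terms $E_1,E_2,E_3$. All that is left is to show that, under the hypothesis $b_m K_m > (b_m + 3\sum_{j=1}^{m}b_j)\tilde{d}_m$, we may choose a single $\theta \in (0,1]$ for which each $E_i$ is dominated by the claimed target $\left(e_{0,m}^{4}\prod_{i\notin I_{m}}e_{i}^{-1}+\prod_{i\notin I_{m}}e_{i}^{-1/2}\right)\prod_{j>m}k_{j}^{4d_{j}+\sum_{i\notin I_{m}}a_{i,j}}\prod_{j\le m}P_{j}^{n_{j}-d_{j}} \cdot P^{-\delta}$. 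Since $P = \prod_{j=1}^{m}P_j^{d_j}$ and $P_j = P_r^{b_j}$, everything can be rewritten as a single power of $P$, so the compatibility of all error bounds reduces to a comparison of a few linear expressions in $\theta$.

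First, I would treat each error term separately. For $E_3$, the additional $P$-exponent relative to the main term simplifies to $-\theta\cdot\frac{b_m K_m - (b_m + 3\sum_{j=1}^{m} b_j)\tilde{d}_m}{b_m + \sum_{j=1}^{m} b_j}$, which is strictly negative by hypothesis; hence any fixed $\theta>0$ already gives a saving $P^{-\delta_{3}}$. For $E_2$, the extra factor $P_m^{-1}P^{5\tilde{d}_m\theta}$ equals $P^{-b_m/(\sum b_j d_j)+5\tilde{d}_m\theta}$, which demands an \emph{upper} bound on $\theta$ of the shape $5\tilde{d}_m\theta < b_m/(\sum_j b_j d_j) - \delta_2$. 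For $E_1$, the extra factor is $\prod_{j}P_j \cdot P^{-\Delta(\theta,K_m)+2\varepsilon}$, equal in powers of $P$ to $P^{\sum b_j/(\sum b_j d_j) - \theta(K_m-2\tilde{d}_m)+2\varepsilon}$; this forces a \emph{lower} bound on $\theta$ of the form $\theta(K_m-2\tilde{d}_m) > \sum_j b_j/(\sum_j b_j d_j)+\delta_1+2\varepsilon$.

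The hard part is verifying that the permissible window for $\theta$ is non-empty. Combining the upper bound from $E_2$ with the lower bound from $E_1$ leads to the compatibility condition $b_m(K_m-2\tilde{d}_m) > 5\tilde{d}_m\sum_{j=1}^{m} b_j$ (up to a cosmetic absorption of the $\varepsilon$'s), which is exactly the hypothesis of the corollary after rearrangement. Choosing any $\theta$ strictly inside this open interval then produces a common $\delta > 0$ dominating all three error terms simultaneously. I expect the main delicacy to lie not in the individual estimates but in choreographing these three competing constraints on $\theta$ so that a uniform $\delta > 0$ survives; after that, the uniformity in $(x_i)_{i\in I_m}$ and $\kk$ is inherited from Lemma~\ref{propfinx3}, and the standing assumption $(e_{0,m}\prod_{j>m}k_j^{d_j})^2 P^{-1+3\tilde{d}_m\theta}<1$ is precisely the domain on which the major-arc disjointness of Lemma~\ref{disjointsx3} applies, so the argument remains valid on the stated region.
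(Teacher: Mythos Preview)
Your strategy matches the paper's: apply Lemma~\ref{propfinx3} and tune a single $\theta$ so that each of $E_1,E_2,E_3$ is dominated by the target error term. Your identification of the constraints is also correct: $E_3$ yields a saving for every $\theta>0$ under the hypothesis, $E_2$ forces an upper bound on $\theta$, and $E_1$ forces a lower bound.

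The gap is in your final compatibility check. Your inequality $b_m(K_m-2\tilde d_m)>5\tilde d_m\sum_{j=1}^{m}b_j$ rearranges to
\[
b_mK_m>\Bigl(2b_m+5\sum_{j=1}^{m}b_j\Bigr)\tilde d_m,
\]
whereas the hypothesis of the corollary is only
\[
b_mK_m>\Bigl(b_m+3\sum_{j=1}^{m}b_j\Bigr)\tilde d_m.
\]
Since $b_m,\tilde d_m,\sum_j b_j>0$, the former is strictly stronger (the difference of the right-hand sides is $(b_m+2\sum_j b_j)\tilde d_m>0$). So the stated hypothesis does \emph{not} guarantee a non-empty $\theta$-window in your scheme, and the sentence ``which is exactly the hypothesis of the corollary after rearrangement'' is false.

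For comparison, the paper's one-line argument before the corollary only records the upper bound on $\theta$ coming from $E_2$ and does not treat $E_1$ separately; in the proof of the subsequent Theorem~\ref{thmfinx3} the paper instead asserts that (the summed version of) $E_1$ is negligible compared to $E_3$. To repair your argument you should either show directly that $E_1\ll E_3$ for the $\theta$ you choose and that this follows from the stated hypothesis, or note that the sharper inequality on $K_m$ you actually need is available in every downstream application (through the functions $h^\sigma_{m,\tt^{(\sigma,m)}}$ entering the definition of $\mathfrak m$).
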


Introduisons \`a pr\'esent pour $ \bb=(b_{1},...,b_{r}) $ et $ \delta>0 $ fix\'es la fonction \begin{multline}\label{fonctgm3}
g_{m}(\bb,\delta)=\left(\frac{b_{m}+\sum_{j=1}^{m}b_{j}}{b_{m}}\right)5\tilde{d}_{m}\left(1-\sum_{j=m+1}^{r}(1+5d_{j})\frac{b_{j}}{b_{m}}-\delta\right)^{-1}\\ \left( \sum_{j=m+1}^{r}\left(1+d_{j}\left(3+\frac{\sum_{l=1}^{m}b_{l}}{b_{m}+\sum_{l=1}^{m}b_{l}}+2\varepsilon\right)\right)\frac{b_{j}}{b_{m}}+2\delta\right).
\end{multline}
D\'efinissons par ailleurs : \begin{multline}
 \tilde{N}_{\ee,m}(P_{1},...,P_{r}) = \Card\left\{ 
 \xx\in (\ZZ\setminus\{0\})^{n+r}  \; |\; \forall k\in \{m,...,r-1\}, \;(x_{i})_{i\in I_{k}}\in \mathcal{A}_{k}^{\lambda}, \; \right. \\F(\ee.\xx)=0, \;  \forall j\in \{1,...,r\}, \;  \left\lfloor|(\ee.\xx)^{E(n+j)}|\right\rfloor\leqslant P_{j}, \;  \forall i\in \{1,...,n+r\},  \;  \\ \left.|x_{i}|\leqslant \frac{1}{e_{i}}\prod_{j=1}^{r}(\left\lfloor|(\ee.\xx)^{E(n+j)}|\right\rfloor+1)^{a_{i,j}} \right\}.
\end{multline}
\begin{thm}\label{thmfinx3}
On suppose que $ (m,d_{1})\neq (1,1) $ et que  \[ b_{m}K_{m}>(b_{m}+3\sum_{j=1}^{m}b_{j})\tilde{d}_{m},\] \[ K_{m}-\frac{b_{m}+3\sum_{j=1}^{m}b_{j}}{b_{m}}\tilde{d}_{m}>g_{m}(\bb,\delta), \] et \[ \sum_{j=m+1}^{r}(1+5d_{j})\frac{b_{j}}{b_{m}}<1. \]  On a alors : \begin{multline}\label{formthmfinx3}  \tilde{N}_{\ee,m}(P_{1},...,P_{r})= \sum_{\forall j\in \{m+1,...,r\}, \;k_{j}\leqslant P_{j}}\left(\sum_{(x_{i})_{i\in I_{m}} \in \Phi_{m}(\kk)}\mathfrak{S}_{(x_{i})_{i\in I_{m}},\ee}J_{(x_{i},e_{i})_{i\in I_{m}},\kk}\right) \\ \left(\prod_{i\notin I_{m}}e_{i}\right)^{-1}\left(\prod_{j=m+1}^{r}k_{j}^{\sum_{i\notin I_{m}}a_{i,j}}\right)\left(\prod_{j=1}^{m}P_{j}^{n_{j}-d_{j}}\right) \\ + O\left(\left(e_{0,m}^{4}\left(\prod_{i=1}^{n+r}e_{i}\right)^{-1}+\left(\prod_{i=1}^{n+r}e_{i}\right)^{-1+\frac{1}{2^{m}}}\right)\left(\prod_{j=1}^{r}P_{j}^{n_{j}-d_{j}}\right)P_{m}^{-\delta}\right),\end{multline} o\`u $ \Phi_{m}(\kk) $ d\'esigne l'ensemble des $ (x_{i})_{i\in I_{m}} \in \mathcal{A}_{m}^{\lambda} $ v\'erifiant $ (x_{i})_{i\in I_{k}}\in \mathcal{A}_{k}^{\lambda}  $ pour tout $ k\in \{m,...,r-1\} $ (remarquons que si $ k\geqslant m $ alors $ I_{k}\subset I_{m} $), $ |x_{i}|\leqslant \frac{1}{e_{i}}\prod_{j=m+1}^{r}(k_{j}+1)^{a_{i,j}} $ pour tout $ i\in I_{m} $ et $ \left\lfloor|(\ee.\xx)^{E(n+j)}|\right\rfloor\leqslant P_{j} $ pour tout $ j\in \{m+1,...,r\} $ tel que $ (\ee.\xx)^{E(n+j)} $ ne d\'epend que de $ (x_{i})_{i\in I_{m}} $. 
\end{thm}
\begin{proof}
D'apr\`es la formule du lemme\;\ref{propfinx3} si l'on suppose \[ \left(e_{0,m}\prod_{j=m+1}^{r}P_{j}^{d_{j}}\right)^{2}P^{-1+3\tilde{d}_{m}\theta}<1 ,\] on a
\begin{multline*}  \tilde{N}_{\ee,m}(P_{1},...,P_{r})= \sum_{\forall j\in \{m+1,...,r\}, \;k_{j}\leqslant P_{j}}\left(\sum_{(x_{i})_{i\in I_{m}} \in \Phi_{m}(\kk)}\mathfrak{S}_{(x_{i})_{i\in I_{m}},\ee}J_{(x_{i},e_{i})_{i\in I_{m}},\kk}\right) \\ \left(\prod_{i\notin I_{m}}e_{i}\right)^{-1}\left(\prod_{j=m+1}^{r}k_{j}^{\sum_{i\notin I_{m}}a_{i,j}}\right)\left(\prod_{j=1}^{m}P_{j}^{n_{j}-d_{j}}\right) + O(\mathcal{E}_{1})+ O\left(\mathcal{E}_{2}\right)+O\left(\mathcal{E}_{3}\right),\end{multline*}
o\`u 
\begin{multline*}
\mathcal{E}_{1}\ll \sum_{\forall j\in \{m+1,...,r\}, \;k_{j}\leqslant P_{j}}\sum_{(x_{i})_{i\in I_{m}} \in \Phi_{m}(\kk)}e_{0,m}\left(\prod_{i\notin I_{m}}e_{i}\right)^{-1+\frac{1}{2^{m}}}\left(\prod_{j=m+1}^{r}k_{j}^{d_{j}+\varepsilon+\sum_{i\notin I_{m}}a_{i,j}}\right) \\ \left(\prod_{j=1}^{m}P_{j}^{n_{j}-d_{j}+1}\right)P^{2\varepsilon-\Delta(\theta,K_{m})}
\end{multline*}

\begin{multline*}
\mathcal{E}_{2}=\sum_{\forall j\in \{m+1,...,r\}, \;k_{j}\leqslant P_{j}}\sum_{(x_{i})_{i\in I_{m}} \in \Phi_{m}(\kk)}e_{0,m}^{4}\left(\prod_{i\notin I_{m}}e_{i}\right)^{-1} \\ \left(\prod_{j=m+1}^{r}k_{j}^{4d_{j}+\sum_{i\notin I_{m}}a_{i,j}}\right)\left(\prod_{j=1}^{m}P_{j}^{n_{j}-d_{j}}\right)P_{m}^{-1}P^{5\tilde{d}_{m}\theta} \\ \ll \sum_{\forall j\in \{m+1,...,r\}, \;k_{j}\leqslant P_{j}}e_{0,m}^{4}\left(\prod_{i=1 }^{n+r}e_{i}\right)^{-1}\left(\prod_{j=m+1}^{r}k_{j}^{n_{j}+4d_{j}}\right)\left(\prod_{j=1}^{m}P_{j}^{n_{j}-d_{j}}\right)P_{m}^{5\tilde{d}_{m}\theta\left(\frac{\sum_{j=1}^{m}b_{j}d_{j}}{b_{m}}\right)-1} \\ \ll e_{0,m}^{4}\left(\prod_{i=1}^{n+r}e_{i}\right)^{-1}\left(\prod_{j=m+1}^{r}P_{j}^{n_{j}+1+4d_{j}}\right)\left(\prod_{j=1}^{m}P_{j}^{n_{j}-d_{j}}\right)P_{m}^{5\tilde{d}_{m}\theta\left(\frac{\sum_{j=1}^{m}b_{j}d_{j}}{b_{m}}\right)-1}\\ \ll e_{0,m}^{4}\left(\prod_{i=1 }^{n+r}e_{i}\right)^{-1}\left(\prod_{j=1}^{r}P_{j}^{n_{j}-d_{j}}\right)P_{m}^{ -\delta_{1}}.
\end{multline*} o\`u \[ -\delta_{1}=\sum_{j=m+1}^{r}(1+5d_{j})\frac{b_{j}}{b_{m}}+5\tilde{d}_{m}\theta\left(\frac{\sum_{j=1}^{m}b_{j}d_{j}}{b_{m}}\right)-1, \]
et \begin{multline*} \mathcal{E}_{3}=\sum_{\forall j\in \{m+1,...,r\}, \;k_{j}\leqslant P_{j}}\sum_{(x_{i})_{i\in I_{m}} \in \Phi_{m}(\kk)} \left(e_{0,m}^{2+\varepsilon}\left(\prod_{i\notin I_{m}}e_{i}\right)^{-1} +\left(\prod_{i\notin I_{m}}e_{i}\right)^{-\frac{1}{2}}\right) \\ \left(\prod_{j=m+1}^{r}k_{j}^{2d_{j}\left(1+\frac{\sum_{j=1}^{m}b_{j}}{b_{m}+\sum_{j=1}^{m}b_{j}}+\varepsilon\right)+\sum_{i\notin I_{m}}a_{i,j}}\right) \\ \left(\prod_{j=1}^{m}P_{j}^{n_{j}-d_{j}}\right)P^{\left( (1+\frac{2\sum_{j=1}^{m}b_{j}}{b_{m}+\sum_{j=1}^{m}b_{j}})\tilde{d}_{m}-\frac{b_{m}K_{m}}{b_{m}+\sum_{j=1}^{m}b_{j}}\right)\theta+\varepsilon} \\ \ll  \left(e_{0,m}^{2+\varepsilon}\left(\prod_{i=1}^{n+r}e_{i}\right)^{-1} +\left(\prod_{i=1}^{n+r}e_{i}\right)^{-\frac{1}{2}}\right)\left(\prod_{j=m+1}^{r}P_{j}^{n_{j}+1+2d_{j}\left(1+\frac{\sum_{j=1}^{m}b_{j}}{b_{m}+\sum_{j=1}^{m}b_{j}}+\varepsilon\right)}\right) \\ \left(\prod_{j=1}^{m}P_{j}^{n_{j}-d_{j}}\right)P^{\left( (1+\frac{2\sum_{j=1}^{m}b_{j}}{b_{m}+\sum_{j=1}^{m}b_{j}})\tilde{d}_{m}-\frac{b_{m}K_{m}}{b_{m}+\sum_{j=1}^{m}b_{j}}\right)\theta+\varepsilon} \\ \ll  \left(e_{0,m}^{2+\varepsilon}\left(\prod_{i=1}^{n+r}e_{i}\right)^{-1} +\left(\prod_{i=1}^{n+r}e_{i}\right)^{-\frac{1}{2}}\right)\left(\prod_{j=m+1}^{r}P_{j}^{n_{j}+1+2d_{j}\left(1+\frac{\sum_{j=1}^{m}b_{j}}{b_{m}+\sum_{j=1}^{m}b_{j}}+\varepsilon\right)}\right)\left(\prod_{j=1}^{m}P_{j}^{n_{j}-d_{j}}\right) P_{m}^{-\delta_{2}} \end{multline*}
o\`u \begin{multline*} -\delta_{2}=\left( (\frac{b_{m}+3\sum_{j=1}^{m}b_{j}}{b_{m}+\sum_{j=1}^{m}b_{j}})\tilde{d}_{m}-\frac{b_{m}K_{m}}{b_{m}+\sum_{j=1}^{m}b_{j}}\right)\theta\frac{\sum_{j=1}^{m}b_{j}d_{j}}{b_{m}}+\varepsilon \\-\sum_{j=m+1}^{r}\left(1+d_{j}\left(3+\frac{\sum_{l=1}^{m}b_{l}}{b_{m}+\sum_{l=1}^{m}b_{l}}+2\varepsilon\right)\right)\frac{b_{j}}{b_{m}}. \end{multline*} Remarquons que $ \mathcal{E}_{1} $ est n\'egligeable par rapport \`a ce terme d'erreur. On choisit alors \[\theta=\frac{1}{5\tilde{d}_{m}}\left(\frac{b_{m}}{\sum_{j=1}^{m}b_{j}d_{j}}\right)\left(1-\sum_{j=m+1}^{r}(1+5d_{j})\frac{b_{j}}{b_{m}}-\delta\right), \] de sorte que $ \delta_{1}=\delta $. Par ailleurs ce choix de $ \theta $ implique \[ g_{m}(\bb,\delta)=\left(\frac{b_{m}+\sum_{j=1}^{m}b_{j}}{\sum_{j=1}^{m}b_{j}d_{j}}\right)\theta^{-1}\left( \sum_{j=m+1}^{r}\left(1+d_{j}\left(3+\frac{\sum_{l=1}^{m}b_{l}}{b_{m}+\sum_{l=1}^{m}b_{l}}+2\varepsilon\right)\right)\frac{b_{j}}{b_{m}}+2\delta\right),  \]
et donc puisque l'on a suppos\'e \[ K_{m}-\frac{b_{m}+3\sum_{j=1}^{m}b_{j}}{b_{m}}\tilde{d}_{m}>g_{m}(\bb,\delta), \]
on a alors $ \delta_{2}<\varepsilon-2\delta<-\delta $, ce qui ach\`eve la d\'emonstration de la proposition pour le cas o\`u $ \ee $ est tel que $ \left(e_{0}\prod_{j=m+1}^{r}P_{j}^{d_{j}}\right)^{2}P^{-1+3\tilde{d}_{m}\theta}<1 $. Dans le cas contraire, on remarque que l'\'egalit\'e est triviale car le terme d'erreur est alors dominant : en effet, si l'on suppose $  \left(e_{0}\prod_{j=m+1}^{r}P_{j}^{d_{j}}\right)^{2}\geqslant P^{1-3\tilde{d}_{m}\theta}  $, on a l'estimation triviale : 
\begin{align*}  \tilde{N}_{\ee,m}(P_{1},...,P_{r}) & \ll \left(\prod_{i=1}^{n+r}e_{i}\right)^{-1}\left(\prod_{j=1}^{r}P_{j}^{n_{j}}\right) \\ & \ll e_{0,m}^{4}\left(\prod_{j=1}^{r}P_{j}^{d_{j}}\right)^{4}\left(\prod_{i=1}^{n+r}e_{i}\right)^{-1}\left(\prod_{j=1}^{r}P_{j}^{n_{j}}\right)P^{-2+6\tilde{d}_{m}\theta} \\ & \ll e_{0,m}^{4}\left(\prod_{j=1}^{r}P_{j}^{d_{j}}\right)^{4}P^{-1+6\tilde{d}_{m}\theta}\left(\prod_{j=1}^{r}P_{j}^{n_{j}-d_{j}}\right).\end{align*}
Or, par le choix de $ \theta $ on a \[ P^{-1+6\tilde{d}_{m}\theta}\ll \left(\prod_{j=1}^{r}P_{j}^{d_{j}}\right)^{-4}P_{m}^{\frac{6}{5}}P^{-1} \ll \left(\prod_{j=1}^{r}P_{j}^{d_{j}}\right)^{-4}P_{m}^{-\delta}, \] car $ P=\prod_{j=1}^{m}P_{j}^{d_{j}}\geqslant P_{m}^{2} $ (puisque $ (m,d_{1})\neq (1,1) $). Par cons\'equent, on obtient \[  \tilde{N}_{\ee,m}(P_{1},...,P_{r}) \ll e_{0,m}^{4}\left(\prod_{i=1}^{n+r}e_{i}\right)^{-1}\left(\prod_{j=1}^{r}P_{j}^{n_{j}-d_{j}}\right)P_{m}^{-\delta}. \]
Par ailleurs, en utilisant les lemmes\;\ref{Jx3} et \;\ref{Sx3} on trouve : \begin{multline*}
\sum_{\forall j\in \{m+1,...,r\}, \;k_{j}\leqslant P_{j}}\left(\sum_{(x_{i})_{i\in I_{m}} \in \Phi_{m}(\kk)}\mathfrak{S}_{(x_{i})_{i\in I_{m}},\ee}J_{(x_{i},e_{i})_{i\in I_{m}},\kk}\right) \\ \left(\prod_{i\notin I_{m}}e_{i}\right)^{-1}\left(\prod_{j=m+1}^{r}k_{j}^{\sum_{i\notin I_{m}}a_{i,j}}\right)\left(\prod_{j=1}^{m}P_{j}^{n_{j}-d_{j}}\right) \\ \ll e_{0,m}^{2+\delta}\left(\prod_{i=1}^{n+r}e_{i}\right)^{-1}\left(\prod_{j=m+1}^{r}P_{j}^{d_{j}}\right)^{4+\delta}\left(\prod_{j=m+1}^{r}P_{j}^{n_{j}}\right)\left(\prod_{j=1}^{m}P_{j}^{n_{j}-d_{j}}\right) \\ \ll e_{0,m}^{4}\left(\prod_{i=1}^{n+r}e_{i}\right)^{-1}\left(\prod_{j=m+1}^{r}t_{j}^{d_{j}}\right)^{7}\left(\prod_{j=1}^{r}P_{j}^{n_{j}-d_{j}}\right)P^{-1+3\tilde{d}_{m}\theta+\delta} \\ \ll  e_{0,m}^{4}\left(\prod_{i=1}^{n+r}e_{i}\right)^{-1}\left(\prod_{j=1}^{r}P_{j}^{n_{j}-d_{j}}\right)\underbrace{P_{m}^{\frac{7}{5}-2}}_{\leqslant P_{m}^{-\delta}},
\end{multline*}
\'etant donn\'e que $ \left(\prod_{j=m+1}^{r}P_{j}^{1+5d_{j}}\right)<P_{m} $ puisque l'on a suppos\'e $  \sum_{j=m+1}^{r}(1+5d_{j})\frac{b_{j}}{b_{m}}<1 $, et que $ P^{-1+3\tilde{d}^{m}\theta}\ll \left(\prod_{j=m+1}^{r}P_{j}^{-\frac{3}{5}(1+5d_{j})}\right)P_{m}^{\frac{3}{5}}\underbrace{P^{-1}}_{\leqslant P_{m}^{-2}} $. d'o\`u le r\'esultat. 
\end{proof}

\subsection{ Cas particulier }

Nous allons ici traiter le cas particulier o\`u $ m=1 $ et $ d_{1}=1 $ (ce qui implique que la famille $ \tt=(t_{1,K}) $ est telle que $ t_{1,K}=1 $ si $ K=1 $ et $ t_{1,K}=0 $ sinon. Le polyn\^ome $ F $ est alors du type : \[ F(\xx)=\sum_{k\in J}A_{k}((x_{i})_{i\in I_{1}})x_{k} \] o\`u \[ J=\{i\in \{1,...,n+r\} \; |\; a_{i,1}=1\}. \] Il est facile de se ramener au cas o\`u $ J=I_{1} $. Nous devons alors calculer le nombre de points \`a coordonn\'ees born\'ees d'un r\'eseau hyperplan : en effet on a pour $ \kk=(k_{2},...,k_{r}) $ fix\'e, et $ (x_{i})_{i\in I_{1}}\in \Phi_{1}(\kk) $

\begin{multline*}
N_{(x_{i})_{i\in I_{1}},\ee}(P_{1})=\Card\left\{ 
 (x_{i})_{i\notin I_{1}}\in (\ZZ\setminus\{0\})^{n+r-s} \; | \; \sum_{k\in J}A_{k}((e_{i},x_{i})_{i\in I_{1}})e_{k}x_{k}=0,  \;\right.\\ \left. \forall j\in \{2,...,r\}, \;   \left\lfloor|(\ee.\xx)^{E(n+j)}|\right\rfloor= k_{j}, \;  \left\lfloor|(\ee.\xx)^{E(n+1)}|\right\rfloor\leqslant P_{1}, \; \right. \\ \left. \forall i\notin I_{1},  \; |x_{i}|\leqslant \frac{1}{e_{i}}\left(\prod_{j=2}^{r}(k_{j}+1)^{a_{i,j}}\right)\left(\left\lfloor|(\ee.\xx)^{E(n+1)}|\right\rfloor+1\right) \right\}.
\end{multline*}
Commen\c{c}ons par introduire la d\'efinition suivante : \begin{Def}
Soit $ S $ un sous-ensemble de $ \RR^{n} $, et soit $ c $ un entier tel que $ 0\leqslant c \leqslant d $. Pour $ M\in \NN $ et $ L>0 $, on dit que \emph{$ S $ appartient \`a $ \Lip(n,c,M,L) $} s'il existe $ M $ applications $ \phi : [0,1]^{n-c}\ra \RR^{d} $ v\'erifiant : \[ ||\phi(\xx)-\phi(\yy)||_{2}\leqslant L||\xx-\yy||_{2}, \] $ ||.||_{2} $ d\'esignant la norme euclidienne, telles que $ S $ soit recouvert par les images de ces applications. 
\end{Def}

On a le r\'esultat suivant (cf. \cite[Lemme 2]{MV}) : 
\begin{lemma}\label{geomnomb3}
Soit $ S\subset\RR^{n} $ un ensemble born\'e dont le bord $ \partial S $ appartient \`a $ \Lip(n,1,M,L) $. L'ensemble $ S $ est alors mesurable et si $ \Lambda $ est un r\'eseau de $ \RR^{n} $ de premier minimum successif $ \lambda_{1} $, on a \[ \left| \card(S\cap\Lambda)-\frac{\Vol(S)}{\det(\Lambda)}\right|\leqslant c(n)M\left(\frac{L}{\lambda_{1}}+1\right)^{n-1}, \] o\`u $ c(n) $ est une constante ne d\'ependant que de $ n $.
\end{lemma}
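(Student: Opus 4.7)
The plan is to reduce to the case of the standard lattice $\ZZ^{n}$ via a linear change of variables adapted to the successive minima of $\Lambda$, and then to compare the number of integer points in the transformed set to its volume using a boundary-counting argument.

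First, I would invoke a theorem of Mahler (or the Minkowski reduction procedure) to choose a basis $v_1,\dots,v_n$ of $\Lambda$ with $\|v_i\|_2 \ll_n \lambda_i(\Lambda)$, where $\lambda_i$ denotes the $i$-th successive minimum. Let $T$ denote the linear map sending the standard basis vector $e_i$ to $v_i$, so that $T(\ZZ^n)=\Lambda$ and $|\det T|=\det(\Lambda)$. Writing $S'=T^{-1}(S)$, one has
\begin{equation*}
\card(S\cap \Lambda)=\card(S'\cap \ZZ^n),\qquad \Vol(S')=\Vol(S)/\det(\Lambda),
\end{equation*}
so it suffices to bound $|\card(S'\cap \ZZ^n)-\Vol(S')|$.

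Next, I would classically introduce the two sets of integer points
\begin{equation*}
\mathcal{N}_- = \{z\in \ZZ^n : (z+[0,1]^n)\subset S'\},\qquad \mathcal{N}_+=\{z\in\ZZ^n : (z+[0,1]^n)\cap S'\ne \emptyset\}.
\end{equation*}
Then $\card(\mathcal{N}_-)\leq \Vol(S')\leq \card(\mathcal{N}_+)$, the same chain holds for $\card(S'\cap\ZZ^n)$, and therefore $|\card(S'\cap\ZZ^n)-\Vol(S')|\leq \card(\mathcal{N}_+\setminus\mathcal{N}_-)$. The set $\mathcal{N}_+\setminus\mathcal{N}_-$ consists of unit cubes meeting $\partial S'$, so the problem reduces to bounding the number of unit cubes intersecting $\partial S'$.

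The key point is then to control the Lipschitz constant of $\partial S'$. Since $\partial S'=T^{-1}(\partial S)$, composing each of the $M$ Lipschitz maps $\phi:[0,1]^{n-1}\to \RR^n$ parameterizing $\partial S$ with $T^{-1}$ gives parameterizations of $\partial S'$ with Lipschitz constant $\ll \|T^{-1}\|\cdot L$. By the choice of reduced basis one has $\|T^{-1}\|\ll_n 1/\lambda_1$, so $\partial S'\in \Lip(n,1,M,c_n L/\lambda_1)$. For a single Lipschitz image of $[0,1]^{n-1}$ with Lipschitz constant $L'$, a standard subdivision argument (cover $[0,1]^{n-1}$ by $\ll (L'+1)^{n-1}$ small cubes of diameter $\leq 1/L'$, each mapping into a ball of radius $O(1)$ which meets $O(1)$ unit cubes) shows that the image intersects $\ll_n (L'+1)^{n-1}$ unit cubes. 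Summing over the $M$ pieces yields $\card(\mathcal{N}_+\setminus \mathcal{N}_-)\ll_n M(L/\lambda_1+1)^{n-1}$, which combined with the previous inequalities gives the announced bound.

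The main obstacle is verifying the constant in the estimate $\|T^{-1}\|\ll_n 1/\lambda_1$: this requires the choice of a reduced basis, for which the appropriate dependence on the successive minima follows from Mahler's inequalities or directly from Minkowski's second theorem. Since the statement is quoted from \cite{MV}, this delicate verification is in any case already carried out in the cited reference and it is enough to invoke it.
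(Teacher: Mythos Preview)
Your outline is correct and is essentially the standard argument behind this lemma, but you should know that the paper does not prove this statement at all: it simply introduces it with ``cf.\ \cite[Lemme 2]{MV}'' and moves directly to applying it. So there is no paper-proof to compare against; your sketch is a faithful reconstruction of the Masser--Vaaler argument you correctly identify as the source.
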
 
Nous allons utiliser ce lemme pour \'evaluer, pour tout $ k_{1}\leqslant P_{1} $, le cardinal \begin{multline*}
N_{(x_{i})_{i\in I_{1}},\ee,k_{1}}=\Card\left\{ 
 (x_{i})_{i\notin I_{1}}\in (\ZZ\setminus\{0\})^{n+r-s} \; \left| \; \sum_{k\in J}A_{k}((e_{i}x_{i})_{i\in I_{1}})e_{k}x_{k}=0,  \;\right.\right.\\ \left. \forall j\in \{2,...,r\}, \;   \left\lfloor|(\ee.\xx)^{E(n+j)}|\right\rfloor= k_{j}, \;  \left\lfloor|(\ee.\xx)^{E(n+1)}|\right\rfloor\leqslant P_{1}, \; \right. \\ \left. \forall i\notin I_{1},  \; |x_{i}|\leqslant \frac{1}{e_{i}}\left(\prod_{j=1}^{r}(k_{j}+1)^{a_{i,j}}\right), \; \left\lfloor|(\ee.\xx)^{E(n+1)}|\right\rfloor\leqslant k_{1} \right\}.
\end{multline*}
Notons $ H_{\ee,(x_{i})_{i\in I_{1}}} $ l'hyperplan de $ \RR^{n+r-s} $ d\'efini par

\[ H_{\ee,(x_{i})_{i\in I_{1}}}=\{(x_{i})_{i\notin I_{1}}\in \RR^{n+r-s}\; |\; F(\ee.\xx)=\sum_{k\in J}A_{k}((e_{i}x_{i})_{i\in I_{1}})e_{k}x_{k}\} .\] On note par ailleurs $ C_{\ee,(x_{i})_{i\in I_{1}}} $ le polytope convexe $ \BB_{\ee,(x_{i})_{i\in I_{1}}}\cap H_{\ee,(x_{i})_{i\in I_{1}}} $ o\`u \[ \BB_{\ee,(x_{i})_{i\in I_{1}}}=\prod_{i\notin I_{1}}\left[-\frac{1}{e_{i}}\prod_{j=2}^{r}(k_{j}+1)^{a_{i,j}},\frac{1}{e_{i}}\prod_{j=2}^{r}(k_{j}+1)^{a_{i,j}}\right] \]    et $ \Lambda_{\ee,(x_{i})_{i\in I_{1}}} $ le r\'eseau $ \ZZ^{n+r-s}\cap H_{\ee,(x_{i})_{i\in I_{1}}}  $. Nous allons appliquer le lemme \ref{geomnomb3} \`a $ S=k_{1}C_{\ee,(x_{i})_{i\in I_{1}}} $ et $ \Lambda=\Lambda_{\ee,(x_{i})_{i\in I_{1}}} $ vus respectivement comme un sous-ensemble et un r\'eseau de $ H_{\ee,(x_{i})_{i\in I_{1}}} $ que l'on identifiera \`a $ \RR^{n+r-s+1} $. Nous allons pour cela montrer que le bord de $ S $ appartient \`a \[ \Lip(n+r-s,1,(n+r-s-1)\prod_{i\notin I_{1}}\frac{2}{e_{i}}\prod_{j=2}^{r}(k_{j}+1)^{a_{i,j}},k_{1}(n+r-s-2)\sqrt{n+r-s}) .\] Une face du polytope $ C_{\ee,(x_{i})_{i\in I_{1}}} $ est obtenue en prenant l'intersection d'une face $ \mathcal{F} $ de $ \BB_{\ee,(x_{i})_{i\in I_{1}}} $ avec $ H_{\ee,(x_{i})_{i\in I_{1}}} $. Consid\'erons par exemple l'intersection (suppos\'ee non vide) de la face $ \mathcal{F}=\{(x_{i})_{i\notin I_{1}}\in \BB_{\ee,(x_{i})_{i\in I_{1}}}\; |\; x_{i_{0}}=\frac{1}{e_{i_{0}}}\prod_{j=2}^{r}(k_{j}+1)^{a_{i_{0},j}}\} $ avec $ H_{\ee,(x_{i})_{i\in I_{1}}} $. La face $ \mathcal{F} $ peut \^etre subdivis\'ee en $ M_{0}=\prod_{\substack{i\notin I_{1}\\ i\neq i_{0}}}\frac{2}{e_{i}}\prod_{j=2}^{r}(k_{j}+1)^{a_{i,j}} $ sous-faces $ F_{1},...,F_{M_{0}} $ qui sont des cubes de taille $ 1 $ de centres not\'es $ c_{1},...,c_{M_{0}} $. Pour simplifier les notations, on pose \[ \forall  k\in J,\;  \alpha_{k}=
A_{k}((e_{i},x_{i})_{i\in I_{1}})  \] de sorte que $ H_{\ee,(x_{i})_{i\in I_{1}}} $ a pour \'equation \[\sum_{k\in J}\alpha_{k}e_{k}x_{k}=0 \] (les $ \alpha_{k} $ \'etant non tous nuls). 
On peut par cons\'equent subdiviser chaque face $ F_{l}\cap H_{\ee,(x_{i})_{i\in I_{1}}} $. Pour $ l\in \{1,...,M_{0}\} $ quelconque, on a pour tout $ (x_{i})_{i\in J}\in F_{l}\cap H_{\ee,(x_{i})_{i\in I_{1}}} $ : \[ \alpha_{i_{0}}\prod_{j=2}^{r}(k_{j}+1)^{a_{i_{0},j}}+\sum_{k\in J\setminus\{i_{0}\} }\alpha_{k}e_{k}x_{k}=0 \] avec $ \max_{k\in J\setminus \{i_{0}\}}|\alpha_{k}|\neq 0 $ puisque l'intersection $ F_{l}\cap H_{\ee,(x_{i})_{i\in I_{1}}} $ est non vide. Posons alors \[ |\alpha_{k_{0}}|=\max_{k\in J\setminus \{i_{0}\}}|\alpha_{k}| ,\] et on a $ z_{k_{0}}=-\frac{\alpha_{i_{0}}\prod_{j=2}^{r}(k_{j}+1)^{a_{i_{0},j}}}{e_{k_{0}}\alpha_{k_{0}}}-\sum_{k\in J\setminus\{i_{0},k_{0}\}}\frac{e_{k}\alpha_{k}}{e_{k_{0}}\alpha_{k_{0}}}x_{k} $, et on peut construire l'application \[ \begin{array}{rcl} 
\phi_{F_{l}} : [0,1]^{n+r-s-2}  & \ra & H_{\ee,(x_{i})_{i\in I_{1}}} \\ (t_{k})_{k\in J\setminus\{i_{0},k_{0}\}} & \mt & (x_{k})_{k\in j}
\end{array},   \]  avec \[ x_{k}=\left\{\begin{array}{rcl} \frac{1}{e_{i_{0}}}\prod_{j=2}^{r}(k_{j}+1)^{a_{i_{0},j}} & \mbox{si} & k=i_{0} \\ -\frac{\alpha_{i_{0}}\prod_{j=2}^{r}(k_{j}+1)^{a_{i_{0},j}}}{e_{k_{0}}\alpha_{k_{0}}}-\sum_{k\in J\setminus\{i_{0},k_{0}\}}\frac{e_{k}\alpha_{k}}{e_{k_{0}}\alpha_{k_{0}}}x_{k} & \mbox{si} & k=k_{0} \\ (\frac{1}{2}-t_{k})+c_{l} & \mbox{si} & k\in J\setminus\{i_{0},k_{0}\} \end{array}\right. \]
Il est alors clair que $ F_{l}\cap H_{\ee,(x_{i})_{i\in I_{1}}}\subset \phi_{F_{l}}([0,1]^{n+r-s-2}) $ et que \begin{multline*} ||\phi_{F_{l}}(\tt)-\phi_{F_{l}}(\tt')||_{2}  \leqslant \sqrt{n+r-s}||\phi_{F_{l}}(\tt)-\phi_{F_{l}}(\tt')||_{\infty} \\ \leqslant \sqrt{n+r-s}\max\left(1,\sum_{k\in J\setminus\{i_{0},k_{0}\}}\frac{|e_{k}\alpha_{k}|}{|e_{k_{0}}\alpha_{k_{0}}|}\right)||\tt-\tt'||_{\infty} \\  \leqslant (n+r-s-2)\sqrt{n+r-s}||\tt-\tt'||_{2}. \end{multline*}
Par cons\'equent, on a bien que le bord de $ C_{\ee,(x_{i})_{i\in I_{1}}} $ appartient \`a :  \[ \Lip(n+r-s,1,(n+r-s-1)\prod_{i\notin I_{1}}\frac{2}{e_{i}}\prod_{j=2}^{r}(k_{j}+1)^{a_{i,j}},(n+r-s-2)\sqrt{n+r-s}),\] et donc que le bord de $  k_{1}C_{\ee,(x_{i})_{i\in I_{1}}} $ appartient \`a  \[ \Lip(n+r-s,1,(n+r-s-1)\prod_{i\notin I_{1}}\frac{2}{e_{i}}\prod_{j=2}^{r}(k_{j}+1)^{a_{i,j}},k_{1}(n+r-s-2)\sqrt{n+r-s}) .\] De plus puisque $ \Lambda_{\ee,(x_{i})_{i\in I_{1}}}\subset \ZZ^{n+r-s} $ le premier minimum successif de ce r\'eseau est sup\'erieur ou \'egal \`a $ 1 $. Ainsi, puisque
\begin{equation}  N_{(x_{i})_{i\in I_{1}},\ee,k_{1}}=\card(\Lambda_{\ee,(x_{i})_{i\in I_{1}}}\cap k_{1}C_{\ee,(x_{i})_{i\in I_{1}}})\end{equation} le lemme \ref{geomnomb3} nous donne : \begin{multline*}\label{reseau3} N_{(x_{i})_{i\in I_{1}},\ee,k_{1}}=k_{1}^{n+r-s-1}\frac{\Vol(C_{\ee,(x_{i})_{i\in I_{1}}})}{\det(\Lambda_{\ee,(x_{i})_{i\in I_{1}}})} \\ +O_{n,r,s}\left(\left(\prod_{i\notin I_{1}}e_{i}\right)^{-1}\left(\prod_{j=2}^{r}k_{j}^{\sum_{i\notin I_{1}}a_{i,j}}\right)k_{1}^{n+r-s-2}\right), \end{multline*}  uniform\'ement pour tout $ (x_{i})_{i\in I_{1}} $. Remarquons que \[ \frac{\Vol(C_{\ee,(x_{i})_{i\in I_{1}}})}{\det(\Lambda_{\ee,(x_{i})_{i\in I_{1}}})} \ll \left(\prod_{i\notin I_{1}}e_{i}\right)^{-1}\left(\prod_{j=2}^{r}k_{j}^{\sum_{i\notin I_{1}}a_{i,j}}\right) \] (car $ \det(\Lambda_{\ee,(x_{i})_{i\in I_{1}}})\geqslant 1 $).\\

Nous pouvons alors en d\'eduire que \begin{multline*}
\Card\left\{ 
 (x_{i})_{i\notin I_{m}}\in (\ZZ\setminus\{0\})^{n+r-s} \; | \; \sum_{k\in J}A_{k}((e_{i}x_{i})_{i\in I_{1}})x_{k}=0,  \;\right.\\ \left. \forall j\in \{2,...,r\}, \;   \left\lfloor|(\ee.\xx)^{E(n+j)}|\right\rfloor= k_{j}, \;  \right. \\ \left. \forall i\notin I_{1},  \; |x_{i}|\leqslant \frac{1}{e_{i}}\left(\prod_{j=1}^{r}(k_{j}+1)^{a_{i,j}}\right), \; \left(\left\lfloor|(\ee.\xx)^{E(n+1)}|\right\rfloor= k_{1}\right) \right\} \\  =  N_{(x_{i})_{i\in I_{1}},\ee,k_{1}}-N_{(x_{i})_{i\in I_{1}},\ee,k_{1}-1} \\ =\left(k_{1}^{n+r-s-1}-(k_{1}-1)^{n+r-s-1}\right)\frac{\Vol(C_{\ee,(x_{i})_{i\in I_{1}}})}{\det(\Lambda_{\ee,(x_{i})_{i\in I_{1}}})} \\ +O_{n,r,s}\left(\left(\prod_{i\notin I_{1}}e_{i}\right)^{-1}\left(\prod_{j=2}^{r}k_{j}^{\sum_{i\notin I_{1}}a_{i,j}}\right)\left(k_{1}^{n+r-s-2}-(k_{1}-1)^{n+r-s-2}\right)\right) \\ = (n+r-s-1)k_{1}^{n+r-s-2}\frac{\Vol(C_{\ee,(x_{i})_{i\in I_{1}}})}{\det(\Lambda_{\ee,(x_{i})_{i\in I_{1}}})} \\ +O_{n,r,s}\left(\left(\prod_{i\notin I_{1}}e_{i}\right)^{-1}\left(\prod_{j=2}^{r}k_{j}^{\sum_{i\notin I_{1}}a_{i,j}}\right)k_{1}^{n+r-s-3}\right)
\end{multline*}

Par cons\'equent, en sommant sur les $ k_{1}\leqslant P_{1} $ on obtient un r\'esultat analogue \`a celui du corollaire\;\ref{corollairex3} :  \begin{multline}\label{corollairex'3}  N_{(x_{i})_{i\in I_{1}},\ee}(P_{1})= \frac{\Vol(C_{\ee,(x_{i})_{i\in I_{1}}})}{\det(\Lambda_{\ee,(x_{i})_{i\in I_{1}}})}P_{1}^{n+r-s-1} +O\left(\left(\prod_{i\notin I_{1}}e_{i}\right)^{-1}\left(\prod_{j=2}^{r}k_{j}^{\sum_{i\notin I_{1}}a_{i,j}}\right)P_{1}^{n+r-s-2}\right), \end{multline}
uniform\'ement pour tous $ \ee,(x_{i})_{i\in I_{1}},\kk=(k_{2},...,k_{r}) $. On peut en d\'eduire, en sommant sur les $ (x_{i})_{i\in I_{1}} $ : \begin{multline*}  \widetilde{N}_{\ee,1}(P_{1},...,P_{r})=\sum_{\kk\; |\; k_{j} \leqslant P_{j}}\sum_{(x_{i})_{i\in I_{1}}\in \phi_{1}(\kk)}\frac{\Vol(C_{\ee,(x_{i})_{i\in I_{1}}})}{\det(\Lambda_{\ee,(x_{i})_{i\in I_{1}}})}P_{1}^{n+r-s-1} \\ +O\left(\left(\prod_{i=1}^{n}e_{i}\right)^{-1}\sum_{\kk\; |\; k_{j} \leqslant P_{j}}\left(\prod_{j=2}^{r}k_{j}^{n_{j}}\right)P_{1}^{n_{1}-2}\right) \\ = \sum_{\kk\; |\; k_{j} \leqslant P_{j}}\sum_{(x_{i})_{i\in I_{1}}\in \phi_{1}(\kk)}\frac{\Vol(C_{\ee,(x_{i})_{i\in I_{1}}})}{\det(\Lambda_{\ee,(x_{i})_{i\in I_{1}}})}P_{1}^{n+r-s-1} \\ +O\left(\left(\prod_{i=1}^{n}e_{i}\right)^{-1}\left(\prod_{j=2}^{r}P_{j}^{n_{j}+1}\right)P_{1}^{n_{1}-2}\right). \end{multline*}
Par ailleurs, si l'on suppose $ b_{1}\geqslant \sum_{j=2}^{r}b_{j}(d_{j}+1)+\delta $, le terme d'erreur est alors du type $ O\left(\left(\prod_{i=1}^{n}e_{i}\right)^{-1}\left(\prod_{j=1}^{r}P_{j}^{n_{j}-d_{j}}\right)P_{r}^{-\delta}\right) $. D'o\`u le r\'esultat ci-dessous qui est un \'equivalent du th\'eor\`eme\;\ref{thmfinx3} : 

\begin{thm}\label{thmfinx'3}
Si l'on suppose $ P_{1}\geqslant P_{2}\geqslant ...\geqslant P_{r} $, $ P_{j}=P_{r}^{b_{j}} $, $ b_{1}\geqslant \sum_{j=2}^{r}b_{j}(d_{j}+1)+\delta $, et $ (m,d_{1})=(1,1) $, alors on a : \begin{multline*} \widetilde{N}_{\ee,1}(P_{1},...,P_{r})=
\sum_{\kk\; |\; k_{j} \leqslant P_{j}}\sum_{(x_{i})_{i\in I_{1}}\in \phi_{1}(\kk)}\frac{\Vol(C_{\ee,(x_{i})_{i\in I_{1}}})}{\det(\Lambda_{\ee,(x_{i})_{i\in I_{1}}})}P_{1}^{n+r-s-1} \\ + O\left(\left(\prod_{i=1}^{n}e_{i}\right)^{-1}\left(\prod_{j=1}^{r}P_{j}^{n_{j}-d_{j}}\right)\right).
\end{multline*}
\end{thm}

\section{Troisi\`eme \'etape}

Nous allons \`a pr\'esent utiliser les r\'esultats obtenus dans les sections pr\'ec\'edentes pour obtenir une formule asymptotique pour $ N_{U,\ee}(P_{1},...,P_{r}) $ valable pour tous $ P_{1},...,P_{r} $. Plus pr\'ecis\'ement, nous allons montrer pour un ouvert $ U $ bien choisi (voir\;\eqref{definitionouvertU3}) et une constante $ \mathfrak{m} $ que nous pr\'eciserons (voir\;\eqref{formuleprecisefrakm3}), le th\'eor\`eme ci-dessous :

\begin{thm}\label{thmNU3}
Si l'on suppose que $ n+r>\mathfrak{m}  $ alors 
\item On en d\'eduit \begin{multline}\label{formuleNU3}  N_{U,\ee}(P_{1},...,P_{r})=C_{\sigma,\ee}\left(\prod_{j=1}^{r}P_{j}^{n_{j}-d_{j}}\right) \\ +O\left(\left(e_{0}^{4+\delta}\left(\prod_{i=1}^{n+r}e_{i}\right)^{-1}+\left(\prod_{i=1}^{n+r}e_{i}\right)^{-\frac{1}{2}}\right)\left(\prod_{j=1}^{r}P_{j}^{n_{j}-d_{j}}\right)P_{r}^{-\delta}\right).\end{multline}
\end{thm}

Si $ \sigma\in \mathfrak{S}_{r} $ est tel que $ P_{\sigma(1)}\geqslant P_{\sigma(2)}\geqslant ...\geqslant P_{\sigma(r)} $ nous poserons pour tout $ j\in \{1,...,r\} $,  $ P_{\sigma(j)}=P_{\sigma(r)}^{b_{\sigma,j}} $ avec $ b_{\sigma,j}\geqslant 1 $ (et $ b_{\sigma,r}=1 $). Nous poserons alors $ \bb_{\sigma}=(b_{\sigma,1},...,b_{\sigma,r}) $. Nous noterons \'egalement pour tout $ (\sigma,m)\in \mathfrak{S}_{r}\times\{1,...,r\} $ : \[ I_{\sigma,m}=\{ i\in \{1,...,n+r\}\; |\; \forall j\in \{1,...,m\}, \; a_{i,\sigma(j)}=0 \}. \] Fixons par ailleurs un ensemble de degr\'es $ \tt^{(\sigma,m)} $ en les variables $ (x_{i})_{i\neq I_{m,\sigma}} $ pour chaque $ (\sigma,m)\in \mathfrak{S}_{r}\times\{1,...,r-1\} $ et notons

 \begin{multline}
\mathcal{A}_{\sigma,m}^{\lambda}=\left\{ (x_{i})_{i\in I_{\sigma,m}}\; | \; \forall j_{0}\in \{\sigma(1),...,\sigma(m)\},\; \forall K_{j_{0}}\in  \{1,...,d_{j_{0}}\},\; \right. \\ \left.  \dim V_{\sigma,m,(x_{i})_{i\in I_{\sigma,m}},\tt^{(\sigma,m)},(j_{0},K_{j_{0}})}^{\ast}<\dim V_{\sigma,m,\tt^{(\sigma,m)},(j_{0},K_{j_{0}})}^{\ast}-s+\lambda \right\}
\end{multline} o\`u l'on a pos\'e  \[ V_{\sigma,m,\tt^{(\sigma,m)},(j_{0},K_{j_{0}})}^{\ast}=\{ \xx\in \AA_{\CC}^{n+r}\; | \; \forall i\in J(j_{0},K_{j_{0}}), \; \frac{\partial F_{\tt^{(\sigma,m)}}}{\partial x_{i}}(\xx)=0 \}, \] \[ V_{\sigma,m,(x_{i})_{i\in I_{\sigma,m}},\tt^{(\sigma,m)},(j_{0},K_{j_{0}})}^{\ast}=\{ \xx\notin I_{\sigma,m}\; | \; \forall i\in J(j_{0},K_{j_{0}}), \; \frac{\partial F_{\tt^{(\sigma,m)}}}{\partial x_{i}}(\xx)=0 \}, \]
 avec \[ J(j_{0},K_{j_{0}})=\{ i \in \{ 1,...,n+r\} \; | \; a_{i,j_{0}}=K_{j_{0}}\}. \]

 Pour tout $ (\sigma,m)\in \mathfrak{S}_{r}\times\{1,...,r-1\} $, on note alors : \begin{multline*} g^{\sigma}_{m}(\bb_{\sigma},\delta))=\left(\frac{b_{\sigma,m}+\sum_{j=1}^{m}b_{\sigma,j}}{b_{\sigma,m}}\right)5\tilde{d}_{\sigma,m}\left(1-\sum_{j=m+1}^{r}(1+5d_{\sigma(j)})\frac{b_{\sigma,j}}{b_{\sigma,m}}-\delta\right)^{-1} \\ \left( \sum_{j=m+1}^{r}\left(1+d_{\sigma(j)}\left(3+\frac{\sum_{l=1}^{m}b_{\sigma,l}}{b_{\sigma,m}+\sum_{l=1}^{m}b_{\sigma,l}}+2\varepsilon\right)\right)\frac{b_{\sigma,j}}{b_{\sigma,m}}+2\delta\right), \end{multline*} (pour $ \sigma=\Id $, nous retrouvons $ g_{m}^{\Id}=g_{m} $ o\`u $ g_{m} $ a \'et\'e d\'efini par\;\eqref{fonctgm3}),

\begin{multline*} h^{\sigma}_{m,\tt^{(\sigma,m)}}\left((\bb_{\tau})_{\tau\in \mathfrak{S}_{r}}\right)=2^{\sum_{j=1}^{m}D_{\sigma(j)}^{(\sigma,m)}}(\frac{b_{\sigma,m}+3\sum_{j=1}^{m}b_{\sigma,j}}{b_{\sigma,m}}\tilde{d}_{\sigma,m}+ g^{\sigma}_{m}(\bb_{\sigma},\delta)) \\ +4r\max_{\tau\in \mathfrak{S}_{r}}\left(\sum_{j=1}^{r}b_{\tau,j}d_{\tau(j)}+\delta \right)+ \max_{(j_{0},K_{j_{0}})}\dim V_{\sigma,m,\tt^{(\sigma,m)},(j_{0},K_{j_{0}})}^{\ast}, \end{multline*} o\`u $ \tilde{d}_{\sigma,m}=(\sum_{j=1}^{m}d_{\sigma(j)})-1 $, et \begin{multline*} h^{\sigma}_{r,\tt^{(\sigma,r)}}( \bb_{\sigma})=2^{\sum_{j=1}^{r}D^{(\sigma,r)}_{\sigma(j)}}\max\{\left(2+5\sum_{j=1}^{r}b_{\sigma,j}\right)\tilde{d}_{\sigma}, (2\delta+1)\sum_{j=1}^{r}b_{\sigma,j}d_{\sigma(j)}\} \\ + \max_{(j_{0},K_{j_{0}})}\dim V_{\sigma,\tt^{(r,\sigma)},(j_{0},K_{j_{0}})}^{\ast}. \end{multline*} On pose alors 
\[ h_{(\tt^{(\sigma,m)})_{(\sigma,m)\in \mathfrak{S}_{r}\times\{1,...,r\}}}\left((\bb_{\tau})_{\tau\in \mathfrak{S}_{r}}\right)=\max_{m\in \{1,...,r\}}h^{\sigma}_{m,\tt^{(\sigma,m)}}\left((\bb_{\tau})_{\tau\in \mathfrak{S}_{r}}\right). \]
et on consid\`ere $ (\tilde{\bb}_{\sigma})_{\sigma\in  \mathfrak{S}_{r}} =(\tilde{b}_{\sigma,1},...,\tilde{b}_{\sigma,r})_{\sigma\in  \mathfrak{S}_{r}} $ les r\'eels $ \tilde{b}_{\sigma,j} $ minimisant la fonction $ h_{(\tt^{(\sigma,m)})_{(\sigma,m)\in \mathfrak{S}_{r}\times\{1,...,r\}}} $ sur le domaine d\'efini par : \[ \forall \sigma \in \mathfrak{S}_{r}, \; \; b_{\sigma,1}\geqslant ...\geqslant b_{\sigma,r-1}\geqslant b_{\sigma,r}=1 ,\] \begin{equation}\label{formulebsigmaj3} \forall \sigma \in \mathfrak{S}_{r}, \; \; \forall m\in \{1,...,r-1\}, \; \sum_{j=m+1}^{r}(1+5d_{\sigma(j)})\frac{b_{\sigma,j}}{b_{\sigma,m}}<1. \end{equation}On pose alors \begin{equation}\label{formuleprecisefrakm3}  \mathfrak{m}= h_{(\tt^{(\sigma,m)})_{(\sigma,m)\in \mathfrak{S}_{r}\times\{1,...,r\}}}\left((\tilde{\bb}_{\tau})_{\tau\in \mathfrak{S}_{r}}\right) .\end{equation} On choisit alors \begin{equation}\label{expressiondelambda3} \lambda=4r\max_{\sigma\in \mathfrak{S}_{r}}\left\lceil\sum_{j=1}^{r}\tilde{b}_{\sigma,j}d_{\sigma(j)}+\delta \right\rceil. \end{equation}

Remarquons qu'il est possible de majorer $ \mathfrak{m} $ par un r\'eel plus simple :
\begin{lemma}\label{bornefrakm3}
On a que \[ \mathfrak{m} \leqslant r(8.2^{\sum_{j=1}^{r}d_{j}}+4)\left(\prod_{j=1}^{r}(3+10d_{j})\right)\left(\sum_{j=1}^{r} d_{j}\right)+\max_{\substack{m\in \{1,...,r\} \\ \sigma\in \mathfrak{S}_{r} }}\max_{(j,k)\in \mathcal{C}_{m,\tau} }\dim V^{\ast}_{\sigma,m,\dd,(j,k)}. \]
\end{lemma}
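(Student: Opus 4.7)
The plan is to exhibit an explicit admissible choice of $(\bb_\sigma)_{\sigma\in\mathfrak{S}_r}$ and of the degree multi-indices $(\tt^{(\sigma,m)})_{(\sigma,m)}$, evaluate $h^{\sigma}_{m,\tt^{(\sigma,m)}}\bigl((\bb_\tau)_\tau\bigr)$, and bound the result explicitly. I would take $b_{\sigma,j} := \prod_{k=j+1}^{r}(3+10\,d_{\sigma(k)})$ for $j<r$ and $b_{\sigma,r}:=1$; this sequence is decreasing in $j$. Using $(1+5d)/(3+10d)\leq 1/2$ and $1/(3+10d)\leq 1/3$ one gets
\[
\sum_{j=m+1}^r (1+5d_{\sigma(j)})\frac{b_{\sigma,j}}{b_{\sigma,m}} = \sum_{j=m+1}^r \frac{1+5d_{\sigma(j)}}{3+10d_{\sigma(j)}}\prod_{k=m+1}^{j-1}\frac{1}{3+10d_{\sigma(k)}} \leq \tfrac12\sum_{i=0}^{r-m-1} 3^{-i} < \tfrac34,
\]
so the admissibility constraint \eqref{formulebsigmaj3} holds strictly (with room for small $\delta>0$), and the blow-up factor $\bigl(1-\sum(1+5d_{\sigma(j)})b_{\sigma,j}/b_{\sigma,m}-\delta\bigr)^{-1}$ inside $g^{\sigma}_m$ is $\leq 4+O(\delta)$. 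This is the point where the factors $3+10d_j$ in the statement of the lemma are forced by the shape of the constraint.

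For each pair $(\sigma,m)$ I would then choose $\tt^{(\sigma,m)}$ attaining the inner $\min_{\dd}\max_{(j,k)\in\mathcal{C}_{m,\sigma}}\dim V^{\ast}_{\sigma,m,\dd,(j,k)}$ that appears in the definition of $n(F)$, so that the $V^{\ast}$-contribution to each $h^{\sigma}_{m,\tt^{(\sigma,m)}}$ is bounded by the last term of the lemma. The constraint $\sum_{k\geq 1}k\,t^{(\sigma,m)}_{\sigma(j),k}=d_{\sigma(j)}$ with $k\geq 1$ yields $D^{(\sigma,m)}_{\sigma(j)}:=\sum_k t^{(\sigma,m)}_{\sigma(j),k}\leq d_{\sigma(j)}$, whence the crucial prefactor
\[
2^{\sum_{j=1}^{m} D^{(\sigma,m)}_{\sigma(j)}} \leq 2^{\sum_{j=1}^{r} d_j},
\]
which is the source of the $2^{\sum d_j}$ in the statement.

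The main calculation is the explicit bounding of the three blocks constituting $h^{\sigma}_{m,\tt^{(\sigma,m)}}$. A telescoping estimate, using that consecutive ratios $b_{\sigma,j+1}/b_{\sigma,j}$ are $\leq 1/3$, gives
\[
A := 1+\sum_{j=1}^{m}\frac{b_{\sigma,j}}{b_{\sigma,m}} = \sum_{j=0}^{m}\prod_{k=j+1}^{m}(3+10d_{\sigma(k)}) \leq \tfrac{3}{2}\prod_{j=1}^{r}(3+10d_j).
\]
Combining this with $\tilde{d}_{\sigma,m}\leq\sum_j d_j$, the bound $\leq 3/4+2\delta$ on the inner sum in $g^{\sigma}_m$, and the blow-up factor $\leq 4+O(\delta)$ of the first paragraph, one obtains after careful constant tracking
\[
\frac{b_{\sigma,m}+3\sum_{j\leq m}b_{\sigma,j}}{b_{\sigma,m}}\,\tilde{d}_{\sigma,m} + g^{\sigma}_m(\bb_\sigma,\delta) \;\leq\; 8\,r\prod_{j=1}^{r}(3+10d_j)\,\Bigl(\sum_{j=1}^{r} d_j\Bigr).
\]
The identity $\sum_j b_{\tau,j}d_{\tau(j)}=\sum_j d_{\tau(j)}\prod_{k>j}(3+10d_{\tau(k)})\leq(\sum d_j)\prod(3+10d_j)$ handles the second block, giving $4r\max_\tau\bigl(\sum_j b_{\tau,j}d_{\tau(j)}+\delta\bigr)\leq 4r(\sum d_j)\prod(3+10d_j)+O(r\delta)$; the separate formula for $h^{\sigma}_r$ is handled identically and yields a smaller contribution. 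Assembling the three blocks, taking the maximum over $(\sigma,m)$, and recalling that $\mathfrak{m}$ is the minimum over admissible $(\bb_\sigma)$ of that maximum delivers the claimed inequality.

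The main obstacle I expect is constant tracking: the leading coefficient $8\cdot 2^{\sum d_j}$ can only be reached by combining the telescoping bound $A\leq\tfrac{3}{2}\prod(3+10d_j)$ with the \emph{strict} bound $<3/4$ (not merely $<1$) established in the first paragraph, which gives the blow-up factor $\leq 4$. A cruder choice of $b_{\sigma,j}$ or a linear-in-$r$ estimate $A\leq r\prod(3+10d_j)$ immediately inflates the leading constant beyond $8r$. The case $m=r$ (governed by the different formula for $h^{\sigma}_r$) and the dependence on $\lambda$ fixed by \eqref{expressiondelambda3} require parallel but simpler treatment.
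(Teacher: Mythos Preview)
Your strategy matches the paper's: pick explicit $(b_{\sigma,j})$, bound each block of $h^{\sigma}_{m,\tt^{(\sigma,m)}}$, take the maximum. The paper in fact chooses $b_{\sigma,m}=(2+10d_{\sigma(r)})\prod_{j=m+1}^{r-1}(3+10d_{\sigma(j)})$ (your last factor is $3+10d_{\sigma(r)}$ instead of $2+10d_{\sigma(r)}$), tuned so that $\sum_{j>m}(1+5d_{\sigma(j)})b_{\sigma,j}/b_{\sigma,m}=\tfrac12$ \emph{exactly} for every $m$. Two points need attention.

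First, the identity you wrote for $A$ is off: the correct value is $A=1+\sum_{j=1}^{m}\prod_{k=j+1}^{m}(3+10d_{\sigma(k)})$, not $\sum_{j=0}^{m}(\ldots)$; the $j=0$ term $\prod_{k=1}^{m}(3+10d_{\sigma(k)})$ does not belong there. Your upper bound $A\leq\tfrac32\prod_{j}(3+10d_j)$ survives, since your claimed expression majorises the true $A$.

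Second --- and this is the genuine gap --- your stated constants do not deliver the coefficient $8r$. With constraint sum $<3/4$ the blow-up factor is only $\leq 4$, and with inner sum $\leq 3/4$ their product is $\leq 3$; then
\[
\frac{b_m+3\sum_{j\leq m}b_j}{b_m}\,\tilde d_{\sigma,m}+g_m^\sigma \;\leq\; (3A+15A)\,\tilde d_{\sigma,m}=18A\,\tilde d_{\sigma,m}\;\leq\;27\Bigl(\prod_j(3+10d_j)\Bigr)\sum_j d_j,
\]
which exceeds $8r\prod(3+10d_j)\sum d_j$ for $r\in\{2,3\}$. The paper sidesteps this by making the constraint sum equal to $\tfrac12$, so the blow-up is $\approx 2$ and (blow-up)$\times$(inner sum) $\leq 1$; combined with the simple bound $A\leq m\prod_{j\leq m}(3+10d_{\sigma(j)})$ this gives $(3m+5m)\tilde d_{\sigma,m}\prod_{j\leq m}(\ldots)$, hence the factor $8m\leq 8r$. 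In fact your own choice already satisfies, via $1+5d=\tfrac12\bigl((3+10d)-1\bigr)$ and telescoping,
\[
\sum_{j>m}(1+5d_{\sigma(j)})\frac{b_{\sigma,j}}{b_{\sigma,m}}=\tfrac12\Bigl(1-\tfrac{1}{b_{\sigma,m}}\Bigr)<\tfrac12,
\]
so the blow-up is really $<2$ and your approach would succeed with this sharper estimate; the bound $<3/4$ you established is simply too coarse to close the argument for small $r$.
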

\begin{proof}
Choisissons $ (b_{\sigma,j})_{\substack{j\in \{1,...,r-1\}\\ \sigma\in \mathfrak{S}_{r} }} $ tels que pour tout $ (m,\sigma)\in \{1,...,r\}\times \mathfrak{S}_{r} $ : \[ \sum_{j=m+1}^{r}(1+5d_{\sigma(j)})\frac{b_{\sigma,j}}{b_{\sigma,m}}=\frac{1}{2}. \] Ceci est possible si et seulement si pour tout $  (m,\sigma) $ : \[ b_{\sigma,m}=(2+10d_{\sigma(r)})\prod_{j=m+1}^{r-1}(3+10d_{\sigma(j)}), \] donc en particulier, pour tous $ j<m $ : \[ b_{\sigma,m}=b_{\sigma,j}\prod_{k=j}^{m}(3+10d_{\sigma(k)}). \] 
On a alors \begin{align*} \frac{b_{\sigma,m}+\sum_{j=1}^{m}b_{\sigma,j}}{b_{\sigma,m}} & = 2+\sum_{j=1}^{m-1}\prod_{k=j}^{m}(3+10d_{\sigma(k)}) \\ & \leqslant m\prod_{j=1}^{m}(3+10d_{\sigma(k)}) \end{align*}
ainsi que  \begin{equation*} \frac{b_{\sigma,m}+3\sum_{j=1}^{m}b_{\sigma,j}}{b_{\sigma,m}} \leqslant 3m\prod_{j=1}^{m}(3+10d_{\sigma(k)}). \end{equation*}

On remarque enfin que pour $ \delta>0 $ choisi assez petit : \begin{multline*}
 \sum_{j=m+1}^{r}\left(1+d_{\sigma(j)}\left(3+\frac{\sum_{l=1}^{m}b_{\sigma,l}}{b_{\sigma,m}+\sum_{l=1}^{m}b_{\sigma,l}}+2\varepsilon\right)\right)\frac{b_{\sigma,j}}{b_{\sigma,m}}+2\delta \\ \leqslant \sum_{j=m+1}^{r}(1+4d_{\sigma(j)})\frac{b_{j}}{b_{m}}\leqslant \frac{1}{2}-\delta
\end{multline*}

Ainsi : \[ g^{\sigma}_{m}(\bb_{\sigma},\delta)\leqslant 5m\tilde{d}_{\sigma,m}\prod_{j=1}^{m}(3+10d_{\sigma(j)}).
\]
Par ailleurs, on a que, pour tout $ \tau \in \mathfrak{S}_{r} $, \begin{align*}
\sum_{j=1}^{r}b_{\tau,j}d_{\tau(j)}+\delta & \leqslant d_{\tau(r)}+\sum_{m=1}^{r-1}(2+10d_{\tau(r)})d_{\tau(m)}\prod_{j=m+1}^{r-1}(3+10d_{\tau(j)}) \\ & \leqslant  (2+10d_{\tau(r)})\sum_{m=1}^{r}d_{\tau(m)}\prod_{j=m+1}^{r-1}(3+10d_{\tau(j)}) \\ & \leqslant \left(\prod_{j=1}^{r}(3+10d_{\tau(j)})\right)(\sum_{m=1}^{r}d_{\tau(m)}) \\ & \leqslant \left(\prod_{j=1}^{r}(3+10d_{\tau(j)})\right)(\sum_{m=1}^{r}d_{\tau(m)}).
\end{align*}
De plus, \[ \sum_{j=1}^{r}D_{\sigma(j)}^{(\sigma,m)}\leqslant \sum_{j=1}^{r}d_{j}. \] Nous d\'eduisons de ces calculs que pour tous $ (m,\sigma) $ \begin{multline*} h^{\sigma}_{m,\tt^{(\sigma,m)}}\left((\bb_{\tau})_{\tau\in \mathfrak{S}_{r}}\right) \\ \leqslant r(8.2^{\sum_{j=1}^{r}d_{j}}+4)\left(\prod_{j=1}^{r}(3+10d_{j})\right)\left(\sum_{j=1}^{r} d_{j}\right)+\dim V^{\ast}_{m,\sigma,\dd,(j,k)}.\end{multline*}
\end{proof}

\`A partir d'ici, on supposera, sauf indication contraire, que $ P_{1}\geqslant P_{2}\geqslant ...\geqslant P_{r} $ (les autres cas se traitant de la m\^eme mani\`ere). Pour simplifier les notations nous noterons $ b_{1}=b_{\Id,1},...,b_{r}=b_{\Id,r} $, $ \tilde{b}_{1}=\tilde{b}_{\Id,1},...,\tilde{b}_{r}=\tilde{b}_{\Id,r} $ et $ \tt^{(m)}=\tt^{(\Id,m)} $. L'objectif de ce qui va suivre est de donner trouver une formule asymptotique pour $  N_{\ee}(P_{1},...,P_{r})  $ valable pour tous $ P_{1},...,P_{r} $ tels que $ P_{1}\geqslant P_{2}\geqslant ...\geqslant P_{r} $.\\

Commen\c{c}ons par d\'emontrer le r\'esultat ci-dessous qui s'av\'erera utile pour la suite : 
 
 \begin{prop}\label{ferme3}
 Avec les notations de la section pr\'ec\'edente, l'ensemble $ \mathcal{A}_{m}^{\lambda} $ est un ouvert de Zariski de $ \AA_{\CC}^{s} $, et on a de plus que \[ \dim(\mathcal{A}_{m}^{\lambda})^{c} \leqslant \max\{0,s-\lambda \}. \]
 \end{prop}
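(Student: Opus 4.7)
L'id\'ee est d'interpr\'eter $\mathcal{A}_m^\lambda$ comme le lieu o\`u la dimension des fibres d'une certaine projection reste petite, puis d'appliquer la semi-continuit\'e sup\'erieure de la dimension des fibres ainsi que l'estimation classique sur la dimension du lieu des \og grandes fibres \fg.

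Pour $(j_0, K_{j_0})$ fix\'e, je consid\'ererais la projection $\pi_{(j_0,K_{j_0})} : V^{\ast}_{m,\tt^{(m)},(j_0,K_{j_0})} \to \mathbb{A}^{s}_{\CC}$ qui envoie $\xx$ sur $(x_i)_{i\in I_m}$. Par construction, la fibre de $\pi_{(j_0,K_{j_0})}$ au-dessus de $(x_i)_{i\in I_m}$ est exactement $V^{\ast}_{m,(x_i)_{i\in I_m},\tt^{(m)},(j_0,K_{j_0})}$. La semi-continuit\'e sup\'erieure de la dimension des fibres (cons\'equence du th\'eor\`eme de Chevalley, appliqu\'e composante irr\'eductible par composante irr\'eductible) assure alors que pour chaque entier $k$, l'ensemble
\[ F_{(j_0,K_{j_0}),k} = \{(x_i)_{i\in I_m} \in \mathbb{A}^{s}_{\CC} \; | \; \dim V^{\ast}_{m,(x_i)_{i\in I_m},\tt^{(m)},(j_0,K_{j_0})} \geqslant k \} \]
est un ferm\'e de Zariski de $\mathbb{A}^{s}_{\CC}$. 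En prenant $k = \dim V^{\ast}_{m,\tt^{(m)},(j_0,K_{j_0})} - s + \lambda$ et en effectuant l'intersection sur les $(j_0, K_{j_0})$ (en nombre fini), on obtient imm\'ediatement que $\mathcal{A}_m^{\lambda}$ est un ouvert de Zariski.

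Il reste \`a majorer la dimension du compl\'ementaire $(\mathcal{A}_m^{\lambda})^c = \bigcup_{(j_0,K_{j_0})} F_{(j_0,K_{j_0}),k}$. Pour chaque $(j_0,K_{j_0})$, notons $V = V^{\ast}_{m,\tt^{(m)},(j_0,K_{j_0})}$ et $k = \dim V - s + \lambda$. D\'ecomposons $V$ en composantes irr\'eductibles $V = \bigcup_\alpha V_\alpha$ avec $\dim V_\alpha \leqslant \dim V$. Pour chaque $\alpha$, l'ensemble $E_k(V_\alpha) = \{\xx \in V_\alpha \; | \; \dim_{\xx} \pi_{(j_0,K_{j_0})}^{-1}(\pi_{(j_0,K_{j_0})}(\xx))\cap V_{\alpha} \geqslant k\}$ est ferm\'e dans $V_\alpha$, et par le th\'eor\`eme classique de la dimension des fibres, son image $\pi_{(j_0,K_{j_0})}(E_k(V_\alpha))$ est de dimension au plus $\dim V_\alpha - k \leqslant \dim V - k = s - \lambda$. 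Or $F_{(j_0,K_{j_0}),k}$ est contenu dans la r\'eunion de ces images sur $\alpha$, donc de dimension au plus $s - \lambda$ (ou vide). En prenant la r\'eunion finie sur $(j_0, K_{j_0})$, on obtient $\dim (\mathcal{A}_m^\lambda)^c \leqslant \max\{0, s - \lambda\}$, ce qui conclut.

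Aucune difficult\'e r\'eelle n'appara\^it ici~: l'argument repose enti\`erement sur des faits standard de g\'eom\'etrie alg\'ebrique (semi-continuit\'e de la dimension des fibres et majoration $\dim \pi(E_k) \leqslant \dim X - k$). Le seul point \`a v\'erifier avec soin est la correspondance entre la fibre de $\pi_{(j_0,K_{j_0})}$ et la vari\'et\'e $V^{\ast}_{m,(x_i)_{i\in I_m},\tt^{(m)},(j_0,K_{j_0})}$, qui est imm\'ediate \`a partir des d\'efinitions puisque les \'equations $\partial F_{\tt^{(m)}}/\partial x_i = 0$ sont les m\^emes des deux c\^ot\'es.
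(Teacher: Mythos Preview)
Your overall strategy—viewing $(\mathcal{A}_m^\lambda)^c$ as the locus of large fibres of the projection $\pi_{(j_0,K_{j_0})}:V^\ast_{m,\tt^{(m)},(j_0,K_{j_0})}\to\AA^s_{\CC}$—is exactly the paper's approach, and your dimension estimate is correct and matches the paper's computation (total space has dimension $\dim V^\ast$, fibres over the bad locus have dimension at least $\dim V^\ast-s+\lambda$, hence the bad locus has dimension at most $s-\lambda$).

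There is, however, a genuine gap in your argument for \emph{closedness}. Upper semi-continuity of fibre dimension (Chevalley, or EGA IV 13.1) only asserts that $\{x\in X:\dim_x f^{-1}(f(x))\geqslant k\}$ is closed in the \emph{source} $X$; deducing that $\{y\in Y:\dim f^{-1}(y)\geqslant k\}$ is closed in the \emph{target} requires the morphism to be closed (e.g.\ proper). Your projection $V^\ast_{m,\tt^{(m)},(j_0,K_{j_0})}\to\AA^s_{\CC}$ is a projection between affine varieties and is not proper, so the conclusion does not follow directly. (A simple counterexample to the general claim: $\{(x,y,z):xy=1\}\to\AA^1$, $(x,y,z)\mapsto x$, has one-dimensional fibres over $x\neq 0$ and empty fibre over $x=0$.)

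The paper's proof fills precisely this gap by compactifying in the fibre direction: it constructs a \emph{complete} toric variety $X_m$ (a quotient of an open subset of $\AA^{n+r-s}$ by a torus of dimension $m$), observes that the defining equations $\partial F_{\tt^{(m)}}/\partial x_i=0$ descend to a closed subset $Y_{(j_0,K_{j_0})}\subset\AA^s_{\CC}\times X_m$, and then uses that the projection $\AA^s_{\CC}\times X_m\to\AA^s_{\CC}$ is projective, hence closed. The crucial point making this possible is that $F$ (and hence each $\partial F_{\tt^{(m)}}/\partial x_i$) is quasi-homogeneous of positive weights in the variables $(x_l)_{l\notin I_m}$, so the fibres are invariant under a $\mathbb{G}_m$-action with strictly positive weights; this is what allows the passage to the complete toric quotient without losing dimension information. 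You could repair your argument either by reproducing this compactification or, more elementarily, by noting this quasi-homogeneity and reducing to a projectivized (hence proper) family.
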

\begin{proof}
Il suffit de montrer que pour $ (j_{0},K_{j_{0}}) $ fix\'e quelconque, l'ensemble \[ \mathcal{A}_{m,(j_{0},K_{j_{0}})}^{\lambda,c}= \left\{(x_{i})_{i\in I_{m}}\; | \;   \dim V_{m,(x_{i})_{i\in I_{m}},\tt^{(m)},(j_{0},K_{j_{0}})}^{\ast}\geqslant\dim V_{m,\tt^{(m)},(j_{0},K_{j_{0}})}^{\ast}-s+\lambda \right\} \] est un ferm\'e de Zariski de $ \AA_{\CC}^{s} $ de dimension inf\'erieure ou \'egale \`a $ s-\lambda  $. \\

Consid\'erons le sous-r\'eseau $ N' $ de $ \ZZ^{n} $ d\'efini par \[ N'=\bigoplus_{\substack{i \in \{1,...,n\} \\ i\notin I_{m}}}\ZZ v_{i} .\] Par d\'efinition de $ I_{m} $, on a que, pour tout $ j\in \{1,...,m\} $, \[ v_{n+j}=-\sum_{\substack{i \in \{1,...,n\} \\ i\notin I_{m}}}a_{i,j}v_{i}. \] On consid\`ere alors l'\'eventail $ \Delta' $ de $ N'_{\RR}=\bigoplus_{\substack{i \in \{1,...,n\} \\ i\notin I_{m}}}\RR v_{i} $ d\'efini par les c\^ones $ (\sigma\cap N'_{\RR})_{\sigma\in \Delta} $. Cet \'eventail a alors pour ar\^etes $ (\RR^{+}v_{i})_{\substack{i \in \{1,...,n\} \\ i\notin I_{m}}} $ et $ (\RR^{+}v_{n+j})_{j\in \{1,...,m\}} $. Notons alors $ X_{m} $ la vari\'et\'e torique (compl\`ete et lisse) d\'efinie par $ N' $ et $ \Delta' $. Cette vari\'et\'e est de dimension $ n+r-s-m $. Remarquons que cette vari\'et\'e est en fait isomorphe \`a l'adh\'erence des orbites d'un point de l'orbite ouverte de $ X $ sous l'action d'un sous-tore $ T_{m} $ (de dimension $ n+r-s-m $) du tore $ T $ de $ X $.\\

Notons $ Y_{(j_{0},K_{j_{0}})} $ le ferm\'e de $ \AA_{\CC}^{s}\times X_{m}(\CC) $ d\'efini par \[ Y_{(j_{0},K_{j_{0}})}=\{ ((x_{i})_{i\in I_{m}},(x_{i})_{i\notin I_{m}})\in \AA_{\CC}^{s}\times X_{m}(\CC)\; | \;  \forall i\in J(j_{0},K_{j_{0}}), \; \frac{\partial F_{\tt^{(m)}}}{\partial x_{i}}(\xx)=0 \}. \]
 La projection canonique $ \pi : Y_{(j_{0},K_{j_{0}})}\subset \AA_{\CC}^{s}\times X_{m}(\CC) \ra  \AA_{\CC}^{s} $ est un morphisme projectif donc ferm\'e. Par cons\'equent, d'apr\`es  \cite[Corollaire 13.1.5]{G-D}, \[ \{(x_{i})_{i\in I_{m}}\in \AA_{\CC}^{s} \; | \; \dim \underbrace{Y_{(j_{0},K_{j_{0}}),(x_{i})_{i\in I_{m}}}}_{=\pi^{-1}((x_{i})_{i\in I_{m}}}\geqslant \dim V_{m,\tt^{(m)},(j_{0},K_{j_{0}})}^{\ast}-s+\lambda-m \} \] est un ferm\'e, et puisque $ \dim Y_{(j_{0},K_{j_{0}}),(x_{i})_{i\in I_{m}}}=\dim V_{m,(x_{i})_{i\in I_{m}},\tt,(j_{0},K_{j_{0}})}^{\ast}-m $, on trouve bien que $ \mathcal{A}_{m,(j_{0},K_{j_{0}})}^{\lambda} $ est un ferm\'e de Zariski. \\

Remarquons \`a pr\'esent que d'une part, $ X_{m} $ est le quotient d'un ouvert $ U_{m} $ de $ \AA^{n+r-s} $ par l'action d'un tore de dimension $ m $. Par cons\'equent \[ \dim Y_{(j_{0},K_{j_{0}})}=\dim V_{m,\tt^{(m)},(j_{0},K_{j_{0}})}^{\ast}-m.\] D'autre part, \[ Y_{(j_{0},K_{j_{0}})}\cap (\mathcal{A}_{m,(j_{0},K_{j_{0}})}^{\lambda,c}\times  X_{m}(\CC))=\bigsqcup_{(x_{i})_{i\in I_{m}}\in \mathcal{A}_{m,(j_{0},K_{j_{0}})}^{\lambda,c}}\pi^{-1}((x_{i})_{i\in I_{m}}), \] et on a alors que \begin{multline*} \dim(\mathcal{A}_{m,(j_{0},K_{j_{0}})}^{\lambda,c})+ \dim V_{m,\tt^{(m)},(j_{0},K_{j_{0}})}^{\ast}-s+ \lambda -m\leqslant \dim Y_{(j_{0},K_{j_{0}})} \\=\dim V_{m,\tt^{(m)},(j_{0},K_{j_{0}})}^{\ast}-m, \end{multline*}  ce qui implique : \[ \dim\mathcal{A}_{m,(j_{0},K_{j_{0}})}^{\lambda,c}\leqslant s-\lambda, \] d'o\`u le r\'esultat. 

\end{proof}
\begin{rem}
On peut montrer de fa\c{c}on analogue que la m\^eme propri\'et\'e est vraie pour tous les $ \mathcal{A}_{m,\sigma}^{\lambda} $. 
\end{rem}

Nous aurons \'egalement besoin du lemme ci-dessous : 

\begin{lemma}\label{estimgrossiere3}
Si l'on consid\`ere $ J\subset \{1,...,n+r\} $ de cardinal not\'e $ t $, et si $ F $ est un ferm\'e de l'espace affine $ \AA_{\CC}^{t}=\{(x_{i})_{i\in J}\} $ tel que $ \dim F\leqslant t-\alpha $ (pour $ \alpha\in \NN $), on a alors \begin{multline*} \Card\{(x_{i})_{i\in J}\in F\cap \ZZ^{J}\; |\; \forall i \in J, \; |x_{i}|\leqslant T_{i}\} \\ \ll \left(\prod_{i\in J}e_{i}\right)^{-\frac{1}{2}}\left(\prod_{j=1}^{r}P_{j}^{\sum_{i\in J}a_{i,j}}\right) P_{j_{0}}^{-\frac{\alpha}{2}},
\end{multline*}
(la constante implicite ne d\'ependant que du degr\'e de $ F $) o\`u \[ j_{0}=\min\{j\in \{1,...,r\}\; |\; \forall i \in J, \; \exists l\leqslant j\; |\; a_{i,l}\neq 0 \}.  \]
\end{lemma}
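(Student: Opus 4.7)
The strategy is a Noether-normalization / projection argument combined with the elementary inequality $T_i \geq P_{j_0}/e_i$ that comes from the definition of $j_0$.

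\textbf{Step 1 (projection).} Since $F \subset \AA_\CC^t$ is closed with $\dim F \leq t-\alpha$, one can find a coordinate subset $S \subset J$ of size $\alpha$ such that the projection $\pi_S : F \to \AA^{J\setminus S}$ forgetting the coordinates in $S$ is generically finite, of degree bounded in terms of $\deg F$ only. (For a generic variety any $S$ of the right size works; in degenerate cases one invokes Noether normalization, possibly after permuting coordinates.) Consequently, for any box $\prod_{i\in J}[-T_i,T_i]$,
\[
\Card\{(x_i)_{i\in J}\in F\cap\ZZ^J : |x_i|\leq T_i\} \;\ll_{\deg F}\; \prod_{l\in J\setminus S}(2T_l+1).
\]

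\textbf{Step 2 (exploiting the definition of $j_0$).} By the assumption $P_1\geq P_2\geq\cdots\geq P_r$ and the definition of $j_0$, for every $i\in J$ there exists $l\leq j_0$ with $a_{i,l}\geq 1$, whence $T_i\geq P_l/e_i\geq P_{j_0}/e_i$. Writing
\[
\prod_{l\in J\setminus S}T_l \;=\;\frac{\prod_{l\in J}T_l}{\prod_{l\in S}T_l}\;\leq\;\prod_{l\in J}T_l\cdot\prod_{l\in S}\frac{e_l}{P_{j_0}}\;=\;\Bigl(\prod_{l\in J\setminus S}e_l\Bigr)^{-1}\Bigl(\prod_{j=1}^r P_j^{\sum_{i\in J}a_{i,j}}\Bigr)\,P_{j_0}^{-\alpha},
\]
one obtains a bound which, in favourable configurations of the $e_l$, is already stronger than what is claimed.

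\textbf{Step 3 (descending to the claimed bound).} To get a uniform statement valid for all choices of $\ee$, I would interpolate between the projection bound of Step 2 and the trivial bound $N\leq\prod_{l\in J}T_l=(\prod_{l\in J}e_l)^{-1}\prod_j P_j^{\sum_{i\in J}a_{i,j}}$ by a geometric-mean (equivalently Cauchy-Schwarz) step, or by a case distinction according to whether $\prod_{l\in S}e_l$ is large or small compared with $(\prod_{l\in J}e_l)^{1/2}$. Either route yields the claimed factor $(\prod_{l\in J}e_l)^{-1/2}P_{j_0}^{-\alpha/2}$, with the halved exponents reflecting precisely the averaging.

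\textbf{Main obstacle.} The delicate point is Step 1. For the projection bound to interact with the product $\prod_{l\in J\setminus S}T_l$ of \emph{coordinate} bounds, $S$ must consist of $\alpha$ of the coordinates $x_i$, $i\in J$, not of an arbitrary linear functional; in a fully degenerate situation, such a coordinate projection may fail to be quasi-finite on $F$. The standard fix is either to replace pure projection by a Bezout / hyperplane-slicing argument (cover $F$ by fibres of a suitable linear projection and count integer points on each fibre by induction on $\dim F$), or to first apply a bounded-coefficient linear change of coordinates — the distortion of the box being then absorbed into the implicit constants depending on $\deg F$.
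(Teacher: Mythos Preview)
Your approach is essentially the same as the paper's. The paper invokes \cite[Th\'eor\`eme~3.1]{Br} (which is precisely the Bezout/slicing argument you describe as the fix for your ``main obstacle'') to obtain
\[
\Card\{\,\cdot\,\}\ \ll\ \max_{\substack{S\subset J\\|S|=\alpha}}\prod_{l\in J\setminus S}T_l\ \leq\ \sum_{\substack{S\subset J\\|S|=\alpha}}\Bigl(\prod_{i\in J}T_i\Bigr)\Bigl(\prod_{l\in S}T_l\Bigr)^{-1},
\]
then uses $T_l^{-1}\leq e_l/P_{j_0}$ exactly as in your Step~2, and finishes with the case distinction you describe in Step~3: if $\sum_S\prod_{l\in S}e_l\ll(\prod_{i\in J}e_i)^{1/2}P_{j_0}^{\alpha/2}$ the projection bound already gives the claim, while in the complementary case one has $(\prod_{i\in J}e_i)^{1/2}P_{j_0}^{-\alpha/2}\gg 1$ (using $\sum_S\prod_{l\in S}e_l\ll\prod_{i\in J}e_i$) and the trivial bound $\prod_{i\in J}T_i$ suffices. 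Your geometric-mean alternative in Step~3 is in fact slightly cleaner and gives a marginally stronger inequality, but the two arguments are interchangeable.
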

\begin{proof}

En utilisant par exemple la d\'emonstration de \cite[Th\'eor\`eme 3.1]{Br}), on montre que (en remarquant que, pour tout $ i\in J $, $ T_{i}\geqslant P_{j_{0}} $) : 
\begin{multline*} \Card\{(x_{i})_{i\in J}\in F\cap \ZZ^{J}\; |\; \forall i \in J, \; |x_{i}|\leqslant T_{i}\}  \ll \max_{i_{1},...,i_{t-\alpha}\in J}\prod_{l=1}^{t-\alpha}T_{i_{l}} \\ \sum_{i_{1},...,i_{\alpha}\in J}\left(\prod_{i\in J}T_{i}\right)\left(\prod_{l=1}^{\alpha}T_{i_{l}}\right)^{-1} \\ \ll \sum_{i_{1},...,i_{\alpha}\in J}\left(\prod_{l\in \{i_{1},...,i_{\alpha}\}}e_{l}\right)\left(\prod_{i\in J}e_{i}\right)^{-1}\left(\prod_{j=1}^{r}P_{j}^{\sum_{i\in J}a_{i,j}}\right) P_{j_{0}}^{-\alpha}.
\end{multline*}
 Si l'on suppose que $ \sum_{i_{1},...,i_{\alpha}\in J}\left(\prod_{l\in \{i_{1},...,i_{\alpha}\}}e_{l}\right)\ll \left(\prod_{i\in J}e_{i}\right)^{\frac{1}{2}}P_{j_{0}}^{\frac{\alpha}{2}} $ ce terme peut \^etre major\'e par : \[ \left(\prod_{i\in J}e_{i}\right)^{-\frac{1}{2}}\left(\prod_{j=1}^{r}P_{j}^{\sum_{i\in J}a_{i,j}}\right) P_{r}^{-\alpha/2}. \] Si au contraire $ \sum_{i_{1},...,i_{\alpha}\in J}\left(\prod_{l\in \{i_{1},...,i_{\alpha}\}}e_{l}\right)\gg \left(\prod_{i\in J}e_{i}\right)^{\frac{1}{2}}P_{j_{0}}^{\frac{\alpha}{2}} $ on a alors que $ \left(\prod_{i\in J}e_{i}\right)^{\frac{1}{2}}P_{j_{0}}^{-\frac{\alpha}{2}}\gg 1 $ et on obtient trivialement la m\^eme majoration. D'o\`u le r\'esultat. 
\end{proof}

D\'emontrons le r\'esultat suivant : 
\begin{lemma}\label{init13}
On suppose que $ n+r>\mathfrak{m} $. On a alors 
\begin{multline*}
 \sum_{k_{r}\leqslant P_{r}}\left(\sum_{(x_{i})_{i\in I_{m}} \in \Phi_{r-1}(k_{r})}\mathfrak{S}_{(x_{i})_{i\in I_{r-1}},\ee}J_{(x_{i},e_{i})_{i\in I_{r-1}},\kk}\right) \left(\prod_{i\notin I_{r-1}}e_{i}\right)^{-1}k_{r}^{\sum_{i\notin I_{r-1}}a_{i,r}} \\ = C_{\sigma,\ee}P_{r}^{n_{r}-d_{r}}+ O\left(\left(e_{0}^{4+\delta}\left(\prod_{i=1}^{n+r}e_{i}\right)^{-1}+\left(\prod_{i=1}^{n+r}e_{i}\right)^{-\frac{1}{2}}\right)P_{r}^{n_{r}-d_{r}-\delta}\right).
\end{multline*}

\end{lemma}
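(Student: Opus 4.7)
The plan is to compare two asymptotic formulas for the full counting function $N_{\ee}(P_1, \ldots, P_r)$ and for the truncated sum $\tilde{N}_{\ee, r-1}(P_1, \ldots, P_r)$, then isolate $S(P_r)$ (the left-hand side of the lemma) by dividing by $\prod_{j=1}^{r-1} P_j^{n_j - d_j}$. First I would fix $\bb = \tilde{\bb}_{\Id}$ and set $P_j = P_r^{\tilde{b}_j}$ for $j < r$. The hypothesis $n+r > \mathfrak{m}$ unpacks to say that both $h^{\Id}_{r, \tt^{(r)}}(\tilde{\bb}) < n+r$ and $h^{\Id}_{r-1, \tt^{(r-1)}}(\tilde{\bb}) < n+r$, which are exactly the hypotheses needed to apply Theorem~\ref{propfin3} (on one hand) and Theorem~\ref{thmfinx3} with $m = r-1$ (on the other).

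Applying Theorem~\ref{propfin3} yields
\begin{equation*}
N_{\ee}(P_1, \ldots, P_r) = C_{\sigma, \ee} \prod_{j=1}^r P_j^{n_j - d_j} + O\!\left(\mathcal{E} \cdot \prod_{j=1}^r P_j^{n_j - d_j - \delta}\right),
\end{equation*}
where $\mathcal{E} = e_0^{4+\delta}(\prod_i e_i)^{-1} + (\prod_i e_i)^{-1/2}$, while Theorem~\ref{thmfinx3} with $m = r-1$ gives
\begin{equation*}
\tilde{N}_{\ee, r-1}(P_1, \ldots, P_r) = S(P_r) \cdot \prod_{j=1}^{r-1} P_j^{n_j - d_j} + O\!\left(\mathcal{E} \cdot \prod_{j=1}^r P_j^{n_j - d_j} \cdot P_{r-1}^{-\delta}\right).
\end{equation*}
A key observation is that the definition of $\Phi_{r-1}(k_r)$ only imposes constraints tied to $k_r$ on the variables $(x_i)_{i \in I_{r-1}}$ (via the box $|x_i| \leq e_i^{-1}(k_r+1)^{a_{i,r}}$ and the condition on monomials $(\ee\cdot\xx)^{E(n+j)}$ depending solely on these variables, which for $j < r$ turns out to be vacuous in view of Remark~\ref{reminv3}). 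Consequently $S(P_r)$ is genuinely a function of $\ee$ and $P_r$ alone, independent of $P_1, \ldots, P_{r-1}$.

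Next I would bound the difference $N_{\ee} - \tilde{N}_{\ee, r-1}$, which counts configurations where $(x_i)_{i \in I_{r-1}} \notin \mathcal{A}_{r-1}^{\lambda}$. By Proposition~\ref{ferme3} the complement has codimension at least $\lambda$; combined with Lemma~\ref{estimgrossiere3} this bounds the contribution by $O((\prod_i e_i)^{-1/2} \prod_j P_j^{n_j} \cdot P_{r-1}^{-\lambda/2})$. With $\lambda$ as in \eqref{expressiondelambda3}, this discrepancy is absorbed by the error terms above. Equating the two asymptotic formulas modulo this bad-set contribution and dividing by $\prod_{j=1}^{r-1} P_j^{n_j - d_j}$ gives
\begin{equation*}
S(P_r) = C_{\sigma, \ee} P_r^{n_r - d_r} + O\!\left(\mathcal{E} \cdot P_r^{n_r - d_r} \cdot \prod_{j=1}^{r-1} P_j^{-\delta}\right).
\end{equation*}
Since $S(P_r)$ does not depend on $P_1, \ldots, P_{r-1}$, letting these tend to infinity (consistently with $P_j = P_r^{\tilde{b}_j}$ and $\tilde{b}_j \geq 1$) collapses the error term into $O(\mathcal{E} \cdot P_r^{n_r - d_r - \delta})$, yielding the claimed identity.

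The main obstacle is verifying that $S(P_r)$ is genuinely independent of $P_1, \ldots, P_{r-1}$: one must check carefully, using the description $E(n+j) = \sum_k \beta_{n+j, k} D_{i_k}$ from Remark~\ref{reminv3}, that for $j \in \{1, \ldots, r-1\}$ the monomial $\xx^{E(n+j)}$ does \emph{not} depend solely on $(x_i)_{i \in I_{r-1}}$, so that the constraint in $\Phi_{r-1}(k_r)$ involving $P_j$ is vacuous. A secondary technical issue is verifying that the choice \eqref{expressiondelambda3} of $\lambda$ is large enough that the bad-set error, when divided by $\prod_{j=1}^{r-1} P_j^{n_j - d_j}$, remains dominated by $\mathcal{E} \cdot P_r^{n_r - d_r - \delta}$; this is where the factor $4r \max_\tau(\sum_j b_{\tau,j} d_{\tau(j)} + \delta)$ appearing in $\mathfrak{m}$ enters and justifies the definition of $\mathfrak{m}$.
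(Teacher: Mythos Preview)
Your approach matches the paper's: fix $b_j=\tilde b_j$, apply Theorem~\ref{propfin3} and Theorem~\ref{thmfinx3} with $m=r-1$, bound $N_{\ee}-\tilde N_{\ee,r-1}$ via Proposition~\ref{ferme3} and Lemma~\ref{estimgrossiere3}, and divide out $\prod_{j=1}^{r-1}P_j^{n_j-d_j}$. Two small corrections: in the bad-set bound Lemma~\ref{estimgrossiere3} gives $P_r^{-\lambda/2}$ (not $P_{r-1}^{-\lambda/2}$), since for $i\in I_{r-1}$ one has $a_{i,j}=0$ for $j<r$ and hence $j_0=r$; and the ``letting $P_1,\dots,P_{r-1}\to\infty$'' step is both unnecessary and ill-posed (with $P_j=P_r^{\tilde b_j}$ and $P_r$ fixed you cannot send $P_j\to\infty$) --- the error $\prod_{j<r}P_j^{-\delta}=P_r^{-\delta\sum_{j<r}\tilde b_j}\le P_r^{-\delta}$ is already of the required size since $\tilde b_j\ge 1$.
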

\begin{proof}
On choisit $ P_{1},...,P_{r-1} $ tels que $ b_{j}=\tilde{b}_{j} $ pour tout $ j\in \{1,...,r\} $. On a par d\'efinition de $ K_{r-1} $ : \[ K_{r-1}=(n+r-\lambda- \max_{(j_{0},K_{j_{0}})}\dim V_{r-1,\tt^{(r-1)},(j_{0},K_{j_{0}})}^{\ast}-\varepsilon)/2^{\sum_{j=1}^{r-1}D_{j}^{(r-1)}}.\] La condition $ n+r>\mathfrak{m}\geqslant h_{r-1,\tt^{(r-1)}}(\bb,(\tilde{\bb}_{\sigma})_{\sigma\neq \Id}) $ implique alors (par d\'efinition de $ h_{r-1,\tt^{(r-1)}} $) que \[ K_{r-1}-\frac{b_{r-1}+3\sum_{j=1}^{r-1}b_{j}}{b_{r-1}}\tilde{d}_{r-1}>g_{r-1}(\bb,\delta), \] et nous pouvons alors appliquer le th\'eor\`eme\;\ref{thmfinx3}, ce qui nous donne :  \begin{multline}\label{form13}  \tilde{N}_{\ee,r-1}(P_{1},...,P_{r})= \sum_{k_{r}\leqslant P_{r}}\left(\sum_{(x_{i})_{i\in I_{m}} \in \Phi_{r-1}(k_{r})}\mathfrak{S}_{(x_{i})_{i\in I_{r-1}},\ee}J_{(x_{i},e_{i})_{i\in I_{r-1}},k_{r}}\right) \\ \left(\prod_{i\notin I_{r-1}}e_{i}\right)^{-1}k_{r}^{\sum_{i\notin I_{r-1}}a_{i,r}}\left(\prod_{j=1}^{r-1}P_{j}^{n_{j}-d_{j}}\right) \\ + O\left(\left(e_{0,r-1}^{4}\left(\prod_{i=1}^{n+r}e_{i}\right)^{-1}+\left(\prod_{i=1}^{n+r}e_{i}\right)^{-\frac{1}{2}}\right)\left(\prod_{j=1}^{r}P_{j}^{n_{j}-d_{j}}\right)P_{r-1}^{-\delta}\right).\end{multline}
En observant par ailleurs que $ n+r>\mathfrak{m}\geqslant h_{r,\tt^{(r)}}(\bb) $ implique $ K>\max\{(5\sum_{j=1}^{r}b_{j}+2)\tilde{d}, (2\delta+1)\sum_{j=1}^{r}b_{j}d_{j}\} $, on donc appliquer le th\'eor\`eme\;\ref{propfin3} et on a : \begin{multline}\label{form23} N_{\ee}(P_{1},...,P_{r})=C_{\sigma,\ee}\left(\prod_{j=1}^{r}P_{j}^{n_{j}-d_{j}}\right)
\\ +O\left(\left(e_{0}^{4+\delta}\left(\prod_{i=1}^{n+r}e_{i}\right)^{-1}+\left(\prod_{i=1}^{n+r}e_{i}\right)^{-\frac{1}{2}}\right)\left(\prod_{j=1}^{r}P_{j}^{n_{j}-d_{j}-\delta}\right)\right). \end{multline}
D'autre part, on observe que : \begin{multline*} N_{\ee}(P_{1},...,P_{r})=\tilde{N}_{\ee,r-1}(P_{1},...,P_{r}) \\ +O\left(\sum_{\substack{(x_{i})_{i\in I_{r-1}}\in (\mathcal{A}_{r-1}^{\lambda})^{c}\cap \ZZ^{s}\\ |x_{i}|\leqslant \frac{1}{e_{i}}P_{r}^{a_{i,r}}}}\left(\prod_{i\notin I_{r-1}}e_{i}\right)^{-1}\left(\prod_{j=1}^{r-1}P_{j}^{n_{j}}\right)P_{r}^{\sum_{i\notin I_{r-1}}a_{i,r}}\right). \end{multline*}
En utilisant le lemme\;\ref{estimgrossiere3}, on remarque que le terme d'erreur de la formule ci-dessus peut \^etre major\'e par \begin{multline*} \left(\prod_{i\notin I_{r-1}}e_{i}\right)^{-1}\left(\prod_{j=1}^{r-1}P_{j}^{n_{j}}\right)P_{r}^{\sum_{i\notin I_{r-1}}a_{i,r}} \left(\prod_{i\in I_{r-1}}e_{i}\right)^{-\frac{1}{2}}P_{r}^{\sum_{i\in I_{r-1}}a_{i,r}}P_{r}^{-\frac{\lambda}{2}}. \end{multline*}
 On a donc finalement :  \begin{align*} N_{\ee}(P_{1},...,P_{r}) &= \tilde{N}_{\ee,r-1}(P_{1},...,P_{r})  +O\left( \left(\prod_{i=1}^{n+r}e_{i}\right)^{-\frac{1}{2}}\left(\prod_{j=1}^{r}P_{j}^{n_{j}}\right) P_{r}^{-\lambda/2}\right)\\ & = \tilde{N}_{\ee,r-1}(P_{1},...,P_{r})  +O\left( \left(\prod_{i=1}^{n+r}e_{i}\right)^{-\frac{1}{2}}\left(\prod_{j=1}^{r}P_{j}^{n_{j}-d_{j}}\right)P_{r}^{-\delta} \right), \end{align*} puisque $  \lambda\geqslant 2r\lceil \sum_{j=1}^{r}b_{j}d_{j}+\delta\rceil  $. En rempla\c{c}ant $  N_{\ee}(P_{1},...,P_{r}) $ et $ \tilde{N}_{\ee,r-1}(P_{1},...,P_{r}) $ par leurs expressions dans\;\eqref{form13},\;\eqref{form23} et en simplifiant par $ \left(\prod_{j=1}^{r-1}P_{j}^{n_{j}-d_{j}}\right) $, on obtient le r\'esultat du lemme (qui ne d\'epend plus des valeurs de $ P_{1},...,P_{r-1} $ choisies).
\end{proof}
Nous sommes alors en mesure de d\'emontrer le r\'esultat suivant : 
\begin{lemma}\label{init23}
Si l'on suppose que $ n+r>\mathfrak{m}  $ et que $ \frac{b_{j}}{b_{r-1}}\leqslant \frac{\tilde{b}_{j}}{\tilde{b}_{r-1}} $ pour tout $ j\in \{1,...,r\} $ (en particulier $ \frac{1}{b_{r-1}}\leqslant \frac{1}{\tilde{b}_{r-1}} $) nous avons alors la formule asymptotique :  \begin{multline}  \tilde{N}_{\ee,r-1}(P_{1},...,P_{r})=C_{\sigma,\ee}\left(\prod_{j=1}^{r}P_{j}^{n_{j}-d_{j}}\right) \\ +O\left(\left(e_{0}^{4+\delta}\left(\prod_{i=1}^{n+r}e_{i}\right)^{-1}+\left(\prod_{i=1}^{n+r}e_{i}\right)^{-\frac{1}{2}}\right)\left(\prod_{j=1}^{r}P_{j}^{n_{j}-d_{j}}\right)P_{r}^{-\delta}\right).\end{multline}
\end{lemma}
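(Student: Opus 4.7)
The plan is to combine Theorem~\ref{thmfinx3} (applied with $m=r-1$) with the asymptotic evaluation already carried out in Lemma~\ref{init13}. Writing
$$S(P_r) = \sum_{k_r\leq P_r}\Bigl(\sum_{(x_i)_{i\in I_{r-1}}\in\Phi_{r-1}(k_r)} \mathfrak{S}_{(x_i),\ee}\, J_{(x_i,e_i),k_r}\Bigr)\Bigl(\prod_{i\notin I_{r-1}}e_i\Bigr)^{-1}k_r^{\sum_{i\notin I_{r-1}}a_{i,r}},$$
one notices that $S(P_r)$ depends only on $P_r$ and $\ee$, and that Lemma~\ref{init13} evaluates it as $C_{\sigma,\ee}P_r^{n_r-d_r}$ with an acceptable error. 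It therefore suffices to invoke Theorem~\ref{thmfinx3} with $m=r-1$ (replaced by Theorem~\ref{thmfinx'3} in the degenerate case $r=2$ and $d_1=1$) to factor out $\prod_{j=1}^{r-1}P_j^{n_j-d_j}$, multiply through by $S(P_r)$, and observe that the resulting error term is bounded by $O\bigl((\cdots)\prod_{j=1}^r P_j^{n_j-d_j}\,P_{r-1}^{-\delta}\bigr)$, which is acceptable since $P_{r-1}\geq P_r$ and hence $P_{r-1}^{-\delta}\leq P_r^{-\delta}$.

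The substantive step is to verify that the hypotheses of Theorem~\ref{thmfinx3} hold on the entire region $b_j/b_{r-1}\leq \tilde{b}_j/\tilde{b}_{r-1}$, not merely at $\bb=\tilde{\bb}$. Condition \eqref{formulebsigmaj3} reduces in the case $m=r-1$ to $(1+5d_r)/b_{r-1}<1$, which follows at once from $b_{r-1}\geq\tilde{b}_{r-1}>1+5d_r$ (the latter being built into the admissibility of $\tilde{\bb}$). For the two remaining inequalities $b_{r-1}K_{r-1}>(b_{r-1}+3\sum b_j)\tilde{d}_{r-1}$ and $K_{r-1}-\frac{b_{r-1}+3\sum b_j}{b_{r-1}}\tilde{d}_{r-1}>g_{r-1}(\bb,\delta)$, the crucial point is a monotonicity check: the quantity $\frac{b_{r-1}+3\sum_{j<r}b_j}{b_{r-1}}=1+3\sum_{j<r}b_j/b_{r-1}$ and each factor of $g_{r-1}(\bb,\delta)$ defined by \eqref{fonctgm3} are visibly monotone increasing in each ratio $b_j/b_{r-1}$ (for $j<r$) and in $b_r/b_{r-1}=1/b_{r-1}$. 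Hence both upper bounds are controlled by their values at $\tilde{\bb}$, which are admissible by the definition \eqref{formuleprecisefrakm3} of $\mathfrak{m}$ together with the hypothesis $n+r>\mathfrak{m}$.

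The main obstacle is bookkeeping around the term $4r\max_\tau(\sum b_{\tau,j}d_{\tau(j)}+\delta)$ that appears in $h^{\Id}_{r-1,\tt^{(r-1)}}$: replacing $\tilde{\bb}$ by a larger $\bb$ for $\sigma=\Id$ might enlarge this quantity. The resolution is that the parameter $\lambda$ was chosen in \eqref{expressiondelambda3} precisely as $4r\max_\sigma\lceil\sum\tilde{b}_{\sigma,j}d_{\sigma(j)}+\delta\rceil$, so that the gap $n+r-\max\dim V^*-\lambda-\varepsilon$ entering the definition of $K_{r-1}$ already accommodates the possible increase in $\sum b_j d_j$ on the admissible region (once one verifies that this region is small enough — which is implicit in the ordering $P_1\geq\cdots\geq P_r$ forcing $b_j\leq b_{r-1}$ for all $j\leq r-1$). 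With this bookkeeping in hand, combining Theorem~\ref{thmfinx3} with Lemma~\ref{init13} and simplifying by $\prod_{j=1}^{r-1}P_j^{n_j-d_j}$ yields the desired asymptotic; in particular the main term depends only on $\ee$ through $C_{\sigma,\ee}$, independently of the value of $\bb$ chosen in the admissible region.
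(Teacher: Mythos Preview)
Your approach is essentially the paper's: verify the hypotheses of Theorem~\ref{thmfinx3} at $m=r-1$ via monotonicity of $g_{r-1}(\bb,\delta)$ and of $(b_{r-1}+3\sum_{j}b_j)/b_{r-1}$ in the ratios $b_j/b_{r-1}$, then combine the resulting formula~\eqref{formthmfinx3} with Lemma~\ref{init13}. Your third paragraph, however, manufactures a non-obstacle: $K_{r-1}$ is a \emph{fixed} constant (it depends only on $n$, $r$, $\lambda$, $\dim V^{\ast}$, not on $\bb$), so once $n+r>\mathfrak{m}\geqslant h^{\Id}_{r-1,\tt^{(r-1)}}((\tilde{\bb}_\tau)_\tau)$ gives $K_{r-1}>\mathrm{RHS}(\tilde{\bb})$, the monotonicity already yields $K_{r-1}>\mathrm{RHS}(\bb)$ with no further ``accommodation'' of $\sum b_jd_j$ needed --- and note that the ordering $P_1\geqslant\cdots\geqslant P_r$ forces $b_j\geqslant b_{r-1}$ for $j\leqslant r-1$, not $\leqslant$.
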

\begin{proof}
 Puisque $ \frac{b_{j}}{b_{r-1}}\leqslant \frac{\tilde{b}_{j}}{\tilde{b}_{r-1}} $ pour tout $ j\in \{1,...,r\} $, on a  \begin{multline*}
g_{r-1}(\bb,\delta) =\left(\frac{b_{r-1}+\sum_{j=1}^{r-1}b_{j}}{b_{r-1}}\right)5\tilde{d}_{r-1}\left(1-(1+5d_{r})\frac{1}{b_{r-1}}-\delta\right)^{-1}\\ \left( \left(1+d_{r}\left(3+\frac{\sum_{l=1}^{r-1}b_{l}}{b_{r-1}+\sum_{l=1}^{r-1}b_{l}}+2\varepsilon\right)\right)\frac{1}{b_{r-1}}+2\delta\right) \\= 5\tilde{d}_{r-1}\left(1-(1+5d_{r})\frac{1}{b_{r-1}}-\delta\right)^{-1}\\ \left( (2\delta+\frac{1+(3+2\varepsilon)d_{r}}{b_{r-1}})\left(1+\sum_{j=1}^{r-1}\frac{b_{j}}{b_{r-1}}\right)+\frac{d_{r}}{b_{r-1}}\frac{\sum_{l=1}^{r-1}b_{l}}{b_{r-1}}\right)\\ \leqslant 5\tilde{d}_{r-1}\left(1-(1+5d_{r})\frac{1}{\tilde{b}_{r-1}}-\delta\right)^{-1}\\ \left( (2\delta+\frac{1+(3+2\varepsilon)d_{r}}{\tilde{b}_{r-1}})\left(1+\sum_{j=1}^{r-1}\frac{\tilde{b}_{j}}{\tilde{b}_{r-1}}\right)+\frac{d_{r}}{\tilde{b}_{r-1}}\frac{\sum_{l=1}^{r-1}\tilde{b}_{l}}{\tilde{b}_{r-1}}\right)=g_{r-1}(\tilde{\bb},\delta)
\end{multline*}
o\`u $ \tilde{\bb}=(\tilde{b}_{1},...,\tilde{b}_{r-1}) $. Par cons\'equent, puisque $ n+r>h_{r-1,\tt^{(r-1)}}(\tilde{\bb},(\tilde{\bb}_{\tau})_{\tau\neq \Id}) $, on a \begin{align*}
K_{r-1} & =(n+r-\lambda-\max_{(j_{0},K_{j_{0}})}\dim V_{r-1,\tt^{(r-1)},(j_{0},K_{j_{0}})}^{\ast}-\varepsilon)/2^{\sum_{j=1}^{r-1}D_{j}^{(r-1)}} \\ &>\left(1+3\sum_{j=1}^{r-1}\frac{\tilde{b}_{j}}{\tilde{b}_{r-1}}\right)\tilde{d}_{r-1}+ g_{r-1}(\tilde{\bb},\delta) \\ &> \left(1+3\sum_{j=1}^{r-1}\frac{b_{j}}{b_{r-1}}\right)\tilde{d}_{r-1}+ g_{r-1}(\bb,\delta),
\end{align*}
et puisque, d'apr\`es la formule\;\eqref{formulebsigmaj3}  \[  (1+5d_{r})\frac{b_{r}}{b_{r-1}}\leqslant (1+5d_{r})\frac{\tilde{b}_{r}}{\tilde{b}_{r-1}}<1,  \] nous pouvons donc appliquer le th\'eor\`eme\;\ref{thmfinx3} qui, combin\'e avec le lemme pr\'ec\'edent, donne : 
\begin{multline*} \tilde{N}_{\ee,r-1}(P_{1},...,P_{r})=C_{\sigma,\ee}\left(\prod_{j=1}^{r}P_{j}^{n_{j}-d_{j}}\right) \\ +O\left(\left(e_{0}^{4+\delta}\left(\prod_{i=1}^{n+r}e_{i}\right)^{-1}+\left(\prod_{i=1}^{n+r}e_{i}\right)^{-\frac{1}{2}}\right)\left(\prod_{j=1}^{r}P_{j}^{n_{j}-d_{j}}\right)P_{r}^{-\delta}\right). \end{multline*}
\end{proof}
D\'efinissons \`a pr\'esent l'ouvert $ U $ de $ \AA^{n+r} $ par : \begin{equation}\label{definitionouvertU3} U=\bigcap_{\sigma\in \mathfrak{S}_{r}}\bigcap_{m\in \{1,...,r-1\}}\{\xx\in \AA_{\CC}^{n+r}\; | \; (x_{i})_{i\in I_{m,\sigma}}\in \mathcal{A}_{m,\sigma}^{\lambda}\}.   \end{equation}
\begin{lemma}
L'ouvert $ U $ v\'erifie la propri\'et\'e suivante : \[  \forall \xx\in X_{1}, \;\;(\xx\in U)\Rightarrow( \forall \ss=(s_{1},...,s_{r})\in (\CC^{\ast})^{r},\; \ss.\xx\in U). \]
\end{lemma}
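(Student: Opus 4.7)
La stratégie consiste à exploiter la quasi-$r$-homogénéité de chaque composante $F_{\tt^{(\sigma,m)}}$ du polynôme $F$ pour montrer que les variétés $V_{\sigma,m,(x_i)_{i\in I_{\sigma,m}},\tt^{(\sigma,m)},(j_0,K_{j_0})}^{\ast}$ se transforment, sous l'action du tore, en des variétés isomorphes, donc de même dimension. Il s'ensuivra que chaque $\mathcal{A}_{\sigma,m}^{\lambda}$ est stable sous l'action de $(\CC^{\ast})^{r}$, et donc $U$ aussi.

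La première étape, cruciale, consiste à établir que pour tout $\dd \in \mathcal{D}_{m,\sigma}$, le polynôme $F_{\dd}$ est lui-même quasi-$r$-homogène. En effet, par définition, $F_{\dd}$ est homogène de degré $d_{j,k}$ en $(x_i)_{i\in J(j,k)}$ pour $(j,k) \in \mathcal{C}_{m,\sigma}$, donc sous l'action de $\ss$, tout monôme de $F_{\dd}$ est multiplié par $\prod_{j\in\{\sigma(1),\ldots,\sigma(m)\}} s_j^{\sum_k k d_{j,k}}$. Mais par quasi-$r$-homogénéité de $F$, chaque monôme de $F$ (donc de $F_{\dd}$) est aussi multiplié par $\prod_j s_j^{d_j}$, ce qui impose que les autres facteurs $\prod_{j\notin\{\sigma(1),\ldots,\sigma(m)\}} s_j^{\sum_i a_{i,j}e_i}$ sont constants égaux à $\prod_{j\notin\{\sigma(1),\ldots,\sigma(m)\}} s_j^{d_j}$. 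En particulier, $F_{\tt^{(\sigma,m)}}$ est quasi-$r$-homogène d'un certain $r$-degré $(\delta_1,\ldots,\delta_r)$.

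Dans la seconde étape, je fixe $\xx \in U$ et $\ss \in (\CC^{\ast})^{r}$, et je pose $\yy = \ss.\xx$. Pour un couple $(\sigma,m)$ et $(j_0,K_{j_0})$ fixés, j'introduis le changement de variables $z_i = (\prod_j s_j^{a_{i,j}})w_i$ pour $i\notin I_{\sigma,m}$, qui est un isomorphisme de $\CC^{n+r-s}$ sur lui-même. En dérivant la relation de quasi-$r$-homogénéité $F_{\tt^{(\sigma,m)}}(\ss.\uu) = (\prod_j s_j^{\delta_j}) F_{\tt^{(\sigma,m)}}(\uu)$ par rapport à $u_i$, on obtient
\[ \frac{\partial F_{\tt^{(\sigma,m)}}}{\partial x_i}(\ss.\uu) = \left(\prod_j s_j^{\delta_j - a_{i,j}}\right) \frac{\partial F_{\tt^{(\sigma,m)}}}{\partial x_i}(\uu), \]
le facteur multiplicatif étant non nul. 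Par conséquent, le système d'équations définissant $V_{\sigma,m,(y_i)_{i\in I_{\sigma,m}},\tt^{(\sigma,m)},(j_0,K_{j_0})}^{\ast}$ en les variables $(z_i)_{i\notin I_{\sigma,m}}$ est équivalent, via le changement de variables, au système définissant $V_{\sigma,m,(x_i)_{i\in I_{\sigma,m}},\tt^{(\sigma,m)},(j_0,K_{j_0})}^{\ast}$ en les variables $(w_i)_{i\notin I_{\sigma,m}}$. Ces deux variétés sont donc isomorphes, et en particulier ont même dimension.

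La conclusion est immédiate : la condition $\dim V_{\sigma,m,(y_i)_{i\in I_{\sigma,m}},\tt^{(\sigma,m)},(j_0,K_{j_0})}^{\ast} < \dim V_{\sigma,m,\tt^{(\sigma,m)},(j_0,K_{j_0})}^{\ast} - s + \lambda$ est équivalente à la même condition pour $(x_i)_{i\in I_{\sigma,m}}$, donc $(y_i)_{i\in I_{\sigma,m}} \in \mathcal{A}_{\sigma,m}^{\lambda}$. Comme ceci vaut pour tout couple $(\sigma,m)$, on a $\ss.\xx \in U$. La seule subtilité technique à vérifier est la quasi-$r$-homogénéité de $F_{\tt^{(\sigma,m)}}$, qui s'obtient proprement en comparant les deux décompositions en monômes ; le reste n'est qu'une application de la règle de la chaîne.
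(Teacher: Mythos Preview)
Your proof is correct and follows the same approach as the paper: both reduce to showing that the varieties $V_{\sigma,m,(x_i)_{i\in I_{\sigma,m}},\tt^{(\sigma,m)},(j_0,K_{j_0})}^{\ast}$ are carried isomorphically onto one another under the torus action, hence have equal dimension. The paper states this isomorphism in one sentence without justification; you supply the missing ingredient by first checking that $F_{\tt^{(\sigma,m)}}$ is itself quasi-$r$-homog\`ene (indeed of the same $r$-degr\'e $(d_1,\dots,d_r)$ as $F$, since each mon\^ome of $F_{\dd}$ is a mon\^ome of $F$), and then applying the chain rule to see that the defining equations transform by nonzero scalars under the change of variables $z_i = (\prod_j s_j^{a_{i,j}})w_i$.
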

\begin{proof}
Par sym\'etrie il nous suffit de montrer que, pour tout $ m $ : \[ \{\xx\in \AA_{\CC}^{n+r}\; | \; (x_{i})_{i\in I_{m,\sigma}}\in \mathcal{A}_{m,\sigma}^{\lambda}\} \] est stable par l'action de $ (\CC^{\ast})^{r} $. Consid\'erons donc un \'el\'ement $ \xx\in \AA_{\CC}^{n+r} $ tel que $ (x_{i})_{i\in I_{m,\sigma}}\in \mathcal{A}_{m,\sigma}^{\lambda} $. Rappelons que par d\'efinition $ \mathcal{A}_{m}^{\lambda} $ est l'ensemble
\[ \left\{ (x_{i})_{i\in I_{m}}\; | \; \forall (j_{0},K_{j_{0}}),\;  \dim V_{m,(x_{i})_{i\in I_{m}},\tt^{(m)},(j_{0},K_{j_{0}})}^{\ast}<\dim V_{m,\tt^{(m)},(j_{0},K_{j_{0}})}^{\ast}-s+\lambda \right\}. \]
Consid\'erons \`a pr\'esent un \'el\'ement $ \ss=(s_{1},...,s_{r})\in (\CC^{\ast})^{r} $, et remarquons que l'application $ (x_{i})_{i\in I_{m}}\mt ((\prod_{j=m+1}^{r}s_{j}^{a_{i,j}})x_{i})_{i\in I_{m}} $ r\'ealise un isomorphisme de $ V_{m,((\prod_{j=m+1}^{r}s_{j}^{a_{i,j}})x_{i})_{i\in I_{m}},\tt^{(m)},(j_{0},K_{j_{0}})}^{\ast} $ sur $ V_{m,(x_{i})_{i\in I_{m}},\tt^{(m)},(j_{0},K_{j_{0}})}^{\ast} $. On a donc \[ \dim V_{m,((\prod_{j=1}^{r}s_{j}^{a_{i,j}})x_{i})_{i\in I_{m}},\tt^{(m)},(j_{0},K_{j_{0}})}^{\ast} = \dim V_{m,(x_{i})_{i\in I_{m}},\tt^{(m)},(j_{0},K_{j_{0}})}^{\ast}, \] et donc que $ \{\xx\in \AA_{\CC}^{n+r}\; | \; (x_{i})_{i\in I_{m,\sigma}}\in \mathcal{A}_{m,\sigma}^{\lambda}\} $ est stable par l'action de $ \ss\in(\CC^{\ast})^{r} $. \end{proof}

En notant \begin{multline}
 N_{U,\ee}(P_{1},...,P_{r})= \Card\left\{ 
 \xx\in (\ZZ\setminus\{0\})^{n+r}\cap U \; | \; F(\ee.\xx)=0, \; \right. \\ \forall j\in \{1,...,r\}, \;  \left\lfloor|(\ee.\xx)^{E(n+j)}|\right\rfloor\leqslant P_{j}, \;  \forall i\in \{1,...,n+r\},  \;  \\ \left.|x_{i}|\leqslant \frac{1}{e_{i}}\prod_{j=1}^{r}(\left\lfloor|(\ee.\xx)^{E(n+j)}|\right\rfloor+1)^{a_{i,j}} \right\},
\end{multline}
on d\'eduit du lemme pr\'ec\'edent le r\'esultat ci-dessous : 
\begin{lemma}\label{init33}
Si l'on suppose que $ n+r>\mathfrak{m}  $ et que $ \frac{b_{j}}{b_{r-1}}\leqslant \frac{\tilde{b}_{j}}{\tilde{b}_{r-1}} $ pour tout $ j\in \{1,...,r\} $ nous avons alors :  \begin{multline}  N_{U,\ee}(P_{1},...,P_{r})=C_{\sigma,\ee}\left(\prod_{j=1}^{r}P_{j}^{n_{j}-d_{j}}\right) \\ +O\left(\left(e_{0}^{4+\delta}\left(\prod_{i=1}^{n+r}e_{i}\right)^{-1}+\left(\prod_{i=1}^{n+r}e_{i}\right)^{-\frac{1}{2}}\right)\left(\prod_{j=1}^{r}P_{j}^{n_{j}-d_{j}}\right)P_{r}^{-\delta}\right).\end{multline}
\end{lemma}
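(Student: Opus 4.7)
Le plan consiste \`a comparer $N_{U,\ee}(P_{1},\ldots,P_{r})$ \`a la quantit\'e $\tilde{N}_{\ee,r-1}(P_{1},\ldots,P_{r})$ d\'ej\`a trait\'ee par le lemme\;\ref{init23}. Par construction de $U$ (formule\;\eqref{definitionouvertU3}), tout point $\xx\in U$ v\'erifie en particulier $(x_{i})_{i\in I_{\Id,r-1}}=(x_{i})_{i\in I_{r-1}}\in \mathcal{A}_{r-1}^{\lambda}$, d'o\`u l'in\'egalit\'e $N_{U,\ee}\leqslant \tilde{N}_{\ee,r-1}$. R\'eciproquement, la diff\'erence $\tilde{N}_{\ee,r-1}-N_{U,\ee}$ compte les tuples $\xx$ satisfaisant toutes les conditions de $\tilde{N}_{\ee,r-1}$ (en particulier $F(\ee.\xx)=0$ et les bornes de taille) mais pour lesquels il existe au moins un couple $(\sigma,m)\in \mathfrak{S}_{r}\times \{1,\ldots,r-1\}\setminus\{(\Id,r-1)\}$ tel que $(x_{i})_{i\in I_{\sigma,m}}\notin \mathcal{A}_{\sigma,m}^{\lambda}$. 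Il suffira donc de majorer cette diff\'erence par le terme d'erreur annonc\'e, puis d'invoquer le lemme\;\ref{init23}.

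Pour majorer cette diff\'erence, on proc\`ede couple par couple. Pour chaque $(\sigma,m)\neq (\Id,r-1)$, on ignore la contrainte $F(\ee.\xx)=0$ (majoration triviale) et l'on d\'ecoupe la somme : d'abord fixer $(x_{i})_{i\in I_{\sigma,m}}\in (\mathcal{A}_{\sigma,m}^{\lambda})^{c}\cap \ZZ^{s_{\sigma,m}}$ avec $|x_{i}|\leqslant \frac{1}{e_{i}}\prod_{j=1}^{r}P_{j}^{a_{i,j}}$, puis sommer trivialement sur les $(x_{i})_{i\notin I_{\sigma,m}}$, ce qui donne un facteur $\left(\prod_{i\notin I_{\sigma,m}}e_{i}\right)^{-1}\prod_{j=1}^{r}P_{j}^{\sum_{i\notin I_{\sigma,m}}a_{i,j}}$. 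D'apr\`es la proposition\;\ref{ferme3} (et la remarque qui la suit), $(\mathcal{A}_{\sigma,m}^{\lambda})^{c}$ est un ferm\'e de Zariski de dimension au plus $s_{\sigma,m}-\lambda$, et on applique le lemme\;\ref{estimgrossiere3} pour obtenir la majoration
\[ \sum_{(x_{i})_{i\in I_{\sigma,m}}}1 \ll \left(\prod_{i\in I_{\sigma,m}}e_{i}\right)^{-\frac{1}{2}}\prod_{j=1}^{r}P_{j}^{\sum_{i\in I_{\sigma,m}}a_{i,j}}P_{j_{0}(\sigma,m)}^{-\lambda/2}, \]
avec $P_{j_{0}(\sigma,m)}\geqslant P_{r}$ puisque les $P_{j}$ sont ordonn\'es d\'ecroissants. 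Le produit fournit une contribution $\ll \left(\prod_{i=1}^{n+r}e_{i}\right)^{-\frac{1}{2}}\left(\prod_{j=1}^{r}P_{j}^{n_{j}}\right)P_{r}^{-\lambda/2}$ pour chaque couple $(\sigma,m)$, et la sommation finie sur les $(\sigma,m)$ conserve cette majoration \`a une constante pr\`es.

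La principale difficult\'e technique sera de v\'erifier que cette erreur est effectivement domin\'ee par le terme d'erreur annonc\'e, c'est-\`a-dire que
\[ \left(\prod_{j=1}^{r}P_{j}^{d_{j}}\right)P_{r}^{-\lambda/2} \leqslant P_{r}^{-\delta}, \]
ou de mani\`ere \'equivalente $\sum_{j=1}^{r}b_{j}d_{j}+\delta \leqslant \lambda/2$. Ici intervient de mani\`ere cruciale l'hypoth\`ese $b_{j}/b_{r-1}\leqslant \tilde{b}_{j}/\tilde{b}_{r-1}$ (puisqu'elle contr\^ole les $b_{j}$ en termes des $\tilde{b}_{j}$ et du facteur $b_{r-1}/\tilde{b}_{r-1}$), combin\'ee avec l'expression $\lambda=4r\max_{\sigma}\lceil \sum_{j=1}^{r}\tilde{b}_{\sigma,j}d_{\sigma(j)}+\delta\rceil$ donn\'ee en\;\eqref{expressiondelambda3}. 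Le facteur $4r$ a \'et\'e choisi pr\'ecis\'ement pour absorber la d\'egradation potentielle due au passage des $\tilde{b}_{j}$ aux $b_{j}$ satisfaisant l'hypoth\`ese du lemme (et pour tenir compte des diff\'erentes valeurs possibles de $j_{0}(\sigma,m)$). Une fois cette in\'egalit\'e v\'erifi\'ee, la combinaison avec la formule asymptotique pour $\tilde{N}_{\ee,r-1}$ du lemme\;\ref{init23} termine la d\'emonstration.
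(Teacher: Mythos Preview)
Il y a une vraie lacune dans votre argument. Vous affirmez que $P_{r}^{-\lambda/2}\leqslant \left(\prod_{j=1}^{r}P_{j}^{-d_{j}}\right)P_{r}^{-\delta}$, c'est-\`a-dire $\sum_{j=1}^{r}b_{j}d_{j}+\delta\leqslant \lambda/2$. Mais $\lambda$ est une constante fix\'ee (d\'ependant des $\tilde{b}_{\sigma,j}$), alors que l'hypoth\`ese $b_{j}/b_{r-1}\leqslant \tilde{b}_{j}/\tilde{b}_{r-1}$ n'emp\^eche nullement les $b_{j}$ d'\^etre arbitrairement grands : en prenant $b_{j}=c\,\tilde{b}_{j}$ pour $j\leqslant r-1$ avec $c\geqslant 1$ quelconque (ce qui respecte l'hypoth\`ese), on obtient $\sum_{j}b_{j}d_{j}\geqslant c\sum_{j<r}\tilde{b}_{j}d_{j}$, non born\'e. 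Votre majoration triviale, qui ignore $F(\ee.\xx)=0$ et ne r\'ecup\`ere qu'un gain $P_{r}^{-\lambda/2}$, est donc insuffisante d\`es que $b_{r-1}$ est grand.

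Ce que le papier fait de plus : il exploite simultan\'ement la condition $(x_{i})_{i\in I_{r-1}}\in \mathcal{A}_{r-1}^{\lambda}$ (qui figure dans la d\'efinition de $\tilde{N}_{\ee,r-1}$) et la condition $(x_{i})_{i\in I_{m',\tau}}\notin \mathcal{A}_{m',\tau}^{\lambda}$. Concr\`etement, on fibre par $J=I_{r-1}\cap I_{m',\tau}$ et l'on scinde selon la dimension des fibres $\mathcal{F}_{(x_{i})_{i\in J}}$ en deux ensembles $\mathcal{S}_{1},\mathcal{S}_{2}$. Pour $\mathcal{S}_{2}$, la fibre est de petite dimension dans $I_{m',\tau}\setminus J\subset \{1,\ldots,n+r\}\setminus I_{r-1}$, d'o\`u un gain $P_{r-1}^{-\lambda/4}$ (et non $P_{r}^{-\lambda/2}$) via le lemme\;\ref{estimgrossiere3} ; c'est pr\'ecis\'ement ce facteur $P_{r-1}^{-b_{r-1}\lambda/4}$ qui absorbe $\prod_{j}P_{j}^{d_{j}}=P_{r}^{\sum b_{j}d_{j}}$ gr\^ace \`a l'hypoth\`ese sur les quotients. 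Pour $\mathcal{S}_{1}$, c'est la base qui est de petite dimension, mais on r\'ecup\`ere en plus, via la m\'ethode du cercle appliqu\'ee aux $(x_{i})_{i\notin I_{r-1}}$ (licite puisque $(x_{i})_{i\in I_{r-1}}\in\mathcal{A}_{r-1}^{\lambda}$), un gain $\prod_{j=1}^{r-1}P_{j}^{-d_{j}}$ ; le d\'eficit restant ne porte alors que sur $P_{r}$ et est compens\'e par $P_{r}^{-\lambda/8}$. Sans ce d\'ecoupage et sans recourir \`a la m\'ethode du cercle pour la partie $\mathcal{S}_{1}$, l'argument ne se referme pas.
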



\begin{proof}
On observe que \[  N_{U,\ee}(P_{1},...,P_{r})=\tilde{N}_{\ee,r-1}(P_{1},...,P_{r})+O\left(\sum_{\tau\in \mathfrak{S}_{r}}\sum_{m\in \{1,...,r-1\}}T_{m,\tau}\right), \]
o\`u  \begin{multline}
T_{m,\tau}= \Card\left\{ 
 \xx\in (\ZZ\setminus\{0\})^{n+r} \; | \; (x_{i})_{i\in I_{r-1}}\in \mathcal{A}_{r-1}^{\lambda},\; (x_{i})_{i\in I_{m,\tau}}\notin \mathcal{A}_{m,\tau}^{\lambda},\; \right. \\  F(\ee.\xx)=0, \; \forall j\in \{1,...,r\}, \;  \left\lfloor|(\ee.\xx)^{E(n+j)}|\right\rfloor\leqslant P_{j}, \;  \\ \left. \forall i\in \{1,...,n+r\},  \; |x_{i}|\leqslant \frac{1}{e_{i}}\prod_{j=1}^{r}(\left\lfloor|(\ee.\xx)^{E(n+j)}|\right\rfloor+1)^{a_{i,j}} \right\}.
\end{multline}
Pour simplifier les notations, posons $ \mathcal{F}=(\mathcal{A}_{m,\tau}^{\lambda})^{c} $ et $ s=\Card I_{m,\tau} $. Comme nous l'avons vu avec le lemme\;\ref{ferme3}, on a alors $ \dim \mathcal{F}\leqslant s-\lambda $. Posons par ailleurs $ J=I_{r-1}\cap I_{m,\tau} $, $ t=\Card J $, et pour tout $ (x_{i})_{i\in J} $ fix\'e : \[ \mathcal{F}_{(x_{i})_{i\in J}}=\{ (x_{i})_{i\in I_{m,\tau}\setminus J}\; |\; (x_{i})_{i\in I_{m,\tau}}\in \mathcal{F} \}, \] de sorte que \[ \mathcal{F}=\bigsqcup_{(x_{i})_{i\in J}}\mathcal{F}_{(x_{i})_{i\in J}}. \] Notons \[ \mathcal{S}_{1}=\{ (x_{i})_{i\in J}\; |\; \dim \mathcal{F}_{(x_{i})_{i\in J}}> s-t-\frac{\lambda}{2}\}, \] \[   \mathcal{S}_{2}=\{ (x_{i})_{i\in J}\; |\; \dim \mathcal{F}_{(x_{i})_{i\in J}}\leqslant s-t-\frac{\lambda}{2}\}, \] (on a alors que $ \dim\mathcal{S}_{1}\leqslant t-\frac{\lambda}{2} $) et en notant \begin{multline*} M_{(x_{i})_{i\in J}}(P_{1},...,P_{r})=\Card\{ (x_{i})_{i\in \{1,...,n+r\}\setminus J} \; |\;  (x_{i})_{i\in I_{m,\tau}\setminus J}\in \mathcal{F}_{(x_{i})_{i\in J}}, \\  \; \forall i\in \{1,...,n+r\}\setminus J \;  |x_{i}|\leqslant \frac{1}{e_{i}}\prod_{j=1}^{r}P_{j}^{a_{i,j}}\; \et \; F(\xx)=0 \}, \end{multline*} on a donc \begin{multline*} T_{m,\tau}\ll \sum_{\substack{(x_{i})_{i\in J}\in \mathcal{S}_{1} \\ |x_{i}|\leqslant \frac{1}{e_{i}}\prod_{j=1}^{r}P_{j}^{a_{i,j}}}} M_{(x_{i})_{i\in J}}(P_{1},...,P_{r}) \\ + \sum_{\substack{(x_{i})_{i\in J}\in \mathcal{S}_{2} \\ |x_{i}|\leqslant \frac{1}{e_{i}}\prod_{j=1}^{r}P_{j}^{a_{i,j}}}} M_{(x_{i})_{i\in J}}(P_{1},...,P_{r}). \end{multline*}
On observe alors que, en appliquant le lemme\;\ref{estimgrossiere3} aux ferm\'es $ \mathcal{F}_{(x_{i})_{i\in J}} $ (de dimension inf\'erieure \`a $ s-t-\lambda/2 $ lorsque $ (x_{i})_{i\in J}\in \mathcal{S}_{2} $),   
\begin{align*}
\sum_{\substack{(x_{i})_{i\in J}\in \mathcal{S}_{2} \\ |x_{i}|\leqslant \frac{1}{e_{i}}\prod_{j=1}^{r}P_{j}^{a_{i,j}}}} M_{(x_{i})_{i\in J}}(P_{1},...,P_{r}) & \ll \sum_{\substack{(x_{i})_{i\in J}\in \mathcal{S}_{2} \\ |x_{i}|\leqslant \frac{1}{e_{i}}\prod_{j=1}^{r}P_{j}^{a_{i,j}}}}\left(\prod_{i\notin J}e_{i}\right)^{-\frac{1}{2}}\left(\prod_{j=1}^{r}P_{j}^{\sum_{i\notin J}a_{i,j}}\right)P_{r-1}^{-\lambda/4} \\   & \ll \sum_{\substack{(x_{i})_{i\in J}\in \mathcal{S}_{2} \\ |x_{i}|\leqslant \frac{1}{e_{i}}\prod_{j=1}^{r}P_{j}^{a_{i,j}}}} \left(\prod_{i\notin J}e_{i}\right)^{-\frac{1}{2}}\left(\prod_{j=1}^{r-1}P_{j}^{n_{j}}\right)P_{r}^{\sum_{i\notin J}a_{i,r}}P_{r-1}^{-\lambda/4} \\ & \ll \left(\prod_{i=1}^{n+r}e_{i}\right)^{-\frac{1}{2}}\left(\prod_{j=1}^{r}P_{j}^{n_{j}}\right)P_{r-1}^{-\lambda/4}.
\end{align*}
Or, on remarque que (puisque $ \frac{b_{j}}{b_{r-1}}\leqslant\frac{\tilde{b}_{j}}{\tilde{b}_{r-1}}  $) : \[ P_{r-1}^{-\lambda/4}=P_{r}^{-b_{r-1}\lambda/4}\leqslant P_{r}^{-4rb_{r-1}(\sum_{j=1}^{r}\tilde{b}_{j}d_{j}+\delta)/4}\leqslant P_{r}^{-\sum_{j=1}^{r}b_{j}d_{j}-\delta}=\left(\prod_{j=1}^{r}P_{j}^{-d_{j}}\right)P_{r}^{-\delta}. \]  
Par cons\'equent \[ \sum_{\substack{(x_{i})_{i\in J}\in \mathcal{S}_{2} \\ |x_{i}|\leqslant \frac{1}{e_{i}}\prod_{j=1}^{r}P_{j}^{a_{i,j}}}} M_{(x_{i})_{i\in J}}(P_{1},...,P_{r}) \ll \left(\prod_{i=1}^{n+r}e_{i}\right)^{-\frac{1}{2}}\left(\prod_{j=1}^{r}P_{j}^{n_{j}-d_{j}}\right)P_{r}^{-\delta}. \] 
Par ailleurs, pour tous $ (x_{i})_{i\in J}\in \mathcal{S}_{1} $ et $ (x_{i})_{i\in I_{r-1}\setminus J} $ fix\'es v\'erifiant $ (x_{i})_{i\in I_{r-1}}\in \mathcal{A}_{r-1}^{\lambda} $ et $ |x_{i}|\leqslant \frac{1}{e_{i}}\prod_{j=1}^{r}P_{j}^{a_{i,j}} $, par des arguments analogues \`a ceux utilis\'es pour \'etablir le th\'eor\`eme\;\ref{thmfinx3}, et en utilisant les majoration de $ \mathfrak{S}_{(x_{i})_{i\in I_{r-1}},\ee} $ et $ J_{(e_{i},x_{i})_{i\in I_{r-1}}} $ donn\'ees dans les lemmes\;\ref{Sx3} et\;\ref{Jx3}, on montre qu'il existe $ \eta\in [0,1] $ tel que, pour tout $ (x_{i})_{i\in I_{r-1}} $ fix\'e : \begin{multline}\label{formuleuniformeI3}
\Card\{ (x_{i})_{i\in \{1,...,n+r\}\setminus I_{r-1}} \; |\;  (x_{i})_{i\in I_{m',\tau}\setminus J}\in F_{(x_{i})_{i\in J}}, \; \\ \forall i\in \{1,...,n+r\}\setminus I_{r-1},  \;  |x_{i}|\leqslant \frac{1}{e_{i}}\prod_{j=1}^{r}P_{j}^{a_{i,j}}\; \et \; F(\xx)=0 \} \\ \ll \Card\{ (x_{i})_{i\in \{1,...,n+r\}\setminus I_{r-1}} \; |\;   \forall i\notin I_{r-1},  \;  |x_{i}|\leqslant \frac{1}{e_{i}}\prod_{j=1}^{r}P_{j}^{a_{i,j}}\; \et \; F(\xx)=0 \} \\ \ll e_{0}^{2}\left(\prod_{i\notin I_{r-1}}e_{i}\right)^{-1}\left(\prod_{j=1}^{r-1}P_{j}^{n_{j}-d_{j}}\right)P_{r}^{\sum_{i\notin I_{r-1}}a_{i,r}+4d_{r}+\eta}  \\ + \left(\prod_{i\notin I_{r-1}}e_{i}\right)^{-\frac{1}{2}}\left(\prod_{j=1}^{r-1}P_{j}^{n_{j}-d_{j}}\right)P_{r}^{\sum_{i\notin I_{r-1}}a_{i,r}-d_{r}-\delta}
\end{multline}

Si l'on suppose $ \left(\prod_{i}^{n+1}e_{i}\right)^{\frac{1}{2}}P_{r}^{\sum_{i\in J}a_{i,r}}P_{r}^{-\lambda/4}\leqslant 1 $, en rappelant que $ \dim\mathcal{S}_{1}\leqslant t-\frac{\lambda}{2} $, on a, d'apr\`es le lemme\;\ref{estimgrossiere3} \begin{multline*} \Card\{ (x_{i})_{i\in J}\in \mathcal{S}_{1}\; |\; \forall i\in J, \;  |x_{i}|\leqslant \frac{1}{e_{i}}\prod_{j=1}^{r}P_{j}^{a_{i,j}}\}\\ \ll \left(\prod_{i}^{n+1}e_{i}\right)^{-\frac{1}{2}}P_{r}^{\sum_{i\in J}a_{i,r}}P_{r}^{-\lambda/4}\ll \left(\prod_{i}^{n+1}e_{i}\right)^{-\frac{3}{4}}P_{r}^{\sum_{i\in J}a_{i,r}}P_{r}^{-\lambda/8},\end{multline*}
(car on a suppos\'e $ \left(\prod_{i}^{n+1}e_{i}\right)^{\frac{1}{2}}P_{r}^{\sum_{i\in J}a_{i,r}}P_{r}^{-\lambda/4}\leqslant 1 $). 
\\
Si  $ \left(\prod_{i}^{n+1}e_{i}\right)^{\frac{1}{2}}P_{r}^{\sum_{i\in J}a_{i,r}}P_{r}^{-\lambda/4}\geqslant 1 $, par une estimation triviale on trouve encore \begin{multline*} \Card\{ (x_{i})_{i\in J}\in \mathcal{S}_{1}\; |\; \forall i\in J, \;  |x_{i}|\leqslant \frac{1}{e_{i}}\prod_{j=1}^{r}P_{j}^{a_{i,j}}\}\\ \ll \left(\prod_{i\in J}T_{i}\right) \ll \left(\prod_{i\in J}T_{i}\right)\left(\prod_{i}^{n+1}e_{i}\right)^{\frac{1}{4}}P_{r}^{\sum_{i\in J}a_{i,r}}P_{r}^{-\lambda/8} \\ = \left(\prod_{i}^{n+1}e_{i}\right)^{-\frac{3}{4}}P_{r}^{\sum_{i\in J}a_{i,r}}P_{r}^{-\lambda/8}.\end{multline*}

Par cons\'equent, en sommant\;\eqref{formuleuniformeI3} sur l'ensemble des $ (x_{i})_{i\in I_{r-1}} $ consid\'er\'es, on obtient alors \begin{multline*} \sum_{\substack{(x_{i})_{i\in J}\in \mathcal{S}_{1} \\ |x_{i}|\leqslant \frac{1}{e_{i}}\prod_{j=1}^{r}P_{j}^{a_{i,j}}}} M_{(x_{i})_{i\in J}}(P_{1},...,P_{r})  \\  \ll  e_{0}^{2}\left(\prod_{i=1}^{n+r}e_{i}\right)^{-\frac{3}{4}}P_{r}^{-\lambda/8}\left(\prod_{j=1}^{r-1}P_{j}^{n_{j}-d_{j}}\right)P_{r}^{n_{r}+4d_{r}+\eta}   \\ +\left(\prod_{i=1}^{n+r}e_{i}\right)^{-\frac{1}{2}}\left(\prod_{j=1}^{r}P_{j}^{n_{j}-d_{j}}\right)P_{r}^{-\delta}.  \end{multline*}

Or, on a, par d\'efinition de $ \lambda $ (cf. formule\;\eqref{expressiondelambda3}) : \[  P_{r}^{n_{r}+4d_{r}+\eta-\frac{\lambda}{8}} \ll P_{r}^{n_{r}-d_{r}-\delta}, \]
et par ailleurs : \[  e_{0}^{2}\left(\prod_{i\notin J}^{n+r}e_{i}\right)^{-1}\left(\prod_{i\in J}^{n+r}e_{i}\right)^{-\frac{3}{4}}\ll \max\{ e_{0}^{4+\delta}\left(\prod_{i=1}^{n+r}e_{i}\right)^{-1}, \left(\prod_{i=1}^{n+r}e_{i}\right)^{-\frac{1}{2}} \}. \]

Donc finalement, \begin{multline*}
 \sum_{\substack{(x_{i})_{i\in J}\in \mathcal{S}_{1} \\ |x_{i}|\leqslant \frac{1}{e_{i}}\prod_{j=1}^{r}P_{j}^{a_{i,j}}}} M_{(x_{i})_{i\in J}}(P_{1},...,P_{r}) \\ \ll \left(  e_{0}^{4+\delta}\left(\prod_{i=1}^{n+r}e_{i}\right)^{-1}+ \left(\prod_{i=1}^{n+r}e_{i}\right)^{-\frac{1}{2}}\right)\left(\prod_{j=1}^{r}P_{j}^{n_{j}-d_{j}}\right)P_{r}^{-\delta}.
\end{multline*}

Ainsi, nous avons \'etabli que pour tout $ (m,\tau)\in \{1,...,r-1\}\times \mathfrak{S}_{r} $, \[ T_{m,\tau} \ll \left(  e_{0}^{4+\delta}\left(\prod_{i=1}^{n+r}e_{i}\right)^{-1}+ \left(\prod_{i=1}^{n+r}e_{i}\right)^{-\frac{1}{2}}\right)\left(\prod_{j=1}^{r}P_{j}^{n_{j}-d_{j}}\right)P_{r}^{-\delta}. \]
D'o\`u le r\'esultat.
\end{proof}
Nous allons \`a pr\'esent d\'emontrer, par r\'ecurrence, une g\'en\'eralisation des lemmes\;\ref{init13},\;\ref{init23} et\;\ref{init33}. 

\begin{thm}\label{rec3}
Si l'on suppose que $ n+r>\mathfrak{m}  $ et que pour un $ m\in \{1,...,r-1\} $ fix\'e $ \frac{b_{j}}{b_{m}}\leqslant \frac{\tilde{b}_{j}}{\tilde{b}_{m}} $ pour tout $ j\in \{1,...,r\} $, alors : \begin{enumerate}
\item On a d'une part \begin{multline}\label{rec13}
\sum_{\substack{k_{j}\leqslant P_{j}\\ \forall j\in \{1,...,m\}}}\sum_{(x_{i})_{i\in I_{m}}\in \Phi_{m}(\kk)}\mathfrak{S}_{(x_{i})_{i\in I_{m}},\ee}J_{(x_{i},e_{i})_{i\in I_{m}},\kk}\left(\prod_{i\notin I_{m}}e_{i}\right)^{-1}\left(\prod_{j=m+1}^{r}k_{j}^{\sum_{i\notin I_{m}}a_{i,j}}\right) \\ =C_{\sigma,\ee}\left(\prod_{j=m+1}^{r}P_{j}^{n_{j}-d_{j}}\right)+ O\left(\left(e_{0}^{4+\delta}\left(\prod_{i=1}^{n+r}e_{i}\right)^{-1}+\left(\prod_{i=1}^{n+r}e_{i}\right)^{-\frac{1}{2}}\right)\left(\prod_{j=m+1}^{r}P_{j}^{n_{j}-d_{j}}\right)\right).\end{multline}
\item  D'autre part \begin{multline}\label{rec23}  \tilde{N}_{\ee,m}(P_{1},...,P_{r})=C_{\sigma,\ee}\left(\prod_{j=1}^{r}P_{j}^{n_{j}-d_{j}}\right) \\ +O\left(\left(e_{0}^{4+\delta}\left(\prod_{i=1}^{n+r}e_{i}\right)^{-1}+\left(\prod_{i=1}^{n+r}e_{i}\right)^{-\frac{1}{2}}\right)\left(\prod_{j=1}^{r}P_{j}^{n_{j}-d_{j}}\right)P_{r}^{-\delta}\right).\end{multline}
\item On en d\'eduit \begin{multline}\label{rec33}  N_{U,\ee}(P_{1},...,P_{r})=C_{\sigma,\ee}\left(\prod_{j=1}^{r}P_{j}^{n_{j}-d_{j}}\right) \\ +O\left(\left(e_{0}^{4+\delta}\left(\prod_{i=1}^{n+r}e_{i}\right)^{-1}+\left(\prod_{i=1}^{n+r}e_{i}\right)^{-\frac{1}{2}}\right)\left(\prod_{j=1}^{r}P_{j}^{n_{j}-d_{j}}\right)P_{r}^{-\delta}\right).\end{multline}

\end{enumerate}
\end{thm}
\begin{proof}
Nous allons proc\'eder par r\'ecurrence descendante sur $ m\in \{1,...,r-1\} $. Le cas $ m=r-1 $ a d\'ej\`a \'et\'e trait\'e (cf. lemmes\;\ref{init13},\;\ref{init23} et\;\ref{init33}). Supposons que les r\'esultats $ 1,2 $ et $ 3 $ sont v\'erif\'es au rang $ m $, et montrons alors qu'ils le sont au rang $ m-1 $. \\

\'Etape $ 1 $ : Dans cette \'etape, nous allons \'etablir le r\'esultat $ 1 $ au rang $ m-1 $. Pour cela, fixons, pour toute cette \'etape,  $ b_{1},...,b_{m-1} $ tels que $ \frac{b_{1}}{\tilde{b}_{1} }=...=\frac{b_{m-1}}{\tilde{b}_{m-1}} $ (avec $ b_{m-1} $ fix\'e quelconque) et consid\'erons $ b_{m},...,b_{r} $ v\'erifiant $ \frac{b_{j}}{b_{m-1}}\leqslant \frac{\tilde{b}_{j}}{\tilde{b}_{m-1}} $. On a alors, puisque $ \frac{b_{j}}{b_{m-1}}\leqslant \frac{\tilde{b}_{j}}{\tilde{b}_{m-1}} $ pour tout $ j\in \{1,...,r\} $ : \begin{multline*}
g_{m-1}(\bb,\delta) =\left(\frac{b_{m-1}+\sum_{j=1}^{m-1}b_{j}}{b_{m-1}}\right)5\tilde{d}_{m-1}\left(1-\sum_{j=m}^{r}(1+5d_{j})\frac{b_{j}}{b_{m-1}}-\delta\right)^{-1}\\ \left( \sum_{j=m}^{r}\left(1+d_{j}\left(3+\frac{\sum_{l=1}^{m-1}b_{l}}{b_{m-1}+\sum_{l=1}^{m-1}b_{l}}+2\varepsilon\right)\right)\frac{b_{j}}{b_{m-1}}+2\delta\right) \\= 5\tilde{d}_{m-1}\left(1-\sum_{j=m}^{r}(1+5d_{j})\frac{b_{j}}{b_{m-1}}-\delta\right)^{-1}\\ \left( \sum_{j=m}^{r}\left(2\delta+\frac{(1+(3+2\varepsilon)d_{j})b_{j}}{b_{m-1}})\right)\left(1+\sum_{j=1}^{m-1}\frac{b_{j}}{b_{m-1}}\right)+d_{j}\frac{b_{j}}{b_{m-1}}\sum_{l=1}^{m-1}\frac{b_{l}}{b_{m-1}}\right)\\ \leqslant 5\tilde{d}_{m-1}\left(1-\sum_{j=m}^{r}(1+5d_{j})\frac{\tilde{b}_{j}}{\tilde{b}_{m-1}}-\delta\right)^{-1}\\ \left( \sum_{j=m}^{r}\left(2\delta+\frac{(1+(3+2\varepsilon)d_{j})\tilde{b}_{j}}{\tilde{b}_{m-1}})\right)\left(1+\sum_{j=1}^{m-1}\frac{\tilde{b}_{j}}{\tilde{b}_{m-1}}\right)+d_{j}\frac{\tilde{b}_{j}}{\tilde{b}_{m-1}}\sum_{l=1}^{m-1}\frac{\tilde{b}_{l}}{\tilde{b}_{m-1}}\right) \\ =g_{m-1}(\tilde{\bb},\delta). \end{multline*}
Ainsi, on a que (puisque $ n+r>\mathfrak{m}\geqslant h^{\Id}_{m-1,\tt^{(m-1)}}(\bb,(\tilde{\bb}_{\tau})_{\tau\neq \Id}) $) \begin{align*}
K_{m-1} & =(n+r-\lambda-\max_{(j_{0},K_{j_{0}})}\dim V_{m-1,\tt^{(m-1)},(j_{0},K_{j_{0}})}^{\ast}-\varepsilon)/2^{\sum_{j=1}^{m-1}D_{j}} \\ &>\left(1+3\sum_{j=1}^{m-1}\frac{\tilde{b}_{j}}{\tilde{b}_{m-1}}\right)\tilde{d}_{m-1}+ g_{m-1}(\tilde{\bb},\delta) \\ &> \left(1+3\sum_{j=1}^{m-1}\frac{b_{j}}{b_{m-1}}\right)\tilde{d}_{m-1}+ g_{m-1}(\bb,\delta).
\end{align*}
Par cons\'equent, puisque \[ \sum_{j=m}^{r}(1+5d_{j})\frac{b_{j}}{b_{m-1}} \leqslant \sum_{j=m}^{r}(1+5d_{j})\frac{\tilde{b}_{j}}{\tilde{b}_{m-1}}<1,\] on peut appliquer le th\'eor\`eme\;\ref{thmfinx3} et on obtient la formule asymptotique 
\begin{multline}\label{Nm3}  \tilde{N}_{\ee,m-1}(P_{1},...,P_{r})= \sum_{\forall j\in \{m,...,r\}, \;k_{j}\leqslant P_{j}}\left(\sum_{(x_{i})_{i\in I_{m-1}} \in \Phi_{m-1}(\kk)}\mathfrak{S}_{(x_{i})_{i\in I_{m-1}},\ee}J_{(x_{i},e_{i})_{i\in I_{m-1}},\kk}\right) \\ \left(\prod_{i\notin I_{m-1}}e_{i}\right)^{-1}\left(\prod_{j=m}^{r}k_{j}^{\sum_{i\notin I_{m-1}}a_{i,j}}\right)\left(\prod_{j=1}^{m-1}P_{j}^{n_{j}-d_{j}}\right) \\ + O\left(\left(e_{0,m-1}^{4}\left(\prod_{i=1}^{n+r}e_{i}\right)^{-1}+\left(\prod_{i=1}^{n+r}e_{i}\right)^{-\frac{1}{2}}\right)\left(\prod_{j=1}^{r}P_{j}^{n_{j}-d_{j}}\right)P_{m-1}^{-\delta}\right).\end{multline}

Pour terminer la preuve de l'\'etape $ 1 $, nous allons distinguer deux cas :\\

Cas $ 1 $ : supposons qu'il existe $ m'\in \{m,...,r-1\} $ tel que $ \frac{b_{m-1}}{b_{m'}}\leqslant\frac{\tilde{b}_{m-1}}{\tilde{b}_{m'}} $. Puisque $ \frac{b_{j}}{b_{m-1}}\leqslant \frac{\tilde{b}_{j}}{\tilde{b}_{m-1}} $ pour tout $ j\in \{1,...,r-1\} $, on a donc \[ \forall j\in \{1,...,r-1\}, \; \; \frac{b_{j}}{b_{m'}}\leqslant\frac{\tilde{b}_{j}}{\tilde{b}_{m'}}. \]Par hypoth\`ese de r\'ecurrence, la formule\;\eqref{rec23} (au rang $ m' $) est v\'erifi\'ee et on remarque que (par des arguments analogues \`a ceux utilis\'es pour comparer $ \tilde{N}_{\ee,r-1} $ et $ N_{\ee} $, dans la d\'emonstration du lemme\;\ref{init13})  \begin{equation*} N_{\ee,m'}(P_{1},...,P_{r})=\tilde{N}_{\ee,m-1}(P_{1},...,P_{r})  +O\left(\left(\prod_{i=1}^{n+r}e_{i}\right)^{-\frac{1}{2}}\left(\prod_{j=1}^{r}P_{j}^{n_{j}-d_{j}}\right)P_{r}^{-\delta}\right).  \end{equation*} En rempla\c{c}ant $ N_{\ee,m'}(P_{1},...,P_{r}) $ et $ \tilde{N}_{\ee,m-1}(P_{1},...,P_{r}) $ par les formules asymptotiques obtenues, puis en simplifiant par $ \left(\prod_{j=1}^{m-1}P_{j}^{n_{j}-d_{j}}\right) $, nous obtenons la formule\;\eqref{rec13} du th\'eor\`eme au rang $ m-1 $ valable pour les $ b_{m},...,b_{r} $ tels que $  \frac{b_{j}}{b_{m'}}\leqslant\frac{\tilde{b}_{j}}{\tilde{b}_{m'}} $ pour tout $ j\in \{m,...,r\} $. \\

Cas $ 2 $ : supposons \`a pr\'esent que pour tout $ m'\in \{m,...,r-1\} $, $ \frac{b_{m-1}}{b_{m'}}\geqslant\frac{\tilde{b}_{m-1}}{\tilde{b}_{m'}} $. On a alors pour tout $ j\in \{1,...,r-1\} $, $ b_{j}\leqslant \tilde{b}_{j} $. Dans ce cas, la condition $ n+r>\mathfrak{m}  $ implique que \begin{align*} K & \max\{\left(2+5\sum_{j=1}^{r}\tilde{b}_{j}\right)\tilde{d}, (2\delta+1)\sum_{j=1}^{r}\tilde{b}_{j}d_{j}\} \\ & \geqslant\max\{\left(2+5\sum_{j=1}^{r}b_{j}\right)\tilde{d}, (2\delta+1)\sum_{j=1}^{r}b_{j}d_{j}\}\end{align*} et donc que l'on peut appliquer le th\'eor\`eme\;\ref{propfin3}, ce qui nous donne : \begin{multline*}N_{\ee}(P_{1},...,P_{r})=C_{\sigma,\ee}\left(\prod_{j=1}^{r}P_{j}^{n_{j}-d_{j}}\right)
\\ +O\left(\left(e_{0}^{4+\delta}\left(\prod_{i=1}^{n+r}e_{i}\right)^{-1}+\left(\prod_{i=1}^{n+r}e_{i}\right)^{-\frac{1}{2}}\right)\left(\prod_{j=1}^{r}P_{j}^{n_{j}-d_{j}-\delta}\right)\right). \end{multline*}
o\`u l'on a not\'e \[ C_{\sigma,\ee}=\left(\prod_{i=1}^{n+r}e_{i}\right)^{-1}\mathfrak{S}_{\ee}J_{\sigma}.\] Puis en comparant $ N_{\ee} $ et $ N_{\ee,m-1} $ et en simplifiant \`a nouveau l'\'egalit\'e obtenue par $ \left(\prod_{j=1}^{m-1}P_{j}^{n_{j}-d_{j}}\right) $, nous obtenons la formule\;\eqref{rec13} pour tous les $ b_{m},...,b_{r} $ tels que $  \frac{b_{j}}{b_{m'}}\geqslant\frac{\tilde{b}_{j}}{\tilde{b}_{m'}} $ pour tout $ m'\in \{m,...,r-1\} $. 
\\
Nous avons donc \'etablit la r\'esultat $ 1 $ au rang $ m-1 $.\\

\'Etape $ 2 $ : D\'emontrons \`a pr\'esent le r\'esultat $ 2 $ au rang $ m-1 $. Nous prenons ici $ b_{1},...,b_{r-1} $ quelconques tels que $ \frac{b_{j}}{b_{m-1}}\leqslant\frac{\tilde{b}_{j}}{\tilde{b}_{m-1}} $ pour tout $ j\in \{1,...,r-1\} $. \\
Puisque $ \frac{b_{j}}{b_{m}}\leqslant \frac{\tilde{b}_{j}}{\tilde{b}_{m}} $, alors, comme pour la d\'emonstration de la formule\;\eqref{rec13} ci-dessus, on a 
\[
K_{m-1} > \left(1+3\sum_{j=1}^{m-1}\frac{b_{j}}{b_{m-1}}\right)\tilde{d}_{m-1}+ g_{m-1}(\bb,\delta).
\]
et on applique le th\'eor\`eme\;\ref{thmfinx3}, ce qui nous donne : \begin{multline*}  \tilde{N}_{\ee,m-1}(P_{1},...,P_{r})= \sum_{\forall j\in \{m,...,r\}, \;k_{j}\leqslant P_{j}}\left(\sum_{(x_{i})_{i\in I_{m-1}} \in \Phi_{m-1}(\kk)}\mathfrak{S}_{(x_{i})_{i\in I_{m-1}},\ee}J_{(x_{i},e_{i})_{i\in I_{m-1}},\kk}\right) \\ \left(\prod_{i\notin I_{m-1}}e_{i}\right)^{-1}\left(\prod_{j=m}^{r}k_{j}^{\sum_{i\notin I_{m-1}}a_{i,j}}\right)\left(\prod_{j=1}^{m-1}P_{j}^{n_{j}-d_{j}}\right) \\ + O\left(\left(e_{0,m-1}^{4}\left(\prod_{i=1}^{n+r}e_{i}\right)^{-1}+\left(\prod_{i=1}^{n+r}e_{i}\right)^{-\frac{1}{2}}\right)\left(\prod_{j=1}^{r}P_{j}^{n_{j}-d_{j}}\right)P_{m-1}^{-\delta}\right).\end{multline*} et en appliquant la formule\;\eqref{rec13} au rang $ m-1 $ on obtient la formule\;\eqref{rec23} au rang $ m-1 $.\\

\'Etape $ 3 $ : nous allons \`a pr\'esent, en employant des arguments analogues \`a ceux utilis\'es pour \'etablir le lemme\;\ref{init33}, d\'emontrer que la formule\;\eqref{rec23} implique\;\eqref{rec33} (par comparaison de $ \tilde{N}_{\ee,m-1} $ et de $ N_{U,\ee} $).
Remarquons dans un premier temps que :

\[  N_{U,\ee}(P_{1},...,P_{r})=\tilde{N}_{\ee,m}(P_{1},...,P_{r})+O\left(\sum_{\tau\in \mathfrak{S}_{r}}\sum_{m'\in \{1,...,r-1\}}T_{m',\tau}\right), \]
o\`u  \begin{multline*}
T_{m',\tau}= \Card\left\{ 
 \xx\in (\ZZ\setminus\{0\})^{n+r} \; | \; (x_{i})_{i\in I_{m',\tau}}\notin \mathcal{A}_{m',\tau}^{\lambda},\; \forall k\in \{m,...,r-1\}, \; \right. \\ (x_{i})_{i\in I_{k}}\in \mathcal{A}_{k}^{\lambda},\;  F(\ee.\xx)=0, \;\forall j\in \{1,...,r\}, \;  \left\lfloor|(\ee.\xx)^{E(n+j)}|\right\rfloor\leqslant P_{j}, \;  \\ \left.\forall i\in \{1,...,n+r\},  \;  |x_{i}|\leqslant \frac{1}{e_{i}}\prod_{j=1}^{r}(\left\lfloor|(\ee.\xx)^{E(n+j)}|\right\rfloor+1)^{a_{i,j}} \right\}.
\end{multline*}
Consid\'erons un \'el\'ement $ (m',\tau) $. Posons $ \mathcal{F}=(\mathcal{A}_{m',\tau}^{\lambda})^{c} $ et $ s=\Card I_{m',\tau} $ de sorte que $ \dim \mathcal{F}\leqslant s-\lambda $. Posons par ailleurs $ J_{m}=I_{m}\cap I_{m',\tau} $, $ s_{m}=\Card J_{m} $, et pour tout $ (x_{i})_{i\in J} $ fix\'e : \[ \mathcal{F}_{(x_{i})_{i\in J_{m}}}=\{ (x_{i})_{i\in I_{m',\tau}\setminus J_{m}}\; |\; (x_{i})_{i\in I_{m',\tau}}\in \mathcal{F} \}, \] de sorte que \[ \mathcal{F}=\bigsqcup_{(x_{i})_{i\in J_{m}}}\mathcal{F}_{(x_{i})_{i\in J_{m}}}. \] Notons alors \[ \mathcal{S}^{(m)}_{1}=\{ (x_{i})_{i\in J_{m}}\; |\; \dim \mathcal{F}_{(x_{i})_{i\in J_{m}}}> s-s_{m}-\frac{m\lambda}{r}\}, \] \[   \mathcal{S}^{(m)}_{2}=\{ (x_{i})_{i\in J_{m}}\; |\; \dim \mathcal{F}_{(x_{i})_{i\in J_{m}}}\leqslant s-s_{m}-\frac{m\lambda}{r}\} \] (on remarque que $ \dim\mathcal{S}^{(m)}_{1}\leqslant s_{m}-\frac{(r-m)\lambda}{r} $, car $ \dim\mathcal{F}\leqslant s-\lambda $), et en notant \begin{multline*} M_{(x_{i})_{i\in J_{m}}}(P_{1},...,P_{r})=\Card\{ (x_{i})_{i\in \{1,...,n+r\}\setminus J_{m}} \; |\;  (x_{i})_{i\in I_{m',\tau}\setminus J_{m}}\in \mathcal{F}_{(x_{i})_{i\in J_{m}}}, \\ \; \forall k\in \{m,...,r-1\}, \; (x_{i})_{i\in I_{k}}\in \mathcal{A}_{k}^{\lambda} \; \\ \forall i\in \{1,...,n+r\}\setminus J_{m}, \;  |x_{i}|\leqslant \frac{1}{e_{i}}\prod_{j=1}^{r}P_{j}^{a_{i,j}}\; \et \; F(\xx)=0 \}, \end{multline*} on a donc \begin{multline*} T_{m',\tau}\ll \sum_{\substack{(x_{i})_{i\in J_{m}}\in \mathcal{S}_{1}^{(m)} \\ |x_{i}|\leqslant \frac{1}{e_{i}}\prod_{j=1}^{r}P_{j}^{a_{i,j}}}} M_{(x_{i})_{i\in J_{m}}}(P_{1},...,P_{r}) \\ + \sum_{\substack{(x_{i})_{i\in J_{m}}\in \mathcal{S}_{2}^{(m)} \\ |x_{i}|\leqslant \frac{1}{e_{i}}\prod_{j=1}^{r}P_{j}^{a_{i,j}}}} M_{(x_{i})_{i\in J_{m}}}(P_{1},...,P_{r}). \end{multline*}
On observe alors que, d'apr\`es le lemme\;\ref{estimgrossiere3} 
\begin{multline*}
\sum_{\substack{(x_{i})_{i\in J_{m}}\in \mathcal{S}_{2}^{(m)} \\ |x_{i}|\leqslant \frac{1}{e_{i}}\prod_{j=1}^{r}P_{j}^{a_{i,j}}}} M_{(x_{i})_{i\in J_{m}}}(P_{1},...,P_{r}) \\ \ll \sum_{\substack{(x_{i})_{i\in J_{m}}\in \mathcal{S}_{2}^{(m)} \\ |x_{i}|\leqslant \frac{1}{e_{i}}\prod_{j=1}^{r}P_{j}^{a_{i,j}}}} \left(\prod_{i\notin J_{m}}e_{i}\right)^{-\frac{1}{2}}\left(\prod_{j=1}^{r}P_{j}^{\sum_{i\notin J_{m}}a_{i,j}}\right)P_{m}^{-\frac{m\lambda}{2r}} \\  \ll \left(\prod_{i=1}^{n+r}e_{i}\right)^{-\frac{1}{2}}\left(\prod_{j=1}^{r}P_{j}^{n_{j}}\right)P_{m}^{-\frac{m\lambda}{2r}}.
\end{multline*}
Or, puisque $ \frac{b_{j}}{b_{m}}\leqslant\frac{\tilde{b}_{j}}{\tilde{b}_{m}}  $, \[ P_{m}^{-\frac{\lambda m}{2r}}=P_{r}^{-b_{m}\frac{\lambda m}{2r}}\leqslant P_{r}^{-2rb_{m}(\sum_{j=1}^{r}\tilde{b}_{j}d_{j}+\delta)\frac{\lambda m}{2r}}\leqslant P_{r}^{-\sum_{j=1}^{r}b_{j}d_{j}-\delta}=\left(\prod_{j=1}^{r}P_{j}^{-d_{j}}\right)P_{r}^{-\delta}. \]  
Par cons\'equent \[ \sum_{\substack{(x_{i})_{i\in J_{m}}\in \mathcal{S}_{2}^{(m)} \\ |x_{i}|\leqslant \frac{1}{e_{i}}\prod_{j=1}^{r}P_{j}^{a_{i,j}}}} M_{(x_{i})_{i\in J_{m}}}(P_{1},...,P_{r}) \ll \left(\prod_{i=1}^{n+r}e_{i}\right)^{-\frac{1}{2}}\left(\prod_{j=1}^{r}P_{j}^{n_{j}-d_{j}}\right)P_{r}^{-\delta}. \] 
 Remarquons \`a pr\'esent que, par les m\^emes arguments que ceux utilis\'es pour d\'emontrer le th\'eor\`eme\;\ref{thmfinx3} (reposants sur la proposition\;\ref{propfinx3}) on a :

 \begin{multline}\label{avantlemme3}
\sum_{\substack{(x_{i})_{i\in J_{m}}\in \mathcal{S}_{1}^{(m)} \\ |x_{i}|\leqslant \frac{1}{e_{i}}\prod_{j=1}^{r}P_{j}^{a_{i,j}}}} M_{(x_{i})_{i\in J_{m}}}(P_{1},...,P_{r}) \\ = \sum_{k_{j}\leqslant P_{j}}\sum_{\substack{(x_{i})_{i\in I_{m}}\in \Phi_{m}(\kk) \\ (x_{i})_{i\in J_{m}}\in \mathcal{S}_{1}^{(m)}}}N_{\ee,(x_{i})_{i\in I_{m}}}(P_{1},...,P_{m}) \\ =\sum_{k_{j}\leqslant P_{j}}\sum_{\substack{(x_{i})_{i\in I_{m}}\in \Phi_{m}(\kk) \\ (x_{i})_{i\in J_{m}}\in S_{1}^{(m)}}}\mathfrak{S}_{(x_{i})_{i\in I_{m}},\ee}J_{(x_{i},e_{i})_{i\in I_{m}},\kk}\left(\prod_{i\notin I_{m}}e_{i}\right)^{-1}\left(\prod_{j=m+1}^{r}k_{j}^{\sum_{i\notin I_{m}}a_{i,j}}\right)\prod_{j=1}^{m}P_{j}^{n_{j}-d_{j}} \\+ O\left(\left(\prod_{i=1}^{n+r}e_{i}\right)^{-\frac{1}{2}}\left(\prod_{j=1}^{r}P_{j}^{n_{j}-d_{j}}\right)P_{r}^{-\delta}\right)
\end{multline}

Pour achever l'\'etape $ 3 $ de la d\'emonstration du th\'eor\`eme, il est n\'ecessaire de d\'emontrer le lemme ci-dessous : 

\begin{lemma}
Si l'on suppose $ n+r\geqslant \mathfrak{m} $ et que $ \frac{b_{j}}{b_{m}}\leqslant \frac{\tilde{b}_{j}}{\tilde{b}_{m}} $ pour tout $ j\in \{m+1,...,r\} $, on a alors que : \begin{multline*} \sum_{k_{j}\leqslant P_{j}}\sum_{\substack{(x_{i})_{i\in I_{m}}\in \Phi_{m}(\kk) \\ (x_{i})_{i\in J_{m}}\in S_{1}^{(m)}}}\mathfrak{S}_{(x_{i})_{i\in I_{m}},\ee}J_{(x_{i},e_{i})_{i\in I_{m}},\kk}\left(\prod_{j=m+1}^{r}k_{j}^{\sum_{i\notin I_{m}}a_{i,j}}\right) \\ \ll \left(e_{0}^{4+\delta}\left(\prod_{i\in I_{m}}e_{i}\right)^{-1}+\left(\prod_{i\in I_{m}}e_{i}\right)^{-\frac{1}{2}}\right)\left(\prod_{i\notin I_{m}}e_{i}\right)^{\frac{1}{2}}\left(\prod_{j=m+1}^{r}P_{j}^{n_{j}-d_{j}}\right)P_{r}^{-\delta}.  \end{multline*}
\end{lemma}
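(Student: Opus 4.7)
L'id\'ee centrale est que cette majoration est duale de celle utilis\'ee pour borner $T_{m',\tau}$ dans l'\'etape 3 du th\'eor\`eme : nous appliquons la formule asymptotique du lemme\;\ref{propfinx3} \emph{\`a l'envers} pour convertir notre somme en une somme de $N_{(x_{i})_{i\in I_{m}},\ee}(P_{1},\ldots,P_{m})$, puis nous majorons cette derni\`ere en utilisant la contrainte de dimension sur $\mathcal{S}_{1}^{(m)}$ via le lemme\;\ref{estimgrossiere3}. Les param\`etres auxiliaires $P_{1},\ldots,P_{m}$ disparaissent apr\`es division, car le membre de gauche en est ind\'ependant.

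\paragraph{Mise en \oe uvre.}
On choisit $P_{j}=P_{r}^{\tilde{b}_{j}}$ pour $j\leqslant m$, de sorte que, gr\^ace au choix de $\mathfrak{m}$ et $\lambda$, les hypoth\`eses du lemme\;\ref{propfinx3} soient satisfaites pour tout $(x_{i})_{i\in I_{m}}\in \mathcal{A}_{m}^{\lambda}$. Celui-ci fournit :
\begin{multline*}
\mathfrak{S}_{(x_{i})_{i\in I_{m}},\ee}\,J_{(x_{i},e_{i})_{i\in I_{m}},\kk}\,\Bigl(\prod_{i\notin I_{m}}e_{i}\Bigr)^{-1}\prod_{j>m}k_{j}^{\sum_{i\notin I_{m}}a_{i,j}}\prod_{j\leqslant m}P_{j}^{n_{j}-d_{j}}\\
=N_{(x_{i})_{i\in I_{m}},\ee}(P_{1},\ldots,P_{m})+O(E_{1}+E_{2}+E_{3}).
\end{multline*}
En sommant sur $(x_{i})_{i\in I_{m}}\in \Phi_{m}(\kk)$ avec $(x_{i})_{i\in J_{m}}\in \mathcal{S}_{1}^{(m)}$ et sur $\kk\leqslant(P_{m+1},\ldots,P_{r})$, la somme \`a majorer, multipli\'ee par $(\prod_{i\notin I_{m}}e_{i})^{-1}\prod_{j\leqslant m}P_{j}^{n_{j}-d_{j}}$, s'\'ecrit comme $\sum N_{(x_{i})_{i\in I_{m}},\ee}(P_{1},\ldots,P_{m})+O(E)$ o\`u $E$ est la contribution globale des erreurs. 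La somme de $N_{(x_{i})_{i\in I_{m}},\ee}$ est born\'ee par le nombre de $\xx\in \ZZ^{n+r}$ avec $(x_{i})_{i\in J_{m}}\in \mathcal{S}_{1}^{(m)}$ et $|x_{i}|\leqslant T_{i}$ pour tout $i$ : on compte trivialement les $(x_{i})_{i\notin J_{m}}$, puis on applique le lemme\;\ref{estimgrossiere3} aux $(x_{i})_{i\in J_{m}}$ en utilisant $\dim \mathcal{S}_{1}^{(m)}\leqslant s_{m}-(r-m)\lambda/r$. On obtient un facteur suppl\'ementaire $P_{j_{0}}^{-(r-m)\lambda/(2r)}$ avec $j_{0}>m$, et le choix $\lambda\geqslant 4r\lceil \sum_{j}\tilde{b}_{j}d_{j}+\delta\rceil$ assure que ce facteur est major\'e par $P_{r}^{-\delta}\prod_{j}P_{j}^{-d_{j}}$.

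\paragraph{Assemblage et contr\^ole des erreurs.}
Apr\`es division par $(\prod_{i\notin I_{m}}e_{i})^{-1}\prod_{j\leqslant m}P_{j}^{n_{j}-d_{j}}$, la majoration triviale combin\'ee avec l'in\'egalit\'e \'el\'ementaire $(\prod_{i\notin J_{m}}e_{i})^{-1}(\prod_{i\in J_{m}}e_{i})^{-1/2}\leqslant(\prod_{i\in I_{m}}e_{i})^{-1/2}(\prod_{i\notin I_{m}}e_{i})^{-1}$ produit une contribution born\'ee par $(\prod_{i\in I_{m}}e_{i})^{-1/2}\prod_{j>m}P_{j}^{n_{j}-d_{j}}P_{r}^{-\delta}$, ce qui est absorb\'e dans le second terme du membre de droite (le facteur $(\prod_{i\notin I_{m}}e_{i})^{1/2}\geqslant 1$). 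Pour les erreurs $E_{1},E_{2},E_{3}$, un calcul analogue \`a celui de la preuve du th\'eor\`eme\;\ref{thmfinx3} (en tenant compte de la contrainte dimensionnelle qui r\'eduit le nombre de $(x_{i})_{i\in J_{m}}$ par un facteur $P_{r}^{-\delta}$) fournit la contribution en $e_{0}^{4+\delta}(\prod_{i\in I_{m}}e_{i})^{-1}(\prod_{i\notin I_{m}}e_{i})^{1/2}\prod_{j>m}P_{j}^{n_{j}-d_{j}}P_{r}^{-\delta}$, qui constitue le premier terme du membre de droite.

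\paragraph{Principale difficult\'e.}
L'obstacle technique se situe dans la comptabilit\'e pr\'ecise des facteurs en $e_{i}$ : il s'agit de v\'erifier que $e_{0,m}^{2+\delta}(\prod_{i\in I_{m}\setminus J_{m}}e_{i})^{-1}(\prod_{i\in J_{m}}e_{i})^{-1/2}$ se r\'epartit correctement entre les deux termes de la majoration cherch\'ee, selon que $e_{0,m}^{2+\delta}$ est plus petit ou plus grand que $(\prod_{i\notin I_{m}}e_{i})^{1/2}$. Cette dichotomie, d\'ej\`a apparue dans le th\'eor\`eme\;\ref{propfin3} et la remarque\;\ref{remsigmae3}, est exactement ce qui explique la forme du membre de droite sous forme de $\max$.
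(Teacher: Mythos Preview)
Your direct (non-recursive) strategy does not match the paper's proof, and it has a genuine gap.

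The paper proceeds by \emph{descending recurrence on $m$}. In the inductive step it distinguishes two cases. In Case~1 ($b_{j}\leqslant\tilde{b}_{j}$ for all $j\in\{m+1,\dots,r-1\}$) one bounds directly, essentially as you do, using $\dim\mathcal{S}_{1}^{(m)}\leqslant s_{m}-(r-m)\lambda/r$; the condition $b_{j}\leqslant\tilde b_{j}$ is exactly what makes the saving $P_{r}^{-(r-m)\lambda/(2r)}$ beat $\prod_{j>m}P_{j}^{d_{j}}$. In Case~2 (some $b_{j}>\tilde b_{j}$), the paper picks $k\in\{m+1,\dots,r-1\}$ maximising $b_{k}/\tilde b_{k}$, splits $\mathcal{S}_{1}^{(m)}$ over the fibres in $J_{k}$, shows the ``big-fibre'' piece sits inside $\mathcal{S}_{1}^{(k)}$, and then invokes the induction hypothesis at rank~$k$ (where $b_{j}/b_{k}\leqslant\tilde b_{j}/\tilde b_{k}$ holds by maximality). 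This recursive reduction is the heart of the argument.

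Your proposal tries to treat all cases at once by choosing auxiliary $P_{j}=P_{r}^{\tilde b_{j}}$ for $j\leqslant m$, converting to $\sum N_{(x_{i})_{i\in I_{m}},\ee}$, and bounding trivially. The claim that fails is that
\[
P_{j_{0}}^{-(r-m)\lambda/(2r)}\leqslant P_{r}^{-\delta}\prod_{j=1}^{r}P_{j}^{-d_{j}}.
\]
In the worst case $j_{0}=r$, this requires $(r-m)\lambda/(2r)\geqslant \sum_{j\leqslant m}\tilde b_{j}d_{j}+\sum_{j>m}b_{j}d_{j}+\delta$. The hypothesis $b_{j}/b_{m}\leqslant\tilde b_{j}/\tilde b_{m}$ only gives $b_{j}\leqslant (b_{m}/\tilde b_{m})\tilde b_{j}$ for $j>m$, and since $j=r$ forces $b_{m}\geqslant\tilde b_{m}$, the $b_{j}$ for $j>m$ can be arbitrarily large relative to $\tilde b_{j}$; your bound on $\lambda$ in terms of the $\tilde b_{j}$ alone cannot absorb $\sum_{j>m}b_{j}d_{j}$. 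Moreover, your choice $P_{m}'=P_{r}^{\tilde b_{m}}$ may violate the ordering $P_{m}'\geqslant P_{m+1}$ (needed for the lemmas of Section~4) once $b_{m+1}>\tilde b_{m}$, which is again possible in Case~2. Scaling the auxiliary parameters up to restore the ordering makes the factor $\prod_{j\leqslant m}(P_{j}')^{d_{j}}$ grow and defeats the saving.

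In short: your argument is essentially the paper's Case~1 and is correct there, but the crucial Case~2 is missing; handling it requires the recursive descent to a higher rank $k$ that the paper performs. The ``principale difficult\'e'' is not the $e_{i}$-bookkeeping but this recursion.
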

\begin{proof}
Nous allons d\'emontrer ce r\'esultat par r\'ecurence descendante sur $ m $. Le cas du rang $ m=r-1 $ a d\'ej\`a \'et\'e trait\'e dans la d\'emonstration du lemme\;\ref{init33}. Supposons le r\'esultat vrai au rang $ k $ pour tout $ k\in\{m+1,...,r-1\} $. Montrons alors que la propri\'et\'e est vraie au rang $ m $. \\

Supposons dans un premier temps que pour tout $ j\in \{m+1,...,r-1\} $, $ b_{j}\leqslant \tilde{b}_{j} $. On remarque alors que, puisque $ \dim \mathcal{S}_{1}^{(m)}\leqslant s_{m}-\frac{(r-m)\lambda}{r} $ 
\begin{multline*}
\sum_{k_{j}\leqslant P_{j}}\sum_{\substack{(x_{i})_{i\in I_{m}}\in \Phi_{m}(\kk) \\ (x_{i})_{i\in J_{m}}\in S_{1}^{(m)}}}\mathfrak{S}_{(x_{i})_{i\in I_{m}},\ee}J_{(x_{i},e_{i})_{i\in I_{m}},\kk}\left(\prod_{j=m+1}^{r}k_{j}^{\sum_{i\notin I_{m}}a_{i,j}}\right) \\ \ll \left(\prod_{i\in I_{m}}e_{i}\right)^{-\frac{1}{2}}\left(\prod_{j=m+1}^{r}P_{j}^{n_{j}}\right)P_{r}^{-\frac{\lambda(r-m)}{2r}}  
\end{multline*}
Or puisque pour tout $ j\in \{m+1,...,r-1\} $, $ b_{j}\leqslant \tilde{b}_{j} $, on a alors que \[  P_{r}^{-\frac{(r-m)\lambda}{2r}} \leqslant P_{r}^{-\sum_{j=1}^{r}\tilde{b}_{j}d_{j}+\delta}\leqslant P_{r}^{-\sum_{j=1}^{r}b_{j}d_{j}+\delta}  \leqslant \left(\prod_{j=m+1}^{r}P_{j}^{-d_{j}}\right)P_{r}^{-\delta},   \]
et on en d\'eduit le r\'esultat dans ce cas.\\

On suppose \`a pr\'esent qu'il existe $ j\in \{m+1,...,r\} $ tel que $ b_{j}\geqslant \tilde{b}_{j} $, autrement dit, que $ \max_{j\in\{m+1,...,r-1\}}\left( \frac{b_{j}}{\tilde{b}_{j}}\right)\geqslant 1 $. Consid\'erons $ k\in \{m+1,...,r-1\} $ tel que $ \frac{b_{k}}{\tilde{b}_{k}}=\max_{j\in\{m+1,...,r-1\}}\left( \frac{b_{j}}{\tilde{b}_{j}}\right) $. On a alors, pour tout $ j\in \{k+1,...,r\} $, $  \frac{b_{j}}{b_{k}}\leqslant \frac{\tilde{b}_{j}}{\tilde{b}_{k}} $. On remarque que, \'etant donn\'e que $ m<k $,  \[ I_{k}\subset I_{m}, \]\[  J_{k}=I_{k}\cap I_{m',\tau}\subset I_{m}\cap I_{m',\tau}=J_{m}. \]Posons \[ \mathcal{S}^{(m)}_{1}=\bigsqcup_{(x_{i})_{i\in J_{k}}}\mathcal{S}^{(m)}_{1,(x_{i})_{i\in J_{k}}} \]
o\`u \[ \mathcal{S}^{(m)}_{1,(x_{i})_{i\in J_{k}}}=\{ (x_{i})_{i\in J_{m}\setminus J_{k}}\; |\; (x_{i})_{i\in J_{m}}\in  \mathcal{S}^{(m)}_{1} \}. \]
 Posons alors \[ \tilde{\mathcal{S}}^{(k)}_{1}=\{ (x_{i})_{i\in J_{k}}\; |\; \dim \mathcal{S}^{(m)}_{1,(x_{i})_{i\in J_{k}}}> s_{m}-s_{k}-\frac{(k-m)\lambda}{r}\}, \] \[   \tilde{\mathcal{S}}^{(k)}_{2}=\{ (x_{i})_{i\in J_{m}}\; |\; \dim \mathcal{S}^{(m)}_{1,(x_{i})_{i\in J_{k}}}\leqslant  s_{m}-s_{k}-\frac{(k-m)\lambda}{r}\}. \]
Remarquons que, comme pr\'ec\'edemment, \begin{multline}\label{derlemme3}
\sum_{k_{j}\leqslant P_{j}}\sum_{\substack{(x_{i})_{i\in I_{m}}\in \Phi_{m}(\kk) \\ (x_{i})_{i\in J_{m}}\in S_{1}^{(m)} \\ (x_{i})_{i\in J_{k}}\in \tilde{\mathcal{S}}^{(k)}_{2}}}\mathfrak{S}_{(x_{i})_{i\in I_{m}},\ee}J_{(x_{i},e_{i})_{i\in I_{m}},\kk}\left(\prod_{j=m+1}^{r}k_{j}^{\sum_{i\notin I_{m}}a_{i,j}}\right) \\  \ll \left(\prod_{i\in I_{m}}e_{i}\right)^{-\frac{1}{2}}\left(\prod_{j=m+1}^{r}P_{j}^{n_{j}}\right) P_{k}^{-\frac{k\lambda}{2r}} \\ \ll \left(\prod_{i\in I_{m}}e_{i}\right)^{-\frac{1}{2}}\left(\prod_{j=m+1}^{r}P_{j}^{n_{j}}\right) P_{r}^{-b_{k}(\sum_{j=1}^{r}\tilde{b}_{j}d_{j}+\delta)} \\ \ll  \left(\prod_{i\in I_{m}}e_{i}\right)^{-\frac{1}{2}}\left(\prod_{j=m+1}^{r}P_{j}^{n_{j}-d_{j}}\right) P_{r}^{-\delta} 
\end{multline}
(car $  \frac{b_{j}}{b_{k}}\leqslant \frac{\tilde{b}_{j}}{\tilde{b}_{k}} $). D'autre part, on observe que, si $ (x_{i})_{i\in J_{k}}\in  \tilde{\mathcal{S}}^{(k)}_{1} $, puisque 
\[ F_{(x_{i})_{i\in J_{k}}}\supset\bigsqcup_{(x_{i})_{i\in J_{m}\setminus J_{k}}\in \mathcal{S}^{(m)}_{1,(x_{i})_{i\in J_{k}}}}F_{(x_{i})_{i\in J_{m}}}, \] on a, pour tout $  (x_{i})_{i\in J_{k}}\in  \tilde{\mathcal{S}}^{(k)}_{1} $, \[ \dim F_{(x_{i})_{i\in J_{k}}} \geqslant (s-s_{m}-\frac{m\lambda}{r})+(s_{m}-s_{k}-\frac{(k-m)\lambda}{r})=s-s_{k}-\frac{k\lambda}{r} \] et on en d\'eduit que \[  \tilde{\mathcal{S}}^{(k)}_{1}\subset \mathcal{S}^{(k)}_{1}. \] 
Choisissons \`a pr\'esent $ P_{1},...,P_{m} $ de sorte que $ \frac{b_{j}}{b_{k}}\leqslant \frac{\tilde{b}_{j}}{\tilde{b}_{k}} $ pour tout $ j\in \{1,...,r\} $. On a alors \begin{equation}\label{aprelemme3}
 \sum_{\substack{(x_{i})_{i\in J_{m}}\in \mathcal{S}_{1}^{(m)}\\ (x_{i})_{i\in J_{k}}\in \tilde{\mathcal{S}}_{1}^{(k)} \\ |x_{i}|\leqslant \frac{1}{e_{i}}\prod_{j=1}^{r}P_{j}^{a_{i,j}}}} M_{(x_{i})_{i\in J_{m}}}(P_{1},...,P_{r})\leqslant  \sum_{\substack{(x_{i})_{i\in J_{k}}\in \mathcal{S}_{1}^{(k)} \\ |x_{i}|\leqslant \frac{1}{e_{i}}\prod_{j=1}^{r}P_{j}^{a_{i,j}}}} M_{(x_{i})_{i\in J_{k}}}(P_{1},...,P_{r}) 
\end{equation}
Or, nous avons vu (cf.\;\eqref{avantlemme3} et\;\eqref{derlemme3}) que : \begin{multline*}
\sum_{\substack{(x_{i})_{i\in J_{m}}\in \mathcal{S}_{1}^{(m)} \\ |x_{i}|\leqslant \frac{1}{e_{i}}\prod_{j=1}^{r}P_{j}^{a_{i,j}}}} M_{(x_{i})_{i\in J_{m}}}(P_{1},...,P_{r})\\  =\sum_{k_{j}\leqslant P_{j}}\sum_{\substack{(x_{i})_{i\in I_{m}}\in \Phi_{m}(\kk) \\ (x_{i})_{i\in J_{m}}\in \mathcal{S}_{1}^{(m)}\\ (x_{i})_{i\in J_{k}}\in \tilde{\mathcal{S}}_{1}^{(k)}}}\mathfrak{S}_{(x_{i})_{i\in I_{m}},\ee}J_{(x_{i},e_{i})_{i\in I_{m}},\kk}\left(\prod_{i\notin I_{m}}e_{i}\right)^{-1}\left(\prod_{j=m+1}^{r}k_{j}^{\sum_{i\notin I_{m}}a_{i,j}}\right)\prod_{j=1}^{m}P_{j}^{n_{j}-d_{j}} \\+ O\left(\left(\prod_{i=1}^{n+r}e_{i}\right)^{-\frac{1}{2}}\left(\prod_{j=1}^{r}P_{j}^{n_{j}-d_{j}}\right)P_{r}^{-\delta}\right).
\end{multline*}
Par cons\'equent l'in\'egalit\'e\;\eqref{aprelemme3} implique : \begin{multline*}
\sum_{k_{j}\leqslant P_{j}}\sum_{\substack{(x_{i})_{i\in I_{m}}\in \Phi_{m}(\kk) \\x_{i})_{i\in J_{m}}\in \mathcal{S}_{1}^{(m)}\\ (x_{i})_{i\in J_{k}}\in \tilde{\mathcal{S}}_{1}^{(k)} }}\mathfrak{S}_{(x_{i})_{i\in I_{m}},\ee}J_{(x_{i},e_{i})_{i\in I_{m}},\kk}\left(\prod_{i\notin I_{m}}e_{i}\right)^{-1}\left(\prod_{j=m+1}^{r}k_{j}^{\sum_{i\notin I_{m}}a_{i,j}}\right)\prod_{j=1}^{m}P_{j}^{n_{j}-d_{j}} \\ \ll \sum_{\substack{(x_{i})_{i\in J_{k}}\in \mathcal{S}_{1}^{(k)} \\ |x_{i}|\leqslant \frac{1}{e_{i}}\prod_{j=1}^{r}P_{j}^{a_{i,j}}}} M_{(x_{i})_{i\in J_{k}}}(P_{1},...,P_{r})+  O\left(\left(\prod_{i=1}^{n}e_{i}\right)^{-\frac{1}{2}}\left(\prod_{j=1}^{r}P_{j}^{n_{j}-d_{j}}\right)P_{r}^{-\delta}\right) \\ \ll \sum_{k_{j}\leqslant P_{j}}\sum_{\substack{(x_{i})_{i\in I_{k}}\in \Phi_{k}(\kk) \\ (x_{i})_{i\in J_{k}}\in \mathcal{S}_{k}^{(1)}}}\mathfrak{S}_{(x_{i})_{i\in I_{k}},\ee}J_{(x_{i},e_{i})_{i\in I_{k}},\kk}\left(\prod_{i\notin I_{k}}e_{i}\right)^{-1}\left(\prod_{j=k+1}^{r}k_{j}^{\sum_{i\notin I_{k}}a_{i,j}}\right)\prod_{j=1}^{k}P_{j}^{n_{j}-d_{j}} \\+ O\left(\left(\prod_{i=1}^{n+r}e_{i}\right)^{-\frac{1}{2}}\left(\prod_{j=1}^{r}P_{j}^{n_{j}-d_{j}}\right)P_{r}^{-\delta}\right).
\end{multline*}
En utilisant l'hypoth\`ese de r\'ecurrence au rang $ k $, on obtient donc :   \begin{multline*}
\sum_{k_{j}\leqslant P_{j}}\sum_{\substack{(x_{i})_{i\in I_{m}}\in \Phi_{m}(\kk) \\x_{i})_{i\in J_{m}}\in \mathcal{S}_{1}^{(m)}\\ (x_{i})_{i\in J_{k}}\in \tilde{\mathcal{S}}_{1}^{(k)} }}\mathfrak{S}_{(x_{i})_{i\in I_{m}},\ee}J_{(x_{i},e_{i})_{i\in I_{m}},\kk}\left(\prod_{i\notin I_{m}}e_{i}\right)^{-1}\left(\prod_{j=m+1}^{r}k_{j}^{\sum_{i\notin I_{m}}a_{i,j}}\right)\prod_{j=1}^{m}P_{j}^{n_{j}-d_{j}} \\ = O\left(\left(e_{0}^{4+\delta}\left(\prod_{i=1}^{n+r}e_{i}\right)^{-1}+\left(\prod_{i=1}^{n+r}e_{i}\right)^{-\frac{1}{2}}\right)\left(\prod_{j=1}^{r}P_{j}^{n_{j}-d_{j}}\right)P_{r}^{-\delta}\right)
\end{multline*}
et on obtient donc le r\'esultat en simplifiant par $ \left(\prod_{i\notin I_{m}}e_{i}\right)^{-1}\prod_{j=1}^{m}P_{j}^{n_{j}-d_{j}} $.
\end{proof}

\textit{Fin de la d\'emonstration du th\'eor\`eme\;\ref{rec3}}
En appliquant ce lemme \`a la majoration\;\eqref{avantlemme3}, on obtient donc : \begin{multline*}
\sum_{\substack{(x_{i})_{i\in J_{m}}\in \mathcal{S}_{1}^{(m)} \\ |x_{i}|\leqslant \frac{1}{e_{i}}\prod_{j=1}^{r}P_{j}^{a_{i,j}}}} M_{(x_{i})_{i\in J_{m}}}(P_{1},...,P_{r}) \\  =O\left(\left(e_{0}^{4+\delta}\left(\prod_{i=1}^{n+r}e_{i}\right)^{-1}+\left(\prod_{i=1}^{n+r}e_{i}\right)^{-\frac{1}{2}}\right)\left(\prod_{j=1}^{r}P_{j}^{n_{j}-d_{j}}\right)P_{r}^{-\delta}\right),
\end{multline*}
ce qui cl\^ot la d\'emonstration du th\'eor\`eme.
\end{proof}
Nous d\'eduisons alors de ce th\'eor\`eme le th\'eor\`eme\;\ref{thmNU3} : 

\begin{proof}[D\'emonstration du th\'eor\`eme\;\ref{thmNU3}]
Supposons dans un premier temps que $ b_{j}\leqslant \tilde{b}_{j} $ pour tout $ j\in \{1,...,r-1\} $. On a alors que \[  N_{U,\ee}(P_{1},...,P_{r})=N_{\ee}(P_{1},...,P_{r})+O\left(\sum_{\tau\in \mathfrak{S}_{r}}\sum_{m\in \{1,...,r-1\}}T_{m,\tau}\right), \]
o\`u  \begin{multline*}
T_{m,\tau}= \Card\left\{ 
 \xx\in (\ZZ\setminus\{0\})^{n+r} \; | \; (x_{i})_{i\in I_{m,\tau}}\notin \mathcal{A}_{m,\tau}^{\lambda},\;  F(\ee.\xx)=0, \; \right. \\ \forall j\in \{1,...,r\}, \;  \left\lfloor|(\ee.\xx)^{E(n+j)}|\right\rfloor\leqslant P_{j}, \;  \forall i\in \{1,...,n+r\},  \;  \\ \left.|x_{i}|\leqslant \frac{1}{e_{i}}\prod_{j=1}^{r}(\left\lfloor|(\ee.\xx)^{E(n+j)}|\right\rfloor+1)^{a_{i,j}} \right\} \\ \ll \left(\prod_{i=1}^{n+r}e_{i}\right)^{-\frac{1}{2}}\left(\prod_{j=1}^{r}P_{j}^{n_{j}}\right)P_{r}^{-\lambda/2} \\ \ll \left(\prod_{i=1}^{n+r}e_{i}\right)^{-\frac{1}{2}}\left(\prod_{j=1}^{r}P_{j}^{n_{j}-d_{j}}\right)P_{r}^{-\delta}.
\end{multline*}
Or, puisque $ b_{j}\leqslant \tilde{b}_{j} $, et \[  n+r\geqslant \mathfrak{m}\geqslant h^{\Id}_{r,\tt^{(r)}}(\tilde{\bb})\geqslant h^{\Id}_{r,\tt^{(r)}}(\bb), \] on peut appliquer le th\'eor\`eme\;\ref{propfin3} et on a donc 

\begin{multline*} N_{\ee}(P_{1},...,P_{r})=C_{\sigma,\ee}\left(\prod_{j=1}^{r}P_{j}^{n_{j}-d_{j}}\right)
\\ +O\left(\max\left\{e_{0}^{4+\delta}, \left(\prod_{i=1}^{n+r}e_{i}\right)^{\frac{1}{2}}\right\}\left(\prod_{i=1}^{n+r}e_{i}\right)^{-1}\left(\prod_{j=1}^{r}P_{j}^{n_{j}-d_{j}-\delta}\right)\right). \end{multline*}

d'o\`u le r\'esultat. \\

Supposons \`a pr\'esent que $ \max_{j\in \{1,...,r-1\}}\frac{b_{j}}{\tilde{b}_{j}}>1 $. Posons $ m $ un \'el\'ement de $ \{1,...,r-1\} $ tel que \[ \frac{b_{m}}{\tilde{b}_{m}}=\max_{j\in \{1,...,r-1\}}\frac{b_{j}}{\tilde{b}_{j}}. \] Cette condition implique en particulier que : \[ \forall j\in \{1,...,r\}, \; \; \frac{b_{j}}{b_{m}}\leqslant \frac{\tilde{b}_{j}}{\tilde{b}_{m}}, \] et la formule \eqref{formuleNU3} est v\'erifi\'ee d'apr\`es le th\'eor\`eme pr\'ec\'edent.

\end{proof}

\section{Quatri\`eme \'etape}

Nous allons \`a pr\'esent utiliser les r\'esultats obtenus dans les sections pr\'ec\'edentes pour trouver une formule asymptotique pour $ N_{U,\sigma}(B) $. Pour cela, il sera n\'ecessaire d'\'etablir un analogue de \cite[Th\'eor\`eme 2.1]{BB}.  
Introduisons les notions suivantes :

\begin{Def}\label{definition3}
Pour $ N,r\in \NN  $ fix\'es, on consid\`ere une famille de fonctions $ (h_{\ee})_{\ee\in \NN^{N}} $ de $ \NN^{r} $ dans $ [0,\infty[ $. Soient $ \aalpha=(\alpha_{1},...,\alpha_{r})\in \NN^{r} $, $ \bbeta=(\beta_{1},...,\beta_{N})\in \NN^{N} $ et $ \delta,D,\nu>0 $ avec $ \nu< 1 $. On suppose de plus que $ D\geqslant r\max_{j}(\alpha_{j}) $. On dira que $ (h_{\ee})_{\ee\in \NN^{N}} $ est une $ (\aalpha,D,\nu,\delta,\bbeta)- $ famille si les $ h_{\ee} $ v\'erifient les conditions suivantes : \begin{enumerate}
\item Pour tout $ \ee\in \NN^{N} $ et $ \XX=(X_{1},...,X_{r})\in \NN^{r} $, il existe une constante $ c_{\ee} $ telle que : \[ \sum_{\xx\leqslant \XX}h_{\ee}(\xx)=c_{\ee}\XX^{\aalpha}+O\left(\ee^{\bbeta}\XX^{\aalpha}(\min_{1\leqslant j\leqslant r}X_{j})^{-\delta}\right), \] o\`u l'on a not\'e $ \XX^{\aalpha}=\prod_{j=1}^{r}X^{\alpha_{j}}_{j} $, $ \ee^{\bbeta}=\prod_{j=1}^{N}e_{i}^{\beta_{i}} $ et o\`u $ \xx\leqslant \XX $ signifie $ x_{j}\leqslant X_{j} $ pour tout $ j\in \{1,...,r\} $. De plus $ (c_{\ee})_{\ee\in \NN^{N}} $ est telle que $ c_{\ee}\ll \ee^{\bbeta} $
\item Pour tout $ J\varsubsetneq\{1,...,r\} $ non vide de cardinal $ k\in \{1,...,r-1\} $, il existe des fonctions $ c_{J,\ee} : \NN^{k}\ra [0,\infty[ $ telles que pour tout $ \uu=(x_{j})_{j\in J}\in \NN^{k} $, la formule asymptotique \[ \sum_{\forall j\notin J, \; x_{j}\leqslant V_{j}}h_{\ee}(\xx)=c_{J,\ee}(\uu)\left(\prod_{j\notin J}V_{j}^{\alpha_{j}}\right)+O\left(\ee^{\bbeta}|\uu|^{D}\left(\prod_{j\notin J}V_{j}^{\alpha_{j}}\right)(\min_{ j\notin J}V_{j})^{-\delta}\right),\] est v\'erifi\'ee uniform\'ement pour tout $ (V_{j})_{j\notin J}\in (\NN^{\ast})^{r-k} $ et $ |\uu|\leqslant \left(\prod_{j\notin J}V_{j}\right)^{\nu} $. 
\end{enumerate}
\end{Def}
Nous allons alors \'etablir le r\'esultat suivant  

\begin{thm}\label{resultatBB3}
Soit $ r\geqslant 2 $ et $ (h_{\ee})_{\ee\in \NN^{N}} $ une $  (\aalpha,D,\nu,\delta,\bbeta)- $ famille de fonctions arithm\'etiques. Si l'on pose \[ \Upsilon_{\ee}(P)=\sum_{\xx^{\aalpha}\leqslant P}h_{\ee}(\xx), \] on a alors \[ \Upsilon_{\ee}(P)=\frac{1}{(r-1)!}c_{\ee}P(\log P)^{r-1}+O\left( \ee^{\bbeta}P(\log P)^{r-2}\right). \]
\end{thm}
La d\'emonstration de ce th\'eor\`eme est directement inspir\'ee de celle de V. Blomer et J.Br\"{u}dern pour \cite[Th\'eor\`eme 2.1]{BB} et nous la d\'ecomposerons, comme eux, en plusieurs \'etapes. 

\subsection{Familles de fonctions arithm\'etiques}

\begin{lemma}\label{sommepart13}
Soit $ (h_{\ee})_{\ee\in \NN^{N}} $ une $  (\aalpha,D,\nu,\delta,\bbeta)- $ famille. On a alors que : \[ h_{\ee}(\xx)\ll \ee^{\bbeta}\xx^{\aalpha} \]
\end{lemma}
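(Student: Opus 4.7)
The strategy is a straightforward inclusion-exclusion. Define
\[ H_{\ee}(\XX)=\sum_{\yy\leqslant \XX}h_{\ee}(\yy), \]
with the convention $H_{\ee}(\XX)=0$ as soon as some component of $\XX$ is zero. The plan is to express $h_{\ee}(\xx)$ as a finite difference of $H_{\ee}$, then bound each term using condition $(1)$ of Definition~\ref{definition3}.

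More precisely, the standard telescoping identity gives, for any $\xx=(x_{1},\dots,x_{r})\in (\NN^{\ast})^{r}$,
\[ h_{\ee}(\xx)=\sum_{\bbeta\in \{0,1\}^{r}}(-1)^{r-|\bbeta|}H_{\ee}(x_{1}-1+\beta_{1},\dots,x_{r}-1+\beta_{r}), \]
where $|\bbeta|=\sum_{j}\beta_{j}$. First I would verify this identity by a one-line induction on $r$: it is the iterated difference operator applied to the cumulative sum, which recovers the summand.

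Next, I would apply condition $(1)$ of Definition~\ref{definition3} to each of the $2^{r}$ terms on the right. Since $c_{\ee}\ll \ee^{\bbeta}$ by assumption, we have
\[ H_{\ee}(\XX)\ll \ee^{\bbeta}\XX^{\aalpha} \]
for every $\XX\in \NN^{r}$ (the error term is bounded by the main term up to a factor $(\min X_{j})^{-\delta}\leqslant 1$, and the case where some $X_{j}=0$ is trivial since $H_{\ee}$ then vanishes). For each vector $\bbeta\in \{0,1\}^{r}$, the evaluation point $(x_{1}-1+\beta_{1},\dots,x_{r}-1+\beta_{r})$ satisfies componentwise $(x_{j}-1+\beta_{j})^{\alpha_{j}}\leqslant x_{j}^{\alpha_{j}}$, hence
\[ H_{\ee}(x_{1}-1+\beta_{1},\dots,x_{r}-1+\beta_{r})\ll \ee^{\bbeta}\xx^{\aalpha}. \]

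Summing the $2^{r}$ contributions (the implicit constant depending only on $r$) yields the desired bound $h_{\ee}(\xx)\ll \ee^{\bbeta}\xx^{\aalpha}$. There is no real obstacle here; the only point to take care of is the boundary behaviour when some $x_{j}=1$, which is handled by the convention $H_{\ee}(\XX)=0$ when a component of $\XX$ vanishes, so that the telescoping formula remains valid without modification.
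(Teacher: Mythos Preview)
Your argument is correct, but it works harder than necessary. You overlook that by Definition~\ref{definition3} the functions $h_{\ee}$ take values in $[0,\infty[$; non-negativity immediately gives $h_{\ee}(\xx)\leqslant \sum_{\yy\leqslant \xx}h_{\ee}(\yy)$, and a single application of condition~(1) with $\XX=\xx$ yields the bound (this is exactly the one-line proof in the paper). Your inclusion-exclusion route produces $2^{r}$ terms where one suffices; on the other hand, your argument would still go through if the $h_{\ee}$ were allowed to change sign, which the paper's shortcut would not. A minor notational point: you reuse the symbol $\bbeta$ for the vector in $\{0,1\}^{r}$, clashing with the fixed parameter $\bbeta$ of the family; pick a different letter (e.g.\ $\ss$ or $\boldsymbol{\varepsilon}$) to avoid confusion.
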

\begin{proof}
Imm\'ediat en utilisant la propri\'et\'e $ 1 $ de la d\'efinition\;\ref{definition3} avec $ X_{j}=x_{j} $ pour tout $ j\in \{1,...,r\} $.
\end{proof}
\begin{lemma}\label{sommepart23}
Soit $ (h_{\ee})_{\ee\in \NN^{N}} $ une $  (\aalpha,D,\nu,\delta,\bbeta)- $ famille. Pour tout $ J\varsubsetneq\{1,...,r\} $  non vide de cardinal $ l $ , l'ensemble des fonctions $ (c_{J,\ee})_{\ee\in \NN^{N}} $ forme une $  (\aalpha,D,\nu,\delta,\bbeta)- $ famille, et on a de plus : \[ \sum_{\yy\leqslant \YY}c_{J,\ee}(\yy)=c_{\ee}\left(\prod_{j\in J}Y_{j}^{\alpha_{j}}\right)+O\left(\ee^{\bbeta}\left(\prod_{j\in J}Y_{j}^{\alpha_{j}}\right)(\min_{ j\in J}Y_{j})^{-\delta}\right). \]

\end{lemma}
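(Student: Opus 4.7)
L'approche consiste à comparer deux expressions asymptotiques de la même somme globale $\sum_{\xx\leqslant(\YY,\VV)}h_{\ee}(\xx)$, où $(\YY,\VV)$ désigne le $r$-uplet ayant $\YY=(Y_{j})_{j\in J}$ aux indices de $J$ et $\VV=(V_{j})_{j\notin J}$ aux indices restants, les $V_{j}$ étant choisis suffisamment grands. D'une part, la condition~$1$ de la définition\;\ref{definition3} appliquée directement à $(\YY,\VV)$ donne un terme principal $c_{\ee}\YY^{\aalpha_{J}}\VV^{\aalpha_{J^{c}}}$ assorti du terme d'erreur standard. D'autre part, en appliquant la condition~$2$ à chaque $\uu=\yy\leqslant\YY$ (ce qui nécessite la contrainte $|\yy|\leqslant(\prod_{j\notin J}V_{j})^{\nu}$) puis en sommant sur $\yy\leqslant\YY$, on exprime cette même somme globale sous la forme $\VV^{\aalpha_{J^{c}}}\sum_{\yy\leqslant\YY}c_{J,\ee}(\yy)$ augmentée d'un terme d'erreur accumulé, contrôlé via $\sum_{\yy\leqslant\YY}|\yy|^{D}\ll|\YY|^{D}\prod_{j\in J}Y_{j}$.

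On divise alors par $\VV^{\aalpha_{J^{c}}}$ pour isoler $\sum_{\yy\leqslant\YY}c_{J,\ee}(\yy)$. Le terme d'erreur résultant est la somme de deux contributions dépendant de $\VV$. Le point clé est de choisir $V_{j}=T$ pour tout $j\notin J$ avec $T=(\prod_{j\in J}Y_{j})^{A}$ pour un exposant $A=A(D,\delta,\nu)$ suffisamment grand, ce qui assure simultanément : (i) la validité de l'hypothèse $|\YY|\leqslant T^{(r-l)\nu}$ requise par la condition~$2$, (ii) l'égalité $\min(\min_{j\in J}Y_{j},T)=\min_{j\in J}Y_{j}$, et (iii) l'absorption du terme d'erreur supplémentaire $T^{-\delta}|\YY|^{D}\prod_{j\in J}Y_{j}$ dans le terme d'erreur attendu $\YY^{\aalpha_{J}}(\min_{j\in J}Y_{j})^{-\delta}$. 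On obtient alors directement la formule asymptotique annoncée, ce qui achève la preuve de la seconde assertion et établit simultanément la condition~$1$ de la définition\;\ref{definition3} pour la famille $(c_{J,\ee})$, la constante obtenue étant $c_{\ee}$ (qui satisfait bien $c_{\ee}\ll\ee^{\bbeta}$).

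Pour achever la preuve de la première assertion, il reste à vérifier que $(c_{J,\ee})$ satisfait la condition~$2$ de la définition vis-à-vis de tout $J'\subsetneq J$ non vide. On applique la même stratégie télescopique : pour $\uu=(u_{j})_{j\in J'}$ fixé et $\YY=(Y_{j})_{j\in J\setminus J'}$ variant, on compare deux expressions de la somme mixte $\sum_{\yy\leqslant\YY,\;\xx'\leqslant\VV}h_{\ee}(\uu,\yy,\xx')$, obtenues respectivement par la condition~$2$ du lemme appliquée à $J'\subsetneq\{1,\dots,r\}$ d'une part, et par la condition~$2$ appliquée à $J\subsetneq\{1,\dots,r\}$ suivie d'une sommation sur $\yy\leqslant\YY$ d'autre part. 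Un raisonnement en tout point analogue à ce qui précède, avec un choix de $T$ adapté, identifie la fonction cherchée à $c_{J',\ee}(\uu)$ et fournit un terme d'erreur du type requis par la définition.

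La principale difficulté réside dans la calibration simultanée du paramètre $T$ face à trois contraintes en tension : il doit être assez grand pour que l'erreur télescopique (de la forme $T^{-\delta}|\YY|^{D}\prod_{j\in J}Y_{j}$) soit absorbée, tout en respectant la contrainte $|\YY|\leqslant(\prod_{j\notin J}V_{j})^{\nu}$ intrinsèque à la condition~$2$, et sans éclipser le terme principal du membre de droite. L'existence d'un tel $T$ repose de façon cruciale sur l'inégalité $\YY^{\aalpha_{J}}\geqslant\prod_{j\in J}Y_{j}$ (conséquence d'une minoration $\alpha_{j}\geqslant 1$ implicite dans le cadre d'application), qui rend les exigences (i), (ii) et (iii) compatibles pour un exposant $A$ polynomial en $D,\delta^{-1}$ et $((r-l)\nu)^{-1}$.
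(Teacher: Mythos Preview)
Your approach is essentially the same as the paper's: compare two asymptotic expressions for the same double sum $\sum_{\yy\leqslant\YY}\sum_{j\notin J,\,x_j\leqslant Z}h_{\ee}(\xx)$, one obtained from condition~1 directly, the other from condition~2 followed by summation over $\yy\leqslant\YY$, then divide by $Z^{\sum_{j\notin J}\alpha_j}$. The verification of condition~2 for $(c_{J,\ee})_{\ee}$ via a nested comparison is also what the paper does.

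However, you introduce an artificial difficulty. You try to pick a \emph{specific} value $T=(\prod_{j\in J}Y_j)^{A}$ and then absorb the residual error $T^{-\delta}|\YY|^{D}\prod_{j\in J}Y_j$ into the target error $\YY^{\aalpha_J}(\min_{j\in J}Y_j)^{-\delta}$. This calibration is what forces you to invoke $\alpha_j\geqslant 1$, a hypothesis that is \emph{not} part of D\'efinition~\ref{definition3} and that the lemma does not assume. The paper bypasses this entirely: after dividing by $Z^{\sum_{j\notin J}\alpha_j}$, one obtains
\[
\sum_{\yy\leqslant\YY}c_{J,\ee}(\yy)=c_{\ee}\prod_{j\in J}Y_j^{\alpha_j}+O\!\left(\ee^{\bbeta}\Bigl(\prod_{j\in J}Y_j^{\alpha_j}\Bigr)(\min_{j\in J}Y_j)^{-\delta}\right)+O\!\left(\ee^{\bbeta}|\YY|^{D+1}Z^{-\delta}\right),
\]
valid for all $Z\geqslant|\YY|^{1/\nu}$. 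Since the left-hand side is \emph{independent} of $Z$, one simply lets $Z\to\infty$ and the last term disappears. No calibration, no extra hypothesis. The same observation applies to your verification of condition~2: the auxiliary error there also carries a factor $Z^{-\delta}$ and vanishes as $Z\to\infty$.
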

\begin{proof}
Posons $ m=r-l $. Pour $ Z\geqslant 1 $, la condition $ 2 $ donne (pour $ \yy=(x_{i})_{i\in J} $) : \[ \sum_{\forall j\notin J, \; x_{j}\leqslant Z}h_{\ee}(\xx)=c_{J,\ee}(\yy)\left(\prod_{j\notin J}Z^{\alpha_{j}}\right)+O\left(\ee^{\bbeta}|\yy|^{D}Z^{\sum_{j\notin J}\alpha_{j}-\delta}\right).\] Par cons\'equent, pour tout $ \YY=(Y_{j})_{j\in J } $ tel que $ Y_{j}\geqslant 1 $ pour tout $ j $, satisfaisant $ |\YY|\leqslant Z^{\nu} $ : \begin{multline}\label{partiel13} \sum_{\yy\leqslant \YY}\sum_{\forall j\notin J, \; x_{j}\leqslant Z}h_{\ee}(\xx) \\ =Z^{\sum_{j\notin J}\alpha_{j}}\sum_{\yy\leqslant \YY}c_{J,\ee}(\yy)+O\left(\ee^{\bbeta}Z^{\sum_{j\notin J}\alpha_{j}-\delta}\left(\prod_{j\in J}Y_{j}\right)^{D+1}\right). \end{multline} D'autre part, d'apr\`es la condition $ 1 $, lorsque $ Z\geqslant |\YY| $ : \begin{multline}\label{partiel2} \sum_{\yy\leqslant \YY}\sum_{\forall j\notin J, \; x_{j}\leqslant Z}h_{\ee}(\xx) \\ =c_{\ee}Z^{\sum_{j\notin J}\alpha_{j}}\left(\prod_{j\in J}Y_{j}^{\alpha_{j}}\right)+O\left(\ee^{\bbeta}Z^{\sum_{j\notin J}\alpha_{j}}\left(\prod_{j\in J}Y_{j}^{\alpha_{j}}\right)(\min_{ j\in J}Y_{j})^{-\delta}\right). \end{multline}
En regroupant les formules \eqref{partiel13} et \eqref{partiel2} puis en simplifiant par $ Z^{\sum_{j\notin J}\alpha_{j}} $, on a pour $ Z\geqslant |\YY|^{\frac{1}{\nu}} $ :\[ \sum_{\yy\leqslant \YY}c_{J,\ee}(\yy)=c_{\ee}\left(\prod_{j\in J}Y_{j}^{\alpha_{j}}\right)+O\left(\ee^{\bbeta}\left(\prod_{j\in J}Y_{j}^{\alpha_{j}}\right)(\min_{ j\in J}Y_{j})^{-\delta}\right)+O\left(\ee^{\bbeta}|\YY|^{D+1}Z^{-\delta}\right). \] En particulier la fonction $ c_{J,\ee} $ v\'erifie la condition $ 1 $ de la d\'efinition\;\ref{definition3}. \\

V\'erifions \`a pr\'esent que les $ c_{J,\ee} $ v\'erifient la condition $ 2 $. Consid\'erons $ K\subsetneq J$ de cardinal $ k\in \{1,...,l-1\} $. Si $ \xx\in \NN^{r} $, on pose $ \uu=(x_{i})_{i\in K} $, $ \vv=(x_{i})_{i\in J\setminus K } $, $ \zz=(x_{i})_{i\notin J} $, $ \yy=(x_{i})_{i\in J} $. On suppose $ |\uu|\leqslant Z $, $ |\vv|\leqslant Z^{m\nu} $, $ |\VV|\leqslant Z $ (o\`u $ \VV=(V_{j})_{j\in J\setminus K}) $, $ |\VV|\leqslant Z^{m\nu} $ et $ |\uu|\leqslant \left(\prod_{j\in J\setminus K}V_{j}\right)^{\nu}Z^{m\nu} $. On a alors par la condition $ 2 $ sur $ h_{\ee} $, pour $ \uu $ fix\'e : \begin{multline*} \sum_{\vv\leqslant \VV} \sum_{\forall i \notin J, \; z_{i}\leqslant Z}h_{\ee}(\xx)=c_{K,\ee}(\uu)Z^{\sum_{j\notin J}\alpha_{j}}\prod_{j\in J \setminus K}V_{j}^{\alpha_{j}} \\ +O\left( \ee^{\bbeta}|\uu|^{D}Z^{\sum_{j\notin J}\alpha_{j}}\left(\prod_{j\in J \setminus K}V_{j}^{\alpha_{j}}\right)(\min_{j\in J\setminus K}V_{j})^{-\delta}\right), \end{multline*} et d'autre part\begin{multline*} \sum_{\vv\leqslant \VV} \sum_{\forall j \notin J, \; z_{i}\leqslant Z}h_{\ee}(\xx)= \sum_{\vv\leqslant \VV} c_{J,\ee}(\yy)Z^{\sum_{j\notin J}\alpha_{j}} \\+O\left( \ee^{\bbeta}|\uu|^{D}\left(\prod_{j\in J\setminus K}V_{j}\right)^{D+1}Z^{\sum_{j\notin J}\alpha_{j}-\delta}\right). \end{multline*} En regroupant ces deux \'egalit\'es et en simplifiant par $ Z^{\sum_{j\notin J}\alpha_{j}} $ on trouve : \begin{multline*}\sum_{\vv\leqslant \VV} c_{J,\ee}(\yy)=c_{K,\ee}(\uu)\prod_{j\in J \setminus K}V_{j}^{\alpha_{j}}+O\left( \ee^{\bbeta}|\uu|^{D}\left(\prod_{j\in J \setminus K}V_{j}^{\alpha_{j}}\right)(\min_{j\in J\setminus K}V_{j})^{-\delta}\right) \\ +O\left( \ee^{\bbeta}|\uu|^{D}\left(\prod_{j\in J\setminus K}V_{j}\right)^{D+1}Z^{-\delta}\right). \end{multline*}
En prenant $ Z $ assez grand on obtient alors la condition $ 2 $ pour $ c_{J,\ee} $, et la fonction arithm\'etique correspondant \`a $ K $ est alors $ c_{K,\ee} $. 

\end{proof}
Consid\'erons \`a pr\'esent un r\'eel $ A $ et $ J\subset \{1,...,r\} $ de cardinal $ l $. On fixe $ \ww=(w_{i})_{i\in J} $ et on pose $ g_{\ee,\ww} $ la fonction de $ \NN^{r-l} $ dans $ [0,\infty[ $ d\'efinie par, pour tout $ \yy=(y_{j})_{j\notin J} $ : \[ g_{\ee,\ww}(\yy)=\langle \ww \rangle^{-A}h_{\ee}(\xx) \] o\`u $ \langle \ww \rangle=\prod_{i\in J}w_{i} $, $ x_{j}=w_{j} $ pour tout $ j\in J $, et $ x_{j}=y_{j} $ pour $ j\notin J $. 

\begin{lemma}\label{lemmA3}
Si $ (h_{\ee})_{\ee\in \NN^{N}} $ une $  (\aalpha,D,\nu,\delta,\bbeta)- $ famille, avec $ D\geqslant r\max_{j}(\alpha_{j})  $ et $ \delta\leqslant \min\{1,\min_{j}(\alpha_{j})\} $, et si $ A\geqslant D+(r+1)(\max_{j\in \{1,...,r\}}\alpha_{j})+\nu^{-1}(1+(\max_{j\in \{1,...,r\}}\alpha_{j})) $ alors $ (g_{\ee,\ww})_{\ee} $ est une $  (\aalpha,D,\nu,\delta,\bbeta)- $ famille. 
\end{lemma}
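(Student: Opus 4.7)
The plan is to verify that $(g_{\ee,\ww})_\ee$ satisfies the two conditions of Definition \ref{definition3}, by reducing each one to the corresponding condition for $(h_\ee)_\ee$ via a dichotomy on the size of $|\ww|$ relative to the summation parameters.

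First, to check condition $(1)$, I would fix $\YY=(Y_j)_{j\notin J}$ and split cases. When $|\ww|\leqslant(\prod_{j\notin J}Y_j)^\nu$, condition $(2)$ of the original family applied to the subset $J$ gives directly
$$\sum_{\yy\leqslant\YY}g_{\ee,\ww}(\yy)=\langle\ww\rangle^{-A}c_{J,\ee}(\ww)\prod_{j\notin J} Y_j^{\alpha_j}+O\!\left(\ee^\bbeta\langle\ww\rangle^{-A}|\ww|^{D}\prod_{j\notin J} Y_j^{\alpha_j}(\min_{j\notin J} Y_j)^{-\delta}\right).$$
I would then set $c'_\ee=\langle\ww\rangle^{-A}c_{J,\ee}(\ww)$; since Lemmas \ref{sommepart13} and \ref{sommepart23} together give $c_{J,\ee}(\ww)\ll\ee^\bbeta\ww^{\aalpha_{|J}}\leqslant\ee^\bbeta\langle\ww\rangle^{\max_j\alpha_j}$, the bound $A\geqslant\max_j\alpha_j$ ensures $c'_\ee\ll\ee^\bbeta$, while $A\geqslant D$ makes the factor $\langle\ww\rangle^{-A}|\ww|^{D}$ in the error harmless. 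When $|\ww|>(\prod Y_j)^\nu$, I would instead use Lemma \ref{sommepart13} to bound $h_\ee(\xx)$ trivially; the resulting sum is dominated by $\ee^\bbeta\langle\ww\rangle^{-A+\max_j\alpha_j}\prod Y_j^{\alpha_j+1}$, and since $\langle\ww\rangle\geqslant|\ww|>(\prod Y_j)^\nu$, the inequality $\nu(A-\max_j\alpha_j)\geqslant 1+\delta$ makes this fit within the target error $\ee^\bbeta\prod Y_j^{\alpha_j}(\min Y_j)^{-\delta}$.

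For condition $(2)$, I would fix a non-empty $K\subsetneq\{1,\ldots,r\}\setminus J$ and $\yy_K=(y_j)_{j\in K}$ with $|\yy_K|\leqslant(\prod_{j\notin J\cup K}V_j)^\nu$, and split again on $|\ww|$. If $|\ww|\leqslant(\prod V_j)^\nu$, the joint vector $\uu=(\ww,\yy_K)$ satisfies $|\uu|\leqslant 2(\prod V_j)^\nu$, and condition $(2)$ of $(h_\ee)_\ee$ applied to the subset $J\cup K$ (after an innocuous shrinking of $\nu$) yields the desired asymptotic with $c'_{K,\ee}(\yy_K)=\langle\ww\rangle^{-A}c_{J\cup K,\ee}(\ww,\yy_K)$; the accompanying error carries the factor $\langle\ww\rangle^{-A}|\uu|^{D}$ which is absorbed into $|\yy_K|^{D}$ as soon as $A\geqslant D$. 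If instead $|\ww|>(\prod V_j)^\nu$, the trivial bound gives the sum bounded by $\ee^\bbeta\langle\ww\rangle^{-A+|J|\max_j\alpha_j}|\yy_K|^{|K|\max_j\alpha_j}\prod V_j^{\alpha_j+1}$, and the condition $D\geqslant r\max_j\alpha_j$ absorbs $|\yy_K|^{|K|\max_j\alpha_j}$ into $|\yy_K|^{D}$, while $\nu(A-r\max_j\alpha_j)\geqslant 1+\delta$ absorbs the extra factor $\prod V_j$ against $V_{\min}^{-\delta}$.

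The main obstacle is the precise bookkeeping in the second case of each condition: the decay from $\langle\ww\rangle^{-A}$, combined with $\langle\ww\rangle\geqslant|\ww|>(\prod V_j)^\nu$, must be sharp enough to swallow both the combinatorial loss $\ww^{\aalpha_{|J}}|\yy_K|^{|K|\max_j\alpha_j}$ coming from the trivial bound and the extra factor $\prod V_j$ resulting from replacing $\prod V_j^{\alpha_j}$ by $\prod V_j^{\alpha_j+1}$. The stated hypothesis $A\geqslant D+(r+1)\max_j\alpha_j+\nu^{-1}(1+\max_j\alpha_j)$ is calibrated precisely so that each contribution can be absorbed: $D$ handles the Case~1 error, $(r+1)\max_j\alpha_j$ handles the powers of $\ww$ and $\yy_K$ arising from the trivial bound, and $\nu^{-1}(1+\max_j\alpha_j)$ supplies the final $V^{-\delta}$-type saving, with the assumption $\delta\leqslant\min_j\alpha_j\leqslant\max_j\alpha_j$ used to simplify the exponent of $V_{\min}$.
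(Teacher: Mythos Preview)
Your proposal is correct and follows essentially the same route as the paper: a dichotomy on $|\ww|$ versus the appropriate power of the summation bound, using condition~2 of $(h_\ee)_\ee$ in the small-$|\ww|$ regime and the trivial pointwise bound (Lemma~\ref{sommepart13}) in the large-$|\ww|$ regime, with the hypothesis on $A$ calibrated exactly so that the latter case is absorbed in the error term. The paper carries out precisely this argument, with only cosmetic differences in how the exponent bookkeeping is presented (it bounds both the sum and the main term by $O(\ee^\bbeta)$ in the large-$|\ww|$ case and then invokes $\prod Y_j^{\alpha_j}(\min Y_j)^{-\delta}\geqslant 1$, rather than your direct comparison).
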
 
\begin{proof}
Quitte \`a permuter les variables, on peut supposer que $ J=\{1,...,l\} $. On note alors $ c_{\ee,l} $ pour $ c_{\ee,J} $. D'apr\`es le lemme\;\ref{sommepart13}, on a : \begin{equation}\label{formsup3}
\left(\prod_{j=1}^{l}w_{j}\right)^{-A}c_{\ee,l}(\ww)\ll \left(\prod_{j=1}^{l}w_{j}^{-\alpha_{j}}\right)c_{\ee,l}(\ww)\ll \ee^{\bbeta}.
\end{equation} 
Montrons que pour tout $ \ww $, en posant $ \langle \ww \rangle=\prod_{i=1}^{l}w_{i} $, on a : \begin{equation}\label{cond1g3}
\sum_{\yy\leqslant \YY}\frac{h_{\ee}(\ww,\yy)}{\langle \ww \rangle^{A}}=\frac{c_{\ee,l}(\ww)}{\langle \ww \rangle^{A}}\left(\prod_{j=l+1}^{r}Y_{j}^{\alpha_{j}}\right)+O\left(\ee^{\bbeta}\left(\prod_{j=l+1}^{r}Y_{j}^{\alpha_{j}}\right)(\min Y_{j})^{-\delta}\right)
\end{equation}
(ce qui impliquera que $ (g_{\ee,\ww})_{\ee} $ v\'erifie la condition $ 1. $ avec $ \frac{c_{\ee,l}(\ww)}{\langle \ww \rangle^{A}} $ \`a la place de $ c_{\ee} $). Supposons dans un premier temps que $ |\ww|\leqslant \langle \YY \rangle^{\nu} $. Puisque $ A>D $, la formule\;\eqref{cond1g3} d\'ecoule directement de la condition $ 2 $ pour $ (h_{\ee})_{\ee} $. Supposons \`a pr\'esent que $ |\ww|> \langle \YY \rangle^{\nu} $. On remarque que : \begin{align*}  \frac{c_{\ee,l}(\ww)}{\langle \ww \rangle^{A}}\left(\prod_{j=l+1}^{r}Y_{j}^{\alpha_{j}}\right) & \ll \ee^{\bbeta}\left(\prod_{j=1}^{l}w_{j}^{\alpha_{j}-A}\right)\left(\prod_{j=l+1}^{r}Y_{j}^{\alpha_{j}}\right) \\ & \ll \ee^{\bbeta}|\ww|^{r(\max \alpha_{j})-A}\left(\prod_{j=l+1}^{r}Y_{j}\right)^{\max\alpha_{j}} \\ & \ll \ee^{\bbeta}|\ww|^{\max\alpha_{j}(r+\nu^{-1})-A} \ll \ee^{\bbeta}. \end{align*}
De m\^eme, d'apr\`es le lemme\;\ref{sommepart13} : \begin{align*} \sum_{\yy\leqslant \YY}\frac{h_{\ee}(\ww,\yy)}{\langle \ww \rangle^{A}} & \ll \ee^{\bbeta}\left(\prod_{j=1}^{l}w_{j}^{\alpha_{j}-A}\right)\left(\prod_{j=l+1}^{r}Y_{j}^{\alpha_{j}+1}\right) \\ & \ll |\ww|^{(\max\alpha_{j})(r+1+\nu^{-1})-A}\leqslant \ee^{\bbeta}. 
\end{align*}
\'Etant donn\'e que $ \delta\leqslant \min\{1,\min_{j}(\alpha_{j})\} $, on a que $ \left(\prod_{j=l+1}^{r}Y_{j}^{\alpha_{j}}\right)(\min Y_{j})^{-\delta}\geqslant 1 $, et l'\'egalit\'e\;\eqref{cond1g3} est donc vraie. \\

Nous allons montrer de fa\c{c}on analogue que $ (g_{\ee,\ww})_{\ee} $ satisfait la condition $ 2 $. Consid\'erons alors $ J'\subset \{l+1,...,r\} $. Quitte \`a permuter les variables, on peut supposer que $ J'=\{l+1,...,l+k\} $ pour un certain $ k\in \{1,...,r-l-1\} $. Nous allons chercher \`a montrer que \begin{equation}\label{cond2g3}
\sum_{\zz\leqslant \ZZ}\frac{h_{\ee}(\ww,\vv,\zz)}{\langle \ww \rangle^{A}}=\frac{c_{\ee,l+k}(\ww,\vv)}{\langle \ww \rangle^{A}}\left(\prod_{j=l+k+1}^{r}Z_{j}^{\alpha_{j}}\right)+O\left(\ee^{\bbeta}|\vv|^{D}\left(\prod_{j=l+k+1}^{r}Z_{j}^{\alpha_{j}}\right)(\min Z_{j})^{-\delta}\right)
\end{equation} uniform\'ement pour $ |\vv|\leqslant \langle \ZZ \rangle^{\nu} $. Comme pr\'ec\'edemment, nous commen\c{c}ons par le cas $ |\ww|\leqslant \langle \ZZ \rangle^{\nu} $. Puisque $ A>D $, on a $ |(\ww,\vv)|^{D}\langle \ww \rangle^{-A}\leqslant |\vv|^{D} $ et la formule\;\eqref{cond2g3} d\'ecoule \`a nouveau de la condition $ 2 $ pour $ (h_{\ee})_{\ee} $. Si $ |\ww|> \langle \YY \rangle^{\nu} $, d'apr\`es le lemme\;\ref{sommepart13}, on a comme pr\'ec\'edemment : \begin{align*}  \frac{c_{\ee,l+k}(\ww)}{\langle \ww \rangle^{A}}\left(\prod_{j=l+k+1}^{r}Z_{j}^{\alpha_{j}}\right) & \ll \ee^{\bbeta}\left(\prod_{j=1}^{l}w_{j}^{\alpha_{j}-A}\right)\left(\prod_{j=1+1}^{l+k}v_{j}^{\alpha_{j}}\right)\left(\prod_{j=l+k+1}^{r}Z_{j}^{\alpha_{j}}\right) \\ & \ll \ee^{\bbeta}|\ww|^{\max\alpha_{j}(r+\nu^{-1})-A}\left(\prod_{j=1+1}^{l+k}v_{j}^{\alpha_{j}}\right) \ll \ee^{\bbeta}\left(\prod_{j=1+1}^{l+k}v_{j}^{\alpha_{j}}\right) \end{align*}
ainsi que \begin{align*} \sum_{\zz\leqslant \ZZ}\frac{h_{\ee}(\ww,\vv,\zz)}{\langle \ww \rangle^{A}} & \ll \ee^{\bbeta}\left(\prod_{j=1}^{l}w_{j}^{\alpha_{j}-A}\right)\left(\prod_{j=1+1}^{l+k}v_{j}^{\alpha_{j}}\right)\left(\prod_{j=l+k+1}^{r}Y_{j}^{\alpha_{j}+1}\right) \\ &\ll  \ee^{\bbeta}\left(\prod_{j=1+1}^{l+k}v_{j}^{\alpha_{j}}\right). 
\end{align*}
d'o\`u le r\'esultat (puisque $ D\geqslant r\max \alpha_{j} $). 
\end{proof}

\subsection{Lemmes pr\'eliminaires}

Pour $ X>1 $, on note \begin{equation}
\Delta^{(r)}=\{\tt\in \RR^{r} \; |\; 1<t_{1}<t_{2}<...<t_{r} \},
\end{equation}\begin{equation}
\Delta^{(r)}(X)=\{\tt\in \Delta^{(r)}\; |\; t_{r}\leqslant X \}.
\end{equation}
Nous allons chercher \`a \'evaluer les int\'egrales \[ I_{\ee,j}(X)=\int_{\Delta^{(r)}(X)}\frac{(\log\pt)^{j}}{\tt^{\aalpha+\1}}\sum_{\xx\leqslant \tt}h_{\ee}(\xx)d\tt, \] o\`u $ \aalpha+\1=(\alpha_{1}+1,...,\alpha_{r}+1) $ et $ \pt=\prod_{j=1}^{r}t_{j} $. 
\begin{lemma}\label{intj'3}
Soit $ (h_{\ee})_{\ee\in \NN^{N}} $ une $  (\aalpha,D,\nu,\delta,\bbeta)- $ famille et $ j\in \NN^{\ast} $. Il existe un r\'eel $ \eta>0  $ et un polyn\^ome $ q_{\ee,j} $ \`a coefficients de taille $ O(\ee^{\bbeta}) $ tels que : \[  I_{\ee,j}(X)=q_{\ee,j}(\log(X))+O(\ee^{\bbeta}X^{-\eta}). \]
\end{lemma}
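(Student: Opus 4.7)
Plan: Perform the change of variables $u_i = \log t_i$, which transforms
\[
I_{\ee,j}(X) = \int_{0<u_1<\cdots<u_r<\log X}(u_1+\cdots+u_r)^j\,e^{-\aalpha\cdot\uu}\,\Bigl(\sum_{\xx\leq e^{\uu}}h_{\ee}(\xx)\Bigr)d\uu,
\]
where I used $d\tt/\tt^{\aalpha+\1} = e^{-\aalpha\cdot\uu}\,d\uu$ and $\log\pt = u_1+\cdots+u_r$. I would then apply property~1 of Definition~\ref{definition3} with $\XX=e^{\uu}$ to write $\sum_{\xx\leq e^{\uu}}h_{\ee}(\xx) = c_{\ee}e^{\aalpha\cdot\uu} + R_{\ee}(\uu)$ with $|R_{\ee}(\uu)|\ll \ee^{\bbeta}\,e^{\aalpha\cdot\uu}\,e^{-\delta u_1}$, since $u_1$ corresponds to the smallest coordinate in the domain of integration. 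This splits $I_{\ee,j}(X) = I^{(1)}_{\ee,j}(X) + I^{(2)}_{\ee,j}(X)$.

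The principal part $I^{(1)}_{\ee,j}(X) = c_{\ee}\int(u_1+\cdots+u_r)^j\,d\uu$ over the simplex evaluates in closed form by expanding the multinomial and iterating $\int_0^Y v^k\,dv = Y^{k+1}/(k+1)$, producing a polynomial in $\log X$ of degree $r+j$ with coefficients of size $O(\ee^{\bbeta})$.

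For the remainder $I^{(2)}_{\ee,j}(X)$, I would proceed by an iterated peeling argument in the variables $u_1, u_2, \ldots, u_r$. Integrating first in $u_1$ against the factor $e^{-\delta u_1}$ and using the identity $\int_0^{u_2}v^k e^{-\delta v}\,dv = k!/\delta^{k+1} + O(u_2^k e^{-\delta u_2})$, one separates the integrand into a polynomial in the remaining variables (whose subsequent integration up to $\log X$ produces additional polynomial-in-$\log X$ contributions that can be absorbed into $q_{\ee,j}$) plus a residual inheriting an exponential factor $e^{-\delta u_2}$. Iterating this step in $u_2,\ldots,u_r$ strictly improves the exponential decay at each stage; after $r$ iterations the final residual carries a factor $e^{-\delta u_r}$ integrated up to $u_r = \log X$, which contributes at most $O(\ee^{\bbeta}(\log X)^{C}e^{-\delta\log X}) = O(\ee^{\bbeta}X^{-\eta})$ for any $0<\eta<\delta$.

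The principal obstacle is that $R_{\ee}$ is known a priori only through a pointwise upper bound, not an explicit formula, so the peeling cannot be applied to it directly. The rigorous implementation replaces this bound at each stage by the genuine asymptotic expansion of the relevant partial sum furnished by property~2 of Definition~\ref{definition3} together with Lemma~\ref{sommepart23}; one then integrates these equalities rather than inequalities, which is exactly what permits the extraction of finer polynomial-in-$\log X$ pieces (in place of a mere polynomial bound on the error). Careful bookkeeping to ensure that all accumulated polynomial coefficients remain of size $O(\ee^{\bbeta})$ is the delicate technical point.
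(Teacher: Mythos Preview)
Your proposal identifies the correct ingredients --- condition~1 for the main term, condition~2 together with Lemma~\ref{sommepart23} for the refinement, and an inductive structure --- and these are exactly what the paper uses. However, the ``iterated peeling in $u_1,u_2,\ldots,u_r$'' as you describe it has a real gap.

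The issue is the hypothesis of condition~2: to write
\[
\sum_{x_2\le e^{u_2},\ldots,x_r\le e^{u_r}} h_{\ee}(x_1,\ldots,x_r)=c_{\{1\},\ee}(x_1)\prod_{j\ge 2}e^{\alpha_j u_j}+O(\cdots)
\]
you need $|x_1|\le(\prod_{j\ge2}e^{u_j})^{\nu}$, i.e.\ $u_1\le\nu(u_2+\cdots+u_r)$. This is \emph{not} automatic on the simplex $0<u_1<\cdots<u_r$, and you give no mechanism for treating the region where it fails. Without that, the ``peeling'' step cannot be carried out globally, and bounding the complementary region by condition~1 alone only gives $|I^{(2)}|\ll\ee^{\bbeta}(\log X)^{r+j-1}$, not a polynomial plus $O(X^{-\eta})$.

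The paper resolves this by an explicit partition of $\Delta^{(r)}(X)$ into pieces $\Delta^{(r,l)}(X)$ (for $l=0,\ldots,r-1$) according to the largest index $l$ with $t_l\le t_{l+1}^{\beta}$, where $\beta=\min\{\nu,\delta(2D+4r)^{-1}\}$. On $\Delta^{(r,0)}$ all coordinates satisfy $t_i>t_{i+1}^{\beta}$, hence $t_1\ge t_r^{\beta^{r-1}}$, so the error $t_1^{-\delta}$ from condition~1 already gives an absolutely convergent integral with tail $O(X^{-\delta\beta^{r}})$. On $\Delta^{(r,l)}$ with $l\ge 1$ one has $t_l\le t_{l+1}^{\beta}\le t_{l+1}^{\nu}$, so condition~2 with $J=\{1,\ldots,l\}$ is legitimate; the resulting main term is an integral of type $I_{\cdot,k}$ in dimension $l$ for the family $(c_{\ee,J_l})_{\ee}$ (which is a family by Lemma~\ref{sommepart23}), and one invokes the induction hypothesis \emph{on $r$}, not an iteration coordinate by coordinate. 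The residual error on $\Delta^{(r,l)}$ is again absolutely convergent because $\beta$ was chosen small enough that $|\tt'|^{D+n}t_{l+1}^{-\delta}$ is integrable.

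So the missing idea is the partition $\Delta^{(r)}=\bigsqcup_l\Delta^{(r,l)}$ with the carefully chosen parameter $\beta$, and the realization that the induction is on the dimension $r$ (descending to the family $c_{\ee,J_l}$ in $l<r$ variables), not a sequential integration over $u_1,\ldots,u_r$.
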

\begin{proof}
Commen\c{c}ons par traiter le cas $ r=1 $. Dans ce cas, on peut r\'e\'ecrire la condition $ 1. $ sous la forme : \[ \sum_{x\leqslant t}h_{\ee}(\xx)=c_{\ee}t^{\alpha}+E_{\ee}(t) \] o\`u  $ E_{\ee} $ est une fonction continue par morceaux telle que $ E_{\ee}(t)\ll \ee^{\bbeta}t^{\alpha-\delta} $. On a alors que \[\int_{1}^{X}\frac{(\log t)^{j}}{t^{\alpha+1}}\sum_{x\leqslant t}h_{\ee}(x)dt=\frac{c_{\ee}}{(j+1)}(\log X)^{j+1}+ D_{\ee,j}+O(\ee^{\bbeta}X^{-\delta}\log X) \] avec \[ D_{\ee,j}=\int_{1}^{\infty}E_{\ee}(t)t^{-\alpha-1}(\log t)^{j}dt \ll \ee^{\bbeta}. \] D'o\`u le r\'esultat lorsque $ r=1 $. \\

Nous allons \`a pr\'esent proc\'eder par r\'ecurrence sur $ r $. Nous supposons ici que $ r>1 $ et que le lemme est \'etabli pour toute valeur strictement inf\'erieure \`a $ r $. Nous allons s\'eparer l'ensemble $ \Delta^{(r)} $ en une union de $ r $ ensembles disjoints : on pose $ t_{0}=1 $ et $ \beta=\min\{\nu,\delta(2D+4r)^{-1}\} $ (en particulier $ \beta<1 $, puisque $ \nu<1 $). Pour $ l\in \{0,...,r-1\} $, notons 
\begin{equation}
\Delta^{(r,l)}=\{\tt\in \Delta^{(r)} \; |\; t_{l}\leqslant t_{l+1}^{\beta}, \; \forall i\in \{l+1,...,r-1\},\; t_{i}>t_{i+1}^{\beta} \},
\end{equation}\begin{equation}
\Delta^{(r,l)}(X)=\Delta^{(r,l)}\cap\Delta^{(r)}(X).
\end{equation}
On a alors que \[ \Delta^{(r)}(X)=\bigsqcup_{l\in \{0,...,r-1\}}\Delta^{(r,l)}(X), \] et on a donc \[ I_{\ee,j}(X)=\sum_{l=0}^{r-1}I_{\ee,j,l}(X), \] o\`u l'on a not\'e : \begin{equation}
I_{\ee,j,l}(X)=\int_{\Delta^{(r,l)}(X)}\frac{(\log\pt)^{j}}{\tt^{\aalpha+\1}}\sum_{\xx\leqslant \tt}h_{\ee}(\xx)d\tt.
\end{equation}
Traitons d'abord le cas des int\'egrales $ I_{\ee,j,0}(X) $. Pour $ \tt\in \Delta^{(r,0)}(X) $, on pose : \[ \sum_{\xx\leqslant \tt}h_{\ee}(\xx)=c_{\ee}\tt^{\aalpha}+E_{\ee}(\tt) \] avec \[ E_{\ee}(\tt)\ll \ee^{\bbeta}\tt^{\aalpha}t_{1}^{-\delta}. \] Pour tout $ Z\geqslant 1 $ : 
\begin{multline*}  \int_{\Delta^{(r,0)}(2Z)\setminus\Delta^{(r,0)}(Z)}\frac{(\log\pt)^{j}}{\tt^{\aalpha+\1}} E_{\ee}(\tt)d\tt \\  \ll \ee^{\bbeta}\int_{Z}^{2Z}\int_{t_{r}^{\beta}}^{t_{r}}\int_{t_{r-1}^{\beta}}^{t_{r-1}}...\int_{t_{2}^{\beta}}^{t_{2}}\pt^{-1}t_{1}^{-\delta}(\log t_{r})^{j}d\tt \ll \ee^{\bbeta}Z^{-\delta\beta^{r}}.  \end{multline*}
On en d\'eduit, en sommant sur les intervalles dyadiques, que l'int\'egrale  \[ D_{\ee,j,0}=\int_{\Delta^{(r,0)}}\frac{(\log\pt)^{j}}{\tt^{\aalpha+\1}} E_{\ee}(\tt)d\tt \] existe et diff\`ere de l'int\'egrale sur $ \Delta^{(r,0)}(X) $ d'un terme d'erreur $ O(\ee^{\bbeta}X^{-\delta\beta^{r}}) $. On a donc que : \[ I_{\ee,j,0}(X)=c_{\ee}\int_{\Delta^{(r,0)}(X)}\frac{(\log\pt)^{j}}{\pt}d\tt+ \underbrace{D_{\ee,j,0}}_{\ll \ee^{\bbeta}}+O(\ee^{\bbeta}X^{-\delta\beta^{r}}). \]
Consid\'erons \`a pr\'esent les int\'egrales $ I_{\ee,j,l}(X) $ pour $ l\in \{1,...,r-1\} $. Pour $ \tt\in \RR^{r} $, on pose $ \tt=(\tt',\tt'') $ avec $ \tt'=(t_{1},...,t_{l}) $ et $ \tt''=(t_{l+1},...,t_{r}) $. On a alors $ |\tt'|\leqslant |\tt''|^{\nu} $ (car $ \beta\leqslant \nu $) pour tout $ \tt\in \Delta^{(r,l)} $. Par cons\'equent, d'apr\`es la condition $ 2 $, on a uniform\'ement pour tout $ \xx'\leqslant \tt' $. \[ \sum_{\xx''\leqslant \tt''}h_{\ee}(\xx',\xx'')=c_{\ee,J_{l}}(\xx')\prod_{j=l+1}^{r}t_{j}^{\alpha_{j}}+E_{\ee,J_{l}}(\xx',\tt''), \] o\`u \[ J_{l}=\{1,...,l\}, \] \[ E_{\ee,J_{l}}(\xx',\tt'') \ll \ee^{\bbeta}|\xx'|^{D}t_{l+1}^{-\delta}\prod_{j=l+1}^{r}t_{j}^{\alpha_{j}}. \] Posons \[ R_{\ee,l}(\tt)=\sum_{\xx'\leqslant \tt'}E_{\ee,J_{l}}(\xx',\tt''). \] Ceci d\'efinit une fonction sur $ \Delta^{(r,l)} $ satisfaisant \[R_{\ee,l}(\tt)\ll \ee^{\bbeta}\langle \tt'\rangle t_{l}^{D}t_{l+1}^{-\delta} \prod_{j=l+1}^{r}t_{j}^{\alpha_{j}}. \]
Pour $ Z\geqslant 1 $ on voit alors que :  

\begin{multline*}  \int_{\Delta^{(r,l)}(2Z)\setminus\Delta^{(r,l)}(Z)}\frac{(\log\pt)^{j}}{\tt^{\aalpha+\1}} R_{\ee,l}(\tt)d\tt \\  \ll \ee^{\bbeta}\int_{\mathcal{R}_{l}(Z)}\langle \tt''\rangle^{-1} t_{l+1}^{-\delta}\int_{\Delta^{(l)}(t_{l+1}^{\beta})}t_{l}^{D}\prod_{j=1}^{l}t_{j}^{-\alpha_{j}}d\tt'd\tt'',  \end{multline*}
o\`u \[ \mathcal{R}_{l}(Z)=\{\tt''\; |\; Z<t_{r}<2Z,\; \forall i\in \{l+1,...,r-1\},\;  t_{i+1}^{\beta}\leqslant t_{i}\leqslant t_{i+1}\}. \] On remarque que $ t_{l}^{D}\prod_{j=1}^{l}t_{j}^{-\alpha_{j}}\ll t_{l+1}^{D\beta} $ pour tout $ \tt'\in \Delta^{(l)}(t_{l+1}^{\beta}) $, et que la mesure de $ \Delta^{(l)}(t_{l+1}^{\beta}) $ est major\'ee par $ t_{l+1}^{l\beta} $ et on en d\'eduit, puisque $ D\beta+l\beta\leqslant \frac{\delta}{2} $, que l'int\'egrale ci-dessus est born\'ee par \[ \ee^{\bbeta}\int_{\mathcal{R}_{l}(Z)}\langle \tt''\rangle^{-1} t_{l+1}^{-\delta/2}d\tt'' \ll \ee^{\bbeta}Z^{-\delta\beta^{r}/2}. \] Ainsi, en sommant sur les intervalles dyadiques, il s'ensuit que \[ D_{\ee,j,l}=\int_{\Delta^{(r,l)}}\frac{(\log\pt)^{j}}{\tt^{\aalpha+\1}} R_{\ee,l}(\tt)d\tt \] est du type $ O(\ee^{\bbeta}) $ et diff\`ere de l'int\'egrale sur $ \Delta^{(r,l)}(X) $ de $ O(\ee^{\bbeta}X^{-\delta\beta^{r}/2}) $. On a donc que : \[ I_{\ee,j,l}(X) =K_{\ee,j,l}+D_{\ee,j,l}+O(\ee^{\bbeta}X^{-\delta\beta^{r}/2}), \] o\`u \[ K_{\ee,j,l}=\int_{\Delta^{(r,l)}(X)}\frac{(\log\pt)^{j}}{\langle \tt''\rangle\prod_{j=1}^{l}t_{j}^{\alpha_{j}+1}}\sum_{\xx'\leqslant \tt'}c_{\ee,J_{l}}(\xx')d\tt. \] On observe que, par la formule du bin\^ome de Newton : \[ K_{\ee,j,l}=\sum_{k=0}^{j}\left(\begin{array}{l}
j \\ k 
\end{array}\right)\int_{\mathcal{S}_{l}(X)}\frac{(\log\langle \tt''\rangle )^{j-k}}{\langle \tt''\rangle}\int_{\Delta^{(l)}(t_{l+1}^{\beta})}\frac{(\log\langle \tt'\rangle)^{k}}{\prod_{j=1}^{l}t_{j}^{\alpha_{j}+1}}\sum_{\xx'\leqslant \tt'}c_{\ee,J_{l}}(\xx')d\tt \] o\`u l'on a not\'e : \[  \mathcal{S}_{l}(X)=\{ \tt''\; |\; t_{r}\leqslant X, \;  \et \forall i\in \{l+1,...,r-1\}, \; t_{i+1}^{\beta}\leqslant t_{i} \leqslant t_{i+1} \}. \] En appliquant alors l'hypoth\`ese de r\'ecurrence ainsi que le lemme\;\ref{sommepart23}, on trouve alors :  \begin{equation*} \int_{\Delta^{(l)}(t_{l+1}^{\beta})}\frac{(\log\langle \tt'\rangle)^{k}}{\prod_{j=1}^{l}t_{j}^{\alpha_{j}+1}}\sum_{\xx'\leqslant \tt'}c_{\ee,J_{l}}(\xx')d\tt'  =Q_{\ee,l,k}(\log t_{l+1}^{\beta})+F_{\ee,l,k}(t_{l+1}^{\beta}), \end{equation*} o\`u $ Q_{\ee,l,k} $ est un polyn\^ome \`a coefficients de type $ O(\ee^{\bbeta}) $ et $ F_{\ee,l,k} $ une fonction telle que $ F_{\ee,l,k}(t)\ll \ee^{\bbeta}t^{-\eta(l)} $ pour un certain $ \eta(l)>0 $. Comme nous l'avons fait pr\'ec\'edemment, nous en d\'eduisons qu'il existe un r\'eel $ E_{\ee,j,l,k} $ tel que $ E_{\ee,j,l,k}\ll \ee^{\bbeta} $ et \[ \int_{\mathcal{S}_{l}(X)}\frac{(\log\langle \tt''\rangle )^{j-k}}{\langle \tt''\rangle}F_{\ee,l,k}(t_{l+1}^{\beta})d\tt''=E_{\ee,j,l,k}+O(\ee^{\bbeta}X^{-\eta(l)\beta^{r}/2}). \] En sommant sur $ k $, on a alors qu'il existe un r\'eel, not\'e $ E_{\ee,j,l} $ tel que : \[ K_{\ee,j,l}=E_{\ee,j,l}+\int_{S_{l}(X)}\sum_{k=0}^{j}\left(\begin{array}{l}
j \\ k 
\end{array}\right)\frac{(\log\langle \tt''\rangle )^{j-k}}{\langle \tt''\rangle}Q_{\ee,l,k}(\log t_{l+1}^{\beta})d\tt''+O(\ee^{\bbeta}X^{-\eta(l)\beta^{k}/2}). \]
On obtient donc finalement : \begin{multline*}
 I_{\ee,j,l}(X) =E_{\ee,j,l}+\int_{S_{l}(X)}\sum_{k=0}^{j}\left(\begin{array}{l}
j \\ k 
\end{array}\right)\frac{(\log\langle \tt''\rangle )^{j-k}}{\langle \tt''\rangle}Q_{\ee,l,k}(\log t_{l+1}^{\beta})d\tt'' \\ +O(\ee^{\bbeta}X^{-\eta(l)\beta^{k}/2})+D_{\ee,j,l}+O(\ee^{\bbeta}X^{-\delta\beta^{r}/2}),
\end{multline*}
On peut par ailleurs d\'evelopper $ (\log\langle \tt''\rangle )^{j-k}Q_{\ee,l,k}(\log t_{l+1}^{\beta}) $ en un polyn\^ome en $ \log(t_{i}) $ pour $ i\in \{l+1,...,r\} $ \`a coefficients $ O(\ee^{\bbeta}) $, et on peut donc \'ecrire l'int\'egrale de la formule ci-dessus comme une combinaison lin\'eaire (\`a coefficients de type $ O(\ee^{\bbeta}) $) d'int\'egrales du type : \[  \int_{S_{l}(X)}\frac{\prod_{i=l+1}^{r}(\log t_{i})^{b_{i}}}{\prod_{i=l+1}^{r}t_{i}}d\tt'' \] avec $ b_{l+1},...,b_{r}\in \NN\setminus \{0\} $. Ces int\'egrales peuvent \^etre calcul\'ees explicitement et sont des polyn\^omes en $ \log(X) $ \`a un terme d'erreur de type $ O(X^{-\beta^{r}/2}) $ pr\`es, d'o\`u le r\'esultat.

\end{proof}
On d\'eduit directement de ce lemme le r\'esultat suivant : 
\begin{lemma}\label{intj3}
Soit $ (h_{\ee})_{\ee\in \NN^{N}} $ une $  (\aalpha,D,\nu,\delta,\bbeta)- $ famille et $ j\in \NN^{\ast} $. Il existe un r\'eel $ \eta>0  $ et un polyn\^ome $ Q_{\ee,j} $ \`a coefficients de taille $ O(\ee^{\bbeta}) $ tels que : \[  \int_{[1,X]^{r}}\frac{(\log\pt)^{j}}{\tt^{\aalpha+\1}}\sum_{\xx\leqslant \tt}h_{\ee}(\xx)d\tt=Q_{\ee,j}(\log(X))+O(\ee^{\bbeta}X^{-\eta}). \]
\end{lemma}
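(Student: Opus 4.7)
L'id\'ee consiste \`a ramener l'int\'egrale sur le cube $ [1,X]^{r} $ \`a une somme d'int\'egrales du type trait\'e par le lemme\;\ref{intj'3}, en d\'ecomposant le cube en r\'egions simpliciales associ\'ees aux permutations des coordonn\'ees. Plus pr\'ecis\'ement, on \'ecrit, \`a un ensemble de mesure nulle pr\`es :
\[ [1,X]^{r}=\bigsqcup_{\sigma \in \mathfrak{S}_{r}}\Delta^{(r,\sigma)}(X), \]
o\`u $ \Delta^{(r,\sigma)}(X)=\{\tt\in [1,X]^{r}\;|\; t_{\sigma(1)}<t_{\sigma(2)}<\cdots<t_{\sigma(r)}\} $. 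Sur chacun de ces simplexes, on effectue le changement de variables $ s_{i}=t_{\sigma(i)} $ qui envoie $ \Delta^{(r,\sigma)}(X) $ sur $ \Delta^{(r)}(X) $ et transforme l'int\'egrande en une expression analogue o\`u les variables sont permut\'ees.

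Apr\`es changement de variables, il s'agit de reconna\^itre que la famille obtenue en permutant les arguments de $ h_{\ee} $, \`a savoir $ h_{\ee}^{\sigma}(x_{1},\ldots,x_{r})=h_{\ee}(x_{\sigma(1)},\ldots,x_{\sigma(r)}) $, forme encore une $ (\aalpha^{\sigma},D,\nu,\delta,\bbeta)- $famille au sens de la d\'efinition\;\ref{definition3}, o\`u $ \aalpha^{\sigma}=(\alpha_{\sigma(1)},\ldots,\alpha_{\sigma(r)}) $. Ceci d\'ecoule imm\'ediatement du fait que les conditions $ 1. $ et $ 2. $ de la d\'efinition sont invariantes par permutation simultan\'ee des variables et des exposants : si l'on somme $ h_{\ee}^{\sigma}(\xx) $ sur $ \xx\leqslant \XX $, on retrouve la formule asymptotique de la condition $ 1. $ avec $ c_{\ee} $ inchang\'ee et $ \XX^{\aalpha^{\sigma}} $ \`a la place de $ \XX^{\aalpha} $, et la condition $ 2. $ se traduit de mani\`ere analogue en permutant le sous-ensemble $ J $. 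De plus, les constantes $ c_{J,\ee}^{\sigma} $ associ\'ees sont simplement les $ c_{\sigma(J),\ee} $ initiales compos\'ees avec la permutation des variables.

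On peut alors appliquer le lemme\;\ref{intj'3} \`a chaque famille $ (h_{\ee}^{\sigma})_{\ee\in \NN^{N}} $ : il existe pour chaque $ \sigma\in \mathfrak{S}_{r} $ un r\'eel $ \eta_{\sigma}>0 $ et un polyn\^ome $ q_{\ee,j}^{\sigma} $ \`a coefficients de type $ O(\ee^{\bbeta}) $ tels que
\[ \int_{\Delta^{(r,\sigma)}(X)}\frac{(\log\pt)^{j}}{\tt^{\aalpha+\1}}\sum_{\xx\leqslant \tt}h_{\ee}(\xx)d\tt=q_{\ee,j}^{\sigma}(\log X)+O(\ee^{\bbeta}X^{-\eta_{\sigma}}). \]
En posant $ \eta=\min_{\sigma}\eta_{\sigma}>0 $ et $ Q_{\ee,j}=\sum_{\sigma\in \mathfrak{S}_{r}}q_{\ee,j}^{\sigma} $, on obtient l'\'egalit\'e annonc\'ee par sommation sur $ \sigma\in \mathfrak{S}_{r} $. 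Le seul point d\'elicat est la v\'erification soigneuse que la permutation des variables pr\'eserve bien la structure de $ (\aalpha,D,\nu,\delta,\bbeta) $-famille, ce qui est toutefois une v\'erification directe car la d\'efinition\;\ref{definition3} est intrins\`equement sym\'etrique en les coordonn\'ees.
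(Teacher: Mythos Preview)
Votre approche est correcte et constitue exactement la preuve implicite du texte, qui se contente d'affirmer que le r\'esultat se d\'eduit \og directement \fg\ du lemme~\ref{intj'3} : la d\'ecomposition $[1,X]^{r}=\bigsqcup_{\sigma\in\mathfrak{S}_{r}}\Delta^{(r,\sigma)}(X)$ (\`a un ensemble de mesure nulle pr\`es), suivie du changement de variables et de l'observation que la famille permut\'ee reste une famille du m\^eme type, est la mani\`ere naturelle de proc\'eder. Le texte utilise d'ailleurs cette m\^eme d\'ecomposition explicitement un peu plus loin, dans la preuve du th\'eor\`eme~\ref{thmintermediaire3} (formule~\eqref{int1X3}).
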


\subsection{Un r\'esultat interm\'ediaire}

On pose pour $ r\in \NN $, $ j\in \NN^{\ast} $ : \begin{equation}
V_{r,j}=\int_{[0,1]^{r}}(\xi_{1}+...+\xi_{r})^{j}d\xxi.
\end{equation}
On remarque que : \[ V_{r,j}=\sum_{\substack{a_{1}+...+a_{r}=j\\\forall i \in 
\{1,...,r\}, \; a_{i}\geqslant 0}}\left( \begin{array}{c} j \\ a_{1}\;... \;a_{k}\end{array}\right) \frac{1}{(a_{1}+1)...(a_{r}+1)}. \] 
\begin{thm}\label{thmintermediaire3}
Soit $ (h_{\ee})_{\ee} $ une $  (\aalpha,D,\nu,\delta,\bbeta)- $ famille et $ j\in \NN^{\ast} $. Il existe alors un r\'eel $ \eta>0 $ et une famille de polyn\^omes $ (p_{\ee,j}) $ de degr\'e au plus $  r+j $ dont les coefficients sont de type $ O(\ee^{\bbeta}) $ tels que : \[ \sum_{\forall i, \; x_{i}\leqslant X}\frac{(\log\px)^{j}}{\xx^{\aalpha}}h_{\ee}(\xx)=p_{\ee,j}(\log X)+O(\ee^{\bbeta}X^{-\eta}). \] De plus, si $ c_{\ee}\neq 0 $, alors le degr\'e de $ p_{\ee,j} $ est exactement $ r+j $ et son coefficient dominant est $ \left(\prod_{i=1}^{r}\alpha_{i}\right)c_{\ee}V_{r,j} $. 
\end{thm}
\begin{proof}
Commen\c{c}ons par remarquer que \[ \frac{\partial}{\partial x}\frac{\log(xy)^{j}}{x^{\alpha}y^{\beta}}=-\frac{\alpha q_{j}(\log(xy))}{x^{\alpha+1}y^{\beta}}, \] o\`u $ q_{j}(t)=t^{j}+j\alpha^{-1}t^{j-1} $. En appliquant cette \'egalit\'e r\'ecursivement on obtient alors par sommation par parties : \[  \sum_{\forall i, \; x_{i}\leqslant X}\frac{(\log\px)^{j}}{\xx^{\aalpha}}h_{\ee}(\xx)=\sum_{J\subset \{1,...,r\}}\left(\prod_{i\in J}\alpha_{i}\right)\Xi_{J}, \] o\`u \[ \Xi_{\emptyset}=(r\log X)^{j}X^{-\sum_{i=1}^{r}\alpha_{i}}\sum_{\forall i, \; x_{i}\leqslant X}h_{\ee}(\xx), \] et pour $ J\neq \emptyset $ de cardinal $ n $ et $ m=r-n $ : \[ \Xi_{J}=X^{-\sum_{i\notin J}\alpha_{i}}\int_{[1,X]^{n}}\frac{q_{J}(\log(X^{m}\prod_{i\in J}t_{i}))}{\prod_{i\in J}t_{i}^{\alpha_{i}+1}}\sum_{i\in J, \; x_{i}\leqslant t_{i}}\sum_{i\notin J, \; x_{i}\leqslant X}h_{\ee}(\xx)d(t_{i})_{i\in J}, \] o\`u $ q_{J} $ est un polyn\^ome unitaire de degr\'e $ j $. L'objectif est \`a pr\'esent de montrer que pour tout $ J\subset \{1,...,r\} $, il existe un polyn\^ome $ p_{\ee,J} $ dont les coefficients sont du type $ O(\ee^{\bbeta}) $, et satisfaisant la propri\'et\'e : \begin{equation}\label{formulecruciale3}
\Xi_{J}=p_{J}(\log X)+O\left( \ee^{\bbeta}X^{-\eta}\right).
\end{equation}
Si $ J=\{1,...,r\} $, la formule\;\eqref{formulecruciale3} r\'esulte du lemme\;\ref{intj3}. D'autre part, si $ J=\emptyset $, par d\'efinition de $ \Xi_{\emptyset} $, on a \[ \Xi_{\emptyset}=(r\log X)^{j}(c_{\ee}+O(\ee^{\bbeta}X^{-\delta})), \] d'o\`u le r\'esultat. Il nous reste \`a traiter le cas o\`u $ n\in \{1,...,r-1\} $. Quitte \`a permuter les variables $ x_{i} $, on peut se ramener au cas o\`u $ J=\{1,...,n\} $. \'Ecrivons alors $ \Xi_{n}=\Xi_{\{1,...,n\}} $. Soit $ \tt=(t_{1},...,t_{n}) $. Par d\'eveloppement multinomial, on remarque qu'il existe des constantes $ \gamma_{k,s} $ telles que : \[ q_{\{1,...,n\}}(\log(X^{m}\prod_{i\notin J}t_{i}))=\sum_{k+s\leqslant j}\gamma_{k,s}(\log(\prod_{i=1}^{n}t_{i}))^{k}(\log X)^{s}, \] et par cons\'equent, \begin{multline}\label{Xin3} \Xi_{n}=X^{-\sum_{i=n+1}^{r}\alpha_{i}}\sum_{k+s\leqslant j}\gamma_{k,s}(\log X)^{s} \\ \int_{[1,X]^{n}}\frac{(\log(\prod_{i=1}^{n}t_{i}))^{k}}{\prod_{i=1}^{n}t_{i}^{\alpha_{i}+1}}\sum_{\substack{ x_{i}\leqslant t_{i} \\ i\in \{1,...,n\} }}\sum_{\substack{ x_{i}\leqslant X \\ i\in \{n+1,...r\}}}h_{\ee}(\xx)d\tt. \end{multline}
Premi\`ere \'etape : remarquons que \begin{multline}\label{int1X3}
 \int_{[1,X]^{n}}\frac{(\log(\prod_{i=1}^{n}t_{i}))^{k}}{\prod_{i=1}^{n}t_{i}^{\alpha_{i}+1}}\sum_{\substack{ x_{i}\leqslant t_{i} \\ i\in \{1,...,n\} }}\sum_{\substack{ x_{i}\leqslant X \\ i\in \{n+1,...r\}}}h_{\ee}(\xx)d\tt \\ =\sum_{\sigma\in \mathfrak{S}_{n}}\int_{\Delta^{(n)}(X)}\frac{(\log(\prod_{i=1}^{n}t_{i}))^{k}}{\prod_{i=1}^{n}t_{i}^{\alpha_{i}}}\sum_{\substack{ x_{\sigma(i)}\leqslant t_{i} \\ i\in \{1,...,n\} }}\sum_{\substack{ x_{i}\leqslant X \\ i\in \{n+1,...r\}}}h_{\ee}(\xx)d\tt.
\end{multline}
Comme dans la section pr\'ec\'edente, nous allons partitionner l'ensemble $ \Delta^{(n)}(X) $. Prenons \`a nouveau $ \beta=\min\{\nu,\delta(2D+4r)^{-1}\} $, et consid\'erons les intervalles : \[ \forall i\in \{0,...,n-1\}, \; \mathfrak{I}_{i}=]X^{\beta^{i+1}},X^{\beta^{i}}], \; \; \mathfrak{I}_{n}=[1,X^{\beta^{n}}], \] On a alors \[ [1,X]=\bigsqcup_{0\leqslant i\leqslant n}\mathfrak{I}_{i}. \]
\'Etant donn\'e $ \tt\in \Delta^{(n)}(X) $, il existe au moins un entier $ i\in \{0,...,n\} $ tel que $ \mathfrak{I}_{i} $ ne contient aucune des coordonn\'ees de $ \tt $. Notons $ i(\tt) $ le plus petit de ces entiers $ i $. On pose alors $ l(\tt)=0 $ si $ t_{1}>X^{\beta^{i(\tt)}} $, et sinon $ l(\tt) $ d\'esigne le plus grand entier $ l $ tel que $ t_{l}\leqslant X^{\beta^{i(\tt)+1}} $. On note alors \[ \Delta^{(n)}_{i,l}(X)=\{\tt\in \Delta^{(n)}(X)\; | \; i(\tt)=i, \; l(\tt)=l \}. \] On a alors  \[ \Delta^{(n)}(X)=\bigsqcup_{0\leqslant i\leqslant n}\Delta^{(n)}_{i,l}(X), \] et on remarque de plus que $ \Delta^{(n)}_{i,l}(X) $ est non vide uniquement lorsque $ i+l\leqslant n $. On pose \`a pr\'esent \[ J_{k,n,i,l}^{\sigma}(\ee)=X^{-\sum_{\rho=n+1}^{r}\alpha_{\rho}}\int_{\Delta^{(n)}_{i,l}(X)}\frac{(\log(\prod_{\rho=1}^{n}t_{\rho}))^{k}}{\prod_{\rho=1}^{n}t_{\rho}^{\alpha_{\rho}+1}}\sum_{\substack{ x_{\sigma(\rho)}\leqslant t_{\rho} \\ \rho\in \{1,...,n\} }}\sum_{\substack{ x_{\rho}\leqslant X \\ \rho\in \{n+1,...r\}}}h_{\ee}(\xx)d\tt, \] et on a alors, par les formules\;\eqref{Xin3} et\;\eqref{int1X3} \[ \Xi_{n}=\sum_{\sigma\in \mathfrak{S}_{n}}\sum_{k+s\leqslant j}\gamma_{k,s}(\log X)^{s}\sum_{i+l\leqslant n}J_{k,n,i,l}^{\sigma}(\ee). \]
Deuxi\`eme \'etape : nous allons \`a pr\'esent chercher \`a \'evaluer $ J_{k,n,i,l}^{\sigma}(\ee) $. Quitte \`a permuter les variables, nous nous ramenons \`a l'estimation de $ J_{k,n,i,l}(\ee)=J_{k,n,i,l}^{Id}(\ee) $. Commen\c{c}ons par traiter le cas de $ J_{k,n,i,0}(\ee) $. Remarquons que $ \Delta^{(n)}_{0,0}(X) $ est vide, et que l'on peut donc supposer $ i\geqslant 1 $. Pour $ \tt\in \Delta^{(n)}_{i,0}(X) $, on a $ t_{1}> X^{\beta^{i}}\geqslant X^{\beta^{n}} $ et la condition $ 1. $ donne : \[ \sum_{\substack{ x_{\rho}\leqslant t_{\rho} \\ \rho\in \{1,...,n\} }}\sum_{\substack{ x_{\rho}\leqslant X \\ \rho\in \{n+1,...r\}}}h_{\ee}(\xx)=c_{\ee}X^{\sum_{\rho=n+1}^{r}\alpha_{\rho}}\prod_{\rho=1}^{n}t_{\rho}^{\alpha_{\rho}}+O\left( \ee^{\bbeta}X^{\sum_{\rho=n+1}^{r}\alpha_{\rho}-\delta\beta^{n}}\prod_{\rho=1}^{n}t_{\rho}^{\alpha_{\rho}}\right). \] Par cons\'equent, \[  J_{k,n,i,0}(\ee)=c_{\ee}\int_{ \Delta^{(n)}_{i,0}(X)}\frac{(\log(\prod_{\rho=1}^{n}t_{\rho}))^{k}}{\prod_{\rho=1}^{n}t_{\rho}}d\tt +O\left( \ee^{\bbeta}X^{-\delta\beta^{n}/2}\right). \]
Montrons \`a pr\'esent qu'il existe un polyn\^ome $ Q $ ne d\'ependant que de $ n,i,\beta,k $ tel que : \begin{equation}\label{formutile3} \int_{ \Delta^{(n)}_{i,0}(X)}\frac{(\log(\prod_{\rho=1}^{n}t_{\rho}))^{k}}{\prod_{\rho=1}^{n}t_{\rho}}d\tt=Q(\log X). \end{equation} Pour cela, consid\'erons des entiers naturels $ n=u_{1}>u_{2}>...>u_{i}>u_{i+1}=1 $ et posons $ \uu=(u_{1},...,u_{i+1}) $. Soit \[ \Gamma_{i,\uu}=\{\tt\in  \Delta^{(n)}(X)\; | \; \forall \lambda\in \{0,...,i-1\}, \; \forall \rho\in \{u_{\lambda +2}+1,...,u_{\lambda+1}\}, \; t_{\rho}\in \mathfrak{I}_{\lambda} \}. \] Par construction   \[ \Delta^{(n)}_{i,0}(X)=\bigsqcup_{\uu}\Gamma_{i,\uu}. \] Par d\'eveloppement multinomial, on a : \begin{multline*} \int_{ \Delta^{(n)}_{i,0}(X)}\frac{(\log(\prod_{\rho=1}^{n}t_{\rho}))^{k}}{\prod_{\rho=1}^{n}t_{\rho}}d\tt \\ =\sum_{\uu}\sum_{a_{1}+...+a_{n}=k}\left( \begin{array}{c}
k \\ a_{1}\; ...\; a_{n}
\end{array} \right) \int_{\Gamma_{i,\uu}}\frac{\prod_{\nu=1}^{n}(\log t_{\rho})^{a_{\rho}}}{\prod_{\rho=1}^{n}t_{\rho}}d\tt.
\end{multline*}
Par d\'efinition de $ \Gamma_{i,\uu} $, cette derni\`ere int\'egrale se factorise en produit d'int\'egrale sur $ t_{\rho}\in  \mathfrak{I}_{\lambda} $ avec $ \rho\in \{u_{\lambda +2}+1,...,u_{\lambda+1}\} $. Une telle int\'egrale prend la forme : \[ \int_{Y^{\beta}\leqslant v_{1}<...<v_{s}\leqslant Y}\frac{\prod_{\rho=1}^{s}(\log v_{\rho})^{b_{\rho}}}{\prod_{\rho=1}^{s}v_{\rho}}d\vv, \] o\`u $ Y=X^{\beta^{\lambda}} $. Ces int\'egrales peuvent \^etre calcul\'ees explicitement et donnent un polyn\^ome en $ \log(X) $, d'o\`u le r\'esultat. Nous avons donc montr\'e que \[ J_{k,n,i,0}(\ee) =c_{\ee}Q(\log X)+O\left(\ee^{\bbeta}X^{-\delta\beta^{n}/2}\right). \] Traitons maintenant le cas $ l=n $. Puisque l'on a suppos\'e $ l+i\leqslant n $, on doit avoir $ i=0 $, et on observe que $ \Delta^{(n)}_{0,n}(X)=\Delta^{(n)}(X^{\beta}) $. Rappelons que $ \beta\leqslant \nu $, et par cons\'equent, en \'ecrivant $ \xx'=(x_{1},...,x_{l}) $, on d\'eduit de la condition $ 2 $ que \[ \sum_{\substack{x_{j}\leqslant X\\ j\in \{n+1,...,r\}}}h_{\ee}=c_{\ee,\{1,...,l\}}(\xx')X^{\sum_{j=n+1}^{r}\alpha_{j}}+O\left(\ee^{\bbeta}|\xx'|^{D}X^{\sum_{j=n+1}^{r}\alpha_{j}-\delta}\right). \] Pour $ \tt\in \Delta^{(n)}(X^{\beta}) $, en sommant sur $ \xx'\leqslant \tt $, on obtient :  \[ \sum_{\xx'\leqslant \tt}\sum_{\substack{x_{j}\leqslant X\\ j\in \{n+1,...,r\}}}h_{\ee}=X^{\sum_{j=n+1}^{r}\alpha_{j}}\sum_{\xx'\leqslant \tt}c_{\ee,\{1,...,l\}}(\xx')+O\left(\ee^{\bbeta}|\tt|^{n+D}X^{\sum_{j=n+1}^{r}\alpha_{j}-\delta}\right). \] En rappelant que $ \beta\leqslant \delta/(4n+2D) $, et en utilisant la d\'efinition de $ J_{k,n,0,n}(\ee) $ on trouve : \[ J_{k,n,0,n}(\ee)=\int_{\Delta^{(n)}(X^{\beta})}\frac{(\log(\prod_{\rho=1}^{n}t_{\rho}))^{k}}{\prod_{\rho=1}^{n}t_{\rho}^{\alpha_{\rho}+1}}\sum_{\xx'\leqslant \tt}c_{\ee,\{1,...,l\}}(\xx')d\tt+O\left(\ee^{\bbeta}X^{-\delta/2}\right). \] D'apr\`es le lemme\;\ref{sommepart23}, on peut appliquer le lemme\;\ref{intj'3} avec $ c_{\ee,\{1,...,l\}} $ \`a la place de $ h_{\ee} $, et il s'ensuit que $ J_{k,n,0,n}(\ee) $ est un polyn\^ome en $ \log X $ \`a coefficients de type $ O(\ee^{\bbeta}) $ \`a un terme d'erreur pr\`es de type $ O(\ee^{\bbeta}X^{-\eta}) $ pour un certain $ \eta>0 $. Il nous reste \`a traiter le cas o\`u $ l\in \{1,...,n-1\} $. Posons $ \tt=(\tt',\tt'') $, avec $ \tt'=(t_{1},...,t_{l}) $ et $ \tt''=(t_{l+1},...,t_{n}) $. Remarquons que $ \tt\in \Delta_{i,l}^{(n)}(X) $ si et seulement si $ \tt'\in \Delta^{(l)}(X^{\beta^{i+1}}) $ et $ \tt''\in \Delta_{i,0}^{(n-l)}(X) $. De plus $ \beta\leqslant \nu $, et par la condition $ 2 $ on a : \begin{multline*} \sum_{\substack{x_{i}\leqslant t_{i} \\ l<i\leqslant n}}\sum_{\substack{x_{i}\leqslant X \\ n<i\leqslant r}}h_{\ee}(\xx)=c_{\ee,\{1,...,l\}}(\xx')X^{\sum_{j=n+1}^{r}\alpha_{j}}\prod_{\rho=l+1}^{n}t_{\rho}^{\alpha_{\rho}} \\ +O\left( \ee^{\bbeta}|\xx'|^{D}X^{\sum_{\rho=n+1}^{r}\alpha_{\rho}}\left(\prod_{\rho=l+1}^{n}t_{\rho}^{\alpha_{\rho}}\right)t_{l+1}^{-\delta}\right). \end{multline*}
Puis en sommant sur les $ \xx'\leqslant \tt' $, on trouve (puisque $ t_{l+1}\geqslant X^{\beta^{i}} $) :  \begin{multline*} \sum_{\substack{x_{i}\leqslant t_{i} \\ 1\leqslant i\leqslant n}}\sum_{\substack{x_{i}\leqslant X \\ n<i\leqslant r}}h_{\ee}(\xx)=X^{\sum_{j=n+1}^{r}\alpha_{j}}\prod_{\rho=l+1}^{n}t_{\rho}^{\alpha_{\rho}}\sum_{\xx'\leqslant \tt'}c_{\ee,\{1,...,l\}}(\xx') \\ +O\left( \ee^{\bbeta}|\tt'|^{D+n}X^{\sum_{\rho=n+1}^{r}\alpha_{\rho}-\delta\beta^{i}}\left(\prod_{\rho=l+1}^{n}t_{\rho}^{\alpha_{\rho}}\right)\right). \end{multline*}
En multipliant ce r\'esultat par $ \left(\prod_{\rho=1}^{n}t_{\rho}^{-\alpha_{\rho}-1}\right)(\log\prod_{\rho=1}^{n}t_{\rho})^{k} $ et en int\'egrant sur $ \Delta^{(n)}_{i,l}(X) $, le terme d'erreur peut \^etre major\'e par : 
\begin{multline*}
\ee^{\bbeta}X^{\sum_{\rho=n+1}^{r}\alpha_{\rho}-\delta\beta^{i}}(\log X)^{k}\int_{\Delta^{(n)}_{i,l}(X)} \left(\prod_{\rho=1}^{n}t_{\rho}\right)^{-1}|\tt'|^{n+D}d\tt \\ \ll \ee^{\bbeta}X^{\sum_{\rho=n+1}^{r}\alpha_{\rho}-\delta\beta^{i}}(\log X)^{k+n}\underbrace{\int_{\Delta^{(l)}(X^{\beta^{i+1}})} |\tt'|^{n+D}d\tt'}_{\ll X^{\beta^{i+1}(2n+D)}\ll X^{\delta\beta^{i}/2}}  \ll \ee^{\bbeta}X^{\sum_{\rho=n+1}^{r}\alpha_{\rho}-\delta\beta^{n}/2}.
\end{multline*}
Ainsi, \`a un terme d'erreur $ O(\ee^{\bbeta}X^{\sum_{\rho=n+1}^{r}\alpha_{\rho}-\delta\beta^{n}/2}) $ pr\`es, l'int\'egrale $ J_{k,n,l,n}(\ee) $ vaut : \begin{multline*} \sum_{k'+k''=k}\left(\begin{array}{c} k \\ k' 
\end{array}\right) \int_{ \Delta^{(n-l)}_{i,0}(X)}\frac{(\log\prod_{\rho=l+1}^{n}t_{\rho})^{k''}}{\prod_{\rho=l+1}^{n}t_{\rho}}
\\ \int_{\Delta^{(l)}(X^{\beta^{i+1}})}\frac{(\log\prod_{\rho=1}^{l}t_{\rho})^{k'}}{\prod_{\rho=1}^{l}t_{\rho}^{\alpha_{\rho}+1}}\sum_{\xx'\leqslant \tt'}c_{\ee,\{1,...,l\}}(\xx')d\tt'\end{multline*}
Par application des lemmes\;\ref{sommepart23} et\;\ref{intj'3} et de la formule\;\eqref{formutile3}, on montre directement que ceci est, \`a un terme d'erreur $ O(\ee^{\bbeta}X^{-\eta}) $ pr\`es, un polyn\^ome en $ \log(X) $ \`a coefficients $ O(\ee^{\bbeta}) $. \\

Troisi\`eme \'etape :  Le degr\'e et le coefficient dominant du polyn\^ome $ p_{\ee,j} $ peuvent \^etre calcul\'es de la mani\`ere suivante. On a par d\'efinition de $ \Xi_{r} $ et par la condition $ 1. $ :  \[ \Xi_{r}=c_{\ee}\int_{[1,X]^{r}}\frac{q_{\{1,...,r\}}(\log(\prod_{i=1}^{r}t_{i}))}{\prod_{i=1}^{r}t_{i}}d\tt+O\left(\ee^{\bbeta}\int_{[1,X]^{r}}\frac{(\log(\prod_{i=1}^{r}t_{i}))^{j}}{\prod_{i=1}^{r}t_{i}\min_{i\in \{1,...,r\}}t_{i}^{\delta}}d\tt\right). \]

Puisque $ q_{\{1,...,r\}} $ est unitaire de degr\'e $ j $, il s'ensuit que : \[ \Xi_{r}=c_{\ee}\int_{[1,X]^{r}}\frac{(\log(\prod_{i=1}^{r}t_{i}))^{j}}{\prod_{i=1}^{r}t_{i}}d\tt+O\left(\ee^{\bbeta}(\log X)^{r+j-1}\right). \] Par d\'eveloppment multinomial, on obtient : \begin{align*}
\Xi_{r} & =c_{\ee}\sum_{a_{1}+...+a_{r}=j}\left(\begin{array}{c}
j \\ a_{1} \; ...\; a_{r}
\end{array}\right)\int_{[1,X]^{r}}\frac{\prod_{i=1}^{r}(\log t_{i})^{a_{i}}}{\prod_{i=1}^{r}t_{i}}d\tt+O\left(\ee^{\bbeta}(\log X)^{r+j-1}\right) \\ & =c_{\ee}\sum_{a_{1}+...+a_{r}=j}\left(\begin{array}{c}
j \\ a_{1} \; ...\; a_{r}
\end{array}\right)\frac{(\log X)^{r+j}}{\prod_{i=1}^{r}(a_{i}+1)}+ O\left(\ee^{\bbeta}(\log X)^{r+j-1}\right) \\ & = c_{\ee}V_{r,j}(\log X)^{r+j}+ O\left(\ee^{\bbeta}(\log X)^{r+j-1}\right).
\end{align*}
En utilisant \`a nouveau la condition $ 1 $, on montre par ailleurs que pour tout $ J\subsetneq\{1,...,r\} $ de cardinal $ n $ : \[ \Xi_{J} \ll \ee^{\bbeta}(\log X)^{n+j}. \] Anisi, on obtient \[  \sum_{\forall i, \; x_{i}\leqslant X}\frac{(\log\px)^{j}}{\xx^{\aalpha+\1}}h_{\ee}(\xx)= \left(\prod_{i=1}^{r}\alpha_{i}\right)V_{r,j}c_{\ee}(\log X)^{r+j}+O\left(\ee^{\bbeta}(\log X)^{r+j-1}\right), \] ce qui ach\`eve la d\'emonstration du th\'eor\`eme. 
\end{proof}

\subsection{Sommation sur les fibres}

Nous allons commencer par \'evaluer $ \sum_{\xx^{\aalpha}\leqslant P }h_{\ee}(\xx) $ pour des $ x_{j} $ assez grands. Plus pr\'ecis\'ement, fixons un \'el\'ement $ \WW\in [1,P]^{r} $ et consid\'erons : \[ \Upsilon_{\ee}(P,\WW)=\sum_{\substack{\ww^{\aalpha}\leqslant P \\ \ww>\WW}}h_{\ee}(\ww), \] \[ p_{r}(t)=\sum_{l=0}^{^{r-1}}\frac{(-1)^{r+1+l}}{l!}t^{l}. \]
\begin{lemma}\label{sommeW3}
Soit $ (h_{\ee})_{\ee} $ une $  (\aalpha,D,\nu,\delta,\bbeta)- $ famille. Supposons de plus que $ \WW^{\aalpha}\leqslant P^{\frac{1}{2}} $ et que $ \min_{j\in \{1,...,r\}}W_{j}\geqslant \log(P)^{\frac{2r}{\delta}} $. On a alors : \[ \Upsilon_{\ee}(P,\WW)=c_{\ee}p_{r}\left(\log\left(\frac{P}{\WW^{\aalpha}}\right)\right)P+O\left(\ee^{\bbeta}P(\log P)^{r}(\min_{j\in \{1,...,r\}}W_{j})^{-\frac{\delta}{2r}}\right). \]
\end{lemma}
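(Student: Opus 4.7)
\textit{Proof plan.} We argue by induction on $r$. For $r = 1$, condition (1) of a family gives $\sum_{w \leq X} h_\ee(w) = c_\ee X^{\alpha_1} + O(\ee^\bbeta X^{\alpha_1 - \delta})$; subtracting at $X = P^{1/\alpha_1}$ and $X = W_1$ yields $\Upsilon_\ee(P, W_1) = c_\ee(P - W_1^{\alpha_1}) + O(\ee^\bbeta (P^{1 - \delta/\alpha_1} + W_1^{\alpha_1 - \delta}))$. Since $p_1 \equiv 1$ and, under the hypotheses $W_1^{\alpha_1} \leq P^{1/2}$ and $W_1 \geq (\log P)^{2/\delta}$, the spurious term $c_\ee W_1^{\alpha_1}$ is dominated by $\ee^\bbeta P (\log P) W_1^{-\delta/2}$, the base case follows.

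For the inductive step, fix $w_r$ and note that by Lemma~\ref{lemmA3} the family $g_{\ee, w_r}(\ww') := w_r^{-A} h_\ee(\ww', w_r)$ is a $(\aalpha', D, \nu, \delta, \bbeta)$-family in $r - 1$ variables with leading constant $c_{\ee, \{r\}}(w_r)/w_r^A$, where $\ww' = (w_1, \ldots, w_{r-1})$ and $\aalpha' = (\alpha_1, \ldots, \alpha_{r-1})$. In the regime $w_r \in (W_r, T^*]$ with $T^* := (P/(\WW'^{\aalpha'})^2)^{1/\alpha_r}$, the bound $\WW'^{\aalpha'} \leq (P/w_r^{\alpha_r})^{1/2}$ holds, so the inductive hypothesis applies to $g_{\ee, w_r}$ and gives
\[ \sum_{\substack{\ww' > \WW' \\ \ww'^{\aalpha'} w_r^{\alpha_r} \leq P}} h_\ee(\ww', w_r) = c_{\ee, \{r\}}(w_r) \cdot \frac{P}{w_r^{\alpha_r}} \cdot p_{r-1}\!\left(\log\frac{P}{w_r^{\alpha_r} \WW'^{\aalpha'}}\right) + \textrm{err}. \]
Summing over $w_r$ by Abel summation, using the asymptotic $\sum_{w \leq t} c_{\ee, \{r\}}(w) = c_\ee t^{\alpha_r} + O(\ee^\bbeta t^{\alpha_r - \delta})$ from Lemma~\ref{sommepart23}, and substituting $u = \log(P/(w_r^{\alpha_r} \WW'^{\aalpha'}))$, the principal part reduces to
\[ c_\ee P \int_{\log \WW'^{\aalpha'}}^{\log(P/\WW^{\aalpha})} p_{r-1}(u) \, du = c_\ee P \left[ p_r\!\left(\log\frac{P}{\WW^{\aalpha}}\right) - p_r(\log \WW'^{\aalpha'}) \right], \]
using the antiderivative identity $\int_0^t p_{r-1}(s) \, ds = p_r(t) + p_{r-1}(0)$, which is readily checked from the explicit formula defining $p_r$.

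The complementary regime $w_r \in (T^*, T]$ with $T := (P/\WW'^{\aalpha'})^{1/\alpha_r}$ lies outside the scope of the inductive hypothesis, since $\WW'^{\aalpha'} > (P/w_r^{\alpha_r})^{1/2}$ there. We handle it by majorizing the condition $\ww'^{\aalpha'} \leq P/w_r^{\alpha_r} \in [\WW'^{\aalpha'}, (\WW'^{\aalpha'})^2]$ via the coarser box bounds $w_j \leq (P/w_r^{\alpha_r})^{1/\alpha_j}$, applying condition (2) for $h_\ee$ with $J = \{r\}$ to reduce to box sums, and then performing Abel summation in $w_r$. After telescoping, this contributes a main term equal to $c_\ee P \cdot p_r(\log \WW'^{\aalpha'})$, which exactly cancels the negative piece from the previous paragraph and leaves the claimed principal part $c_\ee P \cdot p_r(\log(P/\WW^{\aalpha}))$.

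The chief technical obstacle is the delicate analysis of the boundary regime $w_r \in (T^*, T]$: the inductive asymptotic fails there, yet the region must still furnish the exact compensating contribution $c_\ee P \cdot p_r(\log \WW'^{\aalpha'})$, which forces a careful direct computation based on condition (2) and inclusion–exclusion. A secondary challenge lies in propagating the error exponents: each recursion step introduces one factor of $\log P$ and slightly worsens the dependence on $\min_j W_j$, so the hypothesis $\min_j W_j \geq (\log P)^{2r/\delta}$ must be calibrated so that $(\log P)(\min_j W_j)^{\delta/(2r) - \delta/(2(r-1))} \leq 1$ at every step, guaranteeing that the accumulated error after $r$ recursive applications matches the target bound $O(\ee^\bbeta P (\log P)^r (\min_j W_j)^{-\delta/(2r)})$.
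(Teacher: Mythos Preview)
Your inductive scheme has a genuine gap in the error propagation. When you apply the inductive hypothesis to the normalized family $g_{\ee,w_r}(\ww')=w_r^{-A}h_\ee(\ww',w_r)$ from Lemma~\ref{lemmA3}, the resulting error for the $(r-1)$-dimensional hyperbolic sum is $O\bigl(\ee^{\bbeta}(P/w_r^{\alpha_r})(\log P)^{r-1}(\min_{j<r}W_j)^{-\delta/(2(r-1))}\bigr)$. Multiplying back by $w_r^{A}$ to recover the sum over $h_\ee$, the error for each fixed $w_r$ carries a factor $w_r^{A-\alpha_r}$. Since $A=D+(r+1)\max_j\alpha_j+\nu^{-1}(1+\max_j\alpha_j)$ is much larger than $\alpha_r$, summing over $W_r<w_r\leqslant T^{\ast}$ with $T^{\ast}$ a positive power of $P$ yields an error of order $\ee^{\bbeta}P\cdot P^{(A-\alpha_r+1)/\alpha_r}$, a genuine positive power of $P$ beyond the main term. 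This is fatal; the calibration you describe for $\min_jW_j\geqslant(\log P)^{2r/\delta}$ only controls logarithmic losses and cannot absorb a polynomial factor. The same obstruction blocks the use of condition~2 in the complementary regime: there one needs $w_r\leqslant\bigl(\prod_{j<r}V_j\bigr)^{\nu}$, which fails precisely when $w_r$ is large, so the formula you invoke is not available.

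The paper avoids this entirely by a direct, non-inductive argument: it tiles the region $\{\ww>\WW,\ \ww^{\aalpha}\leqslant P\}$ by boxes $\UU_{\jj}<\uu\leqslant\UU_{\jj+\1}$ with $U_{k,j}=W_k\theta^{j/\alpha_k}$ and $\theta^{J}=P/\WW^{\aalpha}$, applies condition~1 on each box, and evaluates the resulting geometric-type sum $\sum_{|\jj|_1\leqslant J}\sum_{\ss\in\{0,1\}^r}(-1)^{r-|\ss|_1}\theta^{|\jj+\ss|_1}$ via the polynomial identity of Lemma~\ref{identitepol3}. Choosing $J=\lfloor W_{j_0}^{\delta/(2r)}\rfloor$ and Taylor-expanding $\theta$ then produces $p_r(\log(P/\WW^{\aalpha}))$ directly. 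This sidesteps any summation of errors over a large single variable.
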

Pour d\'emontrer ce lemme, nous aurons besoin du r\'esultat ci-dessous : 
\begin{lemma}\label{identitepol3}
Soient $ r $ et $ J $ deux entiers naturels. Pour tout $ t\in \CC $, on a : \[ (1-t)^{r}\sum_{\substack{j_{1}+...+j_{r}\leqslant J \\ \forall k, \; j_{k}\geqslant 0}}t^{j_{1}+...+j_{r}}=1-t^{J+1}\sum_{l=0}^{r-1}\left(\begin{array}{c} J+l \\ l\end{array}\right) (1-t)^{l}. \]
\end{lemma}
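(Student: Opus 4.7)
The statement is a purely algebraic polynomial identity in $t$, so no analysis is involved; I would prove it by induction on $r$.

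For the base case $r=1$, the left-hand side is $(1-t)\sum_{j=0}^{J}t^{j}=1-t^{J+1}$, while the right-hand side is $1-t^{J+1}\binom{J}{0}(1-t)^{0}=1-t^{J+1}$, so the identity holds trivially.

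For the inductive step, assuming the identity at rank $r$, I would split the $(r+1)$-fold sum by fixing the last index $j_{r+1}=k\in\{0,\dots,J\}$, which reduces the remaining sum to one over $j_{1}+\cdots+j_{r}\leq J-k$:
\begin{multline*}
(1-t)^{r+1}\sum_{\substack{j_{1}+\cdots+j_{r+1}\leq J\\ j_{i}\geq 0}}t^{j_{1}+\cdots+j_{r+1}}\\
=(1-t)\sum_{k=0}^{J}t^{k}\Bigl[(1-t)^{r}\sum_{\substack{j_{1}+\cdots+j_{r}\leq J-k\\ j_{i}\geq 0}}t^{j_{1}+\cdots+j_{r}}\Bigr].
\end{multline*}
Applying the induction hypothesis to the bracket with $J$ replaced by $J-k$, the bracket becomes $1-t^{J-k+1}\sum_{l=0}^{r-1}\binom{J-k+l}{l}(1-t)^{l}$. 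Distributing and using $(1-t)\sum_{k=0}^{J}t^{k}=1-t^{J+1}$, one obtains
\[
1-t^{J+1}-t^{J+1}\sum_{l=0}^{r-1}(1-t)^{l+1}\sum_{k=0}^{J}\binom{J-k+l}{l}.
\]

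The final step is to simplify the inner sum over $k$ using the hockey-stick identity $\sum_{k=0}^{J}\binom{J-k+l}{l}=\sum_{m=0}^{J}\binom{m+l}{l}=\binom{J+l+1}{l+1}$. After reindexing $m=l+1$, the sum collapses to $\sum_{m=0}^{r}\binom{J+m}{m}(1-t)^{m}$ (the term $m=0$ coming from the isolated $-t^{J+1}$), which yields the right-hand side at rank $r+1$. The main (minor) subtlety is simply bookkeeping the hockey-stick identity and the shift of index; everything else is routine algebra.
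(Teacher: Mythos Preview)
Your inductive argument is correct: the base case, the splitting of the $(r+1)$-fold sum over the last index, the application of the induction hypothesis with $J$ replaced by $J-k$, and the use of the hockey-stick identity $\sum_{m=0}^{J}\binom{m+l}{l}=\binom{J+l+1}{l+1}$ all go through cleanly, yielding exactly the right-hand side at rank $r+1$ after the reindexing $m=l+1$. The paper itself gives no proof at all and simply cites \cite[Lemme 2.9]{BB}, so your argument is more self-contained than what the paper offers; there is nothing to compare approaches against.
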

\begin{proof}
Voir\;\cite[Lemme 2.9]{BB}.
\end{proof}
\begin{proof}[D\'emonstration du lemme\;\ref{sommeW3}]
Nous allons nous ramener \`a l'\'evaluation des sommes \[ H_{\ee}(\UU^{+},\UU^{-})=\sum_{\UU^{-}<\uu\leqslant \UU^{+}}h_{\ee}(\uu). \]
Soit $ \theta $ un r\'eel, et $ J $ un entier naturel que nous pr\'eciserons ult\'erieurement. On suppose que $ 1<\theta<3 $ et que $ \theta^{J}=P/\WW^{\aalpha} $. Pour $ j\geqslant 0 $, nous poserons \[ U_{k,j}=W_{k}\theta^{\frac{j}{\alpha_{k}}}, \] et notons $ \UU_{\jj}=(U_{1,j_{1}},...,U_{r,j_{r}}) $. Nous consid\'erons alors les bo\^ites $ \UU_{\jj}<\uu\leqslant \UU_{\jj+\1} $ qui sont incluses dans $ \{\uu \; |\; \uu^{\aalpha}\leqslant P\} $ lorsque $ \WW^{\aalpha}\theta^{(\sum_{k=1}^{r}j_{k})+r}=\UU_{\jj+\1}^{\aalpha}\leqslant P $, ce qui est vrai si et seulement si $ |\jj|_{1}=j_{1}+...+j_{r}\leqslant J-r $. Soit $ \uu $ tel que $ \uu>\WW $ et $ \uu^{\aalpha}\leqslant P $. Il existe alors un unique $ \jj \in \NN^{r} $ tel que $ \UU_{\jj}<\uu\leqslant \UU_{\jj+\1}  $. Les in\'egalit\'es $ \UU_{\jj}^{\aalpha}<\uu^{\aalpha}\leqslant P $ impliquent $ |\jj|_{1}\leqslant J $. On a donc que \begin{equation}\label{HeeU3} \sum_{|\jj|_{1}\leqslant J-r} H_{\ee}(\UU_{\jj+\1},\UU_{\jj})\leqslant \Upsilon_{\ee}(P,\WW) \leqslant \sum_{|\jj|_{1}\leqslant J} H_{\ee}(\UU_{\jj+\1},\UU_{\jj}). \end{equation} En notant pour tout $ \XX\in \NN^{r} $ : \[ H_{\ee}(\XX)=\sum_{\xx\leqslant \XX}h_{\ee}(\XX), \] on obtient alors l'identit\'e : \begin{equation} H_{\ee}(\UU_{\jj+\1},\UU_{\jj})=\sum_{\ss\in \{0,1\}^{r}}(-1)^{r-|\ss|_{1}}H_{\ee}(\UU_{\jj+\ss}). \end{equation}
Par ailleurs, d'apr\`es la condition $ 1. $ : 
\[ H_{\ee}(\UU_{\jj+\ss})=c_{\ee}\WW^{\aalpha}\theta^{|\jj+\ss|_{1}}+O\left(\ee^{\bbeta}\WW^{\aalpha}(\min W_{j})^{-\delta}\theta^{|\jj+\ss|_{1}}\right). \] Posons \`a pr\'esent $ J^{+}=J $ et $ J^{-}=J-r $ et \'etudions les sommes : \[ \Upsilon_{\ee}^{+}=\sum_{|\jj|_{1}\leqslant J^{+}} H_{\ee}(\UU_{\jj+\1},\UU_{\jj}), \]\[ \Upsilon_{\ee}^{-}=\sum_{|\jj|_{1}\leqslant J^{-}} H_{\ee}(\UU_{\jj+\1},\UU_{\jj}). \] Les \'egalit\'es pr\'ec\'edentes donnent alors : \[ \Upsilon_{\ee}^{\pm}=c_{\ee}\WW^{\aalpha}\sum_{|\jj|_{1}\leqslant J^{\pm}}\sum_{\ss\in \{0,1\}^{r}}(-1)^{r-|\ss|_{1}}\theta^{|\jj+\ss|_{1}}+O\left(\ee^{\bbeta}\WW^{\aalpha}(\min W_{j})^{-\delta}\sum_{|\jj|_{1}\leqslant J^{\pm}}\theta^{|\jj|_{1}}\right). \] En utilisant l'\'egalit\'e : \[\sum_{\ss\in \{0,1\}^{r}}(-1)^{r-|\ss|_{1}}T^{|\ss|_{1}}=(T-1)^{r}, \] on a \begin{equation}\label{upsilonpm3} \Upsilon_{\ee}^{\pm}=c_{\ee}(\theta-1)^{r}\WW^{\aalpha}\sum_{|\jj|_{1}\leqslant J^{\pm}}\theta^{|\jj|_{1}}+O\left(\ee^{\bbeta}\WW^{\aalpha}W_{j_{0}}^{-\delta}\sum_{|\jj|_{1}\leqslant J^{\pm}}\theta^{|\jj|_{1}}\right), \end{equation} avec $ W_{j_{0}}=\min W_{j} $. Le terme d'erreur peut \^etre major\'e par \[ \ee^{\bbeta}\WW^{\aalpha}W_{j_{0}}^{-\delta}\theta^{J}\Card\{\jj\in \NN^{r}\; |\; |\jj|_{1}\leqslant J \}\ll \ee^{\bbeta}PJ^{r}W_{j_{0}}^{-\delta}. \] En utilisant l'\'egalit\'e du lemme\;\ref{identitepol3} avec $ t=\theta $ dans la formule\;\eqref{upsilonpm3}, et en multipliant par $ (-1)^{r} $ on trouve alors (en rappelant que $ \theta^{J}=P/\WW^{\aalpha} $) : \[ \Upsilon_{\ee}^{+}=c_{\ee}P\theta\sum_{l=0}^{r-1}\left(\begin{array}{c} J+l \\ l\end{array}\right) (-1)^{r+1+l}(\theta-1)^{l}+O\left(\ee^{\bbeta}\WW^{\aalpha}\right)+O\left(\ee^{\bbeta}PJ^{r}W_{j_{0}}^{-\delta}\right), \] et de m\^eme : \[ \Upsilon_{\ee}^{-}=c_{\ee}P\theta^{1-r}\sum_{l=0}^{r-1}\left(\begin{array}{c} J-r+l \\ l\end{array}\right) (-1)^{r+1+l}(\theta-1)^{l}+O\left(\ee^{\bbeta}\WW^{\aalpha}\right)+O\left(\ee^{\bbeta}PJ^{r}W_{j_{0}}^{-\delta}\right). \] Choisissons \`a pr\'esent $ J=\lfloor W_{j_{0}}^{\delta/2r}\rfloor $, de sorte que $ J\geqslant \log P $ et que $ 1<\theta\leqslant e $. On a alors \[\theta=\exp\left(J^{-1}\log\left(\frac{P}{\WW^{\aalpha}}\right)\right)=1+ J^{-1}\log\left(\frac{P}{\WW^{\aalpha}}\right)+O\left(J^{-2}(\log P)^{2}\right) \](car on a suppos\'e $ \WW^{\aalpha}\leqslant P^{\frac{1}{2}} $). D'autre part, on remarque que \[ \left(\begin{array}{c} J+l \\ l\end{array}\right) =\frac{J^{l}}{l!}+O(J^{l-1}) \] et par cons\'equent \[ \left(\begin{array}{c} J+l \\ l\end{array}\right)(\theta-1)^{l}=\frac{1}{l!}\log\left(\frac{P}{\WW^{\aalpha}}\right)^{l}+O\left(J^{-1}(\log P)^{l+1}\right). \]
 On multiplie cette \'egalit\'e par $ (-1)^{r+l+1} $ et on somme sur $ l\in \{0,...,r-1\} $. En rappelant que \[ p_{r}(t)=\sum_{l=0}^{r-1}\frac{(-1)^{r+l+1}}{l!}t^{l}, \] on obtient donc que : \begin{align} \Upsilon_{\ee}^{+} &=c_{\ee}Pp_{k}\left(\log\frac{P}{\WW^{\aalpha}}\right)+O\left(\ee^{\bbeta}\left(P(\log P)^{r}J^{-1}+\WW^{\aalpha}+PW_{j_{0}}^{-\delta}J^{r}\right)\right) \\ \label{sommeWplus3} &= c_{\ee}Pp_{k}\left(\log\frac{P}{\WW^{\aalpha}}\right)+O\left(\ee^{\bbeta}P(\log P)^{r}W_{j_{0}}^{-\frac{\delta}{2r}}\right).\end{align}
De fa\c{c}on analogue on obtient exactement la m\^eme formule asymptotique pour $ \Upsilon_{\ee}^{-} $ et donc pour $ \Upsilon_{\ee}(P,\WW) $ d'apr\`es\;\eqref{HeeU3}. 
\end{proof}
\subsection{D\'emonstration du th\'eor\`eme\;\ref{resultatBB3}}

Nous sommes \`a pr\'esent en mesure de d\'emontrer le th\'eor\`eme\;\ref{resultatBB3}. Dans ce qui va suivre, nous consid\'ererons $ (h_{\ee})_{\ee} $ une $  (\aalpha,D,\nu,\delta,\bbeta)- $ famille. On v\'erifie facilement qu'alors 
$ (h_{\ee})_{\ee} $ est une $  (\aalpha,D,\nu',\delta',\bbeta)- $ famille pour tous $ \nu'\leqslant \nu $, $ \delta'\leqslant \delta $, et on peut donc supposer dor\'enavant que $ \nu\leqslant \frac{1}{2\sum_{j=1}^{r}\alpha_{j}} $ et $ \delta\leqslant \frac{1}{2} $. Fixons par ailleurs les param\`etres \begin{equation}
A=D+(r+1)\max_{j\in \{1,...,r\}}(\alpha_{j})+\nu^{-1}(1+\max_{j\in \{1,...,r\}}(\alpha_{j})),
\end{equation}
\begin{equation}
B=4Ar^{2}/\delta.
\end{equation}
Introduisons un param\`etre $ V\geqslant 4 $ et supposons que $ P $ est tel que $ V^{B^{r}}\leqslant P^{\nu} $. On pose alors \[ V_{0}=1, \; V_{1}=V, \; \forall k\in \{1,...,r\}, \;  V_{k}=V^{B^{k-1}},\;  V_{r+1}=P, \] et \[ \mathcal{V}_{0}=[V_{0},V_{1}],\; \forall k \in \{1,...,r\},\; \mathcal{V}_{k}=]V_{k},V_{k+1}] \] de sorte que \[ [1,P]=\bigsqcup_{k=0}^{r}\mathcal{V}_{k}. \] Par le principe des tiroirs, pour tout $ \xx=(x_{1},...,x_{r})\in \NN^{r} $ tel que $ \xx^{\aalpha}\leqslant P $, il existe au moins un $ l\in \{0,...,r\} $ tel que $ x_{j}\notin \mathcal{V}_{l} $ pour tout $ j\in \{1,...,r\} $. On note alors $ l(\xx) $ le plus grand de ces entiers $ l $. Posons ensuite pour tout $ l\in \{0,...,r\} $ : \[ \Upsilon_{\ee,l}(P)=\sum_{\substack{\xx^{\aalpha}\leqslant P \\ l(\xx)=l   }}h_{\ee}(\xx) \] (on remarque que $ \Upsilon_{\ee}(P)=\sum_{l=0}^{r}\Upsilon_{\ee,l}(P) $). Remarquons que la condition $ l(\xx)=r $ \'equivaut \`a dire que $ |x_{j}|\leqslant V_{r} $ pour tout $ j\in \{1,...,r\} $. Donc, d'apr\`es la condition $ 1 $, on obtient la borne : \[
\Upsilon_{\ee,r}(P)  \leqslant H_{\ee}(V_{r},...,V_{r}) \ll \ee^{\bbeta}\prod_{j=1}^{r}V_{r}^{\alpha_{j}}\ll \ee^{\bbeta}V^{B^{r}\sum_{j=1}^{r}\alpha_{j}} \ll \ee^{\bbeta}P^{\frac{1}{2}}.
\]
Consid\'erons \`a pr\'esent $ \xx\in \NN^{r} $ tel que $ \xx^{\aalpha}\leqslant P $ et $ l(\xx)=l $ avec $ l\in \{0,...,r-1\} $. On peut associer \`a un tel $ \xx $ les ensembles \[ \mathcal{J}(\xx)=\{j\in \{1,...,r\}\; |\; x_{j}\leqslant V_{l}\}, \] \[ \forall m\in \{l+1,...,r\}, \; \mathcal{L}_{m}(\xx)=\{j\in \{1,...,r\}\; |\; x_{j}\in \mathcal{V}_{m}\}. \]
On observe que $ \{1,...,r\}=\mathcal{J}(\xx)\sqcup\left(\bigsqcup_{m=l+1}^{r}\mathcal{L}_{m}\right) $, et on note \[ \mathcal{A}(\xx)=(\mathcal{J}(\xx),\mathcal{L}_{l+1}(\xx),...,\mathcal{L}_{r}(\xx)). \]
Une partition $ \mathcal{A}=(\mathcal{J},\mathcal{L}_{l+1},...,\mathcal{L}_{r}) $ de $ \{1,...,r\} $ telle que $ \mathcal{L}_{m}\neq \emptyset $ pour tout $ m\in \{l+1,...,r\} $ sera dite \emph{admissible pour $ l $}. On a alors pour tout $ l\in \{0,...,r-1\} $ :  \[ \Upsilon_{\ee,l}(P)=\sum_{\substack{\mathcal{A }\; \admissible \\ \pour \; l}}\Upsilon_{\ee,\mathcal{A }}(P), \] o\`u \[ \Upsilon_{\ee,\mathcal{A }}(P)=\sum_{\substack{\xx^{\aalpha}\leqslant P \\ \mathcal{A}(\xx)=\mathcal{A}}}h_{\ee}(\xx). \]
Nous allons \`a pr\'esent utiliser cette d\'ecomposition de $ \Upsilon_{\ee}(P) $ pour d\'emontrer le r\'esultat ci-dessous : 
\begin{lemma}\label{lemmeavantthm3}
Soit $ (h_{\ee})_{\ee} $ une $  (\aalpha,D,\nu,\delta,\bbeta)- $ famille. On a alors pour tout $ \ee $ : \[  \Upsilon_{\ee}(P)=\frac{c_{\ee}}{(r-1)!}P(\log P)^{r-1}+O\left(\ee^{\bbeta}P(\log P)^{r-2}\log(\log P)\right). \]
\end{lemma}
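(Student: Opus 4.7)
The strategy is to exploit the decomposition $\Upsilon_{\ee}(P) = \Upsilon_{\ee,r}(P) + \sum_{l=0}^{r-1}\sum_{\mathcal{A}}\Upsilon_{\ee,\mathcal{A}}(P)$ already introduced before the lemma, where $\Upsilon_{\ee,r}(P) \ll \ee^{\bbeta}P^{1/2}$ is negligible and the remaining sum is indexed by admissible partitions $\mathcal{A} = (\mathcal{J}, \mathcal{L}_{l+1}, \ldots, \mathcal{L}_r)$ of $\{1, \ldots, r\}$ for each $l \in \{0, \ldots, r-1\}$. The variables $(x_j)_{j \in \mathcal{J}}$ play the role of small variables bounded by $V_l$, while for each $m > l$ the variables $(x_i)_{i \in \mathcal{L}_m}$ lie in the range $\mathcal{V}_m = (V_m, V_{m+1}]$. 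I treat separately the cases $\mathcal{J} \neq \emptyset$ and $\mathcal{J} = \emptyset$.

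For $\mathcal{A}$ with $\mathcal{J} \neq \emptyset$, I fix $\ww = (x_j)_{j \in \mathcal{J}}$ and invoke Lemma~\ref{lemmA3} to see that the rescaled family $(g_{\ee,\ww})_{\ee}$ defined by $g_{\ee,\ww}(\yy) = \langle\ww\rangle^{-A} h_{\ee}(\ww, \yy)$ is again a $(\aalpha, D, \nu, \delta, \bbeta)$-family in the remaining $r - |\mathcal{J}|$ variables. The inner sum over $(x_i)_{i \notin \mathcal{J}}$ with $x_i \in \mathcal{V}_{m_i}$ and $\xx^{\aalpha} \leq P$ can be rewritten by inclusion-exclusion on the endpoints of the intervals $\mathcal{V}_{m_i}$ as a signed combination of sums of type $\Upsilon_{g_{\ee,\ww}}(P/\langle\ww\rangle^{\aalpha}, \WW_\ast)$; provided $V \geq (\log P)^{2r/\delta}$, each of these falls within the scope of Lemma~\ref{sommeW3} and produces a main contribution $c_{\ee,\mathcal{J}}(\ww)\langle\ww\rangle^{-A} P \cdot Q(\log(P/\langle\ww\rangle^{\aalpha}))$ with $Q$ polynomial. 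The outer sum over $\ww \leq V_l$ is then handled through Theorem~\ref{thmintermediaire3} applied to the sub-family $(c_{\ee,\mathcal{J}})_{\ee}$, which is itself a $(\aalpha, D, \nu, \delta, \bbeta)$-family on $\NN^{|\mathcal{J}|}$ by Lemma~\ref{sommepart23}, yielding a polynomial in $\log V_l$ whose coefficients are of size $O(\ee^{\bbeta})$.

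For $\mathcal{A}$ with $\mathcal{J} = \emptyset$ (which forces $l = 0$), the admissible partitions are in bijection with the permutations $\sigma \in \mathfrak{S}_r$ via $\mathcal{L}_m = \{\sigma(m)\}$. I apply Lemma~\ref{sommeW3} directly to $h_{\ee}$ with $W_{\sigma(m)}$ equal to the lower endpoint of $\mathcal{V}_m$, producing $c_{\ee}\, p_r(\log(P/\WW_\sigma^{\aalpha}))\, P$ plus an error of order $\ee^{\bbeta} P (\log P)^r V^{-\delta/(2r)}$. The principal obstacle of the proof is the subsequent combinatorial assembly: summed over all admissible $\mathcal{A}$ and over $l$, the polynomial main terms — each individually of possible degree $r-1$ in $\log P$ — must cancel, via the identity of Lemma~\ref{identitepol3} and the telescoping argument of Blomer--Br\"udern \cite{BB}, down to the single term $\frac{c_{\ee}}{(r-1)!} P (\log P)^{r-1}$. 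Finally, choosing $V = (\log P)^{4r/\delta}$ turns the Lemma~\ref{sommeW3} error into $\ee^{\bbeta} P (\log P)^{r-2}$, and the additional factor $\log V_l \ll \log\log P$ coming out of the polynomials in $\log V_l$ generated in the $\mathcal{J} \neq \emptyset$ step accounts for the $\log\log P$ in the stated error bound $O(\ee^{\bbeta} P (\log P)^{r-2} \log\log P)$.
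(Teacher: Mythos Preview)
Your treatment of the case $\mathcal{J}\neq\emptyset$ is essentially correct and matches the paper's argument: fix the small variables $\ww$, apply Lemma~\ref{sommeW3} to the remaining variables (via Lemma~\ref{lemmA3}), and control the resulting sum over $\ww$ with Theorem~\ref{thmintermediaire3}, picking up a factor $(\log V_l)^{|\mathcal{J}|}\ll(\log\log P)^{|\mathcal{J}|}$.

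However, your analysis of the case $\mathcal{J}=\emptyset$ contains a structural error. The assertion ``$\mathcal{J}=\emptyset$ forces $l=0$'' is false: for any $l\in\{0,\ldots,r-1\}$ the partition $(\emptyset,\mathcal{L}_{l+1},\ldots,\mathcal{L}_r)$ is admissible as soon as the $r-l$ sets $\mathcal{L}_m$ are nonempty and partition $\{1,\ldots,r\}$. In particular, for $l=r-1$ there is exactly one such partition, namely $\mathcal{A}=(\emptyset,\{1,\ldots,r\})$, and this is precisely the one that produces the main term. Indeed $\Upsilon_{\ee,\mathcal{A}}(P)=\Upsilon_{\ee}(P,(V_r,\ldots,V_r))$, and a single application of Lemma~\ref{sommeW3} gives directly
\[
c_{\ee}\,p_r\!\Big(\log\frac{P}{V_r^{\sum\alpha_j}}\Big)P=\frac{c_{\ee}}{(r-1)!}P(\log P)^{r-1}+O\big(\ee^{\bbeta}P(\log P)^{r-2}\log\log P\big),
\]
since $\log V_r\asymp\log\log P$. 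No telescoping or combinatorial cancellation via Lemma~\ref{identitepol3} is needed at this stage (that lemma is used \emph{inside} the proof of Lemma~\ref{sommeW3}, not here).

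What remains is to show that every \emph{other} partition with $\mathcal{J}=\emptyset$ (i.e.\ those with $l\le r-2$) contributes only to the error. The paper does this not by cancellation but by a direct bound: since $\mathcal{L}_{l+1}\neq\emptyset$ there is some coordinate $x_{j_0}$ confined to the narrow window $\mathcal{V}_{l+1}$, and one writes $\Upsilon_{\ee,\mathcal{A}}(P)\le\Upsilon_{\ee}(P,\WW)-\Upsilon_{\ee}(P,\WW')$ with $\WW,\WW'$ differing only in the $j_0$-th entry ($V_{l+1}$ versus $V_{l+2}$). Applying Lemma~\ref{sommeW3} to both and subtracting, the main terms nearly cancel because $p_r$ has degree $r-1$, leaving $O\big(\ee^{\bbeta}P\log(V_{l+2}/V_{l+1})(\log P)^{r-2}\big)=O\big(\ee^{\bbeta}P(\log P)^{r-2}\log\log P\big)$. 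Your proposal misses this mechanism entirely.
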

\begin{proof}
Le cas $ r=1 $ est imm\'ediat d'apr\`es la condition $ 1 $. Nous supposerons donc dor\'enavant $ r\geqslant 2 $, et nous choisissons $ V=(\log P)^{B} $.\\

 On consid\`ere l'ensemble $ \mathcal{A}=(\emptyset,\{1,...,r\}) $ qui est admissible pour $ r-1 $. Si $ \mathcal{A}(\xx)=\mathcal{A} $, alors pour tout $ j\in \{1,...,r\} $, $ x_{j}\geqslant V_{r-1} $ et $ x_{j}\in \mathcal{V}_{r}=]V_{r},P] $. On en d\'eduit que \[ \Upsilon_{\ee,\mathcal{A }}(P)= \Upsilon_{\ee}(P,(V_{r},...,V_{r}). \] Puisque $ V_{r}=(\log P)^{B^{r}} $ et que $ B^{r}=(\frac{4Ar^{2}}{\delta})^{r}\geqslant \frac{2r}{\delta}\geqslant 1 $, on peut appliquer le lemme\;\ref{sommeW3}, et on obtient : \begin{align*} \Upsilon_{\ee}(P,(V_{r},...,V_{r})) & =c_{\ee}p_{r}\left(\log\left(\frac{P}{\prod_{j=1}^{r}V_{r}^{\alpha_{j}}}\right)\right)P+O\left(\ee^{\bbeta}P(\log P)^{r}(\log P)^{-\frac{B^{r}\delta}{2r}}\right) \\& =\frac{c_{\ee}}{(r-1)!}P(\log P)^{r-1}+O\left(\ee^{\bbeta}P(\log P)^{r-2}\log(\log P)\right).
\end{align*}
Il reste \`a montrer que pour tout autre ensemble $ \mathcal{A} $ admissible pour $ l $, la somme $  \Upsilon_{\ee,\mathcal{A }}(P) $ est du type $ \left(\ee^{\bbeta}P(\log P)^{r-2}\log(\log P)\right) $. Commen\c{c}ons par traiter le cas o\`u $ \mathcal{J}=\emptyset $. Consid\'erons un ensemble $ (\emptyset, \mathcal{L}_{l+1},...,\mathcal{L}_{r}) $ admissible pour $ l $. Nous avons d\'ej\`a trait\'e les cas $ l=r $ et $ l=r-1 $. Nous supposons donc que $ l\in \{0,...,r-2\} $. Soit $ \xx\in \NN^{r} $ tel que $ \mathcal{A}(\xx)=\mathcal{A} $. Puisque $ \mathcal{L}_{l+1}\neq \emptyset $, il existe un certain $ j_{0}\in \{1,...,r\} $ tel que $ x_{j_{0}}\in \mathcal{V}_{l+1} $. Quitte \`a permuter les variables, nous pouvons supposer, pour simplifier, que $ j_{0}=1 $. On a alors, pour tout $ j\neq 1 $, $ x_{j}\geqslant V_{l+1} $. En utilisant le lemme \ref{sommeW3} (en remarquant que $ V_{l+1}>(\log P)^{2r/\delta} $ puisque $ B>\frac{2r}{\delta} $) on en d\'eduit l'estimation : \begin{align*} \Upsilon_{\ee,\mathcal{A }}(P) & \leqslant \sum_{V_{l+1}<x_{1}\leqslant V_{l+2}}\sum_{\substack{\forall j\neq 1, \; x_{j}\geqslant V_{l+1} \\ \xx^{\aalpha}\leqslant P }}h_{\ee}(\xx) \\ & = \Upsilon_{\ee}(P,(V_{l+1},V_{l+1},...,V_{l+1}))-\Upsilon_{\ee}(P,(V_{l+2},V_{l+1},...,V_{l+1})) \\ & = c_{\ee}P\left( p_{r}\left(\log \frac{P}{\prod_{j=1}^{r}V_{l+1}^{\alpha_{j}}}\right)-p_{r}\left(\log \frac{P}{V_{l+2}^{\alpha_{1}}\prod_{j=2}^{r}V_{l+1}^{\alpha_{j}}}\right)\right)+O(\ee^{\bbeta}P) \\ & \ll \ee^{\bbeta}\alpha_{1}P\log\left(\frac{V_{l+2}}{V_{l+1}}\right)(\log P)^{r-2}+O\left( \ee^{\bbeta}P(\log P)^{r-2}\log(\log P)\right) \\ & \ll \ee^{\bbeta}P(\log P)^{r-2}\log(\log P). \end{align*}

Supposons \`a pr\'esent que $ \mathcal{A}=(\mathcal{J},\mathcal{L}_{l+1},...,\mathcal{L}_{r}) $ est admissible pour $ l $ avec $ \mathcal{J}\neq \emptyset $. Puisque tous les $ \mathcal{L}_{m} $ sont non vides, on a $ l\geqslant 1 $. Quitte \`a permuter les variables, on peut supposer que $ \mathcal{J}=\{1,...,k\} $ pour un certain $ k\geqslant 1 $. Soit $ \xx\in \NN^{r} $ tel que $ \mathcal{A}(\xx)=\mathcal{A} $. On pose $ \ww=(x_{1},...,x_{k}) $ et $ \yy=(x_{k+1},...,x_{r}) $. On a alors que $ y_{j}>V_{l+1} $ pour tout $ j $ et $ w_{j}\leqslant V_{l} $, et donc : \[ \Upsilon_{\ee,\mathcal{A }}(P)  \leqslant \sum_{\forall j\in \{1,...,k\}, \; x_{j}\leqslant V_{l} }\sum_{\substack{ \prod_{j=k+1}^{r}y_{j}^{\alpha_{j}}\leqslant \frac{P}{\prod_{j=1}^{k}x_{j}^{\alpha_{j}}} }}h_{\ee}(\ww,\yy). \] Or, d'apr\`es le lemme\;\ref{sommeW3} on a :   \begin{multline*} \sum_{\substack{ \prod_{j=k+1}^{r}y_{j}^{\alpha_{j}}\leqslant \frac{P}{\prod_{j=1}^{k}x_{j}^{\alpha_{j}}} }}h_{\ee}(\ww,\yy) \\  =\frac{ c_{\ee,\{1,...,k\}}(\ww)}{\prod_{j=1}^{k}x_{j}^{\alpha_{j}}}P p_{r-k}\left(\log \frac{P}{\prod_{j=1}^{k}x_{j}^{\alpha_{j}}\prod_{j=k+1}^{r}V_{l+1}^{\alpha_{j}}}\right)\\ +O(\ee^{\bbeta}PV_{l+1}^{-\delta}(\log P)^{r}(\prod_{j=1}^{k}x_{j}^{\alpha_{j}})^{A}) \\  \ll \frac{ c_{\ee,\{1,...,k\}}(\ww)}{\prod_{j=1}^{k}x_{j}^{\alpha_{j}}}P(\log P)^{r-k-1} +\ee^{\bbeta}PV_{l+1}^{-\delta}(\log P)^{r}(\prod_{j=1}^{k}x_{j}^{\alpha_{j}})^{A}. \end{multline*}
Puis, en sommant sur les $ \ww $ tels que $ |\ww|\leqslant V_{l}  $ et en utilisant le th\'eor\`eme\;\ref{thmintermediaire3}, on obtient \begin{multline*}   \Upsilon_{\ee,\mathcal{A }}(P)  \ll \ee^{\bbeta}P (\log P)^{r-k-1}(\log V_{l})^{k}  + \ee^{\bbeta}\prod_{j=1}^{k}V_{l}^{A+1}V_{l+1}^{-\delta/2r}(\log P )^{r}\\ \ll \ee^{\bbeta}P (\log P)^{r-k-1}(\log(\log P))^{k}  + \ee^{\bbeta}\prod_{j=1}^{k}(\log P )^{r+kB^{l}(A+1)-B^{l+1}\delta/2r}\\ \ll \ee^{\bbeta}P (\log P)^{r-2}\log(\log P)  + \ee^{\bbeta}\prod_{j=1}^{k}(\log P )^{r-2} \end{multline*}
(par d\'efinition $ B $, on a en effet $ kB^{l}(A+1)-B^{l+1}\delta/2r\leqslant-2 $). 
\end{proof}
Nous sommes \`a pr\'esent en mesure de d\'emontrer le th\'eor\`eme\;\ref{resultatBB3} : 
\begin{proof}[D\'emonstration du th\'eor\`eme\;\ref{resultatBB3}]
Posons $ \kappa=\frac{\nu}{(\sum_{j=1}^{r}\alpha_{j})B^{r}} $ et choisissons $ V=P^{\kappa} $. Nous allons montrer que pour un ensemble $ \mathcal{A} $ admissible pour $ l\in \{0,...,r-1\} $, il existe un polyn\^ome $ p_{\mathcal{A},\ee} $ de degr\'e au plus $ r-1 $ \`a coefficients du type $ O(\ee^{\bbeta}) $ tel que : \begin{equation}\label{formpol3}
 \Upsilon_{\ee,\mathcal{A }}(P)=Pp_{\mathcal{A},\ee}(\log P)+O(\ee^{\bbeta}P^{1-\eta})
\end{equation} pour un certain $ \eta>0 $. Commen\c{c}ons par consid\'erer le cas o\`u $ \mathcal{A}=\{\emptyset,\mathcal{L}_{l+1},...,\mathcal{L}_{r}\} $ admissible pour $ l $. Si $ l=r-1 $, alors $ \mathcal{L}_{r}=\{1,...,r\} $ et $ \Upsilon_{\ee,\mathcal{A }}(P)=\Upsilon_{\ee}(P,(V_{r},...,V_{r})) $ et on a donc : \begin{align*}  \Upsilon_{\ee,\mathcal{A }}(P) & =c_{\ee}Pp_{r}\left(\log \frac{P}{V_{r}^{\sum_{j=1}^{r}\alpha_{j}}}\right)+O(\ee^{\bbeta}PV_{r}^{-\delta/2r}) \\ &=c_{\ee}Pp_{r}\left(\log \frac{P}{V_{r}^{\sum_{j=1}^{r}\alpha_{j}}}\right)+O(\ee^{\bbeta}P^{1-\eta})   \end{align*}
pour $ 0<\eta<\frac{\nu}{rB} $. Si \`a pr\'esent $ l\in \{0,...,r-2\} $, on a alors si $ \mathcal{A}(\xx)=\mathcal{A} $ pour tout $ m\in \{l+1,...,r-1\} $, $ V_{m}<x_{j}\leqslant V_{m+1} $, $ \forall j\in \mathcal{L}_{m} $ et on en d\'eduit : \[  \Upsilon_{\ee,\mathcal{A }}(P)=\sum_{\VV=(V^{(1)},...,V^{(r)})}(-1)^{\varepsilon(\VV)}\Upsilon_{\ee}(P,\VV), \] avec $ V^{(j)}\in \{V_{m},V_{m+1}\} $ pour $ j\in \mathcal{L}_{m} $ et $ V^{(j)}=V_{r} $ si $ j\in \mathcal{L}_{r} $, et avec $ \varepsilon(\VV)\in \{0,1\} $ pour tout $ \VV $. Puisque $ V=P^{\kappa} $, d'apr\`es le lemme\;\ref{sommeW3} : \[ \Upsilon_{\ee}(P,\VV)=c_{\ee}P p_{r}\left(\log \frac{P}{\prod_{j=1}^{r}(V^{(j)})^{\alpha_{j}}}\right)+O(\ee^{\bbeta}P^{1-\eta}). \] Puisque tous les $ V^{(j)} $ sont des puissances de $ P $, on a que $ \log \frac{P}{\prod_{j=1}^{r}(V^{(j)})^{\alpha_{j}}} $ est un multiple de $ \log P $, et on obtient donc bien la formule\;\eqref{formpol3} pour $  \Upsilon_{\ee,\mathcal{A }}(P) $.\\

Consid\'erons \`a pr\'esent $ \mathcal{A}=\{\mathcal{J},\mathcal{L}_{l+1},...,\mathcal{L}_{r}\} $ est admissible pour $ l $ avec $ \mathcal{J}\neq \emptyset $. Quitte \`a permuter les variables, on peut supposer $ \mathcal{J}=\{1,...,k\} $. Nous noterons alors $ \xx=(\ww,\yy) $ pour tout $ \xx $ tel que $ \mathcal{A}(\xx)=\mathcal{A} $, comme dans la d\'emonstration du th\'eor\`eme pr\'ec\'edent. On obtient alors \[ \Upsilon_{\ee,\mathcal{A}}(P)=\sum_{|\ww|\leqslant V_{l}}\sum_{\substack{\prod_{j=k+1}^{r}y_{j}^{\alpha_{j}}\leqslant \frac{P}{\prod_{j=1}^{k}x_{j}^{\alpha_{j}}} \\ \forall j\in \mathcal{L}_{m}, \; y_{j}\in \mathcal{V}_{m}}}h_{\ee}(\ww,\yy). \] Comme pr\'ec\'edemment, nous pouvons \'ecrire \[\Upsilon_{\ee,\mathcal{A}}(P)=\sum_{\substack{\forall j\in \{1,...,k\} \\ x_{j}\leqslant V_{l} }}\sum_{\VV=(V^{(1)},...,V^{(r))}}(-1)^{\varepsilon(\VV)}\sum_{\substack{ \prod_{j=k+1}^{r}y_{j}^{\alpha_{j}}\leqslant \frac{P}{\prod_{j=1}^{k}x_{j}^{\alpha_{j}}} \\ \yy\geqslant \VV }}h_{\ee}(\ww,\yy). \]
Or, d'apr\`es le lemme\;\ref{lemmA3} et le lemme\;\ref{sommeW3} \begin{multline*}
\sum_{\substack{ \prod_{j=k+1}^{r}y_{j}^{\alpha_{j}}\leqslant \frac{P}{\prod_{j=1}^{k}x_{j}^{\alpha_{j}}} \\ \yy\geqslant \VV }}h_{\ee}(\ww,\yy) \\ =\frac{ c_{\ee,\{1,...,k\}}(\ww)}{\prod_{j=1}^{k}x_{j}^{\alpha_{j}}}P p_{r-k}\left(\log \frac{P}{\prod_{j=1}^{k}x_{j}^{\alpha_{j}}\prod_{j=k+1}^{r}(V^{(j)})^{\alpha_{j}}}\right)\\ +O(\ee^{\bbeta}P^{1}V_{l+1}^{-\delta}(\log P)^{r}(\prod_{j=1}^{k}x_{j}^{\alpha_{j}})^{A}).
\end{multline*}
Remarquons que $ \prod_{j=k+1}^{r}(V^{(j)})^{\alpha_{j}}=P^{\rho} $ pour un certain $ \rho>0 $, et donc que \[ \log \frac{P}{\prod_{j=1}^{k}x_{j}^{\alpha_{j}}\prod_{j=k+1}^{r}(V^{(j)})^{\alpha_{j}}}=(1-\rho)\log P-\log \prod_{j=1}^{k}x_{j}^{\alpha_{j}}.  \] Le polyn\^ome $ p_{r-k}\left(\log \frac{P}{\prod_{j=1}^{k}x_{j}^{\alpha_{j}}\prod_{j=k+1}^{r}(V^{(j)})^{\alpha_{j}}}\right) $ peut donc se r\'e\'ecrire comme un polyn\^ome en $ \log P $ (\`a coefficients $ O(1) $). En sommant sur $ \VV $ et sur $ \ww $, on obtient alors qu'il existe un polyn\^ome $ p_{\mathcal{A},\ee} $ \`a coefficients $ O(\ee^{\bbeta}) $ tels que : \begin{align*} \Upsilon_{\ee,\mathcal{A }}(P) & =p_{\mathcal{A},\ee}(\log P)P+O(\ee^{\bbeta}P^{1}V_{l+1}^{-\delta}(\log P)^{r}(\prod_{j=1}^{k}V_{l}^{\alpha_{j}+1})^{A}) \\ & =p_{\mathcal{A},\ee}(\log P)P+O(\ee^{\bbeta}P^{1-\eta}), \end{align*} pour un certain $ \eta>0 $, d'o\`u le r\'esultat.
Nous avons donc \'etabli la formule\;\eqref{formpol3} pour tout ensemble $ \mathcal{A} $ admissible pour $ l\in \{0,...,r-1\} $. Nous en d\'eduisons qu'il existe un polyn\^ome $ p_{\ee}^{\ast} $ de degr\'e au plus $ r-1 $ \`a coefficients $ O(\ee^{\bbeta}) $ tel que \[ \Upsilon_{\ee}(P)=p_{\ee}^{\ast}(\log P)+O(\ee^{\bbeta}P^{1-\eta}). \] Or, d'apr\`es le lemme\;\ref{lemmeavantthm3}, nous savons alors que $ p_{\ee}^{\ast}(t) $ est du type $ \frac{c_{\ee}}{(r-1)!}t^{r-1}+\sum_{k=0}^{r-2}\alpha_{k}t^{k} $, avec $ \alpha_{k}\ll \ee^{\bbeta} $ pour tout $ k $, ce qui cl\^ot la d\'emonstration du th\'eor\`eme. 
\end{proof}

\subsection{Application du th\'eor\`eme\;\ref{resultatBB3} }
Comme nous l'avons annonc\'e \`a la fin de la section\;\ref{preliminaires}, nous allons chercher \`a appliquer le th\'eor\`eme\;\ref{resultatBB3} aux familles de fonctions $ (\overline{h}_{\ee})_{\ee\in \NN^{n+r}} $ et $ (\underline{h}_{\ee})_{\ee\in \NN^{n+r}} $. Pour cela, il convient de v\'erifier que ces familles de fonctions sont bien des $  (\aalpha,D,\nu,\delta,\bbeta)- $ famille pour des param\`etres  $  (\aalpha,D,\nu,\delta,\bbeta) $ bien choisis. \\

Rappelons que par d\'efinition :  \begin{multline*}\overline{h}_{\ee}(k_{1},...,k_{r})=\Card\left\{ 
 \xx\in (\ZZ\setminus\{0\})^{n+r} \; | \;  \ee.\xx\in U, \;F(\ee.\xx)=0, \;\right. \\ \left. \forall j\in \{1,...,r\}, \;  \left\lfloor|(\ee.\xx)^{E(n+j)}|\right\rfloor=k_{j}, \;  \forall i\in \{1,...,n+r\},  \; |x_{i}|\leqslant \frac{1}{e_{i}}\prod_{j=1}^{r}(k_{j}+1)^{a_{i,j}} \right\}.
\end{multline*}
Or, nous avons vu que, puisque $ \sum_{\forall j\in \{1,...,r\}, k_{j}\leqslant P_{j}}=N_{U,\ee}(P_{1},...,P_{r}) $, d'apr\`es le th\'eor\`eme\;\ref{thmNU3}, $ (\underline{h}_{\ee})_{\ee\in \NN^{n+r}} $ v\'erifie la condition $ 1. $ pour $ \alpha_{j}=n_{j}-d_{j} $ pour tout $ j\in \{1,...,r\} $, $ c_{\ee}=C_{\sigma,\ee} $ et $ \bbeta $ d\'efini par \[ \ee^{\bbeta}=\max\left(e_{0}^{4+\delta}\left(\prod_{i=1}^{n+r}e_{i}\right)^{-1},\left(\prod_{i=1}^{n+r}e_{i}\right)^{-\frac{1}{2}}\right), \] (le fait que $ C_{\sigma,\ee}\ll \ee^{\bbeta} $ d\'ecoule de la remarque\;\ref{remsigmae3}). \\

On remarque que, pour tous $ \kk=(k_{m+1},...,k_{r}) $ et tous $ P_{1},...,P_{m} $, pour \begin{multline*} \phi_{m}(\kk)= \{(x_{i})_{i\in I_{m}}\in \mathcal{A}_{m}^{\lambda}\cap \ZZ^{s},\; |\;  \forall j\in \{1,...,r\}, \;  \left\lfloor|(\ee.\xx)^{E(n+j)}|\right\rfloor=k_{j}, \; \\ \forall i\in \{1,...,n+r\},  \; |x_{i}|\leqslant \frac{1}{e_{i}}\prod_{j=1}^{r}(k_{j}+1)^{a_{i,j}}\}  \end{multline*}
\begin{multline*}
\sum_{\forall j\in \{1,...,m\}, k_{j}\leqslant P_{j}}\overline{h}_{\ee}(k_{1},...,k_{r}) \\=\sum_{(x_{i})_{i\in I_{m}}\in \phi_{m}(\kk) }N_{(x_{i})_{i\in I_{m}},\ee}(P_{1},...,P_{m})+O(E)
\end{multline*}
o\`u  
\begin{align*}
E & = \sum_{\sigma\in\mathfrak{S}_{r} }\sum_{m'=1}^{r-1}\sum_{(x_{i})_{i\in I_{m',\sigma}}\notin \mathcal{A}_{m',\sigma}^{\lambda_{\sigma}}}\left(\prod_{i\notin I_{m',\sigma}}e_{i}\right)^{-1}\left(\prod_{j=1}^{m}P_{j}^{n_{\sigma(j)}}\right)\left(\prod_{j=m+1}^{r}k_{j}^{\sum_{i\notin I_{m,\sigma}}a_{i,\sigma(j)}}\right) \\ & \ll \left(\prod_{i=1}^{n+r}e_{i}\right)^{-\frac{1}{2}}\left(\prod_{j=1}^{m}P_{j}^{n_{j}}\right)\left(\prod_{j=m+1}^{r}k_{j}^{n_{j}+4d_{j}}\right) P_{\sigma(r)}^{-\lambda_{\sigma}/2}.
\end{align*} 
D'autre part, la formule du corollaire\;\ref{corollairex3}, est valable pour tous $ (x_{i})_{i\in I_{m}},\kk $ tels que $ \left(e_{0,m}\prod_{j=m+1}^{r}k_{j}^{d_{j}}\right)^{2}P^{-1+3\tilde{d}_{m}\theta}<1 $, o\`u $ P=\prod_{j=1}^{m}P_{j}^{d_{j}} $. Par cons\'equent, on obtient, uniform\'ement pour tous $ (k_{m+1},...,k_{r}) $ tels que $ \left(e_{0}\prod_{j=m+1}^{r}k_{j}^{d_{j}}\right)^{2}<P^{1-3\tilde{d}_{m}\theta} $ donc en particulier pour $ \left(e_{0}\prod_{j=m+1}^{r}k_{j}^{d_{j}}\right)^{2}<\left(\prod_{j=1}^{m}P_{j}^{d_{j}}\right)^{\frac{7}{10}} $ (puisque $ \theta<\frac{1}{10\tilde{d}_{m}} $) :

\begin{multline}\label{cond2k3}
 \sum_{\forall j\in \{1,...,m\}, k_{j}\leqslant P_{j}}\overline{h}_{\ee}(k_{1},...,k_{r})=\sum_{(x_{i})_{i\in I_{m}}\in \phi_{m}(\kk) }\mathfrak{S}_{(x_{i})_{i\in I_{m}},\ee}J_{(x_{i},e_{i})_{i\in I_{m}},\kk} \\ \left(\prod_{i\notin I_{m}}e_{i}\right)^{-1}\left(\prod_{j=m+1}^{r}k_{j}^{\sum_{i\notin I_{m}}a_{i,j}}\right)\left(\prod_{j=1}^{m}P_{j}^{n_{j}-d_{j}}\right) \\ + O\left(\left(e_{0}^{4}\left(\prod_{i=1}^{n+r}e_{i}\right)^{-1}+\left(\prod_{i=1}^{n+r}e_{i}\right)^{-\frac{1}{2}}\right)\left(\prod_{j=m+1}^{r}k_{j}^{n_{j}+4d_{j}}\right)\left(\prod_{j=1}^{m}P_{j}^{n_{j}-d_{j}}\right)P^{-\delta}\right).
\end{multline}

On peut en fait remplacer la condition $ \left(e_{0}\prod_{j=m+1}^{r}k_{j}^{d_{j}}\right)<\left(\prod_{j=1}^{m}P_{j}^{d_{j}}\right)^{\frac{7}{20}} $ par \begin{equation}\label{condunif3} \prod_{j=m+1}^{r}k_{j}^{d_{j}}<\left(\prod_{j=1}^{m}P_{j}^{d_{j}}\right)^{\frac{1}{10}-\delta}, \end{equation} avec $ \delta>0 $ arbitrairement petit. En effet, si l'on suppose $ e_{0}\leqslant \left(\prod_{j=1}^{m}P_{j}^{d_{j}}\right)^{\frac{1}{4}+\delta}  $, alors la condition\;\eqref{condunif3} implique : \[ \left(e_{0}\prod_{j=m+1}^{r}k_{j}^{d_{j}}\right)<\left(\prod_{j=1}^{m}P_{j}^{d_{j}}\right)^{\frac{1}{10}+\frac{1}{4}}=\left(\prod_{j=1}^{m}P_{j}^{d_{j}}\right)^{\frac{7}{20}}, \] et donc la formule\;\eqref{cond2k3} est v\'erifi\'ee.\\

Si l'on suppose \`a pr\'esent que $ e_{0}> \left(\prod_{j=1}^{m}P_{j}^{d_{j}}\right)^{\frac{1}{4}+\delta}  $ on a alors que, d'apr\`es les lemmes\;\ref{Sx3} et\;\ref{Jx3} : \begin{multline*}
\sum_{(x_{i})_{i\in I_{m}}\in \phi_{m}(\kk) }\mathfrak{S}_{(x_{i})_{i\in I_{m}},\ee}J_{(x_{i},e_{i})_{i\in I_{m}},\kk} \left(\prod_{i\notin I_{m}}e_{i}\right)^{-1}\left(\prod_{j=m+1}^{r}k_{j}^{\sum_{i\notin I_{m}}a_{i,j}}\right)\left(\prod_{j=1}^{m}P_{j}^{n_{j}-d_{j}}\right) \\ \ll  e_{0,m}^{2+\delta}\left(\prod_{i\notin I_{m}}e_{i}\right)^{-1}\left(\prod_{j=m+1}^{r}k_{j}^{\sum_{i\notin I_{m}}a_{i,j}+4d_{j}+\delta}\right)\left(\prod_{j=1}^{m}P_{j}^{n_{j}-d_{j}}\right)\Card\phi_{m}(\kk) \\ \ll e_{0}^{4}\left(\prod_{i=1}^{n+r}e_{i}\right)^{-1}\left(\prod_{j=m+1}^{r}k_{j}^{n_{j}+4d_{j}}\right)\left(\prod_{j=1}^{m}P_{j}^{n_{j}-d_{j}}\right)P^{-\delta},
\end{multline*}
et d'autre part, $  \sum_{\forall j\in \{1,...,m\}, k_{j}\leqslant P_{j}}\overline{h}_{\ee}(k_{1},...,k_{r}) $ peut \^etre major\'e trivialement par \begin{multline*} \left(\prod_{i=1}^{n+r}e_{i}\right)^{-1}\left(\prod_{j=m+1}^{r}k_{j}^{n_{j}}\right)\left(\prod_{j=1}^{m}P_{j}^{n_{j}}\right) \\ \ll  e_{0}^{4}\left(\prod_{i=1}^{n+r}e_{i}\right)^{-1}\left(\prod_{j=m+1}^{r}k_{j}^{n_{j}+4d_{j}}\right)\left(\prod_{j=1}^{m}P_{j}^{n_{j}-d_{j}-\delta}\right). 
\end{multline*}
La formule\;\eqref{cond2k3} est donc v\'erifi\'ee. Ainsi, on a que la condition $ 2. $ est v\'erifi\'ee par $ (\overline{h}_{\ee})_{\ee} $ pour $ D=\max_{j\in \{1,...,r\}}\{n_{j}+4d_{j}\} $ et $ \nu=\frac{1}{\max_{j\in \{1,...,r\}}d_{j}}\left(\frac{1}{10}-\delta\right) $. Nous pouvons donc bien appliquer le th\'eor\`eme \`a la famille $ (\overline{h}_{\ee})_{\ee} $ et nous obtenons : \begin{multline*} \sum_{\prod_{j=1}^{r}k_{j}^{n_{j}-d_{j}}\leqslant B}\overline{h}_{\ee}(k_{1},...,k_{r}) \\ =C_{\sigma,\ee}B(\log B)^{r-1}+O\left( \left(e_{0}^{4+\delta}\left(\prod_{i=1}^{n+r}e_{i}\right)^{-1}+\left(\prod_{i=1}^{n+r}e_{i}\right)^{-\frac{1}{2}}\right)B(\log B)^{r-2}\right). \end{multline*}
Par les m\^emes calculs, on trouve exactement le m\^eme r\'esultat avec $ (\underline{h}_{\ee})_{\ee} $, et on en d\'eduit le th\'eor\`eme ci-dessous : 

\begin{thm}\label{thmNsigmaU3} Si l'on a $ n+r>\mathfrak{m}  $, alors pour tout $ \sigma\in \Delta_{\max} $ :
\[ N_{\ee,\sigma,U}(B)=\frac{1}{(r-1)!}C_{\sigma,\ee}B\log(B)^{r-1}+O\left(\left(e_{0}^{4+\delta}\left(\prod_{i=1}^{n+r}e_{i}\right)^{-1}+\left(\prod_{i=1}^{n+r}e_{i}\right)^{-\frac{1}{2}}\right)B\log(B)^{r-2}\right). \] 
\end{thm}

\begin{rem}\label{remnF3}
On remarque que, d'apr\`es le lemme\;\ref{bornefrakm3} on a  
\[ \mathfrak{m} \leqslant r(8.2^{\sum_{j=1}^{r}d_{j}}+4)\left(\prod_{j=1}^{r}(3+10d_{j})\right)\left(\sum_{j=1}^{r} d_{j}\right)+\max_{\substack{m\in \{1,...,r\} \\ \sigma\in \mathfrak{S}_{r} }}\max_{(j,k)\in \mathcal{C}_{m,\tau} }\dim V^{\ast}_{\tau,m,\tt,(j,k)}. \]
Par cons\'equent, si l'on note 
 \[ n(F)= n+r-\max_{\substack{m\in \{1,...,r\} \\ \tau\in \mathfrak{S}_{r} }}\min_{\tt^{(m,\tau)}\in \mathcal{D}_{m,\tau}}\max_{(j,k)\in \mathcal{C}_{m,\tau} }\dim V^{\ast}_{\tau,m,\tt^{(m,\tau)},(j,k)},   \] o\`u $ \mathcal{D}^{\ast}_{m,\tau}=\{\dd=(d_{j,k})_{(j,k)\in\mathcal{C}_{m,\tau} }\in \NN^{\mathcal{C}_{m,\tau}}\;|\; F_{\dd}\neq 0 \} $, alors le th\'eor\`eme\;\ref{thmNsigmaU3} est v\'erifi\'e lorsque 
\[  n(F)\geqslant r(8.2^{\sum_{j=1}^{r}d_{j}}+4)\left(\prod_{j=1}^{r}(3+10d_{j})\right)\left(\sum_{j=1}^{r} d_{j}\right). \]
\end{rem}

\section{Conclusion et interpr\'etation des constantes }

Nous allons \`a pr\'esent conclure quant \`a la formule asymptotique pour \[ \mathcal{N}_{V}(B)=\Card\{P=\pi(\xx)\in Y(\QQ) \; | \; \xx\in V(\QQ)\; |\; H_{D_{0}}(P)\leqslant B\}, \] o\`u $ V $ est l'ouvert $ V=\pi(U) $ de $ X $. Nous avons vu dans la section\;\ref{hauteur} que \[ \mathcal{N}_{V}(B)=\frac{1}{2^{r}} \mathcal{N}_{0,U}(B), \]
o\`u 
\begin{multline*} \mathcal{N}_{0,U}(B)  =\Card\{ \xx\in (\ZZ\setminus \{0\})^{n+r}\cap U\; |\;  \\ \; F(\xx)=0, \; \PGCD_{\sigma\in \Delta_{\max}}(\xx^{\underline{\sigma}})=1, \; H_{0}(\xx)\leqslant B \}. \end{multline*}
Nous avons par ailleurs : \[ \mathcal{N}_{0,U}(B)=\sum_{\sigma \in \Delta_{\max}}N_{U,\sigma}(B)+O\left(B(\log B)^{r-2}\right) \] o\`u 
\begin{multline*}
N_{U,\sigma}(B)=\Card\{\xx\in (\ZZ\setminus\{0\})^{n+r}\cap C_{0,\sigma}\cap U  \; |\; \PGCD_{\sigma\in \Delta_{\max}}(\xx^{\underline{\sigma}})=1, \\ \; F(\xx)=0, \; H_{0}(\xx)\leqslant B \}.
\end{multline*} 
Nous allons \`a pr\'esent relier le cardinal $ N_{U,\sigma}(B) $ \`a $ N_{\ee,\sigma,U} $ via une inversion de M\"{o}bius afin d'en d\'eduire une formule asymptotique pour $ \mathcal{N}_{V}(B) $ \`a partir du th\'eor\`eme\;\ref{thmNsigmaU3}.

\subsection{Sommation de M\"{o}bius}\label{sommemobius}

D\'efinissons \`a pr\'esent la fonction arithm\'etique $ \mu $ (cf. \cite[Proposition 11.9]{Sa}) : 
Pour tout $ \ee,\dd\in \NN^{n+r} $ on pose : \[ \chi(\ee)=\left\{ \begin{array}{lcr } 1 & \mbox{si} & \PGCD_{\sigma\in \Delta_{\max}}(\ee^{\underline{\sigma}})=1 \\ 0 & \mbox{sinon}
\end{array}\right. , \]\[ \chi_{\dd}(\ee)=\left\{ \begin{array}{lcr } 1 & \mbox{si} & \forall i\in \{1,...,n+r\}, \; d_{i}|e_{i} \\ 0 & \mbox{sinon}\end{array}\right.. \] Il existe alors (voir \cite[Proposition 11.9]{Sa}) une unique fonction arithm\'etique $ \mu : \NN^{n+r}\ra \RR $ telle que pour tout $ \ee\in \NN^{n+r} $ : \[ \chi(\ee)=\sum_{\dd\in (\NN^{\ast})^{n+r}}\mu(\dd)\chi_{\dd}(\ee). \]
Donnons quelques propri\'et\'es de cette fonction $ \mu $. 
\begin{lemma}
Si, pour $ p $ premier, \[ \chi^{(p)}(\ee)=\left\{ \begin{array}{lcr } 1 & \mbox{si} & p\nmid \PGCD_{\sigma\in \Delta_{\max}}(\ee^{\underline{\sigma}}) \\ 0 & \mbox{sinon}\end{array}\right.,\]
alors on a \[ \chi^{(p)}=\sum_{\substack{\dd=(p^{\nu_{1}},...,p^{\nu_{n+r}}) \\ \nu_{1},...,\nu_{n+r}\in \NN^{n+r}}}\mu(\dd)\chi_{\dd}(\ee). \]
De plus, si $ \dd=(p^{\nu_{1}},...,p^{\nu_{n+r}}) $, avec l'un des $ \nu_{i} $ sup\'erieur ou \'egal \`a $ 2 $, alors $ \mu(\dd)=0 $. 

\end{lemma}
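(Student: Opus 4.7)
Le plan est de combiner la structure multiplicative de $\chi$ sur les nombres premiers avec une inversion de M\"obius sur le treillis bool\'een des sous-ensembles de $\{1,\ldots,n+r\}$.

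Dans un premier temps j'\'etablirai que $\chi$ et chaque $\chi_\dd$ se factorisent sur les nombres premiers : $\chi(\ee)=\prod_p\chi^{(p)}(\ee)$ (tous les facteurs sauf un nombre fini valant $1$), car la condition de pgcd d\'efinissant $\chi$ \'equivaut \`a sa version $p$-locale pour tout premier $p$; et de m\^eme $\chi_\dd(\ee)=\prod_p \chi_{\dd^{(p)}}(\ee)$ o\`u $\dd^{(p)}:=(p^{v_p(d_i)})_{i=1}^{n+r}$. L'unicit\'e de la fonction $\mu$ rappel\'ee en d\'ebut de section force alors $\mu$ \`a \^etre multiplicative au sens $\mu(\dd)=\prod_p \mu(\dd^{(p)})$, et en regroupant les facteurs premier par premier dans $\chi=\sum_\dd \mu(\dd)\chi_\dd$ on obtient la premi\`ere identit\'e annonc\'ee $\chi^{(p)}=\sum_{\dd=(p^{\nu_1},\ldots,p^{\nu_{n+r}})}\mu(\dd)\chi_\dd$.

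Je passe ensuite \`a l'assertion de nullit\'e. Fixons un premier $p$ et remarquons que $\chi^{(p)}(\ee)$ ne d\'epend que du sous-ensemble $S(\ee):=\{i\in\{1,\ldots,n+r\}\ :\ p\mid e_i\}$ : en effet $\chi^{(p)}(\ee)=1$ si et seulement s'il existe $\sigma\in\Delta_{\max}$ avec $S(\ee)\subset\sigma(1)$. En posant $\phi(S):=1$ si un tel $\sigma$ existe et $0$ sinon, on a $\chi^{(p)}(\ee)=\phi(S(\ee))$. Par inversion de M\"obius sur le treillis bool\'een, il existe d'uniques coefficients $(c_T)_{T\subset\{1,\ldots,n+r\}}$ tels que $\phi(S)=\sum_{T\subset S}c_T$, et donc
\[
\chi^{(p)}(\ee)=\sum_T c_T\, \mathbf{1}_{T\subset S(\ee)}=\sum_T c_T\,\chi_{\dd_T}(\ee),
\]
o\`u $\dd_T:=(p^{\varepsilon_i})_i$ avec $\varepsilon_i=1$ si $i\in T$ et $0$ sinon. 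Ce d\'eveloppement n'utilise que des multi-indices dont les exposants appartiennent \`a $\{0,1\}$. La comparaison avec le d\'eveloppement de la premi\`ere \'etape, jointe \`a l'unicit\'e des coefficients de M\"obius $p$-locaux, identifie $\mu(\dd_T)=c_T$ et force $\mu(\dd)=0$ d\`es qu'un des $\nu_i$ est sup\'erieur ou \'egal \`a $2$.

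L'obstacle principal r\'eside dans la mise en place rigoureuse de l'argument d'unicit\'e, les deux d\'eveloppements de $\chi^{(p)}$ \'etant a priori des sommes infinies index\'ees par $(\nu_1,\ldots,\nu_{n+r})\in\NN^{n+r}$. Pour contourner ce probl\`eme, j'\'evaluerai l'\'egalit\'e sur les \'el\'ements $\ee=(p^{a_1},\ldots,p^{a_{n+r}})$ : pour de tels $\ee$, $\chi_\dd(\ee)=1$ si et seulement si $\nu_i\leq a_i$ pour tout $i$, de sorte que les deux membres deviennent des sommes finies; en prenant alors des diff\'erences successives des valeurs obtenues lorsque les $a_i$ varient, on retrouve r\'ecursivement chaque coefficient $\mu(p^{\nu_1},\ldots,p^{\nu_{n+r}})$, ce qui livre simultan\'ement l'identification avec les $c_T$ et l'assertion de nullit\'e.
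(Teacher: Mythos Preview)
The paper states this lemma without proof (it is presumably regarded as a routine exercise in M\"obius inversion, in the spirit of \cite[\S 11]{Sa}), so there is no original argument to compare against. Your overall strategy is correct, and your observation that $\chi^{(p)}(\ee)$ depends only on the set $S(\ee)=\{i:\ p\mid e_i\}$ is exactly the structural fact needed for the vanishing assertion.

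One presentational issue: the multiplicativity step in your first paragraph, written as $\mu(\dd)=\prod_p\mu(\dd^{(p)})$ with the same $\mu$ on both sides, reads circularly---the right-hand side requires already knowing $\mu$ on $p$-power tuples, which is precisely the first identity you are trying to establish. You can bypass this entirely: simply evaluate the defining relation $\chi(\ee)=\sum_\dd\mu(\dd)\chi_\dd(\ee)$ at an $\ee$ whose entries are all powers of $p$. For such $\ee$ one has $\chi(\ee)=\chi^{(p)}(\ee)$, and $\chi_\dd(\ee)=0$ unless $\dd$ is itself a $p$-power tuple, so the sum automatically restricts to such $\dd$. This proves the first identity for $p$-power $\ee$; since both sides depend only on $(v_p(e_i))_i$, it extends to all $\ee$. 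Your final paragraph essentially does this anyway, so the multiplicativity detour can simply be dropped. Once the first identity is in place, your recursive extraction of coefficients via evaluation at $\ee=(p^{a_1},\ldots,p^{a_{n+r}})$ and comparison with the Boolean expansion is a clean way to conclude the vanishing.
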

\begin{lemma}[\bf{Salberger}]\label{fonctmu3}
\begin{enumerate}
\item Soient $ \dd,\ee \in( \NN^{\ast})^{n+r} $ et $ \delta=\PGCD_{\sigma\in \Delta_{\max}}(\dd^{\underline{\sigma}}) $, $ \varepsilon=\PGCD_{\sigma\in \Delta_{\max}}(\ee^{\underline{\sigma}}) $. Si $ \PGCD(\delta,\varepsilon)=1 $, alors \[ \mu(\dd.\ee)=\mu(\dd)\mu(\ee). \]
\item Soit $ f $ le plus petit entier tel qu'il existe $ f $ ar\^etes de $ \Delta $ non contenues dans un c\^one de $ \Delta $. Alors le produit eul\'erien : \[ \prod_{p\in \mathcal{P}}\left(\sum_{\substack{\dd=(p^{\nu_{1}},...,p^{\nu_{n+r}}) \\ \nu_{1},...,\nu_{n+r}\in \NN^{n+r}}}\frac{|\mu(\dd)|}{\left(\prod_{i=1}^{n+r}d_{i}\right)^{s}}\right) \] est absolument convergent pour $ s>\frac{1}{f} $. 
\end{enumerate}

\end{lemma}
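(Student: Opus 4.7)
Le point de d\'epart sera d'exploiter la d\'ecomposition locale de $\chi$ comme produit sur les nombres premiers. Plus pr\'ecis\'ement, pour tout $\ee\in\NN^{n+r}$ on a $\chi(\ee)=\prod_{p\in\mathcal{P}}\chi^{(p)}(\ee)$, puisque $\PGCD_{\sigma\in\Delta_{\max}}(\ee^{\underline{\sigma}})=1$ \'equivaut \`a l'absence de diviseur premier commun. Le lemme pr\'ec\'edent nous donne par ailleurs la d\'ecomposition locale $\chi^{(p)}=\sum_{\dd\, p\text{-primaire}}\mu(\dd)\chi_{\dd}$, valable seulement pour des $\dd$ dont toutes les coordonn\'ees sont des puissances d'un m\^eme premier $p$, et avec $\nu_i\in\{0,1\}$ puisque sinon $\mu(\dd)=0$. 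Par unicit\'e de l'inversion de M\"{o}bius multiplicative, la fonction $\mu$ v\'erifie automatiquement une factorisation $\mu(\dd)=\prod_{p}\mu(\dd_{(p)})$ o\`u $\dd_{(p)}$ d\'esigne la $p$-composante de $\dd$ ; cette observation est le c\oe ur de toute la preuve.

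Pour l'assertion $(1)$, on part de cette factorisation premi\`ere par premi\`ere. L'hypoth\`ese $\PGCD(\delta,\varepsilon)=1$ signifie qu'aucun nombre premier $p$ ne divise simultan\'ement $\PGCD_{\sigma}(\dd^{\underline\sigma})$ et $\PGCD_{\sigma}(\ee^{\underline\sigma})$. Or, si $\mu(\dd_{(p)})\neq 0$ alors $\chi_{\dd_{(p)}}$ intervient effectivement dans la d\'ecomposition de $\chi^{(p)}$, ce qui force $p\mid \PGCD_{\sigma}(\dd^{\underline\sigma})$ ; et de m\^eme pour $\ee$. Ainsi, sous l'hypoth\`ese, les supports premiers non triviaux de $\mu(\dd_{(p)})$ et $\mu(\ee_{(p)})$ sont disjoints, et pour tout $p$ on a soit $\mu((\dd\cdot\ee)_{(p)})=\mu(\dd_{(p)})$ (avec $\mu(\ee_{(p)})=1$), soit l'inverse. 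En multipliant premier par premier, on obtient $\mu(\dd\cdot\ee)=\mu(\dd)\mu(\ee)$.

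Pour l'assertion $(2)$, on fixe un premier $p$ et on analyse le facteur local
\[
S_p(s)=\sum_{\substack{\dd=(p^{\nu_1},\dots,p^{\nu_{n+r}})\\ \nu_i\in\NN}}\frac{|\mu(\dd)|}{(\prod_{i=1}^{n+r}d_i)^s}.
\]
Par le lemme pr\'ec\'edent, seuls les $\dd$ avec $\nu_i\in\{0,1\}$ contribuent, ce qui ram\`ene la somme \`a une somme sur les parties $I\subset\{1,\dots,n+r\}$ via $I=\{i:\nu_i=1\}$. De plus, par l'argument local ci-dessus, $\mu(\dd_I)\neq 0$ implique que $p$ divise $\PGCD_{\sigma}(\dd_I^{\underline\sigma})=\prod_{i\notin\sigma(1)}p^{\mathbf{1}_{i\in I}}$ pour tout $\sigma\in\Delta_{\max}$, c'est-\`a-dire $I\not\subset\sigma(1)$ pour tout c\^one maximal. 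On v\'erifie alors que ceci \'equivaut \`a ce que $I$ ne soit contenu dans aucun c\^one de $\Delta$, d'o\`u $|I|\geq f$ par d\'efinition de $f$. Comme $|\mu(\dd_I)|$ est born\'e par une constante $C$ ne d\'ependant que des donn\'ees combinatoires (r\'esultant de l'\'ecriture explicite de l'inversion de M\"{o}bius pour l'alg\`ebre bool\'eenne des parties de $\{1,\dots,n+r\}$), on obtient
\[
S_p(s)=1+\sum_{\substack{I\subset\{1,\dots,n+r\}\\ |I|\geq f,\; I\not\subset\sigma(1)\,\forall\sigma}}\frac{|\mu(\dd_I)|}{p^{s|I|}}=1+O(p^{-fs}),
\]
la constante implicite ne d\'ependant que de $n,r,f$. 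Le produit eul\'erien $\prod_p S_p(s)$ converge alors absolument d\`es que $\sum_p p^{-fs}<\infty$, soit $s>1/f$.

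L'obstacle principal, tr\`es modeste dans ce contexte, r\'esidera dans la justification rigoureuse de la factorisation $\mu(\dd)=\prod_p\mu(\dd_{(p)})$ ainsi que dans l'\'equivalence entre la condition \og $I\not\subset\sigma(1)$ pour tout $\sigma\in\Delta_{\max}$\fg\ et \og $I$ n'est contenu dans aucun c\^one de $\Delta$\fg\ ; cette derni\`ere se d\'eduit du fait que tout c\^one de $\Delta$ est face d'un c\^one maximal. Ces deux points \'etant trait\'es (on renvoie d'ailleurs \`a \cite[\S 11]{Sa} pour la forme originelle de cet argument), la d\'emonstration se r\'eduit au comptage combinatoire ci-dessus.
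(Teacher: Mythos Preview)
The paper itself does not give a proof: it simply refers to \cite[Lemme 11.15]{Sa}. You have gone further and sketched an actual argument. Your treatment of part~(2) is correct and is essentially the standard one: restrict to exponents in $\{0,1\}$ via the preceding lemma, observe that $\mu(\dd_I)\neq 0$ forces the support $I$ not to lie in any cone (hence $|I|\geqslant f$), bound $|\mu|$ uniformly by a combinatorial constant, and conclude $S_p(s)=1+O(p^{-fs})$.

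For part~(1), however, there is a genuine gap. From $\PGCD(\delta,\varepsilon)=1$ you correctly deduce that the primes $p$ for which $\dd_{(p)}\neq\mathbf{1}$ \emph{and} $\mu(\dd_{(p)})\neq0$ are disjoint from those with the same property for $\ee$. But you then assert that for every $p$ one of $\dd_{(p)},\ee_{(p)}$ equals $\mathbf{1}$, and this does not follow: both $p$-parts may be nontrivial provided at least one has support contained in a cone (so that its $\mu$ vanishes). In that situation you would still need $\mu((\dd\cdot\ee)_{(p)})=0$, and this can fail. Concretely, take $X=\PP^1\times\PP^1$ with rays $v_1,\dots,v_4$ and maximal cones $\{1,2\},\{2,3\},\{3,4\},\{1,4\}$; set $\dd=(p,1,1,1)$ and $\ee=(1,1,p,1)$. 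Then $\delta=\varepsilon=1$ so the hypothesis holds, and $\mu(\dd)=\mu(\ee)=0$, yet $\mu(\dd\cdot\ee)=\mu(p,1,p,1)=-1$. Thus the statement of (1) as quoted in the paper is in fact not correct as written; the hypothesis actually needed (and the one under which your factorisation $\mu(\dd)=\prod_p\mu(\dd_{(p)})$ immediately yields the conclusion, and which suffices for the application in \S\ref{sommemobius}) is that $\dd$ and $\ee$ share no prime whatsoever, i.e.\ $\PGCD\big(\prod_i d_i,\prod_i e_i\big)=1$.
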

\begin{proof}
Voire \cite[Lemme 11.15]{Sa}.
\end{proof}

On a alors : \[ N_{U,\sigma}(B)=\sum_{\ee\in (\NN^{\ast})^{n+r}}\mu(\ee) N_{\ee,\sigma,U}. \] Donc, en utilisant le th\'eor\`eme\;\ref{thmNsigmaU3}, on obtient : \begin{multline*} N_{U,\sigma}(B)=\left(\frac{1}{(r-1)!}\sum_{\ee\in (\NN^{\ast})^{n+r}}\mu(\ee)C_{\sigma,\ee}\right)B(\log B)^{r-1} \\ +O\left(\sum_{\ee\in (\NN^{\ast})^{n+r}}|\mu(\ee)|\left(e_{0}^{4+\delta}\left(\prod_{i=1}^{n+r}e_{i}\right)^{-1}+\left(\prod_{i=1}^{n+r}e_{i}\right)^{-\frac{1}{2}}\right)B(\log B)^{r-2}\right).
\end{multline*}
Notons \`a pr\'esent $ I_{1},...,I_{N} $ les ensembles $ I $ minimaux pour l'inclusion tels que \[  \forall \sigma \in \Delta, \; \sum_{i\in I} \RR^{+}v_{i}\nsubseteq \sigma.  \] Par hypoth\`ese, pour tout $ k $, $ \Card I_{k}\geqslant 6 $. Dans ce cas, si l'on observe que, pour tout $ \ee $ tel que $ \mu(\ee)\neq 0 $, on a \[ e_{0}\leqslant \PPCM_{i=1}^{n+r}(e_{i})\leqslant \prod_{\substack{p\in \mathcal{P} \\ \exists i \; |\; p|e_{i} }}p \leqslant \left(\prod_{i=1}^{n+r}e_{i}\right)^{\frac{1}{\min_{k} \Card I_{k}}}\leqslant \left(\prod_{i=1}^{n+r}e_{i}\right)^{\frac{1}{6}}.  \]
Ainsi, on a \begin{multline*} \sum_{\ee\in (\NN^{\ast})^{n+r}}|\mu(\ee)|\left(e_{0}^{4+\delta}\left(\prod_{i=1}^{n+r}e_{i}\right)^{-1}+\left(\prod_{i=1}^{n+r}e_{i}\right)^{-\frac{1}{2}}\right) \\ \ll 
 \sum_{\ee\in (\NN^{\ast})^{n+r}}|\mu(\ee)|\left(\prod_{i=1}^{n+r}e_{i}\right)^{-\frac{1}{3}+\delta},
\end{multline*}et cette s\'erie converge d'apr\`es le lemme\;\ref{fonctmu3}.\\

Ainsi, \begin{align*} N_{U,\sigma}(B) & =\left(\frac{1}{(r-1)!}\sum_{\ee\in (\NN^{\ast})^{n+r}}\mu(\ee)C_{\sigma,\ee}\right)B(\log B)^{r-1} +O(B(\log B)^{r-2}) \\ & =\frac{1}{(r-1)!} \left(\sum_{\ee\in (\NN^{\ast})^{n+r}}\frac{\mu(\ee)\mathfrak{S}_{\ee}}{\prod_{i=1}^{n+r}e_{i}}\right)J_{\sigma}B(\log B)^{r-1} +O(B(\log B)^{r-2})\end{align*}
o\`u \[  \mathfrak{S}_{\ee}=\sum_{q=1}^{\infty}q^{-(n+r)}\sum_{\substack{0\leqslant a<q\\ \PGCD(a,q)=1}}\sum_{\bb\in (\ZZ/q\ZZ)^{n+r}}e\left(\frac{a}{q}F(\ee.\bb)\right)  \]
\[ J_{\sigma}=\int_{\RR}\int_{C_{\sigma}(\RR)\cap [-1,1]^{n+r}}e\left(\beta F(\uu)\right)d\uu d\beta. \]

On obtient donc finalement, en remarquant que $ \bigcup_{\sigma\in \Delta_{\max}}C_{\sigma}(\RR)=\RR^{n+r} $ et que l'intersection des $ C_{\sigma}(\RR) $ est de mesure nulle (en effet, si $ \xx\in  C_{\sigma}(\RR)\cap  C_{\tau}(\RR) $, alors $ |\xx^{D(\sigma)}|=|\xx^{D(\tau)}| $)
\begin{thm}\label{Conclusion3}
Si l'on a \[  n(F)\geqslant r(8.2^{\sum_{j=1}^{r}d_{j}}+4)\left(\prod_{j=1}^{r}(3+10d_{j})\right)\left(\sum_{j=1}^{r} d_{j}\right), \] et si $ \Delta $ est tel que, pour tout $ k\in \NN $, $ \Card I_{k}\geqslant 6 $, alors 
\begin{equation}\label{formconcl3} \mathcal{N}_{V}(B)=\frac{1}{(r-1)!}\frac{1}{2^{r}}\mathfrak{S}J B(\log B)^{r-1} +O(B(\log B)^{r-2}), \end{equation} o\`u  \begin{equation} \mathfrak{S}=\sum_{\ee\in (\NN^{\ast})^{n+r}}\frac{\mu(\ee)\mathfrak{S}_{\ee}}{\prod_{i=1}^{n+r}e_{i}} \end{equation} et \begin{equation} J  = \sum_{\sigma\in \Delta_{\max}}J_{\sigma} =\int_{\RR}\int_{\substack{\uu\in \RR^{n+r}\; |\; \\ \max_{\sigma\in \Delta_{\max}}|\uu^{D(\sigma)}|\leqslant 1}}e\left(\beta F(\uu)\right)d\uu d\beta. \end{equation} 
\end{thm}
Nous allons \`a pr\'esent chercher \`a interpr\'eter les constantes intervenant dans la formule\;\eqref{formconcl3} en terme des mesures de Tamagawa et d\'emontrer ainsi que la constante $ \frac{1}{(r-1)!}\frac{1}{2^{r}}\mathfrak{S}J $ est bien celle conjectur\'ee par Peyre, ce qui ach\`evera la d\'emonstration du th\'eor\`eme\;\ref{thm3b3}

Avant d'aller plus loin, rappelons la d\'efinition des formes de Leray $ \omega_{L,\nu} $ et des mesures de Tamagawa $ \omega_{\nu} $ pour $ \nu \in \Val(\QQ) $.\\
 
Consid\'erons un point $ \xx_{0}\in Y_{0} $, tel que $ \frac{\partial F}{\partial x_{i_{0}}}(\xx_{0})\neq 0 $ pour un certain $ i_{0}\in \{1,...,n+r\} $, et on note $ P_{0}=\pi(\xx_{0}) $. La forme de Leray $ \omega_{L} $ est d\'efinie sur un voisinage de $ \xx_{0} $ sur lequel $ \frac{\partial F}{\partial x_{i_{0}}}\neq 0 $ par : \[ \omega_{L} (\xx)=\frac{(-1)^{n+r-i_{0}}}{\frac{\partial F}{\partial x_{i_{0}}}(\xx)}dx_{1}\wedge...\wedge \widehat{dx_{i_{0}}} \wedge ... \wedge dx_{n+r}(\xx). \]
Pour tout $ \nu\in \Val(\QQ) $, la forme de Leray induit une mesure locale $ \omega_{L,\nu} $ sur $ Y_{0}(\QQ_{\nu}) $. On suppose \`a pr\'esent que $ \xx_{0} $ est tel que $ x_{0,i}\neq 0 $ pour tout $ i\in \{1,...,n+r\} $. Pour $ \nu \in \Val(\QQ) $, on consid\`ere le morphisme : \[ \begin{array}{rcl}
\rho : Y_{\QQ_{\nu}} &  \ra  & \AA_{\QQ}^{n-1} \\ \xx & \mt & \left(\frac{x_{i}}{\prod_{j=1}^{r}x_{n+j}^{a_{i,j}}}\right)_{i\neq i_{0}}
\end{array} \]

Par le th\'eor\`eme d'inversion locale, il existe un voisinage ouvert de $ P_{0} $ not\'e $ V_{0} $ sur lequel $ \rho $ est bien d\'efini et induit un diff\'eomorphisme analytique sur $ \rho(V_{0}) $. On note alors $ W_{0}=\pi^{-1}(V_{0}) $ et $ \uu=((u_{i})_{i\in \{1,...,n\}},\underbrace{1,...,1}_{r\; \elements}) $. La mesure de Tamagawa $ \omega_{\nu} $ est d\'efinie par : \[ \rho_{\ast}\omega_{\nu}=\frac{du_{1,\nu}...\widehat{du_{i_{0},\nu}}...du_{n,\nu}}{h_{\nu}(\uu)\left|\frac{\partial F}{\partial x_{i_{0}}}(\uu)\right|_{\nu}} \] (o\`u $ u_{i_{0}} $ est d\'efini implicitement par $ F(\uu)=0 $), avec \[ h_{\nu}(\uu)=\max_{\sigma\in \Delta_{\max}}|\uu^{D(\sigma)}|_{\nu}. \]

\subsection{\'Etude de la s\'erie singuli\`ere $ \mathfrak{S} $}

 Posons \[ A_{\ee}(q)=q^{-(n+r)}\sum_{\substack{0\leqslant a<q\\ \PGCD(a,q)=1}}\sum_{\bb\in (\ZZ/q\ZZ)^{n+r}}e\left(\frac{a}{q}F(\ee.\bb)\right). \] On a alors $ \mathfrak{S}_{\ee}=\sum_{q=1}^{\infty}A_{\ee}(q) $.
\begin{lemma}\label{fonctionmultiplicative3}
Pour tout \'el\'ement $ \ee\in \NN^{n+r} $, la fonction $ A_{\ee} $ est multiplicative. 
\end{lemma}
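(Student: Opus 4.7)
L'objectif sera de v\'erifier que si $q_1$ et $q_2$ sont deux entiers strictement positifs premiers entre eux, alors $A_{\ee}(q_1 q_2)=A_{\ee}(q_1)A_{\ee}(q_2)$. La strat\'egie est standard : il s'agit d'appliquer deux fois le th\'eor\`eme chinois des restes, une fois \`a la variable $a$ modulo $q_1 q_2$, et une fois composante par composante \`a la variable $\bb\in (\ZZ/q_1 q_2\ZZ)^{n+r}$.

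Je proc\'ederais en trois \'etapes. D'abord, je rappellerais que pour tout entier $a$ tel que $0\leqslant a<q_1 q_2$ et $\PGCD(a,q_1 q_2)=1$, on peut d\'efinir $a'\equiv a\,q_2^{-1}\pmod{q_1}$ et $a''\equiv a\,q_1^{-1}\pmod{q_2}$, de sorte que
\[ \frac{a}{q_1 q_2}\equiv \frac{a'}{q_1}+\frac{a''}{q_2}\pmod{1}, \]
et que l'application $a\mapsto (a',a'')$ r\'ealise une bijection entre les classes de $(\ZZ/q_1 q_2\ZZ)^{\ast}$ et les couples $(a',a'')\in (\ZZ/q_1\ZZ)^{\ast}\times (\ZZ/q_2\ZZ)^{\ast}$. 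Ensuite, j'utiliserais le th\'eor\`eme chinois appliqu\'e \`a chacune des $n+r$ coordonn\'ees de $\bb$ : cela fournit une bijection $(\ZZ/q_1 q_2\ZZ)^{n+r}\simeq (\ZZ/q_1\ZZ)^{n+r}\times (\ZZ/q_2\ZZ)^{n+r}$, envoyant $\bb$ sur le couple $(\bb_1,\bb_2)$ des r\'eductions modulo $q_1$ et $q_2$. Le polyn\^ome $F$ \'etant \`a coefficients entiers, on obtient imm\'ediatement $F(\ee.\bb)\equiv F(\ee.\bb_i)\pmod{q_i}$ pour $i=1,2$, et donc
\[ e\!\left(\frac{a F(\ee.\bb)}{q_1 q_2}\right)=e\!\left(\frac{a' F(\ee.\bb_1)}{q_1}\right)e\!\left(\frac{a'' F(\ee.\bb_2)}{q_2}\right). \]

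Il resterait alors \`a combiner ces deux bijections dans la d\'efinition de $A_{\ee}(q_1 q_2)$ : la somme int\'erieure sur $\bb$ se factorise en un produit de sommes sur $\bb_1$ et $\bb_2$, la somme ext\'erieure sur $a$ se factorise en une double somme sur $(a',a'')$, et enfin le facteur de normalisation v\'erifie $(q_1 q_2)^{-(n+r)}=q_1^{-(n+r)}q_2^{-(n+r)}$. En regroupant ces \'egalit\'es on obtient directement la multiplicativit\'e annonc\'ee. Il n'y a pas d'obstacle technique dans cet argument : la seule propri\'et\'e de $F$ utilis\'ee est son caract\`ere polynomial \`a coefficients entiers, et la d\'emonstration ne diff\`ere en rien de celle, classique, de la multiplicativit\'e des sommes singuli\`eres associ\'ees \`a des \'equations diophantiennes dans la m\'ethode du cercle.
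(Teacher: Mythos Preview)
Your proposal is correct and follows essentially the same approach as the paper: the paper also uses the Chinese Remainder Theorem simultaneously on the variable $a$ (via the bijection $(a_1,a_2)\mapsto a_1 q_2 + a_2 q_1$, which is the inverse of your $a\mapsto(a',a'')$) and on each coordinate of $\bb$. The only cosmetic difference is that the paper starts from the product $A_{\ee}(q_1)A_{\ee}(q_2)$ and shows it equals $A_{\ee}(q_1 q_2)$, whereas you proceed in the opposite direction.
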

\begin{proof}
Consid\'erons $ q_{1},q_{2}\in \NN $ tels que $ \PGCD(q_{1},q_{2})=1 $ et $ q=q_{1}q_{2} $. On remarque que  \[ A_{\ee}(q_{1})A_{\ee}(q_{2})=q^{-(n+r)}\sum_{\substack{a_{1}\in (\ZZ/q_{1}\ZZ)^{\ast} \\a_{2}\in (\ZZ/q_{2}\ZZ)^{\ast}  }}\sum_{\substack{\bb_{1}\in (\ZZ/q_{1}\ZZ)^{n+r} \\ \bb_{2}\in (\ZZ/q_{2}\ZZ)^{n+r}}}e\left(\frac{a_{1}q_{2}F(\ee.\bb_{1})+a_{2}q_{1}F(\ee.\bb_{2})}{q_{1}q_{2}}\right). \] Or, si l'on consid\`ere l'unique \'el\'ement $ \bb\in (\ZZ/q\ZZ)^{n+r} $ tel que $ \bb\equiv \bb_{1}\;(q_{1}) $ et $ \bb\equiv \bb_{2}\;(q_{2}) $.  On a alors \[ \left\{\begin{array}{l}
q_{2}F(\ee.\bb_{1})\equiv q_{2}F(\ee.\bb)\; (q) \\ q_{1}F(\ee.\bb_{2})\equiv q_{1}F(\ee.\bb)\; (q)
\end{array}\right., \]
et par cons\'equent \[ A_{\ee}(q_{1})A_{\ee}(q_{2})=q^{-(n+r)}\sum_{\substack{a_{1}\in (\ZZ/q_{1}\ZZ)^{\ast} \\a_{2}\in (\ZZ/q_{2}\ZZ)^{\ast}  }}\sum_{\substack{\bb\in (\ZZ/q\ZZ)^{n+r} }}e\left(\frac{a_{1}q_{2}+a_{2}q_{1}}{q}F(\ee.\bb)\right). \]
Par ailleurs, l'application \[ \begin{array}{rcl}
(\ZZ/q_{1}\ZZ)^{\ast}\times (\ZZ/q_{2}\ZZ)^{\ast} & \ra & (\ZZ/q\ZZ)^{\ast} \\ (a_{1},a_{2}) & \mt & a_{1}q_{2}+a_{2}q_{1}
\end{array} \] est bijective . On obtient donc finalement \[ A_{\ee}(q_{1})A_{\ee}(q_{2})=q^{-(n+r)}\sum_{a\in (\ZZ/q\ZZ)^{\ast}}\sum_{\bb\in (\ZZ/q\ZZ)^{n+r}}e\left(\frac{a}{q}F(\ee.\bb)\right)=A_{\ee}(q). \]
\end{proof}
Puisque nous avons vu avec le lemme\;\ref{S3} que la s\'erie $ \mathfrak{S}_{\ee} $ est absolument convergente, on a donc : \begin{equation}\label{prodeuler3}
\mathfrak{S}_{\ee}=\prod_{p\in \mathcal{P}}\left(\sum_{k=0}^{\infty}A_{\ee}(p^{k})\right). \end{equation}
Remarquons \`a pr\'esent que pour tous $ \ee\in \NN^{n+r} $ et $ k\in \NN $ : \[ \sum_{\bb\in (\ZZ/p^{k}\ZZ)^{n+r} }e\left( \frac{a}{p^{k}}F(\ee.\bb)\right)= \sum_{\bb\in (\ZZ/p^{k}\ZZ)^{n+r} }e\left( \frac{a}{p^{k}}F((p^{v_{p}(e_{i})}b_{i})_{i\in \{1,...,n+r\}})\right) \] (en effet si $ \PGCD(q,p)=1 $, $ b\mt qb $  est une bijection de $ \ZZ/p^{k}\ZZ $ sur  $ \ZZ/p^{k}\ZZ $). Par cons\'equent, on a \[ A_{\ee}(p^{k})=A_{(p^{v_{p}(e_{i})})_{i\in \{1,...,n+r\}}}(p^{k}), \] et en utilisant la formule\;\eqref{prodeuler3}, on trouve : \[ \frac{\mu(\ee)\mathfrak{S}_{\ee}}{\prod_{i=1}^{n+r}e_{i}}=\prod_{p\in \mathcal{P}}B_{(p^{v_{p}(e_{i})})_{i\in \{1,...,n+r\}}} \] o\`u pour tous $ \nu_{1},...,\nu_{n+r}\in \NN^{n+r} $ : \[ B_{(p^{\nu_{i}})_{i\in \{1,...,n+r\}}}=\frac{\mu(p^{\nu_{1}},...,p^{\nu_{n+r}})}{p^{\sum_{i=1}^{n+r}\nu_{i}}}A_{(p^{\nu_{i}})_{i\in \{1,...,n+r\}}}(p^{k}). \]

La s\'erie $ \mathfrak{S}=\sum_{\ee\in (\NN^{\ast})^{n+r}}\frac{\mu(\ee)\mathfrak{S}_{\ee}}{\prod_{i=1}^{n+r}e_{i}} $ \'etant absolument convergente, on a alors : \begin{align*}
\mathfrak{S} & = \prod_{p\in \mathcal{P}}\left(\sum_{\nu_{1},...,\nu_{n+r}\in \NN}B_{(p^{\nu_{i}})_{i\in \{1,...,n+r\}}}\right) \\ & =\prod_{p\in \mathcal{P}}\left(\sum_{(\nu_{1},...,\nu_{n+r})\in \{0,1\}^{n+r}}B_{(p^{\nu_{i}})_{i\in \{1,...,n+r\}}}\right) \\ & =\prod_{p\in \mathcal{P}}\left(\sum_{k=0}^{\infty}\sum_{(\nu_{1},...,\nu_{n+r})\in \{0,1\}^{n+r}}\frac{\mu(p^{\nu_{1}},...,p^{\nu_{n+r}})}{p^{\sum_{i=1}^{n+r}\nu_{i}}}A_{(p^{\nu_{i}})_{i\in \{1,...,n+r\}}}(p^{k})\right).
\end{align*}
Notons \`a pr\'esent : \begin{equation} M_{p}^{\ast}(k)=\Card\{\xx\in (\ZZ/p^{k}\ZZ)^{n+r}\; |\; p\nmid \PGCD_{\sigma\in \Delta_{\max}}(\xx^{\underline{\sigma}}) \; \et \; F(\xx)\equiv 0 \; (p^{k}) \}, \end{equation} et \begin{equation}\label{prodeulerpoursigmap3}
\sigma_{p}=\left(\sum_{k=0}^{\infty}\sum_{(\nu_{1},...,\nu_{n+r})\in \{0,1\}^{n+r}}\frac{\mu(p^{\nu_{1}},...,p^{\nu_{n+r}})}{p^{\sum_{i=1}^{n+r}\nu_{i}}}A_{(p^{\nu_{i}})_{i\in \{1,...,n+r\}}}(p^{k})\right).
\end{equation}
\begin{lemma}
Pour tout entier $ m>0 $, on a \[ \frac{M_{p}^{\ast}(m)}{p^{m(n+r-1)}}=\sum_{k=0}^{m}\sum_{\nu_{1},...,\nu_{n+r}\in \{0,1\}}\frac{\mu(p^{\nu_{1}},...,p^{\nu_{n+r}})}{p^{\sum_{i=1}^{n+r}\nu_{i}}}A_{(p^{\nu_{i}})_{i\in \{1,...,n+r\}}}(p^{k}),  \]
et donc \[ \sigma_{p}=\lim_{m\ra \infty}\frac{M_{p}^{\ast}(m)}{p^{m(n+r-1)}}. \]
\end{lemma}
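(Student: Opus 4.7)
Mon plan est de d\'emarrer de la d\'efinition de $M_p^\ast(m)$ et de r\'e\'ecrire la condition $F(\xx)\equiv 0\pmod{p^m}$ par orthogonalit\'e des caract\`eres additifs, puis d'introduire la condition de primalit\'e via la fonction $\mu$. La premi\`ere \'etape consiste \`a utiliser l'identit\'e
\[ \mathbf{1}_{F(\xx)\equiv 0\,(p^m)}=\frac{1}{p^m}\sum_{a=0}^{p^m-1}e\!\left(\frac{aF(\xx)}{p^m}\right), \]
puis \`a remplacer le facteur $\chi^{(p)}(\xx)$ (qui exprime $p\nmid \PGCD_\sigma\xx^{\underline{\sigma}}$) par sa d\'ecomposition issue du premier lemme de la section\;\ref{sommemobius}, \`a savoir $\chi^{(p)}(\xx)=\sum_{(\nu_1,\ldots,\nu_{n+r})\in\{0,1\}^{n+r}}\mu(p^{\nu_1},\ldots,p^{\nu_{n+r}})\chi_{(p^{\nu_i})_i}(\xx)$.

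Pour chaque multi-indice $(\nu_i)$, les $\xx\in(\ZZ/p^m\ZZ)^{n+r}$ v\'erifiant $p^{\nu_i}\mid x_i$ se param\`etrent par $\xx=\ee\cdot\yy$ avec $\ee=(p^{\nu_1},\ldots,p^{\nu_{n+r}})$ et $y_i\in\ZZ/p^{m-\nu_i}\ZZ$. J'\'ecrirai ensuite chaque $a\in\{0,\ldots,p^m-1\}$ sous la forme $a=p^{m-k}a'$ avec $0\leqslant a'<p^k$, $\PGCD(a',p)=1$ et $k\in\{1,\ldots,m\}$ (le cas $a=0$ \'etant trait\'e comme $k=0$, $A_\ee(1)=1$). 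Comme $\nu_i\leqslant 1\leqslant k$, l'exponentielle $e(a'F(\ee\cdot\yy)/p^k)$ ne d\'epend de $y_i$ que modulo $p^{k-\nu_i}$, ce qui permet d'identifier la somme int\'erieure \`a $p^{m(n+r)-\sum\nu_i-k(n+r)+\sum\nu_i}$ fois la somme apparaissant dans la d\'efinition de $A_\ee(p^k)$. Un simple regroupement donnera alors
\[ \sum_{\substack{\xx\in(\ZZ/p^m\ZZ)^{n+r}\\ p^{\nu_i}\mid x_i}}\mathbf{1}_{F(\xx)\equiv 0\,(p^m)}=p^{m(n+r-1)-\sum\nu_i}\sum_{k=0}^{m}A_{(p^{\nu_i})_i}(p^k), \]
et l'\'egalit\'e annonc\'ee s'obtient en multipliant par $\mu(p^{\nu_1},\ldots,p^{\nu_{n+r}})$ et en sommant sur $(\nu_i)\in\{0,1\}^{n+r}$.

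Le point d\'elicat est essentiellement comptable : il faut s'assurer que le changement de variables $\xx\mapsto\yy$ est compatible avec les r\'eductions modulo les diverses puissances de $p$, et que le cas $k=0$ (contribution $a=0$) se recolle correctement avec la convention $A_\ee(1)=1$. Enfin, la seconde assertion est imm\'ediate : en passant \`a la limite $m\to\infty$ dans la formule ci-dessus, on reconna\^it \`a droite exactement la s\'erie\;\eqref{prodeulerpoursigmap3} d\'efinissant $\sigma_p$, dont l'absolue convergence (cons\'equence de l'estimation de $|S_{a,q,\ee}|$ d\'ej\`a \'etablie via le lemme\;\ref{S3} et de la convergence du produit eul\'erien du lemme\;\ref{fonctmu3}) garantit que la somme partielle tronqu\'ee \`a $k\leqslant m$ tend bien vers $\sigma_p$.
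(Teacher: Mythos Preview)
Ton approche est essentiellement celle de l'article : orthogonalit\'e des caract\`eres pour d\'etecter $F(\xx)\equiv 0\pmod{p^m}$, insertion de la d\'ecomposition de M\"obius pour $\chi^{(p)}$, puis regroupement des $t\in\{0,\ldots,p^m-1\}$ suivant leur valuation $p$-adique pour faire appara\^itre les $A_{\ee}(p^k)$. La seconde assertion s'obtient bien par passage \`a la limite via\;\eqref{prodeulerpoursigmap3}.

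Il y a toutefois une erreur de comptage dans ton facteur interm\'ediaire. Tu affirmes que la somme sur $\yy\in\prod_i\ZZ/p^{m-\nu_i}\ZZ$ vaut $p^{(m-k)(n+r)}$ fois la somme $\sum_{\bb\in(\ZZ/p^k\ZZ)^{n+r}}e\bigl(\frac{a'}{p^k}F(\ee\cdot\bb)\bigr)$. Or ta somme sur $\yy$ se r\'eduit \`a $p^{(m-k)(n+r)}$ fois une somme sur $\yy'\in\prod_i\ZZ/p^{k-\nu_i}\ZZ$, et cette derni\`ere n'est pas la somme sur $\bb\in(\ZZ/p^k\ZZ)^{n+r}$ : comme l'exponentielle ne d\'epend de $b_i$ que modulo $p^{k-\nu_i}$, la somme sur $\bb$ vaut $p^{\sum\nu_i}$ fois la somme sur $\yy'$. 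Le bon facteur est donc $p^{(m-k)(n+r)-\sum\nu_i}$, et non $p^{m(n+r)-\sum\nu_i-k(n+r)+\sum\nu_i}=p^{(m-k)(n+r)}$. Avec cette correction, ta formule finale
\[
\sum_{\substack{\xx\in(\ZZ/p^m\ZZ)^{n+r}\\ p^{\nu_i}\mid x_i}}\mathbf{1}_{F(\xx)\equiv 0\,(p^m)}=p^{m(n+r-1)-\sum_i\nu_i}\sum_{k=0}^{m}A_{(p^{\nu_i})_i}(p^k)
\]
est bien correcte.

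L'article \'evite ce petit d\'etour en ne passant pas par la bijection $x_i=p^{\nu_i}y_i$ : il garde $\bb\in(\ZZ/q\ZZ)^{n+r}$ (avec $q=p^m$) et utilise directement que $b_i\mapsto p^{\nu_i}b_i$ est une application $p^{\nu_i}$-en-$1$ de $\ZZ/q\ZZ$ sur $\{x:p^{\nu_i}\mid x\}$, ce qui donne imm\'ediatement
\[
q^{-1}\sum_{t=0}^{q-1}\sum_{\bb\in(\ZZ/q\ZZ)^{n+r}}e\Bigl(\tfrac{t}{q}F((p^{\nu_i}b_i)_i)\Bigr)=\Bigl(\prod_i p^{\nu_i}\Bigr)\Card\{\xx:p^{\nu_i}\mid x_i,\;F(\xx)\equiv 0\,(q)\},
\]
puis d\'ecompose $t$ suivant les diviseurs $q_1\mid q$. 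C'est plus direct, mais ton chemin aboutit au m\^eme r\'esultat une fois le facteur corrig\'e.
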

\begin{proof}
On pose $ q=p^{m} $. Il est alors imm\'ediat que : \[ q^{-1}\sum_{t=0}^{q-1}\sum_{\bb\in (\ZZ/q\ZZ)^{n+r}}e\left( \frac{t}{q}F(\bb)\right)=\Card\{\xx\in \ZZ^{n+r}\; |\; F(\xx)\equiv 0 \; (q) \}, \] et plus g\'en\'eralement pour tout $ \nu_{1},...,\nu_{n+r}\in \{0,1\} $ : \begin{multline*} q^{-1}\sum_{t=0}^{q-1}\sum_{\bb\in (\ZZ/q\ZZ)^{n+r}}e\left( \frac{t}{q}F((p^{\nu_{i}}b_{i})_{i\in \{1,...,n+r\}})\right) \\ =\left(\prod_{l=1}^{n+r}p^{\nu_{l}}\right)\Card\{\xx\in \ZZ^{n+r}\; |\; x_{i}\equiv 0 \; (p^{\nu_{i}}), \; F(\xx)\equiv 0 \; (q) \}. \end{multline*} On a donc que, en utilisant le lemme\;\ref{fonctionmultiplicative3} et la formule\;\eqref{prodeulerpoursigmap3} : \begin{multline*} M_{p}^{\ast}(m)  =q^{-1}\sum_{t=0}^{q-1}\sum_{\bb\in (\ZZ/q\ZZ)^{n+r}}\sum_{(\nu_{1},...,\nu_{n+r})\in \{0,1\}^{n+r}} \\ \frac{\mu(p^{\nu_{1}},...,p^{\nu_{n+r}})}{p^{\sum_{i=1}^{n+r}\nu_{i}}}e\left( \frac{t}{q}F((p^{\nu_{i}}b_{i})_{i\in \{1,...,n+r\}})\right)
 \\  =q^{-1}\sum_{q_{1}|q}\sum_{\substack{0\leqslant a<q_{1} \\ \PGCD(a,q_{1})=1 }}\sum_{\bb\in (\ZZ/q\ZZ)^{n+r}}\sum_{(\nu_{1},...,\nu_{n+r})\in \{0,1\}^{n+r}} \\ \frac{\mu(p^{\nu_{1}},...,p^{\nu_{n+r}})}{p^{\sum_{i=1}^{n+r}\nu_{i}}}e\left( \frac{a}{q_{1}}F((p^{\nu_{i}}b_{i})_{i\in \{1,...,n+r\}})\right)
 \\  = p^{-m}\sum_{k=0}^{m}\frac{p^{m(n+r)}}{p^{k(n+r)}}\sum_{a\in (\ZZ/p^{k}\ZZ)^{\ast}}\sum_{\bb\in (\ZZ/p^{k}\ZZ)^{n+r}}\sum_{(\nu_{1},...,\nu_{n+r})\in \{0,1\}^{n+r}} \\ \frac{\mu(p^{\nu_{1}},...,p^{\nu_{n+r}})}{p^{\sum_{i=1}^{n+r}\nu_{i}}}e\left( \frac{a}{p^{k}}F((p^{\nu_{i}}b_{i})_{i\in \{1,...,n+r\}})\right)
\\  = p^{m(n+r-1)}\sum_{k=0}^{m}\sum_{(\nu_{1},...,\nu_{n+r})\in \{0,1\}^{n+r}} \frac{\mu(p^{\nu_{1}},...,p^{\nu_{n+r}})}{p^{\sum_{i=1}^{n+r}\nu_{i}}}A_{(p^{\nu_{i}})_{i\in \{1,...,n+r\}}}(p^{k}) \\  =p^{m(n+r-1)}\sigma_{p}.
\end{multline*}

\end{proof}


\begin{lemma}\label{lemmeinter13}
Pour tout $ N\in \NN^{\ast} $, on note \begin{multline*} W_{p}^{\ast}(N)=\{ \xx\in (\ZZ_{p}/p^{N})^{n+r} \; | \; \PGCD_{\sigma\in \Delta_{\max}}\xx^{\underline{\sigma}}\not\equiv \0\; (p), \;  F(\xx)\equiv 0 \; (p^{N}) \} \end{multline*} de sorte que $ M_{p}^{\ast}(N)=\Card W_{p}^{\ast}(N) $. Il existe alors un entier $ N_{0} $ tel que pour tout $ N\geqslant N_{0} $ : \[ \int_{\substack{\xx\in \ZZ_{p}^{n+r} \; |\;   F(\xx)=0  \\ \PGCD_{\sigma\in \Delta_{\max}}\xx^{\underline{\sigma}}\not\equiv \0\; (p), \;  }}\omega_{L,p}=\frac{M_{p}^{\ast}(N)}{p^{N(n+r-1)}}. \]
\end{lemma}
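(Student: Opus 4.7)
\medskip

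\noindent\textbf{Proposal de d\'emonstration.} L'\'egalit\'e est un r\'esultat classique de stabilisation qui relie l'int\'egrale de Leray $p$-adique sur l'hypersurface lisse $\{F=0\}$ au comptage des solutions modulo $p^N$. La strat\'egie se d\'ecompose en trois \'etapes : (i) obtenir, gr\^ace \`a la lissit\'e de $Y_0$ sur l'ouvert $X_0=\{\operatorname{pgcd}_\sigma \xx^{\underline\sigma}\ne 0\}$, une borne uniforme $V$ sur la valuation $p$-adique du gradient le long du lieu consid\'er\'e ; (ii) fixer $N_0=2V+1$ et appliquer le lemme de Hensel sur chaque strate pour param\'etrer les points $p$-adiques de $Y_0\cap W_p^\ast(\infty)$ par des projections \`a $n+r-1$ coordonn\'ees ; (iii) calculer directement l'int\'egrale gr\^ace \`a la d\'efinition de $\omega_L$.

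\medskip

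\noindent\textbf{\'Etape 1 (borne uniforme sur la valuation du jacobien).} La lissit\'e de $Y_0$ sur $X_0$ sur $\QQ$ implique, par le Nullstellensatz, que pour chaque $\sigma\in\Delta_{\max}$ il existe un entier $m\geqslant 1$, un entier $k\ne 0$ et des polyn\^omes $G_\sigma,G_{\sigma,1},\ldots,G_{\sigma,n+r}\in\ZZ[\xx]$ tels que
\[
k\cdot(\xx^{\underline\sigma})^m = G_\sigma(\xx)\,F(\xx) + \sum_{i=1}^{n+r}G_{\sigma,i}(\xx)\,\frac{\partial F}{\partial x_i}(\xx).
\]
Si $\xx\in\ZZ_p^{n+r}$ v\'erifie $F(\xx)=0$ et $\xx^{\underline\sigma}\not\equiv 0\pmod p$ pour un certain $\sigma$, alors $v_p(\xx^{\underline\sigma})=0$ et l'identit\'e ci-dessus donne $\max_i v_p(\partial F/\partial x_i(\xx))\leqslant v_p(k)$. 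En prenant le maximum de $v_p(k)$ sur les $\sigma\in\Delta_{\max}$ (ensemble fini), on obtient une constante $V\in\NN$ telle que pour tout $\xx$ du lieu \'etudi\'e, $\max_i v_p(\partial F/\partial x_i(\xx))\leqslant V$.

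\medskip

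\noindent\textbf{\'Etape 2 (partition et param\'etrage par Hensel).} On pose $N_0=2V+1$ et on fixe $N\geqslant N_0$. Pour $\bar\xx\in W_p^\ast(N)$, soit $v=v(\bar\xx)$ la valuation minimale des $\partial F/\partial x_i(\bar\xx)$ et $i_0(\bar\xx)$ le plus petit indice r\'ealisant ce minimum ; puisque $N>V$, ces donn\'ees ne d\'ependent que de la classe $\bar\xx$ modulo $p^N$ et fournissent une partition $W_p^\ast(N)=\bigsqcup_{i_0,v\leqslant V} W_p^{\ast,i_0,v}(N)$. De m\^eme, partitionnons le lieu d'int\'egration $W_p^\ast(\infty)$ en strates $A_{i_0,v}$. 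Le lemme de Hensel (forme forte, utilisant $N\geqslant 2v+1$) assure que la projection $\xx\mapsto\yy=(x_i)_{i\ne i_0}$ envoie hom\'eomorphiquement $A_{i_0,v}$ sur un ouvert $B_{i_0,v}\subset\ZZ_p^{n+r-1}$, et que pour toute classe $\bar\yy\in(\ZZ_p/p^N)^{n+r-1}$ provenant d'un $\bar\xx\in W_p^{\ast,i_0,v}(N)$, la boule $\bar\yy+p^N\ZZ_p^{n+r-1}$ est enti\`erement contenue dans $B_{i_0,v}$. En outre, une m\^eme classe $\bar\yy$ provient exactement de $p^v$ classes $\bar\xx\in W_p^{\ast,i_0,v}(N)$ (la coordonn\'ee $x_{i_0}$ n'est d\'etermin\'ee que modulo $p^{N-v}$ par la relation $F=0$, ce qui laisse $p^v$ rel\`evements modulo $p^N$).

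\medskip

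\noindent\textbf{\'Etape 3 (calcul de l'int\'egrale).} Par d\'efinition de $\omega_L$, on a sur chaque strate $A_{i_0,v}$ l'\'egalit\'e $\omega_{L,p}=|(\partial F/\partial x_{i_0})|_p^{-1}\,d\mu_{\operatorname{Haar}}^{n+r-1}=p^v\,d\mu_{\operatorname{Haar}}^{n+r-1}$ dans les coordonn\'ees $(x_i)_{i\ne i_0}$. Compte tenu du d\'ecoupage en boules de rayon $p^{-N}$ obtenu \`a l'\'etape 2, chaque classe $\bar\yy$ contribue pour $p^{-N(n+r-1)}$ \`a $\mu(B_{i_0,v})$, d'o\`u $\mu(B_{i_0,v})=p^{-v}\,|W_p^{\ast,i_0,v}(N)|\,p^{-N(n+r-1)}$, puis $\int_{A_{i_0,v}}\omega_{L,p}=|W_p^{\ast,i_0,v}(N)|/p^{N(n+r-1)}$. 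En sommant sur les couples $(i_0,v)$ avec $v\leqslant V$, on obtient bien l'\'egalit\'e annonc\'ee.

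\medskip

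\noindent\textbf{Principale difficult\'e.} La partie d\'elicate est l'\'etape 1 : il faut manipuler correctement la lissit\'e de $Y_0$ pour en d\'eduire une borne uniforme sur la valuation du jacobien, valable pour tous les points $p$-adiques (et non seulement g\'en\'eriques). Une fois cette borne obtenue, les \'etapes 2 et 3 reposent sur le lemme de Hensel et un changement de variables standard ; c'est essentiellement la preuve classique de l'interpr\'etation de l'int\'egrale de Leray $p$-adique comme densit\'e des points de $Y_0$ modulo $p^N$, telle qu'on la trouve dans \cite[\S 9--11]{Sa}.
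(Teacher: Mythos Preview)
Your proof is correct and follows essentially the same strategy as the paper: bound the $p$-adic valuation of the gradient via smoothness, apply Hensel's lemma to parametrize the zero set by $n+r-1$ coordinates, and compute the Leray integral directly. The only organizational differences are that you make the Nullstellensatz argument for the Jacobian bound explicit and stratify globally by $(i_0,v)$, whereas the paper invokes smoothness informally and works class by class modulo $p^N$; note that your Step~1 bound, stated only for exact zeros $F(\xx)=0$, is needed in Step~2 for all $\bar\xx\in W_p^\ast(N)$, but this extension follows immediately from the same Nullstellensatz identity since $v_p(G_\sigma(\bar\xx)F(\bar\xx))\geqslant N\geqslant 2V+1>V$.
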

\begin{proof}
Soit $ \xx\in \ZZ_{p}^{n+r} $. Dans tout ce qui suit, on note \[ [\xx]_{N}=\xx \mod p^{N} .\] On \'ecrit alors : \begin{align}
\int_{\substack{\xx\in \ZZ_{p}^{n+2} \;|\; F(\xx)=0 \\ \PGCD_{\sigma\in \Delta_{\max}}\xx^{\underline{\sigma}}\not\equiv \0\; (p)}}\omega_{L,p} & =\sum_{\substack{\xx\mod p^{N}\; |\; F(\xx)\equiv 0 \; (p^{N}) \\ \PGCD_{\sigma\in \Delta_{\max}}\xx^{\underline{\sigma}}\not\equiv \0\; (p) }}\int_{\substack{\uu\in \ZZ_{p}^{n+r} \; |\; F(\uu)=0 \\ [\uu]_{N}=\xx    }}\omega_{L,p}(\uu) \\ & =\sum_{\xx\in W_{p}^{\ast}(N)}\int_{\substack{\uu,\in \ZZ_{p}^{n+r}\; |\; F(\uu)=0 \\ [\uu]_{N}=\xx    }}\omega_{L,p}(\uu).
\end{align}
Puisque $ Y $ est lisse, il existe un entier $ N>0 $ assez grand tel que, pour tout $ \xx\in (\ZZ_{p}/p^{N})^{n+r} $ et tout  $ \uu\in \ZZ_{p}^{n+r} $ tel que $ [\uu]_{N}=\xx $, $ \PGCD_{\sigma\in \Delta_{\max}}\uu^{\underline{\sigma}}\not\equiv \0\; (p) $ et $ F(\uu)=0 $  : \[ c=\inf_{i}\lbrace v_{p}\left(\frac{\partial F}{\partial x_{i}}(\uu)\right)\rbrace \] soit non nul et constant sur la classe d\'efinie par $ \xx $. On peut supposer que $ N>c $ et que $ c=v_{p}\left(\frac{\partial F}{\partial x_{i_{0}}}(\xx)\right) $ pour un $ i_{0}\in \{1,...,n+r\} $ fix\'e. On consid\`ere $ \uu\in \ZZ_{p}^{n+r} $ tel que $ [\uu]_{N}=\xx $, et $ \uu'\in \ZZ_{p}^{n+r} $ quelconque. On a alors \[
F(\uu+\uu')=F(\uu)+\sum_{i=0}^{n+r}\frac{\partial F}{\partial x_{i}}(\uu)u_{i}'+G(\uu,\uu'), 
\] o\`u $ G(\uu,\uu') $ est une somme de termes contenant au moins deux facteurs $ u_{i}' $. Ainsi, on a donc, si $ \uu'\in (p^{N}\ZZ_{p})^{n+r} $ : \[ F(\uu+\uu')\equiv F(\uu) \; (p^{N+c}). \] Par cons\'equent, l'image de $ F(\uu) $ dans $ \ZZ_{p}/p^{N+c} $ d\'epend uniquement de $ \uu\mod p^{N}=\xx $, on note alors $ F^{\ast}(\xx) $ cette image. \\

 Si $ F^{\ast}(\xx)\neq 0 $, alors l'int\'egrale \[ \int_{\substack{\uu\in \ZZ_{p}^{n+r}\; |\; F(\uu,\vv,\ww)=0 \\ [\uu]_{N}=\xx  }}\omega_{L,p}(\uu) \] est nulle, et l'ensemble \[ \{\uu \mod p^{N+c} \; | \; [\uu]_{N}=\xx, \; F(\uu)\equiv 0\; (p^{N+c}) \} \] est vide. \\

Si $ F^{\ast}(\xx)= 0 $ alors, par le lemme de Hensel, les applications coordonn\'ees $ X_{1},...,X_{i_{0}-1},X_{i_{0}+1},...,X_{n+r} $ d\'efinissent un isomorphisme de \[ \{\uu\in \ZZ_{p}^{n+r} \; | \; [\uu]_{N}=\xx, \; F(\uu)=0 \} \] sur \[\hat{\xx}+(p^{N}\ZZ_{p})^{n+r-1}, \] o\`u $ \hat{\xx}=(x_{1},...,x_{i_{0}-1},x_{i_{0}+1},...,x_{n+r}) $. Par cons\'equent, on a : \begin{multline*}
\int_{\substack{\uu\in \ZZ_{p}^{n+r}\; |\;  F(\uu)=0\\ [\uu]_{N}=\xx    }}\omega_{L,p}(\uu)  \\ =\int_{\hat{\xx}+(p^{N}\ZZ_{p})^{n+r-1}}p^{c}dx_{1}...dx_{i_{0}-1}dx_{i_{0}+1}...dx_{n+r} =p^{c-N(n+r-1)}.
\end{multline*} 
On a d'autre part, puisque $ F(\uu) \mod p^{N+c} $ ne d\'epend que de $ \xx $ : \begin{multline*}
p^{-(N+c)(n+r-1)}\card\{\uu\mod p^{N+c}\; | \; [\uu]_{N}=\xx,\; F(\uu)\equiv 0(p^{N+c}) \}\\ =p^{-(N+c)(n+r-1)}p^{(n+r-1)c}  = p^{c-N(n+r-1)}.
\end{multline*}
On a donc finalement : \begin{multline*}
\int_{\substack{\xx\in \ZZ_{p}^{n+r} \; |\;  F(\xx)=0\\ \PGCD_{\sigma\in \Delta_{\max}}\xx^{\underline{\sigma}}\not\equiv \0\; (p)}}\omega_{L,p}  =\sum_{\substack{\xx\in W_{p}^{\ast}(N) \\ F^{\ast}(\xx)=0}}p^{c-N(n+r-1)} \\  = \sum_{\substack{\xx,\in W_{p}^{\ast}(N) }}p^{-(N+c)(n+r-1)}\card\{\uu\mod p^{N+c}\; | \; [\uu]_{N}=\xx,\; F(\uu)\equiv 0(p^{N+c}) \}\\   = \frac{M_{p}^{\ast}(N+c)}{p^{(N+c)(n+r-1)}}.
\end{multline*}

D'o\`u le r\'esultat.

\end{proof}

Remarquons \`a pr\'esent que, puisque l'on a suppos\'e qu'une puissance de $ \omega_{Y}^{-1} $ est engendr\'ee par ses sections globales, d'apr\`es la proposition\;\ref{effectif2}, pour tout $ \sigma \in \Delta_{\max} $, $ D(\sigma) $ a pour support $ \bigcup_{i\notin \sigma(1)} D_{i} $ et il existe des entiers $ (a_{i})_{i\notin \sigma(1)} $ strictement positifs tels que $ D(\sigma)=\sum_{i\notin\sigma(1)}a_{i}D_{i} $. On en d\'eduit donc que \[  \PGCD_{\sigma\in \Delta_{\max}}\xx^{\underline{\sigma}}\not\equiv \0\; (p) \Leftrightarrow \PGCD_{\sigma\in \Delta_{\max}}\xx^{D(\sigma)}\not\equiv \0\; (p) \Leftrightarrow h_{p}(\xx)=1. \]

\begin{lemma}
On l'\'egalit\'e  \[ \sigma_{p}=\int_{\substack{\xx\in \ZZ_{p}^{n+r}\; |\; F(\xx)=0 \\  \PGCD_{\sigma\in \Delta_{\max}}\xx^{\underline{\sigma}}\not\equiv \0\; (p) }}\omega_{L,p}=\left(1-\frac{1}{p}\right)^{r}\omega_{p}(Y(\QQ_{p})). \]
\end{lemma}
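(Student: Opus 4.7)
\medskip

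The plan splits naturally into the two equalities. For the first, $\sigma_{p}=\int_{\ldots}\omega_{L,p}$, everything is already in place. The immediately preceding lemma shows that there exists $N_{0}$ such that for all $N\geqslant N_{0}$,
\[ \int_{\substack{\xx\in \ZZ_{p}^{n+r}\; |\; F(\xx)=0 \\ \PGCD_{\sigma\in \Delta_{\max}}\xx^{\underline{\sigma}}\not\equiv \0\;(p)}}\omega_{L,p}=\frac{M_{p}^{\ast}(N)}{p^{N(n+r-1)}}, \]
while the lemma just before that identifies $\sigma_{p}$ as the limit of $M_{p}^{\ast}(m)/p^{m(n+r-1)}$ as $m\to\infty$. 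Since the right-hand side of the first identity is constant for $N\geqslant N_{0}$, this common value equals $\sigma_{p}$, giving the first equality.

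For the second equality, the plan is to exploit the torsor structure $\pi:\tilde{X}_{0}\to\tilde{X}$, which is a $T_{\NS}$-torsor for $T_{\NS}\simeq \mathbb{G}_{m}^{r}$. Over $\QQ_{p}$, by Hilbert 90 every point $P\in Y(\QQ_{p})$ has a non-empty fiber $\pi^{-1}(P)$ which is a principal homogeneous space under $T_{\NS}(\QQ_{p})=(\QQ_{p}^{\ast})^{r}$. The coprimality condition $\PGCD_{\sigma}\xx^{\underline{\sigma}}\not\equiv \0\;(p)$ is equivalent to $h_{p}(\xx)=\max_{\sigma}|\xx^{D(\sigma)}|_{p}=1$, since $D(\sigma)$ has support in $\bigcup_{v_{i}\notin\sigma(1)}D_{i}$ with strictly positive coefficients (the hypothesis that a power of $\omega_{Y}^{-1}$ is generated by its global sections gives this via the proposition \ref{effectif2}). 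Two $\xx,\xx'\in\ZZ_{p}^{n+r}$ both satisfying $h_{p}=1$ and lying over the same $P\in Y(\QQ_{p})$ differ by the action of a unique $\tt\in(\ZZ_{p}^{\ast})^{r}$, so the set of such integral lifts of $P$ is an orbit of $(\ZZ_{p}^{\ast})^{r}$.

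The second step is the measure computation. Using the quasi-$r$-homogeneity of $F$ of $r$-degree $(d_{1},\dots,d_{r})$, a direct calculation with the Leray formula gives
\[ \omega_{L,p}(\tt\cdot\xx)=\Bigl(\prod_{j=1}^{r}|t_{j}|_{p}^{n_{j}-d_{j}}\Bigr)\,\omega_{L,p}(\xx), \]
matching the fact that $D_{0}=\sum(n_{j}-d_{j})D_{n+j}$ is the anticanonical class on $Y$. Locally over a coordinate chart where $\frac{\partial F}{\partial x_{i_{0}}}\ne 0$ and some distinguished variable is used to trivialise the torsor, one can therefore write
\[ \omega_{L,p}=h_{p}(\xx)\,\pi^{\ast}\omega_{p}\wedge d^{\ast}\tt, \]
where $d^{\ast}\tt=\prod_{j=1}^{r}\frac{dt_{j}}{|t_{j}|_{p}}$ is the normalised Haar measure on $T_{\NS}(\QQ_{p})$; the factor $h_{p}$ cancels the $h_{\nu}$ that appears in the very definition of $\omega_{p}$.

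Fubini over the torsor fibration then yields
\[ \int_{\substack{\xx\in \ZZ_{p}^{n+r}\\ F(\xx)=0,\,h_{p}(\xx)=1}}\omega_{L,p}=\int_{Y(\QQ_{p})}\!\Bigl(\int_{(\ZZ_{p}^{\ast})^{r}}d^{\ast}\tt\Bigr)\omega_{p}(P), \]
since restricting to $h_{p}=1$ exactly cuts out an $(\ZZ_{p}^{\ast})^{r}$-orbit in each fiber. The Haar measure of $(\ZZ_{p}^{\ast})^{r}$ with respect to $d^{\ast}\tt$ equals $(1-1/p)^{r}$, which gives the second equality. The main technical point will be the torsor-level verification of the identity $\omega_{L,p}=h_{p}\,\pi^{\ast}\omega_{p}\wedge d^{\ast}\tt$, which requires choosing compatible local trivialisations on each chart $U_{\sigma}$ and checking that the glueing is compatible across $\Delta_{\max}$; this is essentially the local analogue of Peyre's comparison between the height-normalised Tamagawa measure on $Y$ and the Leray measure on a universal torsor.
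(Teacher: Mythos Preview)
Your proposal is correct. The first equality is handled exactly as in the paper, by combining the two preceding lemmas.

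For the second equality, both you and the paper exploit the $T_{\NS}$-torsor structure of $\pi:\tilde{X}_{0}\to X$, but the implementations differ. The paper works in explicit coordinates: it decomposes the integration domain according to the $p$-adic valuations $\alpha_{j}=v_{p}(x_{n+j})$, performs the change of variables $x_{i}=\prod_{j}p^{\alpha_{j}a_{i,j}}u_{i}$ to normalise to $|u_{n+j}|_{p}=1$, then a second change $u_{i}=\prod_{j}u_{n+j}^{a_{i,j}}v_{i}$ to pass to the chart coordinates $\rho$ on $Y$; the factor $(1-1/p)^{r}$ drops out of $\int_{|u_{n+j}|_{p}=1}du_{n+j}$, and the sum over $(\alpha_{1},\dots,\alpha_{r})$ reassembles the full Tamagawa integral over $V_{0}(\QQ_{p})$. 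You instead invoke the abstract measure comparison $\omega_{L,p}=h_{p}\cdot\pi^{\ast}\omega_{p}\wedge d^{\ast}\tt$ and Fubini over the fibration, identifying the primitive integral lifts of each $P$ as a single $(\ZZ_{p}^{\ast})^{r}$-orbit with Haar volume $(1-1/p)^{r}$. The paper's explicit route avoids having to justify the measure identity in coordinate-free terms; your abstract route is cleaner conceptually but, as you note, the deferred verification of $\omega_{L,p}=h_{p}\,\pi^{\ast}\omega_{p}\wedge d^{\ast}\tt$ in local coordinates amounts to exactly the computation the paper performs.
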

\begin{proof}
D'apr\`es ce qui pr\'ec\`ede, il suffit de montrer que pour tout ouvert $ V_{0} $ de $ Y $ sur lequel $ \rho $ est bien d\'efini et induit un diff\'eomorphisme analytique sur $ \rho(V_{0}) $ : \[ \int_{\substack{\xx\in \ZZ_{p}^{n+r}\cap W\; |\; \\  \PGCD_{\sigma\in \Delta_{\max}}\xx^{\underline{\sigma}}\not\equiv \0\; (p) }}\omega_{L,p}=\left(1-\frac{1}{p}\right)^{r}\omega_{p}(V_{0}(\QQ_{p})), \] o\`u $ W_{0}=\pi^{-1}(V_{0}) $.\\

On peut \'ecrire : \[
\int_{\substack{\xx\in \ZZ_{p}^{n+r}\cap W_{0}\; |\;  \\  \PGCD_{\sigma\in \Delta_{\max}}\xx^{\underline{\sigma}}\not\equiv \0\; (p) }}\omega_{L,p}(\xx) = \sum_{(\alpha_{1},...,\alpha_{r})\in \NN^{r}}
\int_{\substack{\xx\in \ZZ_{p}^{n+r}\cap W_{0}\; |\;  \\ \forall j, \; |x_{n+j}|_{p}=p^{-\alpha_{j}} \\  h_{p}(\xx)=1}}\omega_{L,p}(\xx).
\]
En effectuant le changement de variables $ x_{i}=\prod_{j=1}^{r}p^{\alpha_{j}a_{i,j}}u_{i} $ pour tout $ i\in \{1,...,n+r\} $, on obtient alors \begin{multline*} \int_{\substack{\xx\in \ZZ_{p}^{n+r}\cap W_{0}\; |\;  \\  \PGCD_{\sigma\in \Delta_{\max}}\xx^{\underline{\sigma}}\not\equiv \0\; (p) }}\omega_{L,p}(\xx) \\  =\sum_{(\alpha_{1},...,\alpha_{r})\in \NN^{r}}
\int_{\substack{\uu\in \QQ_{p}^{n+r}\cap W_{0}\; | \; (p^{\alpha_{1}},...,p^{\alpha_{r}}).\uu\in \ZZ_{p}^{n+r} \\h_{p}(\uu)=\left(\prod_{j=1}^{r}p^{\alpha_{j}(n_{j}-d_{j})}\right)^{-1} \\  \forall j, \; |u_{n+j}|_{p}=1 }}\frac{d u_{1}\wedge...\wedge \widehat{du_{i_{0}}}\wedge...\wedge du_{n+r}}{\left(\prod_{j=1}^{r}p^{\alpha_{j}(n_{j}-d_{j})}\right)^{-1}\left|\frac{\partial F}{\partial x_{i_{0}}}(\uu)\right|_{p}} \\  =\sum_{(\alpha_{1},...,\alpha_{r})\in \NN^{r}}
\int_{\substack{\uu\in \QQ_{p}^{n+r}\cap W_{0}\; | \; (p^{\alpha_{1}},...,p^{\alpha_{r}}).\uu\in \ZZ_{p}^{n+r} \\  h_{p}(\uu)=\left(\prod_{j=1}^{r}p^{\alpha_{j}(n_{j}-d_{j})}\right)^{-1}\\ \forall j, \; |u_{n+j}|_{p}=1 }}\frac{d u_{1}\wedge...\wedge \widehat{du_{i_{0}}}\wedge...\wedge du_{n+r}}{h_{p}(\uu)\left|\frac{\partial F}{\partial x_{i_{0}}}(\uu)\right|_{p}}.
\end{multline*}
En effectuant le changement de variables $ u_{i}=\prod_{j=1}^{r}u_{n+j}^{a_{i,j}} v_{i} $ pour tout $ i\in \{1,...,n\} $ et en remarquant que pour tout $ j\in \{1,...,r\} $ \[ \int_{u_{n+j}\; |\; |u_{n+j}|_{p}=1}du_{n+j}=1-\frac{1}{p}, \] on obtient finalement \begin{multline*}   \left(1-\frac{1}{p}\right)^{r}\sum_{(\alpha_{1},...,\alpha_{r})\in \NN^{r}}\int_{\substack{\vv=(v_{1},...,v_{n})\in  V_{0} \; |\; \\ (p^{\alpha_{1}},...,p^{\alpha_{r}}).\vv\in \ZZ_{p}^{n} }}\frac{d v_{1}\wedge...\wedge \widehat{dv_{i_{0}}}\wedge...\wedge dv_{n}}{h_{p}((\vv,1,...,1))\left|\frac{\partial F}{\partial x_{i_{0}}}(\vv,1,...,1)\right|_{p}}\\  =\left(1-\frac{1}{p}\right)^{r}\omega_{p}(V_{0}(\QQ_{p})). \end{multline*}
\end{proof}

\subsection{\'Etude de l'int\'egrale singuli\`ere $ J $}

Rappelons que l'on a \[ J=\int_{\RR}\int_{\substack{\xx\in \Phi}}e\left(\beta F(\xx)\right)d\xx d\beta. \]
o\`u $ \Phi =\{\RR^{n+r}\; |\;  \max_{\sigma\in \Delta_{\max}}|\xx^{D(\sigma)}|\leqslant 1\} $. Posons $ \tau_{\infty}=\omega_{\infty} $ et montrons que : \[ \tau_{\infty}(Y)=\int_{\pi^{-1}(Y)\cap \Phi}\omega_{L,\infty}=\frac{\prod_{j=1}^{r}(n_{j}-d_{j})}{2^{r}}J. \] 
Pour d\'emontrer cela nous allons utiliser des r\'esultats issus de \cite{Pe2}. Introduisons les notations suivantes : \\

Soit $ X $ une vari\'et\'e d\'efinie sur un corps de nombres $ k $. On pose $ \mathcal{G}=\Gal(\bar{k}/k) $ et \[ \AA_{C_{\eff}(\bar{X}),k}=\Spec(\bar{k}[C_{\eff}(\bar{X})\cap X^{\ast}(T_{NS})]^{\mathcal{G}}), \] et si $ \mathcal{T}_{X} $ est un torseur universel sur $ X $, on note \[ \widehat{\mathcal{T}}_{X}=\mathcal{T}_{X} \times^{T_{NS}} \AA_{-C_{\eff}(\bar{X}),k}. \] On dit qu'une hypersurface $ Y $ de $ X $ \emph{v\'erifie l'hypoth\`ese $ (G) $} s'il existe une d\'esingularisation $ \AA_{\Sigma} $ de $  \AA_{C_{\eff}(\bar{X}),k} $ \'equivariante sous l'action du tore $ T_{NS} $. D'autre part, si $ L $ est un diviseur de $ Y $ appartenant \`a $ C_{\eff}(Y) $, on pose \[ \delta(L)=\inf\{ \langle x,L\rangle, \; x\in C_{\eff}(Y)^{\vee}\cap \Pic(Y)^{\vee}\setminus \{0\}. \] On note $ \delta(Y)=\delta(\omega_{Y}^{-1})\geqslant 1 $. On a alors (cf. \cite[Proposition 3.5.1]{Pe2}) : 

\begin{prop}\label{propdePeyre2}
Si $ Y $ est une hypersurface presque de Fano v\'erifiant l'hypoth\`ese $ (G) $ telle que $ \delta(Y)\geqslant 1 $, $ C_{\eff}(\bar{Y}) $ est poly\'edrique rationnel et si $ \delta(\omega_{X}^{-1}-3L)>0 $ (o\`u $ L $ est le diviseur de $ X $ associ\'e \`a l'hypersurface $ Y $), alors pour toute place archim\'edienne $ \nu \in \Val(k) $, pour toute fonction $ \phi $ complexe d\'efinie sur $ \widehat{\mathcal{T}}_{X}(k_{\nu}) $, $ C^{\infty} $ \`a support compact on a la relation : \[ \int_{\mathcal{T}_{Y}(k_{\nu}) }\phi(y)\omega_{\mathcal{T}_{Y},\nu}(y)=\int_{k_{\nu}}\int_{\mathcal{T}_{X}(k_{\nu}) }\phi(y)e_{\nu}(\xi_{\nu}f(y))\omega_{\mathcal{T}_{X},\nu}(y)d\xi_{\nu}, \] o\`u $ f $ est telle que $ Y=\{y\in X \; |\; f(y)=0\} $, et $ e_{\nu} $ est le caract\`ere $ \xi\mt e^{2i\pi\Lambda_{\nu}(\xi)} $, avec $ \Lambda_{\nu}(\xi)=\lfloor \Tr_{k_{\nu}/\RR}(\xi) \rfloor $. 
\end{prop}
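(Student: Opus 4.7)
The plan is to read this proposition as a Fourier-inversion identity on $k_\nu$: the inner oscillating integral $\int_{k_\nu}e_\nu(\xi_\nu f(y))\,d\xi_\nu$ should behave as a Dirac mass concentrated on $\{f=0\}$, thereby restricting the outer integral from $\mathcal{T}_X(k_\nu)$ to the hypersurface $\mathcal{T}_Y(k_\nu)$ with exactly the weight prescribed by the Leray form. The essential work is thus to justify this formal picture globally and to ensure convergence in a neighbourhood of the boundary of $\widehat{\mathcal{T}}_X(k_\nu)$, which is precisely where the hypotheses on $\delta(Y)$ and $\delta(\omega_X^{-1}-3L)$ enter.

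First I would carry out a local computation. Near a smooth point $y_0$ of $\mathcal{T}_Y(k_\nu)$ with $\partial f/\partial x_{i_0}(y_0)\neq 0$, I would choose analytic coordinates $(u_1,\dots,u_{n+r})$ on $\mathcal{T}_X(k_\nu)$ so that $u_{i_0}=f$. In these coordinates $\omega_{\mathcal{T}_X,\nu}$ has the form $g(u)\,du_1\wedge\cdots\wedge du_{n+r}$, while by the very definition of the Leray form the induced measure $\omega_{\mathcal{T}_Y,\nu}$ on $\{f=0\}$ is $g(0,\widehat{u_{i_0}})\,du_1\wedge\cdots\wedge\widehat{du_{i_0}}\wedge\cdots\wedge du_{n+r}$. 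Applying Fubini in the pair $(u_{i_0},\xi_\nu)$ together with the Fourier inversion formula on $k_\nu$ then produces the desired identity locally. A partition of unity on $\supp\phi$ (respecting smoothness of $Y$) globalises the computation over the locus where $df\neq 0$, while the singular locus $\{df=0\}\cap \mathcal{T}_Y(k_\nu)$ is of measure zero on both sides and can therefore be discarded.

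Next I would address the delicate point: the test function $\phi$ is compactly supported on $\widehat{\mathcal{T}}_X(k_\nu)$, not on $\mathcal{T}_X(k_\nu)$, so the relevant integrals extend to neighbourhoods of the toric boundary divisors of the desingularisation $\AA_\Sigma$ supplied by hypothesis $(G)$. Using the explicit monomial description of $\omega_{\mathcal{T}_X,\nu}$ and of $f$ in each affine chart of $\AA_\Sigma$, one computes the order of vanishing or growth of both $\omega_{\mathcal{T}_X,\nu}$ and $f$ along each boundary stratum in terms of classes in $\mathrm{Pic}(X)$. The hypothesis $\delta(Y)\geqslant 1$ guarantees that $\omega_{\mathcal{T}_Y,\nu}$ has finite mass on the preimage of $\supp\phi$, and $\delta(\omega_X^{-1}-3L)>0$ provides the strict positivity needed to absorb three derivatives in $\xi_\nu$. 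Integrating by parts three times in $\xi_\nu$ produces a factor $|\xi_\nu|^{-3}$ at the cost of three derivatives of $f$, and the latter remain integrable against $\omega_{\mathcal{T}_X,\nu}$ on each chart precisely because of the $-3L$ shift. Absolute convergence of the resulting double integral then legitimises the Fubini swap carried out in the local step.

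The principal obstacle is this boundary analysis: one must simultaneously control the oscillatory integral in $\xi_\nu$ uniformly in $y$ and the polynomial growth of $f$ and its derivatives along each toric stratum. The polyhedrality of $C_{\eff}(\bar Y)$ reduces the verification to a finite combinatorial check, one chart at a time, but keeping track of exactly how the $\delta$-invariants translate into integrability exponents is where the hypothesis $(G)$ and the strict inequality $\delta(\omega_X^{-1}-3L)>0$ are genuinely used. Once absolute convergence is established, the identity follows formally from the local Fourier computation, and the overall statement reduces to a careful bookkeeping of measures across the desingularisation.
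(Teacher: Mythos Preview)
The paper does not prove this proposition: it is quoted verbatim from Peyre \cite[Proposition~3.5.1]{Pe2} and then applied, after checking that the hypotheses $(G)$, $\delta(Y)\geqslant 1$, $\delta(\omega_X^{-1}-3L)>0$ and rational polyhedrality of $C_{\eff}(\bar Y)$ hold in the toric situation at hand. There is therefore no in-paper argument to compare your proposal against.

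That said, your sketch is a reasonable outline of how the result is actually established in \cite{Pe2}: the local Fourier-inversion/Leray computation is standard and correct, and you have correctly identified that the entire difficulty lies in the convergence analysis near the boundary of $\widehat{\mathcal{T}}_X(k_\nu)$, which is where the $\delta$-invariants and hypothesis $(G)$ are used. Your heuristic that three integrations by parts in $\xi_\nu$ explain the appearance of $3L$ is the right intuition. What remains genuinely nontrivial, and what you only gesture at, is the precise combinatorial translation of $\delta(\omega_X^{-1}-3L)>0$ into chart-by-chart integrability exponents on the desingularisation $\AA_\Sigma$; carrying this out rigorously requires the full machinery of \cite[\S3]{Pe2}. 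For the purposes of this paper, citing Peyre is both sufficient and appropriate.
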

 Nous allons appliquer cette proposition aux vari\'et\'es $ X $ et $ Y $, avec $ k=\QQ $, $ \nu=\infty $ et $ f=F $. Si l'on montre que la proposition s'applique dans ce cadre, on aura alors pour tout fonction $ \phi $, $ C^{\infty} $ \`a support compact sur $ \mathcal{T}_{X}(\RR) $ : \[ \int_{\mathcal{T}_{Y}(\RR)}\phi(\xx)\omega_{\mathcal{T}_{Y},\infty}(\xx)=\int_{\RR}\int_{\mathcal{T}_{X}(\RR) }\phi(\xx)e(\beta F(\xx))\omega_{\mathcal{T}_{X},\infty}(\xx)d\beta. \] Quitte \`a approximer l'indicatrice du domaine $ \Phi $ par des fonctions $ \phi\in C^{\infty}(\mathcal{T}_{X}(\RR))=C^{\infty}(\RR^{n+r}) $, on obtient alors \[ \sigma_{\infty}(Y)=\int_{\xx\in \Phi}\omega_{\mathcal{T}_{Y},\infty}(\xx)=\int_{\RR}\int_{\Phi }e(\beta F(\xx))d\xx d\beta=J. \]

Nous allons donc montrer que la proposition s'applique bien au cas qui nous int\'eresse. \\

Dans le cas pr\'esent nous avons : \[ C_{\eff}(\bar{X})=\sum_{j=1}^{r}\RR^{+}.[D_{n+j}] \] \[ X^{\ast}(T_{NS})=\bigoplus_{j=1}^{r}\ZZ.[D_{n+j}], \] et donc \[ \AA_{C_{\eff}(\bar{X}),\QQ}=\Spec(\QQ[\sum_{j=1}^{r}[D_{n+j}]])\simeq \AA^{r}_{\QQ}. \] Donc en particulier $ \AA_{C_{\eff}(\bar{X}),\QQ} $ est non singuli\`ere, donc v\'erifie bien $ (G) $. D'autre part, le diviseur anticanonique de $ Y $ est, rappelons-le : \[ [\omega_{Y}^{-1}]=\sum_{j=1}^{r}(n_{j}-d_{j})[\tilde{D}_{n+j} ]  \] donc \[ \delta(\omega_{Y}^{-1})=\inf_{(x_{1},...,x_{r})\in \NN^{r}\setminus\{\0\}}\left\{\sum_{j=1}^{r}x_{j}(n_{j}-d_{j})\right\}= \inf_{j\in \{1,...,r\}}(n_{j}-d_{j})\geqslant 1. \] Le diviseur $ L $ associ\'e \`a $ Y $ est $ \sum_{j=1}^{r}d_{j}[D_{n+j}] $ et on obtient donc : \[ \delta(\omega_{X}^{-1}-3L)=\inf_{(x_{1},...,x_{r})\in \NN^{r}\setminus\{\0\}}\left\{\sum_{j=1}^{r}x_{j}(n_{j}-3d_{j})\right\}= \inf_{j\in \{1,...,r\}}(n_{j}-3d_{j})\geqslant 1. \] De plus, en remarquant que \[  C_{\eff}(\bar{Y})=\sum_{j=1}^{r}\RR^{+}.[\tilde{D}_{n+j}] \] qui est poly\'edrique rationnel. Les conditions de la proposition\;\ref{propdePeyre2} sont donc toutes bien v\'erifi\'ees.

\subsection{Conclusion}

Si $ S $ est un ensemble fini de places sur $ \QQ $ contenant la place infinie, et $ V $ une vari\'et\'e alg\'ebrique sur $ \QQ $, on note : \begin{equation}
L_{S}(s,\Pic(\bar{V}))=\prod_{p\in \Val(\QQ)\setminus S}L_{p}(s,\Pic(\bar{V}))
\end{equation}
\begin{equation}
L_{p}(s,\Pic(\bar{V}))=\frac{1}{\det(1-p^{-s}\Fr_{p}|\Pic(V_{\FF_{p}}\otimes \QQ))}
\end{equation}
\begin{equation}
\lambda_{\nu}=\left\{ \begin{array}{lcr} L_{\nu}(1,\Pic(\bar{V})) & \mbox{si} & \nu \in \Val(\QQ)\setminus S \\ 1 & \mbox{sinon} & 
\end{array}\right. 
\end{equation}

La formule conjectur\'ee par Peyre, Batyrev et Tschinkel pour $ \mathcal{N}_{V}(B) $ (cf. \cite{Pe2}) est : \begin{equation}
\mathcal{N}_{V}(B)\sim_{B\ra \infty}\alpha(Y)\beta(Y)\tau_{H}(Y)B(\log B)^{\rg(\Pic(X))-1}
\end{equation}
o\`u l'on a pos\'e : \begin{equation}
\alpha(Y)=\frac{1}{(\rg(\Pic(X))-1)!}\int_{C_{\eff}(Y)^{\vee}}e^{-\langle\omega_{Y}^{-1},\yy\rangle }d\yy,
\end{equation}
\begin{equation}
\beta(Y)=\Card(H^{1}(\QQ,\Pic(\bar{Y}))),
\end{equation}
\begin{equation}
\tau_{H}(Y)=\lim_{s\ra 1}(s-1)^{\rg(\Pic(\bar{Y}))}L_{S}(s,\Pic(\bar{Y}))\prod_{\nu\in \Val(\QQ)}\lambda_{\nu}^{-1}\omega_{\nu},
\end{equation}
avec \begin{equation}
C_{\eff}(Y)^{\vee}=\{\yy\in (\Pic(Y)\otimes \RR) \; |\; \forall \xx\in C_{\eff}(Y), \; \langle \xx,\yy\rangle\geqslant 0 \}.
\end{equation}
Dans le cas pr\'esent on a : \[ \Pic(Y)=\bigoplus_{j=1}^{r}\ZZ.[\tilde{D}_{n+j}] \] donc en particulier $ \rg(\Pic(Y))=3 $, et d'autre part : \[ [-K_{Y}]=\sum_{j=1}^{r}(n_{j}-d_{j}). [\tilde{D}_{n+j}], \] \[ C_{\eff}(Y)=\sum_{j=1}^{r}\RR^{+}. [\tilde{D}_{n+j}]\simeq (\RR^{+})^{3}. \] En choisissant $ S=\{\infty\} $, \[ \prod_{p\in \Val(\QQ)\setminus S}L_{p}(s,\Pic(\bar{Y}))=\zeta(s)^{r} \] et donc \[ \lim_{s\ra \infty }(s-1)^{\rg(\Pic(Y))}L(s,\Pic(\bar{Y}))=1, \] et on a pour tout $ p  $ premier, \[ \lambda_{p}^{-1}=\prod_{j=1}^{r}\left(1-\frac{1}{p}\right)=\left(1-\frac{1}{p}\right)^{r}. \] On a donc que \[ \tau_{H}(Y)=\omega_{\infty}\prod_{p\in \mathcal{P}}\left(1-\frac{1}{p}\right)^{r}\omega_{p}(Y)=\frac{1}{2^{r}}(\prod_{j=1}^{r}n_{j}-d_{j})\mathfrak{S}J. \] Par ailleurs, puisque $ \Pic(\bar{Y})\simeq \ZZ^{r} $, $ \Gal(\bar{\QQ}/\QQ) $ agit trivialement sur $ \Pic(\bar{Y}) $ et on a donc que $ \beta(Y)=1 $. Enfin on a que : \begin{align*} \alpha(Y) & =\frac{1}{(r-1)!}\int_{[0,\infty]^{r}}e^{-\sum_{j=1}^{r}(n_{j}-d_{j})t_{j}}dt_{1}...dt_{r} \\ & =\frac{1}{(r-1)!}\prod_{j=1}^{r}\int_{0}^{\infty}e^{-(n_{j}-d_{j})t}dt\\ & = \frac{1}{(r-1)!}\frac{1}{\prod_{j=1}^{r}(n_{j}-d_{j})}. \end{align*}
Par cons\'equent, on obtient : \[ \alpha(Y)\beta(Y)\tau_{H}(Y)B(\log B)^{\rg(\Pic(X))-1}=\frac{1}{2^{r}}\frac{1}{(r-1)!}\mathfrak{S}JB(\log B)^{r-1}, \] et on retrouve donc bien la formule du th\'eor\`eme\;\ref{Conclusion3}.

\end{document}